\documentclass[oneside]{amsart}

\usepackage[T1]{fontenc}
\usepackage{amsmath}
\usepackage{amssymb}
\usepackage{amsthm}
\usepackage[bbgreekl]{mathbbol}
\usepackage{cancel}
\usepackage{enumerate}
\usepackage{geometry}
\usepackage{relsize}
\usepackage{tikz-cd}
\usepackage{url}
\usepackage{verbatim}
\usepackage{xspace}

\usepackage{xcolor}
\usepackage[all]{xy}
\usepackage[hidelinks]{hyperref}
\hypersetup{
  pdftitle={Sheared Witt Vectors},
  pdfauthor={Bhargav Bhatt, Akhil Mathew, and Vadim Vologodsky}
}
\usepackage{cleveref}

\DeclareSymbolFontAlphabet{\mathbb}{AMSb} 
\DeclareSymbolFontAlphabet{\mathbbl}{bbold}

\definecolor{todo}{rgb}{1,0,0}
\definecolor{conditional}{rgb}{0,1,0}
\definecolor{e-mail}{rgb}{0,.40,.80}
\definecolor{reference}{rgb}{.20,.60,.22}
\definecolor{mrnumber}{rgb}{.80,.40,0}
\definecolor{citation}{rgb}{0,.40,.80}

\theoremstyle{definition}
\newtheorem{definition}{Definition}[section]
\newtheorem*{question}{Question}

\newtheorem{construction}[definition]{Construction}
\newtheorem{example}[definition]{Example}
\newtheorem{remark}[definition]{Remark}

\theoremstyle{plain}
\newtheorem{proposition}[definition]{Proposition}
\newtheorem{lemma}[definition]{Lemma}
\newtheorem{corollary}[definition]{Corollary}

\newtheorem{theorem}[definition]{Theorem}

\newtheoremstyle{named}{}{}{\itshape}{}{\bfseries}{.}{.5em}{#1 \thmnote{#3}}
\theoremstyle{named}

\DeclareMathOperator{\spec}{Spec}
\DeclareMathOperator{\Spec}{Spec}
\DeclareMathOperator{\Spf}{Spf}

\newcommand{\cone}{\mathrm{cone}}

\newcommand{\FF}{\mathbb{F}}
\newcommand{\MM}{\mathbb{M}}
\newcommand{\ZZ}{\mathbb{Z}}

\DeclareMathOperator{\Hom}{Hom}

\renewcommand{\phi}{\varphi}

\newcommand{\triplearrows}{\begin{smallmatrix} \to \\ \to \\ \to \end{smallmatrix}}

\newcommand{\quot}{/^{\mathbb{L}}}

\newcommand{\two}{\widetilde{2}}
\newcommand{\zpcycl}{\mathbb{Z}[\zeta_{p^\infty}]^{\wedge}_p}

\newcommand{\et}{\mathrm{\acute{e}t}}
\newcommand{\iso}{\cong}

\newcommand{\nperf}{\mathrm{NPerf}}
\newcommand{\nperftors}{\mathrm{NPerf}}

\DeclareRobustCommand{\dcart}{\texorpdfstring{\ensuremath{\delta}-Cartier}{delta-Cartier}\xspace}
\DeclareRobustCommand{\dcartwo}{\texorpdfstring{\ifmmode\delta\text{-\~{C}artier}\else\ensuremath{\delta}-\~{C}artier\fi}{delta-tilde Cartier}\xspace}

\newcommand{\ddcart}{\delta\mathrm{-CartCAlg}}
\newcommand{\ddcartwo}{\delta\mathrm{-}\widetilde{\mathrm{C}}\mathrm{artCAlg}}
\newcommand{\dieu}{Dieudonn\'e $\delta$}

\newcommand{\deltanilperfect}{\texorpdfstring{$\delta$-nilperfect}{delta-nilperfect}}
\newcommand{\dhl}{\delta\mathrm{-Ring}_{\delta\mathrm{-nilperf}}}
\newcommand{\dring}{\delta\mathrm{-Ring}}
\newcommand{\hV}{\tilde{V}}
\newcommand{\lt}{L^{\mathrm{taut}}}
\newcommand{\dnil}[1]{\sqrt[\delta]{#1}}
\newcommand{\dniltop}[1]{\sqrt[\delta]{#1}^{\,\mathrm{top}}}
\newcommand{\nilperf}[1]{#1^{\delta\text{-}\mathrm{nilperf}}}

\newcommand{\chW}{{}^{s}\!W}
\newcommand{\gh}{\mathrm{gh}}

\newcommand{\hw}{\hat{W}}
\newcommand{\hwbig}{\hw_{\mathrm{big}}}

\newcommand{\wbig}{W_{\mathrm{big}}}
\providecommand{\Sym}{\operatorname{Sym}}
\providecommand{\Pic}{\operatorname{Pic}}
\providecommand{\Ext}{\operatorname{Ext}}
\providecommand{\Norm}{\operatorname{Norm}}
\providecommand{\ev}{\operatorname{ev}}
\providecommand{\Wbig}{W_{\mathrm{big},R}}
\providecommand{\Wrat}{W_{\mathrm{rat},R}}
\providecommand{\Wratplus}{W_{\mathrm{rat},R}^{+}}
\providecommand{\Aone}{\mathbb{A}^1_R}
\providecommand{\RHom}{\mathrm{R}\!\operatorname{Hom}}
\providecommand{\rGamma}{\mathrm{R}\Gamma}
\providecommand{\mono}{\hookrightarrow}
\providecommand{\Z}{\ZZ}
\newcommand{\qperf}{Q^{\mathrm{perf}}}
\newcommand{\ainf}{A_{\inf}}

\newcommand{\gasharp}{\mathbb{G}_a^{\sharp}}
\newcommand{\gahat}{\widehat{\mathbb{G}_a}}
\newcommand{\gahatsharp}{\mathbb{G}_a^{\widehat{\sharp}}}
\newcommand{\gabar}{\overline{\mathbb{G}_a}}
\newcommand{\Prism}{{ {\mathbbl{\Delta}}}}

\title{Sheared Witt vectors}
\author{Bhargav Bhatt}
\address{School of Mathematics, Institute for Advanced Study, 1 Einstein Drive, Princeton, NJ 08540, USA; and Department of Mathematics, Princeton University, Princeton, NJ 08544, USA}
\email{bhargav.bhatt@gmail.com}

\author{Akhil Mathew}
\address{Department of Mathematics, University of Chicago, 5734 S. University Ave., Chicago, IL 60637-1514, USA}
\email{amathew@math.uchicago.edu}

\author{Vadim Vologodsky}
\address{Department of Mathematics, University of Toronto, 40 St. George St., Toronto, ON M5S 2E4, Canada}
\email{vadim.vologodski@utoronto.ca}

\date{\today}
\dedicatory{Dedicated to Vladimir Drinfeld}

\begin{document}

\begin{abstract}
V.~Drinfeld
and E.~Lau introduced a ``decompletion'' of the ring of $p$-typical Witt
vectors, following earlier work of T.~Zink. The goal of this paper is to offer
an exposition of this construction, which we call the sheared Witt vectors, on the category of rings $R$ whose reduction is a perfect $\mathbb{F}_p$-algebra.
\end{abstract}
\maketitle
\tableofcontents

\section{Introduction}

The purpose of this paper is to study a ``decompleted'' variant $\chW$ of the
($p$-typical)
Witt vector functor $W$.

Let us begin with some motivation: why would one seek such a decompletion?
Given a perfect field $k$ (or more generally a perfect $\mathbb{F}_p$-algebra
by \cite[Th.~D]{Lausmoothness}), the Dieudonn\'e
theory
(cf.~\cite[Ch.~V]{DG70})
gives a classification of the category of commutative finite locally free group
schemes over $k$ of $p$-power order via Dieudonn\'e modules, or $W(k)$-modules equipped with
Frobenius and Verschiebung.
It is a natural question to
extend the Dieudonn\'e theory to more general rings.
One then runs into the following problem:  for any $n \geq 1$, the functor
carrying a ring $R$ to
$W(R)/p^n$ does not  commute with filtered colimits. However, the category of
finite locally free group schemes over a filtered colimit of a diagram of rings
$\left\{R_i\right\}_{i \in I}$ is the filtered colimit of the
categories of finite locally free group schemes over each $R_i$.
Thus, a Dieudonn\'e theory over non-perfect rings necessitates a
``decompletion'' of the ring of Witt vectors. (Another reason to care about such a modification of the Witt vectors, and historically our original one, is explained in Remark~\ref{rmk:sheared}.)

Such a functor of ``decompleted'' Witt vectors was studied by T.~Zink
\cite{ZinkDieudonne} and Lau \cite{Laurelations}, for rings $R$ such that
$R_{\mathrm{red}}$ is perfect
and such that there exists $N \geq 1$ such that $x^N = 0$ for all $x \in
\mathrm{Nil}(R)$.
It was recently observed by V.~Drinfeld \cite{Dri25} and E.~Lau \cite{Lau25} that the
construction can be
extended to
all $R$ with $R_{\mathrm{red}}$ perfect, and the
Dieudonn\'e theory for $p$-divisible groups was developed by M.~Hoff and E.~Lau
 \cite{HL26}. The purpose of this text is to
give an exposition of the Drinfeld--Lau--Zink construction, which we will call the
\emph{sheared Witt vectors}, and its basic properties.

\begin{definition}
For any ring $R$, we let $\hw(R) \subset W(R)$ be the collection of Witt vectors
$x = \sum_{i \geq 0} V^i [x_i] \in W(R)$
such that all Witt components $x_i \in R$ of $x$ are
nilpotent  and
such that all but finitely
many vanish. Then $\hw(R) \subset W(R)$ is an ideal, stable under
the Witt vector Frobenius and Verschiebung.
\end{definition}

\begin{definition}[Drinfeld, Lau]
\label{introshearedW}
Let $R$ be a ring whose reduction  $R_{\mathrm{red}}$ is a perfect $\mathbb{F}_p$-algebra; we will call such rings \emph{nilperfect}.
We define the \emph{sheared Witt vectors} $\chW(R)$ via the pullback diagram
of rings
\[ \xymatrix{
\chW(R) \ar[d]  \ar[r] & \varprojlim_F (W(R)/\hw(R))\ar[d]  \\
W(R) \ar[r] &  W(R)/\hw(R)
}.\]
By construction, we have a natural map
$\chW(R) \to W(R)$, and one can show that $\chW(R)$ is derived
(but usually not classically) $p$-complete as an abelian group. We have a short exact sequence
\[ 0 \to \hw(R) \to \chW(R) \to \varprojlim_F (W(R)/\hw(R)) \to 0. \]

We will later extend this construction to derived
$p$-complete rings $R$ such that
$(R/p)_{\mathrm{red}}$ is
perfect; we call such rings \emph{$p$-completely nilperfect}. For example,\footnote{However,
this case does not cover all $R$ of interest. If $R$ is a
semiperfect $\mathbb{F}_p$-algebra, then $\chW(R)$ is a derived $p$-complete ring
with typically unbounded $p$-power torsion, which is usually not classically $p$-complete. Thus,
$\chW( \chW(R))$ is not covered by this case.}
 if $R$ is a $p$-adically complete ring
with bounded $p$-power torsion and such that $(R/p)_{\mathrm{red}}$ is perfect,
then we \emph{define} $\chW(R) = \varprojlim_n \chW(R/p^n)$. 
\end{definition}

\begin{remark}
    $\chW$ is a sheaf of rings on the category of nilperfect rings equipped with the fpqc topology, cf.~\Cref{chWisfpqcsheaf}.  
    The category of nilperfect rings forms a basis for the fpqc topology on all $p$-nilpotent rings. Therefore, one can extend the definition of $\chW$ to all $p$-nilpotent rings by fpqc descent (see Remark~\ref{rmk:cohomologysW}); however, we will not treat this extension in this paper, because of the presence of higher cohomology. 
\end{remark}

\begin{example}
Let $R$ be a local Artinian ring with perfect residue field $k$ of characteristic $p$ and maximal
ideal $\mathfrak{m}$.
Note that one has a natural embedding $W(k) \hookrightarrow W(R)$, which is the unique section of the natural map $W(R) \to W(k)$.
In this case, $\chW(R) \subset W(R)$ is the subring
$W(k) \oplus \hw(\mathfrak{m}) \subset W(R)$. This subring has been considered
by \cite{ZinkDieudonne, Laurelations}. See \Cref{chWofArtinian}.
\end{example}

\begin{example}
    \label{chWofsemiperfect:intro}
Let $R$ be a semiperfect $\mathbb{F}_p$-algebra
written as a quotient $R = P/I$ for $P$ a perfect $\mathbb{F}_p$-algebra and
an ideal $I \subset P$.
In this case,
the map
$W(P) \simeq \chW(P) \twoheadrightarrow \chW(R)$
is a surjection, 
and $\chW(R)$ is the derived $p$-completion of the quotient of $W(P)$
by the ideal generated by $V^i([x])$, for all $x \in I$ and $i \geq 0$. In
particular, $\chW(R) \twoheadrightarrow W(R)$.

For instance, if $R$ is the quotient $P/(f_1, \dots, f_n)$ where $P$ is a
perfect
$\mathbb{F}_p$-algebra and $f_1, \dots, f_n \in P$, then
$\chW(R)$ is the derived $p$-completion of the quotient of
$W(P)$ by the elements
$\{ p^{i} [f_j^{1/p^i}], i \in \mathbb{Z}_{\geq 0}, 1 \leq j \leq n \} $.
For instance, if $R = \mathbb{F}_p[x^{1/p^\infty}]/(x)$, then $\chW(R)$ is the
``graded decompletion'' of $W(R)$, i.e., $\chW(R)$ is the derived
$p$-completion of the direct sum
of various cyclic groups generated by $[x^i]$, for $i \in \mathbb{Z}[1/p]_{\geq
0}$. See \Cref{chWofsemiperfect,truncatedsemiperfectchW}.
\end{example}

\subsection{An analog of Joyal's characterization for $\chW$}
To further characterize $\chW$,
we recall some preliminaries about
$\delta$-rings and the Witt vectors, cf.~\cite[\S 2]{bhattscholze} for an
account.
Recall that a \emph{$\delta$-ring} is a ring $A$ equipped with a map $\delta: A
\to A$ satisfying various identities, which guarantee that
the map $\phi: A \to A$ defined by $\phi(a) \stackrel{\mathrm{def}}{=} a^p + p
\delta(a)$ is a ring homomorphism (and is equivalent to that when $A$ is
$p$-torsionfree). We
write $\dring$ for the category of $\delta$-rings.
The forgetful functor from $\dring$ to the category of commutative rings
preserves limits and colimits, and admits both adjoints.
The fundamental relation between $\dring$ and the Witt vector construction is as
follows:

\begin{theorem}[Joyal \cite{Joyal85}]
\label{joyal:witt}
For any ring $R$, the ring $W(R)$ has a natural $\delta$-structure such that
$\phi$ is the Witt vector Frobenius $F: W(R) \to W(R)$, and the map
$W(R) \to R$ exhibits $W(R)$ as the cofree $\delta$-ring on $R$. That is, for
any $\delta$-ring $A$, the map $W(R) \to R$ induces a bijection
$\mathrm{Hom}_\delta(A, W(R)) \xrightarrow{\sim} \mathrm{Hom}(A, R)$.
\end{theorem}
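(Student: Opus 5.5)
The plan is to prove the universal property first in the $p$-torsionfree case, using ghost components, and then pass to general rings by naturality in $R$ and in $A$.

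\textbf{Constructing the $\delta$-structure.} Consider first $R = \mathbb{Z}[x_0, x_1, \dots]$. Here $W(R)$ is $p$-torsionfree, since the ghost map $W(R) \to \prod_{n \geq 0} R$ is injective and $R$ is $p$-torsionfree. The Witt Frobenius satisfies $F(x) \equiv x^p \pmod p$ on $W(R)$ for every ring $R$; it suffices to know this for $W(\mathbb{F}_p[x_0, x_1, \dots])$, where $F$ is the Witt vector of Frobenius. So on torsionfree $W(R)$ we may define $\delta(x) = (F(x) - x^p)/p$. The identities making $\delta$ a $\delta$-structure hold automatically, because $F$ is a ring endomorphism lifting Frobenius. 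Each Witt component of $\delta(x)$ is therefore a universal integral polynomial in the components of $x$. Defining $\delta$ by these polynomials for an arbitrary $R$ gives a natural $\delta$-structure with $\phi = F$, and all identities transfer from the universal case.

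\textbf{Constructing the inverse map.} Given a $\delta$-ring $A$ and a ring map $f: A \to R$, I use the standard construction $s_A: A \to W(A)$. This is the unique $\delta$-map lifting the identity, characterized in ghost coordinates by $a \mapsto (a, \phi(a), \phi^2(a), \dots)$. Its existence and its ring-homomorphism property are again checked universally: reduce to $A$ a free $\delta$-ring, which is $p$-torsionfree, so ghost coordinates detect everything. Compatibility with $\delta$ holds because both sides have ghost coordinates shifted by $\phi$, and for torsionfree targets $\delta$ is determined by $\phi$. The inverse then sends $f$ to $W(f) \circ s_A$. This is a $\delta$-map because $W(f)$ commutes with $F$ and hence with $\delta$, the structure being defined by natural polynomials.

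\textbf{Checking the two composites.} The composite $A \to W(R) \to R$ is $f \circ (\text{first ghost component of } s_A) = f$. For the other composite, let $g: A \to W(R)$ be a $\delta$-map. Naturality gives $W(g_0) \circ s_A = W(\mathrm{pr}_0) \circ W(g) \circ s_A = W(\mathrm{pr}_0) \circ s_{W(R)} \circ g$, where $g_0$ is the composite of $g$ with the projection $\mathrm{pr}_0: W(R) \to R$. It therefore suffices to show $W(\mathrm{pr}_0) \circ s_{W(R)} = \mathrm{id}_{W(R)}$. By naturality in $R$, it is enough to check this for $R$ polynomial over $\mathbb{Z}$. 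There all rings involved are torsionfree, so it is a ghost-coordinate identity: the $n$-th ghost component of $F^n(x)$ equals $w_n(x)$, which is the standard relation between the ghost components and $F$.

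\textbf{Main obstacle.} The hard step is the integrality of $s_A$: it must genuinely land in $W(A)$ rather than only in the ghost ring $\prod_{n} A$. I would deduce this from Dwork's lemma. For torsionfree $A$ with a Frobenius lift $\phi$, a sequence $(a_n)$ lies in the image of the ghost map if and only if $a_{n} \equiv \phi(a_{n-1}) \pmod{p^{n}}$. The sequence $(\phi^n(a))$ satisfies this trivially, and the statement then extends to non-torsionfree $A$ by writing $A$ as a quotient of a free $\delta$-ring.
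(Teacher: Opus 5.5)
The paper does not prove this theorem. It quotes it from Joyal and refers to \cite[\S 2]{bhattscholze}, so there is no internal argument to compare with. Your outline is the standard proof: define $\delta$ on $W$ in the universal torsionfree case, build the unit $s_A$ (the paper's $w_\delta$) by Dwork's lemma, and verify the identity $W(\mathrm{pr}_0)\circ s_{W(R)}=\mathrm{id}$ in ghost coordinates. The construction of $s_A$, its naturality, and the check of the two composites are sound. There is one slip: the fact you need is that the \emph{zeroth} ghost component of $F^n(x)$ is $w_n(x)$, because $\mathrm{pr}_0=w_0$. The $n$-th ghost component of $F^n(x)$ is $w_{2n}(x)$.

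One step fails as written: the justification of $F(x)\equiv x^p \pmod{pW(R)}$ by reduction to $W(\mathbb{F}_p[x_0,x_1,\dots])$.
\begin{itemize}
\item \emph{Why it fails.} Knowing the congruence after passing to $W(R/p)$ only gives $F(x)-x^p\in pW(R)+W(pR)$, and $W(pR)\not\subset pW(R)$ in general. For example, $[p]$ maps to $0$ in $W(\mathbb{F}_p)$, but $[p]\notin pW(\mathbb{Z}_p)$. Indeed, $[p]/p$ would have ghost components $1,p^{p-1},\dots$, which contradicts the congruence $w_1\equiv w_0^p \pmod p$ that ghost components satisfy. So the characteristic-$p$ case does not give the universal torsionfree case, and the universal case is exactly what you need to define $\delta$.
\item \emph{A fix valid for every ring.} Write $x=[x_0]+V(y)$. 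Then $F(x)=[x_0^p]+py$. On the other hand, $x^p\equiv [x_0]^p+V(y)^p \pmod{pW(R)}$. The projection formula and $FV=p$ give $V(y)^p=p^{p-1}V(y^p)$, so $F(x)-x^p\in pW(R)$.
\item \emph{Alternative fix via Dwork.} Work on $\mathbb{Z}[x_0,x_1,\dots]$ with $\phi(x_i)=x_i^p$, and apply your Dwork lemma to the ghost components $g_n=w_{n+1}(x)-w_n(x)^p$ of $F(x)-x^p$. From $w_{n+1}(x)-\phi(w_n(x))=p^{n+1}x_{n+1}$ you get $p\mid g_n$ and $g_n\equiv\phi(g_{n-1})\pmod{p^{n+1}}$. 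Hence $(g_n/p)_n$ is a ghost vector.
\end{itemize}
With this repair, the rest of your argument goes through.

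For non-torsionfree $A$, it is cleanest to set $s_A(a)=W(f_a)(s(x))$, where $f_a\colon\mathbb{Z}\{x\}\to A$ is the $\delta$-map with $x\mapsto a$. This makes well-definedness and naturality in $\delta$-maps automatic. Choosing lifts along a surjection from a free $\delta$-ring would instead require its own well-definedness argument.
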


For a nilperfect ring $R$, the ring $\chW(R)$ also admits a natural $\delta$-structure, and the map $\chW(R) \to R$ exhibits $\chW(R)$ as the cofree ``$\delta$-nilperfect'' $\delta$-ring in the sense that we now formulate.

\begin{definition}
A $\delta$-ring $A$ is \emph{perfect} if the Frobenius $\phi: A \to A$ is an
isomorphism.
\end{definition}

\begin{remark} \label{perfectdeltarings}
If $R$ is a perfect $\mathbb{F}_p$-algebra, then $W(R)$ is perfect as a
$\delta$-ring, and
conversely any perfect $\delta$-ring which is derived $p$-complete is of this
form.

More generally, $W(R)$ for $R$ a perfect $\mathbb{F}_p$-algebra has the following universal
property: for any derived $p$-complete $\delta$-ring $A$, maps $W(R) \to A$
(either of rings or of $\delta$-rings) are the same as maps of
$\mathbb{F}_p$-algebras $R \to
A^{\flat} \stackrel{\mathrm{def}}{=}
\varprojlim_\phi(A/p)$; see \cite[\S 2.4]{bhattscholze}.
    \end{remark}

\begin{definition}[Cf.~\Cref{deltanilpdef}]
Fix a
$\delta$-ring $A$. We say that
an element $a \in A$ is \emph{$\delta$-nilpotent} if
$\delta^i(a)$ is nilpotent for all $i \geq 0$ and
$\delta^i(a) = 0$ for $i \gg 0$. Equivalently, by
\Cref{deltaofnilpotent} below, it suffices to require that $a$ is nilpotent
and $\delta^i(a) = 0$ for $i \gg 0$.
\end{definition}

An important example of a $\delta$-nilpotent element is the product of
any two $p$-power torsion elements of $A$ (\Cref{prodlemmadeltaring}).

\begin{definition}[\deltanilperfect\ $\delta$-rings]\label{def:Nilperfect_delta-rings}
Let $A$ be a $\delta$-ring. We say that $A$ is \emph{\deltanilperfect} if
$A/\dnil{A}$ is a perfect $\delta$-ring, where $\dnil{A} \subset A$ denotes the ideal of
$\delta$-nilpotent elements.
\end{definition}

With this definition,
any perfect $\delta$-ring is \deltanilperfect. Moreover, it is straightforward to
check that any colimit of \deltanilperfect\
$\delta$-rings remains \deltanilperfect.

We now formulate the analog of \Cref{joyal:witt} for $\chW$.

\begin{theorem}[Cf.~\Cref{chWcofreenilperfect}]
\label{chWcofreenilperfect:intro}
Let $R$ be a nilperfect ring.  Then the map
$\chW(R) \to R$ exhibits $\chW(R)$ as the cofree \deltanilperfect\ $\delta$-ring
on $R$. That is, for any \deltanilperfect\ $\delta$-ring $A$, the natural
map
\[
\Hom_{\dring}(A,\chW(R)) \to \Hom(A,R)
\]
is a bijection.
\end{theorem}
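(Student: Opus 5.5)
We reduce to Joyal's theorem (\Cref{joyal:witt}). A ring map $f\colon A\to R$ corresponds to a unique $\delta$-map $\tilde f\colon A\to W(R)$. So the claim amounts to two things: $\chW(R)\to W(R)$ is injective on the relevant maps, and $\tilde f$ factors (uniquely) through $\chW(R)$ precisely when $A$ is \deltanilperfect. We also need $\chW(R)$ itself to be a \deltanilperfect\ $\delta$-ring with $\chW(R)\to W(R)$ a $\delta$-map. This should follow from the pullback description: $\hw(R)$ is an $F$-stable ideal, and one checks it is a $\delta$-ideal. Then $W(R)/\hw(R)$ and $\varprojlim_F W(R)/\hw(R)$ are $\delta$-rings, the latter perfect, so the pullback $\chW(R)$ is a $\delta$-ring. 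Its $\delta$-nilradical contains $\hw(R)$, and the quotient maps to a perfect $\delta$-ring.

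\textbf{Step 1: $\hw(R)=\dnil{W(R)}$.} I would first identify $\hw(R)$ with the ideal of $\delta$-nilpotent elements of $W(R)$. For $x=\sum V^i[x_i]$, nilpotence of $x$ in $W(R)$ with $R_{\mathrm{red}}$ perfect of characteristic $p$ should force every $x_i$ to be nilpotent. Eventual vanishing of $\delta^n(x)$ should correspond to finitely many nonzero components, by comparing $\delta$ with the shift of Witt components modulo nilpotents. I expect this identification to hold only up to a subtlety, and that in fact only the inclusion $\dnil{W(R)}\subseteq \hw(R)$ is needed, together with $\hw(R)\subseteq\dnil{\chW(R)}$.

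\textbf{Step 2: factorization.} Given $f\colon A\to R$ with $A$ \deltanilperfect, consider $\tilde f$. Since $\delta$-maps preserve $\delta$-nilpotents, $\tilde f(\dnil A)\subseteq \dnil{W(R)}\subseteq \hw(R)$. Hence the composite $A\to W(R)/\hw(R)$ factors through the perfect $\delta$-ring $A/\dnil A$. A map from a perfect $\delta$-ring $B$ into $C=W(R)/\hw(R)$ lifts uniquely to $\varprojlim_F C$, via $b\mapsto (\bar g(\phi^{-n}b))_n$, and this lift is a $\delta$-map. This gives a compatible map $A\to \varprojlim_F C$, hence a $\delta$-map $A\to\chW(R)$ by the pullback property, lifting $f$.

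\textbf{Step 3: uniqueness.} Let $g_1,g_2\colon A\to\chW(R)$ be $\delta$-maps inducing the same map to $R$. Their images in $W(R)$ agree by Joyal. Their images in $\varprojlim_F C$ both factor through $A/\dnil A$ by Step 1. Since this quotient is perfect, they are determined by the composite to $C$, and these composites agree. By the pullback property, $g_1=g_2$.

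The main obstacle is Step 1: showing that $\delta$-nilpotent elements of $W(R)$ lie in $\hw(R)$, and that $\hw(R)$ is a $\delta$-ideal. I would first try to use the formula $\delta(V[y])=\sum\ldots$ in terms of Witt components, together with \Cref{deltaofnilpotent}. If that fails, I would instead reduce along $R\to R_{\mathrm{red}}$ and use that $W(R_{\mathrm{red}})$ is $p$-torsionfree and perfect. There the only $\delta$-nilpotent element is $0$, so any $\delta$-nilpotent $x$ has nilpotent components. Finite support would then be extracted from $\delta^n(x)=0$ by induction on the first nonzero component.
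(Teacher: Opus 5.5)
Your strategy is the paper's. Steps 2 and 3 are the proof of \Cref{chisrightadjoint} specialized to $W(R)$, and the remaining input is exactly \Cref{dnilinW}, $\dnil{W(R)}=\hw(R)$, which gives $\chW(R)=\nilperf{W(R)}$ (\Cref{chWasnilperfection}). The gap is that you leave Step 1 as a tentative plan, and your hedge about it is mistaken.

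You do need both inclusions. $\hw(R)$ sits in $\chW(R)=W(R)\times_{Q(R)}\qperf(R)$ as the pairs $(x,0)$, and powers, $\delta$-iterates and vanishing of such pairs are all computed in $W(R)$. So the claim $\hw(R)\subseteq\dnil{\chW(R)}$, which you use without proof to see that $\chW(R)$ is \deltanilperfect, is literally the inclusion $\hw(R)\subseteq\dnil{W(R)}$. There is no subtlety: the equality holds for every ring $R$, with no use of nilperfectness. Your fallback through $R_{\mathrm{red}}$ does give nilpotence of the Witt components. But ``induction on the first nonzero component'' is not an argument for finite support, since $\delta$ has no tractable formula in Witt components. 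Comparing with the shift ``modulo nilpotents'' is also vacuous once all components are nilpotent.

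The missing idea is to use Joyal coordinates instead of Witt coordinates. The $i$th Joyal coordinate of $x$ is $j_i(x)=\gh_0(\delta^i(x))$. The ring of functions on $W$ is polynomial on the $j_i$, with $j_i$ homogeneous of degree $p^i$. Hence $\hw(R)$ is exactly the set of $x$ whose Joyal coordinates are nilpotent and eventually zero (\Cref{hwviaJoyal}). If $x$ is $\delta$-nilpotent, then every $j_i(x)$ is nilpotent and $j_i(x)=0$ once $\delta^i(x)=0$; that is the whole proof of $\dnil{W(R)}\subseteq\hw(R)$.

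For the reverse inclusion, use homogeneity. From $\delta([t]y)=[t^p]\delta(y)$, the $k$th Witt component of $\delta^i(x)$ is a universal polynomial in $x_0,x_1,\dots$, homogeneous of degree $p^{i+k}$ when $x_m$ has degree $p^m$. So if $x\in\hw(R)$, all components of $\delta^i(x)$ vanish once $p^i$ is large, and each $\delta^i(x)\in\hw(R)$ is nilpotent. The same computation shows $\hw(R)$ is a $\delta$-ideal.

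One smaller slip is in Step 3. Step 1 only puts the $\qperf(R)$-component of $g_1(\dnil{A})$ and $g_2(\dnil{A})$ into $\ker(\qperf(R)\to Q(R))$, which is nonzero in general. What actually kills it is that $\qperf(R)$ has injective Frobenius, hence no nonzero $\delta$-nilpotent elements.
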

\subsection{$\chW$ via taut square-zero extensions}

The kernel of the map $\chW(R) \to W(R)$ is a square-zero ideal, and $\delta$ acts isomorphically on the kernel. 
Moreover, locally in the flat topology, $\chW(R) \to W(R)$ is surjective. 
Using this observation, we can give another construction of $\chW(R)$ as a $\delta$-ring, at least for some classes of $R$, based on the following definition. 

\begin{definition}[Taut square-zero extensions and derivations]
Let $B \twoheadrightarrow A$ be a surjection of $\delta$-rings whose kernel $I
\subset B$ squares to zero.
We say that the square-zero extension $B \twoheadrightarrow A$ is \emph{taut} if
$\delta$ induces an
isomorphism of abelian groups\footnote{$\delta$ always induces an additive map
on a square-zero
$\delta$-ideal.} $\delta:  I \xrightarrow{\sim} I$ and $I$ is derived $p$-complete. 
One can similarly define the notion of a \emph{taut derivation} of a $\delta$-ring. 
\end{definition}

\begin{remark}
\label{perfectringssplit}
If $A$ is a perfect $\delta$-ring and if $I \hookrightarrow B \twoheadrightarrow A$ is a
square-zero extension of $\delta$-rings with
$I$ derived $p$-complete, then the extension has a unique splitting $A \to B$;
this is a
consequence of the
universal property of the Witt vectors of perfect $\FF_p$-algebras (\Cref{perfectdeltarings}).
More generally, by \Cref{tautsqzerosplits}, any taut square-zero extension of a
\deltanilperfect\ $\delta$-ring by a derived $p$-complete ideal admits a unique
section.
\end{remark}

\begin{theorem}[Cf.~\Cref{chWasuniversaltaut} and \Cref{universaltautsquarezero:general}]
\label{chWunivtaut:intro}
Let $R$ be a $p$-completely nilperfect ring such that $\chW(R) \to W(R)$ is surjective. Then the map
$\chW(R) \to W(R)$ exhibits $\chW(R)$ as the universal taut square-zero
extension of $W(R)$ by a derived $p$-complete ideal. That is, for every taut
square-zero extension $A \twoheadrightarrow W(R)$, there is a unique map of $\delta$-rings $\chW(R) \to A$ over $W(R)$.
\end{theorem}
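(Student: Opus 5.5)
Given a taut square-zero extension $I \hookrightarrow A \twoheadrightarrow W(R)$, we produce the map $\chW(R) \to A$ by pulling back. Form the fiber product of $\delta$-rings
\[ A' \stackrel{\mathrm{def}}{=} A \times_{W(R)} \chW(R). \]
Then $A' \to \chW(R)$ is again a square-zero extension with kernel $I$, and $\delta$ still acts bijectively on $I$. So $A' \to \chW(R)$ is taut with derived $p$-complete kernel. By \Cref{chWcofreenilperfect:intro}, applied to the identity of $\chW(R)$, together with the fact that a colimit of \deltanilperfect\ rings is \deltanilperfect, we know $\chW(R)$ is itself \deltanilperfect. \Cref{tautsqzerosplits} (recalled in \Cref{perfectringssplit}) then gives a unique section $s: \chW(R) \to A'$. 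Composing $s$ with the projection $A' \to A$ yields a $\delta$-map $\chW(R) \to A$ over $W(R)$.

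For uniqueness, note that maps $\chW(R) \to A$ over $W(R)$ correspond bijectively to sections of $A' \to \chW(R)$, by the universal property of the fiber product. Uniqueness of the section in \Cref{tautsqzerosplits} therefore gives uniqueness of the map. Alternatively, two such maps differ by a $\delta$-compatible derivation $D:\chW(R) \to I$. One shows $D$ vanishes on the $\delta$-nilpotent ideal using $\delta$-compatibility and the bijectivity of $\delta$ on $I$, and on the perfect quotient using $p$-completeness together with the usual argument that perfect rings have no derivations.

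It remains to check that $\chW(R) \to W(R)$ is itself a taut square-zero extension, so that it is an object of the category in which it is claimed to be initial. This is where the surjectivity hypothesis enters. The kernel $K$ of $\chW(R) \to W(R)$ is, from the pullback square in \Cref{introshearedW}, identified with $\ker\bigl(\varprojlim_F W(R)/\hw(R) \to W(R)/\hw(R)\bigr)$. For $x,y \in K$ we have $xy=0$, since the pullback embeds $\chW(R)$ into $W(R) \times \varprojlim_F(\ldots)$ and $K$ sits in the second factor as sequences with zeroth term $0$. On such sequences $F$ acts as the shift, so $F = \phi$ is bijective on $K$. Since $K^2 = 0$, we get $p\delta(x) = \phi(x) - x^p = \phi(x)$, and we would deduce that $\delta$ is bijective from this together with the $p$-torsion structure of $K$; for instance, $\hw$ consists of elements killed by products with $p$-power torsion, in the spirit of \Cref{prodlemmadeltaring}. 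Derived $p$-completeness of $K$ follows because $K$ is a kernel of a map between derived $p$-complete groups.

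The \textbf{main obstacle} is justifying that $\delta$, not merely $\phi$, is bijective on $K$. I expect to handle it by computing $\delta$ directly on the inverse system: $\delta$ is induced levelwise on $W(R)/\hw(R)$, since $\hw$ is a $\delta$-ideal, and on square-zero elements it satisfies $\delta(x) = $ the $\phi$-shift up to a unit factor. The $p$-completely nilperfect case then follows by passing to the limit over $R/p^n$.
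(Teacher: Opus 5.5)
Your reduction is the right one and matches the paper's. Pulling back along $\chW(R)\to W(R)$ reduces the universal property to two facts: taut rigidity of $\chW(R)$, and tautness of $\chW(R)\to W(R)$. The gap is in your proof of taut rigidity. The assertion that $\chW(R)$ is $\delta$-nilperfect holds for $R\in\nperf$, where $\chW(R)=\nilperf{W(R)}$ by construction. But the statement concerns $p$-completely nilperfect rings. For these, $\chW(R)$ is an inverse limit $\varprojlim_n\chW(R/p^n)$ (bounded torsion), or is defined through the \dcart (resp.\ \dcartwo) universal property. Closure of $\delta$-nilperfect rings under colimits is therefore irrelevant. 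In fact the assertion fails whenever $p$ is not nilpotent in $R$. For $p>2$, the element $z=V(1)-p\in\chW(R)$ satisfies $F(z)=p-p=0$. Yet its image in $W(R)$ has zeroth ghost component $-p$, which is not nilpotent, so $z\notin\dnil{\chW(R)}$. Hence the Frobenius of $\chW(R)/\dnil{\chW(R)}$ is not injective. For $p=2$, take $z=\hV(1)-2$, for which $F^2(z)=F(\two)-2=0$. The elements of $\hw(R)$ are in general only topologically $\delta$-nilpotent. So your argument, and likewise your alternative derivation argument for uniqueness, proves the theorem only for nilperfect $R$. Nothing in it shows that the surjectivity hypothesis forces $p$ to be nilpotent.

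The paper obtains the rigidity by other means.

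With bounded torsion (\Cref{tautrigidityofchWasdcart}):
the $p$-power torsion of $\chW(R)$ lies in $\hw(R)$ and is genuinely $\delta$-nilpotent, so killing it is a taut equivalence. On the $p$-torsionfree quotient, the image of $\hw(R)$ is topologically $\delta$-nilpotent. Such elements lift uniquely through taut extensions with $p$-torsion kernel (\Cref{topdnilliftstautextension}). Its key input is \Cref{pdividespthree}, which ensures $\delta^n(a)\in p\,\dniltop{A}$ for $n\gg 0$. So $\chW(R)\to\qperf(R)$ is a taut equivalence onto a perfect $\delta$-ring.

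In general (\Cref{universaltautsquarezero:general}):
a taut extension of $\chW(R)$ acquires a unique \dcart (resp.\ \dcartwo) structure by \Cref{liftcartierstructuretautsquarezero}, using that $B\to B[\hV]$ is a taut equivalence. Its quotient by $\hV$ is still $R$, and the left-adjoint property \Cref{chWuniversalcartier} produces the unique section.

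Your tautness paragraph also needs repair. Vanishing of the zeroth coordinate does not give $K^2=0$; one needs that products of $p$-power torsion elements of $Q$ vanish (\Cref{ppowertorsioninQ}). Also, $\phi$ is not bijective on $K$: since $\phi=p\delta$ on $K$, we have $\phi(K)\subset pK$, and $pK\neq K$ unless $K=0$ by derived Nakayama. Bijectivity of $\delta$ comes instead from $\delta$ and $\hV$ being mutually inverse on $Q(R)[p^\infty]$ (\Cref{deltaonQ}), applied termwise on $T_F(Q)$. Without bounded torsion, one identifies the kernel with $\bigcap_i\hV^i\chW(R)$ and uses \Cref{infiniteVdivisibleideals}.
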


Why should taut square-zero extensions lead to a decompletion of the Witt vectors?
One observation is the following:
if $\{R_i\}_{i \in I}$ is a filtered diagram of $p$-nilpotent rings such that the Frobenius $F: W(R_i) \to W(R_i)$ is surjective for each $i$, then the natural map
$(\varinjlim_i W(R_i))_{\hat{p}} \to W(\varinjlim_i R_i)$ is a taut square-zero extension (this can be deduced from \Cref{infiniteVdivisibleideals}).
In other words, taut square-zero extensions naturally
arise from the failure of the Witt vector functor to commute with filtered colimits, and taking the universal such extension thus gives a decompletion of the Witt vectors.

\subsection{A left adjoint characterization}
We also formulate a characterization of $\chW$ as a \emph{left} adjoint functor;
this proves useful in giving explicit presentations of $\chW$ in a number of
cases.
For this, it will be convenient to incorporate some additional structure, which
is also carried by $W$. Recall the Witt vector Verschiebung $V: W(R) \to W(R)$.
The Verschiebung satisfies additional compatibilities with the
$\delta$-structure, which can be axiomatized
as follows:
\begin{definition}[Magidson \cite{Magidson}, Drinfeld]
A \dcart ring consists of a $\delta$-ring $A$ and an additive map $V: A
\to A$ with the following properties:
\begin{enumerate}
\item We have the identity $F(V(x)) = px$ for all $x \in A$.
\item We have the projection formula $V( F(x) y ) = xV(y)$ for all $x, y \in
A$.
\item For any $x \in A$, $\delta(V(x)) = x  - p^{p-2} V(x^p)$.
\end{enumerate}
\end{definition}

We prove that the Verschiebung on a \dcart ring is always injective. For $p > 2$, the category of \dcart rings is a full subcategory of the category of $\delta$-rings equipped with an ideal (intended to be the image of Verschiebung), cf.~\Cref{automaticV}; to first approximation, $\delta$ serves as a partial inverse of Verschiebung. When one works with derived $p$-complete \dcart rings for $p > 2$, then $V: A \to A$ has a natural retraction (\Cref{naturalretractionofV}).

The notion of a \dcart ring neatly characterizes the Witt vectors,
via the following result.

\begin{theorem}[Magidson \cite{Magidson}]
\label{VcompleteW}
If $A$ is any \dcart ring and $R$ is any ring, then
\begin{equation}  \Hom_{\ddcart}(A, W(R)) \xrightarrow{\sim} \Hom(A/V, R).
\end{equation}
As a consequence, the functor $R \mapsto W(R)$ establishes an equivalence
between the category of
rings and the category of \dcart rings $A$ which are $V$-complete (i.e.,
$A \xrightarrow{\sim} \varprojlim_n A/V^n A$).
\end{theorem}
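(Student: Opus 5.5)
The plan is to reduce to Joyal's theorem (\Cref{joyal:witt}) and then check that the $V$-compatibility comes for free. I will carry out three steps: construct the map, show it is injective, and show it is surjective. The equivalence statement is then deduced at the end.

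\emph{Setting up the map.} Given a map of \dcart rings $f\colon A \to W(R)$, compose it with the projection $W(R)\to R$. Since $V(W(R))$ is exactly the kernel of $W(R)\to R$ and $f\circ V = V\circ f$, the composite kills $V(A)$. It therefore induces a map $A/V \to R$, and this defines the map in the displayed bijection.

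\emph{Injectivity.} By \Cref{joyal:witt}, a $\delta$-map $A\to W(R)$ is determined by its composite to $R$. Hence two \dcart maps inducing the same map $A/V\to R$ agree.

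\emph{Surjectivity.} Given $g\colon A/V\to R$, let $f\colon A\to W(R)$ be the $\delta$-map corresponding by Joyal to $A\to A/V\to R$. It remains to show $f(Vx)=Vf(x)$ for all $x$. This is the main obstacle, and I plan to use Joyal's theorem once more. Set $D=\ker(W(R)\to R)=V(W(R))$ and consider the two additive maps $u=f\circ V$ and $v=V\circ f$ from $A$ to $W(R)$. Both land in $D$: for $u$ because $g$ kills $V(A)$, and for $v$ trivially. I will show that $u$ and $v$ satisfy the same recursion. Applying axiom (3) in both $A$ and $W(R)$ gives
\[
\delta(u(x)) = f(x) - p^{p-2} u(x^p), \qquad \delta(v(x)) = f(x) - p^{p-2} v(x^p).
\]
So $e=u-v$ satisfies $\delta(u(x))-\delta(v(x)) = -p^{p-2}e(x^p)$. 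Since $\delta$ is not additive, I would rather use the Witt components directly: $W(R)\to R$ is $w_0$, and the $n$-th ghost-type component is recovered via $\delta^n$ modulo lower terms.

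Concretely, I plan to prove by induction on $n$ that $e(x)$ has vanishing Witt components in degrees $\le n$ for all $x$. The Witt component of $V(y)$ in degree $k+1$ is the $k$-th component of $y$. Comparing $u(x)$ and $v(x)$ via their images under $\delta^k$ reduced to $R$, one obtains the components of $u(x)$ recursively from those of $f(x)$ and of $u(x^p)$, and the components of $v(x)$ by exactly the same recursion. Both $u(x)$ and $v(x)$ have zeroth component $0$. The identities above then give equality of the higher components inductively, because the $\delta$-expression of the $(k{+}1)$-st component of an element $Vy$ in terms of $y$ is universal.

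A cleaner alternative, which I would try first, is to reduce to the universal case. It suffices to treat $A$ free \dcart and $R$ $p$-torsionfree (e.g.\ a polynomial ring over $\mathbb Z$), where ghost maps are injective. There the identity $f\circ V=V\circ f$ can be checked on ghost components using $FV=p$ and $F\circ f=f\circ F$. This reduction needs free \dcart rings to be $p$-torsionfree, which I would take from the paper's structure theory or replace by the injectivity of $V$.

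\emph{Deducing the equivalence.} Full faithfulness on $V$-complete objects follows from the bijection together with $W(R)/V=R$: maps $W(R)\to W(S)$ correspond to maps $R\to S$. For essential surjectivity, take $A$ $V$-complete and apply the bijection with $R=A/V$ to the identity of $A/V$. This produces $A\to W(A/V)$, and I would show it is an isomorphism modulo each $V^n$ by induction. For the induction I would use that $V^nA/V^{n+1}A\cong A/V$, which comes from $V$ being injective (stated in the paper). Passing to the limit then gives $A\cong W(A/V)$ by $V$-completeness.
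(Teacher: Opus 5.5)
Your proof is correct, and its outline agrees with the paper's (given as \Cref{Vcompletecartierwitt}). Joyal's theorem reduces everything to checking that the $\delta$-map $f\colon A\to W(R)$ attached to $A\to A/V\to R$ commutes with $V$. Essential surjectivity is then the same $V$-filtration argument via \Cref{Visinjective}. The difference lies in that key check.

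The paper invokes \Cref{automaticV}. Injectivity of $V$ on $W(R)$ gives $f(V(a))=V(g(a))$. Applying $F$ (so $FV=p$ enters) gives $p(g-f)=0$. Applying $\delta$ gives $h(a)=p^{p-2}V\bigl(h(a)^p-h(a^p)\bigr)$ for $h=g-f$. Hence $h=0$ immediately if $p>2$, and by $V$-adic separatedness of $W(R)$ if $p=2$.

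You instead compare Joyal coordinates $j_k=w_0\circ\delta^k$ of $u(x)=f(V(x))$ and $v(x)=V(f(x))$. Both satisfy $\delta(u(x))=f(x)-p^{p-2}u(x^p)$. So $j_k(u(x))=P_{k-1}\bigl(j_{<k}(f(x)),j_{<k}(u(x^p))\bigr)$, with $P_{k-1}$ the universal polynomial computing $\delta^{k-1}(a-p^{p-2}b)$, and likewise for $v$. Since $j_0$ vanishes on both, induction on $k$ (for all $x$ at once) gives $u=v$.

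This uses only axiom (3) and $w_0\circ f\circ V=0$ --- not $FV=p$, the projection formula, or injectivity of $V$. It is uniform in $p$, and is essentially the coordinate form of the paper's $p=2$ separatedness iteration. What the paper's route buys is generality in the target. \Cref{automaticV} applies to any \dcart target when $p>2$ and is reused later (e.g.\ for full faithfulness into pairs). Your argument needs Joyal coordinates, i.e.\ a target of the form $W(R)$; \Cref{VidealnotVmap} shows that at $p=2$ some restriction on the target is unavoidable anyway.

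Two small points about the write-up. First, the induction should be phrased in Joyal coordinates (your ``images under $\delta^k$ reduced to $R$''), not Witt components. What drives it is the universality of $\delta^{k-1}(a-p^{p-2}b)$ together with $w_0$ being a ring map, not a formula for the components of $V(y)$. Second, in your ghost-component alternative no torsion-freeness of the source is needed. The universal case is $A=\mathbb{Z}\{x\}[V]$, $R=A/V=\mathbb{Z}\{x\}$, and only $W(R)$ must be $p$-torsionfree, which is automatic. There $\gh_n(f(V(x)))=p\,\gh_{n-1}(f(x))=\gh_n(V(f(x)))$ follows from $F^nV=pF^{n-1}$ and $Ff=fF$.
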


In other words,
$W(-)$ is the \emph{right adjoint}
of the functor $A \mapsto A/V$ from \dcart rings to rings, and $W(R)$ is
the \emph{terminal} \dcart ring $A$ with an isomorphism $A/V
\xrightarrow{\sim} R$.
In seeking a decompletion of $W(R)$, it is thus natural to consider
\dcart rings $A$ with $A/V \xrightarrow{\sim} R$ but which are not
$V$-complete.

Indeed, for $p > 2$, $\chW (R)$ acquires the structure of a \dcart ring (in
fact, all
the terms of the square defining $\chW$ do)
and the map $\chW(R) \to W(R)$ is one of \dcart rings; this map induces an
isomorphism on $V$-completion.
We prove that $\chW(R)$ has the dual universal property for derived $p$-complete \dcart rings, 
and is therefore the \emph{initial} derived $p$-complete \dcart ring $A$ with an isomorphism
$A/V \xrightarrow{\sim} R$.

\begin{theorem}[Cf.~\Cref{chWuniversalcartier}]
\label{chWleftadjoint:intro}
Assume $p>2$. Let $R$ be a $p$-complete ring such that $R$ has bounded $p$-power torsion and $(R/p)_{\mathrm{red}}$ is perfect.  Then for any derived $p$-complete \dcart ring
$A$, the natural map
$R \to \chW(R)/V$ induces an
isomorphism
\[
\Hom_{\ddcart}(\chW(R),A) \xrightarrow{\sim} \Hom(R,A/V).
\]
\end{theorem}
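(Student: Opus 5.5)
Our plan is to reduce to simpler cases in two stages. We first treat $R$ nilperfect and $p$-nilpotent, and then pass to $p$-complete $R$ with bounded $p$-power torsion by taking limits over $R/p^n$. Within the first stage, we reduce to the case where $\chW(R)\to W(R)$ is surjective, as in \Cref{chWunivtaut:intro}. Both sides of the claimed bijection should behave well under fpqc covers by semiperfect-type rings. Concretely, choose a perfect $\FF_p$-algebra $P$ with a map $W(P)\to \chW(R)$ that is surjective modulo $\hw$, for instance $P$ perfect on $R_{\mathrm{red}}$. Then \Cref{perfectdeltarings} and the explicit presentation in \Cref{chWofsemiperfect:intro} give a concrete presentation of $\chW(R)$ as a \dcart ring.

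\textbf{Uniqueness.} Let $f,g:\chW(R)\to A$ be two maps of \dcart rings that agree after composing to $R\to A/V$. Their difference is an additive map $\chW(R)\to A$. It lands in $V(A)$ on the image of $R$, and it is compatible with $F$, $V$ and $\delta$. On the perfect part $W(R_{\mathrm{red}})$ (or $W(P)$), the two maps agree by \Cref{perfectdeltarings}, since $A^\flat$ only sees $A/p$. Indeed, a map of $\delta$-rings $W(P)\to A$ is determined by the induced map $P\to A^\flat$, and this is determined modulo $V$ because $\phi$ is bijective on $W(P)$. On the generators $V^i[x]$ with $x$ nilpotent, the values are determined by the values on $[x]$ together with $V$-linearity. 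Finally, $[x]$ is determined modulo $V(A)$ by $R\to A/V$. The remaining ambiguity is an element $V(a)$ satisfying the multiplicativity constraints of a Teichmüller lift. We will use the relation $\delta(V(x)) = x - p^{p-2}V(x^p)$, with $p>2$, to show that such an ambiguity is forced to vanish by derived $p$-completeness of $A$. This is the standard trick in which $\delta$ serves as a partial inverse of $V$.

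\textbf{Existence.} Given $R\to A/V$, we build $\chW(R)\to A$ in three steps.
\begin{enumerate}
\item On the perfect part, we use \Cref{perfectdeltarings}: the composite $R\to A/V\to A/(p,V)$ gives a map $R_{\mathrm{red}}^{\mathrm{perf}}$-type data into $A^\flat$. This uses that $A/V$ modulo nilpotents receives a map from a perfect ring, and that $V(A)$ is $\delta$-nilpotent-ish after modding by $p$.
\item On the nilpotent part, we send $V^i[x]\mapsto V^i(\tilde x)$, where $\tilde x$ is a suitable lift of $x$. The lift is made canonical by the $\delta$-nilperfectness of $A/V$-lifts, through the cofree property \Cref{chWcofreenilperfect:intro} applied to a suitable $\delta$-nilperfect quotient.
\item We check that these assignments glue across the pullback square defining $\chW(R)$.
\end{enumerate}
A cleaner route is the following. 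Let $A' \subset A$ be the largest \deltanilperfect\ sub-$\delta$-ring compatible with $V$, i.e. the preimage of the perfect part. We would show that the given map factors through $A'$, and then apply \Cref{chWcofreenilperfect:intro} to get $A'\to \chW(A'/V)$. Pairing this with the map $R\to A'/V$ and using functoriality gives the comparison.

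\textbf{Main obstacle.} The key difficulty is the existence step, and in particular compatibility with the pullback square: we must show that the map is well defined on $\varprojlim_F(W(R)/\hw(R))$. We would handle it via the $\delta$-Cartier presentation, showing that $\chW(R)$ is generated as a derived $p$-complete \dcart ring by $W(P)$ and the elements $[x]$, $x\in\mathrm{Nil}$, with relations given by the kernel of $P\to R_{\mathrm{red}}$ and the Witt relations. The universal property then reduces to checking these relations in $A$, which uses $V$-injectivity and the retraction of $V$ from \Cref{naturalretractionofV}.
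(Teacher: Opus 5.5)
There is a genuine gap, on two levels. First, the reductions are not valid, because the statement is a universal property for maps \emph{out of} $\chW(R)$. Since $\chW(R)=\varprojlim_n\chW(R/p^n)$ is a limit, maps out of it are not computed from maps out of the $\chW(R/p^n)$. Moreover, a map $R\to A/V$ need not factor through any $R/p^n$ (take $R=\mathbb{Z}_p$, $A=W(\mathbb{Z}_p)$ and the identity). So the $p$-nilpotent case does not imply the $p$-complete one. Likewise, the sheaf property of $\chW$ exhibits $\chW(R)$ as an \emph{equalizer} of its values on a \v{C}ech nerve. That gives no control over $\Hom_{\ddcart}(\chW(R),A)$, and $R\mapsto\Hom(R,A/V)$ does not satisfy descent either, so ``both sides behave well under fpqc covers'' is unfounded. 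The ``cleaner route'' applies \Cref{chWcofreenilperfect:intro} in the wrong direction (it describes maps \emph{into} $\chW(-)$), and it presupposes the map being constructed. Finally, a generators-and-relations presentation of $\chW(R)$ as a derived $p$-complete \dcart ring is essentially a restatement of the theorem; the paper deduces such presentations, e.g.\ \Cref{chWasperfectdeltaenvelope}, from it. Only for semiperfect $\FF_p$-algebras is there an independent presentation (\Cref{chWofsemiperfect}). There, your heuristic that a Teichm\"uller-type element of $V(A)$ must vanish, in the precise form of \Cref{norankoneinVideal}, essentially does the job. But there is no way to get from that case to the general one.

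Second, and more fundamentally, nothing in your proposal deals with the fact that $A$ need not be $V$-separated or $V$-complete, which is where the content of the theorem lies. By \Cref{Vcompletecartierwitt}, $\widehat A_V\simeq W(A/V)$, so \dcart maps $\chW(R)\to\widehat A_V$ correspond to maps $R\to A/V$; the claim is that they lift uniquely to $A$. The data you prescribe in steps (1)--(2) (values on a perfect part and on the $V^i[x]$ with $x$ nilpotent) make equal sense for $W(R)$. Yet the statement is false for $W(R)$. For $R=\FF_p[x^{1/p^\infty}]/(x)$, both $W(R)$ and $\chW(R)$ would corepresent $A\mapsto\Hom(R,A/V)$, forcing $\chW(R)\to W(R)$ to be an isomorphism, whereas its kernel is nonzero (\Cref{semiperfectexample}). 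So everything rests on step (3), compatibility with the $\qperf(R)$ factor, which you flag but do not resolve.

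The missing input is the paper's, in three parts.
\begin{enumerate}
\item $\bigcap_iV^i(A)$ is a square-zero ideal on which $\delta$ and $V$ are mutually inverse, so $A\to A/\bigcap_iV^i(A)$ is a taut square-zero extension (\Cref{infiniteVdivisibleideals}, using derived $p$-completeness).
\item $A/\bigcap_iV^i(A)$ is the kernel of a taut $\delta$-derivation on $\widehat A_V$ (\Cref{Vcompletiontautfactorization}).
\item $\chW(R)$ is taut rigid (\Cref{tautrigidityofchWasdcart}). This is proved by showing that $\chW(R)\to\qperf(R)$ is a taut equivalence, via $\delta$-nilpotence of the torsion in $\hw(R)$ and topological $\delta$-nilpotence of the rest; this is where bounded $p$-power torsion is used.
\end{enumerate}
Taut rigidity makes the derivation vanish on $\chW(R)$, which gives existence, and makes lifts through $\bigcap_iV^i(A)$ unique, which gives uniqueness. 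Then \Cref{automaticV} upgrades $\delta$-maps to \dcart maps; see \Cref{tautrigidityimpliesuniversalproperty}. In particular, your uniqueness step linearizes incorrectly: $f-g$ is not compatible with $\delta$. Rather, once $f\equiv g$ modulo $\bigcap_iV^i(A)$ (by comparing in $\widehat A_V$), $f-g$ is a $\delta$-derivation into a taut Frobenius module, and its vanishing is exactly taut rigidity.
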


\Cref{chWuniversalcartier} is actually a consequence of the theory of taut square-zero extensions of $\delta$-rings. 
Our key observation is that for any derived $p$-complete \dcart ring $A$, the map $A \to \widehat{A}$ to the $V$-completion  is the composite of a taut square-zero extension of $\delta$-rings and the kernel of a taut derivation; the starting point for this fact is that the ideal $V^i(A) \subset A$  squares into $p V^i(A)$ for each $i \geq 1$, from which one readily proves that $\bigcap_{i \geq 0} 
V^i(A) \subset A$ is a taut square-zero ideal on which $V$ gives the inverse to $\delta$.

Now $\chW(R)$ has a ``taut rigidity'' property: all taut square-zero extensions of $\chW(R)$ split uniquely. As a consequence, we deduce that maps of \dcart rings $\chW(R) \to A$  identify with maps of \dcart rings $\chW(R) \to \widehat{A}$, which is equivalent to maps of rings $R \to \widehat{A}/V$ by \Cref{VcompleteW}.

At $p = 2$, there is an analogous result, but with a more complicated algebraic structure called \dcartwo rings, cf.~\Cref{sec:dcartwo}.
Rather than a Verschiebung, a \dcartwo ring $A$ is equipped with a modified Verschiebung $\hV: A \to A$ such that $F\hV$ is no longer multiplication by 2 but rather by an element $\two$ that is required to satisfy further axioms. We develop analogs of the main results for \dcart rings in the setting of \dcartwo rings; for example, we show that \dcartwo rings are a full subcategory of pairs of a $\delta$-ring and an ideal (which is false for \dcart rings at $p = 2$), cf.~\Cref{automatichV}.

We  extend the definition of $\chW$ to all derived $p$-complete rings $R$ with $(R/p)_{\mathrm{red}}$ perfect,
by taking the statement of 
\Cref{chWleftadjoint:intro} as the \emph{definition} of $\chW(R)$ for such $R$.
 
In \Cref{subsec:cyclotomicexamples}, we 
use the algebra of \dcart (resp. \dcartwo) rings to give explicit presentations of $\chW(R)$ for various classes of $R$, e.g., obtaining a mixed characteristic analog of \Cref{chWofsemiperfect:intro}.
 
\begin{example}
Let $R$ be a perfectoid ring. We show in \Cref{thm:chWofperfectoid} that $\chW(R)$ is the ring-theoretic cosaturation of $W(R)$, considered as an algebra over $\ainf(R)$ with respect to the almost structure induced by the map $\ainf(R) \to W(R) \to W( (R/p)_{\mathrm{red}})$. In particular, $\chW(R)$ is always a quotient of $\ainf(R)$ when $R$ is perfectoid.  More generally, the Frobenius is surjective on $\chW(R)$ whenever $R$ is semiperfectoid, in contrast to the Witt vectors, cf.~\cite{dk}. 
\end{example}

\begin{example}
    Consider the ring 
    $R = \left(\zpcycl[x^{1/p^\infty}]\right)^{\wedge}_p/(x)$. 
    In this case, one has the following explicit description of $\chW(R)$ as a sort of ``$q$-analog'' of \Cref{chWofsemiperfect:intro}: it is the $p$-adic completion of the quotient of the $\delta$-ring
\[
   \mathbb{Z}_p[q^{1/p^\infty}, x^{1/p^\infty}]
\]
  by 
  \begin{itemize}
    \item  The elements $(q-1)(q^{1/p^n} - 1)$ for all $n \geq 1$.
    \item
    The elements
    $x, [p]_{q^{1/p}} x^{1/p}, [p]_{q^{1/p^2}} [p]_{q^{1/p}} x^{1/p^2}, \dots$ where $[p]_q$ is the $q$-analog of $p$.
  \end{itemize} 

\end{example}

\begin{remark}[The cohomology of $\chW$]
\label{rmk:cohomologysW}
In later work, we shall explore further cohomological aspects of the $\chW$ construction.
For example, we will show that $\chW$ has no higher cohomology in the flat topology on $\nperftors$. 
For $R$ a smooth $\mathbb{F}_p$-algebra, we will show
that the flat cohomology complex $R\Gamma(\Spec(R),\chW)$ is computed by the two-term complex
$$ W(R) \xrightarrow{d} \left(W \Omega_R^1[1/F]\right)^{\wedge}_p,$$
obtained by truncating the de Rham--Witt complex of $R$ and $p$-completely 
inverting the operator $F$ in degree 1. 
In fact, for any derived $p$-complete ring $R$, the complex $R\Gamma(\mathrm{Spf}(R),\chW)$ is the fiber of the map from $W(R)$ to its taut cotangent complex introduced in \Cref{def:tautcotangentcomplex}. 
\end{remark}

\begin{remark}[Sheared prismatization]
\label{rmk:sheared}
This project began as a prequel to the joint work of the authors with
Artem Kanaev and Mingjia Zhang on \emph{sheared prismatization}
\cite{BKMVZ}, whose goal is to construct an enlargement of the
prismatic formalism devoid of ``nilpotence of $p$-curvature''
constraints. Let us explain this connection. Fix a nilperfect ring
$R$. Recall \cite{bhattlurieAPC} that a point of $\Spf(\mathbb{Z}_p)^\Prism(R)$,
also known as a Cartier--Witt divisor over $R$, is given by a
generalized Cartier divisor $\left(\alpha\colon I \to W(R)\right)$ together with an
isomorphism with $\left(p\colon W(R) \to W(R)\right)$ after base change along
$W(R) \to W(R_{\mathrm{red}}) = W(R)/W(\mathrm{Nil}(R))$. In our initial approach to the results
of \cite{BKMVZ}, we defined a \emph{sheared Cartier--Witt divisor} over
$R$ to be a generalized Cartier divisor $\left(\alpha\colon I \to W(R)\right)$
together with an isomorphism with $\left(p\colon W(R) \to W(R)\right)$ after base
change along $W(R) \to Q(R) \stackrel{\mathrm{def}}{=} W(R)/\hw(R)$.
(The adjective ``sheared''
reflects the stronger constraints on the congruence $\alpha \equiv p$
imposed in the definition.) Upon learning this definition, Drinfeld
suggested instead defining $\chW(-)$ as in \Cref{introshearedW}, and
then defining sheared Cartier--Witt divisors by simply replacing
$W(-)$ with $\chW(-)$ in the definition of Cartier--Witt divisors.
This approach is conceptually cleaner, and is indeed the approach we
shall adopt in the writeup \cite{BKMVZ}; we thank Drinfeld for his
suggestion.
\end{remark}

\subsection*{Notation and conventions}
Throughout, $p$ is a fixed prime number. 

Given a $\delta$-ring $A$, we write 
$w_\delta$ for the unique map of $\delta$-rings, given by \Cref{joyal:witt},
$$ w_\delta: A \to W(A) $$
whose composition with the projection $W(A) \to A$ is the identity.

\subsection*{Acknowledgments}
The definition of $\chW$ and most of the main results of section 3, as well as the key definitions in section 5, were communicated to us by Vladimir Drinfeld. We thank him warmly for his generosity and encouragement, as well as numerous questions and comments on the text.  

As indicated in
Remark~\ref{rmk:sheared}, this project is closely related to
\cite{BKMVZ}, and in fact 
Mingjia Zhang was involved in earlier stages of this project; although she declined to be a coauthor, we thank her heartily for her contributions and for many helpful discussions.

We are also grateful to 
Shachar Carmeli, 
Arthur-César Le Bras, Eike Lau, Jacob Lurie, Kirill Magidson, Joshua Mundinger, 
Juan Esteban Rodr\'iguez Camargo, Nick Rozenblyum, Alexander Petrov, and Peter Scholze  for helpful discussions on the material of this paper.
Pullback squares similar to the one in \Cref{introshearedW} also 
arise in the theory of analytic Witt vectors and analytic prismatization/syntomification
developed in ongoing works of 
Ansch\"utz,  Bosco, Hauck, Le Bras, Rodr\'iguez Camargo, and Scholze. 

Bhatt was partially supported by grants from the Packard Foundation and the Simons Foundation (MPS-SIM-00622511, MPS-PERF-00001529-02). 

Mathew was supported by the Simons Collaboration on Perfection in Algebra, Geometry, and Topology and the NSF
(grants  2507081 and 2152311). Mathew also thanks the Institute for Advanced Study, the Mathematisches Forschungsinstitut Oberwolfach, and the University of Toronto  for their hospitality. 

The work of Vologodsky was supported by an NSERC Discovery Grant
(RGPIN-2026-07347). Part of this work was carried out during Vologodsky's
visits to the University of Chicago and Princeton University, which were supported by the Simons Foundation (the Simons Collaboration on Perfection in Algebra, Geometry, and Topology and MPS-SIM-00622511). 
Vologodsky thanks both
institutions for their hospitality.

\subsection*{Tool and computational resource disclosure}
Some of the results in Sections 5 and 6 were discovered with the help of computer assistance, specifically ChatGPT/Codex/Claude. The main results (in particular, \Cref{chWuniversalcartier})
were originally discovered without computer assistance, but computer assistance 
helped us discover some of the general structural results about \dcartwo rings. Moreover, discussions with ChatGPT helped us simplify, correct, and streamline a number of arguments throughout the paper, and in particular in Sections 5 and 6. 

We used these tools (and GitHub Copilot) for proofreading and editing the manuscript, as well as for directly drafting pieces of proofs in a few cases (with human guidance) in Sections 5 and 6. We have edited, reviewed, and verified all the content generated by these tools to ensure its accuracy.

\section{Preliminaries on $\hw$}

The purpose of this section is to collect various general facts about the Witt
vectors and $\delta$-rings.
In particular,
for any ring $R$, we review the
definition and properties of the ideal $\hw(R) \subset W(R)$.
We also discuss the interaction
between torsion and nilpotence in $\delta$-rings, and in particular define for a
$\delta$-ring $A$ the ideal $\dnil{A}$ of $\delta$-nilpotent elements
(\Cref{deltanilpdef}).

\subsection{Completion of an affine scheme with respect to a grading}
Let $\MM_m = ( \mathbb{A}^1, \times)$ be the multiplicative monoid scheme, so
that an $\MM_m$-action on a module is
equivalent to a nonnegative grading.
We
construct a subfunctor of an affine scheme with $\MM_m$-action.

Fix a base ring $A$.
\begin{definition}[The subfunctor $\hat{M} \subset M$]
\label{Mhatdef}

    Let $M$ be an affine scheme over $A$ equipped with an action of the
    multiplicative monoid $\MM_m$.
    In other words, $M$ is the spectrum of a nonnegatively graded $A$-algebra $T
    = \bigoplus_{i \geq 0} T_i$.

	 We regard $M$ as a functor from $A$-algebras to sets.
Define a subfunctor $\hat{M} \subset M$  as follows: given an $A$-algebra $B$,
     $\hat{M}(B) \subset M(B)$ consists of those elements such that the
	 corresponding $A$-algebra map $T \to B$ annihilates $\bigoplus_{i \geq N}
	 T_i$ for some $N \in \mathbb{Z}_{\geq 0}$. Then  $\hat{M}$
	 is the sequential colimit of the affine schemes	 $\spec( T/ \bigoplus_{i
	 \geq N} T_i)$ under the natural closed inclusions as $N$ increases; in
	 particular, $\hat{M}$ is an ind-scheme.
\end{definition}

\begin{proposition} \label{Mhatcommuteswithfilteredcolimits}
    Suppose that $M = \spec T$ is as in \Cref{Mhatdef} and $T_0$ is a finitely
    presented $A$-algebra and each $T_i$ is a finitely presented $T_0$-module.
    Then the functor  $\hat{M}$ commutes with filtered colimits.
\end{proposition}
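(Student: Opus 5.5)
The plan is to reduce to the case where each truncated functor is corepresented by a finitely presented $A$-algebra, and then to commute a filtered colimit past the colimit over $N$. For each $N$ set $T_{<N} = T/\bigoplus_{i\geq N} T_i$; as an $A$-module this is $\bigoplus_{i<N} T_i$. By definition, $\hat M(B) = \varinjlim_N \Hom_A(T_{<N}, B)$. This colimit is along injective maps, since $T\to T_{<N}\to T_{<N'}$ are surjections, so $\hat M(B)$ is simply the union inside $M(B)$.

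\textbf{Step 1: each $T_{<N}$ is a finitely presented $A$-algebra.} First, $T_0$ is finitely presented over $A$ by hypothesis. Second, $T_{<N} = T_0 \oplus T_1 \oplus \cdots \oplus T_{N-1}$ is a finitely presented $T_0$-module, being a finite direct sum of finitely presented modules. Third, the multiplication on $T_{<N}$ is $T_0$-bilinear and is determined by the products $T_i \otimes T_j \to T_{i+j}$ for $i+j<N$ (products landing in degree $\geq N$ are zero).

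To get an explicit algebra presentation, I would choose finitely many generators $t_{i,k}$ of each $T_i$ for $1\le i<N$, and finitely many $T_0$-linear relations among them. Then
\[
T_{<N} \cong T_0[x_{i,k}]\big/ J ,
\]
where $J$ is generated by:
\begin{enumerate}
\item the module relations among the $x_{i,k}$;
\item all monomials of total degree $\geq N$ (these are finitely many generators of the ideal they generate, since there are finitely many variables);
\item for each pair $(i,k),(j,l)$ with $i+j<N$, the relation expressing $x_{i,k}x_{j,l}$ as a $T_0$-linear combination of the $x_{i+j,m}$.
\end{enumerate}

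I then need to check that this quotient is indeed $T_{<N}$. The target quotient is spanned as a $T_0$-module by the $x_{i,k}$ with $i<N$, because relation (3) reduces every monomial to a linear one. The induced module structure is exactly the finite presentation of each $T_i$. So the map onto $T_{<N}$ is an isomorphism. All generator sets are finite, so $T_{<N}$ is finitely presented over $T_0$, hence over $A$. I expect this bookkeeping to be the main (mild) obstacle. An alternative is to note that a finite $T_0$-algebra which is finitely presented as a $T_0$-module is finitely presented as a $T_0$-algebra; this is a standard fact and avoids the explicit relations.

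\textbf{Step 2: commuting the colimits.} Let $B = \varinjlim_\alpha B_\alpha$ be a filtered colimit of $A$-algebras. Since $T_{<N}$ is finitely presented, $\varinjlim_\alpha \Hom_A(T_{<N},B_\alpha) \xrightarrow{\sim} \Hom_A(T_{<N},B)$. Taking $\varinjlim_N$ and interchanging the two colimits (colimits commute with colimits) gives
\[
\varinjlim_\alpha \hat M(B_\alpha) = \varinjlim_\alpha\varinjlim_N \Hom(T_{<N},B_\alpha) \cong \varinjlim_N \varinjlim_\alpha \Hom(T_{<N},B_\alpha) \cong \varinjlim_N \Hom(T_{<N},B) = \hat M(B).
\]
One should also check that this identification is the natural comparison map. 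That holds because every map in the chain is induced by the structure maps $B_\alpha\to B$.
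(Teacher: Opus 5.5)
Your proof is correct and follows the same route as the paper. The paper likewise writes $\hat M$ as the filtered colimit over $N$ of the functors $\Spec\bigl(T/\bigoplus_{i\geq N}T_i\bigr)$ and uses that each of these quotients is a finitely presented $A$-algebra. Your Step~1 supplies the justification for that finite presentation, which the paper leaves implicit; either your explicit presentation or the standard fact that a finite algebra which is finitely presented as a module is finitely presented as an algebra does the job.
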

\begin{proof}
    This follows because $\hat{M}$ is the filtered colimit of the functors
    $\spec( T / \bigoplus_{i \geq N} T_i)$ and each of these commutes with
    filtered colimits, since each quotient $T / \bigoplus_{i \geq N} T_i$ is a finitely presented $A$-algebra.
\end{proof}
\begin{remark} \label{usefulremarkabouthats}
\begin{enumerate}
    \item
The functor that sends an affine $A$-scheme $M$ with $\MM_m$-action to the
functor $\hat{M}$ commutes with finite limits.
    \item Given an affine $A$-scheme $M$ with $\MM_m$-action, we can rescale the
    $\MM_m$-action by precomposition with the $n$-th power map $t \mapsto t^n$
    (alternatively, rescale the grading by $n$).
    This rescaling does not change the subfunctor $\hat{M} \subset M$.
    \item
    Given an affine $A$-scheme $M$ with $\MM_m$-action and any other affine
    $A$-scheme $M'$, we can equip $M \times M'$ with the action only on the
    first factor.
    Then
    \[ \widehat{ M \times M'} = \hat{M} \times M' . \]
\end{enumerate}
\end{remark}

\begin{corollary}
\label{Mhatissubgroup}
    Suppose $M$ is as in \Cref{Mhatdef} and is equipped with the structure of a
    group scheme compatible with $\MM_m$-action: that is,
the corresponding graded ring $T$ is given the structure of a graded Hopf
algebra. In this case, $\hat{M} \subset M$ is a subgroup functor.\footnote{In particular, $\hat{M}$ is represented by a group ind-scheme.}
\end{corollary}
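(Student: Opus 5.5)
The plan is to check the three group-theoretic conditions (identity, closure under multiplication, closure under inversion) at the level of points, using the graded Hopf algebra structure on $T$. A more economical route goes through \Cref{usefulremarkabouthats}(1). Because $\hat{(-)}$ commutes with finite limits, it preserves group objects. The one subtlety is that the multiplication $m\colon M\times M\to M$ is $\MM_m$-equivariant only for the \emph{diagonal} action on $M \times M$, and this is exactly the structure to which part (1) applies.

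To make this concrete, I first describe $\widehat{M\times M}$ for the diagonal action. The graded ring is $T\otimes_A T$ with $(T\otimes T)_n=\bigoplus_{i+j=n}T_i\otimes T_j$. A point $(x,y)\in M(B)\times M(B)$ corresponds to the map $T\otimes T\to B$, $a\otimes b\mapsto x(a)y(b)$. I claim this map kills $(T\otimes T)_{\geq N}$ for some $N$ if and only if both $x$ and $y$ kill high degrees.

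For the forward direction, take $a\in T_i$ and tensor with $1\in T_0$. For the reverse direction, if $x$ kills $T_{\ge N_1}$ and $y$ kills $T_{\ge N_2}$, then every $a\otimes b$ with $i+j\ge N_1+N_2$ has $i\ge N_1$ or $j\ge N_2$, so it is killed. This gives $\widehat{M\times M}=\hat M\times\hat M$, which is the finite-limit compatibility asserted in \Cref{usefulremarkabouthats}(1).

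Next I use naturality of $\hat{(-)}$ under equivariant maps. If $f\colon M\to M'$ is $\MM_m$-equivariant, then $f^*\colon T'\to T$ is graded. Consequently, if $x\in\hat M(B)$ kills $T_{\ge N}$, then $x\circ f^*$ kills $T'_{\ge N}$. Hence $f(\hat M)\subset \hat{M'}$.

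The group structure maps are equivariant in the following sense:
\begin{itemize}
\item comultiplication $\Delta\colon T\to T\otimes T$ is graded for the total grading, so $m$ is equivariant for the diagonal action;
\item the antipode $S\colon T\to T$ is graded, so inversion is equivariant;
\item the counit $\epsilon\colon T\to A$ is graded with $A$ in degree $0$, so the identity section $\spec A\to M$ is equivariant, where $\spec A$ carries the trivial grading and $\hat{\spec A}=\spec A$.
\end{itemize}

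Combining these with the diagonal computation, $m$ carries $\hat M\times\hat M=\widehat{M\times M}$ into $\hat M$, and inversion carries $\hat M$ into $\hat M$. For the identity, $\epsilon$ kills $T_{\ge 1}$, so the unit point lies in $\hat M$. Therefore $\hat M(B)$ is a subgroup of $M(B)$ for every $B$, functorially in $B$.

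The only step needing real care is the identification of $\widehat{M\times M}$ with the diagonal action, as opposed to the one-factor action of \Cref{usefulremarkabouthats}(3). This is the elementary degree-counting argument above, and everything else is formal.
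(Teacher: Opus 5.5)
Your proof is correct and follows the paper's route. The paper simply cites \Cref{usefulremarkabouthats}(1): the construction $M \mapsto \hat{M}$ commutes with finite limits and therefore preserves group objects. Your argument unpacks exactly this, by checking that $\widehat{M\times M}=\hat{M}\times\hat{M}$ for the diagonal action and that the graded structure maps (comultiplication, antipode, counit) carry the relevant subfunctors into $\hat{M}$.
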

\begin{proof}
    This follows from the previous remark.
\end{proof}

\subsection{Definition of $\hw$}

In this section, for any ring $R$, we review the construction of  the ideal
$\hw(R) \subset W(R)$
in the ring $W(R)$ of Witt vectors of $R$, and analogously for $\wbig(R)$
(cf.~\cite[V.4.4]{DG70}).

\begin{definition}[The subfunctor $\hw \subset W$]
\label{wh via grading}
The functor $W$ has a natural $\MM_m$-action, such that $r \in R$ acts on $W(R)$
by multiplication by $[r]$.
This action satisfies the condition of \Cref{Mhatdef} and lets us define a
subfunctor $\hw \subset W$. In the same way, we define a subfunctor $\hwbig
\subset \wbig$. Note that the definition also extends to nonunital rings in the evident way.
\end{definition}

\begin{proposition}[$\hw, \hwbig$ via Witt components]
\label{hwviaWitt}
Let $x \in W(R)$ be expressed as $\sum_{i \geq 0} V^i [x_i]$ for $x_i \in R$.
Then $x \in \hw(R)$ if and only if
all of the $x_i$ are nilpotent and $x_i = 0$ for $i \gg 0$.
 Similarly, an element of $\wbig$ belongs to $\hwbig$ if and only if all the
 Witt components are nilpotent and all but finitely many are zero.
\end{proposition}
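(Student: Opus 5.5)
The plan is to make the grading on the coordinate ring of $W$ explicit and read off the condition in \Cref{Mhatdef} in Witt coordinates.

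As a scheme over $\ZZ$, $W = \spec \ZZ[x_0, x_1, \dots]$, where $x \mapsto (x_i)$ records the Witt components $x = \sum_{i\ge 0} V^i[x_i]$. First I will determine the $\MM_m$-action of $r$, namely multiplication by $[r]$, in these coordinates. Using the projection formula, $[r] V^i[x_i] = V^i(F^i[r]\,[x_i]) = V^i[r^{p^i} x_i]$. Since multiplication by $[r]$ is additive, $[r]x = \sum_i V^i[r^{p^i}x_i]$. So $r$ acts by $x_i \mapsto r^{p^i} x_i$, and the coordinate ring $T = \ZZ[x_0,x_1,\dots]$ is graded with $\deg x_i = p^i$. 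In particular $T_0 = \ZZ$.

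The key step is to identify the ideal $\bigoplus_{n\ge N} T_n$. It is spanned by the monomials of degree $\ge N$. A point $x \in W(R)$, viewed as $T \to R$, kills this ideal exactly when every monomial $\prod x_i^{a_i}$ with $\sum a_i p^i \ge N$ maps to $0$.

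For the forward direction, suppose $x$ kills this ideal. Taking a single variable, $x_i^{a}$ lies in it once $a p^i \ge N$, so each $x_i$ is nilpotent. Also $x_i$ itself lies in it once $p^i \ge N$, so $x_i = 0$ for $i \gg 0$.

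For the converse, suppose only $x_0, \dots, x_m$ are nonzero, with $x_i^{n_i} = 0$. A monomial in $x_0,\dots,x_m$ that survives has $a_i < n_i$ for all $i$, so its degree is bounded by $\sum_{i \le m} n_i p^i$. Taking $N$ larger than this bound, every monomial of degree $\ge N$ maps to $0$. The only point needing care is that monomials involving a variable $x_j$ with $j>m$ vanish automatically.

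The big Witt vectors are handled in the same way. Write $x = \prod_{n\ge1}(1 - x_n t^n)$, or use the analogous Witt components. Multiplication by $[r]$ corresponds to $t \mapsto rt$, so $x_n \mapsto r^n x_n$ and the grading is $\deg x_n = n$. The same monomial argument then applies verbatim.

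I expect the main obstacle to be justifying the formula $[r]\sum V^i[x_i] = \sum V^i[r^{p^i}x_i]$ for infinite sums. I will handle this by $V$-adic continuity, or equivalently by checking on ghost components for the universal case $\ZZ[r, x_i]$, which is torsion free; there the ghost map is injective and the identity is immediate.
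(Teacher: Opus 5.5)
Your proposal is correct and follows the paper's own argument: the coordinate ring of $W$ (resp.\ $\wbig$) is the polynomial ring on the Witt components, which are homogeneous of degree $p^i$ (resp.\ $n$), so the condition in \Cref{Mhatdef} can be read off directly. You spell out the degree computation via the projection formula and the monomial bookkeeping, both of which the paper leaves implicit.
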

\begin{proof}
 The $i$th Witt component is (as a function on the Witt scheme) homogeneous of
 degree $p^i$.
Moreover, the algebra of functions on the Witt scheme is precisely the
polynomial algebra on the Witt coordinates.
The result follows.
\end{proof}

\begin{remark} Any element of $\hw(R) \subset W(R)$ is nilpotent. 
    \end{remark}

\begin{remark}[$\hw$ via Joyal coordinates]\label{hwviaJoyal}
The ring of functions on the Witt scheme $W$ has another set of generators,
namely, the Joyal coordinates $j_i, i \geq 0$; the $i$th Joyal coordinate of $x
\in W(R)$ is the zeroth ghost (or Witt) component of $\delta^i(x)$; see
\cite[Def.~3.1.1]{KedlayaPrismaticNotes} and the original source
\cite{Joyal85}.
The ring of functions on $W$ is also the polynomial ring on $j_0, j_1, \dots$
and each $j_i$ is homogeneous of degree $p^i $. Therefore, it follows by similar
reasoning that
$x \in W(R)$ belongs to $\hw(R)$ if and only if all the Joyal coordinates are
nilpotent and all but finitely many vanish.
\end{remark}

\begin{remark}
    The functor $\hw(-)$ commutes with filtered colimits, and is representable
    by an ind-scheme, by \Cref{Mhatcommuteswithfilteredcolimits}.
	 The same holds for $\hwbig(-)$.
\end{remark}

\begin{proposition}
\label{hwbasic}
For any ring $R$,
$\hw(R) \subset W(R)$ is an ideal, and it is stable under $F$ and $V$.
Similarly, $\hwbig(R) \subset \wbig(R)$ is an ideal, and it is stable under all
the Frobenius and Verschiebung operations.
\end{proposition}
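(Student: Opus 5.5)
We deduce everything from the graded-scheme formalism of \Cref{Mhatdef}, organized as in \Cref{Mhatissubgroup}. First, the additive group structure on $W$ is compatible with the $\MM_m$-action. Multiplication by $[r]$ is additive, so addition $W \times W \to W$ is $\MM_m$-equivariant for the diagonal action. Equivalently, the Hopf algebra of functions $\mathbb{Z}[x_0,x_1,\dots]$, graded with $x_i$ in degree $p^i$, is a graded Hopf algebra: each coordinate of a sum of Witt vectors is homogeneous of degree $p^i$. \Cref{Mhatissubgroup} then shows that $\hw(R)\subset W(R)$ is an additive subgroup. The same argument applies to $\hwbig$, with the $n$-th coordinate in degree $n$.

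Next we show $\hw(R)$ absorbs multiplication by $W(R)$. Consider the multiplication map $W \times W \to W$, letting $\MM_m$ act only on the first factor. This map is equivariant, since $([r]x)y = [r](xy)$. By \Cref{usefulremarkabouthats}(3), the completion of $W\times W$ for this action is $\hw \times W$. A map of affine schemes compatible with gradings sends $\hat{M}$ into $\hat{M'}$, because the pullback of a function of degree $\ge N$ has degree $\ge N$. Hence multiplication sends $\hw(R)\times W(R)$ into $\hw(R)$. Together with the first step, this shows $\hw(R)$ is an ideal.

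For stability under $F$ and $V$, we again check equivariance, allowing the rescalings of \Cref{usefulremarkabouthats}(2). For $F$ we have $F([r]x) = [r^p]F(x)$, so $F$ is equivariant once the action on the target is rescaled by $t\mapsto t^p$. By \Cref{usefulremarkabouthats}(2), rescaling does not change $\hat{W}$, so $F(\hw)\subset\hw$. For $V$, the projection formula gives $V([r^p]x) = V(F[r]\cdot x) = [r]V(x)$. Thus $V$ is equivariant after rescaling the source action by $t\mapsto t^p$, and the same reasoning gives $V(\hw)\subset \hw$.

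Alternatively, for $V$ one can argue directly with \Cref{hwviaWitt}: $V$ shifts Witt components, $V(\sum V^i[x_i]) = \sum V^{i+1}[x_i]$, so nilpotence and finiteness are clearly preserved. For the big Witt vectors we use $F_n([r]x) = [r^n]F_n(x)$ and $V_n([r^n]x) = [r]V_n(x)$, and the same rescaling argument applies.

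The main point requiring care is the principle that a grading-compatible map $\spec T' \to \spec T$ carries $\hat{M'}$ into $\hat{M}$, especially after rescaling. This amounts to the pullback $T\to T'$ sending $T_{\ge N}$ into $T'_{\ge cN}$ for the relevant rescaling factor $c$. We would state this once as a general lemma and then apply it in every case above.
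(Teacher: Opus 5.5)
Your proposal is correct and follows the paper's proof: you get the subgroup property from the graded Hopf algebra structure (\Cref{Mhatissubgroup}), the ideal property from equivariance of multiplication with $\MM_m$ acting only on the first factor together with \Cref{usefulremarkabouthats}, and stability under $F$ and $V$ (and under $F_n$, $V_n$ for big Witt vectors) from equivariance up to rescaling. You also spell out that $M \mapsto \hat{M}$ is functorial for (rescaled) equivariant maps, a step the paper uses only implicitly.
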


\begin{proof}
The statement that $\hw(R) \subset W(R), \hwbig(R) \subset \wbig(R)$ are
subgroups follows from \Cref{Mhatissubgroup}, as does the stability under
Frobenius and  Verschiebung operators (which are $\MM_m$-equivariant up to
rescaling the $\MM_m$-actions).
Since the multiplication $W \times W \to W$ (or $\wbig \times \wbig \to \wbig$)
is a map of $\MM_m$-equivariant schemes where we make $\MM_m$ act only on the
first factor in $W \times W$ (or $\wbig \times \wbig$)
we conclude from \Cref{usefulremarkabouthats} that $\hw \subset W$
(resp.~$\hwbig \subset \wbig$) is an ideal.
\end{proof}

\begin{proposition}
\label{Fnil}
Let $R$ be any ring.
Suppose $x \in \hw(R)$. Then $F^m x = 0 $ for $m \gg 0$.
Similarly, if $y \in \hwbig(R)$, then the $m$-th Frobenius annihilates $y$ for $m
\gg 0$.
\end{proposition}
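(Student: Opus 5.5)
Since $\hw(R)$ is an additive subgroup (\Cref{hwbasic}) and $F$ is additive, it suffices to treat a single term $V^i[a]$ with $a\in R$ nilpotent. Indeed, by \Cref{hwviaWitt} any $x\in\hw(R)$ is a finite sum $x=\sum_{i=0}^{n}V^i[x_i]$ with each $x_i$ nilpotent. If each summand is killed by $F^{m_i}$, then $F^m x=0$ for $m\ge\max_i m_i$.

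For a single term we use the standard identities $FV=p$ and $F[a]=[a^p]$. For $m\ge i$ they give
\[ F^m V^i[a] = F^{m-i}(F^iV^i[a]) = F^{m-i}(p^i[a]) = p^i[a^{p^{m-i}}]. \]
Choose $m$ with $p^{m-i}\ge N$, where $a^N=0$. Then $[a^{p^{m-i}}]=[0]=0$, so $F^mV^i[a]=0$.

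The big Witt vector statement has the same shape. By \Cref{hwviaWitt}, an element $y\in\hwbig(R)$ is a finite sum $\sum_n V_n[y_n]$ with each $y_n$ nilpotent. Here $V_n$ is the big Verschiebung and $[\cdot]$ the Teichmüller lift in $\wbig$; this is the standard expansion $\wbig(R)\ni y=\prod_n(1-y_nt^n)$ in the $\Lambda$-ring model, suitably normalized. We use the identities $F_m[a]=[a^m]$ and $F_mV_n=\gcd(m,n)\,V_{n/g}F_{m/g}$ with $g=\gcd(m,n)$. Together these give
\[ F_mV_n[a]=g\,V_{n/g}[a^{m/g}]. \]
For a fixed $n$, take $m$ to be a multiple of $nN$. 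Then $g=n$ and $m/g\ge N$, so this vanishes. Taking $m$ divisible by $N\cdot\mathrm{lcm}(n)$ over the finitely many $n$ that occur, and using additivity of $F_m$, we get $F_my=0$.

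The only point needing care is the assertion that $\hwbig$ elements are finite sums $\sum V_n[y_n]$ with nilpotent $y_n$. This is exactly the content of \Cref{hwviaWitt} in the big Witt coordinates. The composite formula for $F_mV_n$ is classical. I therefore expect no real obstacle; the proof is a direct computation once the elements are written in Witt coordinates.
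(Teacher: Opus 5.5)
Your $p$-typical argument is correct. In the big Witt paragraph, however, you only conclude $F_my=0$ for $m$ divisible by $N\cdot\mathrm{lcm}(n)$, while the statement asks for all sufficiently large $m$. Unlike the $p$-typical case, the $F_m$ are not iterates of a single operator. Vanishing of $F_my$ therefore only propagates to multiples of $m$ (via $F_{km}=F_kF_m$), not to $m+1$, so as written you have not proved the claim.

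The fix is already in your formula. Since $g=\gcd(m,n)\leq n$, one has $m/g\geq m/n$, so $F_mV_n[a]=gV_{n/g}[a^{m/g}]=0$ for every $m\geq nN$; there is no need to force $g=n$. Hence $F_my=0$ for all $m\geq N\max n$, where $N$ is a common nilpotence bound for the finitely many nonzero components $y_n$.

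With that repair, your route is valid but differs from the paper's. The paper notes that $F^n\colon W\to W$ is $\MM_m$-equivariant once the action on the target is precomposed with the $p^n$-th power map, so $F^n$ multiplies degrees of coordinate functions by $p^n$. A point of $\hw(R)$, i.e.\ a map from the coordinate ring killing everything of degree $\geq N$, is therefore sent for $p^n\geq N$ to a map killing all positive-degree functions, which is the point $0$. The same sentence covers $\wbig$ verbatim, with $F_m$ multiplying degrees by $m$, and gives vanishing for all $m\geq N$ at once. It needs no $F$/$V$ commutation identities and no decomposition into finite sums of Teichm\"uller--Verschiebung terms.

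Your computation instead gives explicit bounds in terms of the Witt components and their nilpotence orders. The cost is importing $F_mV_n=gV_{n/g}F_{m/g}$ and the finite-sum description of $\hw$ and $\hwbig$. The missed quantifier above is exactly the kind of issue the degree argument avoids automatically.
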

\begin{proof}
This follows because $F^n: W \to W$ is a map of $\MM_m$-equivariant schemes,
where the $\MM_m$-equivariant structure on the target is defined by
precomposition with the map $p^n: \MM_m \to \MM_m$. In other words, $F^n$
induces a map on the rings of functions on $W$ which multiplies the degree by
$p^n$.
 The argument for $\hwbig \subset \wbig$ is similar.
\end{proof}

As an abelian group,
\begin{equation} \wbig(R) = (1 + t R[[t]])^{\times}.
\label{bigwittaspowerseries} \end{equation}

\begin{proposition}
\label{bigwitthwaspoly}
For any ring $R$, under the identification
\eqref{bigwittaspowerseries}, $\hwbig(R) \subset \wbig(R)$ corresponds to
$ \mathrm{ker}( ( R[t])^{\times} \to R^{\times})$.
\end{proposition}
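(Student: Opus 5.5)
We identify $\wbig(R)$ with $(1+tR[[t]])^\times$ in the standard way: the Witt vector with big Witt components $(a_1,a_2,\dots)$ corresponds to $\prod_{n\ge1}(1-a_nt^n)^{-1}$. (With the sign convention $\prod(1-a_nt^n)$ the argument is identical.) The plan is to use the characterization of $\hwbig$ in \Cref{hwviaWitt}: an element lies in $\hwbig(R)$ exactly when all big Witt components $a_n$ are nilpotent and all but finitely many vanish. We then prove the two inclusions separately.

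\emph{Inclusion $\hwbig(R)\subset\ker((R[t])^\times\to R^\times)$.} Let $x\in\hwbig(R)$. Then the corresponding power series is a finite product $\prod_{n\le N}(1-a_nt^n)^{-1}$ with each $a_n$ nilpotent. For nilpotent $a$, the element $1-at^n$ is a unit in $R[t]$, because $at^n$ is nilpotent. Its inverse $\sum_k a^kt^{nk}$ is therefore a polynomial. So the product is a polynomial with constant term $1$, invertible in $R[t]$, and hence lies in the kernel.

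\emph{Reverse inclusion.} Let $f\in R[t]^\times$ with $f(0)=1$. Recall the standard fact that a polynomial is a unit if and only if its constant term is a unit and its higher coefficients are nilpotent. Hence $f=1+b_1t+\dots+b_dt^d$ with each $b_i$ nilpotent. We must show that the Witt components $a_n$ of $f$ are nilpotent and vanish for $n\gg0$.

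For nilpotence, reduce modulo the nilradical. There $f\equiv1$, so every component of the image in $\wbig(R_{\mathrm{red}})$ is zero, i.e.\ each $a_n$ is nilpotent. This uses the naturality of the components.

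Vanishing for large $n$ is the main obstacle. We use a degree/grading argument rather than explicit formulas. The component $a_n$ is a universal polynomial in $b_1,\dots,b_d$, namely the image under the universal map from $\mathbb{Z}[b_1,\dots,b_d]$. It is homogeneous of weight $n$, where $b_i$ has weight $i$ (the $\MM_m$-grading, $t\mapsto \lambda t$). Since the $b_i$ are nilpotent, they generate an ideal $J=(b_1,\dots,b_d)$ with $J^M=0$ for some $M$. A monomial of weight $n$ in variables of weight at most $d$ has total degree at least $n/d$. Hence for $n\ge dM$ each such monomial lies in $J^M=0$, and $a_n=0$.

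Equivalently, this is the observation that $\ker(R[t]^\times\to R^\times)$ is the set of $R$-points of the ind-scheme attached to the graded ring of coefficient functions, so it matches $\hwbig$ via \Cref{Mhatdef}. Either way, $f\in\hwbig(R)$, which completes the proof.
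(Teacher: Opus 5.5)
Your proof is correct, but it goes through a detour that the paper avoids. The paper's argument is essentially your closing remark. The coefficients $c_i$ of $1+c_1t+c_2t^2+\cdots$ are themselves free polynomial coordinates on $\wbig$. Under the $\MM_m$-action (multiplication by $[\lambda]$, i.e.\ $t\mapsto\lambda t$), the coordinate $c_i$ is homogeneous of degree $i$. So, exactly as in \Cref{hwviaWitt}, \Cref{Mhatdef} says a point lies in $\hwbig(R)$ if and only if all $c_i$ are nilpotent and almost all vanish. By the standard description of units of $R[t]$, this is precisely $\ker\bigl((R[t])^{\times}\to R^{\times}\bigr)$.

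You instead start from the Witt-component criterion and transport it through the product formula $\prod(1-a_nt^n)^{-1}$. This requires two facts that you assert rather than prove. First, the $a_n$ are integer polynomials in the $c_n$; this follows by triangular inversion of $c_n=a_n+(\text{terms in }a_1,\dots,a_{n-1})$. Second, this change of coordinates preserves weights. Both are standard and easy, and granting them your two inclusions are fine. Your weighted-degree estimate (for $n\geq dM$ every monomial lies in $J^M=0$) is the same mechanism that underlies \Cref{hwviaWitt}.

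The paper's route needs no coordinate change, because $\hwbig$ is defined intrinsically by the grading and does not depend on which homogeneous coordinates one uses. Note also that your separate nilpotence step (reducing to $R_{\mathrm{red}}$) is redundant. Since $a_n$ has weight $n\geq 1$, it has no constant term, so $a_n\in J$, which is already nilpotent.
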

\begin{proof}
In fact,
this follows because under the identification
\eqref{bigwittaspowerseries},
the coefficient of $t^i$, as a function on $\wbig$, is homogeneous of degree
$i$.
\end{proof}

\begin{proposition}[Product criterion for $\hw$]
Let $R$ be any ring and let $x, y \in W(R)$.
Suppose that:
\begin{enumerate}
\item $F^n x = 0$ for some $n$.
\item All Witt coordinates (or equivalently Joyal coordinates) of $y$ are
nilpotent.\footnote{Equivalently, $y$ belongs to the kernel of $W(R) \to
W(R_{\mathrm{red}})$.}
\end{enumerate}
Then $xy \in \hw(R)$.
\label{productlemmahw}
\end{proposition}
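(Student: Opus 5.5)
The plan is to split $y$ into a finite part, which lies in $\hw(R)$, and a tail divisible by a high power of $V$, which $x$ kills. This uses only the ideal property of $\hw(R)$ (\Cref{hwbasic}) and the projection formula for $V$.

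First, I fix $n$ with $F^n x = 0$, as in hypothesis (1). I write $y = \sum_{i \geq 0} V^i[y_i]$ with all $y_i \in R$ nilpotent, as in hypothesis (2). If the hypothesis is phrased in Joyal coordinates, the footnote (kernel of $W(R) \to W(R_{\mathrm{red}})$) translates it into Witt coordinates, since that kernel consists exactly of the Witt vectors whose Witt components lie in the nilradical.

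Second, I choose $N \geq n$ and decompose
\[ y = y' + V^N(y''), \qquad y' = \sum_{i<N} V^i[y_i], \qquad y'' = \sum_{j \geq 0} V^j[y_{N+j}]. \]
This is the standard identity for Witt vectors: the Witt components of $V^N(y'')$ are those of $y$ shifted by $N$, and addition of Witt vectors with disjoint supports of components is componentwise. More safely, $y - y'$ has vanishing first $N$ Witt components, hence lies in $V^N W(R)$. By \Cref{hwviaWitt}, each term $V^i[y_i]$ with $y_i$ nilpotent lies in $\hw(R)$. Since $\hw(R)$ is a subgroup (\Cref{hwbasic}), it follows that $y' \in \hw(R)$.

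Third, by the projection formula $x V^N(z) = V^N(F^N(x) z)$,
\[ x \cdot V^N(y'') = V^N(F^N(x)\, y'') = V^N(F^{N-n}(F^n x)\, y'') = 0. \]
Hence $xy = xy'$, and this lies in $\hw(R)$ because $\hw(R)$ is an ideal.

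For the big Witt vectors the same argument works. One uses the decomposition of $\wbig(R)$ via the coordinates $(1 - a_i t^i)$, and the analogous projection formula $x V_m(z) = V_m(F_m(x) z)$, taking $m$ divisible by all small indices. The only point needing care in the whole proof is that the decomposition $y = y' + V^N(y'')$ holds with $y'$ built from exactly the first $N$ Witt components, which is standard.
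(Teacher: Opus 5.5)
Your argument is correct and is essentially the paper's proof: write $y = a + V^n(b)$ with $a \in \hw(R)$ a finite sum of terms $V^i[y_i]$ with $y_i$ nilpotent, and kill the tail via $x V^n(b) = V^n((F^n x) b) = 0$, so that $xy = xa \in \hw(R)$ by the ideal property. The closing paragraph on big Witt vectors is not part of this statement and can be dropped; as sketched it is too vague anyway, since the tail of a big Witt vector is not the image of a single $V_m$.
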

\begin{proof}
By writing $y = \sum_{i \geq 0} V^i( [y_i])$, we can write
$y = a + V^n ( b)$ where $a \in \hw(R)$ and $b \in W(R)$.
Then \[xy = x ( a + V^n (b)) = xa + V^n (  (F^n x)b ) = xa \in \hw(R),\]
since $a \in \hw(R)$ and $\hw(R) \subset W(R)$ is an ideal.
\end{proof}

\begin{example}[Gradings on $\hw$]
\label{gradingsonhw}
Let $R$ be a $\mathbb{Z}[1/p]_{\geq 0}$-graded ring, and let $I = R_+ =
\bigoplus_{i \in \mathbb{Z}[1/p]_{>0}} R_i$.
Suppose that there exists $N \in \mathbb{N}$ such that $R_i = 0$ for all $i \geq
N$; in particular, every element of $I$ is nilpotent.

In this case, we can place a natural $\mathbb{Z}[1/p]_{>0}$ grading on $\hw(I)$
such that
the elements in degree $d$ are those (finite) sums $\sum_{i \geq 0} V^i [x_i]
\in \hw(I)$
such that $x_i \in I$ is
homogeneous of degree $p^i d$.
To see this, it suffices to show that every element of $\hw(I)$ is a finite sum
of homogeneous elements. By \Cref{hwviaWitt}, every element of $\hw(I)$ is a
finite sum of elements of the form $V^i[x]$ with $x \in I$, so it is enough to
treat $[x]$. Write $x$ as a finite sum of homogeneous elements. Since $\hw$
commutes with filtered colimits, we may replace $I$ by the nonunital graded
subring generated by these homogeneous components. After multiplying all degrees
by a power of $p$, we may therefore assume that $I$ is concentrated in positive
integer degrees.

Set $I_{\geq n} = \bigoplus_{m \geq n} I_m$. The filtration
$I = I_{\geq 1} \supset I_{\geq 2} \supset \cdots$ is finite because the grading
is bounded above. It induces a finite filtration on $\hw(I)$ whose associated
graded terms are
\[
\hw(I_{\geq n}/I_{\geq n+1}) = \bigoplus_{\mathbb{N}} I_{\geq n}/I_{\geq n+1}.
\]
Indeed, $I_{\geq n}/I_{\geq n+1}$ is a square-zero nonunital ring. Thus every element of each associated graded term is a finite sum of
homogeneous elements. Inducting along the finite filtration, every element of
$\hw(I)$ is a finite sum of homogeneous terms, as desired.
\end{example}

\subsection{$p$-nilpotent rings}
In this section we prove some results about $\hw \subset W$ that are specific to
 $p$-nilpotent rings.

\begin{example}
Let $R$ be a nonzero $p$-nilpotent ring.
Then $[p] \in \hw(R)$, but $p \notin \hw(R)$.
\end{example}

Throughout this section, we will frequently use the following elementary Witt
vector identity.

\begin{proposition}
Let $R$ be any ring. If $a, b \in R$ and $m \leq n$, then
\begin{equation}
    V^m ( [a])\cdot V^n ( [b]) = p^{m} V^n ( [ a^{p^{n-m} }b]).
    \label{wittidentity}
\end{equation}
\end{proposition}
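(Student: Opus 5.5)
We will prove \eqref{wittidentity} using only the standard Witt vector identities: $F$ is a ring endomorphism, $F[a] = [a^p]$, $FV = p$, and the projection formula $V(F(x)\,y) = x\,V(y)$. There is no need to reduce to $p$-torsionfree rings or to compare ghost components.

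First we treat $m = 0$, where the claim is $[a]\cdot V^n[b] = V^n[a^{p^n} b]$. Iterating the projection formula $n$ times gives $x\cdot V^n(y) = V^n(F^n(x)\,y)$. Taking $x = [a]$ and $y = [b]$, and using $F^n[a] = [a^{p^n}]$ together with multiplicativity of the Teichm\"uller map, we get $[a]\,V^n[b] = V^n([a^{p^n}][b]) = V^n[a^{p^n}b]$.

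For general $m \le n$, we write $V^n[b] = V^m(V^{n-m}[b])$ and apply the projection formula in the other direction:
\[
V^m([a])\cdot V^m(z) = V^m\bigl([a]\cdot F^m V^m z\bigr) = V^m\bigl([a]\cdot p^m z\bigr) = p^m V^m([a] z),
\]
with $z = V^{n-m}[b]$. This uses $F^mV^m = p^m$, which follows from $FV = p$ by induction. It also uses that $V^m$ is additive, which lets us pull out $p^m$. By the $m=0$ case applied with $n-m$ in place of $n$, we have $[a]\,z = V^{n-m}[a^{p^{n-m}}b]$. Substituting gives $p^m V^m V^{n-m}[a^{p^{n-m}}b] = p^m V^n[a^{p^{n-m}}b]$, which is the claimed identity.

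Every step here is a formal consequence of identities valid in $W(R)$ for any ring $R$, so no step poses a real obstacle. The only thing to handle carefully is applying the projection formula in the correct orientation. In the general step we need $V(x)\,V(y) = V(x\,FV(y))$, which follows from $V(F(u)\,x) = u\,V(x)$ with $u = V(y)$.
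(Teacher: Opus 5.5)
Your proof is correct and follows the paper's approach: the paper also derives the identity directly from the projection formula $xV^i(y) = V^i(F^i(x)\,y)$, the relation $F^iV^i = p^i$, and $F^i[z] = [z^{p^i}]$. You simply write out the two applications of the projection formula that the paper leaves implicit.
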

\begin{proof}
    This follows from the Witt vector identities $x V^i (y) = V^i( (F^i x) y)$,
    $F^i V^i = p^i$, and $F^i ( [z]) = [z^{p^i}]$.
\end{proof}

\begin{proposition}
    \label{lemmatorsioninWittvectors}
Let $R$ be any ring. Let $x \in R$ such that $p^m x = 0$
and $x^n = 0$. Then
there exists $A = A(m, n) \in \mathbb{Z}_{\geq 0}$ such that $p^A [x] = 0$.
\end{proposition}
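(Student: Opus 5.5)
The plan is to prove the statement by induction, using a single Witt vector identity that trades $p[x]$ for Teichmüller representatives of "smaller" elements. The induction runs on the pair $(m,n)$. Each step either lowers the torsion exponent $m$ (keeping $n$) or lowers the nilpotence exponent $n$ (keeping $m$). So it suffices to show that $A(m,n)$ can be taken to be $1+\max(A(m-1,n), A(m,\lceil n/p\rceil))$, with the base cases handled separately.

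The base cases are trivial. If $m=0$ then $x=0$, and if $n=1$ then $x=0$, so $[x]=0$ and we may take $A=0$.

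For the inductive step, write $p \in W(\mathbb{Z})$ in terms of its zeroth Witt component, $p = [p] + V(z)$ for some $z \in W(\mathbb{Z})$; this holds because the zeroth Witt component of $p$ is $p$. We view this identity in $W(R)$ via $\mathbb{Z}\to R$. Multiplying by $[x]$ and using the projection formula $V(z)\,y = V(z\, F y)$ together with $F[x]=[x^p]$ gives
\[ p[x] = [px] + V\bigl(z\,[x^p]\bigr). \]
Multiplying by $p^{A-1}$ and using additivity of $V$, we get
\[ p^A[x] = p^{A-1}[px] + V\bigl(z\cdot p^{A-1}[x^p]\bigr). \]
Now $y=px$ satisfies $p^{m-1}y=0$ and $y^n=0$, so by induction $p^{A(m-1,n)}[px]=0$. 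Also $w=x^p$ satisfies $p^m w=0$ and $w^{\lceil n/p\rceil}=0$. Since $n\ge 2$ we have $\lceil n/p\rceil<n$, so by induction $p^{A(m,\lceil n/p\rceil)}[x^p]=0$. Choosing $A-1$ at least both of these exponents kills both terms.

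The main point needing care is the well-foundedness of the induction, and there is only mild bookkeeping here. The first recursive call decreases $m$, and the second keeps $m$ but strictly decreases $n$. Hence the lexicographic order on $(m,n)$, with $m$ first, works, and we obtain an explicit (if crude) bound depending only on $m,n$ and $p$. Everything used (the projection formula, $F[x]=[x^p]$, and additivity of $V$) is among the standard Witt vector identities already invoked in \eqref{wittidentity}.
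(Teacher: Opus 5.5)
Your proof is correct, but it takes a different and somewhat simpler route than the paper.

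\textbf{The paper's route.} It writes $p[x]=FV[x]=V([x^p])+\sum_{i}V^i([\psi_i(x)])$, where the $\psi_i$ are the Witt components of the commutator $FV([c])-VF([c])$. It then has to argue three things:
\begin{enumerate}
\item each $\psi_i\in\mathbb{Z}[c]$ has no constant term;
\item each $\psi_i$ is divisible by $p$, by comparing with $\mathbb{F}_p$-algebras, where $F$ and $V$ commute;
\item the sum causes no trouble.
\end{enumerate}
This reduces the statement to the cases $(m-1,n)$ and $(m,n-1)$.

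\textbf{Your route.} You use the single identity $p=[p]+V(z)$ in $W(\mathbb{Z})$. It needs only that $\gh_0$ is a ring map with kernel $VW(\mathbb{Z})$. The projection formula then gives $p[x]=[px]+V(z[x^p])$.

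\textbf{Comparison.} The roles of the two terms are swapped. In the paper, the main term $V([x^p])$ lowers the nilpotence exponent and the correction terms lower the torsion exponent. In yours, the Teichm\"uller term $[px]$ lowers the torsion exponent and the $V$-term lowers the nilpotence exponent. It is harmless that $z$ is not a Teichm\"uller element, since you only need $p^{A-1}[x^p]=0$ to kill $z\,p^{A-1}[x^p]$.

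Your version needs no Witt polynomials and no discussion of the sum. It also gives a slightly better recursion in $n$ ($\lceil n/p\rceil$ instead of $n-1$). The paper's version instead writes $p[x]$ directly as a sum of $V^i$ of Teichm\"uller representatives, i.e.\ in Witt-coordinate form. That is in keeping with the rest of its section, but the statement does not need it.

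Your well-founded induction on $(m,n)$ in lexicographic order, and your base cases, are fine.
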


\begin{proof}
In any ring, for any element $c$, the $i$th Witt component of
\[ F V ([c]) - V F([c])  \]
is a polynomial $\psi_i(c) \in \mathbb{Z}[c]$ in $c$ with no constant term.
Since $F, V$ commute in the Witt vectors of $\mathbb{F}_p$-algebras, we find
that $\psi_i$ is divisible by $p$ for each $i$.

Thus, we find that
\[p[x] = FV [x] = V( [x^p]) +  \sum_{i=0}^\infty V^i ( [ \psi_i(x)]). \]
The right-hand side is a (necessarily finite since $p[x] \in \hw(R)$) sum of iterated applications of the Verschiebung to
terms that satisfy the hypotheses of the lemma for either $(m-1, n)$ or $(m,
n-1)$.
We conclude the lemma by induction on $(m, n)$.
\end{proof}

\begin{corollary}
\label{torsionnilpotent}
Let $R$ be a $p$-nilpotent ring.
Suppose $x \in \hw(R)$. Then $p^m x = 0$ for $m \gg 0$: that is, $\hw(R)
\subset W(R)$ consists of $p$-power torsion elements.
\end{corollary}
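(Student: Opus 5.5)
By \Cref{hwviaWitt}, every $x \in \hw(R)$ can be written as a finite sum $x = \sum_{i=0}^{N} V^i[x_i]$ with each $x_i \in R$ nilpotent. The $p$-power torsion elements of $W(R)$ form a subgroup, and $V$ is additive with $p^m V^i(y) = V^i(p^m y)$. So it suffices to show that $[x_i]$ is $p$-power torsion for each $i$. The subgroup is closed under finite sums, so this settles $x$.

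Fix a nilpotent $y \in R$, say $y^n = 0$. Since $R$ is $p$-nilpotent, there is some $m$ with $p^m R = 0$, and in particular $p^m y = 0$. The hypotheses of \Cref{lemmatorsioninWittvectors} are therefore satisfied. It gives an integer $A = A(m,n)$ with $p^A[y] = 0$. Apply this to each $y = x_i$ and take $M = \max_i A(m, n_i)$, where $n_i$ is a nilpotency exponent for $x_i$. Then
\[
p^M x = \sum_{i=0}^{N} V^i\bigl(p^M [x_i]\bigr) = 0.
\]

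There is no real obstacle, since \Cref{lemmatorsioninWittvectors} does the substantive work. The only point needing care is that $p$-nilpotence of $R$ supplies a uniform exponent $m$ with $p^m x_i = 0$, and that the sum is finite. Both are guaranteed by the hypotheses together with \Cref{hwviaWitt}.
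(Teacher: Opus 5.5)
Your proposal is correct and is exactly the argument the paper intends: the corollary is stated without further proof as an immediate consequence of \Cref{lemmatorsioninWittvectors}. It is applied to each of the finitely many nilpotent Witt components of $x$ supplied by \Cref{hwviaWitt}, with $p^m R = 0$ providing the uniform torsion exponent.
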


Conversely, we can give the following characterization of torsion elements in
$W(R)$, for $R$ $p$-nilpotent.

\begin{proposition}
\label{torsioninW}
    Let $R$ be a $p$-nilpotent ring. The following are equivalent for $x \in
    W(R)$:
    \begin{enumerate}
    \item $x$ is $p$-power torsion.
    \item There exists $n$ such that $F^n(x) = 0$.
    \item There exist $m, n$ such that $p^m F^n (x) \in \hw(R)$.
        \item There exists $N \geq 0$ such that all the Witt components $x_i$ of
        $x$ satisfy $x_i^N = 0$.
    \end{enumerate}
	 Moreover, each of $n, m, N$ can be bounded uniformly in terms of the other two and of the power of
	 $p$ that annihilates $R$.
\end{proposition}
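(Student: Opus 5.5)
Let $p^e R = 0$. I will prove (1)$\Rightarrow$(2)$\Rightarrow$(4)$\Rightarrow$(3)$\Rightarrow$(1), tracking the bounds along the way.

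\emph{(1)$\Rightarrow$(2).} Suppose $p^m x = 0$. For a $p$-nilpotent ring $R$ the Witt vector identity $p = V F$ holds up to a correction. Indeed, over $\mathbb{F}_p$-algebras $p = VF$; in general $p\cdot y = V F y + (\text{terms with coefficients divisible by } p)$. I would rather use ghost-free reasoning. The $0$-th Witt component of $F^n y$ is $\equiv y_0^{p^n}$ modulo $p$. More cleanly: in $W(R)$ with $p^e R = 0$, we have $F^{e} (W(R)) \subset$ \ldots{} This is getting murky, so here is the route I will actually take. First prove (1)$\Rightarrow$(4): if $p^m x = 0$, compare Witt components. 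Since $p y = V(F y)$ plus corrections, the Witt components of $p^m x$ start with $x_0^{p^m}$ modulo $p$. By induction on $m$, and then on the power of $p$ killing $R$, one gets $x_i^{N} = 0$ with $N$ depending on $m$ and $e$. Concretely, the $0$-th component of $p x$ is $p x_0$, and the $1$-st component is $x_0^p + p(\ldots)$. Iterating, the component $(p^m x)_m$ is $x_0^{p^m}$ modulo $p$. Vanishing of it gives $x_0^{p^m} \in pR$, hence $x_0^{p^m e'}=0$ for suitable $e'$ after repeating the argument modulo $p^k$. Having shown $x_0$ nilpotent with bounded exponent, write $x = [x_0] + V x'$. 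By \Cref{lemmatorsioninWittvectors}, $[x_0]$ is $p^A$-torsion. Hence $V x'$ is $p$-power torsion with bounded exponent, and since $V$ is injective, $x'$ is too. Induction over the components gives (4).

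\emph{(4)$\Rightarrow$(2).} $F^n$ multiplies the degree of each component by $p^n$. The Witt components of $F^n x$ are polynomials in the $x_i$ that are homogeneous of degree $p^{n+j}$ for the grading in which $x_i$ has degree $p^i$. A monomial of that weight in variables each satisfying $x_i^N = 0$ vanishes once it contains some variable to power $\geq N$. The catch is that infinitely many variables appear, so we cannot simply count. Instead, use $F^n(V^i[x_i]) = p^i V^{i-n}[\ldots]$ for $i \ge n$; more precisely $F^n V^i = p^n V^{i-n}$, which vanishes in $W(R)$ modulo something? This fails, since $p^n \neq 0$ in $W$. Instead, I would argue via (3): $x_i^N = 0$ gives $x = a + V^k b$ with $a \in \hw(R)$ for every $k$. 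Then $F^n a = 0$ for large $n$ by \Cref{Fnil}, but that bound depends on $a$, which is where uniformity matters. The key point is the universal case: the ring $\mathbb{Z}/p^e[x_0,x_1,\dots]/(x_i^N)$ is graded with $x_i$ in degree $p^i$. Each component of $F^n x$ has degree $p^{n+j}$, and it is killed modulo the ideal of all monomials. The main obstacle will be proving that for $n \gg 0$ (depending on $N,e$), every monomial in the $j$-th component of $F^n x$ of weight $p^{n+j}$ vanishes. I would handle this by showing the Frobenius on $W(\mathbb{F}_p$-alg$)$ acts componentwise as $p$-th power, so over $\mathbb{F}_p$ we get $F^n x = \sum V^i[x_i^{p^n}] = 0$ once $p^n \ge N$. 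Then I would pass from $\mathbb{F}_p$ to $\mathbb{Z}/p^e$ by filtering $R$ by $p^k R$ and using that $\ker(W(R)\to W(R/p))=W(pR)$ on which $F$ is topologically nilpotent in a controlled way (repeat $e$ times).

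\emph{(2)$\Rightarrow$(3)} is trivial with $m=0$. For \emph{(3)$\Rightarrow$(1)}: by \Cref{torsionnilpotent}, $p^m F^n x$ is $p$-power torsion, so $F^n x$ is. Then $p^n x = V^n F^n x$ is torsion, so $x$ is. The uniform bounds come from \Cref{lemmatorsioninWittvectors} in the universal graded ring.
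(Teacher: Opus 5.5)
Your implication (3)$\Rightarrow$(1) rests on the identity $p^n x = V^nF^n x$, which holds only for $\mathbb{F}_p$-algebras. For a general $p$-nilpotent $R$ one has $FV=p$ but $VF(x)=V(1)\,x$. Moreover $V(1)\neq p$: already in $W(\mathbb{Z}/p^2)$, the element $p$ has zeroth Witt component $p\neq 0$, while $V(1)$ has zeroth component $0$. So from ``$F^nx$ is torsion'' you only get that $V^n(1)\,x$ is torsion, and your cycle does not close.

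The paper avoids this by proving (3)$\Rightarrow$(4) after replacing $R$ by $R/p$. This is harmless: $\bar x_i^N=0$ in $R/p$ gives $x_i^{Ne}=0$ in $R$ (where $p^eR=0$), and $\hw(R)$ maps into $\hw(R/p)$. Over $R/p$ one has $VF=FV=p$, and $Vy\in\hw$ if and only if $y\in\hw$. Hence (3) says $F^{m+n}x\in\hw(R/p)$. Since $F$ raises Witt components to the $p$-th power, only finitely many $x_i^{p^{m+n}}$ are nonzero and these are nilpotent, which is (4). Then (4) gives both (1) and (2) via \Cref{lemmatorsioninWittvectors}, and (1), (2)$\Rightarrow$(3) are trivial.

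Your other two implications also need repair. In (1)$\Rightarrow$(4), the component-by-component induction does not produce a single $N$. At each step you only know that $x'$ is killed by $p^{\max(m,A)}$ with $A=A(e,p^me)$, so the torsion exponents, and with them the nilpotence exponents, can grow with $i$. The observation you started from already handles all components at once: in $W(R/p)$ one has $p^m\bar x=V^mF^m\bar x$ and $V$ is injective. Hence $\bar x_i^{p^m}=0$ for every $i$, and so $x_i^{p^me}=0$.

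In (4)$\Rightarrow$(2) you discarded the argument that actually works. \Cref{lemmatorsioninWittvectors} gives $p^A[x_i]=0$ with $A=A(e,N)$ independent of $i$. Then:
\begin{itemize}
\item for $i\geq n\geq A$, $F^nV^i[x_i]=V^{i-n}(p^n[x_i])=0$;
\item for $i<n$, $F^nV^i[x_i]=F^{n-i}(p^i[x_i])=p^i[x_i^{p^{n-i}}]$, which vanishes once $p^{n-A}\geq N$ (if $i\geq A$ the factor $p^i$ kills it, otherwise $p^{n-i}>p^{n-A}\geq N$).
\end{itemize}
Since $F$ maps $V^kW(R)$ into $V^{k-1}W(R)$, it commutes with the $V$-adic expansion of $x$, so $F^nx=0$. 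Your substitute was to settle the $\mathbb{F}_p$-case and then invoke nilpotence of $F$ on $W(pR)$ ``in a controlled way.'' That leaves the nilpotence unproved, and proving it requires exactly this lemma-based argument.
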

\begin{proof}
Clearly (1) and (2) imply  (3).
(4) implies both (1) and (2)
 thanks to \Cref{lemmatorsioninWittvectors}.
Let us prove that (3) implies (4).
    Without loss of generality, by replacing $R$ by $R/p$,  we can take $R$ to be an $\mathbb{F}_p$-algebra.
    Since $V$ and $F$ commute and $FV = p$, the condition (3) on $x$ becomes the
    assertion that $F^r(x) \in \hw(R)$ for $r \gg 0$. Since $F$ is given by
    raising Witt components to the $p$th power, this condition holds if and only
    if the Witt components of $x$ are uniformly nilpotent, as desired.
\end{proof}

\begin{proposition}[Product criterion in $p$-nilpotent rings]
\label{productsinhwadic}
    Let $R$ be a $p$-nilpotent ring.     Suppose $x\in W(R)$ is $p$-power
	 torsion.
Suppose $y \in \mathrm{ker}(W(R) \to W( R_{\mathrm{red}}))$.
Then $xy \in \hw(R)$.
\label{productsinhw}
	 \end{proposition}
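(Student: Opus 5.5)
The plan is to reduce to the general product criterion, \Cref{productlemmahw}, using the characterization of torsion elements in \Cref{torsioninW}.

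First, since $R$ is $p$-nilpotent and $x$ is $p$-power torsion, the implication (1)$\Rightarrow$(2) of \Cref{torsioninW} gives an integer $n$ with $F^n(x) = 0$. This is hypothesis (1) of \Cref{productlemmahw} for $x$.

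Second, the condition $y \in \ker(W(R) \to W(R_{\mathrm{red}}))$ means each Witt component $y_i$ maps to zero in $R_{\mathrm{red}}$, so each $y_i$ is nilpotent. This is hypothesis (2) of \Cref{productlemmahw} for $y$, as the footnote there records.

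Applying \Cref{productlemmahw} then gives $xy \in \hw(R)$. Concretely, one writes $y = a + V^n(b)$ with $a = \sum_{i<n} V^i[y_i] \in \hw(R)$, so that
\[
xy = xa + V^n\bigl((F^n x)\, b\bigr) = xa \in \hw(R),
\]
using that $\hw(R)$ is an ideal (\Cref{hwbasic}).

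There is no real obstacle. The only substantive input is the step ``$p$-power torsion $\Rightarrow$ $F^n x = 0$'', which was already established in \Cref{torsioninW}. That step in turn rests on $p$-nilpotence of $R$, through the reduction to $\mathbb{F}_p$-algebras where $F$ and $V$ commute and $FV = p$.
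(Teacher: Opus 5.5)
Your proposal is correct and is the same argument as the paper: \Cref{torsioninW} gives $F^n(x)=0$ from $p$-power torsion, the kernel condition on $y$ means its Witt components are nilpotent, and \Cref{productlemmahw} then yields $xy\in\hw(R)$. Your explicit decomposition $y=a+V^n(b)$ just unwinds the proof of \Cref{productlemmahw}.
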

	 \begin{proof}
This follows from \Cref{torsioninW} and \Cref{productlemmahw}.
	 \end{proof}

\begin{proposition}
\label{squareofaminusb}
    Let $R$ be a $p$-nilpotent ring, and let $a, b \in R$ be  elements such that
    $a-b$ is nilpotent.
    Then $([a] - [b])^2 \in \hw(R)$.
\end{proposition}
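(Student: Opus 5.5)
Write $c = a - b$, which is nilpotent. The plan is to express $[a]-[b]$ as $x + y$ with $x$ a $p$-power torsion element and $y$ with nilpotent Witt components, and then apply \Cref{productsinhw} together with the fact that $\hw(R)$ is an ideal.

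\emph{Step 1: the difference lies in the kernel.} Note first that $[a]-[b]$ lies in $\ker(W(R)\to W(R_{\mathrm{red}}))$, since $a$ and $b$ have the same image in $R_{\mathrm{red}}$. This alone does not give the claim, so we need more.

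\emph{Step 2: reduction to a universal case.} The natural route is to reduce to the universal case $R_0 = \mathbb{Z}/p^k[b,c]/(c^n)$, with $a = b+c$; $\hw$ is functorial, so it suffices to treat this ring. Over $R_0$, use the additive decomposition
\[
[b+c] - [b] = [c] + z,
\]
where $z := [b+c]-[b]-[c]$. The Witt components of $z$ are universal polynomials in $b,c$. Modulo $c$ they vanish, since $[b]-[b]-[0]=0$, so every component of $z$ is divisible by $c$ and hence nilpotent.

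\emph{Step 3: showing $z$ is torsion.} The point I expect to be the main obstacle is that $z$ is $p$-power torsion. Over $\mathbb{F}_p$-algebras the Teichm\"uller map is not additive, so $z$ need not vanish, and one instead wants uniform nilpotence of its components. By \Cref{torsioninW}, (4)$\Rightarrow$(1), it suffices to show that the Witt components of $z$ are uniformly nilpotent. Each component is a polynomial divisible by $c$ in $R_0$ where $c^n=0$, and polynomials divisible by $c$ in a ring with $c^n = 0$ satisfy $(c f)^n = 0$. So all components satisfy $x_i^n=0$, and $z$ is $p$-power torsion. Similarly $[c]$ has components $c,0,0,\dots$, so it is uniformly nilpotent and hence torsion by \Cref{lemmatorsioninWittvectors}. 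Thus $[a]-[b]$ is itself $p$-power torsion, and indeed the same argument applies directly in $R$: $[a]-[b] = [b+c]-[b]$ has every Witt component divisible by $c$ (the components are universal polynomials vanishing at $c=0$), hence all components satisfy $x_i^N=0$ for $N$ the nilpotence order of $c$. By \Cref{torsioninW}, $[a]-[b]$ is $p$-power torsion.

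\emph{Step 4: conclusion.} Now take $x = y = [a]-[b]$ in \Cref{productsinhw}. Here $x$ is $p$-power torsion by Step 3, and $y$ lies in the kernel to $W(R_{\mathrm{red}})$ by Step 1, so $([a]-[b])^2\in\hw(R)$.

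The only delicate point is justifying that each Witt component of $[b+c]-[b]$ is a multiple of $c$ in the polynomial ring $\mathbb{Z}[b,c]$. This holds because subtraction is given by integral polynomials and the components vanish upon setting $c=0$.
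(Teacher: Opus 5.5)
Your proof is correct. The shared skeleton matches the paper: you take $x=y=[a]-[b]$ in \Cref{productsinhw}, with $y$ in the kernel to $W(R_{\mathrm{red}})$. Where you differ is in how you control $x$.

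The paper argues as follows. Since $p$ and $a-b$ are both nilpotent, $a^{p^n}=b^{p^n}$ for $n\gg 0$; this is a short binomial argument, since $a-b\in I$ implies $a^p-b^p\in pI+I^p$. Hence $F^n([a]-[b])=[a^{p^n}]-[b^{p^n}]=0$. This is exactly hypothesis (1) of \Cref{productlemmahw}, and the conclusion follows via \Cref{productsinhw}.

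You instead show that every Witt component of $[a]-[b]$ lies in the ideal $(a-b)$, so all components are killed by the $N$-th power where $(a-b)^N=0$. You then invoke the implication (4)$\Rightarrow$(1) of \Cref{torsioninW}, which rests on the inductive \Cref{lemmatorsioninWittvectors}.

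The paper's route is shorter and more elementary. It uses only $F[x]=[x^p]$ and lands directly in the hypothesis of the product criterion, without passing through $p$-power torsion. Your route has the modest merit that the component computation works in any ring; $p$-nilpotence enters only at the torsion step.

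Your write-up can be streamlined. The auxiliary ring $R_0$ and the decomposition $[c]+z$ are superfluous once you give the direct argument at the end of Step 3. The universal-polynomial bookkeeping can also be replaced by one observation: $[a]-[b]$ maps to $0$ in $W(R/(a-b))$, so it lies in $W((a-b)R)=\ker(W(R)\to W(R/(a-b)))$, and every element of $(a-b)R$ has vanishing $N$-th power.
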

\begin{proof}
    Our assumptions imply that $a^{p^n} = b^{p^n} $ for $n \gg 0$, whence $F^n (
    [a] -[b]) = 0$. The result now follows from \Cref{productsinhw}.
\end{proof}

\begin{remark}
In \Cref{squareofaminusb}, $[a]- [b]$ need not belong to $\hw(R)$.
For example, if $R = \mathbb{F}_p[\epsilon]/\epsilon^2$, the element $[1+
\epsilon] - 1 \notin \hw(R)$.

In fact, we claim that the Witt coordinates of $[1 + \epsilon] - 1 \in W(
\mathbb{F}_p[\epsilon]/\epsilon^2)$ are
all equal to $\epsilon$. To see this, we may work in
$\mathbb{Z}_p[\epsilon]/\epsilon^2$ and observe that the ghost coordinates are
$(\epsilon, p \epsilon, p^2 \epsilon, \dots )$, from which we may solve for the
Witt coordinates as claimed.
\end{remark}

\subsection{Pre-adic rings}
We formulate the next results more generally for pre-adic rings where $p$ is
topologically nilpotent.
For us, a \emph{pre-adic ring} is a topological ring $R$ such that there exists a finitely generated ideal $I \subset R$ (called an \emph{ideal of definition}) such that $R$ has the $I$-adic topology (cf.~\cite[\href{https://stacks.math.columbia.edu/tag/07E7}{Tag 07E7}]{stacks-project}).

\begin{construction}[$\hw, \hwbig$ for a pre-adic ring]
Given a pre-adic ring $R$, define $\hw(R) \subset W(R)$ to be the ideal consisting of elements whose Witt
components (or equivalently Joyal coordinates) are all topologically nilpotent and converge to zero in the topology
(and similarly for $\hwbig(R) \subset \wbig(R)$).
If $I$ is an ideal of definition and $R$ is $I$-adically complete, then
$\hw(R) = \varprojlim_n \hw(R/I^n)$.
In general,
$\hw(R) = W(R) \times_{W( \widehat{R}_I)} \hw( \widehat{R}_I )$.
\end{construction}

We will need the following criterion for when an element of $W(R)$ belongs to
$\hw(R)$.

\begin{proposition}
\label{criterionforhwunderF}
    Let $R$ be a pre-adic ring where $p$ is topologically nilpotent.
    Let $I$ be an ideal of definition.
    Suppose that for any $n > 0$ and $y \in R$, the following holds:
    if
    $p y \in I^{np}$, then $y \in I^{n+1}$.

    Then given
    $x \in W(R)$, we have $x \in \hw(R)$ if and only if
    \begin{enumerate}
        \item
        The image of $x$  in $W(R/I)$ belongs to $\hw(R/I)$.
        \item $Fx \in \hw(R)$.
    \end{enumerate}
     If $R/I$ is moreover reduced, then item (1) is redundant.
\end{proposition}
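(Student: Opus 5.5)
The forward direction is immediate: $\hw(R)\to W(R/I)$ lands in $\hw(R/I)$ because reduction preserves topological nilpotence and convergence to zero, and $F$ preserves $\hw(R)$. This is the pre-adic version of \Cref{hwbasic}, obtained by passing to the limit over $R/I^n$. For the converse, write $x=\sum_{i\ge0}V^i[x_i]$. The plan has two parts: first show that every $x_i$ is topologically nilpotent, then show by induction on $n$ that $x_i\in I^n$ for all $i\gg0$.

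The main tool is the shape of the Witt components of $F$. From the ghost relation $w_j(Fx)=w_{j+1}(x)$ one gets universal formulas
\[
(Fx)_i = x_i^p + p\,x_{i+1} + f_i(x_0,\dots,x_i),
\]
where $f_i$ has $p$-divisible integer coefficients, does not contain the monomial $x_i^p$, and is isobaric of weight $p^{i+1}$ when $x_j$ is given weight $p^j$. The homogeneity comes from the $\MM_m$-equivariance of $F$ after rescaling the grading, as in \Cref{Fnil}. Since each variable has weight at most $p^i$ and the monomial $x_i^p$ is excluded, every monomial of $f_i$ is a product of at least $p$ of the variables. I would verify this structure first, because it is the step most likely to hide a subtlety: the exact form of the $p x_{i+1}$ term and the exclusion of $x_i^p$ are what make the argument work.

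Topological nilpotence. We have $x_i^p\equiv (Fx)_i \bmod p$ up to terms in $p$ times polynomials in the $x_j$. Since $(Fx)_i$ is topologically nilpotent and $p$ is topologically nilpotent, an induction on $i$ shows that each $x_i^p$, and hence each $x_i$, is topologically nilpotent. Hypothesis (1) then gives an $N$ with $x_i\in I$ for $i\ge N$. In the reduced case, $p\in\sqrt I$ forces $p\in I$. Then $x_i^p\equiv (Fx)_i \bmod I$, and $(Fx)_i\in I$ for $i\gg0$, so reducedness of $R/I$ gives $x_i\in I$ for $i\gg0$ without using (1).

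Induction on $n$. Suppose $x_i\in I^n$ for all $i\ge N$. Put $a=\sum_{j<N}V^j[x_j]$. This is a finite sum of elements with topologically nilpotent components, so $a\in\hw(R)$. Then $x-a=V^N b$, with $b=(x_N,x_{N+1},\dots)$. The components of $x':=x-a$ are $0,\dots,0,x_N,x_{N+1},\dots$, so all of them lie in $I^n$, and $Fx'=Fx-Fa\in\hw(R)$. Apply the displayed formula to $x'$. Both $x_i'^{\,p}$ and every monomial of $f_i(x'_0,\dots,x'_i)$ are products of at least $p$ elements of $I^n$, so they lie in $I^{np}$. Hence
\[
(Fx')_i\in p\,x'_{i+1}+I^{np}.
\]
Since $Fx'\in\hw(R)$, its components converge to $0$, so $(Fx')_i\in I^{np}$ for $i\gg0$. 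Therefore $p\,x'_{i+1}\in I^{np}$, and the hypothesis on $R$ gives $x'_{i+1}\in I^{n+1}$ for $i\gg0$. This is the inductive step. Combined with topological nilpotence, it shows $x\in\hw(R)$.
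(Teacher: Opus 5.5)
Your argument is correct, but it works in Witt coordinates, whereas the paper works in Joyal coordinates. In Joyal coordinates the Frobenius is exactly $(x_i)_{i\geq 0}\mapsto (x_i^p+px_{i+1})_{i\geq 0}$, since $j_i(Fx)=\gh_0(F\delta^i x)=j_i(x)^p+p\,j_{i+1}(x)$. The paper's inductive step is therefore immediate: if $x_i\in I^M$ for $i\gg 0$, then $px_{i+1}\in I^{Mp}$ for $i\gg 0$, hence $x_{i+1}\in I^{M+1}$. There are no cross-terms involving lower coordinates, so the paper needs neither your degree count nor the truncation $x\mapsto x-\sum_{j<N}V^j[x_j]$.

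Your route has to control the extra terms $f_i(x_0,\dots,x_i)$. You do this correctly in two moves. First, isobaricity of weight $p^{i+1}$ forces every monomial of $(Fx)_i-px_{i+1}$ to have degree at least $p$. Second, subtracting the head $\sum_{j<N}V^j[x_j]$, which lies in $\hw(R)$, makes every coordinate lie in $I^n$. In exchange, your proof uses the Witt-component definition of $\hw$ directly and never needs its equivalence with the Joyal-coordinate description.

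One small correction: your claim that $f_i$ omits the monomial $x_i^p$ is false. Evaluating at $x=V[t]$ gives $(Fx)_1=(p[t])_1=(1-p^{p-1})t^p$, so $f_1$ contains $x_1^p$ with coefficient $-p^{p-1}$. This does not affect the proof. Because every variable has weight at most $p^i$, isobaricity alone already gives every monomial degree at least $p$, and that is all your inductive step uses.
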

\begin{proof}
Clearly items (1) and (2) are necessary for $x \in \hw(R)$, so we need to show
that they are sufficient.
Let $x \in W(R)$ satisfy (1) and (2).
Consider the Joyal coordinates $(x_i)_{i \geq 0}$ of $x$; our assumptions imply that they are topologically nilpotent, so we need to show that
these converge to zero in the $I$-adic topology.

The Joyal coordinates of $Fx$ are $(x_i^p + p x_{i+1})_{i \geq 0}$.
By assumption, the sequence
$(x_i^p + p x_{i+1})_{i \geq 0}$ converges to zero as $i \to \infty$ in the
$I$-adic topology.
We need to show that for any $M$, there exists $N  = N(M)$ such that $x_i \in
I^M$ for $i > N$. We will prove this by induction on $M$.

Item (1) implies $x_i \in I$ for  $i \gg 0$.
This proves the base case $M = 1$. Note that if $R/I$ is reduced (and therefore an $\mathbb{F}_p$-algebra), then the
condition $Fx \in \hw(R)$ already implies that
$x$ even maps to zero in $W(R/I)$.

Now we treat the inductive step.
Suppose that we know that $x_i \in I^{M}$ (and hence $x_i^p  \in I^{Mp}$) for
all $i \gg 0$.
We also know (since $Fx \in \hw(R)$) that $x_i^p + px_{i+1} \in I^{Mp}$ for all
$i \gg 0$.
For $i \gg 0$, this gives $p x_{i+1} \in I^{Mp} $, which implies $x_{i+1} \in
I^{M+1}$ by the assumption on the pair $(R, I)$. This completes the inductive
step and concludes the proof.
\end{proof}

\begin{proposition} \label{ghostcrit}
Let $R$ be a pre-adic ring with the $p$-adic topology, such that $R$ is
$p$-torsionfree.  Let $x \in W(R)$. Then the following are equivalent:
 \begin{enumerate}
 \item
 $x \in \hw(R)$.
 \item
 \begin{enumerate}
 \item The image of $x$ in $W(R/p)$ belongs to $\hw(R/p)$.
     \item
      There exists a sequence $a_n\in \mathbb{Z}$, with $\lim a_n = +\infty$
      such that
			the $n$-th ghost component $ \gh_n(x)$ of $x$ is divisible by $p^{n+a_n}$.
\item If $p =2$, we require additionally that $x$ maps to an element of
$\hw(R/4) \subset W(R/4)$.
 \end{enumerate}

\end{enumerate}
\end{proposition}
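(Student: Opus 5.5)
The plan is to work directly with Witt components $x = \sum_{i\ge 0} V^i[x_i]$ and the ghost formula $\gh_n(x) = \sum_{i=0}^n p^i x_i^{p^{n-i}}$. Since $R$ carries the $p$-adic topology, the construction of $\hw$ for pre-adic rings says that $x \in \hw(R)$ iff two things hold: each $x_i$ is nilpotent mod $p$, and for every $M$ we have $x_i \in p^M R$ for $i \gg 0$. The first condition is exactly the statement that the components of the image in $W(R/p)$ are nilpotent, which is part of (a). One could instead try to apply \Cref{criterionforhwunderF} with $I=(p)$. However, its hypothesis $py \in p^{np}R \Rightarrow y \in p^{n+1}R$ fails exactly when $p=2,n=1$, which explains condition (c). So I will run the same inductive argument by hand, using ghost components.

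\emph{(1) $\Rightarrow$ (2).} Condition (a) and, for $p=2$, condition (c) are immediate, since $\hw$ is functorial and its image under reduction lands in $\hw$ (\Cref{hwviaWitt}). For (b), fix $M$ and choose $N$ with $x_i \in p^M R$ for all $i > N$. For the finitely many $i \le N$, there is $k$ with $x_i^k \in pR$. Hence $p^i x_i^{p^{n-i}}$ is divisible by $p^{\lfloor p^{n-i}/k\rfloor}$, which is eventually much larger than $p^{n+M}$ as $n \to \infty$. For $N < i \le n$, the term $p^i x_i^{p^{n-i}}$ is divisible by $p^{i + Mp^{n-i}}$. Writing $j=n-i$, we have $i+Mp^{j} \ge n+M$ because $Mp^j \ge j+M$. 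Thus $p^{n+M} \mid \gh_n(x)$ for $n \gg 0$ (depending on $M$), and this produces a sequence $a_n \to \infty$.

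\emph{(2) $\Rightarrow$ (1).} Nilpotence of each $x_i$ mod $p$ comes from (a). It remains to show by induction on $M$ that $x_i \in p^M R$ for $i \gg 0$.
\begin{itemize}
\item Base case $M=1$: this follows from (a). For $p=2$, condition (c) also gives the case $M=2$ directly.
\item Inductive step: suppose $x_i \in p^M R$ for $i \ge N$, with $M \ge 1$ if $p$ is odd and $M\ge 2$ if $p=2$. Decompose
\[ \gh_n(x) = \sum_{i<N} p^i x_i^{p^{n-i}} + \sum_{N\le i<n} p^i x_i^{p^{n-i}} + p^n x_n . \]
The first sum is divisible by $p^{n+M+1}$ for $n \gg 0$, by the exponential growth argument above. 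In the middle sum, with $j = n-i \ge 1$, the term is divisible by $p^{i+Mp^j}$, and we need $Mp^j \ge j+M+1$. This inequality holds for all $j\ge1$ except when $p=2$, $M=1$, $j=1$, which is excluded by our starting range for $M$. Finally, (b) gives $p^{n+M+1}\mid \gh_n(x)$ for $n\gg0$, since $a_n \ge M+1$ eventually. Therefore $p^n x_n \in p^{n+M+1}R$, and $p$-torsionfreeness yields $x_n \in p^{M+1}R$ for $n \gg 0$.
\end{itemize}
This completes the induction.

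The main obstacle is the bookkeeping of exponents in the inductive step, and in particular isolating the single failing case $p=2$, $M=1$, $j=1$; this is precisely what hypothesis (c) is designed to bypass. The remaining estimates are routine consequences of nilpotence mod $p$ and torsionfreeness.
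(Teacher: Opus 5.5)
Your proof is correct and is essentially the paper's argument. Both induct on the power of $p$ dividing the tail of Witt components and read off $p^{M+1}\mid x_n$ from the ghost expansion via the inequality $Mp^j\geq j+M+1$, whose only failure ($p=2$, $M=1$, $j=1$) is covered by hypothesis (c). The only difference is cosmetic: the paper disposes of the finitely many early components by subtracting the corresponding $V^i[x_i]\in\hw(R)$ so that all $x_i$ become divisible by $p^{A-1}$, whereas you bound those terms directly using nilpotence mod $p$, and both work.
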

\begin{proof}
It is easy to see that (1) implies (2).
Conversely, let $x$ be a Witt vector satisfying the conditions in (2).
We need to show that for any $A$, all but finitely many of the Witt components
are divisible by $p^A$.
Suppose the contrary; then choose $A$ minimal such that $p^A \nmid x_i$ for
infinitely many $i$.
Our assumptions imply that $A > 1$ (and $A>2$ for $p = 2$). Since the hypotheses
are invariant under replacing $x$ by $x + \epsilon$ for any $\epsilon \in
\hw(R)$, we may assume furthermore (also using the inductive hypotheses) that
\emph{all} $x_i$ are divisible by $p^{A-1}$, i.e., we subtract $V^i [x_i]$ for
the (finitely many) $i$ such that $p^{A-1} \nmid x_i$.

Consider the expression
\begin{equation}
 \gh_n(x) = x_0^{p^n} + p x_1^{p^{n-1}} + \dots + p^n x_n. \label{ghost:auxiliary}
 \end{equation}
For $n \gg 0$, $p^{n+A} \mid \gh_n(x)$.
Moreover, since $p^{A-1} \mid x_i$ for all $i$, we find that the term $p^i
  x_i^{p^{n-i}}$ is divisible by $p^{i + (A-1) p^{n-i}}$.
Since $A > 1$, we have for $i < n$ the inequality \[i + (A-1)p^{n-i} \geq i +
(A-1) + (n-i) + 1 = n+A\]
(using the inequality $a p^b \geq a + b + 1$ if $a \geq 1, b \geq 1$ and $p \neq
2$ or $a \geq 2, b \geq 1$ if $p = 2$).

Thus, in the expression \eqref{ghost:auxiliary}, all terms but the last term $p^n x_n$
are divisible by $p^{n+A}$, which implies that $p^{n+A} \mid p^n x_n$, or $p^A
\mid x_n$, as desired.
\end{proof}

\medskip
This criterion also implies the following corollary.

\begin{corollary}
\label{Fandwhatp}
 Let $R$ be a pre-adic ring such that $R$ has the $p$-adic topology and $R$ is
 $p$-torsionfree.
  Suppose that $x \in W(R)$ is such that $Fx \in \hw(R)$. Suppose moreover  that:
  \begin{enumerate}
      \item For $p > 2$, the image of $x$ in $W(R/p)$ belongs to $\hw(R/p)$.
      \item For $p =2$, the image of $x$ in $W(R/4)$ belongs to $\hw(R/4)$.
	  \end{enumerate}
	  Then $x \in \hw(R)$.
\end{corollary}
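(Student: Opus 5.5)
The plan is to deduce this from the ghost criterion \Cref{ghostcrit}. Let $x \in W(R)$ with $Fx \in \hw(R)$, and assume that $x$ maps into $\hw(R/p)$ (or into $\hw(R/4)$ when $p=2$). In either case $x$ also maps into $\hw(R/p)$, since $\hw$ is functorial: reduction $W(R/4)\to W(R/p)$ sends Witt components to their reductions, and nilpotence and eventual vanishing survive reduction. Hence conditions (a) and (c) of \Cref{ghostcrit} hold, and the only thing left to verify is the ghost divisibility condition (b) for $x$.

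For (b), apply the easy direction (1)$\Rightarrow$(2) of \Cref{ghostcrit} to $Fx \in \hw(R)$. This gives integers $b_n \to +\infty$ with $p^{n+b_n} \mid \gh_n(Fx)$. Now use the standard identity $\gh_n(Fx) = \gh_{n+1}(x)$. It follows that $\gh_{n+1}(x)$ is divisible by $p^{n+b_n} = p^{(n+1) + (b_n - 1)}$. Setting $a_{n+1} = b_n - 1$ and choosing $a_0$ arbitrarily (say $a_0 = -n$ so that the condition is vacuous, or simply $a_0 = 0$ adjusted as needed), we get $a_n \to +\infty$ and $p^{n+a_n} \mid \gh_n(x)$ for all $n$.

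One point to check is that in the statement of \Cref{ghostcrit}(b) the $a_n$ are allowed to be negative; they are, being integers with limit $+\infty$, so $n=0$ causes no trouble. The only other thing to confirm is the easy implication (1)$\Rightarrow$(2) for $Fx$. If $y=\sum V^i[y_i]\in\hw(R)$, then for each $A$ all but finitely many $y_i$ are divisible by $p^A$. Consequently each term $p^i y_i^{p^{n-i}}$ of $\gh_n(y)$ is divisible by $p^{n+A}$ for $n\gg0$: for $i$ small, use topological nilpotence, since $y_i^{p^{n-i}}$ is divisible by a large power of $p$ because $y_i \in pR$ eventually, by reduction mod $p$ landing in $\hw$; for $i$ large, use divisibility of $y_i$. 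I do not expect any real obstacle here; the main care needed is bookkeeping of the index shift and applying \Cref{ghostcrit} in both directions. Then \Cref{ghostcrit} yields $x \in \hw(R)$.
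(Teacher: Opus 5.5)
Your proposal is correct and is essentially the paper's proof: apply the easy direction of \Cref{ghostcrit} to $Fx$, transfer the ghost divisibility to $x$ via $\gh_n(Fx)=\gh_{n+1}(x)$ (taking $a_0=0$ for the trivial $n=0$ case), and combine this with the reduction hypothesis to conclude by the converse direction of \Cref{ghostcrit}. As you note, at $p=2$ the hypothesis over $R/4$ also gives the one over $R/2$.
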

\begin{proof}
    Set $y = Fx$. By \Cref{ghostcrit}, there exists a sequence $a_n \to +\infty$
    such that the $n$th ghost component $\gh_n(y)$ is divisible by $p^{n+a_n}$.
    Since $\gh_n(y) = \gh_{n+1}(x)$, the ghost components of $x$ satisfy the
    divisibility condition in \Cref{ghostcrit} as well. Together with the
    hypothesis on the image of $x$ in $W(R/p)$ for $p>2$ and in $W(R/4)$ for
    $p=2$, this implies that $x \in \hw(R)$.
\end{proof}

\begin{proposition}
\label{pminusV1}
    For $p > 2$,  $p - V(1) \in \hw(\mathbb{Z}_p) $.
    \end{proposition}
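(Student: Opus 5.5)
Apply \Cref{ghostcrit} to $x = p - V(1) \in W(\mathbb{Z}_p)$, using $R = \mathbb{Z}_p$ with the $p$-adic topology. This ring is $p$-torsionfree, and since $p>2$ condition (2)(c) does not arise. Two things must be checked: the image of $x$ in $W(\mathbb{F}_p)$ lies in $\hw(\mathbb{F}_p)$, and the ghost components of $x$ satisfy the required divisibility.

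\emph{Reduction mod $p$.} In $W(\mathbb{F}_p)$ we have $V(1) = V F(1) = p$, since $F$ and $V$ commute and $FV = p$ on Witt vectors of $\mathbb{F}_p$-algebras. Hence $x$ maps to $0$ in $W(\mathbb{F}_p)$, and $0 \in \hw(\mathbb{F}_p)$. (In fact \Cref{criterionforhwunderF} notes that this condition is redundant when the residue ring is reduced, but it holds directly here anyway.)

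\emph{Ghost components.} The ghost map is additive. For $n \geq 1$ we have $\gh_n(p) = p$, because $p$ is the image of the integer $p$ and ghost maps are ring maps. Also $\gh_n(V(1)) = p\,\gh_{n-1}(1) = p$, while $\gh_0(V(1)) = 0$. Hence $\gh_n(x) = 0$ for all $n \geq 1$, and $\gh_0(x) = p$. Taking $a_0 = 1$ and $a_n$ arbitrarily large for $n \geq 1$ (for instance $a_n = n$), we get $p^{n + a_n} \mid \gh_n(x)$ for every $n$, since $\gh_n(x)=0$ for $n\ge1$ and $p^{1}\mid p$ for $n=0$. Thus condition (2)(b) holds, and \Cref{ghostcrit} gives $x \in \hw(\mathbb{Z}_p)$.

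The main point to get right is the ghost computation $\gh_n(V y) = p\,\gh_{n-1}(y)$ together with $\gh_0(Vy)=0$. It is also worth understanding why the hypothesis $p>2$ is needed. The criterion requires the extra mod-$4$ condition when $p=2$, and that condition fails there: one can check that $2 - V(1) \notin \hw(\mathbb{Z}_2)$, because $2$ and $V(1)$ already differ in $W(\mathbb{Z}/4)$ in infinitely many components. So the hypothesis $p>2$ enters only through \Cref{ghostcrit}.
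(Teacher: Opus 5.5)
Your proof is correct and is essentially the paper's argument: the paper observes that $p - V(1)$ vanishes in $W(\mathbb{F}_p)$ and is killed by $F$, and then invokes \Cref{Fandwhatp}, whose proof is \Cref{ghostcrit} applied through the identity $\gh_{n+1}(x) = \gh_n(Fx)$. Your direct computation $\gh_n(p - V(1)) = 0$ for $n \geq 1$ simply unpacks that corollary.
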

    \begin{proof}
    In fact, this element maps to zero in $W( \mathbb{F}_p)$ and is annihilated
    by Frobenius, so we may apply \Cref{Fandwhatp}.
\end{proof}

The previous proposition fails when $p =2$.
In fact, we have the following result:

\begin{proposition}
    \label{impossibilityinWZ4}
    It is not possible to solve in $W( \mathbb{Z}/4) $ the equation
    $F\bar{x} = V(1) + \epsilon$ with $\bar{x} \in W(\mathbb{Z}/4), \epsilon \in
    \hw(\mathbb{Z}/4)$. In particular, $2 - V(1) \notin \hw( \mathbb{Z}/4)$.
\end{proposition}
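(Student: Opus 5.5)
The plan is to lift the putative equation to $W(\mathbb{Z}_2)$ and use ghost components, where addition is componentwise. The ring $\mathbb{Z}_2$ is $2$-torsionfree, and the image of $F$ on $W(\mathbb{Z}_2)$ is cut out by the Dwork-type congruences
\[
\gh_n(y) \equiv \gh_{n-1}(y) \pmod{2^{n+1}} \qquad (n \geq 1).
\]
These hold because $\gh_n(Fx) = \gh_{n+1}(x)$, and the standard congruence $\gh_{n+1}(x) \equiv \gh_n(x) \pmod{2^{n+1}}$ holds for any Witt vector over $\mathbb{Z}_2$, where Frobenius is the identity.

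\emph{The second assertion.} It follows from the first. If $2 - V(1) \in \hw(\mathbb{Z}/4)$, put $\bar{x} = 2$. Then $F\bar{x} = 2 = V(1) + \epsilon$ with $\epsilon = 2 - V(1)$, which would solve the forbidden equation.

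\emph{Setting up the lift.} Suppose $F\bar{x} = V(1) + \epsilon$ in $W(\mathbb{Z}/4)$.
\begin{enumerate}
\item By \Cref{hwviaWitt}, $\epsilon = \sum_{i \le k} V^i[\bar e_i]$ with each $\bar e_i \in 2\mathbb{Z}/4$.
\item Lift each $\bar e_i$ to $e_i \in 2\mathbb{Z}_2$, set $\tilde\epsilon = \sum_{i\le k} V^i[e_i]$, and lift $\bar{x}$ arbitrarily to $x \in W(\mathbb{Z}_2)$.
\item Then $y := Fx$ satisfies $y = V(1) + \tilde\epsilon + \kappa$ with $\kappa \in \ker(W(\mathbb{Z}_2) \to W(\mathbb{Z}/4)) = W(4\mathbb{Z}_2)$.
\end{enumerate}

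\emph{Ghost components of the three summands.}
\begin{itemize}
\item $\gh_n(V(1))$ equals $0$ for $n = 0$ and $2$ for $n \ge 1$.
\item $\gh_n(\tilde\epsilon) = \sum_{i \le \min(n,k)} 2^i e_i^{2^{n-i}}$.
\item $\gh_n(\kappa) = \sum_i 2^i \kappa_i^{2^{n-i}}$ with $\kappa_i \in 4\mathbb{Z}_2$. Each term has valuation at least $n+2$, so $\gh_n(\kappa) \equiv 0 \pmod{2^{n+2}}$, and $\kappa$ drops out of every congruence modulo $2^{n+1}$.
\end{itemize}
The hypothesis $y \in F(W(\mathbb{Z}_2))$ therefore becomes the family of congruences
\[
\gh_n(V(1)) - \gh_{n-1}(V(1)) + \sum_{i \le k} 2^i\bigl(e_i^{2^{n-i}} - e_i^{2^{n-1-i}}\bigr) \equiv 0 \pmod{2^{n+1}},
\]
where the term with $i = n$ is read as $2^n e_n$ (and $e_i = 0$ for $i > k$).

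\emph{Forcing infinitely many nonzero $e_i$.} I now analyse these congruences by induction on $n$, aiming to show that they force $e_n \not\equiv 0 \pmod 4$ for every $n$.
\begin{itemize}
\item At $n = 1$ the congruence reads $2 + e_0^2 + 2e_1 - e_0 \equiv 0 \pmod 4$. Since $e_0^2 \equiv 0$ and $2e_1 \equiv 0 \pmod 4$, this gives $e_0 \equiv 2 \pmod 4$.
\item For the inductive step, assume $e_i \equiv 2 \pmod 4$ for $i < n$, i.e.\ $e_i = 2u_i$ with $u_i$ a unit. Compute the $2$-adic valuations of $2^i\bigl(e_i^{2^{n-i}} - e_i^{2^{n-1-i}}\bigr)$ modulo $2^{n+1}$. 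Only $i = n-1$ and a few small $i$ (for instance $i = 0$ at $n = 2$) can contribute at level $2^n$.
\item The $V(1)$ term contributes only at $n = 1$.
\item Once $n > k$, we have $e_n = 0$, so the $2^n e_n$ term vanishes. The surviving low-order contributions are then shown to sum to exactly $2^n$ modulo $2^{n+1}$, which violates the congruence.
\end{itemize}
This contradicts the finiteness of $\epsilon$.

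The hard part is this valuation bookkeeping. Every term with $i \le n-3$ has valuation at least $i + 2^{n-1-i} \ge n+1$ (using $e_i^{2^{n-1-i}}$ divisible by $2^{2^{n-1-i}}$), so it vanishes modulo $2^{n+1}$. Only $i = n-1$ and $i = n-2$ survive, giving the recursion
\[
2^n e_n \equiv 2^{n-1}e_{n-1} - 2^{n-2}\bigl(e_{n-2}^4 - e_{n-2}^2\bigr) \pmod{2^{n+1}}.
\]
If the direct residue analysis becomes tangled, the fallback is to pick the largest $k$ with $e_k \ne 0$ modulo $4$ and examine the congruence at $n = k+1$ and $n = k+2$. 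There the right-hand side has too small a valuation to be cancelled by terms that are by then absent.
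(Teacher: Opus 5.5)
Your argument is essentially the paper's: lift to $W(\mathbb{Z}_2)$, pass to ghost components, and use $2$-adic valuation bookkeeping to show that the Witt components of $\epsilon$ can never become $0 \bmod 4$. The real difference is where the congruences come from. The paper lifts $\bar x$ to $x$, sets $\epsilon = Fx - V(1)$ in $W(\mathbb{Z}_2)$, and uses that $x \mapsto 2$ in $W(\mathbb{F}_2)$. Hence $x-2$ has even Witt components, and $\gh_i(\epsilon) = \gh_{i+1}(x-2)$ is divisible by $2^{i+2}$ for $i \geq 1$. This amounts to the one-step recursion $2^i\epsilon_i \equiv -2^{i-1}\epsilon_{i-1}^2 \pmod{2^{i+2}}$, which gives $v_2(\epsilon_i)=1$ for all $i$. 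You instead lift $\epsilon$ to a finite $\tilde\epsilon$ and use only the Dwork congruences for $y = Fx$. These are weaker, since they constrain only differences modulo $2^{n+1}$. That is why you get a two-step recursion, and it is also the source of an off-by-one in your main line.

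Since every $e_n$ is even, $2^n e_n \equiv 0 \pmod{2^{n+1}}$, so the level-$n$ congruence says nothing about $e_n \bmod 4$. Your recursion is correct, but it really reads $2^{n-1}e_{n-1} \equiv -2^{n-2}e_{n-2}^2 \pmod{2^{n+1}}$, so it determines $e_{n-1}$ from $e_{n-2}$. Consequently, the inductive step as you phrase it (deducing $e_n$ at level $n$) does not go through. Likewise, the assertion that once $n > k$ the surviving terms sum to $2^n$ is false at $n = k+1$. There, for $k \geq 1$ and under your inductive hypothesis $e_k \equiv e_{k-1} \equiv 2 \pmod 4$, the terms $-2^k e_k$ and $-2^{k-1}e_{k-1}^2$ contribute $2^{k+1}+2^{k+1} \equiv 0 \pmod{2^{k+2}}$, so the congruence holds. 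Your fallback is the correct argument and is complete on its own. The $n=1$ congruence gives $e_0 \equiv 2 \pmod 4$, so there is a largest $k$ with $e_k \equiv 2 \pmod 4$. At level $n = k+2$, the term $V(1)$ no longer contributes and $4 \mid e_{k+1}$. So the congruence reduces to $2^k e_k^2 \equiv 0 \pmod{2^{k+3}}$, which is impossible since $v_2(2^k e_k^2) = k+2$. Organize the proof around that step and it is correct.
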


\begin{proof}
Choose a lift $x \in W( \mathbb{Z}_2)$ of $\bar{x}$ and write
$\epsilon = F x - V(1)$.  
    Let the Witt coordinates of $x$ be $(x_i)_{i \geq 0}$  and those of
    $\epsilon$ be $(\epsilon_i)_{i \geq 0}$.
    Then by assumption, $v_2(\epsilon_i) \geq 1$ for each $i$. We will derive a
    contradiction by showing that
    $v_2( \epsilon_i) = 1$ for all $i$, so $\epsilon$ cannot project to an element of $\hw( \mathbb{Z}/4)$.

Note first that $x_1$ is a 2-adic unit and $x_i$ is divisible by 2 for $i \neq
1$; this follows from comparison in $W( \mathbb{F}_2)$, where (since $\epsilon$
maps to zero) we find that $x$ must map to $2 = V(1) \in W( \mathbb{F}_2)$.
    Let us take the ghost coordinates of the equation $Fx - V(1)=
 \epsilon$. We find
\begin{gather}
    x_0^2 + 2x_1 = \epsilon_0 \\
    x_0^4 + 2x_1^2 + 4x_2 -2= \epsilon_0^2 + 2 \epsilon_1 \\
    x_0^8 + 2x_1^4 + 4x_2^2 + 8 x_3 - 2 =   \epsilon_0^4 + 2 \epsilon_1^2 + 4
    \epsilon_2 \\
    \dots
\end{gather}
Since $x_1$ is a 2-adic unit and $v_2(x_0) \geq 1$, we find that
$v_2(\epsilon_0) = 1$.

For $i \geq 1$,  we find that the $i$th ghost component  $\gh_i(\epsilon)$ of $\epsilon = F(x) - V(1)$ is also
$\gh_i( F(x) - 2) = \gh_{i+1}(x - 2)
$. This has 2-adic valuation at least $i+2$ since all the Witt components of $x-2$
are divisible by $2$, so $v_2( \gh_i(\epsilon)) \geq i+2$.
On the other hand, 
\[ \gh_i( \epsilon) = \epsilon_0^{2^i} + 2 \epsilon_1^{2^{i-1}} + \dots + 2^{i-1} \epsilon_{i-1}^2
+ 2^i \epsilon_i.\]
If we inductively assume that $v_2( \epsilon_j) = 1$ for $j < i$, the fact that
the displayed quantity has 2-adic valuation $\geq i+2$ forces $v_2( \epsilon_i)
= 1$.
By induction, we conclude that $v_2( \epsilon_i) = 1$ for all $i$.

Finally, the last claim that $2 - V(1) \notin \hw(\mathbb{Z}/4)$ follows because
otherwise we could take $\bar{x} = 2, \epsilon = 2 - V(1)$ to solve the stated
equation.
\end{proof}
\begin{proposition}
\label{uinWZ2}
For $p = 2$,  $2 - V( [-1]) \in \hw( \mathbb{Z}_2)$.
\end{proposition}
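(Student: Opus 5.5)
The plan is to apply the ghost-component criterion \Cref{ghostcrit} to $x = 2 - V([-1]) \in W(\mathbb{Z}_2)$. The ring $\mathbb{Z}_2$ is $p$-torsionfree with the $p$-adic topology, so it suffices to verify conditions (2a), (2b) and (2c) there.

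\emph{Ghost components.} For any ring, $\gh_n(V(y)) = p\,\gh_{n-1}(y)$ and $\gh_n([a]) = a^{p^n}$. Hence
\[
\gh_0(V([-1])) = 0, \qquad \gh_1(V([-1])) = -2, \qquad \gh_n(V([-1])) = 2 \quad \text{for } n \geq 2 .
\]
Since $\gh_n(2) = 2$ for all $n$, the ghost vector of $x$ is $(2, 4, 0, 0, \dots)$. Condition (2b) therefore holds trivially: take $a_n$ arbitrarily large for $n \geq 2$, since the ghost components vanish there.

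\emph{Condition (2a).} In $W(\mathbb{F}_2)$ we have $[-1] = 1$ and $V(1) = 2$, so $x$ maps to $0$, which lies in $\hw(\mathbb{F}_2)$.

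\emph{Condition (2c).} This is the main step. I would solve for the Witt components $x_i$ from the ghost vector. The equations $\gh_0 = 2$ and $\gh_1 = x_0^2 + 2x_1 = 4$ give $x_0 = 2$ and $x_1 = 0$. Then I would show by induction on $n \geq 2$ that $4 \mid x_n$. Assuming $4 \mid x_i$ for $2 \leq i < n$, the equation $\gh_n(x) = 0$ reads
\[
2^{2^n} + \sum_{2 \leq i < n} 2^i x_i^{2^{n-i}} + 2^n x_n = 0 .
\]
The first term has $2$-adic valuation $2^n \geq n+2$ for $n \geq 2$. Each middle term has valuation at least $i + 2^{n-i+1}$, and $2^{k+1} \geq k+2$ with $k = n - i \geq 1$ gives $i + 2^{n-i+1} \geq n+2$. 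Hence $2^{n+2} \mid 2^n x_n$, that is, $4 \mid x_n$.

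Consequently the image of $x$ in $W(\mathbb{Z}/4)$ is $[2]$. This element has a single nonzero Witt component, which is nilpotent, so it lies in $\hw(\mathbb{Z}/4)$ by \Cref{hwviaWitt}. With (2a), (2b) and (2c) established, \Cref{ghostcrit} gives $x \in \hw(\mathbb{Z}_2)$.

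The only genuine content is the valuation bookkeeping in the induction. It works because the ghost components of $x$ vanish from degree $2$ on. This is exactly what fails for $2 - V(1)$: its ghost vector is $(2, 0, 2, 2, \dots)$, the situation excluded by \Cref{impossibilityinWZ4}.
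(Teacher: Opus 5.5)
Your proof is correct and matches the paper's argument: you compute the ghost vector $(2,4,0,0,\dots)$, show by induction that $x_0=2$, $x_1=0$ and $4\mid x_n$ for $n\geq 2$, and then apply \Cref{ghostcrit}. One correction to your closing aside: since $\gh_n(V(1))=2$ for all $n\geq 1$, the ghost vector of $2-V(1)$ is $(2,0,0,\dots)$, so conditions (2a) and (2b) hold for it too, and what actually fails is the mod-$4$ condition (2c) ($\gh_1=0$ forces $x_1=-2$, and in fact every Witt component has $2$-adic valuation exactly $1$, as shown in the proof of \Cref{impossibilityinWZ4}).
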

\begin{proof}
We claim that the Witt components $x_n$ of $2 - V( [-1])$ satisfy $v_2(x_n) \geq 2$ for $n > 0$.
Moreover,  the ghost components are
$2, 4, 0, 0, 0, \dots$. This will imply 
 $2 - V( [-1]) \in \hw( \mathbb{Z}_2)$ by \Cref{ghostcrit}.

To prove the claim, note that $x_0 = 2$, $x_1 = 0$, and, in general, the Witt components satisfy
\[x_0^{2^n} + 2 x_1^{2^{n-1}} + 4 x_2^{2^{n-2}} + \dots + 2^n x_n = 0.\]
Suppose inductively that for $0 < i < n$, we have $v_2(x_i) \geq 2$.
For $n \geq 2$, all the monomials in the above expression except for the last
then have 2-adic valuation at least $n+2$.
(In fact, $2^n \geq n+2$, and for $0 < i < n$, $i + 2( 2^{n-i}) \geq n+2$.) This
implies  that $v_2(x_n) \geq 2$ for $n > 0$ and completes the proof.
\end{proof}

\subsection{The Frobenius $F: \hw \to \hw$}

Recall that the Frobenius $F: W \to W$ is faithfully flat as a map of schemes
\cite[Prop.~3.4.7]{bhattlurieAPC}.
We start by proving an analog for $\hw$, where it suffices to use the fppf
topology.

\begin{proposition}[Fppf surjectivity of Frobenius]
\label{Fisfppfsurj}
Let $x \in \hw(R)$. Then there exists an fppf cover $R \to R'$ and an element $y
\in \hw(R')$ such that $Fy = x$. That is, $F$ is fppf locally surjective on
$\hw$. \end{proposition}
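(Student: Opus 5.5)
Since $\hw(R)$ is a subgroup of $W(R)$ (\Cref{hwbasic}), it suffices to lift each generator and then adjoin roots for all of them in one extension. By \Cref{hwviaWitt}, any $x \in \hw(R)$ is a finite sum $\sum_{i=0}^{N} V^i[x_i]$ with each $x_i$ nilpotent. A composite of finitely many fppf covers is an fppf cover, and $\hw$ is functorial. So it is enough to show the following: for each nilpotent $a \in R$ and each $i \geq 0$, there is an fppf cover $R \to R'$ and some $y \in \hw(R')$ with $Fy = V^i[a]$.

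For $i = 0$, adjoin a $p$-th root. Set $R' = R[t]/(t^p - a)$. This is free of rank $p$ over $R$, hence fppf, and $t$ is nilpotent because $t^{pn} = a^n = 0$. Then $y = [t] \in \hw(R')$ and $Fy = [t^p] = [a]$.

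For $i \geq 1$, take $y = V^{i-1}[a']$ for a suitable $a'$. We have $F V^{i-1}[a'] = p V^{i-2}[a']$ for $i \geq 2$, and $FV[a'] = p[a']$ in general, which is not literally $V^i[a]$. Instead I use the identity $V^i[a] = V^{i-1}(V[a])$ and try to write $V[a]$ as $F$ of something up to terms in $\hw$ that can be handled inductively. A cleaner option is the formula $F V = p$: we get $V[a] = F(z)$ as soon as $[a] = F(w)$ (so that $V[a] = VF w$), provided $F$ and $V$ interact well. The honest identity is $V(F w)= V(1)\cdot w$. So a more robust choice is to pass to the universal case. Let $S = \mathbb{Z}[a]/(a^n)$, and consider the fppf sheaf morphism $F\colon \hw \to \hw$ restricted to the graded ind-scheme $\hw_{\le N}$. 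Then use that $F\colon W \to W$ is faithfully flat (\cite[Prop.~3.4.7]{bhattlurieAPC}). Given $x \in \hw(R)$, the fiber $R' = R \otimes_{W,F} W$, i.e.\ the ring representing $\{y \in W : Fy = x\}$, is faithfully flat over $R$, and $y$ is the universal point. I then need to check two things: that $y \in \hw(R')$, and that $R'$ can be replaced by a finitely presented cover.

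For the first point, use the degree argument of \Cref{Fnil}. The Joyal coordinates of $Fy$ are $(y_j^p + p y_{j+1})_j$, and these equal those of $x$, which are nilpotent and eventually $0$. This does not immediately force the $y_j$ to be eventually $0$ (e.g.\ $p y_{j+1} = -y_j^p$ can continue forever). So instead I construct $R'$ explicitly as a quotient. Adjoin variables $y_0, \dots, y_M$, set $y_j = 0$ for $j > M$, and impose the finitely many equations $Fy = x$. These equations are $y_j^p + p y_{j+1} = x_j$ for $j < M$, $y_M^p = x_M$, and $0 = x_j$ for $j > M$, which already holds once $M \geq N$. The resulting algebra is $R[y_0,\dots,y_M]/(y_j^p + p y_{j+1} - x_j,\ y_M^p - x_M)$. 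Solving from the top down, it is free of rank $p^{M+1}$ over $R$, with basis the monomials having exponents $<p$. So it is finite, free, and finitely presented, hence an fppf cover. Nilpotence of each $y_j$ follows by descending induction: $y_M^p = x_M$ is nilpotent, and $y_j^p = x_j - p y_{j+1}$ is nilpotent. Hence $y \in \hw(R')$ and $Fy = x$ in Joyal coordinates.

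The main obstacle is the bookkeeping in this last step: checking that the Joyal-coordinate formula for $F$ (as used in the proof of \Cref{criterionforhwunderF}) is exactly $(y_j^p + p y_{j+1})$, and that the triangular system yields a free module. Both are routine once the Joyal coordinates are in place, and with them this direct construction makes the reduction to generators in the first paragraph unnecessary.
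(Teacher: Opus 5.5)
Your final construction, the algebra $R[y_0,\dots,y_M]/(y_j^p+py_{j+1}-x_j,\ y_M^p-x_M)$, is correct and proves the statement on its own. The earlier attempts (the reduction to $V^i[a]$ and the fiber ring $R\otimes_{W,F}W$) can simply be deleted.

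The two facts you call the main obstacle are indeed routine:
\begin{itemize}
\item Joyal coordinates give a bijection $W(R)\cong R^{\mathbb{N}}$, natural in $R$. This is Joyal's cofreeness applied to the free $\delta$-ring $\mathbb{Z}\{t\}=\mathbb{Z}[t,\delta t,\delta^2t,\dots]$.
\item One has $j_i(Fy)=\gh_0(\phi(\delta^i y))=y_i^p+py_{i+1}$, because $\phi=(\cdot)^p+p\delta$ commutes with $\delta$ and $\gh_0$ is a ring map.
\end{itemize}
With $M\geq N$, your tower of monic degree-$p$ extensions is free of rank $p^{M+1}$. Descending induction gives nilpotence of each $y_j$, and \Cref{hwviaJoyal} then puts $y$ in $\hw(R')$.

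This is a genuinely different route from the paper's, which stays in Witt coordinates. The paper proceeds as follows:
\begin{itemize}
\item It reduces to $x=V^n[a]$, adjoins a $p^n$-th root $b$ of $a$, and rewrites $x=[b]V^n(1)$.
\item It chooses $M$ with $b^{p^M}=0$ and uses fppf surjectivity of $F\colon W_{M+1}\to W_M$ to write $V^n(1)=F(z)+V^M(y)$.
\item The tail dies, since $[b]V^M(y)=V^M([b^{p^M}]y)=0$.
\item Finally it adjoins a $p$-th root $c$ of $b$, getting $x=F([c]z)$, with $[c]z\in\hw$ by the ideal property.
\end{itemize}

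Your approach replaces this with one observation: in Joyal coordinates, $F$ is a triangular system that can be truncated, precisely because elements of $\hw$ have eventually vanishing Joyal coordinates. Compared with the paper's argument, yours:
\begin{itemize}
\item needs no reduction to generators;
\item needs no external input about Frobenius on truncated Witt vectors (in effect you prove the needed surjectivity by hand);
\item gives an explicit finite free cover of rank $p^{M+1}$ for arbitrary $x$ at once.
\end{itemize}
The paper's argument is shorter once the fppf surjectivity of $F$ on $W_{M+1}\to W_M$ is granted. It also makes visible the mechanism by which nilpotence of $b$ absorbs the $V^M$-tail, the same trick as in \Cref{productlemmahw}.
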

\begin{proof}
Since \(x\) is a finite sum of terms \(V^n[a]\) with \(a\) nilpotent, it
suffices, after composing finitely many fppf covers, to treat the case
\(x=V^n[a]\).
Fppf locally on $R$, we can write $a = b^{p^n}$, so
$x = [b] V^n(1)$.
Choose $M$ such that $b^{p^M} = 0$.

After replacing \(R\) by an fppf cover, we can find \(c \in R\) and
\(z,y\in W(R)\) such that
$b = c^p$ and $V^n (1) = F(z) + V^{M}(y)$.
The first claim is evident and the second follows because
$F: W_{M+1} \to W_M$ (i.e., the Frobenius on truncated Witt vectors) is an fppf
surjection.

Then we get
\begin{align*} x &  = V^n ( [a]) \\
& =
[b] V^n(1) \\
& = [b]( F(z) + V^M(y)) \\
& = [b] F(z) \\
& =
F( [c]\cdot z)
\end{align*}
Here we used that $b^{p^M} = 0$.
Now $[c]\cdot z \in \hw(R)$ because $c$ is nilpotent
and
$\hw(R) \subset W(R)$ is an ideal by \Cref{hwbasic}.
\end{proof}

Let $W^F$ denote the kernel of $F: W \to W$, and similarly let $\hw^F$ denote
the kernel of $F: \hw \to \hw$.
Recall from  \cite[Lem.~3.2.6]{drinfeld} that over $\mathbb{Z}_{(p)}$,
the natural map
$W^F \to W \to \mathbb{G}_a$ lifts uniquely to an isomorphism
$W^F \xrightarrow{\sim} \gasharp$, where $\gasharp = \spec
\mathbb{Z}_{(p)}\left[ \frac{x^i}{i!}\right]_{i \geq 1}$ is the
divided-power envelope of $0$ in $\mathbb{G}_a$.

\begin{definition}[The group ind-scheme $\gahatsharp$]  We have a subfunctor
$\gahatsharp \subset \gasharp$ defined as follows: $\gasharp $ has a natural
$\MM_m$-action by rescaling, and we use \Cref{Mhatdef} to define the subfunctor
$\gahatsharp$.
Therefore,
\[ \gahatsharp = \varinjlim_N \spec\left(
\mathbb{Z}_{(p)}\left[ \frac{x^i}{i!}\right]_{i \geq 1}
/ \left(
\frac{x^j}{j!}\right)_{j \geq N} \right).  \]
\end{definition}

Explicitly, $\gasharp(R)$ parametrizes the choice of an element $x \in R$ with
divided powers $\{x^{(i)}\}_{i \geq 0}$. An element of $\gasharp(R)$ belongs to
$\gahatsharp(R)$ if and only if all but finitely many of the divided powers
$x^{(i)}$ vanish, i.e., if the divided powers are ``nilpotent.''

\begin{proposition}
\label{hwFgahatsharp}
    The isomorphism $W^F \simeq \gasharp$ over $\mathbb{Z}_{(p)}$ restricts to
    an isomorphism $\hw^F \simeq \gahatsharp$.
	 Thus, by \Cref{Fisfppfsurj}, we have a short exact sequence of fppf sheaves
	 $0 \to \gahatsharp \to \hw \stackrel{F}{\to} \hw \to 0$.
\end{proposition}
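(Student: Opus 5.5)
The claim is that $\hw^F = W^F \cap \hw$ corresponds, under $W^F \simeq \gasharp$, to $\gahatsharp$. The plan is to check that the isomorphism $W^F \xrightarrow{\sim} \gasharp$ is $\MM_m$-equivariant for suitable gradings. Both subfunctors are then cut out by the same condition from \Cref{Mhatdef}, and finite-limit compatibility (\Cref{usefulremarkabouthats}) identifies $\hw^F$ with $\widehat{W^F}$.

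\textbf{Step 1: $\hw^F = \widehat{W^F}$.} The group scheme $W^F$ is the fiber product of $F\colon W \to W$ and the zero section. $F$ is $\MM_m$-equivariant once the target action is rescaled by $t \mapsto t^p$ (as in the proof of \Cref{Fnil}). By \Cref{usefulremarkabouthats}(1),(2), $\widehat{W^F}$ is the kernel of $F\colon \hw \to \hw$, i.e.\ $\hw^F$. Here $W^F$ carries the restricted $\MM_m$-action $x \mapsto [t]x$, and its coordinate ring is the quotient of the graded polynomial ring on Witt coordinates by a homogeneous ideal. So $\widehat{W^F}(R)$ consists of the points whose pullback kills all functions of high degree, which is the same as having nilpotent, eventually vanishing Witt coordinates.

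\textbf{Step 2: equivariance of $W^F \simeq \gasharp$.} The map $W^F \to W \to \mathbb{G}_a$ is the zeroth Witt component, and it satisfies $([t]x)_0 = t x_0$. So it is equivariant for the standard scaling on $\mathbb{G}_a$. The isomorphism $W^F \simeq \gasharp$ is the unique lift of this map (\cite[Lem.~3.2.6]{drinfeld}), and for each $t$ the rescaled composite is also a lift of the rescaled map. By uniqueness, applied over the base $\MM_m \times W^F$ (i.e.\ universally in $t$), the isomorphism intertwines $[t]$ with scaling on $\gasharp$, under which $x^{(i)} \mapsto t^i x^{(i)}$. One has to check that uniqueness holds over an arbitrary $\mathbb{Z}_{(p)}$-base and for maps out of $\MM_m \times W^F$, not merely for the group homomorphism itself. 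I would argue this via the fact that $\gasharp$ is the PD envelope: $\gasharp$-valued points over $\mathbb{Z}_{(p)}$-flat bases are determined by their image in $\mathbb{G}_a$, and $\MM_m \times W^F$ is flat over $\mathbb{Z}_{(p)}$ because $W^F \simeq \gasharp$. This is the main point to get right.

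\textbf{Step 3: conclude.} An $\MM_m$-equivariant isomorphism of affine schemes is an isomorphism of graded coordinate rings. The subfunctor construction of \Cref{Mhatdef} is functorial for such isomorphisms, so it matches $\widehat{W^F} = \hw^F$ with $\widehat{\gasharp} = \gahatsharp$. For the exact sequence, $\hw^F$ is by definition the kernel of $F$ on $\hw$, and \Cref{Fisfppfsurj} gives fppf surjectivity. Hence $0 \to \gahatsharp \to \hw \xrightarrow{F} \hw \to 0$ is exact as fppf sheaves.
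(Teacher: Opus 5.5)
Your proposal is correct and follows the same route as the paper. The paper's proof just observes that $M \mapsto \hat{M}$ commutes with finite limits, so $\hw^F = \widehat{W^F}$, and transports this along $W^F \simeq \gasharp$. Your Step~2, which uses the uniqueness of maps from a $\mathbb{Z}_{(p)}$-flat source into $\gasharp$ to show that this isomorphism is $\MM_m$-equivariant for the scaling action on $\gasharp$, makes explicit a point the paper leaves implicit.
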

\begin{proof}
    This follows from the identification $\gasharp \simeq \mathrm{ker}(F:
	 W \to W)$
	 because the construction $M \mapsto \hat{M}$ commutes with finite limits
	 (\Cref{usefulremarkabouthats}).
\end{proof}

\begin{remark}
    From this point of view, the fact that $p - V(1) \in \hw( \mathbb{Z}_p)$ for
    $p > 2$ but not for $ p = 2$ is equivalent to the statement that the
    canonical divided powers on $(p)\subset \mathbb{Z}_p$ tend $p$-adically to
    zero precisely when $p>2$.
\end{remark}

\subsection{$\delta$-nilpotent elements}

\begin{definition}[$\delta$-nilpotent elements]
\label{deltanilpdef}
Let $A$ be any $\delta$-ring.
We say that an element $a \in A$ is \emph{$\delta$-nilpotent} if
$\delta^i(a)$ is nilpotent for all\footnote{As shown below in \Cref{deltaofnilpotent}, the nilpotence of $a$ implies the nilpotence of $\delta^i(a)$ for all $i \geq 0$.} $i \geq 0$ and
$\delta^i(a) = 0$ for $i \gg
0$.
Let $\dnil{A} \subset A$ be the subset of $\delta$-nilpotent elements.
\end{definition}

\begin{example}
Let $R$ be a $\mathbb{Q}_{\geq 0}$-graded $\delta$-ring such that $R_i = 0 $ for
all $i \gg 0$.
Then any homogeneous element of $R$ of positive degree is
$\delta$-nilpotent, because $\delta$ multiplies the degree by $p$.
\end{example}

\begin{example}
Consider the ring $\mathbb{Z}_p[\epsilon]/(\epsilon^2, p \epsilon)$.
We can make this into a $\delta$-ring via $\delta(a \epsilon) = a \epsilon$ for
$a \in \mathbb{F}_p$.
Then $\epsilon$ is a nilpotent, $p$-torsion element which is not $\delta$-nilpotent.
\end{example}

\begin{proposition}
For any $\delta$-ring $A$,
$\dnil{A}$ is the preimage of
$\hw(A)$ via the $\delta$-map $w_\delta: A \to W(A)$.
\label{crit:dnil}
\end{proposition}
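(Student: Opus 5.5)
We want: $a \in \dnil{A}$ iff $w_\delta(a) \in \hw(A)$. The natural tool is \Cref{hwviaJoyal}. An element $x \in W(A)$ lies in $\hw(A)$ if and only if all its Joyal coordinates are nilpotent and all but finitely many vanish. The plan is therefore to identify the Joyal coordinates of $w_\delta(a)$ explicitly: the claim is that they are exactly $a, \delta(a), \delta^2(a), \dots$. Granting this, the equivalence is immediate from the definition of $\delta$-nilpotence.

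To compute the Joyal coordinates, recall that the $i$th Joyal coordinate of $x \in W(A)$ is the zeroth Witt (equivalently ghost) component of $\delta^i(x)$, where $\delta$ is the Joyal $\delta$-structure on $W(A)$. Since $w_\delta\colon A \to W(A)$ is a map of $\delta$-rings (\Cref{joyal:witt}), we have $\delta^i(w_\delta(a)) = w_\delta(\delta^i(a))$. Since $w_\delta$ is a section of the projection $W(A) \to A$, the zeroth component of $w_\delta(\delta^i(a))$ is $\delta^i(a)$. Hence the Joyal coordinates of $w_\delta(a)$ are $(\delta^i(a))_{i \geq 0}$.

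By \Cref{hwviaJoyal}, $w_\delta(a) \in \hw(A)$ if and only if every $\delta^i(a)$ is nilpotent and $\delta^i(a) = 0$ for $i \gg 0$. This is verbatim the definition of $a \in \dnil{A}$ in \Cref{deltanilpdef}.

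The only step needing care is the identification of Joyal coordinates, specifically that "zeroth component" in the definition of the Joyal coordinates means the projection $W(A) \to A$ used in \Cref{joyal:witt}. I would double-check this against the convention cited in \Cref{hwviaJoyal}. As a by-product, the argument shows $\dnil{A}$ is an ideal, since $w_\delta$ is a ring map and $\hw(A)$ is an ideal by \Cref{hwbasic}. It also shows $\dnil{A}$ is stable under $\delta$, since $\hw(A)$ is stable under the $\delta$-structure on $W(A)$: shifting Joyal coordinates preserves the defining condition.
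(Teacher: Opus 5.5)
Your proposal is correct and follows essentially the same route as the paper. The paper's proof also observes that the Joyal coordinates of $w_\delta(a)$ are $a, \delta(a), \delta^2(a), \dots$ and concludes from \Cref{hwviaJoyal}; your justification of that identification (that $w_\delta$ is a $\delta$-map and a section of the projection $W(A) \to A$) simply spells out the step the paper leaves implicit.
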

\begin{proof}
Given $a \in A$, the Joyal coordinates of $w_\delta(a)$ are $a, \delta(a),
\delta^2(a), \dots$.  The result follows.
\end{proof}

\begin{corollary}
$\dnil{A} \subset A$ is a $\delta$-ideal, so $A/\dnil{A}$ acquires the structure
of a $\delta$-ring.
\end{corollary}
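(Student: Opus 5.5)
The plan is to deduce everything from \Cref{crit:dnil}, which identifies $\dnil{A}$ with the preimage $w_\delta^{-1}(\hw(A))$ under the $\delta$-ring map $w_\delta\colon A \to W(A)$.

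First I show that $\dnil{A}$ is an ideal. By \Cref{hwbasic}, $\hw(A) \subset W(A)$ is an ideal. Since $w_\delta$ is in particular a ring homomorphism, the preimage of an ideal under it is an ideal of $A$. So $\dnil{A}$ is an ideal.

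Next I show stability under $\delta$. Let $a \in \dnil{A}$. Because $w_\delta$ is a map of $\delta$-rings, $w_\delta(\delta(a)) = \delta(w_\delta(a))$, where on the right $\delta$ denotes the Joyal $\delta$-structure on $W(A)$. It therefore suffices to know that $\hw(A)$ is stable under the $\delta$-operation of $W(A)$. I would check this with Joyal coordinates, as in \Cref{hwviaJoyal}. The $i$th Joyal coordinate of $x$ is the zeroth component of $\delta^i(x)$, so the Joyal coordinates of $\delta(x)$ are those of $x$ shifted by one. If the Joyal coordinates of $x$ are nilpotent and almost all vanish, then the same holds for $\delta(x)$. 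Thus $\delta(\hw(A)) \subset \hw(A)$, and so $\delta(a) \in \dnil{A}$. (Alternatively, this can be read off directly from the definition: the sequence $\delta^i(\delta(a))$ is just a tail of the sequence $\delta^i(a)$.)

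Finally, I pass to the quotient. A $\delta$-ideal $I$ means an ideal with $\delta(I) \subset I$. For such an ideal, $\delta$ descends to $A/I$, because $\delta(a + x) = \delta(a) + \delta(x) - \sum_{0<j<p} \tfrac{1}{p}\binom{p}{j} a^j x^{p-j} \equiv \delta(a) \pmod I$ for $x \in I$. The $\delta$-ring identities then hold in $A/I$ because they hold in $A$. This is the standard fact that quotients of $\delta$-rings by $\delta$-ideals are $\delta$-rings.

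The only step requiring any care is the stability of $\hw(A)$ under $\delta$, which the Joyal-coordinate shift handles. The rest is formal.
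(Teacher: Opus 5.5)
Your argument is correct and matches the paper's: there the corollary is read off from \Cref{crit:dnil}. The point is that $\dnil{A} = w_\delta^{-1}(\hw(A))$, where $w_\delta$ is a $\delta$-ring map and $\hw(A) \subset W(A)$ is an ideal (\Cref{hwbasic}) that is stable under $\delta$ by exactly the Joyal-coordinate shift you describe. As you observe, $\delta$-stability is immediate from the definition anyway, so the only substantive input is the ideal property supplied by \Cref{hwbasic}.
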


\begin{proposition}
\label{dnilinW}
Let $R$ be any ring, and consider the $\delta$-ring
$W(R)$. Then $\dnil{ W(R)} = \hw(R)$.
\end{proposition}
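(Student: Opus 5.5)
By \Cref{crit:dnil}, $\dnil{W(R)}$ is the preimage of $\hw(W(R))$ under $w_\delta\colon W(R) \to W(W(R))$. So the statement is equivalent to the following claim: for $x \in W(R)$, we have $w_\delta(x) \in \hw(W(R))$ if and only if $x \in \hw(R)$. Equivalently, the Joyal coordinates of $w_\delta(x)$ are exactly $x, \delta(x), \delta^2(x), \dots$, and we must show that these are all nilpotent and eventually zero in $W(R)$ precisely when $x \in \hw(R)$.

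\emph{Containment $\dnil{W(R)} \subset \hw(R)$.} Suppose $\delta^i(x)$ is nilpotent for all $i$ and vanishes for $i \geq N$. Let $\pi\colon W(R) \to R$ be the projection. The $i$th Joyal coordinate of $x$ (\Cref{hwviaJoyal}) is $\pi(\delta^i(x))$. Each such coordinate is nilpotent because $\delta^i(x)$ is nilpotent, and it vanishes for $i \geq N$. Hence $x \in \hw(R)$ by \Cref{hwviaJoyal}. This direction is formal.

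\emph{Containment $\hw(R) \subset \dnil{W(R)}$.} Elements of $\hw(R)$ are nilpotent (remark after \Cref{hwbasic}), and $\hw(R)$ is an ideal stable under $F$. Therefore, if $x \in \hw(R)$, then $x^p \in \hw(R)$ and $F(x) \in \hw(R)$, so $p\,\delta(x) = F(x) - x^p \in \hw(R)$. This does not directly give $\delta(x) \in \hw(R)$, since we cannot divide by $p$. Instead I would use the grading formalism. The map $\delta\colon W \to W$ of schemes is $\MM_m$-equivariant after rescaling by $p$ on the target, since $\delta([t]y) = [t^p]\delta(y)$. Indeed, in $W$ of a $p$-torsionfree ring, $\phi([t]y) = [t^p]\phi(y)$ and $([t]y)^p = [t^p]y^p$; since $W$ is flat, the identity $\delta([t]y) = [t^p]\delta(y)$ holds universally. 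Equivalently, the pullback of functions along $\delta$ multiplies degrees by $p$. Hence $\delta$ sends $\hat{W}$ into $\hat{W}$, by the same argument as in \Cref{hwbasic} and \Cref{Fnil}, so $\hw(R)$ is stable under $\delta$. Consequently every $\delta^i(x)$ lies in $\hw(R)$ and in particular is nilpotent.

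\emph{Eventual vanishing.} It remains to show $\delta^i(x) = 0$ for $i \gg 0$, and I expect this to be the main obstacle. The plan is to reduce to the universal case, since $\hw$ is an ind-scheme built from finitely presented pieces. Write $x = \sum_{j<n} V^j[x_j]$ with $x_j^{M} = 0$. Then $x$ is pulled back from the universal ring $T = \mathbb{Z}[x_0,\dots,x_{n-1}]/(x_j^M)$, which is graded with $x_j$ in degree $p^j$. Its components in degree $d$ vanish for $d \geq D$, where $D$ depends only on $n$ and $M$. The coordinate ring of $W$ is graded with the $k$th Witt coordinate in degree $p^k$. Since $\delta^i$ multiplies degrees by $p^i$, the $k$th Witt coordinate of $\delta^i(x)$ is a homogeneous element of $T$ of degree $p^{k+i}$ with respect to the grading in which $x$ has weight one. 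Once $p^i \geq D$, all these elements lie in $T_{\geq D} = 0$. Hence $\delta^i(x) = 0$ in $W(T)$, and therefore in $W(R)$. The point needing care is to verify the precise equivariance $\delta^* (\text{degree } d) \subset \text{degree } pd$ on the coordinate ring of $W$. I would deduce it from the identity $\delta([t]y) = [t^p]\delta(y)$, exactly as $F$ was handled in \Cref{Fnil}.
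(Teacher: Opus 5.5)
Your proof is correct and follows essentially the paper's route: the forward inclusion reads off the Joyal coordinates $\pi(\delta^i(x))$. The reverse inclusion uses that $\delta^i$ multiplies degrees by $p^i$ on the graded coordinate ring of $W$, so the Witt components of $\delta^i(x)$ are homogeneous of degree $p^{k+i}$ in the finitely many nilpotent Witt components of $x$, hence nilpotent and zero for $i \gg 0$. Your reduction to the bounded graded ring $T$, and your derivation of the equivariance from $\delta([t]y)=[t^p]\delta(y)$ (immediate from the product formula since $\delta([t])=0$), just spell out what the paper's short argument leaves implicit.
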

\begin{proof}
Suppose $x \in W(R)$ is $\delta$-nilpotent, so that
$\delta^i(x) $ is nilpotent for all $i$ and vanishes for $i \gg 0$.
Taking the zeroth ghost component of each $\delta^i(x)$, we find that the Joyal coordinates of $x$ are nilpotent and vanish for $i \gg 0$, so $x \in \hw(R)$.

Conversely, suppose $x \in \hw(R)$.
This implies that for all sufficiently high degree polynomials $P$ in the Witt components $\{x_i, i \geq 0\}$ (with $x_i$ placed in degree $p^i$), we have $P(x_0, x_1, \dots) = 0$. This implies that all the Witt components of $\delta^i(x)$ are nilpotent and vanish for $i \gg 0$, so $\delta^i(x) \in \hw(R)$ for all $i$ and vanishes for $i \gg 0$.
\end{proof}

We now include some general results
about the interaction between torsion and nilpotence in $\delta$-rings.

\begin{lemma}[{Cf.~\cite[Lem.~2.28]{bhattscholze}}]
\label{torsionmeansnilpotent}
Let $A$ be any $\delta$-ring, and $a \in A$.
Suppose $a$ is $p$-power torsion.
Then:
\begin{enumerate}
\item
$a$ is nilpotent.
\item
If $p^n a = 0$, then $\phi^n(a) = 0$.
\end{enumerate}
\end{lemma}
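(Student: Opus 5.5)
The plan is to prove (2) first, by induction on $n$, and then deduce (1) from it. The key identity is $\phi(a) = a^p + p\delta(a)$.

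For the base case $n=1$, suppose $pa = 0$. Applying $\delta$ to $pa = 0$ and using the formula $\delta(xy) = x^p\delta(y) + y^p\delta(x) + p\delta(x)\delta(y)$ together with $\delta(p) = p - p^{p-1}$, we get
\[
0 = \delta(pa) = p^p\delta(a) + a^p(p - p^{p-1}) + p(p-p^{p-1})\delta(a).
\]
Since $pa = 0$, the term $a^p(p-p^{p-1})$ vanishes. Hence $p^p\delta(a) + p^2(1-p^{p-2})\delta(a) = 0$, i.e.\ $p^2 u\,\delta(a) = 0$ for some $u = 1 - p^{p-2} + p^{p-2}$, which is $1$ (check: $p^p + p^2 - p^{p} = p^2$). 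Thus $p^2\delta(a) = 0$. I will redo this step carefully, since what we actually want is
\[
\phi(a) = a^p + p\delta(a), \qquad p\,\phi(a) = \phi(pa) = 0 .
\]
A cleaner route: since $\phi$ is a ring map, $\phi(a)^{?}$ is not directly available, so instead compute $\phi(a)\cdot\phi(a)$. Following \cite[Lem.~2.28]{bhattscholze}, the better identity is obtained by multiplying $\phi(a) = a^p + p\delta(a)$ by $a$:
\[
a\,\phi(a) = a^{p+1} + \delta(a)\,pa = a^{p+1}.
\]
Also $\phi(pa) = p\phi(a) = 0$.

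The main obstacle is getting $\phi(a) = 0$ exactly rather than merely nilpotence. I plan to use the standard trick. Since $pa = 0$, we have $p\delta(a) = \phi(a) - a^p$, and applying $\delta$ to $pa = 0$ gives, as computed, $p^2\delta(a) = 0$ up to the exact constant. The Bhatt--Scholze argument instead computes
\[
0 = \delta(pa) = p\delta(a) + a^p\delta(p) + p\,\delta(p)\delta(a)
\]
(writing it as $\delta(p\cdot a)$ with the product formula in the other order). Using $pa = 0$, the middle term is $a^p(p - p^{p-1}) = 0$, provided $p \geq 2$ and noting $a^p p = 0$. This yields $p\delta(a)\bigl(1 + \delta(p)\bigr) = 0$ after regrouping, where $1 + \delta(p) = 1 + p - p^{p-1}$. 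I then need this factor to be a unit, or to handle it $p$-adically; it is $\equiv 1 \bmod p$, and $p\delta(a)$ is killed by a power of $p$. So I will multiply by a suitable truncated inverse of $1+\delta(p)$, which is legitimate since $p^N\cdot p\delta(a) = 0$. This gives $p\delta(a) = 0$, hence $\phi(a) = a^p$.

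Then $a^p = \phi(a)$ and $p\phi(a) = 0$, so $\phi(a)$ is again $p$-torsion, and $\phi(a)^p = \phi(\phi(a))$. To conclude $\phi(a) = 0$ I expect to need a second input. Rather than pushing on this, I would restate the induction as follows: if $p^n a = 0$, then $p^{n-1}\phi(a) = 0$. This holds because $p^{n-1}\phi(a) = p^{n-1}a^p + p^n\delta(a)$, where $p^{n-1}a^p = a^{p-1}\cdot p^{n-1}a$ needs to vanish, and $p^n\delta(a) = 0$ by the unit argument above applied to $\delta(p^n a) = 0$. Iterating, $p^0\phi^n(a) = 0$, which is (2).

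For (1), note that $a^{p^n} \equiv \phi^n(a)$ modulo $p$-multiples of terms that are themselves torsion. I would prove by induction that $a^{p^n}$ lies in the ideal generated by $\phi^n(a)$ and $p$-multiples of torsion elements killed by smaller powers of $p$. Combining this with (2) and induction on the torsion exponent gives that $a$ is nilpotent. The delicate step throughout is the vanishing of the correction terms such as $p^{n-1}a^p$; I would try to control them by writing $a^p = a^{p-1}\cdot a$ and inducting on $n$ inside the ideal $\{x : p^n x = 0\}$.
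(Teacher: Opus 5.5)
\textbf{Genuine gap: the base case of (2).} Your argument for (2) fails at $n=1$ because of a wrong constant. In any $\delta$-ring $\delta(p)=(\phi(p)-p^p)/p=1-p^{p-1}$, not $p-p^{p-1}$. Also, in your second pass the first term of $\delta(p\cdot a)$ should be $p^p\delta(a)$, not $p\delta(a)$. With the correct value, the product formula gives the universal identity
\[
\delta(pa)=p^p\delta(a)+(1-p^{p-1})a^p+p(1-p^{p-1})\delta(a)=\phi(a)-p^{p-1}a^p .
\]
So $pa=0$ gives $\phi(a)=p^{p-2}a^{p-1}\cdot pa=0$ at once. Applying this to $p^{n-1}a$ gives $p^{n-1}\phi(a)=0$, and induction on $n$ gives (2). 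This is exactly the paper's argument.

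With your value, the term $a^p\delta(p)$ appeared to vanish. In fact it contributes precisely the $a^p$ that combines with $p\delta(a)$ into $\phi(a)$. This led you to the intermediate claim $p\delta(a)=0$, i.e.\ $\phi(a)=a^p$, which is false. To see this, let $I\subset\mathbb{Z}\{x\}$ be the $\delta$-ideal generated by $px$, and grade $\mathbb{Z}\{x\}$ by $\deg\delta^i(x)=p^i$. Then $I$ is generated as an ideal by the homogeneous elements $\delta^n(px)$ of degree $p^n$. Its degree-$p$ part is therefore spanned by $px^p$ and $\delta(px)=(1-p^{p-1})x^p+p\delta(x)$, and this span contains neither $x^p$ nor $p\delta(x)$. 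Thus in $\mathbb{Z}\{x\}/I$ one has $px=0$ and $\phi(x)=0$, but $x^p\neq 0$ and $p\delta(x)\neq 0$.

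Your fallback does not close the gap either. The statement ``$p^na=0\Rightarrow p^{n-1}\phi(a)=0$'' is true, but for $n=1$ it is literally the missing base case, so ``iterating'' is circular. Your justification asks $p^{n-1}a^p$ and $p^n\delta(a)$ to vanish separately, and this already fails for $n=1$ in the example above. Only their sum vanishes, which is the content of the displayed identity.

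\textbf{Part (1).} Deducing nilpotence from (2) is sound, and simpler than the induction you sketch. Since $\phi$ induces the Frobenius on $A/pA$, we get $a^{p^n}\equiv\phi^n(a)=0\pmod{pA}$. Write $a^{p^n}=pb$; then $a^{np^n+1}=b^n\cdot p^na=0$.

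The paper instead proves (1) independently of (2). For any map $f\colon A\to k$ to a perfect field, the $\delta$-map $A\xrightarrow{w_\delta}W(A)\to W(k)$ kills the $p$-power torsion element $a$, because $W(k)$ is $p$-torsionfree. Composing with $W(k)\to k$ gives $f(a)=0$. Your route is elementary and gives an explicit nilpotence exponent; the paper's avoids computation but gives no exponent.
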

\begin{proof}
For (2),  we reduce to the case $n = 1$. For any element $x$ of a
$\delta$-ring, one has
$ \delta (px) = \phi(x) - p^{p-1} x^p$.
Taking $x = a$
with $pa = 0$, we find that $\phi(a) = 0$ as desired.

For (1),
it suffices to show that for any perfect field $k$ and any map $f: A \to k$,
$f(a) = 0$.
Now the $\delta$-map
\[A \xrightarrow{w_\delta} W(A) \xrightarrow{W(f)} W(k) \]
necessarily carries $a$ to zero, since $W(k)$ is $p$-torsionfree.
But the composite of this map with the projection $W(k) \to k$ is $f$, whence
the claim. (For an explicit proof, compare \emph{loc.~cit.})
\end{proof}

\begin{corollary}
\label{frobinjtorsionfree}
A $\delta$-ring $A$ such that $\phi: A \to A$ is injective is  $p$-torsionfree.
\end{corollary}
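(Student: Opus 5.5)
If $\phi$ is injective and $a \in A$ is $p$-power torsion, then $a = 0$; that is exactly the statement. So we fix $a \in A$ with $p^n a = 0$ for some $n \geq 0$ and aim to show $a = 0$.

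We apply \Cref{torsionmeansnilpotent}(2) directly: it says that $p^n a = 0$ implies $\phi^n(a) = 0$. Since $\phi$ is injective, so is its iterate $\phi^n$. Hence $\phi^n(a) = 0 = \phi^n(0)$ forces $a = 0$, and therefore $A$ has no nonzero $p$-power torsion. In particular $A$ is $p$-torsionfree.

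If we want to avoid quoting part (2) for general $n$, we can instead argue by induction on the exponent of torsion using only the case $n=1$. Suppose $pa = 0$. The identity $\delta(px) = \phi(x) - p^{p-1}x^p$ with $x = a$ gives $0 = \delta(0) = \phi(a) - p^{p-1}a^p$. The term $p^{p-1}a^p$ vanishes: for $p \geq 2$ it is divisible by $pa$, since $p - 1 \geq 1$ and $a^p = a \cdot a^{p-1}$. Hence $\phi(a) = 0$, and injectivity gives $a = 0$. For the inductive step, if $p^n a = 0$, apply the base case to $p^{n-1}a$ to get $p^{n-1}a = 0$, and conclude by induction.

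The only point needing care is the vanishing of $p^{p-1}a^p$, which was already settled in the proof of \Cref{torsionmeansnilpotent}. Otherwise there is no real obstacle.
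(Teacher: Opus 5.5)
Your proof is correct and matches the paper's, which records this corollary without separate proof as an immediate consequence of \Cref{torsionmeansnilpotent}(2): $pa=0$ forces $\phi(a)=0$, hence $a=0$ by injectivity. Only the case $n=1$ is needed for $p$-torsionfreeness, so your induction on the torsion exponent is harmless but unnecessary.
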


\begin{corollary}
    \label{deltaofnilpotent}
    Let $A$ be any $\delta$-ring, and $a \in A$.
    If $a$ is nilpotent, then $\delta^i(a)$ is nilpotent for each $i$.
\end{corollary}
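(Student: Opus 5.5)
The natural route is to deduce the statement from \Cref{torsionmeansnilpotent} by reducing to the universal nilpotent element. Suppose $a^N = 0$. The plan is to show that $\delta^i(a)$ is killed by every map from $A$ to a perfect field of characteristic $p$. The statement should also hold in characteristic zero, but since $\delta$-rings have $\phi$ lifting Frobenius modulo $p$, we instead aim to show that $\delta^i(a)$ lies in every prime ideal of $A$.

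\textbf{Primes containing $p$.} Let $\mathfrak p \subset A$ be a prime and first suppose $p \in \mathfrak p$. Choose a perfect field $k$ with a map $f\colon A \to k$ whose kernel is $\mathfrak p$; for instance, take the perfection of an algebraic closure of $\mathrm{Frac}(A/\mathfrak p)$. As in the proof of \Cref{torsionmeansnilpotent}, consider the $\delta$-map
\[ g = W(f)\circ w_\delta \colon A \to W(A) \to W(k). \]
Since $g(a)^N = 0$ and $W(k)$ is a domain (being $p$-torsionfree with perfect residue field $k$), we get $g(a) = 0$. Because $g$ is a map of $\delta$-rings, it follows that $g(\delta^i(a)) = \delta^i(g(a)) = 0$. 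Projecting to $k$ gives $f(\delta^i(a)) = 0$, so $\delta^i(a) \in \mathfrak p$.

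\textbf{Primes not containing $p$.} Now suppose $p \notin \mathfrak p$. Localize to $A_{\mathfrak p}$, or better, work in $A[1/p]$. Over $A[1/p]$, $\delta$ is determined by $\phi$ via $\delta(x) = (\phi(x) - x^p)/p$, at least on the image of $A$. Since $\phi$ is a ring map, $\phi(a)^N = \phi(a^N) = 0$, so $\phi(a)$ is nilpotent. Hence $p\,\delta(a) = \phi(a) - a^p$ is nilpotent, and its image in $A/\mathfrak p$ is zero. As $p$ is invertible in $A/\mathfrak p$, the image of $\delta(a)$ vanishes, i.e.\ $\delta(a) \in \mathfrak p$.

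\textbf{Induction.} The same computation applies with $a$ replaced by $\delta(a)$, once we know $\delta(a)$ is nilpotent. So the cleanest organization is to prove only that $\delta(a)$ is nilpotent whenever $a$ is, and then induct on $i$. For this single step, the case $p\in\mathfrak p$ can also be handled as above: the Witt argument already gives $f(\delta(a)) = 0$ directly. Combining both cases, $\delta(a)$ lies in every prime ideal, hence in the nilradical, so $\delta(a)$ is nilpotent. Iterating yields the claim for all $\delta^i(a)$.

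The only mildly delicate point is the characteristic-zero primes, where we cannot use Witt vectors of a perfect field. The identity $p\,\delta(a) = \phi(a) - a^p$ together with invertibility of $p$ modulo $\mathfrak p$ handles this directly, so no serious obstacle remains.
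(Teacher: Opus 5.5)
Your argument is correct, but it is organized differently from the paper's. The paper never splits by residue characteristic. From the nilpotence of $p\,\delta(a)=\phi(a)-a^p$ it gets $p^M\delta(a)^M=0$ for some $M$. So $\delta(a)^M$ is $p$-power torsion, hence nilpotent by \Cref{torsionmeansnilpotent}, and the result follows by induction on $i$.

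You instead inline the proof of that lemma. At primes containing $p$ you run the $W(k)$ argument directly on the nilpotent element $a$, using that $W(k)$ is reduced rather than that it is $p$-torsionfree. This has the small bonus of killing all $\delta^i(a)$ at once, with no induction. At primes not containing $p$ you use the same identity $p\,\delta(a)=\phi(a)-a^p$ as the paper. The paper's version is shorter because the torsion lemma already absorbs the characteristic-zero primes: a $p$-power torsion element maps to zero in any ring where $p$ is invertible. Your version makes explicit where each of the two ingredients is actually needed.

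One small correction: $p\notin\mathfrak p$ does not make $p$ invertible in $A/\mathfrak p$, only nonzero there. What you actually need, and have, is that $p\,\delta(a)\in\mathfrak p$ with $\mathfrak p$ prime and $p\notin\mathfrak p$, which forces $\delta(a)\in\mathfrak p$. The detour through $A[1/p]$ is unnecessary.
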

\begin{proof}
The identity $\phi(a) = a^p + p \delta(a)$ shows that $p\delta(a) = \phi(a) - a^p$ is nilpotent, so some power of $\delta(a)$ is $p$-power torsion and hence nilpotent by \Cref{torsionmeansnilpotent}; therefore, $\delta(a)$ is nilpotent. The result follows by induction.
\end{proof}

\begin{lemma}
\label{deltapreservestorsion}
Let $A$ be any $\delta$-ring.
Suppose $a \in A$ is $p$-power torsion.
Then $\delta^i(a)$ is $p$-power torsion for each $i$.
\end{lemma}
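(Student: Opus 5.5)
Since $\delta^i$ is an iterate of $\delta$, it suffices to treat $i = 1$: show that if $p^n a = 0$ then $\delta(a)$ is again $p$-power torsion, and then induct on $i$. The plan is to induct on $n$, the exponent with $p^n a = 0$, using the two identities we have: $\phi(x) = x^p + p\delta(x)$, and the formula $\delta(px) = \phi(x) - p^{p-1}x^p$ from the proof of \Cref{torsionmeansnilpotent}.

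\emph{Base case $n = 1$.} Suppose $pa = 0$. By \Cref{torsionmeansnilpotent}(2), $\phi(a) = 0$, so $p\delta(a) = -a^p$. Since $p a^p = 0$, we get $p^2\delta(a) = 0$, and $\delta(a)$ is $p$-power torsion.

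\emph{Inductive step.} Suppose $p^{n}a = 0$ with $n \geq 2$, and set $b = pa$. Then $p^{n-1}b = 0$, so by the inductive hypothesis $\delta(b)$ is $p$-power torsion. Now
\[ \delta(b) = \delta(pa) = \phi(a) - p^{p-1}a^p. \]
Here $a^p$ is $p$-power torsion, and hence so is $\phi(a)$. Then
\[ p\delta(a) = \phi(a) - a^p \]
is $p$-power torsion, and therefore so is $\delta(a)$.

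In fact the induction on $n$ is unnecessary: $\phi$ is a ring map, so $p^n\phi(a) = \phi(p^n a) = 0$, hence $\phi(a)$ is directly $p$-power torsion. Combined with $p^n a^p = 0$, the identity $p\delta(a) = \phi(a) - a^p$ shows $p^{n+1}\delta(a) = 0$. This is the argument I would write, followed by the induction on $i$ (applying it to $\delta(a)$, which is again $p$-power torsion).

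I expect no step to be a real obstacle. The only point needing care is that $\delta$ is not additive, so one cannot argue directly with $\delta(p^n a)$; the identity $p\delta(a) = \phi(a) - a^p$ sidesteps this entirely.
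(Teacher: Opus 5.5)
Your final argument is correct and is exactly the paper's proof: the identity $\phi(a) = a^p + p\delta(a)$, with $\phi(a)$ and $a^p$ both killed by $p^n$, gives $p^{n+1}\delta(a) = 0$, and then one inducts on $i$. The preliminary induction on $n$ is valid but, as you noted yourself, unnecessary.
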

\begin{proof}
The identity
$\phi(a) = a^p + p \delta(a)$ shows that $\delta(a)$, and inductively
$\delta^i(a)$ for each $i$, is $p$-power torsion.
\end{proof}

\begin{lemma} Let $A$ be any $\delta$-ring.
\label{prodlemmadeltaring}
If $x, y \in A[p^\infty]$, then $xy \in \dnil{A}$.
\end{lemma}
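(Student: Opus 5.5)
The plan is to use the criterion of \Cref{crit:dnil}: $xy \in \dnil{A}$ if and only if $w_\delta(xy) = w_\delta(x) w_\delta(y)$ lies in $\hw(A)$. The natural tool for showing that a product lies in $\hw$ is the product criterion \Cref{productlemmahw}. For that we need two things:
\begin{enumerate}
\item some power of Frobenius kills one factor, say $F^n w_\delta(x) = 0$;
\item all Witt (equivalently Joyal) coordinates of the other factor $w_\delta(y)$ are nilpotent.
\end{enumerate}

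For the first point, $w_\delta$ is a map of $\delta$-rings, so it intertwines $\phi$ on $A$ with $F$ on $W(A)$. If $p^n x = 0$, then \Cref{torsionmeansnilpotent}(2) gives $\phi^n(x) = 0$. Hence
\[ F^n w_\delta(x) = w_\delta(\phi^n(x)) = 0. \]

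For the second point, the Joyal coordinates of $w_\delta(y)$ are $y, \delta(y), \delta^2(y), \dots$. By \Cref{deltapreservestorsion}, each $\delta^i(y)$ is $p$-power torsion. By \Cref{torsionmeansnilpotent}(1), each is therefore nilpotent. (Alternatively: $y$ is nilpotent, so one can invoke \Cref{deltaofnilpotent}.) So all Joyal coordinates of $w_\delta(y)$ are nilpotent. Since the Witt and Joyal coordinates generate the same graded polynomial ring, with corresponding degrees $p^i$, nilpotence of all Joyal coordinates is equivalent to nilpotence of all Witt coordinates. This is the equivalence asserted parenthetically in \Cref{productlemmahw}.

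Applying \Cref{productlemmahw} now gives $w_\delta(x)w_\delta(y) \in \hw(A)$, and hence $xy \in \dnil{A}$. Only one of the factors actually needs to be torsion for the Frobenius condition. The torsion of the other factor is used solely to get nilpotence of its $\delta$-iterates.

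I expect no serious obstacle; the argument is a direct assembly of the earlier results. The one point to double-check is the coordinate translation in the last step. In \Cref{productlemmahw} the decomposition $y = a + V^n(b)$ with $a \in \hw$ uses the Witt components. So I will make sure that nilpotence of all Joyal coordinates gives nilpotence of all Witt coordinates. This follows because each Witt coordinate is an integral polynomial without constant term in finitely many Joyal coordinates, by homogeneity of degree $p^i$.
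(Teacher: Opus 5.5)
Your proposal is correct and is essentially the paper's own proof: it uses $w_\delta$ to pass to $W(A)$, kills $w_\delta(x)$ with $F^n$ via \Cref{torsionmeansnilpotent}, observes that the Joyal coordinates $\delta^i(y)$ are nilpotent, and then applies \Cref{productlemmahw} followed by \Cref{crit:dnil}. Your extra remark on translating nilpotence from Joyal to Witt coordinates is also sound: each Witt coordinate is homogeneous of positive degree in the Joyal coordinates, so it has no constant term.
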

\begin{proof}
We use the map $w_\delta: A \to W(A)$.
Note that $w_\delta(x)$ is annihilated by a power of $F$
thanks to \Cref{torsionmeansnilpotent}.
Moreover, the Joyal coordinates of $w_\delta(y)$, i.e., $\delta^i(y)$ for $i \geq 0$, are all nilpotent since $y$ is nilpotent (cf.~\Cref{torsionmeansnilpotent} and \Cref{deltaofnilpotent}).

It follows that $w_\delta(x) w_\delta(y) \in \hw(A)$ by
\Cref{productlemmahw}.
We now conclude that $xy \in \dnil{A}$ by \Cref{crit:dnil}.
\end{proof}

\section{Definition and basic structure of $\chW$}

In this section, we study aspects of the functor $\chW$, defined in
\Cref{introshearedW},
on rings $R$
such that $R_{\mathrm{red}}$ is perfect.
Most of these results were communicated to us by Drinfeld, and appear in
\cite{Dri25,Lau25, HL26}.

\begin{definition}[Nilperfect rings]
Let $\nperf$ denote the category of rings $R$ such that
$R_{\mathrm{red}}$ is a perfect $\mathbb{F}_p$-algebra. We will call such rings
\emph{nilperfect.}
\end{definition}

Note that $\nperf$ includes any semiperfect $\mathbb{F}_p$-algebra, as well as
any
local Artinian ring with perfect residue field of characteristic $p$.

\subsection{$\hw$-torsors and $\hwbig$-torsors}

We begin with a collection of results on $\hw$-torsors
and
$\hwbig$-torsors.

\begin{proposition}[Drinfeld]
\label{wbigtorsors}
Let $R$ be any ring. Consider the following two categories:
\begin{enumerate}
\item The groupoid of $\hwbig$-torsors on $R$ in the fpqc topology.
\item The groupoid of invertible $R[t]$-modules $\mathcal{L}$ together with a
trivialization at $t = 0$.
\end{enumerate}
There is a natural fully faithful inclusion from category (1) into category (2).
\end{proposition}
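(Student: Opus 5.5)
The plan is to use \Cref{bigwitthwaspoly}, which identifies $\hwbig(R)$ with $\ker\bigl(R[t]^\times \to R^\times\bigr)$. This is the automorphism group of the trivial object of category (2): an automorphism of the $R[t]$-module $R[t]$ is multiplication by a unit $u \in R[t]^\times$, and compatibility with the trivialization at $t=0$ forces $u(0)=1$. So $\hwbig$ should be exactly the sheaf of automorphisms of the trivial object of (2), and the functor from (1) to (2) should send a torsor $P$ to the contracted product $P \times^{\hwbig} (R[t], \mathrm{triv})$.

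\textbf{Step 1: a stack.} I will first check that (2) is an fpqc stack on $R$-algebras. Invertible $R'[t]$-modules, for varying $R'$, satisfy fpqc descent: the map $R[t] \to R'[t]$ is faithfully flat whenever $R\to R'$ is, and invertibility (finite projective of rank one) descends along faithfully flat maps. The trivialization at $t=0$ is a morphism of modules, so it also descends. Hence the prestack $\mathcal{S}$ sending $R'$ to the groupoid (2) over $R'$ is an fpqc stack.

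\textbf{Step 2: the automorphism sheaf.} By \Cref{bigwitthwaspoly}, the automorphism sheaf of the trivial object $(R'[t], \mathrm{id})$ is
\[ R' \mapsto \ker\bigl(R'[t]^\times \to R'^\times\bigr) = \hwbig(R'), \]
and this identification is functorial in $R'$.

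\textbf{Step 3: the functor and full faithfulness.} The standard dictionary then applies: for a stack $\mathcal{S}$ and an object $e$ with automorphism sheaf $G$, the functor $P \mapsto P\times^{G} e$ is an equivalence from $G$-torsors onto the full substack of objects that are fpqc-locally isomorphic to $e$; the inverse is $X \mapsto \underline{\mathrm{Isom}}(e, X)$. Applied here, category (1) is equivalent to the full subgroupoid of (2) consisting of those $(\mathcal{L}, \alpha)$ that are fpqc-locally trivial, i.e. locally isomorphic to $(R'[t], \mathrm{id})$. Since this subgroupoid is full, the inclusion of (1) into (2) is fully faithful.

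\textbf{Main obstacle.} The delicate point is descent in Step 1. The modules live over $R[t]$ while the topology is on $R$, so one must check that $\mathcal{S}$ is a stack for fpqc covers of $R$. This holds because the base change of an fpqc cover $R\to R'$ along $R \to R[t]$ is $R[t]\to R'[t]$, which is again faithfully flat, and the descent data match because $R'[t]\otimes_{R[t]}R'[t] = (R'\otimes_R R')[t]$. With that in place, the rest is formal torsor yoga.

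Essential surjectivity is not claimed, and it genuinely fails in general: an invertible $R[t]$-module trivial at $t=0$ need not be fpqc-locally trivial over $R$, for instance when $R$ is not seminormal.
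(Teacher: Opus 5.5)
Your argument is correct and is essentially the paper's proof: both groupoids satisfy fpqc descent, the automorphism group of the trivial object $(R[t],1)$ is $\ker\bigl(R[t]^\times\to R^\times\bigr)=\hwbig(R)$ by \Cref{bigwitthwaspoly}, and local triviality of torsors then gives the fully faithful embedding. One correction to your closing aside: failure of seminormality does not by itself make essential surjectivity fail. \Cref{localtrivialtyWbig} shows the inclusion is an equivalence for every $p$-nilpotent $R$, seminormal or not, and the paper leaves open (as a question of Drinfeld) whether it is an equivalence for all $R$ in the fpqc topology; Gabber's example only rules out fppf-local trivialization.
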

\begin{proof}
Both categories (1) and (2) satisfy faithfully flat
descent, and the group of automorphisms of
the unit object in (2) (namely, $R[t]$ with the evident trivialization $1$) is
$\hwbig(R) = \mathrm{ker}( (  R[t])^{\times} \to R^{\times})$.
Since every object of (1)  (i.e., every $\hwbig$-torsor) is locally trivial,
we obtain the fully faithful inclusion from (1) into (2).
\end{proof}

Recall \cite[Tag 0EUK]{stacks-project}
that a ring $A$ is \emph{seminormal} if for all $x, y \in A$ such that $x^3 =
y^2$, there exists a unique $a \in A$ such that $x = a^2, y = a^3$.
Note that a perfect $\mathbb{F}_p$-algebra is seminormal; this is because the
map of rings $\mathbb{F}_p[x, y]/(x^3-y^2) \to \mathbb{F}_p[a]$ carrying
$x \mapsto a^2, y \mapsto a^3$ induces an isomorphism on perfections.

\begin{corollary}[Drinfeld]
\label{vanishingofhwtorsors}
Let $R$ be a ring such that $R_{\mathrm{red}}$ is seminormal (e.g., perfect).
Then all fpqc $\hwbig$-torsors on $R$ are trivial. If $R$ is additionally $p$-local, then all
fpqc $\hw$-torsors on $R$ are trivial.
\end{corollary}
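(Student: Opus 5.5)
By \Cref{wbigtorsors}, an fpqc $\hwbig$-torsor on $R$ is an invertible $R[t]$-module $\mathcal{L}$ with a trivialization at $t=0$. The torsor is trivial exactly when $\mathcal{L} \simeq R[t]$ compatibly with the trivialization. Since $\hwbig(R)$ is the full automorphism group of the unit, any isomorphism $\mathcal{L}\simeq R[t]$ can then be corrected by a unit of $R$ to match the trivialization at $t=0$. So the first claim reduces to showing that $\Pic(R) \to \Pic(R[t])$ is surjective, i.e.\ an isomorphism (it is split by $t \mapsto 0$).

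To prove this, I would reduce to the reduced case. The map $R[t] \to R_{\mathrm{red}}[t]$ has nilpotent kernel only locally: the kernel $\mathrm{Nil}(R)[t]$ is a nil ideal, though not necessarily nilpotent. Line bundles are finitely presented, so I would first write $R$ as a filtered colimit of finitely generated $\mathbb{Z}$-subalgebras $R_\alpha$. The obstacle is that seminormality of $(R_\alpha)_{\mathrm{red}}$ need not hold. Instead I would use the standard fact that $\Pic(B) \to \Pic(B/J)$ is injective for any nil ideal $J$ (isomorphisms of line bundles lift along nil ideals because units lift). I would also use that it is surjective when $J$ is nil, via idempotent lifting arguments for projective modules over Henselian pairs; a nil ideal gives a Henselian pair, and finite projective modules lift along Henselian pairs.

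Applying this to $B = R[t]$ and $J = \mathrm{Nil}(R)[t] = \mathrm{Nil}(R[t])$ gives $\Pic(R[t]) \simeq \Pic(R_{\mathrm{red}}[t])$, and similarly $\Pic(R)\simeq\Pic(R_{\mathrm{red}})$. Traverso--Swan's theorem states that a reduced ring $A$ is seminormal iff $\Pic(A)\to\Pic(A[t])$ is an isomorphism, which I would cite (\cite[Tag 0EUK]{stacks-project} circle). Combining these gives the first claim.

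For the second claim, $R$ is $p$-local, and I would use the Artin--Hasse-type decomposition of big Witt vectors over $\mathbb{Z}_{(p)}$-algebras: $\wbig(R) \simeq \prod_{(n,p)=1} W(R)$, natural in $R$ and compatible with the $\MM_m$-actions up to rescaling, since the factors are built from Verschiebungs and Frobenii. By \Cref{usefulremarkabouthats}, applying hats, and checking on Witt components that finiteness is preserved, I expect $\hwbig \simeq \bigoplus_{(n,p)=1}\hw$. In particular, $\hw$ is a direct factor of $\hwbig$ as group functors on $p$-local rings. Given an $\hw$-torsor $P$, the pushforward $P \times^{\hw}\hwbig$ is trivial by the first part, and projecting back along the retraction $\hwbig\to\hw$ recovers $P$, so $P$ is trivial. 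The main obstacle is verifying precisely that the $p$-typical decomposition restricts to the hatted subfunctors. I would check this via \Cref{bigwitthwaspoly} and the homogeneity of the idempotent-defining operations, which multiply degrees by positive integers.
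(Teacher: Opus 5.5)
Your proposal is correct and follows the paper's proof. Via \Cref{wbigtorsors}, the first claim reduces to $\Pic(R)\xrightarrow{\sim}\Pic(R[t])$, which is Swan's theorem; the second follows because $\hw$ is a direct summand of $\hwbig$ over $p$-local rings, and your decomposition $\hwbig\simeq\bigoplus_{(n,p)=1}\hw$ makes that explicit. Your detour through nil-invariance of $\Pic$ is valid but unnecessary, since \cite[Th.~1]{Swan80} is already stated for rings whose reduction is seminormal; also, \cite[Tag 0EUK]{stacks-project} is only the definition of seminormality, not the Traverso--Swan theorem.
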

\begin{proof}
This follows from \Cref{wbigtorsors} because $\mathrm{Pic}(R[t])
\xrightarrow{\sim} \mathrm{Pic}(R)$ in
this case \cite[Th.~1]{Swan80}. For the second statement, we use that $\hw$ is a
direct
summand of $\hwbig$.
\end{proof}

\begin{corollary}
\label{localtrivialtyWbig}
The fully faithful inclusion of \Cref{wbigtorsors} is an equivalence for a
$p$-nilpotent ring $R$.
Moreover, every fpqc $\hwbig$-torsor (or $\hw$-torsor) can be trivialized
locally in the fppf topology.
\end{corollary}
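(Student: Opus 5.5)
The plan has two parts: essential surjectivity for $p$-nilpotent $R$, and fppf-local triviality of torsors.

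\emph{Essential surjectivity.} Let $\mathcal{L}$ be an invertible $R[t]$-module with a trivialization $\mathcal{L}/t\mathcal{L} \simeq R$. The goal is a faithfully flat (indeed fppf) map $R \to R'$ over which $\mathcal{L}$ becomes isomorphic to $R'[t]$ compatibly with the trivialization at $t=0$. Once this is done, the descent datum shows $\mathcal{L}$ comes from an $\hwbig$-torsor, because the automorphism group of the trivial object is $\hwbig$ (\Cref{bigwitthwaspoly}). This simultaneously gives the fppf-local triviality for $\hwbig$-torsors. For $\hw$-torsors the same follows, since $\hw$ is a direct summand of $\hwbig$ for $p$-local (in particular $p$-nilpotent) $R$: an $\hw$-torsor induces an $\hwbig$-torsor, and a trivialization of the latter projects to one of the former.

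\emph{Finding the trivializing cover.} The approach is to exploit that $\mathrm{Pic}(R[t]) \to \mathrm{Pic}(R)$ is an isomorphism when $R_{\mathrm{red}}$ is seminormal, via \cite[Th.~1]{Swan80} as in \Cref{vanishingofhwtorsors}.
\begin{enumerate}
\item Reduce to $R$ finitely presented over $\mathbb{Z}/p^n$. This is possible because the data of $\mathcal{L}$ together with its trivialization descend to a finitely generated subring.
\item Construct an fppf cover $R \to R'$ with $R'_{\mathrm{red}}$ perfect. A candidate is to adjoin $p$-th roots of a finite generating set repeatedly. However, this produces a filtered colimit of fppf covers, which is only fpqc in general.
\item Use finite presentation to return to a finite stage. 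Over the colimit $R_\infty$ the Pic of $R_\infty[t]$ is that of $R_\infty$, so $\mathcal{L}\otimes R_\infty[t]$ is trivial compatibly at $t=0$: the isomorphism with $R$ at $t=0$ pins down the base-change class, and any line bundle $\mathcal{L}_0$ on $R_\infty$ has $\mathcal{L}_0\otimes R_\infty[t]$ trivial after Zariski localization on $R_\infty$. This trivialization involves finitely many elements, so it already exists over some finite stage $R_k$. Composing with a Zariski cover gives an fppf cover trivializing $\mathcal{L}$.
\end{enumerate}
The same finite-presentation descent argument also shows that any fpqc $\hwbig$-torsor is fppf-locally trivial. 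Indeed, $\hwbig$ is an ind-finitely-presented group commuting with filtered colimits (\Cref{Mhatcommuteswithfilteredcolimits}), and the corresponding line bundle is of finite presentation, so its trivialization over the perfect-reduction colimit descends to a finite stage.

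\emph{Main obstacle.} The delicate point is step 2: producing an fppf, or a colimit of fppf, cover $R'$ with $R'_{\mathrm{red}}$ seminormal. Adjoining $p$-th roots of nilpotent-free parts alone is insufficient, since seminormality concerns $R_{\mathrm{red}}$ and Frobenius must become surjective on $R_{\mathrm{red}}$. I would first try $R' = R[x_i^{1/p^\infty}]$ over generators $x_i$, noting that $R'/p$ reduced is perfect because generators have $p$-power roots and Frobenius is additive mod $p$. I would then verify faithful flatness by writing each stage as $R[y]/(y^p - x)$, which is free of rank $p$. The residual worry is whether $(R'/p)_{\mathrm{red}}$ is perfect rather than merely semiperfect. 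Here seminormality of semiperfect reduced rings suffices, since every reduced ring on which Frobenius is surjective is perfect.
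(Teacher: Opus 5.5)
Your proposal is correct and is essentially the paper's argument. In both, one adjoins compatible systems of $p$-power roots to get a filtered colimit of finite free (hence fppf) $R$-algebras whose reduction is perfect, hence seminormal; then Swan's theorem trivializes $\mathcal{L}$ there, finite presentation descends the trivialization to a finite stage, flat descent gives essential surjectivity, and the direct-summand argument handles $\hw$-torsors. The differences are cosmetic: you first reduce to the finitely presented case and adjoin roots of finitely many generators only, and you correctly note that the given trivialization at $t=0$ makes the paper's extra localization step unnecessary.
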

\begin{proof}
By flat descent of both sides, it suffices to show
that
for any invertible $R[t]$-module $\mathcal{L}$,
there exists an fppf $R$-algebra $R'$ such that the invertible $R'[t]$-module
$\mathcal{L}
\otimes_{R[t]} R'[t]$ is trivial.
In fact, there exists an $R$-algebra $\widetilde{R}$ such that
$\widetilde{R}_{\mathrm{red}}$ is perfect and $\widetilde{R}$ is a filtered
colimit of fppf $R$-algebras (for example, we can adjoin a system of $p$-power roots to each element of $R$).
Moreover, up to localizing, we can arrange $\widetilde{R}$ such that the
invertible
$\widetilde{R}$-module $(\mathcal{L}/t \mathcal{L}) \otimes_R \widetilde{R}$ is
trivial.
Then the invertible $\widetilde{R}[t]$-module $\mathcal{L} \otimes_{R[t]}
\widetilde{ R}[t]$ is trivial because $\widetilde{R}_{\mathrm{red}}$ is
seminormal \cite[Th.~1]{Swan80}.
Writing $\widetilde{R}$
as a
filtered colimit of fppf $R$-algebras, we conclude.
\end{proof}

\begin{question}[Drinfeld]
Does \Cref{localtrivialtyWbig} hold without the condition that $R$ is
$p$-nilpotent?
Equivalently, given a ring $R$ and an invertible $R[t]$-module $\mathcal{L}$,
does there exist a faithfully flat $R$-algebra $R'$ such that $\mathcal{L}
\otimes_{R[t]} R'[t] \in \mathrm{Pic}(R'[t])$ is trivial?

It is known that $R'$ cannot, in general, be taken to be an fppf $R$-algebra.
An example (due to Gabber) is given in \cite[Rem.~2.2.16]{Rosengarten}, which
we also sketch here.
Let $k$ be an algebraically closed field of characteristic zero.
We consider the ring $R = k[[u_1]] \times_{k[\epsilon]/\epsilon^2} k[[u_2]]$,
where
the maps $k[[u_1]] \to k[\epsilon]/\epsilon^2, k[[u_2]] \to
k[\epsilon]/\epsilon^2$
send $u_1, u_2 \mapsto  \epsilon$.
We consider the element of $\mathrm{Pic}( R[t])$ obtained (via Milnor excision,
cf.~\cite[Thm.~I.3.10]{WeibelKbook})
by gluing the trivial
line bundles on $k[[u_1]][t],
k[[u_2]][t]$ along the element $1 + \epsilon t \in (k[\epsilon]/\epsilon^2[t])^{\times}$.
This is a nontrivial (and non-torsion) element of $\mathrm{Pic}( R[t])$, since
$1 + \epsilon t \in ( k[\epsilon]/(\epsilon^2)[t])^{\times}$ (or any nonzero power of it)
cannot be written
as a product of elements in $(k[[u_1]][t])^{\times} = k[[u_1]]^{\times}$ and
$(k[[u_2]][t])^{\times} = k[[u_2]]^{\times}$.
Given any fppf $R$-algebra $R'$, there exists by
\cite[Tag 05WM]{stacks-project} a finite flat $R$-algebra $R''$ and an
$R$-algebra map $R' \to R''$.
But by an argument with multiplicative norms, this class in $\mathrm{Pic}(
R[t])$ also does not vanish after
passage to any finite flat cover of $R$, e.g., $R''$. Therefore, this class
cannot vanish fppf locally.
\end{question}

We now include a result, suggested to us by V.~Drinfeld, 
on the kernel of $\mathrm{Pic}(R[t]) \to \mathrm{Pic}(R)$ for a general ring $R$ (which, by \Cref{wbigtorsors}, may be thought of as a natural generalization of the notion of $\hwbig$-torsor), and reprove a result of \cite{SekiguchiandSuwa}. The rest of this subsection will not be used in the sequel.

Let \(R\) be a commutative ring. Denote by \(\Wbig\) the group scheme of big Witt vectors over \(R\). Thus, for every \(R\)-algebra \(R'\), the group \(\Wbig(R')\) is the multiplicative group of formal power series
\[
  1+tR'[[t]].
\]
Let \(\Wratplus(R')\subseteq \Wbig(R')\) be the submonoid consisting of polynomials
\[
  1+tR'[t].
\]
The functor \(R'\mapsto \Wratplus(R')\) is represented by a monoid ind-scheme \(\Wratplus\) over \(R\). More precisely, \(\Wratplus\) is the free commutative monoid, in the category of pointed ind-affine schemes over \(R\), on the pointed affine line \(\Aone\). Thus, for every commutative monoid \(M\), the monoid of homomorphisms \(\Hom(\Wratplus,M)\) is naturally identified with the monoid of pointed maps \(\Aone\to M\).

Explicitly, \(\Wratplus\) is the colimit of the diagram
\[
  \Spec R \mono \Sym^1(\Aone) \mono \Sym^2(\Aone) \mono \cdots,
\]
where the transition maps are induced by the zero section \(\Spec R\to\Aone\). Under this presentation, the monoid law on \(\Wratplus\) is induced by the maps
\[
  \Sym^n(\Aone)\times \Sym^m(\Aone)\longrightarrow \Sym^{n+m}(\Aone).
\]
Let \(\Wrat\) denote the \'etale sheafification of the presheaf that sends an \(R\)-algebra \(R'\) to the group completion of \(\Wratplus(R')\). This construction goes back to \cite{almkvist}.

Let \(\mathrm{Sh}_{R_\et}(\mathrm{Ab})\) be the category of sheaves of abelian groups on 
the big \'etale site of \emph{flat} affine schemes over \(\Spec R\).
For a commutative group scheme \(G\) over \(R\), we use the same notation for the sheaf represented by \(G\).

Consider the following Picard groupoids:
\begin{align*}
  \mathcal{C}_1(R)
  &:= \left\{\text{commutative group-scheme extensions }
  0\to \mathbb{G}_{m,R}\to G\to \Wbig\to 0\right\}, \\
  \mathcal{C}_2(R)
  &:= \left\{\text{extensions in }\mathrm{Sh}_{R_\et}(\mathrm{Ab}):
  0\to \mathbb{G}_{m,R}\to G'\to \Wrat\to 0\right\}, \\
  \mathcal{C}_3(R)
  &:= \left\{\text{multiplicative line bundles on the monoid ind-scheme }\Wratplus\right\}, \\
  \mathcal{C}_4(R)
  &:= \left\{\text{line bundles on }\Aone\text{ equipped with a trivialization along }
  \Spec R\to\Aone\right\}.
\end{align*}
The assignments \(R\mapsto \mathcal{C}_i(R)\), for \(1\leq i\leq 4\), define stacks, which we denote by \(\mathcal{C}_i\). There are evident restriction functors
\begin{equation}\label{eq:four_groupoids}
  \mathcal{C}_1(R)\longrightarrow \mathcal{C}_2(R)\longrightarrow \mathcal{C}_3(R)\longrightarrow \mathcal{C}_4(R).
\end{equation}

\begin{proposition}\label{pr:extensionsofWbig}
For every ring \(R\), all the functors in \eqref{eq:four_groupoids} are equivalences of Picard groupoids.
\end{proposition}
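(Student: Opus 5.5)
We prove that each of the three functors in \eqref{eq:four_groupoids} is an equivalence. All four assignments are stacks, so full faithfulness and essential surjectivity can be checked fpqc (or \'etale) locally on $R$ wherever that helps. The guiding computation is the same in all three steps: we compare automorphism groups of the trivial object and the groups of isomorphism classes (the $\pi_0$'s) on both sides. For a morphism of Picard groupoids, being an equivalence amounts to inducing isomorphisms on $\pi_0$ and on $\pi_1 = \mathrm{Aut}(\text{unit})$.

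\textbf{Step 1: $\mathcal{C}_3 \to \mathcal{C}_4$.} This is restriction along $\Aone = \Sym^1(\Aone) \hookrightarrow \Wratplus$. By the universal property of $\Wratplus$ as the free commutative monoid on the pointed scheme $\Aone$, a multiplicative line bundle on $\Wratplus$ should be equivalent to a pointed map $\Aone \to B\mathbb{G}_m$, i.e.\ a line bundle on $\Aone$ rigidified at $0$. To make this concrete, we argue as follows. A multiplicative line bundle on $\Wratplus$ restricts to each $\Sym^n(\Aone)$. Pulled back to $(\Aone)^n$, it is forced by multiplicativity to be the external tensor product $\mathcal{L}^{\boxtimes n}$, together with its canonical $S_n$-equivariant structure. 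We then use that line bundles on $\Sym^n(\Aone) = \Spec R[\sigma_1,\dots,\sigma_n]$ correspond to $S_n$-equivariant line bundles on $\mathbb{A}^n$ of this symmetric form. Concretely, the symmetric norm $\mathcal{L} \mapsto \mathrm{Nm}(\mathcal{L}^{\boxtimes n})$ provides the inverse functor, and it is compatible with the transition maps because of the trivialization at $0$. Automorphisms match: on both sides they are given by $\mathrm{Hom}(\Wratplus, \mathbb{G}_m) = \{f \in R[t]^\times : f(0) = 1\}$. One point needs care: $\Sym^n$ over a general base $R$ need not be smooth in characteristic $p$. We use only that $R[x_1,\dots,x_n]^{S_n} = R[\sigma_i]$ is a polynomial ring, and that the norm of a line bundle along the finite flat map $\mathbb{A}^n \to \Sym^n$ is defined.

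\textbf{Step 2: $\mathcal{C}_2 \to \mathcal{C}_3$.} An extension of $\Wrat$ by $\mathbb{G}_m$ in \'etale sheaves pulls back along $\Wratplus \to \Wrat$ to a $\mathbb{G}_m$-torsor that is multiplicative over the monoid. Conversely, a multiplicative line bundle on $\Wratplus$ gives a monoid extension. Group completion followed by \'etale sheafification is left adjoint to restriction. For extensions by a group, this should give an equivalence, since any monoid extension of $\Wratplus$ by $\mathbb{G}_m$ group-completes uniquely. We check that $\mathrm{Ext}^0$ and $\mathrm{Ext}^1$ agree by comparing $\mathrm{Hom}(\Wrat,\mathbb{G}_m) = \mathrm{Hom}_{\mathrm{mon}}(\Wratplus, \mathbb{G}_m)$ and extension classes.

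\textbf{Step 3: $\mathcal{C}_1 \to \mathcal{C}_2$, via $\Wrat \to \Wbig$.} This is the main obstacle. We compare $\mathrm{Hom}(-,\mathbb{G}_m)$ and $\mathrm{Ext}^1(-,\mathbb{G}_m)$ for $\Wrat$ and $\Wbig$. For $\mathrm{Hom}$, a character of $\Wbig$ is an invertible function on the affine scheme $\Spec R[a_1,a_2,\dots]$ that is multiplicative. Its restriction to $\Wratplus$ is an element $f \in R[t]^\times$ with $f(0)=1$. We must show that every such $f$ extends uniquely, namely as the map $1+\sum a_i t^i \mapsto$ the Cartier dual pairing (evaluating the ``residue''/norm against $f$). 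For uniqueness, we use that $\Wratplus$ is schematically dense in $\Wbig$. For $\mathrm{Ext}^1$, we plan to use a dévissage on the kernel $\Wbig/\Wrat$: working \'etale locally, and using that $\Wbig = \varprojlim \Wbig/(1+t^n)$ with each truncation a quotient of $\Sym^n(\Aone)$-type pieces, we reduce to showing that $\mathrm{Hom}$ and $\mathrm{Ext}^1$ of the cokernel into $\mathbb{G}_m$ vanish. If that route stalls, we will instead use the description via Cartier duality ($\Wbig^\vee \simeq \hwbig$-type formal groups) to identify both sides with $\mathcal{C}_4$ directly, using \Cref{wbigtorsors}.

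Finally, the Picard (symmetric monoidal) structures are visibly preserved in each step, since every construction is compatible with Baer sum and tensor product.
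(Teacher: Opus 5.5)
\textbf{Steps 1 and 2 are fine; Step 3 has a gap.} Your Steps 1 and 2 ($\mathcal{C}_4\to\mathcal{C}_3$ via norms, and $\mathcal{C}_2\simeq\mathcal{C}_3$ via group completion and sheafification) are essentially the paper's arguments. So is your treatment of $\pi_1$ in Step 3 via Cartier's theorem.

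The gap is the $\Ext^1$ part of $\mathcal{C}_1\to\mathcal{C}_2$. This is the real content of the proposition, and you give no argument for it. To see that it is substantive, note the following.
\begin{itemize}
\item A multiplicative line bundle on $\Wratplus$ is a compatible system of bundles $\Norm_n(L)$ on the closed subschemes $\Sym^n(\Aone)=V(a_{n+1},a_{n+2},\dots)\subset \Wbig=\Spec R[a_1,a_2,\dots]$.
\item Since $\Pic$ commutes with filtered colimits of rings, every line bundle on $\Wbig$ is pulled back from some finite stage $\Spec R[a_1,\dots,a_N]$.
\item Hence essential surjectivity contains the finiteness assertion that all the $\Norm_n(L)$ are pulled back from one fixed $\Spec R[a_1,\dots,a_N]$.
\end{itemize}
This is genuinely nonempty whenever $R_{\mathrm{red}}$ is not seminormal, since then $\Pic(R[t])\neq\Pic(R)$. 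Localizing on $R$ does not remove the problem: a strictly henselian non-seminormal ring still has nontrivial $\mathcal{C}_4$.

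\textbf{Your proposed routes do not reach this.}
\begin{itemize}
\item \emph{The dévissage.} You give no mechanism for computing $\Hom$ or $\Ext^1$ of the enormous sheaf $\Wbig/\Wrat$ into $\mathbb{G}_m$. Even if both vanished, the long exact sequence would give only full faithfulness. Surjectivity on $\Ext^1$ would also need control of $\Ext^2(\Wbig/\Wrat,\mathbb{G}_m)$.
\item \emph{The fallback.} Identifying $\mathcal{C}_1$ with $\hwbig$-torsors by Cartier duality requires the extensions to split locally on $R$. Matching those torsors with $\mathcal{C}_4$ via \Cref{wbigtorsors} requires every rigidified line bundle on $\Aone$ to trivialize locally on $R$. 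This is known only for $p$-nilpotent $R$ (\Cref{localtrivialtyWbig}), and it fails fppf-locally in general (Gabber's example). The fpqc-local version is exactly the open question stated in the paper.
\end{itemize}

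\textbf{The missing idea.} The paper never compares $\Wbig$ and $\Wrat$ directly with $\mathbb{G}_m$-coefficients. Instead it proceeds as follows.
\begin{enumerate}
\item It reduces to $R$ essentially of finite type over $\ZZ$, since all the $\mathcal{C}_i$ commute with filtered colimits.
\item It passes through square-zero thickenings $R\to R/I$ using $0\to\mathbb{G}_a\otimes I\to\mathbb{G}_m\to i_*\mathbb{G}_m\to 0$ and the five lemma. The only comparison needed is then with quasi-coherent coefficients: $\RHom(\Wbig,\mathbb{G}_a\otimes I)\simeq\RHom(\Wrat,\mathbb{G}_a\otimes I)$.
\item That comparison is proved by a grading argument. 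The cobar cohomology $\bigwedge^i V\otimes I$ of the graded coalgebra $\mathcal{O}(\Wbig)$ sits in single degrees, so replacing it by the product over degrees (which is what $\Wratplus$ sees) changes nothing. The Breen--Deligne resolution finishes the argument.
\item It observes that all four groupoids are trivial for reduced seminormal $R$.
\item It reaches arbitrary reduced $R$ by Milnor patching and induction on $(\dim R,\#\mathrm{Min}(R))$, using conductor squares $R\to R^+$.
\end{enumerate}
An argument of this kind, deformation theory plus patching down to the seminormal case, has to replace your Step 3.
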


\begin{proof}
We first construct a quasi-inverse \(\mathcal{C}_4\to \mathcal{C}_3\). Let \(L\) be a line bundle on \(\Aone\). For each \(n\geq 0\), the line bundle \(L^{\boxtimes n}\) on \((\Aone)^n\) has a natural \(S_n\)-equivariant structure and descends uniquely along
\[
  (\Aone)^n\longrightarrow \Sym^n(\Aone)
\]
to a line bundle \(\Norm_n(L)\) on \(\Sym^n(\Aone)\).\footnote{More generally, if \(X\) is an affine \(R\)-scheme that is flat over \(R\), there is a natural functor \cite[\S 4.1]{baranovsky}
\[
  \Norm_n\colon \Pic(X)\longrightarrow \Pic(\Sym^n(X)).
\]
If \(M=\Gamma(X,L)\), then
\[
  \Gamma(\Sym^n(X),\Norm_n(L))\simeq \Gamma_R^n(M):=(M^{\otimes_R n})^{S_n}.
\]
To see that \(\Gamma_R^n(M)\) is invertible over \(\mathcal{O}(\Sym^n(X))\), fix \(y\in \Sym^n(X)\), and let \(T\subset X\) be the image, under one of the projections \(X^n\to X\), of the inverse image of \(y\) in \(X^n\). Since \(T\) is a finite set of points, there is an open neighbourhood \(U\subset X\) containing \(T\) on which \(L\) is trivial. Then \(\Norm_n(L)\) is trivial on an open neighbourhood of \(y\) in \(\Sym^n(X)\).}
A trivialization of \(L\) along the zero section induces a multiplicative structure on the resulting line bundle \(\Norm_\bullet(L)\) over \(\Wratplus\). This construction defines the required quasi-inverse \(\mathcal{C}_4\to \mathcal{C}_3\).

To see that \(\mathcal{C}_2\to \mathcal{C}_3\) is an equivalence, observe that every homomorphism from a presheaf of commutative monoids to a sheaf of strict Picard groupoids extends uniquely to the \'etale sheafification of its group completion. Apply this observation to \(\mathcal{F}=\Wratplus\) and to the sheaf that sends an \(R\)-algebra \(R'\) to the Picard groupoid \(\Pic(R')\).

It remains to prove that \(\mathcal{C}_1\to \mathcal{C}_2\) is an equivalence. We proceed in several steps.

\smallskip
\noindent\emph{Step 1: The functor \(\mathcal{C}_1\to \mathcal{C}_2\) induces an isomorphism on \(\pi_1\).}

Using the equivalence \(\mathcal{C}_2\simeq \mathcal{C}_4\), we identify \(\pi_1(\mathcal{C}_2(R))\) with the group of pointed maps \(\Aone\to \mathbb{G}_{m,R}\), namely
\[
  \hwbig(R)=\ker\bigl(R[t]^\times\xrightarrow{\,\ev_0\,}R^\times\bigr).
\]
On the other hand,
\[
  \pi_1(\mathcal{C}_1(R))=\Hom(\Wbig,\mathbb{G}_{m,R}).
\]
Thus, the claim reduces to the assertion that the map
\[
  \Hom(\Wbig,\mathbb{G}_{m,R})\longrightarrow \hwbig(R),
\]
which sends a homomorphism \(\Wbig\to \mathbb{G}_{m,R}\) to its restriction along the Teichm\"uller map \(\Aone\to\Wbig\), is an isomorphism. This is a theorem of Cartier \cite[Theorem~2]{cartier}; see also \cite[\S~37.5]{hazewinkel} for a detailed exposition.

\smallskip
\noindent\emph{Step 2: The case of reduced seminormal rings.}

If \(R\) is reduced and seminormal, then all the groupoids in \eqref{eq:four_groupoids} are trivial. Indeed, the groupoid of line bundles on an affine space over \(R\) is equivalent to \(\Pic(R)\). Hence every multiplicative line bundle on \(\Wbig\) is pulled back from \(R\), and its multiplicative structure forces it to be trivial. The automorphism groups vanish as well, since \(R[t]^\times=R^\times\). This proves that \(\mathcal{C}_1(R)\) is trivial. The same argument applies to \(\mathcal{C}_4(R)\).

\smallskip
\noindent\emph{Step 3: Reduction to rings essentially of finite type over \(\Z\).}

Recall that the functor \(R\mapsto \Pic(R)\) commutes with filtered colimits. It follows that the functors \(R\mapsto \mathcal{C}_i(R)\) commute with filtered colimits for \(i=1,4\), and therefore, by the equivalences already established, for all \(i\). Hence it is enough to prove the assertion for rings essentially of finite type over \(\Z\).

\smallskip
\noindent\emph{Step 4: Reduction to reduced rings.}

Let \(R\) be a ring, let \(I\subset R\) be a square-zero ideal, and set \(\overline{R}=R/I\). Assume that \(\mathcal{C}_1(\overline{R})\to \mathcal{C}_2(\overline{R})\) is an equivalence. 
We show that \(\mathcal{C}_1(R)\to \mathcal{C}_2(R)\) is also an equivalence. 
Let
\[
  i\colon \Spec \overline{R}\mono \Spec R
\]
be the canonical closed immersion, and write
\[
  \mathcal{I}:=\mathbb{G}_{a,R}\otimes_R I
\]
for the corresponding sheaf in \(\mathrm{Sh}_{R_\et}(\mathrm{Ab})\). The exact sequence
\[
  0\to \mathcal{I}\to \mathbb{G}_{m,R}\to i_*\mathbb{G}_{m,\overline{R}}\to 0
\]
gives a commutative diagram
\begin{center}
\footnotesize
\begin{tikzcd}[column sep=tiny,row sep=large]
\Hom(\Wbig,i_*\mathbb{G}_{m,\overline{R}}) \arrow[r] \arrow[d,"\alpha_1"] &
\Ext^1(\Wbig,\mathcal{I}) \arrow[r] \arrow[d,"\alpha_2"] &
\Ext^1(\Wbig,\mathbb{G}_{m,R}) \arrow[r] \arrow[d,"\alpha_3"] &
\Ext^1(\Wbig,i_*\mathbb{G}_{m,\overline{R}}) \arrow[r] \arrow[d,"\alpha_4"] &
\Ext^2(\Wbig,\mathcal{I}) \arrow[d,"\alpha_5"] \\
\Hom(\Wrat,i_*\mathbb{G}_{m,\overline{R}}) \arrow[r] &
\Ext^1(\Wrat,\mathcal{I}) \arrow[r] &
\Ext^1(\Wrat,\mathbb{G}_{m,R}) \arrow[r] &
\Ext^1(\Wrat,i_*\mathbb{G}_{m,\overline{R}}) \arrow[r] &
\Ext^2(\Wrat,\mathcal{I})
\end{tikzcd}
\end{center}
All Hom and Ext groups in this diagram are computed in \(\mathrm{Sh}_{R_\et}(\mathrm{Ab})\). 
We now use that the natural map $\Ext^1(W_{\mathrm{big},\overline{R}},
\mathbb{G}_{m,\overline{R}}) \to \Ext^1(W_{\mathrm{big}, R},
i_* \mathbb{G}_{m,\overline{R}})$ is an isomorphism, and similarly with \(W_{\mathrm{rat}}\) in place of \(W_{\mathrm{big}}\). This follows from the exactness of $i_*: \mathrm{Sh}_{\bar R_\et}(\mathrm{Ab}) \to \mathrm{Sh}_{R_\et}(\mathrm{Ab})$, the isomorphisms $i^* W_{\mathrm{big}, R}\iso W_{\mathrm{big}, \bar R}$
and $i^* W_{\mathrm{rat}, R}\iso W_{\mathrm{rat}, \bar R}$,
and the adjunction.
As a consequence, 
the terms involving \(i_*\mathbb{G}_{m,\overline{R}}\) are identified with the corresponding Hom and Ext groups over \(\overline{R}\).
By assumption, this means that $\alpha_1$ and \(\alpha_4\) are isomorphisms. 
By the following lemma, $\alpha_2$ and $\alpha_5$ are isomorphisms, so we conclude that 
$\alpha_3$ is an isomorphism, as desired. 

\begin{lemma}[{\cite[Proposition~A.6]{akhil}}]
The map \(\Wrat\to \Wbig\) induces an isomorphism
\[
  \RHom(\Wbig,\mathcal{I})\xrightarrow{\sim}\RHom(\Wrat,\mathcal{I}).
\]
\end{lemma}

\begin{proof}
The result is stated in \emph{loc. cit.} under the additional assumption that \(\overline{R}\) is a field. However, the proof does not use that assumption. For the reader's convenience, we sketch a slightly modified version of the argument.

First, we prove that the map \(\Wrat\to\Wbig\) induces an isomorphism
\begin{equation}\label{eq:cohofBW}
  \rGamma(B\Wbig,\mathcal{I})\xrightarrow{\sim}\rGamma(B\Wrat,\mathcal{I}).
\end{equation}
For every commutative monoid \(M\), the map from \(M\) to its group completion becomes a homotopy equivalence after applying the classifying-space functor. Hence we may replace \(\Wrat\) by \(\Wratplus\).

The next observation is that the \(R\)-coalgebra \(\mathcal{O}(\Wbig)\) is the cofree coaugmented conilpotent cocommutative coalgebra on the free \(R\)-module
\[
  V:=xR[x].
\]
In other words, there is an isomorphism  of coaugmented coalgebras  
\begin{equation}\label{eq:Witt_coalgebra}
  \mathcal{O}(\Wbig)\xrightarrow{\sim}\bigoplus_{i\geq 0}\Gamma^i(V),
\end{equation}
This follows from Cartier duality \cite[Theorem~2]{cartier}, which identifies \(\mathcal{O}(\Wbig)\) with the continuous \(R\)-linear dual of \(\mathcal{O}(\widehat{W}_{\mathrm{big},R})\). In particular, \(\mathcal{O}(\Wbig)\) is a coaugmented graded coalgebra.

The isomorphism \eqref{eq:Witt_coalgebra} has the following properties:
\begin{enumerate}
  \item The composite
  \[
    \mathcal{O}(\Wbig)\longrightarrow \bigoplus_{i\geq 0}\Gamma^i(V)\longrightarrow V,
  \]
  which is only a map of \(R\)-modules, sends \(f\in \mathcal{O}(\Wbig)\) to
  \[
    f([x])-f(0)\in V.
  \]
  This property uniquely characterizes \eqref{eq:Witt_coalgebra}.

  \item For every \(n\geq 0\), the composite
  \[
    \mathcal{O}(\Wbig)\longrightarrow \bigoplus_{i\geq 0}\Gamma^i(V)
    \longrightarrow \bigoplus_{0\leq i\leq n}\Gamma^i(V)
    \xrightarrow{\sim}\mathcal{O}(\Sym^n(\mathbb{A}^1_R))
  \]
  is a map of algebras corresponding to the closed immersion
  \[
    \Sym^n(\mathbb{A}^1_R)\mono \Wbig.
  \]
\item The map
  \[
    \mathcal{O}(\Wbig)\longrightarrow \mathcal{O}(\Wrat^+)
  \]
  identifies the target with the direct product of the graded components of the source.
\end{enumerate}

The grading on \(\mathcal{O}(\Wbig)\) induces a grading on the cobar complex computing \(\rGamma(B\Wbig,\mathcal{I})\). The map
\[
  \rGamma(B\Wbig,\mathcal{I})\longrightarrow \rGamma(B\Wrat^+,\mathcal{I})
\]
identifies the target with the direct product of the graded components of the source. On the other hand, we have\footnote{This follows by observing that $\operatorname{Cotor}^i_{\Gamma^\cdot (V)}(R, I)$, as a functor of $V$, commutes with filtered colimits of flat $R$-modules and that, for a finite free $R$-module $V$, the relevant $\operatorname{Cotor}$ is isomorphic to $\operatorname{Ext}^i_{\Sym^\cdot (V^*)}(R, I)$.}
\[
  R^i\Gamma(B\Wbig,\mathcal{I})\simeq \operatorname{Cotor}^i_{\Gamma^\cdot (V)}(R, I)\simeq \bigwedge^i V\otimes_R I.
\]
In particular, each cohomology group is supported in a single grading degree. This proves \eqref{eq:cohofBW}.

An analogous argument applies to finite products of copies of \(\Wrat^+\) and \(\Wbig\). The lemma now follows from the functorial Breen--Deligne resolution \cite[Appendix to Lecture~4]{scholze2026lecturescondensedmathematics}.
\end{proof}

 As a consequence of this step, observe that if \(A\) is a noetherian ring and the equivalence 
 $\mathcal{C}_1(R)\longrightarrow \mathcal{C}_2(R)$ is known for $R=A_{\mathrm{red}}$, then it is known for \(A\): indeed, the nilradical of \(A\) is nilpotent, and one can successively apply the result above to the square-zero extensions arising from its powers.

\smallskip
\noindent\emph{Step 5: Milnor patching.}

The sheaves \(\mathcal{C}_i\), for \(1\leq i\leq 4\), satisfy Milnor patching \cite[\S~2]{milnor}. Namely, given a Cartesian square
\[
\begin{tikzcd}
A \arrow[r] \arrow[d] & B \arrow[d] \\
A' \arrow[r] & B'
\end{tikzcd}
\]
in which at least one of \(A'\to B'\) or \(B\to B'\) is surjective, the induced square of groupoids is a pullback square:
\[
  \mathcal{C}_i(A)\simeq \mathcal{C}_i(B)\times_{\mathcal{C}_i(B')}\mathcal{C}_i(A'),
  \qquad 1\leq i\leq 4.
\]
For \(\mathcal{C}_2\), this is the usual Milnor patching theorem for invertible modules. For \(\mathcal{C}_1\), it follows from the description in terms of multiplicative line bundles on \(\wbig\) together with patching for projective modules.

\smallskip
\noindent\emph{Step 6: Reduction to the seminormal case.}

It remains to prove that
\[
  \mathcal{C}_1(R)\longrightarrow \mathcal{C}_2(R)
\]
is an equivalence for every reduced ring \(R\) essentially of finite type over \(\Z\).  Denote by \(\operatorname{Min}(R)\) the set of irreducible components of \(\Spec R\), i.e., minimal prime ideals of \(R\).  We argue by induction on the lexicographically ordered pair
\[
  \bigl(\dim R,\#\operatorname{Min}(R)\bigr).
\]
Suppose first that \(R\) has more than one irreducible component. Partition \(\operatorname{Min}(R)\) into two nonempty subsets, and let \(I_1\) and \(I_2\) be the intersections of the minimal primes in the two subsets. Since \(R\) is reduced, \(I_1\cap I_2=0\), and hence
\[
\begin{tikzcd}
R \arrow[r] \arrow[d] & R/I_1 \arrow[d] \\
R/I_2 \arrow[r] & R/(I_1+I_2)
\end{tikzcd}
\]
is a Milnor square. The rings \(R/I_1\) and \(R/I_2\) are reduced and have fewer minimal primes than \(R\). Moreover,
\[
  \dim \bigl(R/(I_1+I_2)\bigr)<\dim R.
\]
Using the remark at the end of Step~4 and the induction hypothesis, the equivalence holds for $R/(I_1+I_2)$. 
Thus, Milnor patching reduces the assertion to the case in which \(R\) is a domain.

Now suppose that \(R\) is a domain, and let \(R^+\) be its normalization. Since \(R\) is essentially of finite type over \(\Z\), the ring \(R^+\) is finite over \(R\). Let
\[
  \mathfrak{c}:=\{a\in R\mid aR^+\subset R\}
\]
be the conductor ideal. It is a nonzero ideal of both \(R\) and \(R^+\), and the conductor square
\[
\begin{tikzcd}
R \arrow[r] \arrow[d] & R^+ \arrow[d] \\
R/\mathfrak{c} \arrow[r] & R^+/\mathfrak{c}
\end{tikzcd}
\]
is Cartesian. It is therefore a Milnor square. The ring \(R^+\) is normal, hence seminormal, so the equivalence is known by Step~2. Since \(\mathfrak{c}\neq 0\), we have \(\dim(R/\mathfrak{c})<\dim R\). Moreover, \(R^+/\mathfrak{c}\) is finite over \(R/\mathfrak{c}\), so it also has dimension strictly smaller than \(\dim R\). Applying the induction hypothesis to the reductions of the two quotient rings, and then Step~4 to recover the quotients themselves, Milnor patching yields the equivalence for \(R\).
\end{proof}

\begin{remark}
The ring \(\wbig(R)\) acts naturally on the group scheme \(\Wbig\). This induces an action of \(\wbig(R)\) on the Picard groupoid \(\mathcal{C}_1(R)\), and hence, by \Cref{pr:extensionsofWbig}, on \(\mathcal{C}_4(R)\); cf.~\cite{daytonweibel}.
\end{remark}
\begin{remark}
   For any ring $R$, every commutative group-scheme extensions 
  \( 0\to \mathbb{G}_{a,R}\to G\to \Wbig\to 0 \) splits. In fact, this follows from \Cref{pr:extensionsofWbig} applied to $R$ and 
    its trivial square-zero extension $R\oplus \epsilon R$.
\end{remark}

\begin{corollary}
\label{cor:extensionsWbigtorsors}
Assume that \(R\) is \(p\)-nilpotent. Then the Picard groupoid \(\mathcal{C}_1(R)\) is equivalent to the groupoid of \(\hwbig\)-torsors over \(R\) for the fpqc topology.
\end{corollary}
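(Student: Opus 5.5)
We combine \Cref{pr:extensionsofWbig} with \Cref{wbigtorsors} and \Cref{localtrivialtyWbig}. By \Cref{pr:extensionsofWbig}, $\mathcal{C}_1(R) \simeq \mathcal{C}_4(R)$, the groupoid of line bundles on $\mathbb{A}^1_R$ trivialized along the zero section, i.e.\ invertible $R[t]$-modules $\mathcal{L}$ with a trivialization of $\mathcal{L}/t\mathcal{L}$. This is exactly category (2) of \Cref{wbigtorsors}. By \Cref{localtrivialtyWbig}, since $R$ is $p$-nilpotent, the fully faithful functor from fpqc $\hwbig$-torsors into category (2) is an equivalence. Composing gives $\mathcal{C}_1(R) \simeq \{\hwbig\text{-torsors}\}$.

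It remains to check that the two identifications use the same conventions, so that the composite is a natural equivalence of Picard groupoids and not just an abstract bijection. On automorphisms of the unit, Step~1 of the proof of \Cref{pr:extensionsofWbig} identifies $\pi_1(\mathcal{C}_1(R)) = \Hom(\Wbig,\mathbb{G}_m)$ with $\hwbig(R) = \ker(R[t]^\times \to R^\times)$ via restriction along the Teichm\"uller map (Cartier). The proof of \Cref{wbigtorsors} identifies the automorphisms of the trivial torsor with the same group $\ker(R[t]^\times\to R^\times)$ via \Cref{bigwitthwaspoly}. One should verify that these two identifications agree, possibly up to the sign or inversion convention between $1+tR[t]$ and the Witt structure. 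Whichever convention is forced, the equivalence only changes by an automorphism of $\hwbig$, so the groupoids are equivalent regardless.

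A more conceptual description of the functor is also available. Given an extension $0 \to \mathbb{G}_m \to G \to \Wbig \to 0$, pull it back along the Teichm\"uller section $\mathbb{A}^1 \to \Wbig$. This yields a $\mathbb{G}_m$-torsor on $\mathbb{A}^1_R$, trivialized at $0$ by the group unit. The composite to $\mathcal{C}_4$ is precisely this pullback. Meanwhile, \Cref{wbigtorsors} sends the trivial torsor to $R[t]$ and is determined by descent.

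Both sides are fpqc stacks, and all objects are fppf-locally trivial by \Cref{localtrivialtyWbig}. Full faithfulness and essential surjectivity therefore reduce to the comparison of automorphism groups of the trivial object, which is the Cartier isomorphism already used in Step~1. The main obstacle is only this bookkeeping of compatibility. There is no real mathematical difficulty, since the $p$-nilpotence hypothesis enters solely through \Cref{localtrivialtyWbig}.
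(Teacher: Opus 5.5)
Your proof is correct and is exactly the intended argument: compose the equivalence $\mathcal{C}_1(R)\simeq\mathcal{C}_4(R)$ from \Cref{pr:extensionsofWbig} with the identification of $\mathcal{C}_4(R)$ as category (2) of \Cref{wbigtorsors}. For $p$-nilpotent $R$, that category is equivalent to fpqc $\hwbig$-torsors by \Cref{localtrivialtyWbig}. The convention-matching in your later paragraphs is harmless but unnecessary, since the statement only asserts an equivalence of groupoids.
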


\begin{corollary}[{\cite[Theorem~2.8.1]{SekiguchiandSuwa}}]
Let \(R\) be a \(\mathbb{Z}_{(p)}\)-algebra. For an integer \(n>0\), let \(\mathcal{C}_5(R)\) be the Picard groupoid of commutative group-scheme extensions
\[
  0\to \mathbb{G}_{m,R}\to G\to W_{n,R}\to 0
\]
that admit a scheme-theoretic section \(W_{n,R}\to G\). Then there is a natural equivalence
\[
  \cone\bigl(\hw(R)\xrightarrow{F^n}\hw(R)\bigr)\simeq \mathcal{C}_5(R).
\]
Here the cone is viewed as the Picard groupoid associated to the resulting two-term complex.
\end{corollary}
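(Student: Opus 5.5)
The plan is to reduce to the $\Wbig$ case by means of the Artin--Hasse type decomposition of big Witt vectors over a $\mathbb{Z}_{(p)}$-algebra. Then we analyse extensions of $W_n = W/V^n W$ by $\mathbb{G}_m$ using the exact sequence
\[ 0 \to W \xrightarrow{V^n} W \to W_n \to 0 \]
of group schemes, and apply $\RHom(-,\mathbb{G}_{m,R})$ in $\mathrm{Sh}_{R_\et}(\mathrm{Ab})$.

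First, over $\mathbb{Z}_{(p)}$ we have $\Wbig \simeq \prod_{(k,p)=1} W$ as group schemes. Hence \Cref{pr:extensionsofWbig}, Step 1 (Cartier's theorem) gives $\Hom(W,\mathbb{G}_m) \simeq \hw(R)$, compatibly with the $p$-typical projection. Dually, the map $V^n\colon W\to W$ induces $F^n\colon \hw(R)\to\hw(R)$ on $\Hom(-,\mathbb{G}_m)$; one checks this by restricting along Teichmüller, using $[x]\cdot$-equivariance and the adjointness of $F$ and $V$ under the Cartier pairing. The long exact sequence then reads
\[ 0\to \Hom(W_n,\mathbb{G}_m)\to \hw(R)\xrightarrow{F^n}\hw(R)\to \Ext^1(W_n,\mathbb{G}_m)\to \Ext^1(W,\mathbb{G}_m)\xrightarrow{(V^n)^*}\Ext^1(W,\mathbb{G}_m). \]

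Second, we identify the extensions admitting a scheme-theoretic section. An extension of $W_n$ by $\mathbb{G}_m$ that splits as a scheme pulls back to an extension $G'$ of $W$ by $\mathbb{G}_m$ which also splits as a scheme. Since $W$ is ind-affine and the relevant line bundle on $W$ is trivial, the corresponding multiplicative $\mathbb{G}_m$-torsor is trivial as a torsor. Using $\mathcal{C}_1\simeq\mathcal{C}_4$ restricted to the $p$-typical factor, such $G'$ corresponds to a line bundle on $\mathbb{A}^1$ trivialized at $0$ that is trivial, i.e.\ to the trivial object. So $G'$ is the split extension.

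Conversely, if the pullback to $W$ is split, then $G$ is determined by the coboundary class, that is by $\operatorname{coker}$ of $F^n$ together with automorphisms $\ker F^n$. Such $G$ is a quotient of $\mathbb{G}_m\times W$ by the graph of a character of $V^nW\simeq W$, hence is scheme-theoretically split, because $W\to W_n$ has a scheme section (truncation of Witt coordinates). This identifies $\mathcal{C}_5(R)$ with the fiber of $\mathcal{C}_1$-type extensions over the trivial one, which as a Picard groupoid is $\cone(F^n)$. To get an actual equivalence of Picard groupoids rather than just agreement on $\pi_0$ and $\pi_1$, I would construct the functor explicitly: send $a\in\hw(R)$ (a character $\chi_a$ of $W$) to the pushout of $0\to W\xrightarrow{V^n}W\to W_n\to0$ along $\chi_a$. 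A morphism $b$ acts via $\chi_{F^n b}$ and its coboundary.

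The main obstacle is the compatibility $\chi\circ V^n = \chi_{F^n(\cdot)}$ under Cartier duality, i.e.\ that the Cartier identification $\Hom(W,\mathbb{G}_m)\simeq\hw(R)$ intertwines $V^*$ with $F$. I would verify this on big Witt vectors, where the pairing is explicit ($\langle 1-at, f\rangle$ evaluation of polynomials), and then project $p$-typically. A secondary point is showing that every scheme-split extension of $W$ by $\mathbb{G}_m$ is trivial as a group extension; this needs the vanishing of the symmetric 2-cocycle group $\Hom$-level obstruction, which follows from the $\mathcal{C}_3$ description (a multiplicative structure on the trivial bundle is a unit-valued symmetric cocycle, which by Step 1 style graded arguments is a coboundary).
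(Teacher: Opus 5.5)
Your proposal is correct and follows essentially the paper's route. You pull an extension in $\mathcal{C}_5(R)$ back to $W$ through $\Wbig$, using the $p$-typical section (or decomposition) available over $\mathbb{Z}_{(p)}$, and invoke $\mathcal{C}_1\simeq\mathcal{C}_4$ from \Cref{pr:extensionsofWbig} to see that a scheme-split extension becomes split. You then identify extensions equipped with such a splitting with characters of $V^nW\simeq W$, i.e.\ with $\hw(R)$ by Cartier duality, where changes of splitting act through $F^n$ because Cartier duality exchanges $V$ and $F$.

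Your remark that these pushouts are scheme-split via the Witt-coordinate section of $W\to W_n$ makes explicit a point the paper leaves implicit. Your ``secondary'' cocycle argument is unnecessary: the splitting already follows from injectivity on $\pi_0$ of $\mathcal{C}_1\to\mathcal{C}_4$.
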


\begin{proof}
For every \(\mathbb{Z}_{(p)}\)-algebra \(R\), the morphism \(\Wbig\to W_R\) has a natural section. It follows from \cref{pr:extensionsofWbig} that, for every extension \(G\) in \(\mathcal{C}_5(R)\), its pullback along the projection
\[
  W_R\longrightarrow W_{n,R},
\]
which factors through \(\Wbig\) as noted above, admits a splitting. The groupoid of such extensions \(G\), equipped with a choice of splitting, is equivalent to \(\hw(R)\) by Cartier duality. The result follows because Cartier duality interchanges \(V\) and \(F\).
\end{proof}

\subsection{The functor $Q$}
\label{sec:Q}

\begin{definition}
Let $R \in \nperf$ be a nilperfect ring.
We let $Q(R) = W(R)/\hw(R)$.
\end{definition}

\begin{remark} 
It will occasionally be helpful to consider $Q$ on more general $p$-nilpotent rings. 
Since $\nperftors$ is a basis for the fpqc topology on $p$-nilpotent rings, we can extend $Q$ to a sheaf on all $p$-nilpotent rings by fpqc descent. For any $p$-nilpotent $R$, $W(R)/\hw(R) \subset Q(R)$. 
\end{remark}

Let $R \in \nperftors$.
Since $\hw(R)$ is generally not derived $p$-complete, neither is $Q(R)$.
Nonetheless, one has the following result.

\begin{proposition}
For any $R \in \nperftors$,     the kernel of the zeroth ghost
coordinate map $Q(R) \to R_{\mathrm{red}}$ is a henselian ideal \cite[Tag
09XD]{stacks-project}.
In particular, $Q(R)$ is $p$-henselian.
\end{proposition}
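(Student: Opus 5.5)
The plan is to obtain the henselian property for $Q(R)$ as a quotient of a henselian pair $(W(R), K)$, where
\[ K = \ker\bigl(W(R) \to R \to R_{\mathrm{red}}\bigr) \]
is the set of Witt vectors whose zeroth Witt component is nilpotent. Since every element of $\hw(R)$ has nilpotent Witt components (\Cref{hwviaWitt}), we have $\hw(R) \subset K$. Hence the kernel of $Q(R) \to R_{\mathrm{red}}$ is $K/\hw(R)$. Quotients of henselian pairs are henselian \cite[Tag 09XK]{stacks-project}, so it suffices to show that $(W(R), K)$ is a henselian pair. Throughout I use the criterion of \cite[Tag 09XI]{stacks-project}: a pair $(A,I)$ is henselian if and only if every monic $f \in A[T]$ with $f(0) \in I$ and $f'(0) \in A^\times$ has a root in $I$. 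In that situation the root in $I$ is moreover unique.

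\textbf{Step 1: finite levels.} For each $n$, consider $K_n = \ker(W_n(R) \to R_{\mathrm{red}})$, where $W_n(R)$ denotes the $n$-truncated Witt vectors. I claim $K_n$ is a locally nilpotent ideal. The kernel of $W_n(R) \to R$ consists of elements $V(x)$, and $(V x)^k = p^{k-1} V(x^k)$. Since $R$ is $p$-nilpotent, $p$ is nilpotent in $W_n(R)$, so $V(x)$ is nilpotent. An element $y \in K_n$ differs from $[y_0]$, with $y_0$ nilpotent, by an element of $V W_n(R)$. The Teichm\"uller $[y_0]$ is nilpotent because $[y_0]^k = [y_0^k]$. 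A sum of two nilpotents is nilpotent, so $y$ is nilpotent. A locally nilpotent ideal gives a henselian pair \cite[Tag 0ALI]{stacks-project}, so $(W_n(R), K_n)$ is henselian.

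\textbf{Step 2: passage to the limit.} Let $f \in W(R)[T]$ be monic with $f(0) \in K$ and $f'(0)$ a unit. For each $n$, the image of $f$ in $W_n(R)[T]$ has a unique root $a_n \in K_n$. By uniqueness, the transition maps $W_{n+1}(R) \to W_n(R)$ send $a_{n+1}$ to $a_n$. Since $W(R) = \varprojlim_n W_n(R)$ and $K = \varprojlim_n K_n$, the compatible system $(a_n)$ defines a root $a \in K$ of $f$. Therefore $(W(R), K)$ is henselian. I expect this to be the only real point requiring care: one must use the uniqueness part of the root criterion, since henselianity is not in general preserved by inverse limits without it. I would double-check that the criterion of \cite[Tag 09XI]{stacks-project} indeed includes both uniqueness and the Jacobson-radical condition. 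If it does not, I would verify the Jacobson-radical condition directly: an element $1 + k$ with $k \in K$ is a unit in every $W_n(R)$, hence in $W(R)$.

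\textbf{Step 3: conclusion.} Passing to the quotient by $\hw(R) \subset K$ gives that the kernel of $Q(R) \to R_{\mathrm{red}}$ is a henselian ideal. For the last assertion, note that $p$ maps to zero in the $\mathbb{F}_p$-algebra $R_{\mathrm{red}}$, so $pQ(R)$ is contained in this henselian ideal. If $(A,J)$ is henselian and $I \subset J$, then $(A,I)$ is henselian, since the root criterion for $I$ follows from the one for $J$ together with uniqueness of roots. Hence $(Q(R), pQ(R))$ is henselian, i.e., $Q(R)$ is $p$-henselian.
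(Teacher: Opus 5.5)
Your proof is correct, but it shows that $(W(R),K)$ is henselian by a different route from the paper's. The paper factors $W(R)\to R_{\mathrm{red}}$ as $W(R)\to R\to R_{\mathrm{red}}$ and treats each piece separately. The first surjection is henselian because $W(R)$ is adically complete for the ideal $VW(R)$; here $p$-nilpotence of $R$ gives $(VW(R))^N\subset V^nW(R)$. The second is henselian because the nilradical is locally nilpotent. The composite is then henselian by the transitivity lemma \cite[Tag 0DYD]{stacks-project}, and that same lemma also handles both the passage to $Q(R)$ and the sub-ideal $pQ(R)$.

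You instead observe that at each finite level the entire kernel $K_n\subset W_n(R)$ is nil. This uses $p$-nilpotence of $W_n(R)$ in place of the comparison of the $V$-adic and $VW(R)$-adic topologies, so you never have to split off the nilradical. You then pass to $W(R)=\varprojlim_n W_n(R)$ using uniqueness of roots. This replaces the completeness and transitivity lemmas with an explicit root criterion. The argument becomes more self-contained, but the criterion must then be stated exactly.

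Two of your hedges should be stated as firm requirements.
\begin{enumerate}
\item The Jacobson-radical condition is genuinely part of the criterion. The root condition alone does not imply henselian: $(\mathbb{C},\mathbb{C})$ satisfies it but is not henselian. What Tag 09XI actually contains is Gabber's criterion, which reduces to yours by substituting $T=1+S$ once $K\subset\mathrm{Jac}(W(R))$ is known. Your verification of that inclusion, via compatible inverses of $1+k$, is the right one. Uniqueness of the root in $K_n$ likewise rests on $K_n$ being nil.
\item In the sub-ideal step, ``uniqueness of roots'' needs one more line to show the root lands in $I$. If $a\in J$ is the root of $f$ with $f(0)\in I$, then $f(0)=-a\,(f'(0)+a\,g(a))$ for some $g\in A[T]$. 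The second factor is a unit because $a\in\mathrm{Jac}(A)$, so $a\in I$.
\end{enumerate}
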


\begin{proof}
Say that a surjection of rings is \emph{henselian} if the kernel is henselian as
an ideal of the source.
We now use
\cite[\href{https://stacks.math.columbia.edu/tag/0DYD}{Lemma
0DYD}]{stacks-project} (restated slightly): given surjections $A
\twoheadrightarrow B
\twoheadrightarrow C$, then the map $A \twoheadrightarrow C$ is henselian if and
only if $A \twoheadrightarrow B$ and $B \twoheadrightarrow C$ are henselian.
In particular, any ideal contained in a henselian ideal is itself a henselian
ideal.

For any $p$-nilpotent ring $R$,
the maps $W(R) \to R$ and $ R \to R_{\mathrm{red}}$ are henselian
surjections\footnote{In the first case, this follows because
$W(R)$ is complete with respect to the ideal $VW(R)$. We use that $p \in R$ is
nilpotent to guarantee that for any $n$, there exists $N$ such that $
(VW(R))^N \subset V^n W(R)$.} and
therefore so is their composite.
This also implies that $Q(R) \to R_{\mathrm{red}}$ is a henselian surjection,
and since
an ideal contained in a henselian ideal is also henselian, it follows
that $(p) \subset Q(R)$ is henselian.
\end{proof}

\begin{proposition}
\label{Qisfpqcsheaf}
On the category $\nperftors$, $Q$ is a sheaf of rings for the fpqc topology.
\end{proposition}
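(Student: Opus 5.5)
The plan is to deduce the sheaf property of $Q$ from the exact sequence of presheaves
\[ 0 \to \hw \to W \to Q \to 0 \]
on $\nperftors$, together with three inputs: $W$ is an fpqc sheaf, $\hw$ is an fpqc sheaf, and $H^1_{\mathrm{fpqc}}(R, \hw) = 0$ for $R \in \nperftors$ (\Cref{vanishingofhwtorsors}). Since every nilperfect ring is $p$-nilpotent in the relevant sense (its reduction is an $\mathbb{F}_p$-algebra, so $p$ is nilpotent), hence $p$-local, \Cref{vanishingofhwtorsors} applies.

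First I show $\hw$ is an fpqc sheaf on all rings. The Witt scheme $W$ is affine, so it is an fpqc sheaf. $\hw$ is a subfunctor of $W$, so it suffices to show membership in $\hw$ descends. Take $R \to R'$ faithfully flat and $x \in W(R)$ whose image lies in $\hw(R')$. By \Cref{hwviaWitt}, the Witt components of $x$ in $R'$ are nilpotent and vanish for $i \gg 0$. Since $R \to R'$ is injective, the same holds in $R$: nilpotence and vanishing can both be checked after an injective map. Hence $\hw$ is a sheaf.

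Next, for a faithfully flat map $R \to R'$ in $\nperftors$, consider the Čech nerve $R'' = R' \otimes_R R'$. This may fail to be nilperfect a priori, but its reduction is a reduced quotient of a tensor product of perfect rings, hence perfect, so $R''$ stays in $\nperftors$ and likewise for all higher terms. The task is then to show
\[ Q(R) \to Q(R') \rightrightarrows Q(R'') \]
is an equalizer. The Čech complexes of $\hw$ and $W$ form a short exact sequence of complexes, giving a long exact sequence. Injectivity of $Q(R) \to Q(R')$ and exactness at $Q(R')$ follow from $\check{H}^0$-exactness of $W$ together with the vanishing of $\check{H}^1(R'/R, \hw)$, via a diagram chase:
\begin{itemize}
\item Injectivity: if $x \in W(R)$ maps into $\hw(R')$, then $x \in \hw(R)$ by the descent step above.
\item Exactness: given $\bar y \in Q(R')$ with equal images in $Q(R'')$, lift it to $y \in W(R')$. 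Then $p_1^*y - p_2^*y \in \hw(R'')$ is a Čech 1-cocycle for $\hw$. It is a coboundary $p_1^*h - p_2^*h$ with $h \in \hw(R')$, because Čech $H^1$ injects into the torsor group, which vanishes by \Cref{vanishingofhwtorsors}. Then $y - h$ descends to $W(R)$, giving the desired preimage.
\end{itemize}

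Ring structure is automatic, since $Q$ is a quotient ring presheaf and the sheaf condition is on underlying sets. The main obstacle I expect is making sure the torsor vanishing applies to Čech cocycles for this specific cover, and that the descent datum defined by the cocycle really is an fpqc $\hw$-torsor over $R$. This should be fine because $\hw$ is an ind-scheme sheaf (\Cref{Mhatcommuteswithfilteredcolimits}), so cocycles define torsors by descent. I would also double-check the claim that the terms of the Čech nerve are nilperfect, although only $R$ and $R'$ being nilperfect (and $R$ $p$-local) is actually needed for the torsor vanishing.
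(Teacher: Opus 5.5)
Your proposal is correct and follows the same route as the paper, which deduces the statement in one line from the facts that $W$ and $\hw$ are fpqc sheaves together with the vanishing of fpqc $\hw$-torsors (\Cref{vanishingofhwtorsors}). You spell out the diagram chase the paper leaves implicit: membership in $\hw$ descends along faithfully flat maps, and the \v{C}ech $1$-cocycle $p_1^*y-p_2^*y$ is killed using torsor vanishing over $R$.
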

\begin{proof}
This follows because $W$ and $\hw$ are fpqc sheaves, and by
\Cref{vanishingofhwtorsors}. 
\end{proof}

Next, we discuss some of the structure of $Q$.

\begin{construction}[$F, V, \delta$ on $Q$]
Since $\hw \subset W$ is stable under $F$ and $V$,
these operators
descend to operators $F, V: Q \to Q$.
Since $\hw \subset W$ is a $\delta$-ideal, $Q$ acquires the structure of a sheaf
of $\delta$-rings on $\nperftors$.
\end{construction}

\begin{proposition} \label{VonQ}
If $R \in \nperftors$, then we have a natural short exact sequence
\[ 0 \to Q(R) \stackrel{V^n}{\to} Q(R) \to W_n( R_{\mathrm{red}}) \to 0.  \]
\end{proposition}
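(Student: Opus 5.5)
We need to show that $V^n\colon Q(R)\to Q(R)$ is injective and that its cokernel is $W_n(R_{\mathrm{red}})$. The plan is to compute both directly from the definition $Q(R)=W(R)/\hw(R)$.

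\textbf{Surjectivity and exactness in the middle.} Consider the composite
\[ W(R)\to W_n(R)\to W_n(R_{\mathrm{red}}). \]
It is surjective. Its kernel consists of those $x=\sum_i V^i[x_i]$ whose first $n$ Witt components $x_0,\dots,x_{n-1}$ are nilpotent, that is,
\[ \ker = \hw(R) + V^nW(R), \]
since $\sum_{i<n}V^i[x_i]\in\hw(R)$ by \Cref{hwviaWitt}. The map kills $\hw(R)$, because the components of elements of $\hw(R)$ are nilpotent. So it descends to a surjection $Q(R)\to W_n(R_{\mathrm{red}})$ whose kernel is the image of $V^nW(R)$, which is $V^nQ(R)$.

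\textbf{Injectivity of $V^n$ on $Q(R)$.} This is the main point, and by induction it suffices to treat $n=1$. So suppose $x\in W(R)$ with $Vx\in\hw(R)$; we must show $x\in\hw(R)$. Write $x=\sum_i V^i[x_i]$, so that $Vx=\sum_i V^{i+1}[x_i]$. The Witt components of $Vx$ are therefore $(0,x_0,x_1,\dots)$. By \Cref{hwviaWitt}, these are all nilpotent and almost all vanish, so the same holds for the $x_i$, and $x\in\hw(R)$.

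This argument uses nothing about $R$ beyond its Witt components: the nilperfect hypothesis enters only through the definition of $Q$. The only thing to double-check is that the cokernel identification is really with $W_n(R_{\mathrm{red}})$ and not $W_n(R)$. This is exactly where quotienting by $\hw(R)$ kills the nilpotent components, and it is the step I would verify most carefully: an element with nilpotent first $n$ components, but nonzero later ones, must be split as an element of $\hw(R)$ plus an element of $V^nW(R)$, as above. Naturality in $R$ is clear from the construction.
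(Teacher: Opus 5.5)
Your proposal is correct and follows essentially the same route as the paper: injectivity comes from the fact that $x\in\hat{W}(R)$ if and only if $Vx\in\hat{W}(R)$, because $V$ shifts Witt components, and the cokernel is identified via $W(R)/(\hat{W}(R)+V^nW(R))\cong W_n(R_{\mathrm{red}})$. You spell out the two details the paper leaves implicit, namely the decomposition of the kernel as $\hat{W}(R)+V^nW(R)$ and the component-shift description of $V$.
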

\begin{proof}
	    Injectivity of $V : Q(R) \to Q(R)$ follows because an element of
	 $W(R)$ belongs to $\hw(R)$ if and only if its image under $V$ does.
    The cokernel of $V^n: W(R)/\hw(R) \to W(R)/\hw(R)$ is
    $W_n(R_{\mathrm{red}})$. \end{proof}

It turns out
that $F$ and $V$ commute on $Q$ for $p > 2$, and come close to commuting when $p
= 2$; in this respect $Q$ (on $\nperftors$) behaves more like the Witt vectors
of an
$\mathbb{F}_p$-algebra. To formulate and prove these claims, we use the
following definition.

\begin{definition}[The operator $\hV$ on $W$ and $Q$]
For any ring $R$, define an additive operator $\hV:W(R)\to W(R)$ by
\[
\hV(x)=
\begin{cases}
V(x), & p>2,\\
V([-1]x), & p=2
\end{cases}. 
\]
Note also that for an $\mathbb{F}_2$-algebra
we  have $\hV=V$.
Since
$\hw(R)\subset W(R)$ is stable under $V$ and under multiplication by $[-1]$,
$\hV$ carries $\hw(R)$ into itself. 
Thus, for $R\in\nperftors$, it
descends to an additive operator $\hV:Q(R)\to Q(R)$.
\end{definition}

\begin{remark}
The modified Verschiebung $\hV$ appears in
\cite[Intro., p.~2202 and Lem.~1.7]{Laurelations}. By \Cref{pminusV1} for $p>2$ and \Cref{uinWZ2} for $p=2$, if
\[
\epsilon =
\begin{cases}
1, & p>2,\\
[-1], & p=2,
\end{cases}
\]
then $V(\epsilon)=p$ in $Q(\mathbb{Z}_p)=\varprojlim_n Q(\mathbb{Z}/p^n)$.
Equivalently, on $Q$ the preceding definition is the formula
$\hV(x)=V(\epsilon x)$ with $V(\epsilon)=p$. The element $\epsilon=[-1]$ is the
necessary correction at $p=2$, where $V(1)$ itself does not agree with $2$ in
$Q(\mathbb{Z}_2)$. 
\end{remark}

\begin{proposition}
\label{VonQseqprop}
For $R \in \nperftors$,
we have a natural short exact sequence
\begin{equation} \label{VonQseq}  0 \to Q(R) \stackrel{\hV^n}{\to} Q(R) \to
W_n(R_{\mathrm{red}})\to 0.  \end{equation}
\end{proposition}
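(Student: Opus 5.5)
For $p>2$ we have $\hV=V$, so the statement is exactly \Cref{VonQ}. The work is at $p=2$, where $\hV(x)=V([-1]x)$. Iterating the projection formula $V(F(a)b)=aV(b)$ gives
\[ \hV^n(x) = V^n(u_n x), \qquad u_n = [-1]\cdot F([-1])\cdots F^{n-1}([-1]) = [(-1)^n] , \]
since $F^i([-1])=[(-1)^{2^i}]=[1]$ for $i\geq 1$. To double-check: $\hV^2(x)=V([-1]V([-1]x)) = V(V(F([-1])[-1]x)) = V^2([1]\cdot[-1]x)$, so $u_2=[-1]$ and not $[1]$. Redoing the recursion carefully, $\hV^{k+1}(x)=V([-1]V^k(u_k x))=V^{k+1}(F^k([-1])u_k x)$. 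Hence $u_n=\prod_{k<n}F^k([-1])=[-1]\cdot[1]^{n-1}=[-1]$ for $n\geq1$. In either form, $\hV^n(x)=V^n(ux)$ for a Teichmüller unit $u=[\pm1]$, and I will only use that $u$ is a unit of $W(R)$.

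The plan is then as follows. Multiplication by the unit $u$ preserves $\hw(R)$, because $\hw(R)$ is an ideal by \Cref{hwbasic}; the same applies to $u^{-1}$. So $x\mapsto ux$ induces an automorphism of $Q(R)$. Consequently $\hV^n=V^n\circ(u\cdot)$ on $Q(R)$, and this map is injective because $V^n$ is injective on $Q(R)$ by \Cref{VonQ}.

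For the cokernel, its image equals $V^n(u\,Q(R))=V^n(Q(R))$. The cokernel is therefore the same as that of $V^n$, namely $W_n(R_{\mathrm{red}})$ by \Cref{VonQ}, and the quotient map is the natural one $Q(R)\to W_n(R_{\mathrm{red}})$. This gives the exact sequence \eqref{VonQseq}. Naturality holds because $u$ is defined over $\mathbb{Z}$ and is preserved by every ring map.

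The only delicate point is getting the projection-formula bookkeeping right, so that $\hV^n$ really factors as $V^n$ precomposed with multiplication by a unit, rather than involving something like $F^k$ of a non-Teichmüller element. Since $F$ sends Teichmüller units to Teichmüller units, this causes no trouble. No property of $R$ beyond nilperfectness is used, and that only enters through \Cref{VonQ}.
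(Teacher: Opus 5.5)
Your argument is correct and is the paper's proof with the details written out. The paper just says that $\hV$ is $V$ precomposed with an automorphism and then invokes \Cref{VonQ}. Your projection-formula computation $\hV^n(x)=V^n([-1]x)$ is what justifies passing from $\hV$ to $\hV^n$: the corrected value of $u_n$ is $[-1]$, your first displayed guess $[(-1)^n]$ was wrong, and in any case only the fact that $u_n$ is a unit is used.
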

\begin{proof}
This follows from \Cref{VonQ}, because $\hV$ is $V$ precomposed with an
automorphism.
\end{proof}

\begin{proposition}
\label{FhatV}
    The operators $F, \hV: Q \to Q$ commute and we have $F\hV = \hV F = p$.
	 Moreover, for any $R \in \nperftors$, we have the projection formula $x \hV(y)
	 = \hV( F(x) y)$ for any
	 $x, y \in Q(R)$.
\end{proposition}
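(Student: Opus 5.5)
The plan is to prove the identities on $W(R)$ modulo $\hw(R)$. Since $Q(R) = W(R)/\hw(R)$ for $R$ nilperfect, this suffices. Set $\epsilon = 1$ for $p>2$ and $\epsilon = [-1]$ for $p=2$, so that $\hV(x) = V(\epsilon x)$.

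The projection formula needs no work modulo $\hw$; it holds already in $W(R)$:
\[x\hV(y) = xV(\epsilon y) = V(F(x)\epsilon y) = \hV(F(x)y).\]
Likewise, $F\hV(x) = FV(\epsilon x) = p\epsilon x$. This equals $px$ if $p>2$. If $p=2$, note that $[-1]^2 = 1$, so $2[-1] = 2 - 2(1-[-1])$ and $(1-[-1])$ is killed by $F$. I would instead use that $2(1+[-1]) = V F(1+[-1]) = V(2) $, which reduces the question to whether $2[-1]-2$ lies in $\hw(R)$. A cleaner route is as follows. \Cref{uinWZ2} gives $2 = V([-1]) + \eta$ with $\eta \in \hw(\mathbb{Z}_2)$, hence its image $\eta \in \hw(R)$ via $\mathbb{Z}_2 \to W$ of the $p$-nilpotent $R$ (the image of $\hw(\mathbb{Z}_2)=\varprojlim \hw(\mathbb{Z}/2^n)$ in $W(R)$ lands in $\hw(R)$ since $R$ is a $\mathbb{Z}/2^n$-algebra). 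Applying $F$ gives $2 = F(2) = 2[-1] + F\eta$, that is, $2[-1] \equiv 2$ modulo $\hw(R)$. Then $F\hV(x) = 2[-1]x \equiv 2x$.

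For $\hV F = p$, I would use the identity $V(\epsilon) \equiv p$ modulo $\hw(R)$. For $p>2$ this comes from \Cref{pminusV1}: $p - V(1) \in \hw(\mathbb{Z}_p)$ maps into $\hw(R)$ as above. For $p=2$ it comes from \Cref{uinWZ2}. Then
\[\hV(F x) = V(\epsilon F x) = x V(\epsilon) \equiv px.\]
Since both $F\hV$ and $\hV F$ equal multiplication by $p$ on $Q(R)$, they commute.

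The only delicate point is the justification that the universal elements of $\hw(\mathbb{Z}_p)$ map into $\hw(R)$. Since $R$ is $p$-nilpotent, $\mathbb{Z}_p \to R$ factors through $\mathbb{Z}/p^n$. We then use functoriality of $\hw$ together with the definition $\hw(\mathbb{Z}_p) = \varprojlim_n \hw(\mathbb{Z}/p^n)$ for the $p$-adic pre-adic ring. With that in hand, the $p=2$ correction $2[-1]\equiv 2$ is the main point to check, and it follows by applying $F$ to \Cref{uinWZ2} as indicated.
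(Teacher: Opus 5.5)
Your proof is correct and is essentially the paper's argument. Both use $V(\epsilon)\equiv p$ modulo $\hw$ (from \Cref{pminusV1} and \Cref{uinWZ2}), apply $F$ to get $p\epsilon\equiv p$ and hence $F\hV=p$, and use the projection formula for $V$ to obtain both $\hV F(x)=xV(\epsilon)=px$ and the projection formula for $\hV$; the exploratory detour before your ``cleaner route'' is unnecessary and can be deleted.
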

\begin{proof}
Let
\[
\epsilon =
\begin{cases}
1, & p>2,\\
[-1], & p=2,
\end{cases}
\]
viewed as an element of \(Q(\mathbb{Z}_p)\). By \Cref{pminusV1,uinWZ2}, we have
\(V(\epsilon)=p\) in \(Q(\mathbb{Z}_p)\), hence in \(Q(R)\) for every
\(R\in\nperftors\). Applying \(F\), we get \(p\epsilon=FV(\epsilon)=F(p)=p\).
Therefore
\[
F\hV(x)=FV(\epsilon x)=p\epsilon x=px.
\]
Moreover, by the projection formula for \(V\),
\[
\hV F(x)=V(\epsilon F(x))=xV(\epsilon)=px.
\]
The same projection formula gives
\[
x\hV(y)=xV(\epsilon y)=V(F(x)\epsilon y)=\hV(F(x)y),
\]
as desired.
\end{proof}

\begin{proposition}\label{Visinvertibleon Qinfinity}
For any $R \in \nperftors$, the kernel of $Q(R) \to Q(R_{\mathrm{red}}) = W(
R_{\mathrm{red}})$ agrees
with
$\bigcap_{i \geq 0} \hV^i Q(R)$, and $\hV$ acts invertibly on this ideal.
\end{proposition}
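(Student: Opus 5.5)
Write $N = \ker(Q(R) \to W(R_{\mathrm{red}}))$. The plan has three steps: show $\bigcap_i \hV^i Q(R) \subseteq N$, show $\hV$ restricts to a bijection $N \to N$, and deduce $N \subseteq \bigcap_i \hV^i Q(R)$.

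\emph{First inclusion.} By \Cref{VonQseqprop}, an element $x \in \bigcap_i \hV^i Q(R)$ maps to zero in $W_n(R_{\mathrm{red}})$ for every $n$. Since $W(R_{\mathrm{red}}) = \varprojlim_n W_n(R_{\mathrm{red}})$, the image of $x$ in $W(R_{\mathrm{red}})$ is zero, so $x \in N$.

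\emph{Injectivity and $\hV(N) \subseteq N$.} Injectivity of $\hV$ on $Q(R)$ is part of \Cref{VonQseqprop}. The map $Q(R) \to W(R_{\mathrm{red}})$ intertwines $\hV$ with $\hV$, so $\hV(N) \subseteq N$.

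\emph{Surjectivity of $\hV\colon N \to N$.} This is the main obstacle. Let $x \in N$ and lift it to $\tilde x \in W(R)$. The Witt components of $\tilde x$ lie in $\mathrm{Nil}(R)$. Write $\tilde x = [x_0] + V(y)$, where $y$ again has nilpotent Witt components. Since $[x_0] \in \hw(R)$ because $x_0$ is nilpotent, we get $x = V(\bar y)$ in $Q(R)$ with $\bar y \in N$, because the components of $y$ are nilpotent.

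For $p > 2$ we have $\hV = V$, so this already shows $x = \hV(\bar y)$. For $p = 2$ we set $\bar y' = [-1]\bar y$. Multiplication by $[-1]$ preserves $N$, since $[-1]$ is a unit and $N$ is an ideal. Moreover $[-1]^2 = 1$, so $x = V([-1]\bar y') = \hV(\bar y')$.

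So $\hV\colon N \to N$ is a bijection. One alternative would be to prove $N$ is $\hV$-divisible using \Cref{FhatV}, but the direct Witt-component argument above avoids that.

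\emph{Reverse inclusion.} Given $x \in N$, iterate the surjectivity: $x = \hV(x_1)$ with $x_1 \in N$, then $x_1 = \hV(x_2)$ with $x_2 \in N$, and so on. Hence $x \in \hV^i Q(R)$ for all $i$, which gives $N \subseteq \bigcap_i \hV^i Q(R)$ and completes both claims.

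The only delicate point is the decomposition $\tilde x = [x_0] + V(y)$ with $y$ having nilpotent components. This holds because $\sum_{i \ge 0} V^i[x_i] = [x_0] + V\bigl(\sum_{i \ge 1} V^{i-1}[x_i]\bigr)$ exactly in $W(R)$.
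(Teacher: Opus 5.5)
Your proposal is correct and takes essentially the same route as the paper. The paper observes in one line that a class in $Q(R)$ lies in either ideal exactly when a representing Witt vector has all Witt components nilpotent, and calls the invertibility of $\hV$ clear; your argument spells this out, using \Cref{VonQseqprop} for one inclusion and the decomposition $\tilde x = [x_0] + V(y)$, with the $[-1]$-correction at $p=2$, for surjectivity of $\hV$ on the kernel and hence the reverse inclusion.
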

\begin{proof}
An element of $Q(R)$ represented by $x \in W(R)$ belongs to either
ideal if and only if the Witt components of $x$ are all nilpotent.
The last assertion is clear.
\end{proof}
\begin{proposition}
\label{ppowertorsioninQ}
Let $R$ be any $p$-nilpotent ring.
    An element  $x \in Q(R)$ satisfies $p^n x = 0$ if and only if $F^n(x) = 0$.
The ideal of torsion elements in $Q(R)$ squares to zero.
	 More generally, if $x, y \in Q(R)$ are such that $x$ is $p$-power torsion
	 and
	 $y \in
\bigcap_{i \geq 0} \hV^i (Q(R))$, then $xy = 0$.
\end{proposition}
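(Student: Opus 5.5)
The plan is to reduce to the nilperfect case and then use only the formal identities of \Cref{FhatV}, together with the injectivity of $\hV$ from \Cref{VonQseqprop} and the description of $\bigcap_i \hV^i Q(R)$ in \Cref{Visinvertibleon Qinfinity}.

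\emph{Reduction.} For a general $p$-nilpotent $R$, $Q$ is defined by fpqc descent from $\nperftors$. Every assertion in the statement is either an equality of sections of $Q$ or is stable under pullback along $R \to R'$. This includes the hypotheses ``$p^n x = 0$'', ``$F^n x = 0$'' and ``$y \in \bigcap_i \hV^i Q(R)$''. For the last one, note that $\hV$ is a morphism of sheaves, so $y = \hV^i z_i$ pulls back to the same form. An equality of sections of a sheaf can be checked after passing to an fpqc cover $R \to R'$ with $R' \in \nperftors$, for instance by adjoining $p$-power roots as in the proof of \Cref{localtrivialtyWbig}. So I will assume throughout that $R$ is nilperfect, where \Cref{FhatV,VonQseqprop,Visinvertibleon Qinfinity} apply directly.

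\emph{Step 1: torsion versus Frobenius.} By \Cref{FhatV} we have $p^n x = \hV^n F^n x$. By \Cref{VonQseqprop}, $\hV^n$ is injective on $Q(R)$. Hence $p^n x = 0$ if and only if $F^n x = 0$.

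\emph{Step 2: the product statement.} Let $x$ be $p$-power torsion, say $p^n x = 0$, so $F^n x = 0$ by Step 1. Let $y \in \bigcap_i \hV^i Q(R)$ and write $y = \hV^n z$. Iterating the projection formula of \Cref{FhatV} gives
\[
x y = x\hV^n(z) = \hV^n\bigl(F^n(x) z\bigr) = 0.
\]

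\emph{Step 3: torsion squares to zero.} It suffices to show that every $p$-power torsion $y \in Q(R)$ lies in $\bigcap_i \hV^i Q(R)$; Step 2 then gives $xy = 0$ for any two torsion elements. By \Cref{Visinvertibleon Qinfinity}, this intersection is the kernel of $Q(R) \to Q(R_{\mathrm{red}}) = W(R_{\mathrm{red}})$. Since $R_{\mathrm{red}}$ is a perfect $\mathbb{F}_p$-algebra, $W(R_{\mathrm{red}})$ is $p$-torsionfree. The image of $y$ is therefore torsion in a torsionfree group, hence zero, as required.

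I expect no real obstacle. The only point needing care is the reduction to nilperfect $R$: one must check that the hypothesis $y \in \bigcap_i \hV^i Q(R)$ pulls back along covers, which holds because $\hV$ is a map of sheaves.
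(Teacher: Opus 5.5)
Your proof is correct. The reduction to $R\in\nperftors$ and your Step 1 ($p^n=\hV^nF^n$ together with injectivity of $\hV$) match what the paper does with \Cref{FhatV} and \Cref{VonQseqprop}.

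The difference is in how you handle the product claims:
\begin{itemize}
\item \emph{Your route.} You stay inside $Q(R)$ and use the iterated projection formula $x\hV^n(z)=\hV^n(F^n(x)z)$, with Step 1 supplying $F^n(x)=0$. You then get the square-zero statement by observing that torsion maps to zero in the torsionfree ring $W(R_{\mathrm{red}})$.
\item \emph{The paper's route.} It lifts to $W(R)$ and invokes the product criterion \Cref{productsinhw}: a $p$-power torsion element of $W(R)$ times an element of $\ker(W(R)\to W(R_{\mathrm{red}}))$ lies in $\hw(R)$. That criterion rests on the description of torsion in $W(R)$ (\Cref{torsioninW}, \Cref{torsionnilpotent}) and on the projection formula for the ordinary $V$ on $W(R)$.
\end{itemize}

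What each buys:
\begin{itemize}
\item Your Steps 1 and 2 are purely formal. They use only $F\hV=\hV F=p$, the $\hV$-projection formula and injectivity of $\hV$, so they apply to any ring carrying such operators.
\item The paper's argument gives the slightly sharper statement that the product of lifts already lies in $\hw(R)$.
\item The paper's product step also does not pass through the $p=2$ correction $V([-1])=2$ in $Q(\mathbb{Z}_2)$ that underlies \Cref{FhatV}; yours depends on it via Step 1.
\end{itemize}
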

\begin{proof}
By fpqc descent, we may assume that $R \in \nperftors$. Then this is a consequence of 
   \Cref{FhatV}, \Cref{VonQseqprop}, 
     and \Cref{productsinhw}. 

\end{proof}
\begin{proposition}[$\delta$ on $Q$]
\label{deltaonQ}
Let $R \in \nperftors$.
For any $x \in Q(R)$ which is  $p$-power torsion,  one has $\hV( \delta(x)) =
x$.
As a result, $\delta$ and $\hV$ induce mutually inverse isomorphisms from
$Q(R)[p^\infty]$ to itself.
\end{proposition}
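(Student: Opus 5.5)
Fix $x \in Q(R)$ with $p^n x = 0$. I will show $\hV(\delta(x)) = x$ directly from the relation $F = \phi$ on $Q(R)$, where $\phi(x) = x^p + p\delta(x)$.

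First, the torsion ideal $Q(R)[p^\infty]$ squares to zero by \Cref{ppowertorsioninQ}. Since $p \geq 2$, this gives $x^p = 0$, and hence
\[ F(x) = \phi(x) = p\,\delta(x). \]
By \Cref{FhatV}, $p = \hV F$ on $Q(R)$. Therefore
\[ F(x) = \hV F(\delta(x)) = F(\hV \delta(x)), \]
where the second equality uses that $F$ and $\hV$ commute. So $y := x - \hV\delta(x)$ lies in $\ker F$.

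It remains to show $y = 0$. The element $\delta(x)$ is $p$-power torsion by \Cref{deltapreservestorsion}, so $y$ is $p$-power torsion as well. Next I use that $y$ lies in $\bigcap_i \hV^i Q(R)$. The map $Q(R) \to W(R_{\mathrm{red}})$ has target $p$-torsionfree, so it kills all torsion; hence $y$ lies in that kernel, which equals $\bigcap_i \hV^i Q(R)$ by \Cref{Visinvertibleon Qinfinity}. On this ideal $\hV$ is invertible, so write $y = \hV(z)$ with $z$ in the ideal. Then
\[ 0 = F(y) = F\hV(z) = p z. \]
Thus $z$ is $p$-torsion. It also lies in the kernel of $Q(R) \to W(R_{\mathrm{red}})$, since that ideal is $\hV$-stable and $\hV$ is bijective on it.

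The main obstacle is ruling out a nonzero $z$ that is $p$-torsion and lies in $\bigcap_i \hV^i Q(R)$. The first idea is to iterate: $z$ is again torsion with $F(\hV z) = pz = 0$, but the iteration does not terminate by itself. What does work is the relation $pz = F\hV z$ applied to $\hV^{-1}z$:
\[ p\,\hV^{-1}z = F z. \]
Combining with $pz = 0$ gives $p^2 \hV^{-1} z = pFz = F(pz) = 0$, so bounded torsion propagates along the $\hV$-tower, with the exponent growing by one each step. That alone does not suffice.

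Instead I would check $y = 0$ at the level of Witt vectors. Represent $x$ by $\tilde x \in W(R)$ with Witt components uniformly nilpotent (\Cref{torsioninW}). Reduce to the case $\tilde x = V^k[a]$, and by fpqc descent assume $R$ is nilperfect. Then verify
\[ V\delta(V^k[a]) \equiv V^k[a] \pmod{\hw(R)} \]
using the identity $\delta(Vw) = w - p^{p-2}V(w^p)$ in $W(R)$ together with \Cref{productsinhw}. The correction $p^{p-2}V(w^p)$ lies in $\hw(R)$: for $p > 2$ it involves torsion times nilpotent, and $p = 2$ is handled by the $[-1]$ twist in $\hV$. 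The $k = 0$ case reduces to the higher ones via $[a] = \hV\delta[a] + \hw$, using $p[a] \in \hw$-type arguments from \Cref{lemmatorsioninWittvectors}.

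Finally, $\delta$ and $\hV$ both preserve $Q(R)[p^\infty]$. Since $\hV\delta = \mathrm{id}$ there and $\hV$ is injective (\Cref{VonQseqprop}), they are mutually inverse isomorphisms.
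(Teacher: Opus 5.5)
Your first step is correct but cannot finish the proof. The facts that $y=x-\hV\delta(x)$ is $p$-power torsion, lies in $\bigcap_i\hV^iQ(R)$, and satisfies $F(y)=0$ do not force $y=0$. For example, take $R=\mathbb{F}_p[\epsilon]/(\epsilon^2)$. The class of $\sum_{i\geq 0}V^i[\epsilon]$ in $Q(R)$ is nonzero, since it has infinitely many nonzero Witt components. It lies in $\bigcap_i\hV^iQ(R)$, since all its components are nilpotent. It is killed by $F$, which raises Witt components to the $p$th power. So passing through $F$ discards precisely the information you need, as you noticed yourself.

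The fallback does not repair this. The reduction to $\tilde x=V^k[a]$ is vacuous. Since $a$ is nilpotent, $V^k[a]\in\hw(R)$, so its class in $Q(R)$ is zero and the identity reads $0=0$. The same holds for your ``$k=0$ case'', because $[a]\in\hw(R)$ already. A nonzero torsion class in $Q(R)$ is represented by an infinite $V$-adic sum, and every finite truncation of that sum vanishes in $Q(R)$. So no term-by-term check can reach it.

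What is missing is to apply the identity $\delta(V(w))=w-p^{p-2}V(w^p)$ to the entire tail at once:
\begin{itemize}
\item Write $\tilde x=[x_0]+V(w)$ with $[x_0]\in\hw(R)$. Then $x=V(\bar w)$ in $Q(R)$, and $\bar w$ is $p$-power torsion because $V$ is injective on $Q(R)$.
\item Then $\delta(x)=\bar w-p^{p-2}V(\bar w^{\,p})=\bar w$, because $Q(R)[p^\infty]$ squares to zero (\Cref{ppowertorsioninQ}). This kills the correction term for every $p$, including $p=2$, where $p^{p-2}=1$.
\item Separately, $\hV(\bar w)=V(\bar w)$. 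At $p=2$ this holds because $([-1]-1)\bar w=0$, as a product of two $2$-power torsion elements. This step, not the correction term, is where the $[-1]$-twist enters.
\end{itemize}
With these two facts, $\hV\delta(x)=V(\bar w)=x$, which is exactly the paper's argument.
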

\begin{proof}
If $x \in \mathrm{ker}(Q(R) \to Q(R_{\mathrm{red}}))$, then
$x  = \hV (y)$ for a unique $y \in
\mathrm{ker}(Q(R) \to Q(R_{\mathrm{red}}))$ by \Cref{Visinvertibleon Qinfinity}.
Necessarily $y $ is also $p$-power torsion since $\hV$ is injective.
We claim next that $\hV(y) = V(y)$.
This is clear when $p > 2$,
and for $p = 2$ it follows because
	$[-1] y = y$, since $[-1]-1 \in Q(R)$ is 2-power torsion and products of
	2-power torsion elements vanish by \Cref{ppowertorsioninQ}.

We observe now the formula
$\delta(V(z)) =
z - p^{p-2} V( z^p)$ which holds for all $z \in W(A)$ for any ring $A$  (by
reduction to the
$p$-torsionfree case\footnote{See \S\ref{sec:dcartrings} for more discussion
of this.}), and therefore in $Q(R)$.
Since $x = V(y)$ as well, we find
$\delta(x) = y - p^{p-2} V( y^{p})$.
Since $y$ is $p$-power torsion, we find that $y^2 = 0$  by
\Cref{ppowertorsioninQ}
and
therefore $\delta(x) = y$. The last assertion follows because $\hV:
Q(R)[p^\infty] \to Q(R)[p^\infty]$ is an isomorphism, cf.~\Cref{Visinvertibleon
Qinfinity}.
\end{proof}

\begin{definition}[The functor $\gabar$]
On the category of all $p$-nilpotent rings,
    we write $\gabar$ for the functor carrying  $R$ to $(R/p)_{\mathrm{perf}}$.
We write $\gahat(R)$ for the nilradical of $R$.
	 Note that if $R \in \nperftors$,  then $\gabar(R) =
	 R_{\mathrm{red}}$.
   \end{definition}
	\begin{proposition}
	\label{gabar:basics}
	$\gabar$ is a sheaf of abelian groups for the fpqc topology on the category of $p$-nilpotent
	rings, with no higher cohomology.
We have a short exact sequence of fppf  sheaves
\begin{equation} \label{gabarex} 0 \to \gahat \to \mathbb{G}_a \to \gabar \to
0.  \end{equation}
Equivalently, $\gabar$ is the fppf (or fpqc) sheafification of the presheaf
	$R \mapsto R_{\mathrm{red}}$.
	\end{proposition}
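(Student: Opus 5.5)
We prove the three assertions of \Cref{gabar:basics} in the following order: the exact sequence \eqref{gabarex}, then the identification of $\gabar$ with a sheafification, then the sheaf property and the vanishing of higher cohomology.

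\emph{The exact sequence.} First we check that $\gahat \to \mathbb{G}_a \to \gabar$ is left exact on $p$-nilpotent rings. The map $R \to (R/p)_{\mathrm{perf}}$ sends $x$ to the class of $(x, x^{1/p}, \dots)$; here $(R/p)_{\mathrm{perf}}$ is the colimit perfection along Frobenius. Its kernel consists of the $x$ with $x^{p^n} \in pR$ for some $n$. Since $p$ is nilpotent, these are exactly the nilpotent elements, which is $\gahat(R)$. Next we show the map is fppf locally surjective. An element of $(R/p)_{\mathrm{perf}}$ is represented by $\bar y^{1/p^n}$ for some $y \in R$. Adjoining a root $z^{p^n} = y$ gives the finite free extension $R[z]/(z^{p^n} - y)$, which is an fppf cover, and $z$ maps to $\bar y^{1/p^n}$ there. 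So \eqref{gabarex} is exact as a sequence of fppf sheaves, granted that $\gabar$ is itself a sheaf.

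\emph{Sheafification and the sheaf property.} Consider the presheaf $R \mapsto R_{\mathrm{red}}$, which is $\mathbb{G}_a/\gahat$ computed objectwise. Since $\gahat$ and $\mathbb{G}_a$ are fpqc sheaves, its sheafification is the quotient sheaf $\mathbb{G}_a/\gahat$. By the previous step this quotient maps isomorphically, locally, to $\gabar$, provided $\gabar$ is an fpqc sheaf. So it remains to prove that $\gabar$ is a sheaf with vanishing higher cohomology.

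\emph{Main step.} The functor $R \mapsto (R/p)_{\mathrm{perf}}$ factors through $R \mapsto R/p$, and fpqc covers of $R$ reduce to fpqc covers of $R/p$. Hence we may assume $R$ is an $\mathbb{F}_p$-algebra. Let $R \to S$ be faithfully flat, and let $S^\bullet$ be its Čech nerve. We must show that the complex $R_{\mathrm{perf}} \to S^\bullet_{\mathrm{perf}}$ is acyclic. Perfection is a filtered colimit along Frobenius and is exact on modules. The complex $R \to S^\bullet$ is acyclic by faithfully flat descent, so $\varinjlim_{\phi}(R \to S^\bullet)$ is acyclic too. Note that $S^\bullet_{\mathrm{perf}}$ is indeed the colimit of $S^\bullet$ along Frobenius, applied termwise. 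This gives the sheaf property and vanishing Čech cohomology on every cover.

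To pass from Čech cohomology to derived cohomology, we use that perfection commutes with tensor products: $(S \otimes_R S)_{\mathrm{perf}} = S_{\mathrm{perf}} \otimes_{R_{\mathrm{perf}}} S_{\mathrm{perf}}$. Alternatively, the standard argument for quasi-coherent sheaves applies once Čech cohomology vanishes on all affine covers, via Cartan's criterion. Either route gives $H^i = 0$ for $i > 0$ on affines, and this matches the sheafification statement.

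I expect the main obstacle to be this last comparison, from vanishing Čech cohomology to vanishing derived cohomology. Cartan's criterion handles it, since affines are stable under fiber products.
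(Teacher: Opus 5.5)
Your proof is correct and follows essentially the same route as the paper. You reduce mod $p$, apply faithfully flat descent to the \v{C}ech complex of $R/p \to R'/p$, and pass to the filtered colimit along Frobenius, which is exact. The exact sequence then comes from fppf surjectivity (adjoin $p$-power roots) together with the kernel being the nilradical.

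The one difference is that you make explicit, via Cartan's criterion, the passage from \v{C}ech acyclicity to vanishing of higher cohomology; the paper instead builds this into its formulation of descent in $D(\ZZ)$. A minor slip: the natural map to the colimit perfection $(R/p)_{\mathrm{perf}}$ sends $x$ simply to the class of $\bar x$, not to a compatible sequence of roots, though your computation of the kernel is correct.
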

	\begin{proof}
	First, we prove that the functor
	$R \mapsto (R/p)_{\mathrm{perf}}$ is a sheaf for the fpqc topology
	with no higher cohomology.
	In other words, if $R \to R'$ is a faithfully flat map of $p$-nilpotent
	rings, we need to see that in $D(\ZZ)$,
\begin{equation}  (R/p)_{\mathrm{perf}} \xrightarrow{\sim} \varprojlim \left(( R'/p)_{\mathrm{perf}}
\rightrightarrows ((R' \otimes_R R')/p)_{\mathrm{perf}} \triplearrows \dots
\right).
\end{equation}
	However, this follows because $R/p \to R'/p$ is faithfully flat, which
	implies
	by flat descent that the
	natural map induces an equivalence in $D(\ZZ)$,
\[ R/p \xrightarrow{\sim} \varprojlim \left(( R'/p)
\rightrightarrows ((R' \otimes_R R')/p) \triplearrows \dots
\right), \]
and one can take the filtered colimit along Frobenius. This commutes with the
limit along the simplex category $\Delta$ because the \v{C}ech complexes are
discrete and filtered colimits are exact.

Finally, the short exact sequence
\eqref{gabarex}
follows because
the map $\mathbb{G}_a \to \gabar$ is surjective in the fppf topology (by adding
$p$-power roots to elements), and the kernel is exactly $\gahat$.
	\end{proof}

\begin{proposition}
On the category $\nperftors$,
    \begin{enumerate}
        \item
        The natural map $Q/p \to \gabar$ induces an isomorphism
		  after fpqc sheafification.\footnote{It suffices to use the faithfully
		  flat and countably presented topology, so there are no set-theoretic
		  issues.}
        \item The natural map $\gasharp \to W$ induces an isomorphism
		  of fpqc sheaves
between        $\gasharp/\gahatsharp $ and the $p$-torsion $Q[p] \subset Q$.
    \end{enumerate}
\end{proposition}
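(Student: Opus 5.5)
For (1), I will compute $Q(R)/p$ for $R \in \nperftors$ via the sequence \eqref{VonQseq} with $n=1$. Since $F\hV = p$ on $Q$ (\Cref{FhatV}), we have $pQ(R) = \hV F Q(R) \subset \hV Q(R)$, and the map $Q(R)/p \to Q(R)/\hV Q(R) = R_{\mathrm{red}} = \gabar(R)$ is surjective. Its kernel is $\hV Q(R)/\hV F Q(R)$. Because $\hV$ is injective, this is isomorphic to $Q(R)/F Q(R)$.

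So it suffices to show that $F$ is fpqc-locally surjective on $Q$, or at least that every element of $Q(R)$ becomes $F$-divisible after a flat cover. I will deduce this from the fppf surjectivity of $F$ on $W$ \cite[Prop.~3.4.7]{bhattlurieAPC}: a lift $x \in W(R)$ can be written locally as $x = Fy$ in $W(R')$, and one may enlarge $R'$ to remain nilperfect by adjoining $p$-power roots and passing to the colimit. Thus $Q/p \to \gabar$ is a surjection of presheaves whose kernel is locally zero, and sheafifying gives the isomorphism. The one delicate point is keeping the cover inside $\nperftors$. I plan to handle this with the $\widetilde{R}$ construction from the proof of \Cref{localtrivialtyWbig}, where the perfect-reduction colimit of fppf algebras is a countably presented faithfully flat cover.

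For (2), the map $\gasharp \simeq W^F \to W \to Q$ lands in $Q[p]$, since by \Cref{ppowertorsioninQ} we have $p x = 0$ iff $Fx = 0$. Its kernel is $W^F \cap \hw = \hw^F \simeq \gahatsharp$ by \Cref{hwFgahatsharp}, so $\gasharp/\gahatsharp \to Q[p]$ is injective on presheaves, hence on sheaves. For surjectivity, take $\bar x \in Q(R)$ with $F\bar x = 0$, represented by $x \in W(R)$ with $Fx \in \hw(R)$. By \Cref{Fisfppfsurj}, after an fppf cover (again enlarged to stay nilperfect) we can write $Fx = F\epsilon$ with $\epsilon \in \hw$. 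Then $x - \epsilon \in W^F = \gasharp$ represents $\bar x$.

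I expect the main obstacle to be the sheaf-theoretic bookkeeping: identifying the quotient $\gasharp/\gahatsharp$ as an fpqc sheaf on $\nperftors$ and checking that the presheaf cokernels above vanish after sheafification. Once local surjectivity of $F$ on $W$ and on $\hw$ is available within nilperfect covers, the algebra above is formal.
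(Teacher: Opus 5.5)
Your argument is correct and is essentially the paper's proof. Part (1) uses $p=\hV F$ together with fpqc-local surjectivity of $F$ on $Q$ (inherited from $W$) to get $Q/p=Q/\hV=\gabar$, and part (2) is the snake lemma for the map from $0\to\gahatsharp\to\hw\xrightarrow{F}\hw\to0$ to $0\to\gasharp\to W\xrightarrow{F}W\to0$, which you unwind elementwise. One small correction: \cite[Prop.~3.4.7]{bhattlurieAPC} gives faithful flatness, not fppf surjectivity, of $F$ on $W$, and $F$ is only fpqc-locally surjective there (fppf suffices for $\hw$, \Cref{Fisfppfsurj}); this is harmless, since the statement concerns fpqc sheaves.
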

\begin{proof}
Since $\hV F  = p$ and $F$ is surjective fpqc locally on $Q$ (even on $W$) by
\cite[Prop.~3.4.7]{bhattlurieAPC}, we find that in the category of fpqc sheaves, $Q/p = Q/\hV = Q/V = \gabar$ by
\Cref{VonQ}.
Similarly, we find that $Q[p] $ is the kernel of the surjection of sheaves $F: Q
\to Q$.
We use the diagram of short exact sequences
of sheaves, cf.~\cite[Lem.~3.2.6]{drinfeld} and \Cref{hwFgahatsharp},
\[ \xymatrix{
0 \ar[r] &  \ar[d]  \gahatsharp \ar[r] &\ar[d]   \hw \ar[r]^F &  \ar[d]  \hw
\ar[r] &   0 \\
0 \ar[r] &  \gasharp \ar[r] &  W \ar[r]^F & W  \ar[r] &  0
}\]
to conclude.
\end{proof}

As a consequence, $Q /^{\mathbb{L}} p$ defines a sheaf of (1-truncated) animated rings on $\nperf$, whose $\pi_0$ is $\gabar$ and whose $\pi_1$ is $\gasharp/\gahatsharp$.

\begin{definition}[The functor $\qperf$]
On the category $\nperftors$,
we define the functor $\qperf$ via $\qperf  = \varprojlim_F Q$.

Note that $\qperf$ is an fpqc sheaf of $\delta$-rings  by \Cref{Qisfpqcsheaf}.
Since $\hV$ commutes with $F$ on $Q$ (\Cref{FhatV}), $\hV$ naturally acts on
$\qperf$ commuting
with the projection map $\qperf \to Q$.\footnote{By contrast, when $p = 2$, $V$
does not naturally
act on $\qperf$, unless one restricts to $\mathbb{F}_2$-algebras.}
\end{definition}

\begin{remark}
For any $R \in \nperftors$,
$\qperf(R)$ is $p$-torsionfree. In fact, this follows because $\qperf(R)$ is a
perfect $\delta$-ring,
cf.~\Cref{frobinjtorsionfree}.
\end{remark}

\begin{proposition}
\label{nilinvarianceofqperf}
Let $R \in \nperftors$. Let $I \subset
R$ be an ideal such that there
exists $N \in \mathbb{N}$ such that $x^N = 0$ for all $x \in I$.
Then
$\qperf(R) \xrightarrow{\sim} \qperf(R/I)$.
\end{proposition}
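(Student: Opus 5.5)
Since $\qperf = \varprojlim_F Q$, it suffices to show two things: $Q(R) \to Q(R/I)$ is surjective, and its kernel $K$ is killed by $F^M$ for some fixed $M$. Indeed, consider an inverse system $0 \to K \to Q(R) \to Q(R/I) \to 0$ with all transition maps $F$. If $F^M$ kills $K$, then $\varprojlim_F K = 0$. Moreover $\varprojlim^1_F K = 0$, because the pro-system $(K,F)$ is pro-zero: any $M$-fold composite of transition maps is zero. Hence $\varprojlim_F Q(R) \xrightarrow{\sim} \varprojlim_F Q(R/I)$.

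Surjectivity is immediate. The map $W(R) \to W(R/I)$ is surjective (lift Witt components), so $Q(R) = W(R)/\hw(R) \to W(R/I)/\hw(R/I) = Q(R/I)$ is surjective. Note that $R/I$ is still nilperfect, since $I$ is a nil ideal and so $(R/I)_{\mathrm{red}} = R_{\mathrm{red}}$.

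For the kernel: an element of $K$ is represented by $x \in W(R)$ whose image in $W(R/I)$ lies in $\hw(R/I)$. Lifting that image to an element of $\hw(R)$ (lift each nilpotent Witt component to a nilpotent one, using that $I$ is nil) and subtracting, we may assume $x \in W(I) = \ker(W(R)\to W(R/I))$. So $K$ is the image of $W(I)$ in $Q(R)$, and it suffices to find $M$, depending only on $N$ (and on the $p$-power exponent of $R$), with $F^M W(I) \subset \hw(R)$.

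The plan for this step is to write $x = \sum_i V^i[x_i]$ with all $x_i \in I$, so $x_i^N = 0$ uniformly. By \Cref{torsioninW} (condition (4) implies (2), with $n$ bounded uniformly in terms of $N$ and the exponent of $p$ on $R$), there is $n$ with $F^n(x) = 0$. Then $F^n x = 0 \in \hw(R)$, so $F^n$ kills $K$, and we can take $M = n$. One subtlety is that \Cref{torsioninW} requires $R$ to be $p$-nilpotent; this holds because $R$ is nilperfect, so $p$ is nilpotent in $R$. The main point to check is that the bound in \Cref{torsioninW} is uniform, that is, independent of the particular $x$; the proposition asserts this uniformity. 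With it, the pro-system argument above finishes the proof.
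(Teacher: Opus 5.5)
Your proposal is correct and is essentially the paper's argument: the paper also uses \Cref{torsioninW} to get a uniform power of $F$ killing $\ker(W(R)\to W(R/I))$ (and $\ker(\hw(R)\to \hw(R/I))$), and then passes to $\varprojlim_F$. You package it slightly differently, by identifying the kernel of $Q(R)\to Q(R/I)$ with the image of $W(I)$ and spelling out the $\varprojlim$/$\varprojlim^1$ bookkeeping that the paper leaves implicit.
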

\begin{proof}
In this case, we observe that
a power of
$F$ acts by zero on $\mathrm{ker}(W(R) \to W(R/I))$ and $\mathrm{ker}(
\hw(R) \to  \hw(R/I))$ by \Cref{torsioninW},  whence the claim.
\end{proof}

\begin{proposition}
\label{chWlargeneough}
\label{QperfmodV}
Let $R \in \nperftors$.
Then the maps $\qperf(R)/p \to \qperf(R)/\hV \qperf(R) \to R_{\mathrm{red}}$ are
isomorphisms.
\end{proposition}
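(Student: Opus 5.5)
The plan is to show first that $\qperf(R)/\hV \to R_{\mathrm{red}}$ is an isomorphism, and then that $p\,\qperf(R) = \hV\qperf(R)$.

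\textbf{Step 1: the quotient by $\hV$.} Write $\qperf(R) = \varprojlim_F Q(R)$. Since $\hV$ commutes with $F$ (\Cref{FhatV}), the sequences \eqref{VonQseq} with $n=1$,
\[ 0 \to Q(R) \xrightarrow{\hV} Q(R) \to R_{\mathrm{red}} \to 0, \]
assemble into an inverse system indexed by $F$. On the quotient $R_{\mathrm{red}} = W_1(R_{\mathrm{red}})$, the operator $F$ acts as the Frobenius, which is bijective because $R_{\mathrm{red}}$ is perfect. Taking $\varprojlim_F$ gives
\[ 0 \to \qperf(R) \xrightarrow{\hV} \qperf(R) \to \varprojlim_{\phi} R_{\mathrm{red}} \to \varprojlim{}^1_F Q(R). \]
The first $\varprojlim{}^1$ vanishes, so it remains to show that $\qperf(R) \to R_{\mathrm{red}}$ is surjective. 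Given $r \in R_{\mathrm{red}}$, form the compatible system $\bigl([\tilde r_n]\bigr)_n$, where $\tilde r_n \in R$ lifts $r^{1/p^n}$. Then $F[\tilde r_{n+1}] = [\tilde r_{n+1}^p]$ differs from $[\tilde r_n]$ by $[a]-[b]$ with $a-b$ nilpotent. This is not obviously in $\hw(R)$, so this naive choice need not give a compatible system.

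A cleaner route uses $W_1$ directly. The transition maps of the system $Q(R)$ under $F$ become bijective after passing to $Q/\hV = R_{\mathrm{red}}$. Hence surjectivity of $\varprojlim_F Q \to R_{\mathrm{red}}$ reduces to a Mittag-Leffler-type statement for the kernels, namely the vanishing of $\varprojlim{}^1$ of the system $(Q(R), F)$ restricted appropriately.

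I would instead argue as follows. The map $F : Q(R) \to Q(R)$ is surjective modulo $\hV$, since $R_{\mathrm{red}}$ is perfect. Moreover, $Q(R)$ is $\hV$-adically separated and complete modulo $\bigcap_i \hV^i$. Modulo that intersection, $Q(R)$ becomes $W(R_{\mathrm{red}})$ (\Cref{Visinvertibleon Qinfinity}). There $F$ is bijective, and successive approximation lifts $r$ compatibly, which yields $W(R_{\mathrm{red}}) = \varprojlim_F W(R_{\mathrm{red}})$.

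Let $K = \ker(Q(R) \to W(R_{\mathrm{red}}))$. The key remaining claim is that $\varprojlim_F Q(R) \to \varprojlim_F W(R_{\mathrm{red}}) = W(R_{\mathrm{red}})$ is surjective, which amounts to $\varprojlim{}^1_F K = 0$. On $K$, $\hV$ is invertible, and $F = p\hV^{-1}$ by \Cref{FhatV}. So $\varprojlim{}^1$ along $F$ equals $\varprojlim{}^1$ along multiplication by $p$, up to the automorphism $\hV^{-1}$.

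\textbf{The main obstacle} is therefore the vanishing of $\varprojlim{}^1(K, p)$, that is, showing $K$ is $p$-adically complete in the derived sense. I would try to show this via a description of $K$ in terms of $\ker(W(R) \to W(R_{\mathrm{red}}))/\hw(R)$, or else sidestep it. A possible sidestep is to use $\chW$-style surjectivity: first check the statement fpqc locally, using that $\qperf$ is an fpqc sheaf, and then reduce to the case where $R \to R_{\mathrm{red}}$ admits a ring section $s$. Such a section exists after a faithfully flat cover when $R_{\mathrm{red}}$ is perfect. In that case $W(R_{\mathrm{red}}) \xrightarrow{W(s)} W(R) \to Q(R)$ is an $F$-equivariant section, and surjectivity is immediate.

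\textbf{Step 2: the quotient by $p$.} Since $\hV F = p$ on $\qperf(R)$ and $F$ is bijective there, we get $p\,\qperf(R) = \hV F \qperf(R) = \hV \qperf(R)$. Therefore $\qperf(R)/p = \qperf(R)/\hV \simeq R_{\mathrm{red}}$.
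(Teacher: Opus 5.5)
Your Step~2, your injectivity argument for $\qperf(R)/\hV \to R_{\mathrm{red}}$ (left exactness of $\varprojlim_F$), and your reduction of surjectivity to the vanishing of $\varprojlim^1_F K$ are all fine. Here $K=\ker(Q(R)\to W(R_{\mathrm{red}}))=W(\mathrm{Nil}(R))/\hw(R)$, and you use $F=p\hV^{-1}$ on $K$. The gap is that you never prove this vanishing, and it is the entire content of the surjectivity of $\qperf(R)\to R_{\mathrm{red}}$.

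The goal you set yourself is also false in general. $K$ is usually \emph{not} derived $p$-complete, because $W(\mathrm{Nil}(R))$ is derived $p$-complete while $\hw(R)$ typically is not. Indeed $\varprojlim_F K\simeq \Hom(\mathbb{Z}[1/p],K)$ contains the kernel of $\chW(R)\to W(R)$, which is nonzero for $R=\mathbb{F}_p[x^{1/p^\infty}]/(x)$.

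Your sidestep fails on two counts. First, surjectivity on sections is not an fpqc-local property just because $\qperf$ is a sheaf. A lift over a cover $R'$ descends only if it satisfies the cocycle condition. That needs either a canonical choice, which you lack because your lift depends on the non-unique section $s$, or the vanishing of $H^1_{\mathrm{fpqc}}$ of the kernel $\hV\qperf\simeq\qperf$, which you do not have. Second, the local section does not exist in general: for $R=\mathbb{Z}/p^2$ and any faithfully flat $R\to R'$ one has $p\neq 0$ in $R'$, so there is no ring map from the $\mathbb{F}_p$-algebra $R'_{\mathrm{red}}$ to $R'$ at all.

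Your route only needs the $\varprojlim^1$, and that vanishing is true. Since $\varprojlim^1_p(-)=\mathrm{Ext}^1_{\mathbb{Z}}(\mathbb{Z}[1/p],-)$ is right exact, $\varprojlim^1_p K$ is a quotient of $\varprojlim^1_p W(\mathrm{Nil}(R))=0$. Concretely:
\begin{enumerate}
\item Transport $(K,F)$ to $(K,p)$ by $\hV^n$ in level $n$.
\item Lift the given $\kappa_n\in K$ to $\omega_n\in W(\mathrm{Nil}(R))$.
\item Set $\mu_n=\sum_{j\ge 0}p^j\omega_{n+j}$. This converges in the $p$-adically complete ring $W(R)$ to an element of the closed subgroup $W(\mathrm{Nil}(R))=\ker(W(R)\to W(R_{\mathrm{red}}))$, and it satisfies $\mu_n-p\mu_{n+1}=\omega_n$.
\end{enumerate}
With this inserted, your argument proves the proposition directly for all $p$-nilpotent nilperfect $R$, without reducing to $\mathbb{F}_p$-algebras.

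The paper takes a different path. It first reduces to $\mathbb{F}_p$-algebras via \Cref{nilinvarianceofqperf}, where $F$ and $V$ commute and $F$ preserves $V^{j}W(R)$. It then builds the compatible system by hand from the $V$-adically convergent corrections $v_i=z_i+\sum_{j\ge i}F^{j-i}(y_j)$, which is in effect a hands-on version of the same $\varprojlim^1$-vanishing.
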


\begin{remark}
The following (elementary but slightly involved) argument will  be used in
the sequel to show
that for $R \in \nperf$, $\chW(R)/\hV \chW(R) \xrightarrow{\sim} R$.
\end{remark}

\begin{proof}
By \Cref{nilinvarianceofqperf}, we may assume that $R$ is an
$\mathbb{F}_p$-algebra, so that Frobenius and Verschiebung commute. In this case $\hV=V$: this is clear for $p>2$ by
definition, and for $p=2$ it follows because $[-1]=[1]$ in $W(R)$. We will
therefore write $V$ in place of $\hV$ throughout the rest of the proof.
Since $p =  VF$ and $F: \qperf(R) \to \qperf(R)$ is an isomorphism,
$\qperf(R)/p = \qperf(R)/V$.
Taking $n = 1$ in \Cref{VonQ}, we know that $Q(R)/V =
	R_{\mathrm{red}}$.
	It follows from this, and the diagram of exact sequences
	\[ \xymatrix{
	0 \ar[r] &  Q(R) \ar[r]^{V}  \ar[d]^F  &  Q(R)  \ar[d]^F  \ar[r] &
	R_{\mathrm{red}} \ar[r] \ar[d]^{\phi}  &  0 \\
	0 \ar[r] &  Q(R) \ar[r]^{V} & Q(R) \ar[r] &  R_{\mathrm{red}} \ar[r] &  0
	}\]
		(where $\phi: R_{\mathrm{red}} \xrightarrow{\sim} R_{\mathrm{red}}$ is the
	Frobenius)
	that the map $\qperf(R)/V \to R_{\mathrm{red}}$ is injective.
	It remains to prove surjectivity.

	We use the following approximation observation: if $x \in W(R)$ and $n\geq 1$,
	then there exist $x' \in W(R)$ and $y \in V^n W(R)$ such that
	\[
	F(x')-x-y \in \hw(R).
	\]
	Indeed, modulo $\hw(R)$, and then modulo $V^n Q(R)$, this asks to solve
	$F(\overline{x'})=\overline{x}$ in
	$Q(R)/V^n Q(R) \xrightarrow{\sim} W_n(R_{\mathrm{red}})$, which is possible
	because $R_{\mathrm{red}}$ is perfect. The remaining error may then be
	represented by an element of $V^nW(R)$.

	Fix an element $a \in R$.
	Inductively, we choose a sequence of elements $y_0, y_1, y_2, \dots \in W(R)$
	and $z_0, z_1, z_2, \dots \in W(R)$ such that:
	\begin{enumerate}
	\item
	$z_0 = [a]$.
	\item $F(z_{i+1}) - z_i - y_{i} \in \hw(R) $ for all $i \geq 0$.
	\item $y_{i} \in V^{i+1} W(R)$ for all $i \geq 0$.
	\end{enumerate}
	To construct these sequences, we start with $z_0 = [a]$ and then apply the
	preceding observation with $x=z_i$ and $n=i+1$ to construct $z_{i+1}$ and
	$y_i$.

We then consider the sequence
of elements in $W(R)$:
\begin{gather*}
v_0 = z_0 + y_0 + F( y_1) + F^2 (y_2) + F^3( y_3) + \dots \\
v_1 = z_1 + y_1 + F(y_2) + F^2(y_3) + F^3(y_4) +   \dots \\
v_2 =  z_2 + y_2 +
F(y_3) + F^2(y_4) + F^3(y_5) + \dots \\
\dots \\
v_i = z_i + \sum_{j \geq i} F^{j-i}(y_j).
\end{gather*}

	Note that each of these sums is convergent in $W(R)$ since $y_j \in V^{j+1}W(R)$, 
and $F$ preserves the ideal $V^{j+1}W(R)$ (as $R$ is an $\mathbb{F}_p$-algebra).

We observe that
$F(v_{i+1})- v_i \in \hw(R)$ for each $i \geq 0$.
	Therefore, the sequence $v_0, v_1,
	\dots$ defines  an element of $\qperf(R)$.
	Moreover, $y_0 \in VW(R)$, and for $j \geq 1$,
	$F^j(y_j) \in  VW(R)$, so every correction term in the
	definition of $v_0$ has vanishing zeroth Witt component. Therefore the zeroth
	Witt component of $v_0$ is that of $z_0 = [a]$, namely $a$. Since $a$ was
	arbitrary, this implies that
	$\qperf(R)/V\to R_{\mathrm{red}}$ is surjective, hence an isomorphism.
\end{proof}

\begin{definition}
On the category $\nperftors$,
define the fpqc sheaf $T_F(Q)$ (the ``$F$-Tate module of $Q$'') as
\[ T_F(Q) = \varprojlim \left( \dots \to Q[F^3] \stackrel{F}{\to} Q[F^2]
\stackrel{F}{\to} Q[F] \right) \]
where $Q[F^i] \subset Q$ is the kernel of $F^i$.
\end{definition}
Thus, we have a short exact sequence of fpqc sheaves
on $\nperftors$,
\begin{equation} \label{qperftoQseq}  0 \to T_F(Q) \to \qperf \to Q \to 0.  \end{equation}

\begin{proposition}
\label{qperfsquarezero}
The kernel $T_F(Q)$ of the surjection $\qperf \to Q$ of fpqc sheaves on $\nperf$
is a square-zero ideal in $\qperf$.
\end{proposition}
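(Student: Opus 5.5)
We need to show that the product of two elements of $T_F(Q)(R)$ vanishes in $\qperf(R)$. Because all the objects are fpqc sheaves on $\nperftors$, it suffices to check this on sections over a fixed $R \in \nperftors$. There, an element of $\qperf(R)=\varprojlim_F Q(R)$ is a compatible sequence $(x_0,x_1,\dots)$ with $F(x_{i+1})=x_i$. It lies in $T_F(Q)(R)$ exactly when $x_0=0$, since the kernel of the projection $\qperf\to Q$ is the kernel of $(x_i)\mapsto x_0$. Then $F^i(x_i)=x_0=0$ for every $i$.

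Let $x=(x_i)$ and $y=(y_i)$ be two such sequences. Since multiplication in the inverse limit is componentwise, $xy=(x_iy_i)_i$, and it suffices to show $x_iy_i=0$ in $Q(R)$ for each $i$. By \Cref{ppowertorsioninQ}, $F^i(x_i)=0$ implies $p^ix_i=0$, and likewise for $y_i$. So both $x_i$ and $y_i$ are $p$-power torsion in $Q(R)$. The same proposition states that the ideal of $p$-power torsion elements of $Q(R)$ squares to zero, which gives $x_iy_i=0$. Hence $xy=0$, and $T_F(Q)(R)$ is a square-zero ideal of $\qperf(R)$.

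If we only wanted to use the sharper form of \Cref{ppowertorsioninQ}, we could instead note that $y_i=F(y_{i+1})$ is torsion and apply the product vanishing for a torsion element times an element of $\bigcap_j \hV^j Q(R)$. That is not needed, though, since the torsion-times-torsion statement suffices.

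The only delicate point is identifying sections of the sheaf kernel $T_F(Q)$ with honest compatible sequences $(x_i)$ in $Q(R)$ with $x_0=0$. This is immediate because limits of sheaves are computed sectionwise and $Q$ is an fpqc sheaf on $\nperftors$ by \Cref{Qisfpqcsheaf}. I do not expect a real obstacle. The substantive input is the square-zero property of torsion in $Q(R)$, which was established earlier via \Cref{productsinhw}.
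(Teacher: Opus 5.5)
Your proof is correct and is essentially the paper's argument. The paper simply notes that \Cref{ppowertorsioninQ} makes each $F^n\colon Q\to Q$ a square-zero extension, since its kernel $Q[F^n]=Q[p^n]$ squares to zero. That is exactly your componentwise observation that $x_iy_i\in Q[F^i]\cdot Q[F^i]=0$.
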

\begin{proof}
This follows from \Cref{ppowertorsioninQ}, which implies that $F^n: Q \to Q$ is
a square-zero extension of sheaves for each $n$.
\end{proof}

\subsection{Definition of $\chW$ on $\nperf$}

\label{sec:Wforpnilpotent}

\begin{definition}[Cf.~\Cref{introshearedW}]
For $R \in \nperftors$,
we define the \emph{sheared Witt vectors} $\chW(R)$ via the formula
\begin{equation} \label{chWdef} \chW(R) = W(R) \times_{Q(R)}
\qperf(R).
\end{equation}
\end{definition}

\begin{remark}\label{chWisfpqcsheaf}
Thus, $\chW$ is a sheaf of $\delta$-rings for the fpqc topology on
$\nperftors$ with Frobenius lift $F: \chW \to \chW$. Explicitly, an element of
$\chW(R)$ consists of an element $x \in W(R)$ and a
compatible system of lifts through powers of $F$ of the image $\overline{x} \in
Q(R)$.
\end{remark}

For $R \in \nperftors$, we have by definition (and $Q(R) = W(R)/\hw(R)$) a
natural short exact sequence
\begin{equation} \label{secondexactseq}  0 \to \hw(R) \to \chW(R) \to
\qperf(R) \to 0.
\end{equation}

\begin{definition}[$\chW$ for $p$-completely nilperfect rings with bounded $p$-power torsion]
    We say that a derived $p$-complete ring 
    $R$ is \emph{$p$-completely nilperfect} if  $(R/p)_{\mathrm{red}}$ is perfect.

    Let $R$ be 
   a $p$-completely nilperfect ring such that $R$ additionally has bounded $p$-power torsion. 
    We define
    \begin{equation} 
 \chW(R) \stackrel{\mathrm{def}}{=} \varprojlim_{n } \chW(R/p^n),
    \end{equation} 
 which we consider as a $\delta$-ring. 
\end{definition}

\begin{proposition}\label{chWtoWtautsquarezero}
We have a short exact sequence of fpqc sheaves
on
$\nperftors$,
\begin{equation} \label{firstSES} 0 \to T_F(Q) \to \chW \to W \to 0.
\end{equation}
Moreover, $T_F(Q) \subset \chW$  is a square-zero ideal, stable under $\delta$,
and $\delta$ induces an automorphism of $T_F(Q)$.
\end{proposition}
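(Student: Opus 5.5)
The plan is to deduce the short exact sequence \eqref{firstSES} formally from the defining pullback \eqref{chWdef} together with \eqref{qperftoQseq}. Since $\chW = W \times_Q \qperf$ as fpqc sheaves on $\nperftors$, the kernel of the projection $\chW \to W$ is identified with the kernel of $\qperf \to Q$. By \eqref{qperftoQseq} this kernel is $T_F(Q)$. Surjectivity of $\chW \to W$ as fpqc sheaves follows by base change from surjectivity of $\qperf \to Q$ as fpqc sheaves. That in turn holds because $F\colon Q \to Q$ is an fpqc surjection: $F$ is fpqc surjective on $W$ by \cite[Prop.~3.4.7]{bhattlurieAPC}, and $W \to Q$ is surjective. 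Given $x \in Q(R)$, we can then successively choose $F$-preimages after a countable sequence of faithfully flat extensions and pass to the colimit, which is still faithfully flat. This is exactly the reasoning already packaged in the exactness of \eqref{qperftoQseq}, so we may simply cite it.

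For the square-zero claim, I would note that the map $\chW \to \qperf$ is a ring map which is injective on $T_F(Q)$; more precisely, $T_F(Q) \subset \chW$ is the set of pairs $(0, t)$ with $t \in T_F(Q) \subset \qperf$. The product of two such pairs is $(0, t t')$, and $t t' = 0$ by \Cref{qperfsquarezero}. Stability under $\delta$ follows because $\chW \to W$ is a map of $\delta$-rings (both factors of the pullback and the maps between them are $\delta$-maps), so its kernel is a $\delta$-ideal. On a square-zero $\delta$-ideal, $\delta$ is additive: $\delta(a+b) = \delta(a)+\delta(b) - \sum \binom{p}{i} a^i b^{p-i}/p$, and the correction terms vanish since $ab = 0$.

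The main step is to show that $\delta$ is an automorphism of $T_F(Q)$. I would work with components. An element $t \in T_F(Q)(R)$ (sections, after possibly passing to a cover) is a compatible sequence $(t_0, t_1, \dots)$ in $Q(R)$ with $t_0 = 0$ and $F(t_{i+1}) = t_i$. Hence each $t_i$ satisfies $F^i(t_i) = 0$, so $t_i$ is $p$-power torsion by \Cref{ppowertorsioninQ}. Since $\delta$ on $\qperf = \varprojlim_F Q$ is computed componentwise (and $\delta$ commutes with $F=\phi$), $\delta(t) = (\delta(t_i))_i$. By \Cref{deltaonQ}, $\delta$ and $\hV$ are mutually inverse on $Q(R)[p^\infty]$, and $\hV$ commutes with $F$ (\Cref{FhatV}). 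Therefore $(\hV(t_i))_i$ is again a compatible system with zeroth term $\hV(0) = 0$, and it inverts $\delta$ componentwise. The obstacle here is justifying that the limit and kernel descriptions are compatible at the sheaf level rather than just on sections. Since all constructions are limits of sheaves and the identities $\delta\hV = \hV\delta = \mathrm{id}$ hold on $Q[p^\infty]$ as sheaves, checking on sections over each $R$ suffices.
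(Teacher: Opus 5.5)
Your proposal is correct and follows essentially the paper's argument. The paper likewise pulls back the fpqc-surjective square-zero extension $\qperf \to Q$ (kernel $T_F(Q)$, square-zero by \Cref{qperfsquarezero}) along $W \to Q$, and identifies $\delta$ on $T_F(Q)$ as the inverse of $\hV$ via \Cref{deltaonQ} and \Cref{ppowertorsioninQ}; you have simply written out the componentwise verification that the paper leaves implicit.
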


\begin{proof}
By \Cref{qperfsquarezero}, the map
$\qperf \to Q$ is surjective locally in the fpqc topology and its kernel $T_F(Q)$
is a
square-zero ideal in $\qperf$. Moreover, $\delta$ induces an automorphism
of the kernel $T_F(Q)$ (indeed, an inverse to $\hV$) by \Cref{deltaonQ} and
\Cref{ppowertorsioninQ}.
By pulling back along the map $W \to Q$, we obtain the result.
\end{proof}

\begin{proposition}
For any  $R \in \nperftors$,
the ring $\chW(R)$ is derived $p$-complete.
\end{proposition}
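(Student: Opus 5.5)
We will use the short exact sequence \eqref{secondexactseq}
\[ 0 \to \hw(R) \to \chW(R) \to \qperf(R) \to 0. \]
Derived $p$-complete abelian groups form a full subcategory closed under extensions, so it suffices to show that $\hw(R)$ and $\qperf(R)$ are each derived $p$-complete.

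\emph{Step 1: $\hw(R)$.} Since $R$ is $p$-nilpotent, \Cref{torsionnilpotent} says every element of $\hw(R)$ is $p$-power torsion. This alone is not enough, because a $p$-power torsion group with unbounded exponent need not be derived $p$-complete. Instead we will show that $\hw(R)$ has bounded $p$-power torsion exponent, using the explicit bound of \Cref{lemmatorsioninWittvectors}; that would make it $p$-complete.

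This looks problematic, since elements $[x]$ with $x$ of arbitrarily large nilpotence order need larger powers of $p$. So we change tack: rather than treating $\hw(R)$ and $\qperf(R)$ separately, we use the other extension, from \Cref{chWtoWtautsquarezero},
\[ 0 \to T_F(Q)(R) \to \chW(R) \to W(R) \to 0, \]
which is exact on $R$-points. Exactness on the left and in the middle is automatic from the fiber product definition. Surjectivity is not needed: the image of $\chW(R)\to W(R)$ is the set of $x$ whose image in $Q(R)$ lies in the image of $\qperf(R)$.

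It is cleaner to use the pullback square directly. Since $\chW(R)=W(R)\times_{Q(R)}\qperf(R)$ and $W(R)\to Q(R)$ is surjective, there is a fiber sequence
\[ \chW(R)\to W(R)\oplus\qperf(R)\to Q(R). \]
Derived $p$-complete groups are closed under limits, and $W(R)$ is $p$-complete. It therefore suffices to handle $\qperf(R)=\varprojlim_F Q(R)$, and $Q(R)$ is not complete.

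\emph{Step 2: the kernel $T_F(Q)(R)$.} The kernel of $\chW(R)\to W(R)$ is $\varprojlim_F (\hw(R) \text{ along } F)$, i.e.\ the $F$-Tate module of $\hw(R)$. Concretely, it consists of sequences $(0,h_1,h_2,\dots)$ with $h_i\in\hw(R)$ and $F h_{i+1}=h_i$ (with $h_0 = 0$).

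By \Cref{ppowertorsioninQ} and \Cref{deltaonQ}, this group carries an automorphism $\delta$ inverse to $\hV$. Moreover, $p$ acts as $\hV F$. We would show it is derived $p$-complete as an inverse limit $\varprojlim_n \hw(R)[F^n]$. Each $\hw(R)[F^n]$ is killed by $p^n$ (since $p^n x=0$ iff $F^n x=0$ in $Q$, and similarly in $W$ for $p$-nilpotent $R$ by \Cref{torsioninW}), so each is derived $p$-complete. An inverse limit of derived $p$-complete groups is derived $p$-complete.

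\emph{Step 3: conclusion.} Then $\chW(R)$ is an extension of a subgroup $\operatorname{im}\subset W(R)$ by a derived $p$-complete group. The image is closed in $W(R)$ but need not itself be complete, and this is the main obstacle.

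To get around it, we describe the image as $W(R)\times_{Q(R)}\operatorname{im}(\qperf(R)\to Q(R))$ and write
\[ \chW(R)=\varprojlim_n \left(W(R)\times_{Q(R)} Q(R)\right) \]
along the $F$-tower. More precisely, $\chW(R)=\varprojlim_n C_n$, where $C_n = W(R)\times_{Q(R)}(Q(R)\xrightarrow{F^n}Q(R))$ is the set of pairs $(x,\bar y)$ with $F^n\bar y=\bar x$. Each $C_n$ is an extension of $W(R)$ by $Q(R)[F^n]$, since $F^n$ is surjective fpqc locally; surjectivity on points again fails, so we use the pair description instead. Then $C_n \cong \{(x,y)\in W(R)^2 : F^n y - x\in\hw(R)\}/\hw(R)[F^n]$-type quotients.

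Presenting $C_n$ as the subgroup $\{(x,y): F^ny-x\in \hw(R)\}\subset W(R)^2$ modulo $0\times\hw(R)$, we reduce to two facts. First, the subgroup $\{(x,y)\}$ is the preimage of $\hw(R)$ under the map $(x,y)\mapsto F^ny-x$, and it contains $\hw(R)^2$. Second, the quotient by a $p$-power-torsion subgroup that is bounded after restricting to $F^n$-kernels preserves completeness.

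I expect this bookkeeping with non-complete $\hw(R)$ to be the hardest step. I would first try to show directly that $C_n$ is derived $p$-complete via the sequence $0\to Q(R)[F^n]\to C_n\to W(R)$ together with an identification of the image.
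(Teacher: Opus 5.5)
There is a genuine gap. The step that carries the content is that the image of $\chW(R)\to W(R)$ (or, at finite level, of $C_n\to W(R)$) is derived $p$-complete. You never prove this; you name it as the main obstacle and stop.

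None of your intermediate reductions gets around it:
\begin{enumerate}
\item From the fiber sequence $\chW(R)\to W(R)\oplus\qperf(R)\to Q(R)$ you cannot conclude that it suffices to treat $\qperf(R)$. Closure under fibers needs the target $Q(R)$ to be derived $p$-complete, which fails. Moreover, $\qperf(R)$ is not derived $p$-complete in general either: for $R=\mathbb{F}_p[x^{1/p^\infty}]/(x)$ it is $p$-torsionfree with $\qperf(R)/p=\mathbb{F}_p$, yet strictly bigger than $\mathbb{Z}_p$.
\item Writing $C_n=S_n/(0\times\hw(R))$ with $S_n=\{(x,y):F^ny-x\in\hw(R)\}$ gains nothing. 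Neither $S_n$ (whose cokernel in $W(R)^2$ is $Q(R)$) nor $0\times\hw(R)$ is derived $p$-complete, and the ``second fact'' you invoke is not a valid principle.
\item In Step 2 the kernel of $\chW(R)\to W(R)$ is $\varprojlim_n Q(R)[F^n]$, with entries in $Q(R)$ rather than $\hw(R)$. After that correction your argument that this kernel is derived $p$-complete is fine and agrees with the paper.
\end{enumerate}

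The paper closes the gap sheaf-theoretically. Since $0\to T_F(Q)\to\chW\to W\to 0$ is exact on the fpqc site and $W$ has no higher cohomology, the image of $\chW(R)\to W(R)$ is the kernel of the boundary map $W(R)\to H^1_{\mathrm{fpqc}}(R,T_F(Q))$. This $H^1$ is derived $p$-complete because $T_F(Q)$ is a derived $p$-complete sheaf \cite[Lem.~B.2.2]{Dri25}. So the image is the kernel of a map between derived $p$-complete groups, hence derived $p$-complete.

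Your $C_n$ route can instead be finished by hand; the observation you are missing is the following. The image of $C_n\to W(R)$ is $F^nW(R)+\hw(R)$, and
\[ W(R)/\bigl(F^nW(R)+\hw(R)\bigr)\cong Q(R)/F^nQ(R) \]
is killed by $p^n$, because $p^n=F^n\hV^n$ on $Q(R)$ (\Cref{FhatV}). The argument then runs:
\begin{enumerate}
\item The image is the kernel of a map from the $p$-complete group $W(R)$ to a group of bounded exponent, hence derived $p$-complete.
\item $C_n$ is an extension of this image by the $p^n$-torsion group $Q(R)[F^n]$, hence derived $p$-complete.
\item $\chW(R)=\varprojlim_n C_n$, with transition maps $(x,\bar y)\mapsto(x,F\bar y)$, is derived $p$-complete as a limit of derived $p$-complete groups.
\end{enumerate}
This route avoids sheaf cohomology altogether. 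Without it, or the paper's $H^1$ argument, the proposal does not prove the statement.
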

\begin{proof}
This follows from \eqref{firstSES} because
$T_F(Q)$ is the inverse limit of the terms $Q[F^n]$ (annihilated by $p^n$ since
$\hV^n F^n = p^n$ on $Q$) and
is therefore derived $p$-complete as a sheaf; for this sheaf-theoretic form of
derived $p$-completeness, see \cite[App.~B.2, especially Lem.~B.2.2]{Dri25}.
Since $W$ has no higher cohomology, we have an exact sequence
\[ 0 \to T_F(Q)(R) \to \chW(R) \to W(R) \to H^1_{\mathrm{fpqc}}(R, T_F(Q)) .  \]
Since $T_F(Q)$ is a derived $p$-complete sheaf,
$H^0_{\mathrm{fpqc}}(R, T_F(Q))$ and $H^1_{\mathrm{fpqc}}(R, T_F(Q))$
are derived $p$-complete abelian groups by \cite[Lem.~B.2.2]{Dri25}. Moreover,
$W(R)$ is classically $p$-complete. The exact sequence now implies that $\chW(R)$ is derived $p$-complete as well, since derived $p$-complete abelian groups are closed under kernels, cokernels, and extensions.
\end{proof}
As a consequence, we can obtain a more convenient version of the exact sequence
\eqref{secondexactseq}.
\begin{proposition}
Let $R \in \nperftors$.
We have a natural short exact sequence
\begin{equation}
\label{thirdexactseq}
0 \to \hw(R)_{\hat{p}} \to \chW(R) \to W( R_{\mathrm{red}}) \to 0.
\end{equation}
\end{proposition}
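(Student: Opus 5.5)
We will obtain \eqref{thirdexactseq} by comparing \eqref{secondexactseq} with the derived $p$-completion functor, combined with a computation of $\qperf(R)$ modulo the image of $\hw(R)$.

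First, consider the composite $\chW(R) \to W(R) \to W(R_{\mathrm{red}})$. We claim it is surjective. Indeed, $W(R_{\mathrm{red}}) = \qperf(R_{\mathrm{red}})$ because $\hw(R_{\mathrm{red}})=0$ and $F$ is bijective on $W(R_{\mathrm{red}})$. The map $\chW(R) \to \qperf(R) \to \qperf(R_{\mathrm{red}})$ is surjective, since \eqref{secondexactseq} is surjective onto $\qperf(R)$. It then remains to see that $\qperf(R) \to W(R_{\mathrm{red}})$ is surjective. To check this, observe that both sides are derived (indeed classically, for the target) $p$-complete: $\qperf(R)$ is an inverse limit along $F$ of $Q(R)$, and $Q(R)$ is $\hV$-adically separated and complete modulo $\hw$ in the appropriate sense. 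We should verify that $\qperf(R)$ is $\hV$-complete using \Cref{VonQseqprop}. By \Cref{QperfmodV}, $\qperf(R)/\hV \to R_{\mathrm{red}}$ is an isomorphism. Since $\hV$ is injective on both sides and commutes with the map, induction on $n$ gives $\qperf(R)/\hV^n \cong W_n(R_{\mathrm{red}})$, and surjectivity follows by passing to the limit.

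Next, we identify the kernel of $\chW(R) \to W(R_{\mathrm{red}})$. By the snake lemma applied to \eqref{secondexactseq}, this kernel $K$ is an extension of $K' := \ker(\qperf(R)\to W(R_{\mathrm{red}}))$ by $\hw(R)$. Using \Cref{Visinvertibleon Qinfinity}, the kernel of $Q(R) \to W(R_{\mathrm{red}})$ is $\bigcap_i \hV^iQ(R)$, on which $\hV$ is invertible. I expect $K' = \varprojlim_F \bigcap_i \hV^i Q(R)$, and that this kernel is uniquely $\hV$-divisible, hence uniquely $p$-divisible, since $p = \hV F$ and $F$ is an automorphism on the perfection; equivalently $K'$ is a $\mathbb{Z}[1/p]$-module. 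Since $\hw(R)$ consists of $p$-power torsion elements (\Cref{torsionnilpotent}), $K$ is an extension of a $\mathbb{Z}[1/p]$-module by a $p$-power torsion group.

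Finally, since $\chW(R)$ is derived $p$-complete (by the preceding proposition) and so is $W(R_{\mathrm{red}})$, the kernel $K$ is derived $p$-complete. Applying derived $p$-completion to $0 \to \hw(R) \to K \to K' \to 0$ kills $K'$, because its completion vanishes, and gives $\hw(R)_{\hat p} \xrightarrow{\sim} K_{\hat p} = K$. Here we use that the derived completion of the torsion group $\hw(R)$ is discrete, namely $\hw(R)$ has bounded-below cohomology and $\mathbb{Z}[1/p]$-modules have zero completion including $\lim^1$-terms. One must check that $(\hw(R))_{\hat p}$ is concentrated in degree $0$. This follows from the long exact sequence, since $K$ and $K'_{\hat p}=0$ are discrete. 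The main obstacle is the identification of $K'$ as uniquely $p$-divisible, i.e.\ showing that elements of $\qperf(R)$ killed in $W(R_{\mathrm{red}})$ lie in the $F$-limit of the $\hV$-divisible part with $\hV$ acting invertibly there. I would prove this by applying $\varprojlim_F$ to the exact sequence $0\to \bigcap_i\hV^iQ(R)\to Q(R)\to W(R_{\mathrm{red}})\to 0$, noting that $F$ is bijective on $W(R_{\mathrm{red}})$ so no $\lim^1$ issue arises on the right.
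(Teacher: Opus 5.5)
Your identification of the kernel is sound. $K'=\varprojlim_F N$ with $N=\bigcap_i\hV^iQ(R)$ follows from left exactness of limits. $K'$ is uniquely $p$-divisible because $F$ and $\hV$ are both bijective on it. Derived $p$-completing $0\to\hw(R)\to K\to K'\to 0$ then does give $\hw(R)_{\hat{p}}\simeq K$.

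The gap is the surjectivity of $\qperf(R)\to W(R_{\mathrm{red}})$, which is not routine, and the reasons you give for it are false in general:
\begin{enumerate}
\item $Q(R)$ is not $\hV$-adically separated. Its $\hV$-divisible part is exactly $N=W(\mathrm{Nil}(R))/\hw(R)$, which is typically nonzero.
\item $\qperf(R)$ is typically not derived $p$-complete. Otherwise $\hw(R)=\ker(\chW(R)\to\qperf(R))$ would be derived $p$-complete, which fails for $R=\mathbb{F}_p[x^{1/p^\infty}]/(x)$: there $\hw(R)$ is a sum of cyclic groups of unbounded order.
\item The isomorphisms $\qperf(R)/\hV^n\simeq W_n(R_{\mathrm{red}})$ only show that the image is $V$-adically dense. ``Passing to the limit'' needs $\qperf(R)\to\varprojlim_n\qperf(R)/\hV^n$ to be surjective, which is the claim itself.
\item When you apply $\varprojlim_F$ to $0\to N\to Q(R)\to W(R_{\mathrm{red}})\to 0$, the obstruction to surjectivity is $\varprojlim^1_F N$ on the left. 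Bijectivity of $F$ on $W(R_{\mathrm{red}})$ says nothing about it.
\end{enumerate}

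The fix uses only ingredients you already invoke. Let $C$ be the cokernel of $\chW(R)\to W(R_{\mathrm{red}})$. It is derived $p$-complete, being the cokernel of a map between derived $p$-complete groups. Moreover $C/pC=0$, because $\chW(R)\to\qperf(R)\to R_{\mathrm{red}}=W(R_{\mathrm{red}})/p$ is surjective by \eqref{secondexactseq} and \Cref{QperfmodV}. Hence $C=0$ by Nakayama for derived complete modules.

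The paper sidesteps the issue by derived $p$-completing \eqref{secondexactseq} itself. Since $\chW(R)$ is derived $p$-complete, the $p$-adic Tate module of $\hw(R)$ vanishes, and $\qperf(R)$ is $p$-torsionfree, the completed sequence is short exact. This yields the kernel and the surjectivity onto $\qperf(R)_{\hat{p}}\simeq W(R_{\mathrm{red}})$ at once. Your route to the kernel via unique $p$-divisibility of $K'$ is a valid alternative, but it says nothing about surjectivity, which is why the extra argument is needed.
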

\begin{proof}
This sequence is obtained by applying derived $p$-completion to the
short exact sequence
\eqref{secondexactseq}, since we know that $\chW(R)$ is derived $p$-complete.
Note that the $p$-adic Tate module of $\hw(R) \subset W(R)$ vanishes, which
implies that \eqref{secondexactseq} remains short exact after applying
derived $p$-completion.
Now $\qperf(R)/p \xrightarrow{\sim} R_{\mathrm{red}}$ thanks to
\Cref{chWlargeneough}, and since $\qperf(R)$ is a perfect $\delta$-ring, it
follows that $\qperf(R)_{\hat{p}} \simeq W( R_{\mathrm{red}})$. The result
follows.
\end{proof}

\begin{corollary}
    \label{chWmodpncommuteswithfilteredcolimits}
    The functor $R \mapsto \chW(R)/^\mathbb{L} p^n$ from $\nperftors$ to animated rings commutes with filtered colimits. 
\end{corollary}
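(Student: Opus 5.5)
We will prove the corollary from the short exact sequence \eqref{thirdexactseq}, $0 \to \hw(R)_{\hat{p}} \to \chW(R) \to W(R_{\mathrm{red}}) \to 0$. Applying $-\quot p^n$ turns it into a cofiber sequence of complexes, natural in $R$. Filtered colimits in $D(\ZZ)$ are exact, so it suffices to show that each outer term, after reduction mod $p^n$, commutes with filtered colimits in $R \in \nperftors$. The animated ring structure plays no role, since filtered colimits of animated rings are computed on underlying spectra (or complexes).

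For the left term, note that derived $p$-completion does not change the derived reduction: $\hw(R)_{\hat{p}} \quot p^n \simeq \hw(R) \quot p^n$. Moreover, $\hw$ commutes with filtered colimits of rings by \Cref{Mhatcommuteswithfilteredcolimits}, as noted in the remark following \Cref{hwviaJoyal}. Hence $R \mapsto \hw(R)\quot p^n$ commutes with filtered colimits.

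For the right term, $R \mapsto R_{\mathrm{red}}$ commutes with filtered colimits, and the nilradical of a colimit is the colimit of the nilradicals. Also, $W(P)\quot p^n \simeq W_n(P)$ for $P$ perfect, since $W(P)$ is $p$-torsionfree and $W(P)/p^n = W_n(P)$. The functor $W_n$ commutes with filtered colimits, so $W(R_{\mathrm{red}})\quot p^n$ does too. Finally, a filtered colimit of nilperfect rings is nilperfect, because a filtered colimit of perfect reduced rings is perfect and reduced; so the statement makes sense within $\nperftors$.

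The main point to check is that the sequence \eqref{thirdexactseq} and the identification $\hw(R)_{\hat p}\quot p^n \simeq \hw(R)\quot p^n$ are natural, so that the comparison maps of the three cofiber sequences assemble compatibly. Naturality of \eqref{thirdexactseq} follows from its construction: it is the derived $p$-completion of the natural sequence \eqref{secondexactseq}. With naturality in hand, the five lemma, applied on homotopy groups of the two cofiber sequences (colimit of the sequences versus the sequence of the colimit), finishes the proof.
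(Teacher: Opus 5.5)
Your proposal is correct and follows essentially the paper's argument: reduce \eqref{thirdexactseq} derived mod $p^n$, use that $\hw$ commutes with filtered colimits, and observe that the right-hand term becomes $W_n(R_{\mathrm{red}})$. The only cosmetic difference is that the paper first reduces to $n=1$, where the right-hand term is simply $R_{\mathrm{red}}$.
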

\begin{proof} Without loss of generality, we may take $n = 1$.
Applying $- \otimes_{\mathbb{Z}}^{\mathbb{L}} \mathbb{F}_p$ to
\eqref{thirdexactseq}, we get a natural fiber sequence
\[
\hw(R)/^\mathbb{L}p \to \chW(R)/^\mathbb{L}p \to R_{\mathrm{red}}.
\]
We use that  $W(R_{\mathrm{red}})/^\mathbb{L}p \simeq R_{\mathrm{red}}$ since
$R_{\mathrm{red}}$ is perfect.

Let $R=\varinjlim_i R_i$ be a filtered colimit in $\nperftors$. The left term
commutes with filtered colimits because $\hw$ does, by
\Cref{Mhatcommuteswithfilteredcolimits}, and because
$- \otimes_{\mathbb{Z}}^{\mathbb{L}} \mathbb{F}_p$ commutes with colimits. The
right term clearly commutes with filtered colimits. The result follows. 
\end{proof}

\begin{proposition}
\label{FonchW}
We have a short exact sequence of fpqc sheaves
on $\nperftors$,
\begin{equation}
0 \to \gahatsharp \to \chW \xrightarrow{F} \chW \to 0.
\end{equation}
Moreover, the natural map induces an isomorphism
$\varprojlim_F \chW \xrightarrow{\sim} \varprojlim_F W$.
\end{proposition}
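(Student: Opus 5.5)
Our plan is to analyze $F$ on $\chW = W \times_Q \qperf$ factor by factor, using the two exact sequences \eqref{secondexactseq} and \eqref{firstSES}.

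\emph{Surjectivity.} Consider the sequence $0 \to \hw \to \chW \to \qperf \to 0$ of \eqref{secondexactseq}, viewed as a sequence of fpqc sheaves; it is exact because $\hw$-torsors are trivial (\Cref{vanishingofhwtorsors}). Frobenius acts compatibly on all three terms. On $\qperf$ it is an isomorphism by construction. On $\hw$ it is fppf-locally surjective with kernel $\gahatsharp$, by \Cref{Fisfppfsurj} and \Cref{hwFgahatsharp}. The snake lemma, applied in the abelian category of fpqc sheaves, then gives that $F$ is surjective on $\chW$.

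\emph{Kernel.} The same snake lemma identifies $\ker(F\colon\chW\to\chW)$ with $\ker(F\colon\hw\to\hw) = \gahatsharp$, since $F$ is injective on $\qperf$. Equivalently: an element $(x, (\bar x_n)) \in W(R)\times_{Q(R)}\qperf(R)$ with $Fx=0$ and $F$ applied to its $\qperf$-component equal to zero has $\qperf$-component zero. Hence $x$ maps to $0$ in $Q(R)$, so $x\in\hw(R)$ with $Fx=0$. This is exactly $\hw^F(R) = \gahatsharp(R)$. So we obtain the short exact sequence $0\to\gahatsharp\to\chW\xrightarrow{F}\chW\to0$.

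\emph{Perfections.} Passing to $\varprojlim_F$ in $0\to\hw\to\chW\to\qperf\to 0$ gives a left exact sequence
\[0\to \varprojlim_F\hw \to \varprojlim_F\chW \to \varprojlim_F \qperf = \qperf .\]
By \Cref{Fnil}, every element of $\hw(R)$ is killed by a power of $F$. This alone does not kill $\varprojlim_F\hw$, since compatible $F$-sequences could still be nonzero. We therefore compare instead with $W$. Apply $\varprojlim_F$ to $0\to T_F(Q)\to\chW\to W\to0$ from \eqref{firstSES}. It suffices to show:
\begin{enumerate}
\item $\varprojlim_F T_F(Q) = 0$;
\item ${\varprojlim_F}^1 T_F(Q) = 0$ (as sheaves, or after evaluating, with a suitable $\lim^1$ argument).
\end{enumerate}
On $T_F(Q)$, $F$ is the shift map of the Tate tower $\varprojlim(Q[F^n],F)$. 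Since $T_F(Q)\subset\qperf$ and $\hV$ commutes with $F$, we have $F\hV=p$ there. Moreover $T_F(Q)$ is $p$-torsionfree (it sits in $\qperf$) and derived $p$-complete.

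The key point is that $F$ on $T_F(Q)$ is topologically nilpotent. A compatible system $(t_0,t_1,\dots)$ with $F t_{k+1}=t_k$ has each $t_k$ lying in the kernel of $\qperf\to Q$. Its image in the $n$-th coordinate $Q[F^n]$ is killed by $F^n$, so $t_0 = F^n t_n$ has $n$-th component... We will instead argue more directly. Since $F\hV = p$ and $\hV$ is an isomorphism on $Q[p^\infty]$ (\Cref{deltaonQ}), we get $F = p\hV^{-1}$ on $T_F(Q)$. Thus $F^n = p^n\hV^{-n}$, which lies in $p^n T_F(Q)$.

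An element of $\varprojlim_F T_F(Q)$ therefore lies in $\bigcap_n p^n T_F(Q)$. This intersection vanishes, by derived $p$-completeness together with $p$-torsionfreeness (via the exact sequence for $\lim$ of multiplication by $p$); this gives (1). For (2), the map $1-\sigma$ on $\prod T_F(Q)$ with $\sigma$ built from $F=p\hV^{-1}$ is invertible by the $p$-adically convergent geometric series. Hence $\lim^1=0$, and $\varprojlim_F\chW\xrightarrow{\sim}\varprojlim_F W$.

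The main obstacle is making this last step rigorous at the sheaf level. Specifically, one must justify that $\hV^{-1}$ is defined on all of $T_F(Q)$ (via $\delta$, \Cref{chWtoWtautsquarezero}) and that the sums converge in the derived-complete sheaf sense; for the latter we will use \cite[Lem.~B.2.2]{Dri25}.
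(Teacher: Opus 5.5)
Your proof of the short exact sequence is the paper's argument: the snake lemma applied to $F$ on $0\to\hw\to\chW\to\qperf\to0$, using that $F$ is invertible on $\qperf$ and \Cref{Fisfppfsurj,hwFgahatsharp} on $\hw$.

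For the isomorphism $\varprojlim_F\chW\xrightarrow{\sim}\varprojlim_F W$ there is a gap, at exactly the step you flag, but it is not a convergence issue. Your reduction to ``$\varprojlim_F T_F(Q)=0$ and $\varprojlim^1_F T_F(Q)=0$'' presupposes that \eqref{firstSES} is right exact wherever you take the limit.

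\emph{On sections} it is not. In general $\chW(R)\to W(R)$ is only fpqc-locally surjective (e.g.\ for $R$ local Artinian with $\mathfrak m\neq0$, cf.\ \Cref{chWofArtinian}). So vanishing of the sectionwise $\varprojlim^1$ only shows that $\varprojlim_F\chW(R)$ surjects onto $\varprojlim_F$ of the \emph{image} of $\chW(R)$ in $W(R)$.

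\emph{On sheaves}, the six-term sequence for $\varprojlim$ requires $\prod_{\mathbb{N}}\chW\to\prod_{\mathbb{N}}W$ to be an epimorphism. That is, you need exactness of countable products of fpqc sheaves (repleteness, \cite[Sec.~3.1]{BS15}), and \cite[Lem.~B.2.2]{Dri25} does not supply this. Once repleteness is invoked, your computations do close the argument. They are correct, with one adjustment: $\hV$ is invertible on $T_F(Q)$ by \Cref{chWtoWtautsquarezero} together with $F=p\delta$ on a square-zero $\delta$-ideal, not by \Cref{deltaonQ}, since $T_F(Q)$ sits in $\qperf$ rather than $Q$.

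The detour is unnecessary, which is why the paper says the claim ``follows from the definition.'' The functor $\varprojlim_F$ commutes with the fiber product $\chW=W\times_Q\qperf$. Moreover, $\varprojlim_F\qperf\to\varprojlim_F Q=\qperf$ is an isomorphism: identifying $\varprojlim_F\qperf$ with $\qperf$ via the zeroth term, this map is the identity. Hence $\varprojlim_F\chW=\varprojlim_F W\times_{\qperf}\qperf=\varprojlim_F W$ already on sections, with no $\varprojlim^1$ at all. Equivalently, the fact missing from your sectionwise argument is that an $F$-compatible sequence $(x_n)$ in $W(R)$ lifts termwise to $\chW(R)$, because $(\bar x_{n+k})_{k\geq0}$ is a point of $\qperf(R)$ lying over $\bar x_n$.
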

\begin{proof}
The first claim follows from the short exact sequence of sheaves
of \eqref{secondexactseq} and \Cref{hwFgahatsharp}.
The second claim now follows from the definition of $\chW$.
\end{proof}

\begin{proposition}
As an fpqc sheaf on $\nperftors$,
$\chW$ is the quotient of $W^{\mathrm{perf}} = \varprojlim_F W$ by the ideal
consisting of
sequences $x_0, x_1,  x_2 , \dots \in W$ such that:
\begin{enumerate}
\item
$F(x_{i+1}) = x_i$ for all $i$ (so the sequence defines an element of
$W^{\mathrm{perf}}$).
\item $x_0 = 0$.
\item  $x_i \in \hw$ for all $i$.
\end{enumerate}
Equivalently, we have a short exact sequence  of fpqc sheaves on $\nperf$,
\begin{equation}
\label{WperfchWsheafseq}
  0 \to T_F( \hw) \to W^{\mathrm{perf}} \to \chW \to 0,
\end{equation}
where $T_F( \hw)$ denotes the $F$-Tate module of $\hw$.
\end{proposition}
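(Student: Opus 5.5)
We first construct the map $W^{\mathrm{perf}} \to \chW$ and only then identify its kernel. By \Cref{FonchW}, the natural map $\varprojlim_F \chW \to \varprojlim_F W = W^{\mathrm{perf}}$ is an isomorphism. Composing its inverse with the projection $\varprojlim_F \chW \to \chW$ onto the zeroth term gives a map of fpqc sheaves of $\delta$-rings $\pi \colon W^{\mathrm{perf}} \to \chW$. We describe $\pi$ through the pullback \eqref{chWdef}. A sequence $(x_i)$ with $F(x_{i+1}) = x_i$ is sent to the pair consisting of $x_0 \in W$ and the compatible system $(\bar x_i) \in \qperf$ of images in $Q = W/\hw$. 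To justify this, note that the composite $W^{\mathrm{perf}} \to \chW \to W$ is the projection onto $x_0$, and the composite to $\qperf = \varprojlim_F Q$ is the limit of the projections $W \to Q$. Both composites are maps out of $\varprojlim_F$ compatible with $F$, so they are determined by the above formula.

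Next we show $\pi$ is an epimorphism of fpqc sheaves. By \Cref{FonchW}, $F\colon \chW \to \chW$ is fpqc locally surjective. Given a section $s$ of $\chW$, we can therefore choose, after passing to a countable tower of fpqc covers, successive $F$-preimages $s = s_0, s_1, \dots$. The colimit of such a countable tower is still faithfully flat, and it is countably presented, as in the footnote earlier in the paper. This produces an element of $\varprojlim_F \chW \simeq W^{\mathrm{perf}}$ mapping to $s$. Alternatively, one can combine surjectivity of $W^{\mathrm{perf}} \to W$ with surjectivity of $\qperf \to Q$ and the short exact sequence \eqref{firstSES}.

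Finally we compute the kernel of $\pi$. A sequence $(x_i)$ lies in the kernel exactly when $x_0 = 0$ and $\bar x_i = 0$ in $Q$ for all $i$. Since $\hw \subset W$ is an ideal and $Q = W/\hw$ holds sectionwise on $\nperf$, the condition $\bar x_i = 0$ means $x_i \in \hw$. This gives conditions (1)--(3). The $F$-compatible sequences in $\hw$ with $x_0 = 0$ form precisely $T_F(\hw) = \varprojlim(\cdots \to \hw[F^2] \xrightarrow{F} \hw[F])$. Indeed, $x_i \in \hw$ together with $F^i x_i = x_0 = 0$ says $x_i \in \hw[F^i]$, with transition maps given by $F$. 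This yields the exact sequence \eqref{WperfchWsheafseq}, and the kernel is an ideal because $\hw$ is.

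The main obstacle is the surjectivity step. We must ensure that the countably iterated local lifting is legitimate in the fpqc site on $\nperf$, which needs a countable inverse-limit argument. To avoid this, the alternative route is to show surjectivity via \eqref{firstSES} together with $\hw$-level lifting from \Cref{Fisfppfsurj}: lifts through $F$ on $W$ exist fpqc locally, and the discrepancy lies in $\hw$, where \Cref{Fisfppfsurj} handles it.
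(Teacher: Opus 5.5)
Your proposal is correct and follows the paper's proof: the map comes from the isomorphism $W^{\mathrm{perf}} \simeq \varprojlim_F \chW$ of \Cref{FonchW}, surjectivity comes from fpqc-local surjectivity of $F$ on $\chW$ together with repleteness of the fpqc site (which is exactly your countable-tower argument), and the kernel is identified with $T_F(\hw)$ by unwinding the pullback $\chW = W \times_Q \qperf$. One small correction: your closing ``alternative route'' does not avoid the countable limit argument, because surjectivity of $W^{\mathrm{perf}} \to W$, like the $\hw$-level adjustments, again needs infinitely many successive local lifts; your main route already handles this step correctly.
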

\begin{proof}

In fact, we have an evident map $W^{\mathrm{perf}} \to \chW$ (arising from the
forgetful map $W^{\mathrm{perf}} \to W$ and the quotient map $W^{\mathrm{perf}}
\to \qperf$).
This map induces an isomorphism
$W^{\mathrm{perf}} \to \varprojlim_F \chW$ and thus, by \Cref{FonchW} and
repleteness of the fpqc site (cf.~\cite[Sec.~3.1]{BS15}), a
surjection
$W^{\mathrm{perf}} \to \chW$. By definition, the kernel of $W^{\mathrm{perf}}
\to \chW$ is precisely $T_F(\hw)$, whence the result.
\end{proof}

\begin{example}
Let $R$ be a semiperfect $\mathbb{F}_p$-algebra. Then $\chW(R)$ admits the following presentation. 
Write $R$ as the quotient of the perfect $\mathbb{F}_p$-algebra
    $R^\flat=  \varprojlim_\phi R$ by an ideal $I$. 
    Endow $I= \varprojlim_{n,\phi}  R[\phi^n]$ with the inverse limit topology. 
    Then   $\chW(R)$ is the quotient of $W(R^\flat)$ by the ideal $J$ which consists of all
    $\sum_{n\geq 0} V^n([a_n])\in W(I)$ such that each $a_n$ is topologically nilpotent and 
	    $\lim _{n\to \infty } a_n=0$. This follows readily from \eqref{WperfchWsheafseq}.
\end{example}

For the next result, we will say that an ideal $I \subset R$
equipped with a divided power structure $\left\{\gamma_i: I \to I\right\}_{i
> 0}$ has \emph{uniformly nilpotent} divided
powers if there exists $N \in \mathbb{N}$ such that $\gamma_i(x) = 0$ for all $x
\in I$ and all $i > N$.

\begin{proposition}
Let $R \in \nperf$  and $I \subset R$ an ideal equipped with
uniformly nilpotent divided powers. Then there is a natural short exact sequence
\[ 0 \to  I^{\oplus \mathbb{N}}  \to \chW(R) \to \chW( R/I) \to 0.  \]
\end{proposition}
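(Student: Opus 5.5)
The plan is to compare the exact sequences \eqref{secondexactseq} for $R$ and for $R/I$, so that the statement reduces to computing $\mathrm{ker}(\hw(R)\to\hw(R/I))$, and then to identify that kernel with $I^{\oplus\mathbb{N}}$ using the divided-power logarithm on Witt vectors.

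\emph{Step 1: reduction to $\hw(I)$.} First, $I$ is uniformly nil: for $x\in I$ we have $x^{N+1}=(N+1)!\,\gamma_{N+1}(x)=0$. Hence $R/I\in\nperf$, and \Cref{nilinvarianceofqperf} gives $\qperf(R)\xrightarrow{\sim}\qperf(R/I)$. Next, $\hw(R)\to\hw(R/I)$ is surjective. Indeed, by \Cref{hwviaWitt} an element of $\hw(R/I)$ is a finite sum $\sum V^i[\bar a_i]$ with each $\bar a_i$ nilpotent, and any lift $a_i\in R$ is nilpotent because $I$ is nil. The snake lemma applied to \eqref{secondexactseq} for $R$ and $R/I$ then yields a short exact sequence
\[
0\to K\to\chW(R)\to\chW(R/I)\to 0,\qquad K=\mathrm{ker}\bigl(\hw(R)\to\hw(R/I)\bigr).
\]
The kernel of $W(R)\to W(R/I)$ is $W(I)$, the Witt vectors all of whose components lie in $I$. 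Therefore $K=\hw(I)$, i.e.\ the Witt vectors with components in $I$, all nilpotent, and almost all zero. By the nilness of $I$, these are exactly the Witt vectors with components in $I$ that are almost all zero.

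\emph{Step 2: the divided-power logarithm.} Recall the standard isomorphism $\log\colon W(I)\xrightarrow{\sim}\prod_{n\geq 0}I$ for a PD-ideal $I$ in a $\mathbb{Z}_{(p)}$-algebra. It is given by the divided ghost coordinates $x\mapsto(\gh_n(x)/p^n)_n$, where each $p^{i-n}x_i^{p^{n-i}}$ is interpreted as $\frac{(p^{n-i})!}{p^{n-i}}\gamma_{p^{n-i}}(x_i)$, and $(p^{n-i})!/p^{n-i}$ is a $p$-integral rational number. I would verify, or cite, that this is a natural group isomorphism. One way is to check it in the universal case, a divided power polynomial algebra over $\mathbb{Z}_{(p)}$, which is $p$-torsionfree, and then argue by naturality. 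It is then enough to show that $\log$ carries $\hw(I)$ exactly onto $\bigoplus_{\mathbb{N}}I$; the composite $I^{\oplus\mathbb{N}}\simeq\hw(I)=K$ gives the claimed sequence.

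\emph{Step 3: matching $\hw(I)$ with the direct sum.} This is the main point. For the forward inclusion, take $V^i[a]$ with $a\in I$. Its $n$th divided ghost coordinate is a $p$-integral multiple of $\gamma_{p^{n-i}}(a)$, which vanishes once $p^{n-i}>N$. So $\log(V^i[a])$ has finite support, and since $\hw(I)$ consists of finite sums of such elements, $\log(\hw(I))\subset\bigoplus I$.

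For the reverse inclusion, let $e_i(a)$ denote the element with $i$th log coordinate $a$ and all others zero. I must show $\log^{-1}(e_i(a))\in\hw(I)$. Its Witt components are universal expressions in the divided powers of $a$, and the $m$th component is homogeneous of weight $p^m$ for the grading in which $\gamma_k(a)$ has weight $k$. Work universally in $\mathbb{Z}_{(p)}\langle a\rangle$: every monomial $\prod\gamma_{k_j}(a)$ of total weight $d$ equals an integer multiple of $\gamma_d(a)$. Hence the $m$th Witt component is a $p$-integral multiple of $\gamma_{p^m}(a)$, which vanishes in $R$ once $p^m>N$. So only finitely many components are nonzero, and all of them lie in the nil ideal $I$. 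By \Cref{hwviaWitt} this gives $\log^{-1}(e_i(a))\in\hw(I)$.

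The main obstacle is precisely this bookkeeping. One must get integrality of the inverse logarithm in the universal PD algebra, so that the components are genuinely integral multiples of $\gamma_{p^m}(a)$ rather than merely rational ones. One must also confirm that the construction is natural, so the resulting isomorphism $K\simeq I^{\oplus\mathbb{N}}$ does not depend on choices.
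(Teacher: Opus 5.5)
Your proposal is correct and follows the paper's proof: the paper likewise compares \eqref{secondexactseq} for $R$ and $R/I$ and uses \Cref{nilinvarianceofqperf}, and it simply cites Zink for $\mathrm{ker}(\hw(R)\to\hw(R/I))\simeq\bigoplus_{\mathbb{N}}I$, which you instead prove directly with the divided-power logarithm. One small indexing fix in Step~3: for $e_i(a)$ it is the $(i+m)$th Witt component that is a $\mathbb{Z}_{(p)}$-multiple of $\gamma_{p^m}(a)$, and this follows from the case $i=0$ because the logarithm turns $V$ into the shift of coordinates.
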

\begin{proof}
This follows from \eqref{secondexactseq}, together with the
fact that $\qperf(R) \xrightarrow{\sim} \qperf(R/I)$
(\Cref{nilinvarianceofqperf}) and the
isomorphism
$\mathrm{ker}( \hw( R) \to \hw(R/I)) \simeq \bigoplus_{\mathbb{N}} I$
proved in \cite[p.~205]{Zinkformal}.
\end{proof}

\subsection{Frobenius and Verschiebung}
\label{sec:structureofchW}

\begin{construction}[$F$ and $\hV$ on $\chW$]
For $R \in \nperftors$,
the operators $F, \hV$  on $W(R)$ act compatibly on $Q(R)$ and
$\qperf(R)$ and
therefore induce operators $F, \hV: \chW (R)\to \chW (R)$, preserving the exact
sequences
\eqref{firstSES}, \eqref{secondexactseq}, and \eqref{thirdexactseq}.

We have the relation \[\hV( F(x) y) = x \hV(y)\]
for all $x, y \in \chW(R)$ for
any $R$, since we have this in $W, Q, \qperf$.

When $p > 2$, we have $F \hV = p$, since this holds on
each of $W, Q, \qperf$; note that
$\hV  = V$.
However, when $p = 2$, the operator $F \hV$ is multiplication by the element
\begin{equation} \two\stackrel{\mathrm{def}}{=} F\hV(1) \in \chW(\mathbb{Z}_2).
\end{equation}
Here and below, $\qperf(\mathbb{Z}_2)$ denotes the inverse limit
$\varprojlim_n \qperf(\mathbb{Z}/2^n)$; by \Cref{nilinvarianceofqperf}, this is
identified with $\qperf(\mathbb{Z}/2)=\mathbb{Z}_2$.
Explicitly, $\two$ is the element
$(2[-1]  \in W( \mathbb{Z}_2), 2 \in \qperf( \mathbb{Z}_2))$, noting that $2[-1]
= 2 \in Q( \mathbb{Z}_2)$ (cf.~\Cref{uinWZ2}).
\end{construction}

\begin{proposition}
    \label{prop:chWmodV}
For every $R\in\nperf$, we have a natural short exact sequence
\begin{equation} \label{chWmodV}  0 \to \chW(R) \stackrel{\hV}{\to} \chW(R) \to R \to 0
\end{equation}
compatible with the corresponding sequence
$0\to W(R)\xrightarrow{\hV} W(R)\to R\to0$, via the map
$\chW(R) \to W(R)$.
Therefore, the map $\chW(R) \to W(R)$ exhibits the target as the completion of
the source with
respect to $\hV$.
\end{proposition}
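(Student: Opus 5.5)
The plan is to compare the pullback square defining $\chW(R)$ with its image under $\hV$ and take cokernels. By construction, $\hV$ acts compatibly on the three terms $W(R)$, $Q(R)$, $\qperf(R)$, so we get a map of pullback squares.
\[
0 \to \chW(R) \xrightarrow{\hV} \chW(R) \to C \to 0
\]
Write $C$ for the cokernel in this sequence. There are two things to prove: $\hV$ is injective on $\chW(R)$, and $C \cong R$ compatibly with $W(R)/\hV W(R) \cong R$.

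\emph{Injectivity.} Use the exact sequence \eqref{secondexactseq}, $0 \to \hw(R) \to \chW(R) \to \qperf(R) \to 0$, on which $\hV$ acts compatibly.
\begin{itemize}
\item On $\hw(R)$, $\hV$ is injective because it is injective on $W(R)$.
\item On $\qperf(R) = \varprojlim_F Q(R)$, $\hV$ is injective by \Cref{VonQseqprop}, applied termwise in the inverse limit.
\end{itemize}
The snake lemma then gives injectivity on $\chW(R)$.

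\emph{Identifying the cokernel.} The same snake lemma gives an exact sequence
\[
0 \to \ker(\hV \mid \qperf/\hV) \to \hw(R)/\hV\hw(R) \to \chW(R)/\hV \to \qperf(R)/\hV\qperf(R) \to 0,
\]
where the first term vanishes by the injectivity just shown. We identify the outer terms.
\begin{itemize}
\item By \Cref{QperfmodV}, $\qperf(R)/\hV \cong R_{\mathrm{red}}$.
\item $\hw(R)/\hV\hw(R) \cong \mathrm{Nil}(R)$, via the zeroth Witt component. The key observation is that an element $\sum V^i[x_i]$ of $\hw(R)$ with $x_0 = 0$ equals $V$ of an element of $\hw(R)$, hence also $\hV$ of one, since $[-1]$ preserves $\hw(R)$. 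Conversely, the image of $\hV$ has vanishing zeroth component.
\end{itemize}
So $\chW(R)/\hV$ is an extension of $R_{\mathrm{red}}$ by $\mathrm{Nil}(R)$. The map to $W(R)/\hV W(R) = R$, given by the zeroth Witt component of the $W(R)$-part, is compatible with $0 \to \mathrm{Nil}(R) \to R \to R_{\mathrm{red}} \to 0$. On the sub it is the identity of $\mathrm{Nil}(R)$; on the quotient it is the map $\qperf(R)/\hV \to R_{\mathrm{red}}$ from \Cref{QperfmodV}, which is an isomorphism. The five lemma then gives $\chW(R)/\hV \cong R$. Since $R = W(R)/\hV W(R)$, this is compatible with the Witt vector sequence.

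\emph{Completion statement.} Iterating, $\chW(R)/\hV^n \cong W(R)/\hV^n \cong W_n(R)$ by induction on $n$ with the five lemma, using injectivity of $\hV$ on both rings. Taking the limit, the $\hV$-adic completion of $\chW(R)$ is $\varprojlim_n W(R)/\hV^n = W(R)$. For $p=2$ one should check that $\hV^n W(R) = V^n W(R)$; this holds because $[-1]$ is a unit.

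The main obstacle is the identification of the quotient $\qperf(R)/\hV$, which is where surjectivity is genuinely nontrivial. That is exactly \Cref{QperfmodV}, so here it reduces to bookkeeping. The remaining points to double-check are that the zeroth-component map $\chW(R) \to R$ is indeed compatible with $\qperf(R) \to R_{\mathrm{red}}$, and the $[-1]$-twist at $p=2$ in the claim about $\hw(R)$.
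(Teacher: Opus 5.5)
Your proposal is correct and follows the paper's proof. The paper also derives the sequence from the $\hV$-sequences for $W(R)$, $Q(R)$ and $\qperf(R)$ via the pullback defining $\chW(R)$; your snake lemma on $0\to\hw(R)\to\chW(R)\to\qperf(R)\to 0$ is just the explicit form of that step, and the paper likewise gets $\chW(R)/\hV^n\simeq W(R)/\hV^n$ by induction and $\hV$-adic completeness of $W(R)$. The only slip is notational: the first term of your snake sequence should be $\ker(\hV|_{\qperf(R)})$, not $\ker(\hV\mid\qperf/\hV)$.
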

\begin{proof}
This follows from the analogous claims for $W(R), Q(R), \qperf(R)$
(cf.~\Cref{VonQseqprop} and \Cref{QperfmodV}), using the pullback definition
$\chW(R)=W(R)\times_{Q(R)}\qperf(R)$.
The compatibility with the corresponding sequence for $W(R)$ shows by induction
on $n$ that
\[
\chW(R)/\hV^n \xrightarrow{\sim} W(R)/\hV^n
\]
for all $n\geq 1$. Since $\hV$ defines the same filtration on $W(R)$ as $V$,
the ring $W(R)$ is $\hV$-adically complete, which proves the final assertion.
\end{proof}

\begin{remark}
Note that $\chW$ is not $\hV$-adically separated: from \eqref{chWdef} and
\eqref{firstSES}, we see that  the intersection of all powers of
$\hV$ on $\chW$ is the ideal $T_F(Q) \subset \chW$, on which $\hV$ acts
invertibly.
\end{remark}

\begin{corollary}
\label{chWlocalnilpkernel}
Let $R \twoheadrightarrow R'$ be a surjection of $p$-nilpotent rings  with
locally nilpotent kernel $I \subset R$, and suppose that $R \in \nperf$.
Then $\chW(R) \twoheadrightarrow \chW(R')$ and
$\mathrm{ker}( \chW(R) \to \chW(R')) \simeq \hw(I)_{\hat{p}}$.
\end{corollary}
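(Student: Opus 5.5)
We compare the short exact sequences \eqref{thirdexactseq} for $R$ and for $R'$. Since $I$ is locally nilpotent, $R'$ is again nilperfect and $R_{\mathrm{red}} = R'_{\mathrm{red}}$, so $W(R_{\mathrm{red}}) \xrightarrow{\sim} W(R'_{\mathrm{red}})$. This gives a map of short exact sequences
\[ \xymatrix{
0 \ar[r] & \hw(R)_{\hat{p}} \ar[r] \ar[d] & \chW(R) \ar[r] \ar[d] & W(R_{\mathrm{red}}) \ar[r] \ar[d]^{\sim} & 0 \\
0 \ar[r] & \hw(R')_{\hat{p}} \ar[r] & \chW(R') \ar[r] & W(R'_{\mathrm{red}}) \ar[r] & 0
}\]
By the snake lemma, the surjectivity of $\chW(R) \to \chW(R')$ and the identification of its kernel both reduce to the analogous statements for $\hw(R)_{\hat{p}} \to \hw(R')_{\hat{p}}$. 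Concretely, it suffices to show that
\[0 \to \hw(I)_{\hat{p}} \to \hw(R)_{\hat{p}} \to \hw(R')_{\hat{p}} \to 0\]
is exact.

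First, before completion, we claim that $0 \to \hw(I) \to \hw(R) \to \hw(R') \to 0$ is exact. Here $\hw(I)$ is defined via the nonunital ring $I$, and $I$ consists of nilpotent elements.
\begin{itemize}
\item \emph{Surjectivity.} We use \Cref{hwviaWitt}: an element of $\hw(R')$ is a finite sum $\sum V^i[x_i']$ with each $x_i'$ nilpotent. Lift each $x_i'$ to some $x_i \in R$. Since $x_i'$ is nilpotent, some power $x_i^N$ lies in $I$, and $I$ is locally nilpotent, so $x_i$ is nilpotent. Hence $\sum V^i[x_i] \in \hw(R)$ is a lift.
\item \emph{Kernel.} An element of $\hw(R)$ maps to zero exactly when all its Witt components lie in $I$. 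Since $I$ consists of nilpotents, these are exactly the finite Witt-component sums with components in $I$, which is $\hw(I)$.
\end{itemize}

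Second, we apply derived $p$-completion to this sequence. It produces a long exact sequence whose only possible obstruction to short exactness is $\lim$-type terms; the relevant one is $H^{-1}$ of the derived completion of $\hw(R')$, namely the $p$-adic Tate module $T_p \hw(R')$. This vanishes by the same argument as in the proof of \eqref{thirdexactseq}, because every element of $\hw(R')$ is killed by a bounded power of $p$ (\Cref{torsionnilpotent}). More precisely, each element is $p$-power torsion, and a compatible system $(y_n)$ with $p y_{n+1} = y_n$ forces $F$-nilpotence. For the cleanest version, observe that the Tate module of any group consisting of torsion elements can be nonzero, so instead we argue as the paper did: $T_p \hw(R') \subset T_p W(R') = 0$, since $W(R')$ is classically $p$-complete and separated, so its Tate module vanishes. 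Therefore the completed sequence is short exact, and combining with the snake lemma yields both claims.

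The main obstacle is the justification that derived completion preserves exactness. This amounts to checking $T_p\hw(R') = 0$, which we handle via the inclusion into $W(R')$ together with the vanishing of $T_p W(R')$ ($W(R')$ being $p$-adically separated with no infinitely $p$-divisible torsion chains). A secondary point is naturality of \eqref{thirdexactseq} in $R$, which holds because it is obtained from the natural sequence \eqref{secondexactseq}.
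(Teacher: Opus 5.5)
Your proposal is correct and is essentially the paper's argument. The paper's proof is exactly this comparison of the sequences \eqref{thirdexactseq} for $R$ and $R'$ (using $R_{\mathrm{red}} = R'_{\mathrm{red}}$), with the exactness of $0 \to \hw(I) \to \hw(R) \to \hw(R') \to 0$ surviving derived $p$-completion because $T_p\hw(R') \subset T_pW(R') = 0$. You should, however, delete the stray remark that elements of $\hw(R')$ are killed by a \emph{bounded} power of $p$. \Cref{torsionnilpotent} only gives $p$-power torsion elementwise, with no uniform bound (cf.\ \Cref{semiperfectexample}), so the Tate-module vanishing really does require the inclusion into $W(R')$, as in your final version.
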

\begin{proof}
This follows from \eqref{thirdexactseq}.
\end{proof}

\begin{construction}
Let $R \in \nperf$.
In general, given $x \in R$, the element $[x] \in W(R)$
need not lift to $\chW(R)$.
However, if either of the following conditions holds:
\begin{enumerate}
\item $x$ is nilpotent, or
\item $x$ admits a compatible system of $p^n$-th roots in $R$, i.e., there exist
$x_n \in R$ such that $x_0 = x$ and $x_{n}^p = x_{n-1}$ for all $n \geq 1$,
\end{enumerate}
then one obtains a natural lift of $[x]$ to $\chW(R) = W(R) \times_{Q(R)} \qperf(R)$, given in the first case by $( [x], 0)$ and given in the second case
by $([x],  \{ [x_n] \}_{n \geq 0})$.
If $x$ is nilpotent and admits such a compatible system of $p^n$-th roots,
the two constructions agree.
\label{liftofbracketx}
\end{construction}
\begin{proposition}
\label{surjectioninducessurjection}
Let $R \twoheadrightarrow R'$ be a surjection of $p$-nilpotent rings, and assume
$R  \in \nperf$.\footnote{This implies $R' \in \nperf$ too.} Then $\chW(R)
\twoheadrightarrow \chW(R')$.
\end{proposition}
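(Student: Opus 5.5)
We use the short exact sequence \eqref{thirdexactseq}, which is functorial in $R \in \nperf$, applied to both $R$ and $R'$:
\[
\xymatrix{
0 \ar[r] & \hw(R)_{\hat p} \ar[r]\ar[d] & \chW(R) \ar[r]\ar[d] & W(R_{\mathrm{red}}) \ar[r]\ar[d] & 0 \\
0 \ar[r] & \hw(R')_{\hat p} \ar[r] & \chW(R') \ar[r] & W(R'_{\mathrm{red}}) \ar[r] & 0.
}
\]
By the snake lemma (or a direct diagram chase), it suffices to show that the two outer vertical maps are surjective.

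\textbf{Right-hand map.} The map $R_{\mathrm{red}} \to R'_{\mathrm{red}}$ is a surjection of perfect $\mathbb{F}_p$-algebras. Its kernel $J$ is radical, hence perfect. Surjectivity of $W(R_{\mathrm{red}}) \to W(R'_{\mathrm{red}})$ then follows from Witt components: any Witt vector $\sum V^i[\bar y_i]$ lifts by lifting each component $\bar y_i$ to $R_{\mathrm{red}}$. This holds for surjections of arbitrary rings, since $W(-)$ is the product of copies of $\mathbb{G}_a$ as a set-valued functor.

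\textbf{Left-hand map.} By \Cref{hwviaWitt}, an element of $\hw(R')$ is a finite sum $\sum_{i \le n} V^i[y_i]$ with $y_i \in R'$ nilpotent. Choose lifts $x_i \in R$ of the $y_i$. The issue is that a lift of a nilpotent element need not be nilpotent. However, $y_i^N = 0$ means $x_i^N \in I := \ker(R \to R')$, and $I$ need not be nil, so we must find nilpotent lifts by another route. Here we use that $R$ is nilperfect. The image $\bar x_i$ of $x_i$ in $R_{\mathrm{red}}$ satisfies $\bar x_i^N \in \bar I$ (the image of $I$), so $\bar x_i$ lies in the radical ideal $\sqrt{\bar I}$. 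Since $y_i$ is nilpotent in $R'$, its image in $R'_{\mathrm{red}} = R_{\mathrm{red}}/J$ vanishes. So $\bar x_i \in J$, where $J = \sqrt{\bar I + 0}$.

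We therefore want to modify $x_i$ by an element of $I$ so that the modified element maps to $0$ in $R_{\mathrm{red}}$, and is hence nilpotent. This needs $J = \bar I$, i.e. that $\bar I$ is already radical in the perfect ring $R_{\mathrm{red}}$. This is the main obstacle, and I expect it to fail in general: for instance, $R'$ might be $R/(x^p)$ with $x$ non-nilpotent.

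Instead we work with $\chW$ directly rather than with $\hw$. The cleaner route is to factor the surjection as
\[
R \to R/I_0 \to R',
\]
where $I_0 = I \cap \mathrm{Nil}(R)$. The first map has nil kernel, so \Cref{chWlocalnilpkernel} gives surjectivity of $\chW(R) \to \chW(R/I_0)$. We may therefore replace $R$ by $R/I_0$, after which $I \cap \mathrm{Nil}(R) = 0$. Then $I$ injects into $R_{\mathrm{red}}$, and nilpotent elements of $R'$ correspond to elements of $R$ whose powers lie in $I$.

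Now apply the diagram once more. The left map $\hw(R)_{\hat p} \to \hw(R')_{\hat p}$ may fail to be surjective, but the snake lemma only needs the composite $\chW(R) \to \chW(R') \to W(R'_{\mathrm{red}})$ to be surjective with kernel mapping onto $\hw(R')_{\hat p}$. For a given $V^i[y]$ with $y$ nilpotent, we instead lift it using \Cref{liftofbracketx}. In the perfect ring $R_{\mathrm{red}}$, the image of $x$ (with $x^N \in I$) admits compatible $p$-power roots. Lifting those roots to $R$ via the henselian, nil surjection $R \to R_{\mathrm{red}}$ (adjusting modulo $I$) produces an element of $\chW(R)$ mapping to $V^i[y]$ modulo elements already in the image.

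I expect this matching of $p$-power root systems to be the delicate step. If it fails, I would fall back on fpqc descent: $\chW$ is an fpqc sheaf (\Cref{chWisfpqcsheaf}), its higher cohomology is controlled by $T_F(Q)$, and surjectivity can be checked after passing to a cover where $R$ is semiperfect and lifts become explicit, as in \Cref{chWofsemiperfect:intro}.
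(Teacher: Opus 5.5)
Your proposal has a genuine gap. The one step that needs an idea is never carried out: showing that $[y]\in\hw(R')\subset\chW(R')$ lies in the image of $\chW(R)$ when $y\in\mathrm{Nil}(R')$ has no nilpotent lift to $R$.

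You correctly see that $\hw(R)_{\hat p}\to\hw(R')_{\hat p}$ need not be surjective. For example, take $R=\mathbb{F}_p[x^{1/p^\infty}]$ and $R'=R/(x)$; then $\hw(R)=0$. However, your reduction by $I_0=I\cap\mathrm{Nil}(R)$ does not remove the problem, and the proposed fixes do not work:
\begin{itemize}
\item Lifting a compatible system of $p$-power roots of $\bar x$ from $R_{\mathrm{red}}$ to $R$ ``via the henselian nil surjection, adjusting modulo $I$'' is unsupported. The kernel of $R\to R_{\mathrm{red}}$ is only locally nilpotent, so the usual successive-approximation argument for lifting $p$-power root systems does not apply. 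Nothing guarantees that a lifted system can be chosen congruent to $x$ modulo $I$.
\item The fpqc fallback is not an argument. Surjectivity fpqc-locally does not give surjectivity on sections without an $H^1$-vanishing for the kernel, and that is not available here.
\item You never address passing from $\hw(R')$ to its derived $p$-completion.
\end{itemize}

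The missing idea is to reduce to $R$ \emph{perfect} by killing the whole nilradical $N=\mathrm{Nil}(R)$, together with its image $N'\subset R'$ (a locally nilpotent ideal of $R'$).
\begin{enumerate}
\item Apply \Cref{chWlocalnilpkernel} to both $R\to R_{\mathrm{red}}$ and $R'\to R'/N'$. This gives short exact rows $0\to\hw(N)_{\hat p}\to\chW(R)\to W(R_{\mathrm{red}})\to 0$ and $0\to\hw(N')_{\hat p}\to\chW(R')\to\chW(R'/N')\to 0$.
\item Now the left vertical map \emph{is} surjective: every element of $N'$ lifts to an element of $N$, and such a lift is automatically nilpotent. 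So it suffices to treat $R_{\mathrm{red}}\to R'/N'$, i.e.\ the case $R$ perfect.
\item In the perfect case no lifting of root systems is needed. For $y\in\mathrm{Nil}(R')$, take any lift $\tilde y\in R$; it has its canonical compatible $p$-power roots. By \Cref{liftofbracketx}, $[\tilde y]\in W(R)=\chW(R)$ maps to $([y],([\bar y_n])_n)\in W(R')\times_{Q(R')}\qperf(R')$.
\item This element equals $([y],0)$, which is the image of $[y]\in\hw(R')$. Indeed, the roots $\bar y_n$ of the nilpotent $y$ are nilpotent, so each $[\bar y_n]$ vanishes in $Q(R')$.
\item The image of $\chW(R)\to\chW(R')$ is stable under $\hV$, hence contains $\hw(R')$. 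It is derived $p$-complete, being the image of a map of derived $p$-complete groups, hence contains $\hw(R')_{\hat p}$.
\item Finally, $W(R)\to W(R'_{\mathrm{red}})$ is surjective, so \eqref{thirdexactseq} for $R'$ finishes the proof.
\end{enumerate}
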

\begin{proof}

First, we reduce to the case where $R$ is perfect.
Let $I \subset R$ be the nilradical, and $I' \subset R'$ its image in $R'$.
Then, by \Cref{chWlocalnilpkernel}, we have a diagram of short exact sequences
\[ \xymatrix{
0 \ar[r] &  \hw(I)_{\hat{p}} \ar[d] \ar[r] &  \chW(R) \ar[d]  \ar[r] &
W(R_{\mathrm{red}}) \ar[d]  \ar[r] &  0 \\
0 \ar[r] & \hw( I')_{\hat{p}} \ar[r] &  \chW(R') \ar[r] &  \chW(R'/I')
\ar[r] &  0
}\]
Since $I \twoheadrightarrow I'$
is a surjection of locally nilpotent ideals,
we find $\hw(I) \twoheadrightarrow \hw(I')$ and this is preserved under derived
$p$-completion.
Thus, to prove $\chW(R) \twoheadrightarrow \chW(R')$, it suffices to show that
$ \chW( R_{\mathrm{red}}) = W(R_{\mathrm{red}}) \twoheadrightarrow \chW(
R'/I')$.
In other words, we may reduce to proving the conclusion for $R_{\mathrm{red}}
\to R'/I'$ instead of $R \to R'$.

Thus, we may assume that $R$ is perfect.
In fact, it then suffices to show that
the image of $\chW(R) \to \chW( R')$ contains $\hw(R') \subset  \chW(R')$
(since this image is derived $p$-complete).
Since the image is closed under $\hV$, it suffices to show that if $x \in
\mathrm{Nil}(R')$, then $[x] \in \hw(R') \subset \chW(R')$ can be lifted to
$\chW(R)$. We can perform this lift simply by lifting $x$ to $\tilde{x} \in R$
and forming $[\tilde{x}]
\in W(R) = \chW(R)$, cf.~\Cref{liftofbracketx}.
\end{proof}

\subsection{First examples of $\chW$}

\begin{proposition}
\label{chWofArtinian}
Let $R$ be a local Artinian ring with perfect residue field $k$ of characteristic $p$ and maximal
ideal $\mathfrak{m} \subset R$.
In this case,  the map $\chW(R) \to W(R)$ is injective  and its image is the
direct sum
$W(k) \oplus \hw( \mathfrak{m})$ (using the natural embedding $W(k)
\hookrightarrow W(R)$ which is the unique section of $W(R) \twoheadrightarrow
W(k)$).
\end{proposition}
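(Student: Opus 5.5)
Since $\mathfrak{m}$ is nilpotent (say $\mathfrak{m}^N = 0$), the plan is to compare $\chW(R)$ with $W(R)$ through the exact sequence \eqref{firstSES}, and to identify the image using \eqref{thirdexactseq}.

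\textbf{Injectivity.} By \eqref{chWdef}, the kernel of $\chW(R) \to W(R)$ consists of pairs $(0, \{q_n\})$ with $\{q_n\} \in \qperf(R)$ and $q_0 = 0$. In other words, it is the $F$-Tate module $\varprojlim_F Q(R)[F^n]$, computed on sections rather than sheafified. It therefore suffices to show that $F$ is injective on $Q(R)$; then every $Q(R)[F^n]$ vanishes.

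To prove this, let $x \in W(R)$ with $Fx \in \hw(R)$. Look first at the image of $x$ in $W(k)$. That image is killed by $F$, and $W(k)$ is $p$-torsionfree with $F$ injective since $k$ is perfect, so $x \in W(\mathfrak{m})$. Since $\mathfrak{m}$ is a nil ideal of bounded exponent $N$, \Cref{torsioninW} shows that $F^n x = 0$ for some $n$ and that $x$ is $p$-power torsion. Now use \Cref{VonQ}: $V$ acts invertibly on $\ker(Q(R) \to W(k))$, which is the image of $W(\mathfrak{m})$. Hence the class $\bar x \in Q(R)$ lies in $\bigcap_i V^i Q(R)$ and is $p$-power torsion. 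By \Cref{deltaonQ}, $\hV$ is an automorphism of $Q(R)[p^\infty]$. Also $F\hV = p$, $\hV F = p$ and $F\hV = \hV F$ on $Q$ (\Cref{FhatV}). If $F\bar x = 0$, write $\bar x = \hV \bar y$ with $\bar y$ torsion; then $p\bar y = F\hV \bar y = F\bar x = 0$. Iterating gives $\bar y = \hV \bar z$ with $p\bar z = 0$, and so on.

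This alone does not force $\bar x = 0$. I therefore expect the cleaner route to be the following: $\bar x \in \bigcap_i \hV^i$ is $p^n$-torsion, and the uniform nilpotence bound $N$ gives a uniform $n$ for the whole ideal. Then $p^n \bar x = 0$ for all such $\bar x$, and writing $\bar x = \hV^n \bar w$ with $\bar w$ in the same ideal, we get $\bar x = \hV^n \bar w = \hV^n F^n \bar u$-type identities with $p^n = \hV^n F^n$. So the target claim is: on the ideal $\bigcap_i \hV^i Q(R)$, which is $p^n$-torsion, $\hV$ is bijective and $\hV^n F^n = p^n = 0$, hence $F^n = 0$ there. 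Consequently every element of $\qperf(R)$ with $q_0 = 0$ satisfies $q_m = F^{n} q_{m+n} = 0$, since $q_{m+n}$ lies in this ideal (its image in $W(k)$ is a compatible system of $F$-preimages of $0$, hence $0$). This gives injectivity. The key point is exactly \Cref{nilinvarianceofqperf}: $\qperf(R) \cong \qperf(k) = W(k)$, which makes injectivity immediate once the kernel is identified with $\ker(\qperf(R) \to Q(R))$.

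\textbf{Image.} Use \eqref{thirdexactseq}: $0 \to \hw(R)_{\hat p} \to \chW(R) \to W(k) \to 0$. Here $\hw(R) = \hw(\mathfrak{m})$ is $p^A$-torsion for a uniform $A$ (\Cref{lemmatorsioninWittvectors}, using bounded nilpotence of $\mathfrak{m}$ and $p^M R = 0$), so it is already derived $p$-complete. The splitting comes from \Cref{liftofbracketx}(2). Elements of the perfect field $k$ lift canonically to $R$ with compatible $p$-power roots, via Teichm\"uller-type lifts: the unique multiplicative section $k \to R$ exists since $R$ is Artinian local with perfect residue field. Their Teichm\"uller elements lie in $\chW(R)$, and the resulting subring is the copy of $W(k) \subset W(R)$. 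Thus the image of $\chW(R)$ in $W(R)$ is $W(k) + \hw(\mathfrak{m})$. This sum is direct, because $W(k) \cap \hw(\mathfrak{m}) = 0$ (reduce to $W(k)$).

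The main obstacle is the uniform torsion bound on $\bigcap_i \hV^i Q(R)$ used in the injectivity step, equivalently the applicability of \Cref{nilinvarianceofqperf} with $I = \mathfrak{m}$.
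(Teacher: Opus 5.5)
Your final argument is correct and is essentially the paper's proof. The paper reads everything off from \eqref{secondexactseq} together with $\qperf(R)\xrightarrow{\sim}\qperf(k)=W(k)$ (\Cref{nilinvarianceofqperf}), which rests on exactly the uniform $F$-nilpotence on $\ker(Q(R)\to W(k))$ that you end up using; \Cref{torsioninW} supplies that bound directly, so it is not really an obstacle.

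Drop the opening reduction, though: $F$ is \emph{not} injective on $Q(R)$. For $R=\mathbb{F}_p[\epsilon]/\epsilon^2$, the class of $[1+\epsilon]-1$ is nonzero in $Q(R)$, since its Witt components are all $\epsilon$, yet $F([1+\epsilon]-1)=0$. Moreover, once you have \eqref{thirdexactseq} and your Teichm\"uller splitting of $W(k)$, injectivity of $\chW(R)\to W(R)$ follows at once: project to $W(k)$ first, and the $\hw(\mathfrak{m})$ part then maps by inclusion.
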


This ring was considered by Zink \cite{ZinkDieudonne}  and Lau
\cite{Laurelations}
and denoted
$\mathbb{W}(R)$.
\begin{proof}
This follows from \eqref{secondexactseq}, using that
$\qperf(R) \xrightarrow{\sim} \qperf(k) = W(k)$, by
\Cref{nilinvarianceofqperf}.
\end{proof}

\begin{example}
    \label{semiperfectexample}
Consider the
semiperfect ring $\mathbb{F}_p[x^{1/p^\infty}]/(x)$.
Using the surjection $\mathbb{F}_p[x^{1/p^\infty}] \twoheadrightarrow
\mathbb{F}_p[x^{1/p^\infty}]/(x)$, we find a surjection
\[ \mathbb{Z}_p[x^{1/p^\infty}]^{\wedge}_p  \twoheadrightarrow \chW(
\mathbb{F}_p[x^{1/p^\infty}]/(x)). \]
In fact, the kernel of $\chW (\mathbb{F}_p[x^{1/p^\infty}]/(x)) \to \chW(
\mathbb{F}_p) = \mathbb{Z}_p$ is
$\hw(\mathbb{F}_p[x^{1/p^\infty}]/(x))_{\hat{p}}$.
By
\Cref{gradingsonhw}, the latter is the derived $p$-completion of its graded
pieces in degrees $i \in \mathbb{Z}[1/p]_{>0}$, each of which is generated
by $[x^i]$.
Note that $p^j [x^i] = 0$ where $j$ is minimal such that $p^j i \geq 1$ and, by
comparison with the Witt vectors, this is optimal.
We thus find that
the map
\begin{equation} \label{checkWofasemiperfectex}
\mathbb{Z}_p \oplus \bigoplus_{ i \in \mathbb{Z}[1/p] \cap (0, 1)}
\mathbb{Z}/p^j\cdot [x^i] \to
\chW( \mathbb{F}_p[x^{1/p^\infty}]/(x)) \end{equation}
(with $j$ chosen as before) exhibits the target as the derived $p$-completion of
the source.
\label{truncatedsemiperfectchW}

Note that the element
$ \lim_{n \to \infty} (1 + [x^{1/p^n}])^{p^n} $
maps to $1 = [1+x]$ in $W( \mathbb{F}_p[x^{1/p^\infty}]/(x))$, but is not equal
to $1$
in $\chW( \mathbb{F}_p[x^{1/p^\infty}]/(x))$; in fact, the difference from $1$
is not homogeneous.
\end{example}

\begin{remark}
In \Cref{truncatedsemiperfectchW}, $\chW( \mathbb{F}_p[x^{1/p^\infty}]/(x))$ is
not classically $p$-complete; in fact, it is not $p$-adically separated.
The derived $p$-completion of the left-hand side of
\eqref{checkWofasemiperfectex} is in fact a standard example of a derived
$p$-complete but not classically
$p$-complete abelian group \cite[Ex.~2.4]{Bhatttorsion}.
In fact, if $R$ is any semiperfect $\mathbb{F}_p$-algebra,
then  $\chW(R) \twoheadrightarrow W(R)$ (since both receive surjective maps from
$W(R^{\flat})$) and the kernel of $\chW(R) \to W(R)$ consists precisely of the
elements which are divisible by all powers of $\hV$, or equivalently (by $\hV F
= F \hV = p$) by all powers of $p$.
In particular, if $\chW(R) \twoheadrightarrow W(R)$ has a nonzero kernel, then
$\chW(R)$ is \emph{never} classically $p$-complete.
\end{remark}

\begin{proposition}
\label{chWofsemiperfect}
Let $R = P/I$ be the quotient of a perfect  $\mathbb{F}_p$-algebra $P$ by an
ideal $I \subset P$.
Then $\chW(R)$ is the derived $p$-completion of the quotient of $W(P)$ by the
ideal generated by $V^j( [x])$ for $x \in I$ and $j \geq 0$.
\end{proposition}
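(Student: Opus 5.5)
We will compare $\chW(R)$ with $\chW(P)=W(P)$ using \Cref{surjectioninducessurjection}, which gives a surjection $W(P)=\chW(P)\twoheadrightarrow \chW(R)$. The task is then to identify its kernel $K$ as the derived $p$-completion of the ideal $J\subset W(P)$ generated by the elements $V^j([x])$ with $x\in I$, $j\ge 0$. Since $\chW(R)$ is derived $p$-complete, it is enough to show that the map $W(P)/J\to\chW(R)$ becomes an isomorphism after derived $p$-completion. Note that $J$ is stable under $V$ (and under $F$, since $F(V^j[x])=pV^{j-1}[x]$ for $j\ge1$ and $[x^p]$ for $j=0$), and $J$ is contained in $W(I)=\ker(W(P)\to W(R))$.

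\textbf{Step 1: $J$ maps to zero.} The image of $V^j([x])$ under $W(P)\to\chW(R)=W(R)\times_{Q(R)}\qperf(R)$ has $W(R)$-component $V^j[\bar x]=0$. Its $\qperf$-component is the system $(V^j[x^{1/p^n}])_n$ taken modulo $\hw(R)$. Since $x^{1/p^n}\mapsto \overline{x^{1/p^n}}\in R$ is nilpotent (its $p^n$-th power is $\bar x=0$), each $V^j[\overline{x^{1/p^n}}]$ lies in $\hw(R)$. Hence this component vanishes in $Q(R)$ at every level. So $J$, and hence its derived completion, maps to $0$.

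\textbf{Step 2: identify the kernel via the sheaf sequence.} We compare the exact sequence \eqref{thirdexactseq} for $R$ with the corresponding decomposition for $W(P)$. The composite $W(P)\to\chW(R)\to W(R_{\mathrm{red}})$ is $W$ applied to $P\to R_{\mathrm{red}}=P/\sqrt I$, with kernel $W(\sqrt I)$. We have $\sqrt I=\bigcup_n I^{1/p^n}$ (using perfectness of $P$ and $\phi(\sqrt I)$ considerations: $y\in\sqrt I$ iff $y^{p^n}\in I$ for some $n$). It then suffices to show two things. First, $W(\sqrt I)\subset J_{\hat p}+(\text{preimage of }\hw(R)_{\hat p})$ surjects onto $\hw(R)_{\hat p}$; this is the surjectivity already established. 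Second, and this is the real point, $W(\sqrt I)\cap K\subset J_{\hat p}$.

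For the second claim, write an element of $W(\sqrt I)$ as $\sum_j V^j[y_j]$. The condition that it lies in $K$ forces it into $W(I)$, i.e.\ $y_j\in I$; that gives the $W(R)$-component. The $\qperf$-component must vanish, which forces the Witt components of its $F^{-n}$-preimages to tend to zero in $\hw(R)$. Since $F^{-n}$ is computed in $W(P)$ by $y_j\mapsto y_j^{1/p^n}$, the condition becomes: for each $n$, the components $\overline{y_j^{1/p^n}}$ vanish for $j\gg0$ (depending on $n$). This is exactly the description in the preceding Example: $K$ consists of $\sum V^j[a_j]$ with $a_j\in I$ and $a_j\to 0$ in the topology on $I$ given by the ideals $I_n=\{a: a^{1/p^n}\in I\}$. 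Such a sum is $\sum_{j\le N}V^j[a_j]+\sum_{j>N}V^j[a_j]$. The first part lies in $J$. For the tail, if $a_j\in I_n$ then $a_j=(a_j^{1/p^n})^{p^n}$ and
\[V^j[a_j]=V^j F^n[a_j^{1/p^n}]=p^nV^{j-n}[a_j^{1/p^n}]\in p^nJ\]
for $j\ge n$. Hence the tail converges $p$-adically inside $J$, and $K=J_{\hat p}$ (more precisely, $K$ equals the image of the derived completion).

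\textbf{Main obstacle.} The delicate step is rigorously justifying that description of $K$: verifying, from \eqref{WperfchWsheafseq} evaluated on sections (for the semiperfect $R$, where $W^{\mathrm{perf}}(R)=W(P)$ after identifying $P=R^\flat$, or for general $P$ by first reducing along $P\to R^\flat$, whose kernel consists of elements $V^j[x]$ with $x$ in a perfect ideal already inside $I$), that the global sections of the quotient agree with the naive quotient. I would handle this by using the fact that $T_F(\hw)$ has no higher fpqc cohomology on perfect rings, via \Cref{vanishingofhwtorsors} and a $\varprojlim^1$ argument, and then showing that $\varprojlim$-convergence matches derived $p$-completion via the displayed $p^n$-divisibility.
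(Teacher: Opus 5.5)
Your argument is correct and is essentially the paper's. You get surjectivity from \Cref{surjectioninducessurjection}, read off the kernel from $\chW(R)=W(R)\times_{Q(R)}\qperf(R)$ as the sums $\sum_j V^j[a_j]$ with $a_j\in I$ and, for each $n$, $a_j^{1/p^n}\in I$ for $j\gg 0$, and absorb the tail into $p^nJ$ via $V^j[a_j]=p^nV^{j-n}[a_j^{1/p^n}]$. The ``main obstacle'' you flag is not an obstacle, and the $W(\sqrt I)$ detour can be dropped: $Q(R)=W(R)/\hw(R)$ and $\qperf(R)=\varprojlim_F Q(R)$ are honest rings of sections, so the kernel description follows directly from the fiber product, with no fpqc cohomology or $\varprojlim^1$ argument needed.
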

\begin{proof}
By \Cref{surjectioninducessurjection}, $W(P)= \chW(P) \twoheadrightarrow
\chW(R) = W(R) \times_{Q(R)} \qperf(R)$.
It suffices to identify the kernel.
Given $a = \sum_{i \geq 0} V^i ( [a_i])$ for  a sequence $a_0, a_1, \dots \in
P$, then $a$ belongs to the kernel of $\chW(P) \to \chW(R)$ if and only if:
\begin{enumerate}
\item All $a_i \in I$ (i.e., $a$ maps to zero in $W(R)$).
\item For all $n$, all but finitely many terms in the sequence $\phi^{-n}(a_0),
\phi^{-n}(a_1), \dots $ belong to $I$. In other words,  for each $n$,
$\phi^{-n}(a) \in W(P)$
maps to zero in $Q(R)$, or equivalently $a$ maps to zero in $\qperf(R)$.
\end{enumerate}
Consequently, in the expression
$a = \sum_{i \geq 0} V^i [a_i]$, each term
belongs to the ideal $J \subset W(P)$
generated by $V^j( [x])$ for $x \in I$ and $j \geq 0$, and for each $n$, all but
finitely
many terms belong to $p^n J$.
It follows that the kernel of $W(P) \to \chW(R)$ is precisely the $p$-completion
of $J$ (noting also that $\chW(R)$ is derived $p$-complete).
\end{proof}

\subsection{\texorpdfstring{$\delta$-nilperfection and the cofree property of $\chW$}{delta-nilperfection and the cofree property of check W}}

We now turn to the corresponding $\delta$-nilperfection construction. This
formalism gives the right adjoint to the inclusion of \deltanilperfect\
$\delta$-rings and recovers $\chW$ as the cofree \deltanilperfect\
$\delta$-ring.

\begin{definition}[\deltanilperfect\ $\delta$-rings]
\label{nilperfect:def}
We say that a $\delta$-ring $A$ is \emph{\deltanilperfect} if
$A/\dnil{A}$ is perfect as a $\delta$-ring.
We let $\dhl \subset \dring$ be the full subcategory of \deltanilperfect\
$\delta$-rings.
\end{definition}

\begin{remark}
\label{charofdhl}
A $\delta$-ring $A$ belongs to $\dhl$ if and only if there exists a
$\delta$-ideal $I \subset \dnil{A}$ such that $A/I$ is a perfect $\delta$-ring.
This follows because $ \dnil{(A/I)} = 0$ by perfectness, so $I = \dnil{A}$.
\end{remark}

\begin{proposition}
The subcategory
$\dhl \subset \dring$
is
closed under colimits.
\end{proposition}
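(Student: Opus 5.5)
We need to show that a colimit $A = \varinjlim_{j} A_j$ in $\dring$ of \deltanilperfect\ $\delta$-rings is again \deltanilperfect. Since the forgetful functor $\dring \to$ rings preserves colimits, $A$ is the colimit of the underlying rings. By \Cref{charofdhl}, it suffices to produce a $\delta$-ideal $I \subset \dnil{A}$ with $A/I$ perfect. The natural choice is the ideal $I$ generated by the images of $\dnil{A_j}$ under the structure maps $A_j \to A$.

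The first step is to check that $I$ is a $\delta$-ideal contained in $\dnil{A}$. A map of $\delta$-rings sends $\delta$-nilpotent elements to $\delta$-nilpotent elements, because it commutes with $\delta$ and preserves nilpotence and vanishing. So each image lies in $\dnil{A}$, and since $\dnil{A}$ is an ideal (indeed a $\delta$-ideal, by the corollary to \Cref{crit:dnil}), we get $I \subset \dnil{A}$. Moreover, the ideal generated by a subset stable under $\delta$ is a $\delta$-ideal: using $\delta(ax) = a^p\delta(x) + \phi(x)\delta(a)$ and the additivity formula for $\delta$ modulo the ideal, the $\delta$-ideal generated by a set $S$ is the ideal generated by $S \cup \delta(S) \cup \delta^2(S) \cup \cdots$. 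Here the set $\bigcup_j \mathrm{im}(\dnil{A_j})$ is already $\delta$-stable.

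The second step is to identify $A/I$ as a colimit of perfect $\delta$-rings. Quotienting by a $\delta$-ideal is a colimit (a pushout against $\delta$-rings), and colimits commute with colimits. Hence $A/I \simeq \varinjlim_j A_j/\dnil{A_j}$ in $\dring$, where the transition maps exist because maps of $\delta$-rings preserve $\dnil{}$. Concretely, $A/I$ is the colimit of the diagram $j \mapsto A_j/\dnil{A_j}$, since imposing the relations $\dnil{A_j} = 0$ on the colimit is the same as taking the colimit of the quotients.

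The remaining and main point is that a colimit of perfect $\delta$-rings is perfect. Perfect $\delta$-rings are exactly the $\delta$-rings $B$ on which $\phi$ is an automorphism. For a diagram $B_j$ of such, $\phi_{B_j}^{-1}$ are $\delta$-ring maps, natural in $j$, so they induce an endomorphism $\psi$ of $\varinjlim B_j$ (colimit in $\dring$). Since $\phi$ on the colimit is induced by the $\phi_{B_j}$, the composites $\psi\phi$ and $\phi\psi$ agree with the identity on each $B_j$ and hence equal the identity on the colimit. I should double check that $\phi^{-1}$ is a $\delta$-map; this holds because $\phi$ is a $\delta$-ring endomorphism (it commutes with $\delta$), so its inverse also commutes with $\delta$. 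Thus $A/I$ is perfect, and \Cref{charofdhl} gives $A \in \dhl$. The empty colimit, namely the initial $\delta$-ring $\mathbb{Z}$, is not perfect, but the diagram argument above is for arbitrary colimits once the statement is interpreted with $\dnil{}$. I expect a subtlety at the empty colimit, and will check whether the intended statement concerns nonempty (for instance, filtered) colimits or excludes that edge case.
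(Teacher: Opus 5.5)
Your argument is correct and matches the paper's proof: you take the ideal generated by the images of the $\dnil{A_j}$, check that it is a $\delta$-ideal inside $\dnil{A}$, identify $A/I$ with $\varinjlim_j A_j/\dnil{A_j}$, and conclude by \Cref{charofdhl}. Your $\phi^{-1}$ argument also supplies the step ``a colimit of perfect $\delta$-rings is perfect,'' which the paper leaves implicit.

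Your closing worry is unfounded. On the initial $\delta$-ring $\mathbb{Z}$ the Frobenius is the identity, so $\mathbb{Z}$ is perfect and hence $\delta$-nilperfect. Your own argument already handles the empty diagram, so no restriction to nonempty or filtered colimits is needed.
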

\begin{proof}
Consider any $J$-indexed diagram $\{ A_j\} $ in $\dhl$.
	Let $A = \varinjlim_{j \in J} A_j$; this colimit can be calculated either in
	$\dring$ or in the category of rings.
	For each $j \in J$, set $I_j=\dnil{A_j}$. Inside $A$, consider the ideal $I$
	generated by each $I_j$; this
	is a $\delta$-ideal contained in $\dnil{A}$ by functoriality. The quotient
	$A/I$ is
$\varinjlim_{j \in J} A_j/ I_j$, which is a perfect $\delta$-ring as
a colimit of such.
By \Cref{charofdhl}, this implies that $A \in \dhl$.
\end{proof}

\begin{definition}[$\delta$-nilperfection of a $\delta$-ring]
\label{def:nilperfection}
Given any $\delta$-ring $A$, 
we write $A^{\mathrm{perf}}= \varprojlim_\phi A$ for the (inverse limit) perfection of $A$.

We define the $\delta$-ring $\nilperf{A}$, called
the
\emph{$\delta$-nilperfection} of $A$, via the pullback square
\begin{equation}
\label{Acheckdef}
\xymatrix{
\nilperf{A} \ar[d] \ar[r] &   (A/\dnil{A})^{\mathrm{perf}} \ar[d]  \\
A \ar[r] &  A/\dnil{A}
}. \end{equation}
As a result, we observe that $\nilperf{A}$ fits into a short exact
sequence \begin{equation} \label{firstseqDelta} 0 \to  \dnil{A} \to \nilperf{A}
\to
(A/\dnil{A})^{\mathrm{perf}} \to 0,  \end{equation}
so $\nilperf{A}$ is an extension of a perfect $\delta$-ring by an ideal of
$\delta$-nilpotent elements. In particular, $\nilperf{A} \in \dhl$.
\end{definition}

\begin{example}[$\chW(R)$ as $\delta$-nilperfection]
\label{chWasnilperfection}
Let $A = W(R)$, where $R$ is a nilperfect ring.
Then $\chW(R)$ is the $\delta$-nilperfection $\nilperf{W(R)}$, by
\Cref{dnilinW} and the definition of $\chW(R)$.
\end{example}

\begin{example}
Suppose $\dnil{A} = 0$. In this case, $\nilperf{A} = A^{\mathrm{perf}}$.
\end{example}

\begin{proposition}
\label{chisrightadjoint}
The functor $A \mapsto \nilperf{A}$ from $ \dring \to \dhl$ is the
right adjoint to the
inclusion $\dhl \subset \dring$.
In particular, if $A$ is \deltanilperfect, then $\nilperf{A} \xrightarrow{\sim} A$.
\end{proposition}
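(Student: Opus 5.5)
We must show that for $B \in \dhl$ and any $\delta$-ring $A$, composition with the projection $\nilperf{A} \to A$ induces a bijection
\[
\Hom_{\dring}(B,\nilperf{A}) \to \Hom_{\dring}(B,A).
\]
Since $\nilperf{A} = A \times_{A/\dnil{A}} (A/\dnil{A})^{\mathrm{perf}}$ is a pullback of $\delta$-rings, a map $B \to \nilperf{A}$ is the same as two pieces of data: a map $f: B \to A$, and a map $g: B \to (A/\dnil{A})^{\mathrm{perf}}$ whose composite to $A/\dnil{A}$ agrees with the reduction $\bar f$ of $f$. So it suffices to show that for each $f$ there is a unique such $g$.

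First, $f$ carries $\dnil{B}$ into $\dnil{A}$. This holds because $\delta$-nilpotence is defined by identities ($\delta^i(b)$ nilpotent, $\delta^i(b)=0$ for $i\gg 0$) that are preserved by $\delta$-maps. Hence $\bar f$ factors through a $\delta$-map $\bar f': B/\dnil{B} \to A/\dnil{A}$, and $B/\dnil{B}$ is perfect because $B \in \dhl$.

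Next, let $C$ be a perfect $\delta$-ring and $D$ any $\delta$-ring. Maps $C \to D^{\mathrm{perf}} = \varprojlim_\phi D$ are compatible systems $(h_n)$ with $\phi \circ h_{n+1} = h_n$. Since $\phi_C$ is an automorphism commuting with $\delta$-maps, the system is forced to be $h_n = h_0 \circ \phi_C^{-n}$, so restriction to $h_0$ gives a bijection $\Hom(C, D^{\mathrm{perf}}) \xrightarrow{\sim} \Hom(C,D)$. Applying this with $C = B/\dnil{B}$ and $D = A/\dnil{A}$, the map $\bar f'$ lifts uniquely to $(A/\dnil{A})^{\mathrm{perf}}$, and this produces $g$. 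The limit $D^{\mathrm{perf}}$ is computed in $\delta$-rings, as the forgetful functor preserves limits.

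Uniqueness of $g$ is the only point needing care, and this is where the work lies. Any $g: B \to (A/\dnil{A})^{\mathrm{perf}}$ must kill $\dnil{B}$, because the target is perfect and so has no nonzero $\delta$-nilpotents. The argument of \Cref{charofdhl} shows $\dnil{}$ of a perfect $\delta$-ring vanishes, since such a ring is reduced, $p$-torsionfree by \Cref{frobinjtorsionfree}, and nilpotents are killed by $\phi$-injectivity. So $g$ factors through $B/\dnil{B}$, and there it is determined by its composite to $A/\dnil{A}$, which must equal $\bar f'$. This gives existence and uniqueness, hence the adjunction; naturality is clear.

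For the final claim: if $A \in \dhl$, apply the bijection with $B = A$ to obtain a section $s: A \to \nilperf{A}$ of the projection. The composite $s \circ \pi$ on $\nilperf{A}$ corresponds under the bijection (with $B=\nilperf{A}$, which lies in $\dhl$) to the same map $\pi$ as the identity does, so $s\circ\pi = \mathrm{id}$. Alternatively, one can see directly that $A/\dnil{A}$ is perfect, so $(A/\dnil{A})^{\mathrm{perf}} = A/\dnil{A}$ and the pullback is $A$.
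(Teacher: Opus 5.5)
Your argument is the paper's proof, unpacked. The paper combines the pullback \eqref{Acheckdef} with the chain
\[
\Hom(B,A/\dnil{A})=\Hom(B/\dnil{B},A/\dnil{A})=\Hom(B/\dnil{B},(A/\dnil{A})^{\mathrm{perf}})=\Hom(B,(A/\dnil{A})^{\mathrm{perf}}),
\]
which is exactly your existence-and-uniqueness argument for $g$. Your treatment of the final claim is also fine.

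One justification is wrong, though. A perfect $\delta$-ring need not be reduced, and $\phi$-injectivity does not kill nilpotents in general. For example, $\mathbb{Q}[\epsilon]/(\epsilon^2)$ with $\phi=\mathrm{id}$ (so $\delta(x)=(x-x^p)/p$) is a perfect $\delta$-ring in which $\epsilon$ is a nonzero nilpotent. Its $p$-torsionfreeness plays no role here.

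The fact you actually use is that $\dnil{C}=0$ for a perfect $\delta$-ring $C$. This is still true, but it must be argued for $\delta$-nilpotent elements, not merely nilpotent ones. Induct on the least $N$ with $\delta^N(a)=0$:
\begin{itemize}
\item The element $\delta(a)$ is $\delta$-nilpotent with smaller $N$, so $\delta(a)=0$ by induction.
\item Hence $\phi^k(a)=a^{p^k}$, which vanishes for $k\gg 0$ because $a$ is nilpotent.
\item Injectivity of $\phi$ then gives $a=0$.
\end{itemize}
Alternatively, $w_\delta(a)\in\hw(C)$ by \Cref{crit:dnil}, so $w_\delta(\phi^m(a))=F^m w_\delta(a)=0$ for $m\gg 0$ by \Cref{Fnil}, and again $a=0$. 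With this repair your proof is complete.
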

\begin{proof}
Let $A \in \dring, B \in \dhl$.
Then
we need to prove
that the natural map induces an isomorphism
\[ \Hom_{\dring} (B, A) \simeq  \Hom_{\dring} (B, \nilperf{A}). \]
This follows from the definition
\eqref{Acheckdef}
using that
\begin{align*} \Hom_{\dring}(B, A/\dnil{A}) & =
\Hom_{\dring}(B/\dnil{B}, A/\dnil{A}) \\
& =
\Hom_{\dring}(B/\dnil{B}, (A/\dnil{A})^{\mathrm{perf}}) \\
& =
\Hom_{\dring}(B, (A/\dnil{A})^{\mathrm{perf}}) ,
\end{align*}
where we used that the $\delta$-ring $B/\dnil{B}$ is perfect, since $B \in
\dhl$.
\end{proof}

\begin{proposition}
\label{nilperfectioncrit}
Let $f: B \to A$ be  a map of $\delta$-rings.
Then the natural map $\nilperf{B} \to \nilperf{A}$ is an isomorphism if and only if:
\begin{enumerate}
\item $ f$ induces an isomorphism $\dnil{B} \xrightarrow{\sim} \dnil{A}$.
\item
$f$ induces an isomorphism $(B/\dnil{B})^{\mathrm{perf}} \xrightarrow{\sim}
(A/\dnil{A})^{\mathrm{perf}}$.
\end{enumerate}
\end{proposition}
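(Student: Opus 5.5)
We need to prove that $\nilperf{B}\to\nilperf{A}$ is an isomorphism if and only if conditions (1) and (2) hold. The plan is to compare the short exact sequences \eqref{firstseqDelta} for $B$ and $A$:
\[
0 \to \dnil{B} \to \nilperf{B} \to (B/\dnil{B})^{\mathrm{perf}} \to 0,
\qquad
0 \to \dnil{A} \to \nilperf{A} \to (A/\dnil{A})^{\mathrm{perf}} \to 0.
\]
By functoriality of $\dnil{-}$ (a $\delta$-map carries $\delta$-nilpotents to $\delta$-nilpotents), $f$ induces a map between these sequences.

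\emph{Sufficiency.} Assume (1) and (2). Then the outer vertical maps are isomorphisms, so the middle map is an isomorphism by the five lemma.

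\emph{Necessity.} Suppose $\nilperf{B}\xrightarrow{\sim}\nilperf{A}$. The key step is to recover $\dnil{A}$ and $(A/\dnil{A})^{\mathrm{perf}}$ intrinsically from $\nilperf{A}$:
\begin{enumerate}
\item[(a)] $\dnil{(\nilperf{A})} = \dnil{A}$, where $\dnil{A}$ sits inside $\nilperf{A}$ via \eqref{firstseqDelta};
\item[(b)] $\nilperf{A}/\dnil{(\nilperf{A})} \simeq (A/\dnil{A})^{\mathrm{perf}}$.
\end{enumerate}

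For (a), first note that the map $\nilperf{A}\to A$ restricts to the identity on the ideal $\dnil{A}\subset\nilperf{A}$. This is because the embedding comes from the pullback square: an element $a\in \dnil{A}$ corresponds to the pair $(a,0)$. Since $(a,0)$ is nilpotent and $\delta^i(a,0)=(\delta^i a,0)$, elements of $\dnil{A}$ are $\delta$-nilpotent in $\nilperf{A}$. Conversely, a $\delta$-nilpotent element of $\nilperf{A}$ maps to a $\delta$-nilpotent element of the perfect $\delta$-ring $(A/\dnil{A})^{\mathrm{perf}}$. A perfect $\delta$-ring has no nonzero $\delta$-nilpotents (as used in \Cref{charofdhl}: $\phi$ is injective and $\phi^n(x)$ is a polynomial in $x,\dots,\delta^n x$ with $\phi^n(x)=0$ for $n \gg 0$ once the $\delta^i x$ vanish and $x$ is nilpotent). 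So such an element maps to zero there, and hence lies in $\dnil{A}$ by exactness. Claim (b) then follows immediately from (a) and \eqref{firstseqDelta}.

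With (a) and (b) in hand, both are natural in $f$, so an isomorphism $\nilperf{B}\simeq\nilperf{A}$ induces isomorphisms on $\dnil{(-)}$ and on the quotient by it. These are exactly (1) and (2).

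\emph{Main obstacle.} The only delicate point is the claim, used in (a), that a perfect $\delta$-ring has $\dnil{(-)}=0$. If $x$ is nilpotent with $\delta^i(x)=0$ for $i\ge n$, then $\phi^{m}(x)$ lies in the ideal generated by powers of $x,\delta x,\dots$. One can argue via $w_\delta$ and \Cref{crit:dnil}: $w_\delta(x)\in\hw$, so $F^m w_\delta(x)=0$ by \Cref{Fnil}. Hence $w_\delta(\phi^m x)=0$, and projecting to the zeroth component gives $\phi^m(x)=0$, so $x=0$ by injectivity of $\phi$.
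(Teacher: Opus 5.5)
Your proposal is correct and follows essentially the same argument as the paper. Sufficiency is the five lemma applied to the exact sequences \eqref{firstseqDelta}, and necessity amounts to checking that $\nilperf{A}\to A$ itself satisfies conditions (1) and (2), then using naturality. Along the way you supply a proof, via $w_\delta$ and \Cref{Fnil}, that a perfect $\delta$-ring has no nonzero $\delta$-nilpotent elements; the paper uses this fact without comment.
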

\begin{proof}
In fact, the two conditions imply that $\nilperf{B} \xrightarrow{\sim}
\nilperf{A}$
in light of \eqref{firstseqDelta}. 

Conversely, the definition and the exact sequence \eqref{firstseqDelta} show that for any $\delta$-ring $A$, the nilperfection of $A$ satisfies the conditions, i.e., if we set $C = \nilperf{A} $,  
then $\dnil{C} \xrightarrow{\sim} \dnil{A}$ and $(C/\dnil{C})^{\mathrm{perf}} = (A/\dnil{A})^{\mathrm{perf}}$.
The converse direction then follows. 
\end{proof}

\begin{corollary}
\label{chWcofreenilperfect}
Let $R \in \nperf$. Then the map $\chW(R) \to R$ exhibits $\chW(R)$ as the
cofree \deltanilperfect\ $\delta$-ring on $R$. Equivalently, for any
\deltanilperfect\ $\delta$-ring $A$, the natural map
\[
\Hom_{\dring}(A,\chW(R)) \xrightarrow{\sim} \Hom(A,R)
\]
is a bijection.
\end{corollary}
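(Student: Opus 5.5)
The plan is to chain three adjunctions. First, \Cref{chWasnilperfection} identifies $\chW(R)$ with the $\delta$-nilperfection $\nilperf{W(R)}$. That identification uses \Cref{dnilinW}, which gives $\dnil{W(R)}=\hw(R)$, so that $W(R)/\dnil{W(R)} = Q(R)$ and $(W(R)/\dnil{W(R)})^{\mathrm{perf}} = \qperf(R)$. In particular the pullback square \eqref{Acheckdef} is literally the square \eqref{chWdef} defining $\chW(R)$. Under this identification, the map $\chW(R)\to R$ in the statement is the composite $\nilperf{W(R)} \to W(R) \to R$.

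Now fix a \deltanilperfect\ $\delta$-ring $A$. By \Cref{chisrightadjoint}, composition with the counit $\nilperf{W(R)}\to W(R)$ gives a bijection
\[
\Hom_{\dring}(A,\chW(R)) \xrightarrow{\sim} \Hom_{\dring}(A,W(R)).
\]
By Joyal's theorem (\Cref{joyal:witt}), composition with the projection $W(R)\to R$ gives a bijection
\[
\Hom_{\dring}(A,W(R)) \xrightarrow{\sim} \Hom(A,R).
\]
The composite of these two bijections is composition with $\chW(R)\to W(R)\to R$, which is the natural map in the statement. Hence that map is a bijection.

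The only point needing care is that the comparison map really is the counit. Concretely, the map $\chW(R)\to W(R)$ used to define $\chW(R)\to R$ must be the first projection of the pullback, i.e.\ the left vertical arrow of \eqref{Acheckdef}. This holds by construction, since both \eqref{chWdef} and \eqref{Acheckdef} are built by the same pullback. Likewise, the $\delta$-structure on $\chW(R)$ is the one induced from the pullback of $\delta$-rings, which agrees with the $\delta$-structure on $\nilperf{W(R)}$. I do not expect any real obstacle: all the substantive content sits in \Cref{dnilinW} and \Cref{chisrightadjoint}, which we may assume.
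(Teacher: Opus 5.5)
Your proposal is correct and is exactly the paper's argument: identify $\chW(R)$ with $\nilperf{W(R)}$ (using \Cref{dnilinW}), then compose the bijection from the right adjoint in \Cref{chisrightadjoint} with the bijection from Joyal's theorem (\Cref{joyal:witt}). Your check that $\chW(R)\to W(R)$ is the counit, i.e.\ the first projection of the same pullback square, is accurate; the paper leaves that point implicit.
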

\begin{proof}
This follows from \Cref{chisrightadjoint} and Joyal's theorem
(\Cref{joyal:witt}), together with the identification
$\chW(R)=\nilperf{W(R)}$.
\end{proof}

\section{Square-zero extensions of $\delta$-rings} 

In this section, we treat some aspects of the theory of square-zero extensions of $\delta$-rings, and we show that in suitable circumstances, the map $\chW(R) \to W(R)$ can be characterized as the 
universal ``taut'' extension. 

\subsection{General square-zero extensions of $\delta$-rings}

Throughout, we fix a base $\delta$-ring $A_0$ (which could be $\mathbb{Z}$). 

\begin{definition}[Square-zero extensions of $\delta$-rings]
\label{squarezerodelta}
A \emph{square-zero extension}
of $\delta$-rings under $A_0$ is a surjective map $B \twoheadrightarrow \overline{B}$ of
$\delta$-$A_0$-algebras such that the kernel squares to zero as an ideal (i.e., it is a
square-zero extension in the ring-theoretic sense).
\end{definition}

\begin{example}
Given any $\delta$-$A_0$-algebra $B$ and any $\delta$-ideal $J \subset B$, the ideal $J^2
\subset B$ is also stable under $\delta$, and the map
$B/J^2 \to B/J$ becomes a square-zero extension by the ideal $J/J^2 \subset
B/J^2$.
\end{example}

\begin{definition}[Frobenius modules]
Let $A$ be a $\delta$-ring. A \emph{Frobenius module} over $A$ is an $A$-module
$M$ equipped with a map of $A$-modules $F: M \to \phi_* (M)$.
(Equivalently, $F$ defines a Frobenius semi-linear map $M \to M$.)
A Frobenius module over $A$ is equivalent to a left module over a noncommutative ring
$A[F] = \bigoplus_{i \geq 0} A\cdot F^i$ where the multiplication $a F^i \cdot b
F^j$ is $a \phi^i(b) F^{i+j}$.
We let $D_{\phi}(A) = D( A[F])$ denote the derived $\infty$-category of
Frobenius modules
over $A$.

\end{definition}

\begin{construction}
Given a square-zero extension $B \twoheadrightarrow \overline{B}$ of $\delta$-$A_0$-algebras, with kernel $J$, the restriction of $\delta$ gives a map $\delta:J\to J$; the
$\delta$-identities imply that $\delta:J\to J$ is an additive map, and
$\phi$-semilinear with respect to
the $\overline{B}$-module structure, i.e., $\delta(xy) = \phi(x) \delta(y)$ for
$x \in \overline{B}, y \in J$.
Thus, $\delta$ gives $J$ the structure of a Frobenius $\overline{B}$-module.
Moreover, $\phi|_J = p \delta|_J$.
\end{construction}

\begin{example}
    \label{tautsquarezeroexample}
Consider the ring $R = \mathbb{Z}[t, \epsilon]/((t-1) \epsilon, \epsilon^2)$.
Since $R$ is $p$-torsionfree, we can make $R$ into a $\delta$-ring via $\phi(
\epsilon) = p \epsilon$ and
$\phi(t) = t^p + p \epsilon$.
We have a short exact sequence
\[ 0 \to \mathbb{Z}[t]/(t -1) . \epsilon \to R \to \mathbb{Z}[t] \to 0,  \]
which exhibits
$R$ as a square-zero extension of $\mathbb{Z}[t]$ by $\mathbb{Z} \epsilon$,
considered as a Frobenius module  over $\mathbb{Z}[t]$  with $t \epsilon =
\epsilon$ and $F(\epsilon) = \epsilon$.
\end{example}

\begin{construction}[Trivial square-zero extensions]
\label{squarezerotrivdelta}
Let $A$ be a $\delta$-ring and $M$ a Frobenius module over $A$ with $F: M \to
\phi_* M$.
We can form a $\delta$-ring $A \oplus M$, where the
$\delta$-structure is given by
\begin{equation} \label{squarezeroformula}  \delta( a, m) = ( \delta(a), F(m) - a^{p-1} m).
\end{equation}
In particular,
$\phi(a, m) = ( \phi(a), p F(m))$. By reduction to the $p$-torsionfree case, one
checks that
\eqref{squarezeroformula} defines a $\delta$-structure on $A \oplus M$.
Note that $A \oplus M$ is an augmented $\delta$-ring over $A$, and is
naturally an abelian
group object in this
category.
\end{construction}

\begin{proposition} 
Let $A$ be a $\delta$-ring. 
The category of abelian group objects of $\dring_{A// A}$ 
is identified with the category of 
Frobenius modules over $A$, via 
\Cref{squarezerotrivdelta}. 
\end{proposition}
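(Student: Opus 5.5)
The plan is to identify abelian group objects of $\dring_{A//A}$ with Frobenius modules by splitting such an object into its underlying ring and its $\delta$-structure, and checking that each piece corresponds exactly to one part of the data of a Frobenius module. First, a group object $A\to B\to A$ in $\dring_{A//A}$ is, after forgetting $\delta$, a group object in augmented rings over $A$. Since the forgetful functor $\dring\to$ rings preserves limits, this forgetting is compatible with the group structure. By the classical computation (Beck), group objects in rings over $A$ are the trivial square-zero extensions $A\oplus M$ for an $A$-module $M$. Concretely: the group law $B\times_A B\to B$ is a ring map, which forces the kernel $M=\ker(B\to A)$ to satisfy $m\cdot m'=\mu((m,0)(0,m'))=\mu(0,0)=0$. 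So $M^2=0$, $B\simeq A\oplus M$ via the section, and the group law is addition on $M$.

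Second, I determine the $\delta$-structure on $A\oplus M$. By the preceding construction on square-zero extensions, $\delta$ restricts to an additive, $\phi$-semilinear map $F:=\delta|_M: M\to M$; this is exactly a Frobenius module structure. Since $A\to A\oplus M$ is a $\delta$-map, $\delta(a,0)=(\delta(a),0)$. The $\delta$-sum formula gives
\[
\delta(a+m)=\delta(a)+\delta(m)-\sum_{0<i<p}\tfrac{1}{p}\tbinom{p}{i}a^i m^{p-i},
\]
and only the $i=p-1$ term survives because $M^2=0$. This yields $\delta(a,m)=(\delta(a),F(m)-a^{p-1}m)$, i.e.\ precisely formula \eqref{squarezeroformula}. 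So every group object is the one obtained from \Cref{squarezerotrivdelta}. One must also check that the group law $(A\oplus M)\times_A(A\oplus M)=A\oplus M\oplus M\to A\oplus M$ is a $\delta$-map. This holds because it is the map attached to the $\delta$-compatible addition $M\oplus M\to M$, and additivity of $F$ gives compatibility with \eqref{squarezeroformula}. The same argument handles the unit and the inverse.

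Third, I check morphisms. A map of group objects $A\oplus M\to A\oplus N$ over and under $A$ is a ring map of the form $(a,m)\mapsto(a,f(m))$ with $f$ $A$-linear. Such a map commutes with $\delta$ iff $f(F m)=F f(m)$, since the $a^{p-1}m$ term is automatically respected. So the hom sets agree with Frobenius-module maps. Conversely, \Cref{squarezerotrivdelta} produces a group object from any Frobenius module, and the two constructions are mutually inverse.

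The main obstacle is the first step, namely arguing rigorously that a group object forces the kernel to square to zero and split as $A\oplus M$. This is a purely ring-theoretic Beck-module argument, so I would import it from the ring case via the limit-preserving forgetful functor. The remaining verification of the $\delta$ formula is a short computation using $M^2=0$. If desired, that computation can be reduced to the $p$-torsionfree case, as the paper does elsewhere.
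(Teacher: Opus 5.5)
Your proposal is correct and follows the paper's own route. You forget along the limit-preserving functor to rings, invoke Beck's identification of abelian group objects in $\mathrm{Ring}_{A//A}$ with square-zero extensions $A\oplus M$, and then check that a compatible $\delta$-structure on $A\oplus M$ is the same as the additive, $\phi$-semilinear map $\delta|_M$ (i.e.\ a Frobenius module structure), recovering \eqref{squarezeroformula} from the $\delta$-sum formula with $M^2=0$. The only difference is that you carry out explicitly the verifications, including the check on morphisms, that the paper leaves as ``one then checks.''
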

\begin{proof} 
It is well-known that the category of abelian group objects of
$\mathrm{Ring}_{A//A}$ is equivalent to the category of $A$-modules, via the
square-zero construction $N \mapsto A \oplus N$. One then checks that adding a
$\delta$-structure to $A \oplus N$ (compatible with the previous data) is
equivalent to adding a Frobenius module structure to $N$. 
\end{proof} 
As a consequence, the following discussion is a special case of the formalism of
the cohomology of algebras over a monad, cf.~\cite{Beck}, \cite{BarrBeck}, or
\cite[II.5]{Quillen67}.

In the usual theory for rings, a \emph{derivation} $R \to N$ for $R$ a ring and
$N$ an $R$-module is equivalent to a section of the projection map $R \oplus N
\to R$.
One can make an analogous definition for $\delta$-rings.

\begin{definition}[$\delta$-derivations]
\label{deltaderivations}
Let $A$ be a $\delta$-ring under $A_0$ and let $M$ be a Frobenius $A$-module, with
Frobenius-semilinear operator denoted $F_M:M\to M$. By a
\emph{$\delta$-derivation} from $A$ to $M$ (relative to the base $A_0$), we mean a section of the projection
$A\oplus M\to A$ in the category of $\delta$-$A_0$-algebras. 
One checks that specifying a $\delta$-derivation is equivalent to specifying an ordinary $A_0$-derivation $d: A \to M$ such that
\begin{equation}
\label{deltaderivationformula}
d(\delta(a))=F_M(d(a))-a^{p-1}d(a)
\end{equation}
for every $a\in A$.
\end{definition}
\begin{remark} 
Note that a $\delta$-derivation $d: A \to M$ satisfies, for any $a \in A$, 
\begin{equation} \label{deltaderivationF} d( \phi(a)) = p F_M(d(a)).\end{equation}
Conversely, if $M$ is $p$-torsionfree, then any ordinary derivation $d: A \to M$ (relative to the base $A_0$) that satisfies 
\eqref{deltaderivationF} is a $\delta$-derivation (relative to $A_0$).

\end{remark} 
\begin{construction}[$\Omega^1$ of  a $\delta$-ring]
\label{dividedFondeltaring}
Let $A$ be a $\delta$-$A_0$-algebra which is free as a $\delta$-$A_0$-algebra (and in particular
a polynomial ring). Suppose first that $A_0$ is $p$-torsionfree. 
Then the Frobenius $\phi: \Omega^1_{A/A_0} \to
\phi_*(\Omega^1_{A/A_0})$
is divisible by $p$, yielding (since $\Omega^1_{A/A_0}$ is torsionfree)
a map of $A$-modules $F: \Omega^1_{A/A_0} \to \phi_* ( \Omega^1_{A/A_0})$
satisfying the formula
\begin{equation} 
\label{frobeniusonomega1}
F ( dx) = \frac{d\phi(x)}{p} =  d \delta(x) + x^{p-1} dx,
\end{equation} 
so that $p F = \phi^*$. 
\end{construction}

\begin{example} 
\label{cotangentoffreedelta}
Suppose $A$ is the free $\delta$-$A_0$-algebra on a generator $x$, so $A = A_0[x, 
\delta(x), \delta^2(x), \dots ]$. 
In this case, 
$\Omega^1_{A/A_0}$ is the free $A$-module on $dx, d \delta(x), d \delta^2(x), \dots$. 
From \eqref{frobeniusonomega1}, we find that $\Omega^1_{A/A_0}$ is the free Frobenius
module on the class $dx \in \Omega^1_{A/A_0}$. 
\end{example}

\begin{construction}[The cotangent complex as a Frobenius module]
\label{cotangentasFrobmodule}
We left Kan extend  \Cref{dividedFondeltaring} from  pairs $(A_0 \to A)$ with $A_0$ $p$-torsionfree and $A/A_0$ free to all maps of $\delta$-rings. 
Since the free $\delta$-rings are polynomial rings, this left Kan extension recovers the usual cotangent complex $L_{A/A_0} \in D(A)$.
Thus, for any $\delta$-ring $A$ under any base $A_0$, we obtain a natural
map 
$F: L_{A/A_0} \to \phi_* L_{A/A_0}$ in $D(A)$, so $L_{A/A_0} \in D(A)$ refines to an object of the derived $\infty$-category
$D_\phi(A)$ of Frobenius modules over $A$. 
\end{construction}

\begin{example} 
\label{universaldeltaderivation}
In particular, on $\pi_0$, we obtain a map $F: \Omega^1_{A/A_0} \to
\Omega^1_{A/A_0}$ such that $pF = \phi^*$; this is given by the same formula
in \eqref{frobeniusonomega1}, and thus makes $\Omega^1_{A/A_0}$ a Frobenius module for any $A$.

Let $A$ be a $\delta$-ring. 
The universal derivation
$d:A\to\Omega^1_{A/A_0}$ is a $\delta$-derivation for the Frobenius module structure above, and it is straightforward to check that it is universal among $\delta$-derivations.
\end{example}

We now formulate the classification of square-zero extensions via the cotangent
complex as a Frobenius module, cf.~\cite[Th.~6]{Beck} or
\cite[Prop.~2.4]{Quillencohomology}. 
For the convenience of the reader, we briefly reproduce an argument here. 

\begin{theorem} 
\label{classification:squarezero}
Let $\overline{B}$ be a $\delta$-$A_0$-algebra, and let $J$ be a Frobenius module for
$\overline{B}$. 
Then the groupoid of square-zero extensions (in $\delta$-$A_0$-algebras) of
$\overline{B}$ by $J$ is  naturally equivalent to 
the underlying groupoid of $\mathrm{RHom}_{D_\phi(\overline{B})}(L_{\overline{B}/A_0}, J[1])$. 
\end{theorem}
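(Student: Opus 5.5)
The plan is to run the standard Beck--Quillen argument, with the monad of free $\delta$-$A_0$-algebras playing the role of polynomial algebras. The first step is the case where $\overline{B}$ is a free $\delta$-$A_0$-algebra. Here every square-zero extension $B \twoheadrightarrow \overline{B}$ by $J$ splits, because we may lift the free generators arbitrarily. Splittings form a torsor under $\delta$-derivations $\overline{B} \to J$, i.e.\ sections of $\overline{B} \oplus J \to \overline{B}$ (\Cref{deltaderivations}). By the universal property of $d: \overline{B} \to \Omega^1_{\overline{B}/A_0}$ (\Cref{universaldeltaderivation}), these are $\Hom_{\overline{B}[F]}(\Omega^1_{\overline{B}/A_0}, J)$. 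By \Cref{cotangentoffreedelta}, $\Omega^1$ is a free Frobenius module (on the classes $dx$ of the generators), so it is projective in $D_\phi(\overline{B})$. Since $L_{\overline{B}/A_0} \simeq \Omega^1$ in this case, the groupoid of extensions is equivalent to the groupoid underlying $\RHom_{D_\phi}(L, J[1])$: that object has $\pi_0 = \Ext^1 = 0$ and $\pi_1 = \Hom(\Omega^1, J)$. To match automorphisms precisely, I would also note that an automorphism of an extension inducing the identity on $J$ and on $\overline{B}$ is $b \mapsto b + D(\bar b)$ with $D$ a $\delta$-derivation.

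For general $\overline{B}$, I would take the canonical simplicial resolution $P_\bullet \to \overline{B}$ by free $\delta$-$A_0$-algebras, coming from the free/forgetful adjunction. By \Cref{cotangentasFrobmodule}, $L_{\overline{B}/A_0}$ is computed as a Frobenius module by $|\Omega^1_{P_\bullet/A_0} \otimes_{P_\bullet} \overline{B}|$. Then $\RHom_{D_\phi(\overline{B})}(L, J[1])$ is the totalization of $\RHom_{D_\phi(P_n)}(\Omega^1_{P_n}, J[1])$, where $J$ is viewed as a Frobenius $P_n$-module by restriction. Each term is $\Hom(\Omega^1_{P_n}, J)[1]$, i.e.\ $\mathrm{Der}_\delta(P_n, J)[1]$.

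The main step is then to identify square-zero extensions of $\overline{B}$ by $J$ with the $1$-truncated totalization of $\mathrm{Der}_\delta(P_\bullet, J)[1]$. In other words, I need descent: extensions of $\overline{B}$ are equivalent to compatible families of extensions of $P_n$ (all trivial) together with gluing data. Concretely, I would construct the map in both directions by hand.
\begin{itemize}
\item Given an extension $B$, pull it back to $P_0$ and choose a splitting there. The two pullbacks to $P_1$ then differ by a derivation $D \in \mathrm{Der}_\delta(P_1, J)$, which is a $1$-cocycle.
\item Conversely, given a cocycle $D$, define $B$ as the coequalizer of $P_1 \oplus J \rightrightarrows P_0 \oplus J$, using the identity and the twist by $D$. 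Since $P_1 \rightrightarrows P_0 \to \overline{B}$ is a reflexive coequalizer in $\delta$-rings, computed on underlying sets, this yields a $\delta$-ring extension of $\overline{B}$ by $J$.
\end{itemize}

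I expect the main obstacle to be checking that this is an equivalence at the level of both $\pi_0$ and $\pi_1$. I would reduce it to two facts. First, $\pi_0$ of the totalization is $H^1$ of the cochain complex $\mathrm{Der}_\delta(P_\bullet, J)$, whose cohomology is Beck cohomology. Second, $\pi_1$ is $H^0 = \mathrm{Der}_\delta(\overline{B}, J)$, matching the automorphisms of extensions. Both follow from the coequalizer description together with exactness of the resolution $\pi_0(P_\bullet) = \overline{B}$. This is exactly Beck's theorem \cite[Th.~6]{Beck}, and I would cite it for the comparison of $H^1$ after setting up the identification of abelian group objects with Frobenius modules.
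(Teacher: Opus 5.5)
Your proposal is correct and is essentially the paper's argument: handle free $\delta$-$A_0$-algebras directly (extensions split, automorphisms are $\Hom_{\overline{B}[F]}(\Omega^1,J)$), then descend. The paper phrases the descent as a sheaf property of $B'\mapsto \mathrm{SqZero}(B',J)$ for the topology generated by surjections, whereas you use explicit cocycles on the canonical resolution; both rest on Beck's theorem. One point to tighten: in your coequalizer construction, injectivity of $J$ into the coequalizer needs the full acyclicity of $P_\bullet\to\overline{B}$ (in particular $\pi_1(P_\bullet)=0$) together with the cocycle condition, and not just $\pi_0(P_\bullet)=\overline{B}$.
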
 
\begin{proof} 
Consider 
the category $(\dring)_{A_0/\overline{B}}$ of
$\delta$-$A_0$-algebras equipped
with a map to $\overline{B}$. 
To any $B' \in (\dring)_{A_0/\overline{B}}$, we associate the groupoid $\mathrm{SqZero}( B',
J)$ of 
square-zero extensions of $\delta$-$A_0$-algebras of $B'$ by $J$
(seen as a Frobenius $B'$-module via $B' \to \overline{B}$). 
Via pullback of extensions, this defines a functor 
$(\dring)_{A_0/\overline{B}} \to \mathrm{Groupoids}$. 
When $B'$ is free as a $\delta$-$A_0$-algebra, any square-zero extension splits,
and is consequently isomorphic to $B' \oplus J$; the automorphisms of $B' \oplus
J$ are identified with $\Hom_{B', \phi}( \Omega^1_{B'/A_0}, J)$, and so
we have the desired identification.

To reduce to an arbitrary $B' \in (\dring)_{A_0/\overline{B}}$ (in particular, $B' = \overline{B}$),
we define a Grothendieck topology on the opposite of the category
$(\dring)_{A_0/\overline{B}}$ of
$\delta$-$A_0$-algebras equipped
with a map to $\overline{B}$
such that the covering families are generated by surjections $B_1
\twoheadrightarrow B_2$. 
We observe that the construction $B' \mapsto \mathrm{SqZero}(B', J)$ 
(as a functor $(\dring)_{A_0/\overline{B}} \to \mathrm{Groupoids}$)
is a sheaf
of groupoids for the above topology. That is, if $B_1 \twoheadrightarrow B_2$,
then 
\[ \mathrm{SqZero}(B_2, J) \simeq \varprojlim \left(  \mathrm{SqZero}( B_1, J) 
\rightrightarrows \mathrm{SqZero}( B_1 \times_{B_2} B_1, J) \triplearrows \dots
\right),
\]
because the analogous statement holds for the category $(\dring)_{A_0/B_2}$
(indeed, it holds for sets), and being a square-zero
extension can be tested after pulling back. 
Thus, to produce the desired identification, it suffices to do so on a basis;
however, we have already done so on free $\delta$-$A_0$-algebras. 
\end{proof}

\subsection{Taut square-zero extensions}

\begin{definition}[Perfect Frobenius modules]
Let $A$ be a $\delta$-ring, and let
$M \in D_\phi(A)$. We say that $M$ is \emph{perfect} if $F: M \xrightarrow{\sim}
\phi_* (M)$.
Any $N \in D_\phi(A)$ has a \emph{perfection}, given by $\varinjlim_{n, F}
\phi_*^n(N)$; this is the left adjoint of the inclusion of perfect Frobenius
modules into $D_\phi(A)$.
\end{definition}

\begin{remark}
Perfect Frobenius modules over $A$ are equivalent to perfect Frobenius
modules over the \emph{colimit perfection} $A_{\mathrm{perf}}
\stackrel{\mathrm{def}}{=} \varinjlim_\phi A$.
\end{remark}

\begin{definition}[Taut square-zero extensions and derivations]
\label{tautderivations}
We say that a square-zero extension $0 \to J \to B  \to \overline{B} \to 0$
of $\delta$-rings is \emph{taut} if
$J$ is perfect as a Frobenius module over $\overline{B}$ and is derived $p$-complete. Similarly, 
a $\delta$-derivation into a Frobenius module $M$ is  \emph{taut} if $M$ is perfect and derived $p$-complete as a Frobenius module.
\end{definition}

\begin{example}
The square-zero extension of \Cref{tautsquarezeroexample} becomes taut after $p$-completion.
The universal taut derivation of a $\delta$-ring $A$ is obtained by derived $p$-completing the tautological map into $\Omega^1_A[1/F]$, where $\Omega^1_A$ is equipped with the Frobenius module structure as in \Cref{universaldeltaderivation}. \label{univtautderivation}
\end{example}

\begin{definition}[Taut \'etale maps]
Let $f: A_0 \to A_1$ be a map of $\delta$-rings. We say that $f$ is \emph{taut \'etale} if 
any diagram of $\delta$-rings of the form 
\begin{equation} 
\xymatrix{
    A_0 \ar[d] \ar[r] &  C \ar[d]  \\
    A_1 \ar[r] \ar@{.>}[ur] &  \overline{C} } 
\end{equation} 
where $C \to \overline{C}$ is a taut square-zero extension, admits a unique dashed lift.
\end{definition}

In order to work with taut \'etale maps, it is convenient to use the following slightly stronger condition. 
\begin{definition} 
We say that a map of $\delta$-rings $A_0 \to A_1$ is a \emph{taut equivalence} if: 
\begin{enumerate}
    \item  $(A_0/p)_{\mathrm{perf}} \xrightarrow{\sim} (A_1/p)_{\mathrm{perf}}$.
    \item 
    Pullback along $A_0\to A_1$ induces an equivalence from the category of taut square-zero extensions of $A_1$ with kernel annihilated by $p$ to the corresponding category for $A_0$.
\end{enumerate}
\end{definition}

\begin{definition}[The taut cotangent complex]\label{def:tautcotangentcomplex}
Given a $\delta$-ring $A$ over a base $\delta$-ring $A_0$, we let $\lt_{A/A_0}$ denote 
\[
\lt_{A/A_0} \stackrel{\mathrm{def}}{=}
\left(\varinjlim_{n,F}\phi_*^nL_{A/A_0}\right)^\wedge_p\in D_\phi(A),
\]
and call it the \emph{taut cotangent complex}. \end{definition}

By \Cref{classification:squarezero}, taut square-zero extensions of $A$ in $\delta$-$A_0$-algebras  are controlled by the 
1-truncation of $\lt_{A/A_0}$.

\begin{remark}[Taut equivalences are taut \'etale]
Note that the second condition in the statement that the map $A_0 \to A_1$ of $\delta$-rings is a taut equivalence is equivalent, given the first condition, to the following requirement: for every perfect Frobenius module $M$ over $A_1/p$, the induced map
\[
\operatorname{Map}(\lt_{A_1},M[1])\to \operatorname{Map}(\lt_{A_0},M[1])
\]
is an equivalence.
In particular, the space (i.e., groupoid) of maps from $\lt_{A_1/A_0}$ into $M[1]$ is contractible, which means (by derived $p$-completeness) that the 
1-truncation of $\lt_{A_1/A_0}$ vanishes.

As a consequence, if $A_0 \to A_1$ is a taut equivalence, then it is taut \'etale.
In fact, the obstruction to finding the lift vanishes
because $\lt_{A_1/A_0}$ vanishes in homological degrees $\leq 1$. 
However, the condition of being a taut equivalence is a priori stronger, because it also implies that any taut square-zero extension of $A_0$ with kernel annihilated by $p$ should in fact be pulled back from $A_1$. 
\end{remark}

\begin{proposition} 
    Let $A_0$ be a $\delta$-ring. 
Let $A$ be a $\delta$-$A_0$-algebra. Suppose that 
$\Omega^1_{A/A_0}[1/F]/p = 0$. 
Then $A$ admits an initial taut extension in the category of $\delta$-$A_0$-algebras. 
\end{proposition}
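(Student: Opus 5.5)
The idea is to build the universal taut square-zero extension of $A$ in one step, by pulling back the trivial extension of $A$ by its universal taut derivation module along the universal taut derivation, and then to check its universal property using the classification of square-zero extensions (\Cref{classification:squarezero}).

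\emph{Step 1: the module.} Let $M = \lt_{A/A_0}$, or rather its relevant truncation. The natural candidate for the kernel of the initial taut extension is
\[ N \stackrel{\mathrm{def}}{=} H_1(\lt_{A/A_0}), \]
the first homology of the taut cotangent complex. Taut square-zero extensions of $A$ by a perfect, derived $p$-complete Frobenius module $J$ are classified by $\mathrm{Map}_{D_\phi(A)}(\lt_{A/A_0}, J[1])$. This holds because $J$ is perfect and derived $p$-complete, so maps out of $L_{A/A_0}$ factor uniquely through the perfection and then through its $p$-completion.

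\emph{Step 2: kill $H_0$.} We want this mapping space to be corepresented by $N$ through a single map $\lt_{A/A_0} \to N[1]$. For that we need $H_0(\lt_{A/A_0}) = 0$, and $H_0$ should not contribute to $\mathrm{Ext}^1$.
\begin{itemize}
\item By right-exactness, $H_0(\lt_{A/A_0})$ is the derived $p$-completion of $\Omega^1_{A/A_0}[1/F]$.
\item The hypothesis $\Omega^1_{A/A_0}[1/F]/p = 0$ says $\Omega^1_{A/A_0}[1/F]$ is $p$-divisible. Derived $p$-completion of a module $X$ with $X/^{\mathbb{L}}p$ concentrated in degree $1$ gives $X^\wedge_p$ concentrated in degree $1$, namely $T_p X$ shifted. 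So I must be careful: $H_0(\lt)$ vanishes, but the $p$-adic Tate module of $\Omega^1[1/F]$ contributes to $H_1$.
\end{itemize}
Once $H_0(\lt)=0$, the complex $\lt_{A/A_0}$ is $1$-connective. Then for any discrete $J$,
\[ \pi_0 \mathrm{Map}(\lt, J[1]) = \Hom_{A[F]}(H_1 \lt, J) \]
and
\[ \pi_1 \mathrm{Map}(\lt, J[1]) = \Hom(H_0 \lt, J) = 0. \]
So square-zero extensions by $J$ have no automorphisms, and they correspond exactly to module maps $N \to J$.

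\emph{Step 3: $N$ is an admissible kernel.} We need $N$ to be a perfect Frobenius module that is derived $p$-complete, so that it lies in the class of allowed kernels.
\begin{itemize}
\item Perfection passes to homology, since $F$ is an equivalence on $\lt$ and hence on each $H_i$.
\item Derived $p$-completeness of $H_1$ of a derived $p$-complete complex is standard: homotopy groups of derived $p$-complete objects are derived $p$-complete.
\end{itemize}

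\emph{Step 4: the universal extension.} Let $E$ be the extension corresponding to the canonical map $\lt \to \tau_{\le 1}\lt = N[1]$. Given any taut extension by $J$, its class is the composite $\lt \to N[1] \to J[1]$ for a unique $g\colon N \to J$. The extension is therefore the pushout of $E$ along $g$, and this gives a map $E \to$ (extension) over $A$.

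Uniqueness of that map up to unique isomorphism comes from the vanishing of $\pi_1$ in Step 2. Concretely, two maps $E \to C$ over $A$ differ by a $\delta$-derivation $A \to J$, and these are given by $\Hom(H_0\lt, J) = 0$. This requires identifying $\delta$-derivations into a perfect, derived $p$-complete $J$ with $\Hom(H_0\lt, J)$, which holds by \Cref{universaldeltaderivation}.

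\emph{Main obstacle.} The delicate step is Step 2 together with the translation to $\pi_0$: showing that $p$-divisibility of $\Omega^1[1/F]$ forces $H_0(\lt)=0$, and that no $\lim^1$ or Ext contributions from $H_0$ survive. I would handle this via the fiber sequence for derived $p$-completion, checking that $\lt/^{\mathbb{L}}p$ is $1$-connective. Since $L/^{\mathbb{L}}p$ has $H_0 = \Omega^1/p$, we get $H_0(\lt/^{\mathbb{L}}p) = \Omega^1[1/F]/p = 0$. Connectivity of a derived $p$-complete object can be tested mod $p$, which gives the claim.
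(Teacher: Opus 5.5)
Your proposal is correct and is essentially the paper's argument. The paper simply invokes \Cref{classification:squarezero} and identifies the initial taut extension as the one with kernel $H_1(\lt_{A/A_0})$; you have filled in the details, namely the vanishing of $H_0(\lt_{A/A_0})$ by derived Nakayama from $\Omega^1_{A/A_0}[1/F]/p=0$, and the identification $\pi_0\mathrm{Map}(\lt_{A/A_0},J[1])\simeq \Hom(H_1\lt_{A/A_0},J)$ together with $\pi_1=0$.

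One small correction in Step 4: two maps $E\to C$ over $A$ a priori differ by a $\delta$-derivation of $E$ (not of $A$) into $J$. That difference descends to $A$ only once you know both maps restrict to the same $g\colon N\to J$. This does follow from the uniqueness of $g$ you already established, so the argument goes through.
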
 
\begin{proof} 
This follows from the classification of square-zero extensions in \Cref{classification:squarezero}.  Explicitly, 
the universal taut extension is given by $H_1(\lt_{A/A_0})$.
\end{proof} 

\begin{definition}\label{delta-taut-rigidity}
We say that a $\delta$-ring $A_0$ is \emph{taut rigid} if every taut extension of $A_0$ admits a unique section.
This holds if and only if $\lt_{A_0}$ vanishes in homological degrees $\leq 1$.

\end{definition} 

\begin{remark} 
The $\delta$-ring $A_0$ is taut rigid if and only if 
every taut square-zero extension of $A_0$ by an ideal annihilated by $p$ admits a unique section.
In fact, this follows from derived $p$-complete Nakayama's lemma. 
Consequently, taut rigidity is preserved under taut equivalences. 
\end{remark}

\begin{proposition}\label{tautequivdiagonalcriterion}
Let $A_0 \to A_1$ be a flat map of $\delta$-rings such that $(A_0/p)_{\mathrm{perf}} \xrightarrow{\sim} (A_1/p)_{\mathrm{perf}}$. 
Suppose that $\lt_{A_0}$ is concentrated in degree zero. 
Suppose
 $A_1 \otimes_{A_0} A_1 \to A_1$ is a taut equivalence. Then $A_0 \to A_1$ is a taut equivalence. \end{proposition}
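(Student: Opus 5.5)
The plan is to verify condition (2) in the definition of a taut equivalence, since condition (1) is assumed. Throughout, the category of perfect Frobenius modules over $A_0/p$ is identified with that over $A_1/p$, because both are perfect Frobenius modules over $(A_0/p)_{\mathrm{perf}} \simeq (A_1/p)_{\mathrm{perf}}$. By \Cref{classification:squarezero} and the remark following \Cref{def:tautcotangentcomplex}, it then suffices to show the following. For every perfect Frobenius module $M$ over $A_1/p$, restriction induces an equivalence of groupoids
\[
\tau_{\leq 1}\operatorname{Map}(\lt_{A_1},M[1])\to \tau_{\leq 1}\operatorname{Map}(\lt_{A_0},M[1]).
\]
I split this into full faithfulness, which I would get from the relative taut cotangent complex, and essential surjectivity, which I would get by an explicit base-change construction.

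\emph{Step 1: the relative term.} Since $A_0\to A_1$ is flat, base change gives $L_{A_1\otimes_{A_0}A_1/A_1}\simeq A_1\otimes_{A_0}L_{A_1/A_0}$. The transitivity triangle for $A_1\to A_1\otimes_{A_0}A_1\xrightarrow{m} A_1$ (the first map a coprojection, with composite the identity) then yields
\[
L_{A_1/(A_1\otimes_{A_0}A_1)}\simeq L_{A_1/A_0}[1]
\]
as Frobenius modules. These operations commute with perfection and $p$-completion, so $\lt_{A_1/(A_1\otimes A_1)}\simeq \lt_{A_1/A_0}[1]$.

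The hypothesis that $m$ is a taut equivalence says that $\operatorname{Map}(\lt_{A_1/(A_1\otimes A_1)},M[1])$ is contractible and that the next connecting map is injective on $\pi_0$. Translated through the shift, $\operatorname{Map}(\lt_{A_1/A_0},M[i])$ is contractible for $i=0$ and $i=1$. Feeding this into the fiber sequence
\[
\operatorname{Map}(\lt_{A_1/A_0},M[1])\to\operatorname{Map}(\lt_{A_1},M[1])\to\operatorname{Map}(A_1\otimes\lt_{A_0},M[1])\to \operatorname{Map}(\lt_{A_1/A_0},M[2])
\]
gives full faithfulness: automorphisms and morphisms of taut extensions of $A_1$ by $M$ agree with those of their pullbacks to $A_0$.

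\emph{Step 2: essential surjectivity.} I expect this to be the main obstacle, because it amounts to the vanishing of the connecting map into $\operatorname{Map}(\lt_{A_1/A_0},M[2])$, which Step 1 does not control. I would construct the preimage explicitly. Let $E\to A_0$ be a taut square-zero extension with kernel $M$. Form $E\otimes_{A_0}A_1$. By flatness this is a square-zero extension of $\delta$-rings of $A_1$ by $M\otimes_{A_0}A_1$, and the kernel is naturally a perfect Frobenius module over $A_1\otimes_{A_0}A_1$. Push it out along the multiplication map $M\otimes_{A_0}A_1\to M$. This map makes sense because $M$ is a module over $(A_1/p)_{\mathrm{perf}}$, and it is Frobenius-equivariant; the result is a taut square-zero extension $E'$ of $A_1$ by $M$. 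The map $E\to E\otimes_{A_0}A_1\to E'$ lies over $A_0\to A_1$ and is the identity on $M$, so the pullback of $E'$ to $A_0$ is isomorphic to $E$.

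The hypothesis that $\lt_{A_0}$ is concentrated in degree $0$ enters here and in Step 1. It makes the classes of taut extensions of $A_0$ honest $\operatorname{Ext}^1$ classes of the discrete Frobenius module $\lt_{A_0}$, so the base change and pushout above are computed underived. It also lets one check that the construction $E\mapsto E'$ is compatible with the fiber sequence, which forces the connecting map to vanish. Together with Step 1, this shows that pullback is an equivalence on taut square-zero extensions killed by $p$, so $A_0\to A_1$ is a taut equivalence.
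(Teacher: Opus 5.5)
Your Step 1 is a valid repackaging of the first half of the paper's proof. The paper shows that $\lt_{(A_1\otimes_{A_0}A_1)/A_0}\simeq\lt_{A_1/A_0}\oplus\lt_{A_1/A_0}$ and $\lt_{A_1/A_0}$ have the same maps into $M$ and $M[1]$, and then uses the fold map. You instead use $\lt_{A_1/(A_1\otimes A_1)}\simeq\lt_{A_1/A_0}[1]$. To get contractibility for $i=1$, injectivity of the connecting map is not enough; you need one more observation. Write $i_1\colon A_1\to A_1\otimes_{A_0}A_1$ for the coprojection. Since $m\circ i_1=\mathrm{id}$, the map from the base change of $\lt_{A_1\otimes_{A_0}A_1}$ to $\lt_{A_1}$ is split surjective. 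Hence $\lt_{A_1}\to\lt_{A_1/(A_1\otimes A_1)}$ is null, and the base-changed $\lt_{A_1\otimes_{A_0}A_1}$ splits as $\lt_{A_1}\oplus\lt_{A_1/A_0}$. The hypothesis on $m$ then says exactly that $\operatorname{Map}(\lt_{A_1/A_0},M[1])$ is contractible.

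The genuine gap is Step 2. The ring $E\otimes_{A_0}A_1$ does not exist. A square-zero extension $E\to A_0$ maps \emph{to} $A_0$ and is not an $A_0$-algebra, so it cannot be base changed along $A_0\to A_1$. Asking for an extension $E'$ of $A_1$ by $M$ with a map $E\to E'$ over $A_0\to A_1$ is the same as asking that the class of $E$ lift along $[\lt_{A_1},M[1]]\to[\lt_{A_0},M[1]]$. The obstruction to that lift is precisely its image under the connecting map $[\lt_{A_0},M[1]]\to[\lt_{A_1/A_0},M[2]]$, which is what you set out to kill. So the construction is circular, and your closing remarks about the degree-zero hypothesis do not supply an argument.

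The missing idea is the one the paper uses. From Step 1, $[\lt_{A_1/A_0},M]=[\lt_{A_1/A_0},M[1]]=0$ for every perfect Frobenius module $M$ killed by $p$. Test this against $M=H_0(\lt_{A_1/A_0}/p)$ and then against $M=H_1(\lt_{A_1/A_0}/p)$, and apply derived Nakayama; this gives $\tau_{\leq 1}\lt_{A_1/A_0}=0$. Now the boundary map $\lt_{A_1/A_0}[-1]\to\lt_{A_0}$ (after the harmless base change) goes from an object in homological degrees $\geq 1$ to an object concentrated in degree $0$, so it is zero. Equivalently, the homology long exact sequence shows that $\lt_{A_0}\to\lt_{A_1}$ is an isomorphism on $H_0$ and $H_1$, since the only possible defect, $H_2(\lt_{A_1/A_0})\to H_1(\lt_{A_0})$, has zero target. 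For discrete $M$, $\operatorname{Map}(X,M[1])$ depends only on $\tau_{\leq 1}X$. This gives the full equivalence, including the essential surjectivity you were after.
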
 

\begin{proof} 
    We freely use here that $A_0, A_1, A_1 \otimes_{A_0} A_1$ all have the same 
    $p$-completed perfection, so we can omit various base changes when working with perfect Frobenius modules. 
Because $\lt_{A_0}$ is concentrated in degree zero, it follows that $\lt_{A_0} \to \lt_{A_1}$ is an equivalence after 1-truncation if and only if $\lt_{A_1/A_0}$ vanishes after 1-truncation.

     By assumption, $\lt_{A_1 \otimes_{A_0} A_1 }$ and $\lt_{A_1}$ 
    have the same maps into perfect $p$-torsion Frobenius modules placed in degree zero or one. 
    Upon taking the mapping cone of the map from $\lt_{A_0}$ and using the transitivity triangle for the cotangent complex, we find that 
$\lt_{ (A_1 \otimes_{A_0} A_1)/A_0}$ and $\lt_{A_1/A_0}$  have the same maps into any perfect $p$-torsion Frobenius module placed in degree zero or one.
    We have that
    $$ \lt_{(A_1 \otimes_{A_0} A_1)/A_0} \simeq \lt_{A_1/A_0}  \oplus     \lt_{A_1/A_0}, $$
    so maps from $\lt_{A_1/A_0}$ and from $\lt_{A_1/A_0} \oplus \lt_{A_1/A_0}$ into any perfect $p$-torsion Frobenius module placed in degree zero or one are equivalent.
This (together with Nakayama's lemma) implies that $\lt_{A_1/A_0}$ vanishes in degrees $\leq 1$. 
 \end{proof}

\subsection{Lifting $\delta$-nilpotent elements}

In this subsection we study various Hensel-type lifting properties for taut square-zero extensions of $\delta$-rings. 
We show that $\delta$-nilpotent elements lift uniquely through taut square-zero extensions, and that the same holds for  an appropriate notion of ``topologically $\delta$-nilpotent'' if one imposes suitable bounded torsion conditions.

\begin{lemma}
\label{squarezerolemma}
Consider a $\delta$-ring $C$ and a square-zero $\delta$-ideal $J \subset C$.
Fix $c \in C$ such that 
$cJ = 0$, and
fix $j \in J$. 
Then $\delta(c + j) = 
\delta(c) + \delta(j)$. 
If $\delta: J \simeq \phi_* J$ is an isomorphism (e.g., $J$ is taut), then it suffices that $\phi^n(c)$ annihilates $J$ for some $n$. 
\end{lemma}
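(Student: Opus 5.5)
The plan is to use the universal additivity formula for $\delta$,
\[
\delta(x+y) = \delta(x) + \delta(y) - \sum_{0<i<p} \frac{1}{p}\binom{p}{i} x^i y^{p-i},
\]
with $x = c$ and $y = j$. Every correction term is a multiple of $c^i j^{p-i}$ with $0<i<p$.

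\emph{First claim.} For each $0<i<p$, the monomial $c^i j^{p-i}$ contains at least one factor of $c$ and at least one factor of $j$. Since $cJ = 0$, each such monomial vanishes; this also follows from $J^2=0$ when $p-i\geq 2$. Hence $\delta(c+j) = \delta(c)+\delta(j)$, and the first assertion is immediate.

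\emph{Second claim.} Now assume only that $\phi^n(c)J = 0$ and that $\delta\colon J \to \phi_*J$ is bijective. The approach is to show that $cJ = 0$ already holds, by descending induction on $n$, which reduces us to the first case. Suppose $\phi^{m+1}(c)$ annihilates $J$, and fix $j' \in J$. Write $j' = \delta(j)$ with $j\in J$, using surjectivity of $\delta$ on $J$. Since $J$ is a $\delta$-ideal with $J^2=0$, the formula
\[
\delta(xy)=x^p\delta(y)+y^p\delta(x)+p\delta(x)\delta(y)
\]
applied to $x=\phi^m(c)$ and $y=j$ gives
\[
\delta(\phi^m(c)\,j) = \phi^m(c)^p\delta(j) + p\,\delta(\phi^m(c))\delta(j) = \phi^{m+1}(c)\,\delta(j) = 0.
\]
Here we used $y^p = j^p = 0$ and $x^p + p\delta(x) = \phi(x)$. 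This is exactly the $\phi$-semilinearity $\delta(xy)=\phi(x)\delta(y)$ from the construction preceding \Cref{tautsquarezeroexample}, where $\delta$ is viewed as a Frobenius-module structure on $J$. Injectivity of $\delta$ on $J$ then gives $\phi^m(c)j = 0$.

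The argument above only proves that $\phi^m(c)$ kills $J$ for those $j$ of the specific form used. To repair this, run the same argument with an arbitrary $j\in J$ in place of $j$. Since $\delta(\phi^m(c) j) = \phi^{m+1}(c)\delta(j)=0$ for every $j\in J$, injectivity gives $\phi^m(c)j=0$ for every $j$. So surjectivity of $\delta$ is not needed for the induction; injectivity suffices. Descending from $m=n-1$ to $m=0$ yields $cJ=0$, and the first case then applies.

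\emph{Main obstacle.} The only delicate point is checking the semilinearity identity for a general element $x \in C$ acting on $J$, rather than for an element of $\overline{B}$. It holds because $\delta(xy)=\phi(x)\delta(y)+y^p\delta(x)$ and $y^p=0$ for $y\in J$, using $p\geq 2$.
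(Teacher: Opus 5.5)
Your proof is correct and follows the paper's approach: the paper also expands $\delta(c+j)$ by the additivity formula, uses $J^2=0$ to reduce to the single correction term $-c^{p-1}j$, and then invokes $cJ=0$. For the second part the paper simply asserts that perfectness of $J$ together with $\phi^n(c)J=0$ forces $cJ=0$; your descending induction, using $\delta(\phi^m(c)j)=\phi^{m+1}(c)\delta(j)$ and injectivity of $\delta$ on $J$, is precisely the justification the paper leaves implicit (and, as you note, surjectivity of $\delta$ is never needed).
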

\begin{proof} 
The identity for $\delta(a+b)$ gives
\[
\delta(c+j)=\delta(c)+\delta(j)-c^{p-1}j,
\]
since $J^2=0$. Thus the first assertion follows from $cJ=0$.
If $\phi^n(c)$ annihilates $J$ and $J$ is a perfect Frobenius module, then  
$cJ = 0$. 
\end{proof}

\begin{proposition}
\label{tautextensionsdeltanilp}
Any taut square-zero extension of $\delta$-rings
\[
0 \to I \to B \to A \to 0
\]
induces an isomorphism
\[
\dnil{B} \xrightarrow{\sim} \dnil{A}.
\]
\end{proposition}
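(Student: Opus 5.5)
Write $\pi\colon B\to A$ for the projection. Since $\pi$ is a map of $\delta$-rings, it carries $\dnil{B}$ into $\dnil{A}$. I will prove injectivity and surjectivity separately.

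\emph{Injectivity.} Suppose $b\in\dnil{B}$ and $\pi(b)=0$, so $b\in I$. Because $I$ is a $\delta$-ideal and $b$ is $\delta$-nilpotent, $\delta^n(b)=0$ for $n\gg 0$. By tautness, $\delta\colon I\to I$ is bijective, so $b=0$.

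\emph{Surjectivity.} Fix $a\in\dnil{A}$ and choose $N$ with $\delta^N(a)=0$. I will construct a lift by descending induction on $k$, producing $b_k\in B$ lifting $\delta^k(a)$ with $\delta(b_k)=b_{k+1}$ and $b_N=0$. The claim is that each $b_k$ can be chosen in $\dnil{B}$ with exactly these properties.

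For the inductive step, suppose $b_{k+1}\in\dnil{B}$ lifts $\delta^{k+1}(a)$. Pick any lift $c$ of $\delta^k(a)$. Then $\delta(c)-b_{k+1}=:j\in I$. I want to adjust $c$ to $c+i$ with $i\in I$ so that $\delta(c+i)=b_{k+1}$. By the $\delta$-addition formula, $\delta(c+i)=\delta(c)+\delta(i)-c^{p-1}i$ modulo $I^2=0$; in general the cross terms are $c^{p-1}i$ times units plus terms in $i^2=0$. Since $c$ is nilpotent (it lifts a nilpotent element through a square-zero extension), $\phi^m(c)$ is nilpotent. More usefully, $c^{p-1}$ acts on $I$ through $A$ as the nilpotent element $\delta^k(a)^{p-1}$. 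So I must solve $\delta(i)-\bar c^{p-1}i=-j$ on $I$. Since $\delta$ is invertible, this means $(1-\delta^{-1}\circ \bar c^{p-1})\,i=-\delta^{-1}j$. The operator $T=\delta^{-1}\circ\bar c^{p-1}$ is nilpotent: using $\phi$-semilinearity, $\delta^{-1}(xm)=\phi^{-1}$-twisted, so $T^r$ is multiplication by a product of elements whose $\phi^r$-images involve $\bar c^{(p-1)(1+p+\dots)}$ in a perfect module. Concretely, $\delta^r T^r m=\phi^{r-1}(\bar c^{p-1})\cdots\bar c^{p-1}\cdot m$ up to ordering, and high powers of the nilpotent $\bar c$ kill $m$. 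Hence $1-T$ is invertible, $i$ exists uniquely, and I set $b_k=c+i$.

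Then $b_0$ lifts $a$, and $\delta^n(b_0)=b_n$ vanishes for $n\ge N$. Each $b_k$ is nilpotent: its image is nilpotent and $I^2=0$. By \Cref{deltaofnilpotent}, $b_0\in\dnil{B}$.

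The main obstacle is the nilpotence of the twisted operator $T$ on the perfect Frobenius module $I$. I would verify it by writing $\delta(xm)=\phi(x)\delta(m)$, which gives $\delta^{-1}(\phi(x)m)=x\,\delta^{-1}(m)$, and then inducting to express $T^r$ as multiplication by $\phi^{-?}$-twists. Here it is cleaner to pass to $A_{\mathrm{perf}}$, over which $I$ is a module and $\bar c$ is nilpotent.

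If the uniform bound in this computation proves troublesome, an alternative route is \Cref{squarezerolemma}. Replace the lift by one of the form $c$ with $\phi^n(c)J=0$; nilpotent $\bar c$ does satisfy that $\phi^n(c)$ acts on $I$ through $\phi^n(\bar c)$. Then $\delta$ becomes additive, and one corrects by $\delta^{-1}(j)$ directly.
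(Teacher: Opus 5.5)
Your injectivity argument is the paper's, and the skeleton of your surjectivity argument is sound. Because $I^2=0$, the cross term is exactly $-c^{p-1}i$, and it suffices to invert $1-T$ with $T=\delta^{-1}\circ\bar c^{\,p-1}$ on $I$.

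The gap is precisely the step you flag. You correctly compute $\delta^r T^r(m)=\prod_{s<r}\varphi^s(\bar c)^{p-1}\cdot m$. But this is a product of Frobenius twists of $\bar c$, not a high power of $\bar c$; it is only congruent to $\bar c^{\,p^r-1}$ modulo $p$. A product of many different nilpotent elements need not vanish.

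Your argument uses only that $\bar c$ is nilpotent and that $I$ is perfect, and with just these inputs the conclusion fails. Take the $\mathbb{Q}$-algebra $A=\mathbb{Q}[x_0,x_1,\dots]/(x_s^p)_{s\geq 0}$, with $\delta$-structure given by the injective endomorphism $\varphi(x_s)=x_{s+1}$. Let $I=\varinjlim_\varphi A$, with $\delta$ the induced automorphism, and let $\bar c=x_0$. Then $T^r(1)=\delta^{-r}\bigl(\prod_{s<r}x_s^{p-1}\bigr)\neq 0$ for every $r$. Your alternative route has the same hole: knowing that $\varphi^n(c)$ acts on $I$ through $\varphi^n(\bar c)$ helps only once you know $\varphi^n(\bar c)=0$.

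The missing input is that $\bar c=\delta^k(a)$ is $\delta$-nilpotent, not merely nilpotent.
\begin{itemize}
\item By \Cref{crit:dnil}, $w_\delta(\bar c)\in\hw(A)$.
\item \Cref{Fnil}, together with injectivity of $w_\delta$, then gives $\varphi^m(\bar c)=0$ for $m\gg 0$.
\item Hence $\delta^m(\bar c\,y)=\varphi^m(\bar c)\,\delta^m(y)=0$ for $y\in I$, and injectivity of $\delta$ gives $\bar c I=0$.
\end{itemize}
So $T=0$ and the cross term vanishes outright. Equivalently, $\varphi^m(c)\in I$, so $\varphi^m(c)I\subset I^2=0$. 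This is exactly the hypothesis of \Cref{squarezerolemma} that the paper uses to make $\delta$ additive at lifts of the $\delta^k(a)$.

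With this fact, your step-by-step induction collapses to the paper's proof. There, an arbitrary lift $\widetilde{\epsilon}_0$ is corrected once, by adding $-\delta^{-n}(\delta^n(\widetilde{\epsilon}_0))$.

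Alternatively, your idea of passing to $A_{\mathrm{perf}}$ can be completed using derived $p$-completeness of $I$, which you never invoke. In the perfect, hence $p$-torsionfree, $\delta$-ring $A_{\mathrm{perf}}$, every nilpotent element is infinitely $p$-divisible. It therefore acts by zero on the derived $p$-complete module $I$.
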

\begin{proof}
Since $\delta: I \simeq \phi_* I$ is an isomorphism, it follows that no nonzero element of $I$ can be $\delta$-nilpotent. This proves injectivity.

For surjectivity, let $\epsilon \in \dnil{A}$ and choose any lift
$\widetilde{\epsilon}_0 \in B$.
For $n \gg 0$, the element $\delta^n(\widetilde{\epsilon}_0)$ belongs to $I$.
Since $\delta$ is bijective on $I$, set
\[
i=-\delta^{-n}(\delta^n(\widetilde{\epsilon}_0)) \in I.
\]
By \Cref{squarezerolemma},
\[
\delta^n(\widetilde{\epsilon}_0+i)
=\delta^n(\widetilde{\epsilon}_0)+\delta^n(i)=0.
\]
Moreover, each $\delta$-iterate of $\widetilde{\epsilon}_0+i$ is nilpotent:
its image in $A$ is nilpotent, and any lift of a nilpotent element through a
square-zero extension is nilpotent. Thus $\widetilde{\epsilon}_0+i$ is
$\delta$-nilpotent and lifts $\epsilon$.
\end{proof}

\begin{corollary} 
    \label{tautdeltanilpdescent}
The map $A \to A/\dnil{A}$ is a taut equivalence.
\end{corollary}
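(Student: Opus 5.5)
The plan is to check the two conditions in the definition of a taut equivalence separately. Throughout, write $N = \dnil{A}$ and $\overline{A} = A/N$.

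\emph{Condition (1).} Every element of $N$ is nilpotent. Hence the kernel of $A/p \to \overline{A}/p$, which is the image of $N$, is a locally nilpotent ideal. The colimit perfection $(-)_{\mathrm{perf}}$ kills locally nilpotent ideals: any $x$ with $x^{p^k}=0$ maps to zero in $\varinjlim_\phi$. It also preserves surjectivity. Therefore $(A/p)_{\mathrm{perf}} \xrightarrow{\sim} (\overline{A}/p)_{\mathrm{perf}}$.

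\emph{A key annihilation fact.} Let $J$ be any perfect Frobenius $A$-module, for instance the kernel of a taut extension of $A$. I claim $N J = 0$. For $n \in N$ we have $w_\delta(n) \in \hw(A)$ by \Cref{crit:dnil}. So $F^m w_\delta(n) = 0$ for $m \gg 0$ by \Cref{Fnil}. Since $w_\delta$ is an injective $\delta$-map (it has a retraction), this gives $\phi^m(n) = 0$. Then for $x \in J$ we get $F^m(nx) = \phi^m(n) F^m(x) = 0$, and since $F$ is bijective on $J$, $nx = 0$. (This is the same argument as in \Cref{squarezerolemma}.) Consequently a perfect Frobenius $A$-module is the same thing as a perfect Frobenius $\overline{A}$-module. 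So both categories in condition (2) have the same kernels available.

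\emph{Condition (2): the functor and its inverse.} The functor sends a taut extension $0 \to J \to B \to \overline{A} \to 0$ to the pullback $B \times_{\overline{A}} A \to A$. This is still a square-zero extension of $\delta$-rings with the same kernel $J$, and $J$ is perfect, $p$-torsion and derived $p$-complete, so the result is taut. For the inverse, take a taut extension $0 \to J \to C \to A \to 0$ with $pJ=0$. By \Cref{tautextensionsdeltanilp}, the map $\dnil{C} \to N$ is an isomorphism. Since $\dnil{C}$ is a $\delta$-ideal of $C$ with $\dnil{C} \cap J = 0$, the quotient $C/\dnil{C}$ is a $\delta$-ring and a square-zero extension of $\overline{A}$ by $J$. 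It is taut, since its kernel is $J$ with the same Frobenius structure. The natural map $C \to (C/\dnil{C}) \times_{\overline{A}} A$ is then an isomorphism: it is a map of extensions of $A$ by $J$ that is the identity on $J$ and on $A$.

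\emph{Condition (2): the composites and full faithfulness.} In the other direction, start from $B$ and form $C = B \times_{\overline{A}} A$. Its $\delta$-nilpotent ideal is $0 \times N$, because $\dnil{C}$ maps isomorphically onto $N$ and $0\times N$ consists of $\delta$-nilpotent elements. So $C/\dnil{C} \cong B$, and the two constructions are mutually inverse on objects. For full faithfulness, note that any morphism of extensions $C_1 \to C_2$ over $A$ is a $\delta$-map, so it carries $\dnil{C_1}$ into $\dnil{C_2}$ and descends to the quotients. Conversely, morphisms over $\overline{A}$ pull back. These two operations are inverse by the isomorphisms above.

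I expect the main subtlety to be the annihilation fact $NJ=0$. It is what makes "a Frobenius module over $A$" and "a Frobenius module over $\overline{A}$" agree, and it is what lets the lifted ideal $\dnil{C}$ split off cleanly from $J$. The remaining steps are formal bookkeeping with \Cref{tautextensionsdeltanilp}.
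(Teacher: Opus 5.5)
Your proposal is correct and follows essentially the same route as the paper: the inverse to pullback is $C \mapsto C/\dnil{C}$, justified by \Cref{tautextensionsdeltanilp}, and uniqueness comes from the fact that the $\delta$-nilpotent ideal of a pullback $B \times_{A/\dnil{A}} A$ is $0 \times \dnil{A}$. You also spell out two points the paper leaves implicit: condition (1) on perfections, and the annihilation $\dnil{A}\cdot J = 0$ (the latter also follows directly from $\dnil{C} \cap J = 0$).
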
 
\begin{proof} 
Given a taut square-zero extension $B \twoheadrightarrow A$, we need to construct a unique descent to  a taut square-zero extension of
$A/\dnil{A}$. For this, we form 
$B/\dnil{B} \to A/\dnil{A}$, which is a descent by \Cref{tautextensionsdeltanilp}. 
Note that this is the only possible descent: any descent gives a splitting of $ B \twoheadrightarrow A$ over $\dnil{A}$, and the uniqueness of the splitting follows from the uniqueness in \Cref{tautextensionsdeltanilp}.
\end{proof}

\begin{corollary}
\label{tautsqzerosplits}
Suppose that $A$ is a \deltanilperfect\ $\delta$-ring.
Any taut square-zero extension  of $\delta$-rings
\( 0 \to I \to B \to A \to 0  \)
with $I$ derived $p$-complete
admits a unique section (in $\delta$-rings): that is, $A$ is taut rigid. 
\end{corollary}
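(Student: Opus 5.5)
The plan is to build the section in two steps: first split $B$ over $A/\dnil{A}$ using \Cref{tautextensionsdeltanilp}, then over its perfection using the Witt vector universal property. Let $0 \to I \to B \to A \to 0$ be taut with $I$ derived $p$-complete.

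\emph{Step 1: descend to $A/\dnil{A}$.} By \Cref{tautextensionsdeltanilp}, $\dnil{B} \to \dnil{A}$ is an isomorphism. In particular $\dnil{B} \cap I = 0$, so
\[
0 \to I \to B/\dnil{B} \to A/\dnil{A} \to 0
\]
is again a taut square-zero extension with the same kernel. Sections of $B \to A$ correspond bijectively to sections of $B/\dnil{B} \to A/\dnil{A}$. Indeed, a section $s$ of $B \to A$ is a $\delta$-map, so it carries $\dnil{A}$ into $\dnil{B}$. Since $\dnil{B} \to \dnil{A}$ is bijective, $s|_{\dnil{A}}$ is forced to be the inverse of this isomorphism, and $s$ descends. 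Conversely, given a section $\bar{s}$ of the quotient extension, the section $s$ is recovered as the pullback
\[
B \simeq B/\dnil{B} \times_{A/\dnil{A}} A,
\]
which holds because $\dnil{B} \xrightarrow{\sim} \dnil{A}$. This is exactly the mechanism of \Cref{tautdeltanilpdescent}. We may therefore assume $A$ is a perfect $\delta$-ring.

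\emph{Step 2: the perfect case.} When $A$ is perfect, $p$ is a nonzerodivisor on $A$ by \Cref{frobinjtorsionfree}. I would show that sections of $B \to A$ correspond to sections of the derived $p$-completed extension. For this, note that $\widehat{B}$ is an extension of $\widehat{A}$ by $I$, since $I$ is already complete and $A$ is $p$-torsionfree, so no Tate module term appears. Moreover $B = A \times_{\widehat{A}} \widehat{B}$. Now $\widehat{A} = W(A/p)$ with $A/p$ perfect. By \Cref{perfectdeltarings}, $\delta$-maps $W(A/p) \to \widehat{B}$ correspond to maps $A/p \to \widehat{B}^\flat$. Since $I$ is square-zero and $p$-complete, the standard argument gives $\widehat{B}^\flat \xrightarrow{\sim} \widehat{A}^\flat = A/p$. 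This is the claim from \Cref{perfectringssplit}, and it yields a unique section of $\widehat{B} \to \widehat{A}$. Pulling back along $A \to \widehat{A}$ then gives the unique section of $B \to A$.

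Alternatively, Step 2 follows from the taut cotangent complex. For perfect $A$, $L_A$ has its $p$-completion vanishing, because $\phi$ is an isomorphism but $\phi^* = pF$. Hence the perfection of $L_A$ is $p$-divisible and $\lt_A = 0$, and the uniqueness part also uses the degree-$0$ vanishing. By \Cref{classification:squarezero} and the characterization in \Cref{delta-taut-rigidity}, $A$ is taut rigid.

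The main obstacle I expect is this perfect case, specifically checking that uniqueness holds even when $A$ is not $p$-complete. Uniqueness reduces to showing there are no nonzero taut $\delta$-derivations $A \to I$. For such a derivation $d$ and any $a \in A$, write $a = \phi(b)$. Then $d(a) = pF(d(b))$ by \eqref{deltaderivationF}, so $d(A) \subset \bigcap_n p^n I$. To finish, I would show that $p$-divisibility of the image, together with derived $p$-completeness of $I$, forces $d = 0$: the elements $p^{-n} d(a)$ would span a quotient of $\mathbb{Z}[1/p]$ mapping to $I$, and derived complete groups admit no nonzero maps from $\mathbb{Z}[1/p]$. The point to verify is that these divisions can be chosen compatibly, which should follow from $F$ being an isomorphism on $I$.
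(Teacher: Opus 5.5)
Your proof is correct and follows essentially the paper's route: the paper deduces the corollary from \Cref{tautdeltanilpdescent}, whose proof is exactly your Step~1 (descending to $B/\dnil{B}\to A/\dnil{A}$ via \Cref{tautextensionsdeltanilp}), combined with taut rigidity of perfect $\delta$-rings coming from the vanishing of the mod $p$ cotangent complex, which is your alternative argument for Step~2. Your primary Witt-vector argument for the perfect case (the mechanism of \Cref{perfectringssplit}, together with the pullback $B\simeq A\times_{\widehat{A}}\widehat{B}$) is also valid, and it already settles the uniqueness concern in your last paragraph, because any section is determined by its $p$-completion; in the derivation argument the compatibility is automatic in any case, since $m_n=F^n(d(\phi^{-n}(a)))$ satisfy $m_n=p\,m_{n+1}$.
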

\begin{proof}
This follows from 
\Cref{tautdeltanilpdescent} and taut rigidity of perfect $\delta$-rings (which follows since the mod $p$ cotangent complex vanishes for those).
\end{proof}

Next, we consider a variant of $\delta$-nilpotence 
for $\delta$-rings equipped with the $p$-adic topology and prove analogs of the above results. 

\begin{definition} 
Let $A$ be a $\delta$-ring with bounded $p$-power torsion, which we equip with the $p$-adic topology. 
Let $a \in A$. 
We say that $a$ is \emph{topologically $\delta$-nilpotent}  if 
 the map $$ A \xrightarrow{w_\delta} 
W(A) $$
carries $a$ into $\hw(A)$; equivalently (since $A$ has the $p$-adic topology), the image of $w_\delta(a)$ in $W(A/p^n)$ belongs to $\hw(A/p^n)$ for each $n > 0$). 

Explicitly, $a$ is topologically $\delta$-nilpotent if and only if $a, \delta(a), \delta^2(a), \dots$ are all topologically nilpotent, and 
$\delta^n(a) \to 0$ in the $p$-adic topology as $n \to \infty$. 
We let $\dniltop{A} \subset A$ denote the set of topologically $\delta$-nilpotent elements. \end{definition} 

It is easy to see that $\dniltop{A}$ is a $\delta$-ideal in $A$.
Observe also that if $a \in A$ is topologically nilpotent (i.e., $a^n \in pA$ for some $n$) and $\delta(a) \in \dniltop{A}$, then $a \in \dniltop{A}$ as well.

\begin{lemma}
\label{pdividespthree}
For every prime \(p\), the element \([p^3]\in \hw(\mathbb Z_p)\) is divisible by
\(p\) in \(\hw(\mathbb Z_p)\). Consequently, for any ring \(R\) with bounded $p$-power torsion (with the $p$-adic topology), one has
for $n \geq 1$,
\[
\hw(p^{2n+1}  R)\subset p^n\,\hw(pR).
\]
\end{lemma}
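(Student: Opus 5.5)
The plan is to work in $W(\mathbb{Z}_p)$, which is $p$-torsionfree, and identify elements by their ghost components. We then verify membership in $\hw$ with the criterion \Cref{ghostcrit}. We prove a slightly more general statement that yields both assertions at once.

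\textbf{Claim.} For every $n\geq 1$ there is $z^{(n)}\in W(\mathbb{Z}_p)$ with $p^n z^{(n)}=[p^{2n+1}]$, all of whose Witt components lie in $p^2\mathbb{Z}_p$, and with $z^{(n)}\in\hw(p\mathbb{Z}_p)$. The case $n=1$ gives the first assertion.

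\textbf{Construction of $z^{(n)}$.} We want ghost components
\[
g_m \;=\; p^{(2n+1)p^m-n}, \qquad m\geq 0 .
\]
We solve $\sum_{i\le m} p^i z_i^{p^{m-i}}=g_m$ for the Witt components $z_m$ recursively. We show by induction on $m$ that $z_m\in\mathbb{Z}_p$ and $v_p(z_m)\ge 2$.

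For $m=0$ we get $z_0=p^{n+1}$, which has valuation at least $2$ since $n\ge1$. For the inductive step, assume $v_p(z_i)\ge 2$ for $i<m$. Each term $p^iz_i^{p^{m-i}}$ with $i<m$ then has valuation at least $i+2p^{m-i}$. This is at least $m+2$, by the inequality $ap^b\ge a+b+1$ already used in the proofs of \Cref{ghostcrit} and \Cref{uinWZ2}, valid for $a=2$ and all $p$. The right-hand side has valuation $(2n+1)p^m-n\ge m+2$. Hence $p^m z_m$ has valuation at least $m+2$, so $v_p(z_m)\ge 2$.

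Since $[p^{2n+1}]$ has ghost components $p^{(2n+1)p^m}=p^n g_m$ and $W(\mathbb{Z}_p)$ is torsionfree, this gives $p^nz^{(n)}=[p^{2n+1}]$.

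\textbf{Membership in $\hw$.} The Witt components of $z^{(n)}$ lie in $p^2\mathbb{Z}_p$. Hence the image of $z^{(n)}$ in $W(\mathbb{Z}/p)$ is $0$, and for $p=2$ its image in $W(\mathbb{Z}/4)$ is also $0$; these are conditions (a) and (c) of \Cref{ghostcrit}. For condition (b), $v_p(g_m)-m=(2n+1)p^m-n-m\to\infty$. So \Cref{ghostcrit} gives $z^{(n)}\in\hw(\mathbb{Z}_p)$, with components in $p\mathbb{Z}_p$ tending to $0$, i.e.\ $z^{(n)}\in\hw(p\mathbb{Z}_p)$.

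\textbf{Consequence for $R$.} Let $x\in\hw(p^{2n+1}R)$. Its Witt components $x_i=p^{2n+1}r_i$ tend to $0$ $p$-adically. We use the identity
\[
V^i[p^{2n+1}r_i] \;=\; V^i\bigl([p^{2n+1}][r_i]\bigr) \;=\; p^n\,V^i\bigl(z^{(n)}[r_i]\bigr),
\]
where $z^{(n)}$ is mapped into $W(R)$ via $\mathbb{Z}\to R$, after passing to the $p$-completion. Since $\hw(pR)$ is an ideal stable under $V$ (\Cref{hwbasic}), each $V^i(z^{(n)}[r_i])$ lies in $\hw(pR)$. It remains to sum over $i$.

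\textbf{Main obstacle.} The delicate step is this summation. We need $\sum_i V^i(z^{(n)}[r_i])$ to converge in $W(R)$ to an element of $\hw(pR)$, and we need the passage from $R$ to its completion to be harmless. The ghost-valuation bookkeeping is routine and is done above.

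Our approach is to reduce to $R/p^N$ for each $N$. Because the $x_i\to 0$, the sum is finite modulo $p^N$ up to terms whose Witt components are uniformly small. Bounded $p$-power torsion lets us choose the $r_i$ with $p^{2n+1}r_i\to0$ forcing $r_i$ small up to a fixed bounded error. We then use $\hw(pR)=\varprojlim_N\hw(pR/p^N)$ to assemble the result.
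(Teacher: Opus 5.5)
Your proof is correct, but it obtains the key divisibility by a different route from the paper. The paper computes nothing. It quotes Drinfeld's lemma $p\mid[p^2]$ in $W(\mathbb{Z}_p)$, writes $[p^3]/p=[p]\cdot([p^2]/p)$, and uses that $[p]\in\hw(p\mathbb{Z}_p)$ and that $\hw(p\mathbb{Z}_p)$ is an ideal. For the second assertion it factors $[p^{2n+1}x_i]=p^n\,([p^2]/p)^n\,[px_i]$. There the factor that must lie in $\hw(pR)$ is the Teichm\"uller element $[px_i]$, whose components $(px_i)^{p^m}$ visibly tend to $0$. The factor $([p^2]/p)^n$ is only a coefficient in $W(\mathbb{Z}_p)$, so no membership criterion is needed.

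You instead build $[p^{2n+1}]/p^n$ directly from its ghost components for each $n$. You then need \Cref{ghostcrit} to see that its Witt components tend to $0$. This step is genuinely required by your factorization $z^{(n)}[r_i]$: for small $i$ the $r_i$ need not be topologically nilpotent (e.g.\ $r_0=1$ when $x_0=p^{2n+1}$). Your route buys self-containedness and a sharper explicit statement. Your recursion actually takes place in $\mathbb{Z}$, so $z^{(n)}\in W(p^2\mathbb{Z})$ and no completion is needed to map it into $W(R)$. The same recursion with target ghost components $p^{2p^m-1}$ also reproves Drinfeld's lemma itself (with components in $p\mathbb{Z}$), after which the paper's shorter factorization becomes available.

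In the summation step, replace the sentence about bounded torsion with an exact choice. The equation $x_i=p^{2n+1}r_i$ determines $r_i$ only modulo $R[p^{2n+1}]$. If $x_i=p^{M_i}t_i$ with $M_i\to\infty$, take $r_i=p^{M_i-2n-1}t_i$; then $r_i\to 0$ with no torsion hypothesis. A torsion error $e_i$ must be discarded, not carried along: $p^n z^{(n)}[e_i]=0$, but $z^{(n)}[s_i+e_i]$ need not be small. With this choice, $z^{(n)}[r_i]\in W(p^{M_i-2n-1}R)$. Hence in $W(R/p^N)=W(R)/W(p^NR)$ only finitely many terms of the $V$-adically convergent sum survive, and each lies in $\hw(pR/p^N)$. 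This gives $\sum_i V^i(z^{(n)}[r_i])\in\hw(pR)$, and therefore $x\in p^n\hw(pR)$, just as in the paper.
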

\begin{proof}
By \cite[Lem.~4.7.3]{drinfeld}, one has
\(
p\mid [p^2]
\) in $W(\mathbb Z_p)$. 
Multiplying by $[p]$ and using that $\hw(p\mathbb Z_p)$ is an ideal of $W(\mathbb Z_p)$, we get that $[p^3]/p \in \hw(p\mathbb Z_p)$.
For the last statement, we may assume without loss of generality that $R$ is $p$-complete. 
 
Given $x \in \hw(p^{2n+1}R)$, we can write $x = \sum_{i \geq 0} V^i([p^{2n+1} x_i])$ for some $x_i \in R$ converging to zero in the $p$-adic topology. 
Then $x = p^n\sum_{i \geq 0} V^i(([p^{2n}]/p^n)\cdot [px_i]) \in p^n\,\hw(pR)$.
This gives the desired claim $\hw(p^{2n+1}R)\subset p^n\,\hw(pR).$
\end{proof}

\begin{lemma}\label{dividingtopdnilbyp}
Let $A$ be a $p$-torsionfree $\delta$-ring. 
Suppose $a \in \dniltop{A}$ and suppose that $a, \delta(a), \delta^2(a), \dots$ are all divisible by $p^{2n+1}$ for some $n \geq 1$. 
Then $a/p^n \in \dniltop{A}$ as well.
\end{lemma}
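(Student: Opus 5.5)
The plan is to show that $w_\delta(a/p^n) \in \hw(A)$. Since $A$ is $p$-torsionfree, $w_\delta(a) = p^n w_\delta(a/p^n)$ in $W(A)$. Working with $W(A)$ is awkward because $A$ need not be $p$-complete. Instead I will find an element $y \in \hw(pA)$ with $p^n y = w_\delta(a)$ and then argue that $y = w_\delta(a/p^n)$. Here $\hw$ is taken for the $p$-adic topology, so it is the preimage of $\hw(\widehat{A})$; I may therefore pass to the $p$-completion $\widehat{A}$, which is still $p$-torsionfree.

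\textbf{Step 1: express $w_\delta(a)$ through Joyal coordinates.} The Joyal coordinates of $w_\delta(a)$ are $a, \delta(a), \delta^2(a), \dots$. By hypothesis these all lie in $p^{2n+1}A$, and since $a \in \dniltop{A}$ they tend to $0$ $p$-adically. I want to conclude that $w_\delta(a) \in \hw(p^{2n+1}A)$, meaning that its Witt components also lie in $p^{2n+1}A$ and tend to zero. The Witt components are polynomials in the Joyal coordinates without constant term and of total degree at least one in them. More cleanly, the subset of $W(A)$ with all coordinates in an ideal $I$ is $W(I)$, and this is described equally well by either coordinate system, because the change of coordinates is given by integral polynomials with zero constant term. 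Convergence to zero is exactly the statement of the definition of $\dniltop{A}$, together with the remark that $\hw$ can be described via Joyal coordinates (\Cref{hwviaJoyal}). So $w_\delta(a) \in \hw(p^{2n+1}A)$.

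\textbf{Step 2: divide by $p^n$.} By \Cref{pdividespthree}, $w_\delta(a) = p^n y$ for some $y \in \hw(pA) \subset \hw(A)$.

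\textbf{Step 3: identify $y$ with $w_\delta(a/p^n)$.} We have $p^n\bigl(y - w_\delta(a/p^n)\bigr) = 0$ in $W(A)$. Since $A$ is $p$-torsionfree, $W(A)$ is $p$-torsionfree: the ghost map is injective and its target $\prod A$ is torsionfree. Hence $y = w_\delta(a/p^n)$, so $w_\delta(a/p^n) \in \hw(A)$, which says exactly that $a/p^n \in \dniltop{A}$.

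\textbf{Main obstacle.} The delicate step is Step 1, making precise that $w_\delta(a)$ lies in $\hw(p^{2n+1}A)$ rather than merely in $\hw(A)$. This requires the ideal-theoretic compatibility of the Joyal and Witt coordinates. I would argue it by noting that $W(I) = \ker(W(A) \to W(A/I))$ is detected by either coordinate system, and that inside $W(I)$ the intersection with $\hw$ is detected by topological convergence of the Joyal coordinates.
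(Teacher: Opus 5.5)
Your proof is correct and follows the same route as the paper, whose proof consists only of a citation of \Cref{pdividespthree}. Your Steps 1--3 fill in the details that citation leaves implicit. First, $w_\delta(a)\in\hw(p^{2n+1}A)$, because the change between Joyal and Witt coordinates is given by integral polynomials with no constant term. Second, the element $y\in\hw(pA)$ with $p^n y=w_\delta(a)$ must equal $w_\delta(a/p^n)$, since $W(A)$ is $p$-torsionfree.
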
 

\begin{proof} 
This follows from  \Cref{pdividespthree}. 
\end{proof}

\begin{remark} 
Let $R$ be a $p$-torsionfree ring. Then an element of $W(R)$ is topologically $\delta$-nilpotent if and only if it belongs to $\hw(R)$ (where $R$ is given the $p$-adic topology).
The difficult direction is to show that if $x \in \hw(R)$, then $x$ is topologically $\delta$-nilpotent, which follows from \Cref{pdividespthree}. 
\end{remark}

\begin{proposition}\label{topdnilliftstautextension}
Consider a taut square-zero extension
of $\delta$-rings
\begin{equation} 
0 \to I \to B \to A \to 0
\end{equation} 
such that $pI = 0$, and such that $A$ is $p$-torsionfree. 
Then $\dniltop{B} \xrightarrow{\sim} \dniltop{A}$. That is, every topologically $\delta$-nilpotent element of $A$ lifts uniquely to a topologically $\delta$-nilpotent element of $B$.
\end{proposition}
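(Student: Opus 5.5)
The plan is to prove injectivity and surjectivity separately. The key tools are: the bijectivity of $\delta$ on $I$; the observation that $pB \cdot I = 0$ and $I^2 = 0$; \Cref{squarezerolemma}; and \Cref{dividingtopdnilbyp}.

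\emph{Injectivity.} First I show $I \cap p^m B = 0$ for $m \geq 1$. If $i = p^m b$ with $i \in I$, then $p^m \bar b = 0$ in $A$. Since $A$ is $p$-torsionfree, $\bar b = 0$, so $b \in I$, and then $i = p^m b = 0$ because $pI = 0$.

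This also shows that $B[p^\infty] = I$, so $B$ has bounded $p$-power torsion and $\dniltop{B}$ makes sense.

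Now let $i \in I$ be topologically $\delta$-nilpotent. Then $\delta^n(i) \in I \cap p^m B$ for $n \gg 0$, so $\delta^n(i) = 0$. Since $\delta$ is bijective on $I$, this forces $i = 0$. Because $\dniltop{B}$ is an ideal, this gives injectivity.

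\emph{Reduction to a single $\delta$-iterate (descending induction).} Let $a \in \dniltop{A}$ and let $b \in B$ be any lift of $a$. Since $a$ is topologically nilpotent, $b^k \in pB + I$ for some $k$, and hence $b^k$ annihilates $I$.

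Moreover $\phi^n(b) \equiv b^{p^n} \pmod{pB}$, and $pB$ kills $I$. So for large $n$, $\phi^n(b)$ annihilates $I$, and \Cref{squarezerolemma} shows $\delta(b + j) = \delta(b) + \delta(j)$ for all $j \in I$.

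Consequently, if $\beta$ is a lift of $\delta(a)$ lying in $\dniltop{B}$, then $\delta(b) - \beta \in I$, and
\[ b' = b - \delta^{-1}(\delta(b) - \beta) \]
satisfies $\delta(b') = \beta$. This $b'$ is topologically nilpotent because it lifts $a$ and $pB + I$ is nilpotent. Hence $b' \in \dniltop{B}$ by the observation preceding \Cref{pdividespthree}.

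So it suffices to lift $\delta^N(a)$ for a single $N$, and then descend $N$ times.

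\emph{Lifting $\delta^N(a)$.} Since $\delta^k(a) \to 0$ $p$-adically, I choose $N$ so that $\delta^k(a) \in p^3 A$ for all $k \geq N$. By \Cref{dividingtopdnilbyp} with $n = 1$, we get $\delta^N(a) = pc$ with $c \in \dniltop{A}$.

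Choose any lift $\tilde c$ of $c$. The element $p\tilde c$ is independent of the choice, since $pI = 0$, and it lifts $\delta^N(a)$. I then claim $p\tilde c \in \dniltop{B}$, i.e.\ that $p\, w_\delta(\tilde c)$ maps into $\hw(B/p^m)$ for every $m$.

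The Joyal coordinates $\delta^k(\tilde c)$ lift $\delta^k(c)$, so modulo $\hw(B/p^m)$ we can write $w_\delta(\tilde c) \equiv x$ with $x \in W(\bar I)$. Here $\bar I$ is the image of $I$ in $B/p^m$, and it is still square-zero and killed by $p$.

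\emph{Main obstacle.} It remains to show that $p \cdot W(\bar I) \subset \hw(B/p^m)$. My first attempt is the following. For $i \in \bar I$, the relation $i^2 = 0 = pi$ and \Cref{lemmatorsioninWittvectors} show that $[i]$ is killed by a bounded power of $p$. More precisely, the proof of that lemma shows that $p[i]$ is a finite sum of $V^j[\psi_j(i)]$ with $\psi_j(i) \in p\bar I + \bar I^2 = 0$, up to the term $V[i^p] = 0$. Hence $p[i] = 0$, and so $p V^k[i] = V^k(p[i]) = 0$.

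For an infinite sum $\sum_k V^k[i_k]$, I would use $V$-adic continuity to pass to the limit. If this fails, I would fall back on \Cref{productsinhw}, using that $W(\bar I)$ consists of torsion elements which are annihilated by a power of $F$ (\Cref{torsioninW}), together with $p$ being in the kernel to $W((B/p^m)_{\mathrm{red}})$ only after multiplying by an element of that kernel.
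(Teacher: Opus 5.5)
Your proposal is correct and is essentially the paper's proof. Your descent through $\delta$ via \Cref{squarezerolemma}, your choice of $N$ with $\delta^k(a)\in p^3A$ combined with \Cref{dividingtopdnilbyp}, and your lift of $\delta^N(a)=pc$ as $p\tilde c$ are the paper's Steps 2, 3 and 1. Your ``main obstacle'' $pW(\bar I)\subset \hw(B/p^m)$ is exactly the paper's one-line claim that $\ker\bigl(Q(B/p^m)\to Q(A/p^m)\bigr)$ is killed by $p$. You also usefully make explicit that $I\cap pB=0$, which the paper's uniqueness assertion uses tacitly.

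Your first attempt already settles that step, so no fallback is needed. Since $\bar I^2=0$, the Witt addition polynomials act componentwise on $W(\bar I)\cong\prod\bar I$, hence $pW(\bar I)=0$. Equivalently, your relation $pV^k[i]=0$, together with $pV^K W\subset V^K W$ and the $V$-adic separatedness of $W(B/p^m)$, passes to infinite sums.
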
 
\begin{proof} 
    Uniqueness of the lift is clear since no element of $I$ can be topologically $\delta$-nilpotent by tautness, so the main task is to prove existence, which is a slightly more elaborate version of the argument in \Cref{tautextensionsdeltanilp}.

    \emph{Step 1: Lifts of multiples of $p$.} 
    Let $a \in A$ be topologically $\delta$-nilpotent, and choose any lift $b \in B$.
We observe first that $pb$ is topologically $\delta$-nilpotent in $B$. 
In fact, it suffices to show that for any $n$, the natural map 
$B \xrightarrow{w_\delta} W(B) \to Q(B/p^n)$ annihilates $pb$. 
But the kernel of $$ Q(B/p^n) \to Q(A/p^n)$$
 is annihilated by $p$, since $B/p^n \to A/p^n$ is a square-zero extension of rings with kernel annihilated by $p$.
It follows that
any element of $p\dniltop{A}$ admits a lift to $\dniltop{B}$.

\emph{Step 2: Lifting through $\delta$.} 
Now we show that if $a \in \dniltop{A}$ and $\delta(a)$ admits a lift $r \in \dniltop{B}$, then $a$ admits a lift to $\dniltop{B}$.
In fact, choose any lift $b_0 \in B$ of $a$.
This means that $\delta(b_0) = r + \eta$ for some $\eta \in I$. 
We try to find $\epsilon$ such that $\delta(b_0 + \epsilon) = r$. 

To do this, we use 
that $b_0$ is topologically nilpotent, so $\phi^N(b_0) \in pB$ for some $N \gg 0$.
Since $p I = 0$, we can apply \Cref{squarezerolemma}
to obtain 
\[
\delta(b_0+\epsilon)= \delta(b_0) + \delta(\epsilon) = r+\eta+\delta(\epsilon).
\]
Since $\delta: I \to I$ is an isomorphism, we can find a unique $\epsilon \in I$ such that $\delta(\epsilon) = -\eta$. 
It follows that $b \stackrel{\mathrm{def}}{=} b_0 + \epsilon$ is a lift of $a$ such that $\delta(b) = r$, which forces $b \in \dniltop{B}$.

\emph{Step 3: Conclusion.}
Given $a \in \dniltop{A}$, 
we observe that $\delta^n(a) \in p \dniltop{A}$ for $n \gg 0$.
This follows because \(w_\delta(\delta^n(a)) \in \hw(p^3A)\) for \(n\gg 0\), so \Cref{dividingtopdnilbyp} applies.
This means that $\delta^n(a)$ admits a lift to $\dniltop{B}$ by Step 1. But by applying Step 2 repeatedly, we find that $a$ admits a lift as well. 
\end{proof}

\begin{corollary}\label{topdnilquotienttautequiv}
Let $A$ be a $p$-torsionfree $\delta$-ring.
    Then the map 
    $ A \to A/\dniltop{A}$ is a taut equivalence.
\end{corollary}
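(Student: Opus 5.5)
I would model the argument on \Cref{tautdeltanilpdescent}, with \Cref{topdnilliftstautextension} playing the role of \Cref{tautextensionsdeltanilp}. Set $\bar A = A/\dniltop{A}$. There are two things to prove.

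\textbf{Condition (1).} Every element of $\dniltop{A}$ is topologically nilpotent, so its image in $A/p$ is nilpotent. Hence $A/p \to \bar A/p$ is a surjection with locally nilpotent kernel, and it induces an isomorphism on colimit perfections $(A/p)_{\mathrm{perf}} \xrightarrow{\sim} (\bar A/p)_{\mathrm{perf}}$.

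\textbf{Condition (2).} We must show that pullback along $A \to \bar A$ gives an equivalence between taut square-zero extensions of $\bar A$ with $p$-torsion kernel and those of $A$. I would construct the inverse as a descent.

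Let $0 \to I \to B \to A \to 0$ be taut with $pI = 0$. Since $A$ is $p$-torsionfree, \Cref{topdnilliftstautextension} gives $\dniltop{B} \xrightarrow{\sim} \dniltop{A}$. In particular $\dniltop{B}$ is a $\delta$-ideal of $B$ that meets $I$ trivially and maps isomorphically onto $\dniltop{A}$. Then
\[ 0 \to I \to B/\dniltop{B} \to \bar A \to 0 \]
is exact, and it is a square-zero extension of $\delta$-rings. Its kernel is the same Frobenius module $I$, now viewed over $\bar A$. This is legitimate because $\dniltop{A}$ acts on $I$ through $\dniltop{B}$, and $\dniltop{B} \cdot I \subset \dniltop{B} \cap I = 0$. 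So the new extension is again taut.

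Its pullback along $A \to \bar A$ is $B \times_{\bar A}$ of that extension, i.e.\ $B \times_{B/\dniltop{B}} \ldots$, which recovers $B$. So this gives essential surjectivity of pullback.

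For full faithfulness, and for uniqueness of the descent, I would argue as in \Cref{tautdeltanilpdescent}. A descent of $B$ to an extension $B'$ of $\bar A$ is the same as a $\delta$-splitting of $B \to A$ over the ideal $\dniltop{A}$, namely the kernel of $B \to B'$. Such a splitting must land in $\dniltop{B}$: its image consists of topologically $\delta$-nilpotent elements, since $w_\delta$ is natural and $\hw$ is functorial. By the uniqueness half of \Cref{topdnilliftstautextension}, this splitting is unique.

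For morphisms, I would use the following sequence of identifications:
\begin{itemize}
\item morphisms of extensions of $\bar A$;
\item correspond to morphisms of their pullbacks to $A$ that carry the canonical splitting over $\dniltop{A}$ to the canonical splitting;
\item and every morphism of extensions of $A$ does this automatically, again by uniqueness.
\end{itemize}

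\textbf{Main obstacle.} The delicate point is checking that the pullback of an extension $B'$ of $\bar A$ has $\dniltop{}$ mapping isomorphically onto $\dniltop{A}$. That is, we need the copy of $\dniltop{A}$ inside $B' \times_{\bar A} A$ to be exactly the topologically $\delta$-nilpotent lifts. This is what makes descent followed by pullback, and pullback followed by descent, mutually inverse.

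I expect this to follow from functoriality of $w_\delta$ applied to the map $\dniltop{A} \to B' \times_{\bar A} A$ given by $a \mapsto (0, a)$. We also need the degenerate fact that taut extensions with $pI = 0$ still have bounded $p$-power torsion, so that $\dniltop{B}$ is well defined. I would check this last point carefully: $B$ is an extension of a $p$-torsionfree ring by a $p$-torsion module, so its $p$-power torsion is killed by $p$.
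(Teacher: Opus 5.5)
Your proposal is correct and matches the paper's proof. The paper says only to repeat the argument of \Cref{tautdeltanilpdescent} with \Cref{topdnilliftstautextension} in place of \Cref{tautextensionsdeltanilp}: descend $B$ to $B/\dniltop{B}$, and get uniqueness from uniqueness of lifts. You have supplied the details it leaves implicit, namely condition (1), the fact that the $p$-power torsion of $B$ is killed by $p$, and full faithfulness.

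One step is a bit quick: that a $\delta$-splitting $s$ over $\dniltop{A}$ lands in $\dniltop{B}$ does not follow from ``naturality of $w_\delta$'' alone, because of the topology. It is easily closed using the same proposition:
\begin{enumerate}
\item $s(a)$ is topologically nilpotent, because $pI = I^2 = 0$.
\item For $n \gg 0$ one has $s(\delta^n a) = p\,s(a')$ with $a' \in \dniltop{A}$, by Step 3 there.
\item This element lies in $\dniltop{B}$ by Step 1 there, and hence $s(a) \in \dniltop{B}$.
\end{enumerate}
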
 
\begin{proof} 
This is proved in the same way as \Cref{tautdeltanilpdescent}, using
\Cref{topdnilliftstautextension} in place of \Cref{tautextensionsdeltanilp}.
\end{proof}

In order to apply \Cref{topdnilquotienttautequiv}, 
given a $\delta$-ideal (more generally, we can work with  a nonunital $\delta$-ring $I$), 
we need a criterion for all elements of $I$ to be topologically $\delta$-nilpotent.
\begin{proposition}\label{topdnilnonunitalcriterion}
Let $I$ be a nonunital, $p$-torsionfree $\delta$-ring. Suppose that: 
\begin{enumerate}
    \item  Every element $x \in I$ is annihilated by a power of $F$.  
    \item Every $x \in I$ admits divided powers in $I$ that converge to zero in the $p$-adic topology. 
\end{enumerate}
Then every element of $I$ is topologically $\delta$-nilpotent.
\end{proposition}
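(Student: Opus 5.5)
The plan is to show directly that $w_\delta(x)\in \hw(I)$ for every $x\in I$, with $I$ given the $p$-adic topology. To use the earlier criteria, which are stated for rings, I pass to the unitization $\mathbb{Z}\oplus I$. This ring is still $p$-torsionfree, and $w_\delta(x)$ lies in $W(I)\subset W(\mathbb{Z}\oplus I)$. The argument is an induction on the least $N$ with $\phi^N(x)=0$, combined with the divided-power description of $\ker F$ from \Cref{hwFgahatsharp}. The hypotheses are stable under $\phi$: if $x$ has divided powers $\gamma_n(x)\in I$ tending to $0$, then $\phi(x)$ has divided powers $\phi(\gamma_n(x))$. These also tend to $0$, since $\phi$ is $p$-adically continuous. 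Moreover $\phi^{N-1}(\phi(x))=0$.

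\emph{Base case $N=1$.} If $\phi(x)=0$, then $F(w_\delta(x))=w_\delta(\phi(x))=0$, so $w_\delta(x)\in W^F(\mathbb{Z}\oplus I)$. Under the isomorphism $W^F\simeq \gasharp$ of \cite[Lem.~3.2.6]{drinfeld}, this element corresponds to $x$ together with its divided powers. The divided powers are unique since the ring is $p$-torsionfree, so they are the given $\gamma_n(x)\in I$. Reducing modulo $p^m$, these divided powers vanish for $n\gg 0$, since they converge to $0$. Hence the image of $w_\delta(x)$ lies in $\gahatsharp(\,(\mathbb{Z}\oplus I)/p^m)$, which by \Cref{hwFgahatsharp} equals $\hw^F$. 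Since this holds for every $m$, we get $w_\delta(x)\in\hw$.

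\emph{Inductive step.} Suppose $\phi^N(x)=0$ with $N\ge 2$. By induction, $F(w_\delta(x))=w_\delta(\phi(x))\in\hw$. I then want to apply \Cref{Fandwhatp} to $y=w_\delta(x)$. This requires one extra input: the image of $y$ in $W(\,(\mathbb{Z}\oplus I)/p)$, or in $W(\,(\mathbb{Z}\oplus I)/4)$ when $p=2$, must lie in $\hw$. Equivalently, the Joyal coordinates $\delta^k(x)$ must be nilpotent modulo $p$ (resp.~$4$) and vanish modulo $p$ (resp.~$4$) for $k\gg 0$.

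Nilpotence is easy. Since $\phi^N(x)=0$, $x$ is $p$-power torsion in the Witt-vector sense, and $x^{p^N}\in pI$; this holds because $x^n=n!\gamma_n(x)$. Then \Cref{deltaofnilpotent} applies modulo $p$.

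The eventual vanishing is the main obstacle, and it is where the divided-power hypothesis must enter. My first attempt uses a homogeneity argument. Consider the universal case: a free object generated by $x$ with divided powers, subject to $\phi^N(x)=0$. Grade it by placing $\gamma_n(x)$ in degree $n$; then $\delta^k$ multiplies degrees by $p^k$, as in the grading arguments of \Cref{gradingsonhw}. The key identity is
\[
\gamma_a(x)\gamma_b(x)=\binom{a+b}{a}\gamma_{a+b}(x).
\]
It shows that every degree-$d$ monomial in the divided powers of $x$ is an integer multiple of $\gamma_d(x)$. From this I would hope to show that $\delta^k(x)$ lies in the closed ideal generated by the $\gamma_m(x)$ and their $\phi$-iterates with $m\ge p^{k}$. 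Since $\gamma_m(x)\to 0$ $p$-adically, this would give $\delta^k(x)\in 4I$ for $k\gg 0$.

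If the homogeneity bookkeeping becomes unwieldy, I would use a fallback. Run the induction on $N$ directly modulo $p^m$ for all $m$. Use the exact sequence $0\to\gahatsharp\to\hw\xrightarrow{F}\hw\to0$ to write $w_\delta(x)$ modulo $p^m$ as $z+e$, where $z\in\hw$ satisfies $F(z)=F(w_\delta(x))$ and $e\in\ker F$. Then identify $e$ with an element of $\gasharp$ whose divided powers come from those of $x$ and $\phi(x)$, so that $e$ lands in $\gahatsharp$ by the base case.
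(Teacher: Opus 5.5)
Your base case is sound and is the paper's key step. In a $p$-torsionfree $\mathbb{Z}_{(p)}$-algebra, an element of $\ker F$ with zeroth ghost coordinate $x$ is $x$ together with its divided powers, and convergence of these puts it in $\hw$. You should localize at $p$, since $W^F\simeq\gasharp$ is only available over $\mathbb{Z}_{(p)}$.

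The gap is in the inductive step. After invoking \Cref{Fandwhatp}, what remains is that $\delta^k(x)\in pI$ (resp.\ $4I$) for $k\gg 0$. This is a form of the conclusion itself, and your homogeneity sketch cannot supply it, because it uses only the divided powers of $x$ and of its $\phi$-iterates. Those data do not suffice, as the following example shows; any argument using only them would apply equally to it.

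Take a $p$-torsionfree $\delta$-ring $A$ over $\mathbb{Z}_{(p)}$ and $y\in A$ with $\phi(y)=0$ and $\gamma_n(y)\not\to 0$. For instance, take $A=W(B)$ with $B=\mathbb{Z}_{(p)}\langle t\rangle$ the divided power polynomial algebra, and $y$ the point of $W(B)^{F=0}\simeq\gasharp(B)$ given by $t$. In $C=W(A)$ set $z=w_\delta(y)$ and $X=V(z)$. Then:
\begin{itemize}
\item $\phi^2(X)=0$;
\item $\gamma_n(X)=p^{n-1}V(w_\delta(\gamma_n(y)))\to 0$ and $\gamma_n(\phi(X))=p^n w_\delta(\gamma_n(y))\to 0$;
\item $F(w_\delta(X))=w_\delta(pz)\in\hw(C)$ by your base case.
\end{itemize}
However, $X$ is not topologically $\delta$-nilpotent. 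Your base-case argument applied to $w_\delta(X)-V(w_\delta(z))\in\ker F$ gives $w_\delta(X)\equiv V(w_\delta(z))$ modulo $\hw(C)$. Moreover, $w_\delta(z)\in\ker F$ corresponds to $z$ with divided powers $w_\delta(\gamma_n(y))\not\to 0$, so $w_\delta(z)\notin\hw(C)$. By \Cref{Fandwhatp}, $\delta^k(X)\notin pC$ (resp.\ $4C$) for infinitely many $k$. So the hoped-for containment of $\delta^k(x)$ in the ideal generated by high divided powers of $x$ and their $\phi$-iterates fails in general. Hypothesis (2) must be applied to other elements, such as $\phi(x)/p$.

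Your fallback does not repair this. Modulo $p^m$, the map $F\colon\hw\to\hw$ is only fppf-locally surjective. Also, points of $\gasharp$ over $R/p^m$ are not determined by their underlying element, so the divided powers attached to $e$ depend on the choice of $z$. There is then no sense in which they ``come from those of $x$ and $\phi(x)$'', and by the example those would not suffice anyway.

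The missing idea, which is the paper's proof, is to choose the $F$-preimage explicitly in the $p$-torsionfree ring itself. Since $x^p=p!\,\gamma_p(x)$, we have $\phi(x)\in pI$. The element $e=w_\delta(x)-V(w_\delta(\phi(x)/p))$ then lies in $\ker F$ with zeroth ghost coordinate $x$, hence lies in $\hw$ by your base case. Now induct on $N$ using $\phi(x)/p$ in place of $\phi(x)$, and use $V$-stability of $\hw$. This is legitimate because (2) is assumed for every element of $I$ and $\phi^{N-1}(\phi(x)/p)=0$. It makes \Cref{Fandwhatp} unnecessary.

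Alternatively, your induction through $\phi(x)$ can be salvaged by applying (2) to $\delta^K(x)$. From $\phi(\delta^k(x))\to 0$ one gets $\delta^{k+1}(x)\equiv -(p-1)!\,\gamma_p(\delta^k(x))$ modulo $p^m$ for $k\geq K(m)$. Iterating this compares $\delta^{K+j}(x)$ with a $p$-adic unit multiple of $\gamma_{p^j}(\delta^K(x))$, which tends to zero.
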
 
\begin{proof} 
Let $x \in I$ be given. 
We need to show that 
$w_\delta(x) \in \hw(I) \subset W(I)$. Without loss of generality, we may assume
that $I$ is $p$-local. 

Since $I$ has divided powers, we have $p \mid F(x)$.
Therefore, we may 
consider the element 
$$w_\delta(x) - V( w_\delta(F(x)/p)) \in W(I).$$
This element is annihilated by $F$, whence it belongs to $\gasharp(I) = \ker(F: W(I) \to W(I))$ and thus records the divided powers on $x$. Since the divided powers on $I$ are $p$-adically nilpotent, 
it follows that $w_\delta(x) - V( w_\delta(F(x)/p)) \in \hw(I) \subset W(I)$ (cf.~\Cref{hwFgahatsharp}). 
Modulo $\hw(I)$, we therefore have $w_\delta(x) \equiv V( w_\delta(F(x)/p))$.
Repeating this and using the local nilpotence of $F$ on $I$, we get $w_\delta(x) \in \hw(I)$, as desired.
\end{proof}

\begin{lemma}\label{pFtopdnil}
Let $A$ be a $\delta$-ring with bounded $p$-power torsion. 
For $x \in A$ and $n \geq 0$, one has $p^n x \in \dniltop{A}$ if and only if $F^n(x) \in \dniltop{A}$.
\end{lemma}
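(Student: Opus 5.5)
The statement should reduce to \Cref{ppowertorsioninQ}, applied to the image of $w_\delta(x)$ in $Q(A/p^m)$ for each $m$.

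First I would rephrase topological $\delta$-nilpotence in terms of $Q$. By definition, $a \in \dniltop{A}$ if and only if, for every $m \geq 1$, the image of $w_\delta(a)$ in $W(A/p^m)$ lies in $\hw(A/p^m)$. Equivalently, the image of $w_\delta(a)$ vanishes in $W(A/p^m)/\hw(A/p^m)$. The ring $A/p^m$ is $p$-nilpotent. By the remark following the definition of $Q$, the map $W(A/p^m)/\hw(A/p^m) \to Q(A/p^m)$ is injective, where $Q$ is extended to all $p$-nilpotent rings by fpqc descent. So $a \in \dniltop{A}$ if and only if, for each $m$, the image $\bar{w}_m(a)$ of $w_\delta(a)$ in $Q(A/p^m)$ is zero.

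Next I would use that $w_\delta \colon A \to W(A)$ is a map of $\delta$-rings. Hence it is additive and multiplicative, so $w_\delta(p^n x) = p^n w_\delta(x)$. It also intertwines $\phi$ on $A$ with the Witt vector Frobenius $F$, so $w_\delta(F^n(x)) = F^n(w_\delta(x))$; here $F^n(x)$ on $A$ means $\phi^n(x)$. The projection $W(A) \to W(A/p^m) \to Q(A/p^m)$ is a ring map commuting with $F$. Therefore
\[
\bar{w}_m(p^n x) = p^n\, \bar{w}_m(x), \qquad \bar{w}_m(F^n x) = F^n\big(\bar{w}_m(x)\big).
\]

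Finally I would apply the first assertion of \Cref{ppowertorsioninQ} to the $p$-nilpotent ring $R = A/p^m$. It says that an element $y \in Q(R)$ satisfies $p^n y = 0$ if and only if $F^n(y) = 0$. Taking $y = \bar{w}_m(x)$ gives $\bar{w}_m(p^n x) = 0$ if and only if $\bar{w}_m(F^n x) = 0$, for each $m$. Ranging over all $m$ yields $p^n x \in \dniltop{A}$ if and only if $F^n(x) \in \dniltop{A}$.

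The only delicate point is making sure \Cref{ppowertorsioninQ} applies to $A/p^m$, which need not be nilperfect. That proposition is stated for arbitrary $p$-nilpotent rings using the fpqc extension of $Q$, which is why the injectivity of $W/\hw \hookrightarrow Q$ in the first step matters. The bounded $p$-power torsion hypothesis is used only so that $\dniltop{A}$ is defined as above.
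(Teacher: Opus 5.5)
Your proposal is correct and is the same argument as the paper's proof, which simply applies \Cref{ppowertorsioninQ} to $A/p^m$ for each $m>0$. Your version spells out the details the paper leaves implicit: the translation of topological $\delta$-nilpotence through $w_\delta$, and the injection $W(A/p^m)/\hw(A/p^m)\hookrightarrow Q(A/p^m)$ for the fpqc-extended $Q$.
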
 
\begin{proof} 
This follows from \Cref{ppowertorsioninQ}, applied to $A/p^m$ for each $m > 0$.
\end{proof} 

\begin{proposition} 
    \label{SEStopdnil}
Let $0 \to I' \to I \to I'' \to 0$ be a short exact sequence of nonunital $\delta$-rings, each of which is $p$-torsionfree.
Suppose that: 
\begin{enumerate}
    \item  Each element of $I', I''$ is topologically $\delta$-nilpotent. 
    \item 
	$F$ is locally nilpotent on $I'$ and $I''$ (hence on $I$). 
\end{enumerate}
Then each element of $I$ is topologically $\delta$-nilpotent.
\end{proposition}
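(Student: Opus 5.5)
The plan is to reduce, modulo $p^m$ for each $m$, to a statement about the ring $Q$, and then use the exact sequence to move from $I''$ to $I'$. Since the notion is defined via $w_\delta$, one first adjoins a unit: replace $I$ by the $p$-torsionfree $\delta$-ring $\mathbb{Z}\oplus I$ (similarly for $I', I''$). Then $x \in I$ is topologically $\delta$-nilpotent if and only if, for every $m$, the image $\xi_m$ of $w_\delta(x)$ in $Q(I/p^m) = W(I/p^m)/\hw(I/p^m)$ vanishes. Here $Q$ of a nonunital $p$-nilpotent ring means $W/\hw$.

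\emph{Step 1 (exactness mod $p^m$).} Because $I''$ is $p$-torsionfree, $I'\cap p^mI = p^mI'$. Hence
\[
0\to I'/p^m\to I/p^m\to I''/p^m\to 0
\]
is exact. Lifting Witt components then shows two things: $\hw(I/p^m)\to\hw(I''/p^m)$ is surjective, and the kernel of $W(I/p^m)\to W(I''/p^m)$ is $W(I'/p^m)$. It follows that the kernel of $Q(I/p^m)\to Q(I''/p^m)$ is the image of $Q(I'/p^m)$. By hypothesis (1) on $I''$, the image of $\xi_m$ in $Q(I''/p^m)$ is zero. So $\xi_m$ is represented by some $y\in W(I'/p^m)$.

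\emph{Step 2 (torsion).} Choose $N$ with $F^N x = 0$. This exists by (2): $F^{n}$ kills $x$ modulo $I'$, and then a further power of $F$ kills the resulting element of $I'$. Therefore $F^N\xi_m=0$, and by \Cref{ppowertorsioninQ} $p^N\xi_m = 0$. The same proposition shows that torsion in $Q$ is controlled by $F$ and $\hV$, namely $\hV^N F^N = p^N$ and $\delta,\hV$ are mutually inverse on $Q[p^\infty]$ (\Cref{deltaonQ}).

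\emph{Step 3 (conclusion).} To make Steps 1–2 effective, I would instead run the argument of Step 2 of \Cref{topdnilliftstautextension} directly on $\delta$-iterates. Fix $m$.
\begin{itemize}
\item Since $x''$ is topologically $\delta$-nilpotent, $\delta^k(x) \in I' + p^{2m+1}I$ for $k\gg0$, and the same holds for all further iterates.
\item Write $\delta^k(x) = a + p^{2m+1}b$ with $a\in I'$, and expand $\delta$ of a sum. The cross terms are divisible by $p^{2m+1}$ times products, and the $F$-nilpotence hypothesis lets one use \Cref{pFtopdnil} to control $p^{2m+1}b$.
\item Combining with \Cref{dividingtopdnilbyp}, this gives $w_\delta(\delta^k x)\in \hw(I'/p^m)+\hw(I/p^m)$, i.e. zero in $Q(I/p^m)$.
\item Finally, note that if $\delta(z)$ is topologically $\delta$-nilpotent and $z$ is topologically nilpotent, then so is $z$. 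Here $z$ is topologically nilpotent because $F^Nz=0$ and $p$-torsionfreeness give $z^{p^N}\in pI$. Descending $k$ times from $\delta^k(x)$ to $x$ then finishes the proof.
\end{itemize}

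The main obstacle is Step 3: controlling $\delta$ on a sum $a + p^{2m+1}b$ with $a\in I'$, uniformly in $m$. The non-additivity of $\delta$ introduces mixed terms $a^{i}(p^{2m+1}b)^{p-i}$. For these I would try to show that they are divisible by high powers of $p$, and that $F$-nilpotence forces them into $\hw$ via \Cref{pFtopdnil}. If this fails, the fallback is the $Q$-theoretic route of Steps 1–2. That route requires showing that a class in the image of $Q(I'/p^m)$ which is killed by $F^N$ and comes from an element all of whose $\delta$-iterates eventually lie in $I'$ must vanish. I would attempt this by applying $\delta=\hV^{-1}$ on torsion (\Cref{deltaonQ}) $N$ times to move into $w_\delta(I')$, and then using hypothesis (1) on $I'$.
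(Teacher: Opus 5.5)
Your Step 3 has a genuine gap, and it is at the decisive point. You write $\delta^k(x)=a+p^{2m+1}b$ with $a\in I'$ and claim that $F$-nilpotence lets \Cref{pFtopdnil} control $p^{2m+1}b$. But \Cref{pFtopdnil} only says $p^{e}b\in\dniltop{I}$ if and only if $F^{e}(b)\in\dniltop{I}$, and nothing you wrote places $F^{2m+1}(b)$ in $\dniltop{I}$. Local nilpotence of $F$ gives $F^M(b)=0$ for some $M$ depending on $b$, hence on $m$ and $k$. There is no reason to have $M\le 2m+1$.

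The missing idea is to tie the power of $p$ to the Frobenius exponent of $x$ rather than to $m$.
\begin{enumerate}
\item If $F^N(x)=0$, use hypothesis (1) on $I''$ to write $\delta^k(x)=x'+p^Nz$ with $x'\in I'$ and $z\in I$.
\item Apply $F^N$ to this decomposition. Since $F^N\delta^k(x)=\delta^kF^N(x)=0$, you get $p^NF^N(z)\in I'$.
\item $p$-torsionfreeness of $I''$ then forces $F^N(z)\in I'$, so $F^N(z)\in\dniltop{I'}\subset\dniltop{I}$ by hypothesis (1) on $I'$.
\item Now \Cref{pFtopdnil} gives $p^Nz\in\dniltop{I}$.
\end{enumerate}
This is where torsionfreeness of $I''$ is really used; you only use it for exactness mod $p^m$. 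Any exponent $e\ge N$ would work in place of $N$, but the step of applying $F^e$ and then using torsionfreeness is essential.

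The obstacle you single out, non-additivity of $\delta$, is not an obstacle. $\dniltop{I}$ is the preimage of $\hw$ under the ring homomorphism $w_\delta$, so it is an ideal. Hence $x'+p^Nz\in\dniltop{I}$ follows at once, with no need to expand $\delta$ of a sum or to invoke \Cref{dividingtopdnilbyp}. Your final descent from $\delta^k(x)$ to $x$ is correct and matches the paper: elements killed by a power of $F$ are topologically nilpotent.

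The $Q$-theoretic fallback does not close the gap. It presupposes that the $\delta$-iterates of $x$ eventually lie in $I'$. Hypothesis (1) for $I''$ only gives $\delta^k(x)\in I'+p^eI$. Moreover $\delta$ does not descend to $I/p^m$: one only has $\delta(a+p^mb)\equiv\delta(a)\bmod p^{m-1}$. So the class of $w_\delta(\delta^kx)$ in $Q(I/p^m)$ is not determined by $\delta^k(x)\bmod p^m$, and you cannot move it into $w_\delta(I')$.
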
 
\begin{proof} 
Let $x \in I$, and suppose that $F^N(x) = 0$. 
By assumption, we can choose $i \gg 0$ such that 
$$\delta^i(x)
	= p^N z + x',$$ for some $x' \in I', z \in I$. 

Applying $F^N$ to both sides, we find that 
	$p^N F^N(z) \in I'$, whence $F^N(z) \in I'$ by the $p$-torsionfreeness of $I''$.
It follows that $F^N(z) \in \dniltop{I'} \subset \dniltop{I}$. 
However, this means that $p^N z \in \dniltop{I}$ by \Cref{pFtopdnil}, and therefore $\delta^i(x) \in \dniltop{I}$.
 It is also straightforward to see that $\delta^j(x)$ is topologically nilpotent for all $j$, since any element annihilated by a power of $F$ is topologically nilpotent. The result follows. 
\end{proof}

\subsection{Application to $\chW$}

\begin{proposition} 
\label{tautrigidityofchWasdcart}
Let $R$ be a $p$-complete ring with bounded $p$-power torsion, such that $(R/p)_\mathrm{red}$ is perfect. 
Then the $\delta$-ring $\chW(R)$ is taut rigid.
\end{proposition}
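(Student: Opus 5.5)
The plan is to exhibit $\chW(R)$, up to a taut equivalence, as a perfect $\delta$-ring, and then use taut rigidity of perfect $\delta$-rings. By the remark after \Cref{delta-taut-rigidity}, derived $p$-complete Nakayama reduces us to taut square-zero extensions
\[
0 \to I \to B \to \chW(R) \to 0
\]
with $pI = 0$; we must produce a unique $\delta$-section. Uniqueness should be formal: the difference of two sections is a $\delta$-derivation into $I$, and such a derivation should vanish by the same mechanism used for existence. Also, $\chW(R)$ is derived $p$-complete as a limit of the derived $p$-complete rings $\chW(R/p^n)$.

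\textbf{Step 1 (set up $K$ and the target for lifting).} Let $k = (R/p)_{\mathrm{red}}$, and let
\[
K = \ker\bigl(\chW(R) \to W(k)\bigr).
\]
By \eqref{thirdexactseq} and \Cref{chWlocalnilpkernel}, at each finite level $K_n = \ker(\chW(R/p^n) \to W(k)) = \hw(R/p^n)_{\hat p}$, and $K = \varprojlim_n K_n$. The goal is an analogue of \Cref{tautextensionsdeltanilp} and \Cref{topdnilliftstautextension}: every $a \in K$ should have a unique lift $\tilde a \in B$ with $w_\delta(\tilde a)$ mapping into $\hw(B/p^m)_{\hat p}$ for all $m$. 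Call such lifts ``topologically $\delta$-nilpotent''. Uniqueness of the lift is immediate, since tautness forbids nonzero topologically $\delta$-nilpotent elements of $I$. Uniqueness then forces the set $K_B$ of these lifts to be a $\delta$-ideal of $B$, mapping isomorphically onto $K$.

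\textbf{Step 2 (construct the lifts).} I would follow the three steps in the proof of \Cref{topdnilliftstautextension}.
\begin{itemize}
\item Lifting multiples of $p$ is unchanged: the kernel of $Q(B/p^m) \to Q(\chW(R)/p^m)$ is killed by $p$ because $pI = 0$.
\item Lifting through $\delta$ is also unchanged. It uses only \Cref{squarezerolemma}, topological nilpotence of a lift $b_0$ (so $\phi^N(b_0) \in pB$, which annihilates $I$), and bijectivity of $\delta$ on $I$.
\item The real issue is the analogue of Step 3: every $a \in K$ should satisfy $\delta^n(a) \in pK$ for $n \gg 0$.
\end{itemize}
I cannot use \Cref{dividingtopdnilbyp} directly, because $\chW(R)$ has unbounded $p$-power torsion. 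Instead I would argue on topological generators. Modulo $p$-adic limits, $K$ is generated by $\hV$-iterates of Teichm\"uller-type elements $[x]$, where $x \in R$ has nilpotent image in $R/p$, and by their $p$-power multiples. So it suffices to handle $\hV^i$ of such an element; note $\delta(\hV^i y)$ is governed by the Cartier-type formula $\delta V(z) = z - p^{p-2}V(z^p)$ used in \Cref{deltaonQ}. For $[x]$ with $x^{N} \in pR$, the iterates $\delta^n([x])$ are controlled through $w_\delta$ and \Cref{pdividespthree}, applied to the bounded-torsion ring $R$ rather than to $\chW(R)$. This should give $\delta^n([x]) \in p\,\hw(pR)$-type elements for $n \gg 0$, which lift by the multiples-of-$p$ case. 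The finite levels would be glued by taking $\varprojlim_n$, using that the lifts are unique and hence compatible.

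\textbf{Step 3 (conclude).} Given Step 2, $B/K_B$ is a taut square-zero extension of $\chW(R)/K = W(k)$ by $I$. Since $W(k)$ is perfect, this extension splits uniquely by \Cref{perfectringssplit}. Pulling this splitting back along $B \to B/K_B$ gives a $\delta$-subring of $B$ mapping isomorphically onto $\chW(R)$, i.e.\ a $\delta$-section, whose uniqueness follows from the two uniqueness statements above.

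I expect the main obstacle to be the Step 3 analogue inside Step 2: proving that high $\delta$-iterates of elements of $K$ become $p$-divisible in $K$ despite the torsion in $\chW(R)$. The first thing I would try is to push the computation back to $W(R)$ and $\hw(R/p^n)$, where \Cref{pdividespthree} and \Cref{ppowertorsioninQ} are available, and then transport it to $\chW$ along the $\hV$-compatible exact sequences \eqref{secondexactseq} and \eqref{thirdexactseq}.
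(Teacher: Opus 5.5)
\textbf{There is a genuine gap, and it is not only the one you flag.} Your overall shape is right: lift the kernel of $\chW(R)\to W(k)$ uniquely into $B$, then split over a perfect ring. But the Step 1 claim that uniqueness of lifts is ``immediate, since tautness forbids nonzero topologically $\delta$-nilpotent elements of $I$'' does not hold in your setting.

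In \Cref{topdnilliftstautextension}, that step uses that the base is $p$-torsionfree. Then $I\cap p^mB=p^mI=0$. So if $i\in I$ is topologically $\delta$-nilpotent, some $\delta^n(i)$ lies in $I\cap pB=0$, and bijectivity of $\delta$ on $I$ gives $i=0$.

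For the base $\chW(R)$, which has unbounded $p$-power torsion and is not $p$-adically separated, $I\cap pB$ is the image of the well-defined map $\chW(R)[p]\to I$, $t\mapsto p\tilde t$. Nothing forces this map to vanish a priori; its vanishing is a consequence of the splitting you are trying to prove. Without uniqueness you do not know that $K_B$ is an ideal with $K_B\cap I=0$. So $B/K_B$ is not known to be an extension of $W(k)$ by $I$, and your concluding step collapses.

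The same torsion issue underlies the difficulty you do acknowledge. The claim that $\delta^n(a)$ lies in $pK$ for $n\gg0$, and the passage from Teichm\"uller-type generators to ``$p$-adic limits'', both need a $p$-torsionfree, $p$-adically separated ring. (\Cref{dividingtopdnilbyp} divides by powers of $p$.) $\chW(R)$ is neither. Meanwhile, the bounded-torsion hypothesis on $R$ enters your sketch only vaguely.

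\textbf{The missing idea} is to remove the torsion first, algebraically rather than topologically.
\begin{enumerate}
\item Since $\qperf(R)$ is a perfect, hence $p$-torsionfree, $\delta$-ring, the sequence $0\to\hw(R)\to\chW(R)\to\qperf(R)\to0$ shows that $\chW(R)[p^\infty]\subset\hw(R)$.
\item Because $R$ has bounded $p$-power torsion, a torsion element of $\hw(R)$ has torsion Witt components. These are topologically nilpotent and torsion, hence nilpotent, and only finitely many are nonzero. So every torsion element of $\chW(R)$ is genuinely $\delta$-nilpotent.
\item Genuinely $\delta$-nilpotent elements lift uniquely through any taut extension with no topology involved (\Cref{tautextensionsdeltanilp}). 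Arguing as in \Cref{tautdeltanilpdescent}, the map $\chW(R)\to\chW(R)/\chW(R)[p^\infty]$ is a taut equivalence.
\item On the $p$-torsionfree quotient, the image of $\hw(R)$ is topologically $\delta$-nilpotent. Now $I\cap p^mB=0$ does hold, so \Cref{topdnilquotienttautequiv} applies verbatim. Hence $\chW(R)\to\qperf(R)$ is a taut equivalence.
\item Taut rigidity of the perfect $\delta$-ring $\qperf(R)$ finishes the proof.
\end{enumerate}
This two-stage argument is the paper's proof.
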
 
\begin{proof} 
We have a short exact sequence
\[
0 \to \hw(R) \to \chW(R) \to \qperf(R) \to 0,
\]
from \eqref{secondexactseq} and passage to the limit.  
Consider the torsion in $\hw(R)$. 
Given a torsion element $x \in \hw(R)$, all the Witt components of $x$ must be torsion and therefore nilpotent (since they are topologically nilpotent and $R$ has bounded $p$-power torsion).
Moreover, there can only be finitely many nonzero components, again by the bounded torsion hypothesis. 
This means that $x$ is $\delta$-nilpotent. 
It follows from \Cref{tautdeltanilpdescent} that the map 
$\chW(R) \to \chW(R)/\chW(R)[p^\infty]$ is a taut equivalence.
Next, the image of $\hw(R)$ in the $p$-torsionfree quotient  $\chW(R)/\chW(R)[p^\infty]$ is topologically $\delta$-nilpotent, so by \Cref{topdnilquotienttautequiv} the map 
$\chW(R) \to \qperf(R)$ is a taut equivalence. 
Since $\qperf(R)$ is taut rigid by perfectness, the result follows. 

\end{proof}

\begin{corollary}
\label{chWasuniversaltaut}
Let $R$ be a $p$-completely nilperfect ring with bounded $p$-power torsion. 
Suppose $\chW(R) \to W(R)$ is surjective. 
Then the map $\chW(R) \to W(R)$ is the initial taut square-zero extension of $W(R)$. \end{corollary}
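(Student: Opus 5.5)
The plan is to combine taut rigidity of $\chW(R)$ (\Cref{tautrigidityofchWasdcart}) with the fact that $\chW(R)\to W(R)$ is itself a taut square-zero extension. The proof has three steps: check that $\chW(R)\to W(R)$ is taut, construct maps out of $\chW(R)$ into any taut extension of $W(R)$, and prove these maps are unique.

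\emph{Step 1: tautness.} First I will check that $\chW(R)\twoheadrightarrow W(R)$ is a taut square-zero extension. For each $n$, \Cref{chWtoWtautsquarezero} applied to $R/p^n$ identifies the kernel with $T_F(Q)(R/p^n)$. This is a square-zero ideal on which $\delta$ acts bijectively. Passing to the inverse limit over $n$, the kernel $J$ of $\chW(R)\to W(R)=\varprojlim W(R/p^n)$ is $\varprojlim_n T_F(Q)(R/p^n)$. The kernel terms may carry a $\varprojlim^1$ contribution, but we are taking kernels, which commute with limits, so this is harmless. Hence $J$ is square-zero, $\delta:J\to J$ is bijective, and $J$ is derived $p$-complete as a limit of derived $p$-complete groups. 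Surjectivity holds by hypothesis.

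\emph{Step 2: existence.} Let $B\twoheadrightarrow W(R)$ be any taut square-zero extension with kernel $I$. Form the pullback
\[ B' = B\times_{W(R)}\chW(R). \]
This is a square-zero extension of $\chW(R)$ by $I$, and it is taut because tautness depends only on $I$ as a Frobenius module, now viewed over $\chW(R)$. By \Cref{tautrigidityofchWasdcart}, $\chW(R)$ is taut rigid, so $B'\to\chW(R)$ has a unique $\delta$-section. Composing this section with $B'\to B$ gives a $\delta$-map $\chW(R)\to B$ over $W(R)$.

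\emph{Step 3: uniqueness.} Maps $\chW(R)\to B$ over $W(R)$ correspond bijectively to $\delta$-sections of $B'\to\chW(R)$: a section $s$ is determined by its $B$-component, and its $\chW(R)$-component is forced to be the identity. Uniqueness of sections therefore gives uniqueness of the map. Together with Step 1, this shows that $\chW(R)\to W(R)$ is initial among taut square-zero extensions of $W(R)$.

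The step I expect to be the main obstacle is Step 1, and specifically the passage from the sheaf-level statement \Cref{chWtoWtautsquarezero} on $\nperftors$ to the $p$-complete ring $R$. One must verify that the kernel computed after taking the limit over $n$ is still one on which $\delta$ is bijective and which is derived $p$-complete. Bijectivity of $\delta$ passes through the limit because the inverse is given by $\hV$ compatibly at each level $n$. Derived $p$-completeness is closed under limits, which handles the second point.
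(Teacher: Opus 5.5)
Your proposal is correct and is essentially the paper's argument: the paper likewise combines taut rigidity of $\chW(R)$ (\Cref{tautrigidityofchWasdcart}) with the fact that $\chW(R)\to W(R)$ is a taut square-zero extension, obtained from \Cref{chWtoWtautsquarezero} by passing to the limit over the rings $R/p^n$. Your Steps 2 and 3 just make explicit the pullback argument showing that a taut-rigid taut extension is initial, which the paper leaves implicit.
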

\begin{proof} 
This follows because $\chW(R)$ is taut rigid, and the map $\chW(R) \to W(R)$ is a taut square-zero extension by \Cref{chWtoWtautsquarezero} and passage to the limit. 
\end{proof} 

\section{\dcart\ rings and \dcartwo rings}

The purpose of this section is to develop some further algebra around $\chW$. 
The main tool will be the theory of \dcart\ rings and its 2-primary analog \dcartwo rings, which are $\delta$-rings equipped with a Verschiebung-type operator ($V$, resp.~$\hV$) satisfying certain identities. 
Our main result (\Cref{chWuniversalcartier}) is that $\chW(R)$ satisfies a natural universal property as a left adjoint
in the category of derived $p$-complete \dcart (resp.\ \dcartwo) rings.

\subsection{\dcart\ rings}
\label{sec:dcartrings}

\begin{definition}[{Magidson, \cite[Def.~3.4.4]{Magidson}}]
\label{dcartrings}
A \emph{\dcart ring} is a $\delta$-ring $A$ with $\delta$-ring Frobenius $F: A
\to A$
which is  equipped with an
additive operator $V:
A \to A$ that satisfies the following conditions:
\begin{enumerate} 
\item $F(V(a)) = pa$ for all $a \in A$. 
\item  $a V(b) = V( F(a) b)$ for all $a, b \in A$. 
\item For all $a \in A$,
\begin{equation}
    \delta ( V(a)) = a - p^{p-2} V(a^p) \label{deltaVidentity}
\end{equation}
\end{enumerate}
We let $\ddcart$ denote the category of \dcart rings.
\end{definition}

\begin{remark}
\label{dcart:redundant}
Condition (3) of \Cref{dcartrings} (which is due to Drinfeld) is implied by the
previous conditions after multiplication by $p$ or applying $F$,
and is thus redundant if $A$ is $p$-torsionfree. 
\end{remark}

\begin{remark}
For any $A \in \ddcart$ and $a, b \in A$ and $i \leq j \in \mathbb{Z}_{\geq 0}$,
we have $V^i(a) V^j(b) =
p^i  V^j( F^{j-i} (a) \cdot b)$. This follows from applying the projection formula
repeatedly.
\end{remark}

\begin{remark}
The category $\ddcart$ has all limits and colimits, and the forgetful functor to
rings (or sets) preserves limits and sifted colimits.
\end{remark}

\begin{proposition}
\label{torsionfreedcartring}
The functor $A \mapsto (A, V(A))$ establishes an equivalence between the category of $p$-torsionfree \dcart rings $A$  and the category of pairs $(A, I)$ where $A$ is a $p$-torsionfree $\delta$-ring and $I 
\subset A$ is an ideal such that 
$F$ induces an isomorphism $F|_I: I \simeq pA$. 

\end{proposition}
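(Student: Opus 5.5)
We construct the functor and its inverse explicitly and check that they are mutually inverse. Given a $p$-torsionfree \dcart ring $A$, set $I = V(A)$. Projection formula (2) gives $aV(b) = V(F(a)b) \in V(A)$, so $I$ is an ideal. By (1), $F(V(a)) = pa$, so $F$ maps $I$ onto $pA$. For injectivity of $F|_I$: if $F(V(a)) = 0$ then $pa = 0$, hence $a = 0$ by torsionfreeness, hence $V(a)=0$. Thus $F|_I : I \xrightarrow{\sim} pA$. The same argument shows that $V$ is injective and that $V$ is recovered from the pair $(A,I)$ by the formula
\[
V(a) = (F|_I)^{-1}(pa).
\]
Consequently the functor is faithful. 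It is full because a $\delta$-map $f:A\to A'$ with $f(I)\subset I'$ automatically commutes with $V$: applying $F$ to $f(V(a))$ and to $V(f(a))$ gives $pf(a)$ in both cases, both elements lie in $I'$, and $F|_{I'}$ is injective.

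For essential surjectivity, start with a pair $(A,I)$ and define $V(a) := (F|_I)^{-1}(pa)$. This map is additive, and axiom (1) holds by construction. For axiom (2), note that $aV(b)$ and $V(F(a)b)$ both lie in $I$. Their images under $F$ are $F(a)\,pb$ and $pF(a)b$ respectively, which agree, so the two elements agree by injectivity of $F|_I$. Axiom (3) is redundant in the $p$-torsionfree case by \Cref{dcart:redundant}. For completeness, the check is as follows: $p\delta(V(a)) = F(V(a)) - V(a)^p = pa - V(a)^p$. Iterating the projection identity gives $V(a)^p = V(F(V(a))^{p-1}a) = p^{p-1}V(a^p)$. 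Dividing by $p$ then yields \eqref{deltaVidentity}. Finally, the two constructions are inverse: the ideal $V(A)$ equals $(F|_I)^{-1}(pA) = I$.

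The only point needing care is the computation $V(a)^p = p^{p-1}V(a^p)$ used for axiom (3). It follows by induction from $V(a)V(b) = V(F(V(a))b) = pV(ab)$, so this step is routine and I expect no real obstacle. The whole proof is bookkeeping around the injectivity of $F$ on $I$, which is where $p$-torsionfreeness enters.
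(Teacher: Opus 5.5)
Your proposal is correct and is essentially the paper's argument: the paper defines the inverse functor by taking $V$ to be the inverse of the isomorphism $F/p\colon I \xrightarrow{\sim} A$, which is exactly your $V(a) = (F|_I)^{-1}(pa)$, and leaves the remaining verifications as straightforward. You carry those verifications out in full: the projection formula, the $\delta V$ identity via $V(a)^p = p^{p-1}V(a^p)$, and fullness via injectivity of $F|_{I'}$. The only quibble is that faithfulness needs no argument, since the functor does not change underlying maps.
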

\begin{proof}
We define the inverse functor: if $(A, I)$ is a pair as above, then we can define $V: A \to A$ to be the inverse of the isomorphism
$F/p: I
 \simeq A$. It is straightforward to check that these two functors are inverse to one another. 
\end{proof}

\begin{example}
\label{ZpasVdelta}
The $\delta$-ring $\mathbb{Z}$ (with $F = \mathrm{id}$) is a \dcart ring with
$V(x) = px$ for all $x
\in \mathbb{Z}$.
\end{example}

\begin{definition}[Dieudonn\'e $\delta$-rings]
\label{def:dieudonne-delta-rings}
A \emph{Dieudonn\'e $\delta$-ring} is a \dcart ring $A$ with the additional
property that $VF = p$ on $A$ (equivalently, $V(1) = p$).
\end{definition}

\begin{remark}
    A \dcart ring $A$ is a Dieudonn\'e $\delta$-ring if and only if $V(1) = p$ in
$A$.
The ring $\mathbb{Z}$ (as in \Cref{ZpasVdelta}) is the initial Dieudonn\'e
$\delta$-ring.
\end{remark}

\begin{remark}
    \label{ptorsionfreeDieudonnedeltaring}
Let $A$ be a $p$-torsionfree ring. To give $A$ the structure of a Dieudonn\'e
$\delta$-ring is equivalent to giving an \emph{injective} ring map $F: A \to A$
such that:
\begin{enumerate}
\item  $F(a) \equiv a^p \pmod{pA}$ for all $a \in A$.
\item
$F(A) \supset pA$.
\end{enumerate}
\end{remark}

\begin{remark}[Perfect $\delta$-rings]
\label{perfectdeltaringdcart}
Let $A$ be a perfect $\delta$-ring, i.e., assume that the Frobenius lift
$F:A\to A$ is an isomorphism. Then $A$ has a unique \dcart structure, given by
\[
V(a)=pF^{-1}(a).
\]
Since $A$ is $p$-torsionfree, the last identity follows. 
Moreover, $A$ is a Dieudonn\'e $\delta$-ring. 
\end{remark}

\begin{example}[Witt vectors as a \dcart ring]
For any ring $R$, the Witt vectors $W(R)$ with its usual $\delta$-structure and
Verschiebung is a \dcart ring.
In fact, one only needs to verify the identity for $\delta V$, and this reduces
to the case where $R$ is $p$-torsionfree, where we can use
\Cref{dcart:redundant}.
When $R$ is an $\mathbb{F}_p$-algebra, $W(R)$ is additionally a
\dieu-ring.
\end{example}

\begin{example}[$\chW$ as a \dcart ring]
    \label{chWdcart}
Assume $p>2$. For $R \in \nperf$,
$\chW(R)$ with $\delta$ and $\hV = V$ has the structure of a
\dcart ring, and the map
$\chW(R) \to W(R)$ is a morphism of
\dcart rings.

The usual Witt vectors $W(R)$ form a \dcart ring.
The ideal $\hw(R) \subset W(R)$ is a $\delta$-ideal which is stable under $V$, which implies that the quotient $Q(R) = W(R)/\hw(R)$ inherits the structure of a \dcart ring such that the quotient map $W(R) \to Q(R)$ is a morphism of \dcart rings.
Moreover, \Cref{FhatV} shows that
the Frobenius $F:Q(R)\to Q(R)$, which is a $\delta$-map, commutes with $\hV$, so it is a map of
\dcart rings. Passing to the
perfection defining $\qperf(R)$ gives $\qperf(R)$ a compatible \dcart structure. 
The pullback definition of $\chW(R)$ then gives the \dcart structure on
$\chW(R)$, and the map $\chW(R)\to W(R)$ is a map of \dcart rings by construction.
\end{example}

\begin{remark}[Witt vectors of a $\delta$-ring]
\label{joyalVcoords}
If $A$ is a $\delta$-ring with associated $\delta$-map $w_\delta: A \to W(A)$, then the map
 \[
\prod_{i\geq0} A \longrightarrow W(A),\qquad
(a_i)_{i\geq0}\longmapsto \sum_{i\geq0}V^i(w_\delta(a_i))
\]
is an isomorphism of abelian groups.  Equivalently, every $x\in W(A)$ admits a
unique expansion
\(
x=\sum_{i\geq0}V^i(w_\delta(a_i))\) for a sequence
$ a_i\in A, i \geq 0$.
In particular, this applies to $A=\mathbb{Z}_p$ with its initial $\delta$-ring
structure. 
This follows because $W(A)$ is 
complete and torsionfree with respect to $V$, $W(A)/V \simeq A$, and 
the zeroth Witt coordinate of $w_\delta(a)$ is $a$. 
\end{remark}

\begin{construction}[{Free \dcart rings, cf.~\cite[Cons.~3.4.5]{Magidson}}]
    \label{cons:freedCartring}
Let
$A$ be a $\delta$-ring; we denote the $\delta$-ring Frobenius by $F: A \to A$.
We can form a \dcart ring $A[V]$, which as an $A$-module is given by
\[ A[V] \stackrel{\mathrm{def}}{=} \bigoplus_{i \geq 0}  F_*^i A . \]
We define the operator $V$ on $A[V]$ by shifting from the $i$th summand to the
$(i+1)$st summand, so
as abelian groups
$A[V] = \bigoplus_{i \geq 0} V^i (A)$.
The structure as a commutative ring is determined by the projection
formula. Explicitly, for $0\leq i\leq j$,
\[
V^i(a)V^j(b)=p^iV^j(F^{j-i}(a)b),
\]
and the case $i>j$ follows by commutativity. Finally, $F$ restricts to
the $\delta$-ring Frobenius on the zeroth summand and satisfies
$F(V^i(a)) = p V^{i-1}(a)$, and $\delta$ is determined by $\delta$ on $A$, and the
$\delta$-ring identities
together with the identity for $\delta V$.\footnote{Instead of checking all
of these
identities individually, one can also use \Cref{torsionfreedcartring} in the
$p$-torsionfree case, and then resolve $A$ by $p$-torsionfree $\delta$-rings.}

One checks \cite[Prop.~3.4.7]{Magidson} that $A \mapsto A[V]$ is the left
adjoint to the forgetful functor from \dcart rings to
$\delta$-rings.\footnote{In \emph{loc.~cit.}, this assumes that $A$ is
$p$-torsionfree, but this extends to the general case by left Kan extension.} In
particular, the free \dcart ring on $n$ generators $x_1, \dots, x_n$ is obtained
as $\mathbb{Z}\left\{x_1, \dots, x_n\right\}[V]$, where
$\mathbb{Z}\left\{x_1,\dots, x_n\right\}$ is the free $\delta$-ring on $n$
generators.
\end{construction}

\begin{example}
The initial \dcart ring is given by $\mathbb{Z}[V]$.
\end{example}
\begin{proposition}
\label{chWZpdescription}
 Consider 
$\chW(\mathbb{Z}_p)\stackrel{\mathrm{def}}{=}
\varprojlim_n \chW(\mathbb{Z}/p^n)$. 
If $p > 2$, then 
 the natural map from the $p$-completion of the initial \dcart ring
\begin{equation}
    \label{ZVtochWZp:eq}
\mathbb{Z}[V]^\wedge_p\longrightarrow \chW(\mathbb{Z}_p)
\end{equation}
is an isomorphism. 
For all $p$ (including $p=2$\footnote{However, when $p = 2$, $\chW(\mathbb{Z}_2)$ is not stable under $V$.}), every element of $\chW(\mathbb{Z}_p) \subset W(\mathbb{Z}_p)$ can be
written uniquely as a $p$-adically convergent Verschiebung expansion
\[
\sum_{i\geq 0}V^i(a_i),\qquad a_i\in\mathbb{Z}_p,\quad a_i\to 0.
\]
\end{proposition}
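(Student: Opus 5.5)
We begin by describing $\chW(\mathbb{Z}_p)$ inside $W(\mathbb{Z}_p)$. By Remark~\ref{joyalVcoords}, applied to $A = \mathbb{Z}_p$ with $w_\delta(a)$ the canonical lift, every $x \in W(\mathbb{Z}_p)$ has a unique expansion $x = \sum_{i \geq 0} V^i(a_i)$ with $a_i \in \mathbb{Z}_p$; here $V^i(a_i)$ means $V^i(w_\delta(a_i))$, and $w_\delta$ is the unique $\delta$-map, so $w_\delta(a) = a \cdot 1$. The first step is to show that $\chW(\mathbb{Z}/p^n) \to W(\mathbb{Z}/p^n)$ is injective. Its kernel is $T_F(Q)(\mathbb{Z}/p^n)$, which by \Cref{Visinvertibleon Qinfinity} and the definition of $\qperf$ lies in $\varprojlim_F$ of the kernel of $Q(\mathbb{Z}/p^n) \to W(\mathbb{F}_p)$. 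Moreover $\qperf(\mathbb{Z}/p^n) = \mathbb{Z}_p$ by \Cref{nilinvarianceofqperf}, so the kernel vanishes. Passing to the limit, $\chW(\mathbb{Z}_p) \subset W(\mathbb{Z}_p)$ is the set of $x$ whose image in $Q(\mathbb{Z}/p^n)$ lies in the image of $\mathbb{Z}_p = \qperf$ for every $n$. Concretely, this says $x - c \in \hw(\mathbb{Z}_p)$ for some $c \in \mathbb{Z}_p$, where $\mathbb{Z}_p \subset W(\mathbb{Z}_p)$ via $w_\delta$; here we use $\hw(\mathbb{Z}_p) = \varprojlim \hw(\mathbb{Z}/p^n)$ and the compatibility of the lifts $c$ across $n$. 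So the plan is to prove
\[\chW(\mathbb{Z}_p) = \mathbb{Z}_p + \hw(\mathbb{Z}_p) \subset W(\mathbb{Z}_p).\]

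The key computation is to identify $\hw(\mathbb{Z}_p)$ in Verschiebung coordinates. We claim that $\mathbb{Z}_p + \hw(\mathbb{Z}_p)$ equals the set of $\sum V^i(a_i)$ with $a_i \to 0$, up to the correction at $p = 2$.

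For the inclusion $\supseteq$: for $p > 2$, \Cref{pminusV1} gives $V(1) \equiv p$ modulo $\hw$. Hence $V^i(a) \equiv p^i a$ modulo $\hw(\mathbb{Z}_p)$, since $\hw$ is $V$-stable. A convergent sum $\sum V^i(a_i)$ with $a_i \to 0$ is therefore congruent to $\sum p^i a_i \in \mathbb{Z}_p$. The error $\sum V^i(a_i - V(\cdots))$ converges in $\hw$ because $\hw(\mathbb{Z}_p)$ is closed under such $p$-adically null sums by its definition for pre-adic rings. Alternatively, one checks directly that $V^i(a)$ with $a \in p^m\mathbb{Z}_p$ and the element $V^i(a) - p^i a$ have Witt components in $p^{m'}$ with $m' \to \infty$; this can be verified using \Cref{ghostcrit}.

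For the inclusion $\subseteq$: given $x = c + h$ with $h \in \hw(\mathbb{Z}_p)$, we must show the $V$-coordinates $a_i$ of $x$ tend to $0$. Replacing $x$ by $x - c$, it suffices to treat $h \in \hw$. If $h_i$ denote the Witt components, then $h_i \to 0$ $p$-adically. The relation between Witt components and $V$-coordinates is triangular with integral coefficients, homogeneous of the appropriate weights, and $[a] - a$ lies in $V W$. So each $a_i$ is congruent modulo $p^m$ to a polynomial in $h_0, \dots, h_i$ whose low-index contributions decay. The cleanest route is to use ghost components: $\gh_n(\sum V^i a_i) = \sum_{i \leq n} p^i a_i$, and $h \in \hw$ forces $p^{n + a_n} \mid \gh_n(h)$ with $a_n \to \infty$. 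Taking differences gives $p^n a_n$ divisible by $p^{n + \min(a_n, a_{n-1})}$, hence $a_n \to 0$.

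This ghost-component argument also gives the $\supseteq$ direction, via \Cref{ghostcrit} (with its mod $p$ and mod $4$ conditions). The $p = 2$ case uses \Cref{uinWZ2} in place of \Cref{pminusV1}. Uniqueness of the expansion is inherited from Remark~\ref{joyalVcoords}.

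Finally, for $p > 2$, the map \eqref{ZVtochWZp:eq} sends $\sum V^i(a_i)$, with $a_i \in \mathbb{Z}_p$ and $a_i \to 0$, to the same expression. The $p$-completion of $\mathbb{Z}[V] = \bigoplus V^i\mathbb{Z}$ is exactly the space of null sequences, so the map is a bijection onto the set described above. It is a ring map by the universal property of the initial \dcart ring together with \Cref{chWdcart}.

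The main obstacle is the ghost-component bookkeeping for $p = 2$. There, $V(1) \neq 2$ in $Q$, and \Cref{ghostcrit} requires the extra mod-$4$ condition, which must be checked for $\sum V^i(a_i)$ with $a_i \to 0$.
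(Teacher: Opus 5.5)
Your argument for what the statement actually asserts is correct and is the paper's proof. It has the same ingredients:
\begin{itemize}
\item injectivity of $\chW(\mathbb{Z}/p^n)\to W(\mathbb{Z}/p^n)$ via $\qperf(\mathbb{Z}/p^n)\simeq\mathbb{Z}_p$ (i.e.\ \Cref{chWofArtinian});
\item the description $\chW(\mathbb{Z}_p)=\mathbb{Z}_p+\hw(\mathbb{Z}_p)$;
\item the ghost-difference argument with \Cref{ghostcrit}, giving $a_n\to 0$ for every $p$ (the exponent should read $n+\min(c_n,c_{n-1}-1)$, and you reuse $a_n$ for two different sequences);
\item uniqueness from \Cref{joyalVcoords};
\item for $p>2$, the $\delta$-Cartier map $\mathbb{Z}[V]^\wedge_p\to\chW(\mathbb{Z}_p)$.
\end{itemize}
That last map already puts every null expansion inside $\chW(\mathbb{Z}_p)$, so your separate proof of $\supseteq$ for $p>2$ is redundant. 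Its ``by definition'' closure claim for $\hw(\mathbb{Z}_p)$ under null sums would in any case need the uniform torsion bound of \Cref{torsioninW}.

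The one thing to correct is your treatment of $p=2$. There the inclusion $\supseteq$ is false, so the mod-$4$ check you call the main obstacle cannot be carried out. Suppose $V(1)=c+h$ with $c\in\mathbb{Z}_2$ and $h\in\hw(\mathbb{Z}_2)$. Mapping to $W(\mathbb{F}_2)$, where $V(1)=2$, forces $c=2$. Hence $2-V(1)\in\hw(\mathbb{Z}/4)$, contradicting \Cref{impossibilityinWZ4}. In fact $\chW(\mathbb{Z}_2)$ has index $2$ among the null expansions (\Cref{chWZ2description,cokernelRhVtoRV}). For $V(1)-2$, the mod-$4$ condition of \Cref{ghostcrit} is exactly the one that fails.

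This does not damage the proof. At $p=2$ the statement only claims that elements of $\chW(\mathbb{Z}_2)$ admit a unique such expansion, i.e.\ the inclusion $\subseteq$, and your ghost argument already gives that uniformly in $p$. Just drop the $p=2$ version of $\supseteq$, together with the appeal to \Cref{uinWZ2}.
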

\begin{proof}
For $p>2$, $\chW(\mathbb{Z}_p)=\varprojlim_n\chW(\mathbb{Z}/p^n)$ is a
$p$-complete \dcart ring.  Hence we obtain a unique map of \dcart rings 
as in \eqref{ZVtochWZp:eq}.
Moreover, by \Cref{chWofArtinian}, each map
$\chW(\mathbb{Z}/p^n)\to W(\mathbb{Z}/p^n)$ is injective; passing to the
inverse limit, we get an injection
$\chW(\mathbb{Z}_p)\hookrightarrow W(\mathbb{Z}_p)$.  
The composite map 
$\mathbb{Z}[V]^\wedge_p\to\chW(\mathbb{Z}_p)\hookrightarrow W(\mathbb{Z}_p)$ is injective 
(cf.~\Cref{joyalVcoords})
and  the image consists of the sums
$\sum_iV^i(a_i)$ with $a_i\in\mathbb{Z}_p$ and $a_i\to0$. 
This implies that $\mathbb{Z}[V]^\wedge_p\to\chW(\mathbb{Z}_p)$ is injective. 

It remains to prove surjectivity.  By \Cref{chWofArtinian}, after passing to
the inverse limit we have
\(
\chW(\mathbb{Z}_p)=W(\mathbb{F}_p)\oplus \hw(\mathbb{Z}_p)
\subset W(\mathbb{Z}_p).
\)
The summand $W(\mathbb{F}_p)=\mathbb{Z}_p$ is already in the image of $\mathbb{Z}[V]^\wedge_p$, so it
suffices to show that any $\epsilon\in\hw(\mathbb{Z}_p)$ belongs to this image as well.  By
\Cref{joyalVcoords}, write 
\[
\epsilon=\sum_{i\geq0}V^i(a_i),\qquad a_i\in\mathbb{Z}_p.
\]
If
$\gh_n(\epsilon)$ denotes the $n$th ghost component of $\epsilon$, then
\[
\gh_n(\epsilon)=\sum_{i=0}^n p^ia_i,\qquad
a_n=\frac{\gh_n(\epsilon)-\gh_{n-1}(\epsilon)}{p^n}\quad(n\geq1).
\]
By \Cref{ghostcrit}, there is a sequence $c_n\to+\infty$ such that
$\gh_n(\epsilon)\in p^{n+c_n}\mathbb{Z}_p$.  The displayed formula then gives
$a_n\to 0$, and
hence $\epsilon$ belongs to the image of
$\mathbb{Z}[V]^\wedge_p$.
\end{proof}

For future reference, we record also the following normal form for free Dieudonn\'e \(\delta\)-algebras. In this case, we obtain the normal form when the $\delta$-ring Frobenius is split injective, which is the case for free \(\delta\)-rings.

\begin{proposition}[Normal form for Dieudonn\'e $\delta$-algebras]
    \label{prop:normalformDieudonne}
Let \(B\) be a  \(\delta\)-ring and suppose that the $\delta$-ring Frobenius \(
\varphi:B\longrightarrow \varphi_*B
\)
admits a \(B\)-linear retraction. Choose a complement
\(
\varphi_*B=\varphi(B)\oplus C.
\)
If \(\mathbb D_B\) denotes the free Dieudonn\'e \(\delta\)-algebra under \(B\),
then the map
\[
B\oplus\bigoplus_{n\geq0}\varphi_*^nC
   \longrightarrow \mathbb D_B,
\qquad
(b,(c_n))\longmapsto b+\sum_{n\geq0}V^{n+1}(c_n),
\]
is an isomorphism of \(B\)-modules.
\end{proposition}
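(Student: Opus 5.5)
The plan is to first build an explicit model of $\mathbb D_B$ and then check that the displayed map is an isomorphism onto it. By the adjunction in \Cref{cons:freedCartring}, the free \dcart ring on $B$ is $B[V]=\bigoplus_{i\geq0}V^i(F^i_*B)$. The free Dieudonn\'e $\delta$-algebra under $B$ is the quotient of $B[V]$ by the smallest ideal $J$ that contains $V(1)-p$ and is stable under $\delta$ and $V$. Two identities will drive the argument. First, the projection formula gives $V(F(b))=bV(1)$, which equals $pb$ in $\mathbb D_B$. Iterating, $V^{n+1}(\varphi(b))=pV^n(b)$ in $\mathbb D_B$ for all $n\geq0$. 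Second, in the decomposition $\varphi_*B=\varphi(B)\oplus C$, the summand $\varphi(B)$ is exactly the part on which $V$ collapses to multiplication by $p$.

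\textbf{Surjectivity.} Every element of $\mathbb D_B$ is a finite sum $\sum_i V^i(a_i)$ with $a_i\in B$; it is the image of such an element of $B[V]$, since $\mathbb D_B$ is a quotient of $B[V]$. Each $a_i$ with $i\geq1$ decomposes as $a_i=\varphi(b_i)+c_i$ with $c_i\in C$. Then $V^i(a_i)=pV^{i-1}(b_i)+V^i(c_i)$. This lowers the $V$-degree of the non-$C$ part, so a descending induction on the top index $i$ rewrites every element in the form $b+\sum_n V^{n+1}(c_n)$.

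\textbf{Injectivity.} This is the main obstacle. The approach is to construct $\mathbb D_B$ directly as a candidate and verify the axioms, rather than to analyze $J$. Put $D=B\oplus\bigoplus_{n\geq0}\varphi^n_*C$, and define $V:D\to D$ by the rule
\[
V(b_0,(c_n))=\bigl(p\pi(b_0),\ (\sigma(b_0),c_0,c_1,\dots)\bigr),
\]
where $b_0=\varphi(\pi(b_0))+\sigma(b_0)$ is the decomposition of $b_0$, so that $\pi$ is the given $B$-linear retraction and $\sigma$ is the projection to $C$. Define $F$ on the summands by $F(V^{n+1}c)=pV^n(c)$. The multiplication is forced by the projection formula: $V^{m+1}(c)\,V^{n+1}(c')$ is rewritten as $p^{m+1}V^{n+1}(F^{n-m}(c)c')$, and then the right-hand side is reduced to normal form using the rule for $V$ above. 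The $\delta$-structure is forced by \eqref{deltaVidentity}.

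Checking ring, $\delta$ and \dcart axioms on $D$ by hand is unpleasant, so I would argue by the standard torsionfree trick. First treat $B$ $p$-torsionfree with $\varphi$ injective (e.g.\ $B$ a free $\delta$-ring). In that case, by \Cref{ptorsionfreeDieudonnedeltaring}, a Dieudonn\'e $\delta$-structure amounts to an injective Frobenius with image containing $p$. One realizes $D$ inside $B[1/p]^{\mathbb N}$ via ghost-type coordinates: $V^{n+1}(c)$ is sent to $c/p^{\,n+1}$ twisted by $\varphi^{-(n+1)}$ in the colimit perfection $\varinjlim_\varphi B[1/p]$. Concretely, $D$ is identified with the subring of $B_{\mathrm{perf}}[1/p]$ generated by $B$ and the elements $p\varphi^{-k}(b)$. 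The map $D\to B_{\mathrm{perf}}[1/p]$ is injective because the $\varphi^{-n}(C)$-parts of distinct levels are independent; this independence is exactly where the splitting $\varphi_*B=\varphi(B)\oplus C$ is used. Since this subring is visibly a Dieudonn\'e $\delta$-ring, it receives the universal map from $\mathbb D_B$. Composing with the surjection from the first step shows that the displayed map is injective.

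For a general $B$, I would reduce to this case by writing $B$ as a reflexive coequalizer of free $\delta$-rings. Both sides of the statement commute with sifted colimits; here the splitting is assumed to be functorial, or one works with a chosen simplicial resolution compatible with the retraction. This base-change compatibility is where I expect the real difficulty, and if it fails I would instead verify the axioms on $D$ directly by reduction to universal identities.
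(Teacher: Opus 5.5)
Your strategy is the paper's. Surjectivity comes from $V^{i}(\varphi(b))=V^{i-1}(bV(1))=pV^{i-1}(b)$ plus descending induction, and that half is fine as written. Injectivity comes from composing with the universal map $\mathbb D_B\to B_{\mathrm{perf}}=\varinjlim_\varphi B$, which is a Dieudonn\'e $\delta$-ring with $V=pF^{-1}$ by \Cref{perfectdeltaringdcart}, and checking that the composite is injective on $B\oplus\bigoplus_n\varphi^n_*C$. As written, though, the injectivity step has errors that need fixing, and two parts of your plan can be deleted.

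\emph{The target and the independence check.} $V^{n+1}(c)$ maps to $p^{n+1}\varphi^{-(n+1)}(c)$, not to $c/p^{n+1}$. The subring generated by $B$ and the elements $p\varphi^{-k}(b)$ is not a $\delta$-subring. For $B=\mathbb{Z}_p[x]$ with $\varphi(x)=x^p$, one gets $\delta(px^{1/p^2})=(1-p^{p-1})x^{1/p}$, which lies outside it, so it is not ``visibly'' Dieudonn\'e. You need neither a subring nor $[1/p]$: map to $B_{\mathrm{perf}}$ itself. The ``independence of levels'' is then this check. Suppose $b+\sum_{n=0}^{N}p^{n+1}\varphi^{-(n+1)}(c_n)=0$. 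Apply $\varphi^{N+1}$ and use that $B\to B_{\mathrm{perf}}$ is injective to get $\varphi(y)+p^{N+1}c_N=0$ in $B$, where $y=\varphi^N(b)+\sum_{n<N}p^{n+1}\varphi^{N-n-1}(c_n)$. Since $C$ is a $B$-submodule of $\varphi_*B$, the element $p^{N+1}c_N$ lies in $C$, so both terms vanish. Hence $c_N=0$ because $B$ is $p$-torsionfree, and $y=0$ because $\varphi$ is injective. But $y=0$ is the same relation with $N-1$ in place of $N$, so you can induct down to $b=0$.

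\emph{What to delete.} Drop the final reduction to free $\delta$-rings. The hypothesis already puts you in your ``torsionfree case'': a retraction makes $\varphi$ injective, so $B$ is $p$-torsionfree by \Cref{frobinjtorsionfree} and $B\to B_{\mathrm{perf}}$ is injective. That is all the check uses. The reduction would also fail on its own terms, as you suspected. The left-hand side depends on a non-functorial choice of $C$, and split injectivity of $\varphi$ is not preserved by colimits. Finally, your explicit definition of $V$, $F$, the product and $\delta$ on $D$ is never used and can go.
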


\begin{proof}
Using the relation $VF = p$, 
it is straightforward to see that the displayed map is surjective, i.e., that its image is a Dieudonn\'e \(\delta\)-subalgebra of \(\mathbb D_B\) containing \(B\).
Thus, it remains only to prove injectivity. 
The perfection $B_{\mathrm{perf}}$ of $B$ is a perfect $\delta$-ring, and hence a Dieudonn\'e $\delta$-ring by \Cref{perfectdeltaringdcart}.
 One checks that the resulting map
\(\mathbb D_B\to B_{\mathrm{perf}}\) 
restricts to an injection on 
$B \oplus\bigoplus_{n\geq0}\varphi_*^nC$, whence the result.
\end{proof}

\begin{proposition}
\label{BtoBVtautequivalence}
Let $p > 2$, and let $B$ be any $\delta$-ring.
The map 
$L_B[F^{-1}] \to L_{B[V]}[F^{-1}]$ is an isomorphism on 1-truncations. 
In particular, the natural map $B \to B[V]$  is a taut equivalence. 
\end{proposition}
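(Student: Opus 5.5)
The plan is to reduce to the case where $B$ is a free $\delta$-ring, and then to show that the relative taut cotangent complex $\lt_{B[V]/B}$ vanishes in homological degrees $\leq 1$. Both $B \mapsto L_B$ and $B\mapsto L_{B[V]}$, with their Frobenius-module structures from \Cref{cotangentasFrobmodule}, commute with sifted colimits, and $B \mapsto B[V]$ is a left adjoint (\Cref{cons:freedCartring}), so it also commutes with sifted colimits. Resolving $B$ simplicially by free $\delta$-rings, which are $p$-torsionfree polynomial rings, therefore reduces the first assertion to such $B$. By the transitivity triangle $L_B\otimes_B B[V]\to L_{B[V]}\to L_{B[V]/B}$, and since inverting $F$ only depends on the colimit perfections (which agree for $B$ and $B[V]$, cf.\ the second step below), the claim becomes: $L_{B[V]/B}[F^{-1}]$ vanishes after $1$-truncation. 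By derived Nakayama it suffices to prove this after $-\otimes^{\mathbb{L}}\mathbb{F}_p$.

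Next I would describe $B[V]$ concretely. Write $I=\bigoplus_{i\geq1}V^i(B)\subset B[V]$, so that $B[V]=B\oplus I$ as $B$-modules. The formula $V^i(a)V^j(b)=p^iV^j(F^{j-i}(a)b)$ shows $I^2\subset pI$. Hence $B[V]/p\simeq B/p\oplus I/p$ is a trivial square-zero extension, and in particular $(B/p)_{\mathrm{perf}}\xrightarrow{\sim}(B[V]/p)_{\mathrm{perf}}$, giving condition (1) of a taut equivalence. As a $B$-algebra, $B[V]$ is generated by the elements $V^i(a)$ with $a\in B$ and $i\geq1$.

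To compute $L_{B[V]/B}$ with its Frobenius, I would present $B[V]$ as a quotient of the free $\delta$-$B$-algebra $P$ on symbols $y_{i,a}\mapsto V^i(a)$. The relations are:
\begin{itemize}
\item additivity and the projection-formula products;
\item the relation $\delta(V^i a)=V^{i-1}(a)-p^{p-2}V^i(a^p)$ coming from \eqref{deltaVidentity}, where $F V^i(a) = pV^{i-1}(a)$ and $V^0(a)=a\in B$.
\end{itemize}
By \eqref{frobeniusonomega1}, the Frobenius on $\Omega^1$ satisfies
\[
F(d\,V^i a)=d\,\delta(V^ia)+(V^ia)^{p-1}\,dV^ia .
\]
Since $p>2$, the coefficient $p^{p-2}$ is divisible by $p$, and $(V^ia)^{p-1}\in I^{p-1}\subset pB[V]$. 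Working mod $p$, this gives $F(dV^ia)\equiv dV^{i-1}a$. For $i=1$ the right side is $da$, which vanishes relative to $B$. So $F$ acts on $\Omega^1_{B[V]/B}/p$ by lowering the $V$-degree, and every element is killed by a finite power of $F$. Thus $\pi_0(L_{B[V]/B}/p)[F^{-1}]=0$. This is exactly where $p>2$ enters: at $p=2$ the term $p^{p-2}V(a^p)=V(a^2)$ survives mod $p$.

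The main obstacle is the same statement for $\pi_1$, and this is where I expect the real work. I would choose the presentation above compatibly with the grading by $V$-degree (put $y_{i,a}$ in degree $i$). Then the relation module, which computes $\pi_1$ of $L_{B[V]/B}\otimes\mathbb{F}_p$ together with the Koszul-type terms coming from $I^2\subset pI$, carries an induced Frobenius that again strictly decreases the degree mod $p$. The plan is to check this on the two types of relations:
\begin{itemize}
\item the multiplication relations $y_{i,a}y_{j,b}-p^iy_{j,F^{j-i}(a)b}$;
\item the $\delta$-relations.
\end{itemize}
In each case $F$ carries a degree-$i$ relation to a degree-$(i-1)$ relation, up to terms divisible by $p$. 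It follows that $F$ is locally nilpotent on $\pi_1(L_{B[V]/B}/p)$, so its perfection vanishes.

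For the second assertion, vanishing of $\lt_{B[V]/B}$ in degrees $\leq1$ together with the perfection isomorphism identifies maps out of $\lt_{B}$ and out of $\lt_{B[V]}$ into any $M[0]$ or $M[1]$, with $M$ a perfect $p$-torsion Frobenius module. By \Cref{classification:squarezero} and the remark following the definition of taut equivalences, this gives condition (2), so $B\to B[V]$ is a taut equivalence.
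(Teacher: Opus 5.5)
Your reduction to free $B$, the comparison of perfections, and the $\pi_0$ computation are fine: for $p>2$ one does get $F(dV^ia)\equiv dV^{i-1}a \pmod p$, so $F$ is locally nilpotent on $\Omega^1_{B[V]/B}/p$. The gap is the $\pi_1$ step. This is the real content of the proposition, and the mechanism you describe cannot work.

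Put $\mathcal{I}=\ker(P\to B[V])$. Since $B[V]$ is $p$-torsionfree and $\Omega^1_{P/B}\otimes_P B[V]$ is free, the transitivity triangle for $B\to P\to B[V]$ gives a Frobenius-equivariant exact sequence
\[
0\to \pi_1(L_{B[V]/B}\otimes^{\mathbb{L}}\mathbb{F}_p)\to (\mathcal{I}/\mathcal{I}^2)/p\to (\Omega^1_{P/B}\otimes_P B[V])/p\to \Omega^1_{B[V]/B}/p\to 0,
\]
where $F$ acts on $\mathcal{I}/\mathcal{I}^2$ through $\delta$. So $\pi_1$ is only the kernel of the middle map, not the relation module. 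Worse, the third term is a free Frobenius module over $B[V]/p$ (cf.~\Cref{cotangentoffreedelta}), so its perfection is nonzero, as $(B/p)_{\mathrm{perf}}\neq 0$. The perfection of the fourth term vanishes by your own $\pi_0$ computation. Since perfection is exact, $(\mathcal{I}/\mathcal{I}^2)/p$ has nonzero perfection, i.e.\ $F$ is \emph{not} locally nilpotent on the relations.

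You can see this directly. If $\rho_{i,a}=\delta(y_{i,a})-y_{i-1,a}+\cdots$ is a $\delta V$-relation, then $\delta(\rho_{i,a})$ contains the free generator $\delta^2(y_{i,a})$ of $P$ with coefficient $1$. So it is not a lower-degree relation modulo $p$. Moreover, neither the product relations nor the $\delta V$-relations are homogeneous in the $V$-degree, so there is no grading for $F$ to lower. What must actually be shown is that the middle map becomes injective after inverting $F$, and the proposal gives no argument for this.

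The paper avoids computing $\pi_1$ altogether. It works with the $\delta$-subrings $B[V]_{\le n}=\bigoplus_{i\le n}F^i_*B$. These are flat over $B$, and $F^n$ on them factors through $B$, so $(pF)^n$ kills $L_{B[V]_{\le n}/B}$. By \Cref{tautequivdiagonalcriterion} it then suffices that the multiplication map $B[V]_{\le n}\otimes_B B[V]_{\le n}\to B[V]_{\le n}$ is a taut equivalence. This follows from \Cref{topdnilquotienttautequiv} and \Cref{topdnilnonunitalcriterion}: the kernel $J$ is generated by $\lambda_{j,x}=V^j(x)\otimes 1-1\otimes V^j(x)$, which are killed by $F^j$ and satisfy $\gamma_p(\lambda_{j,x})\in pJ$ (this is where $p>2$ enters).

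The filtration also fixes a second problem in your write-up. $L_{B[V]/B}[F^{-1}]$ has unbounded $p$-power torsion and need not be derived $p$-complete, so derived Nakayama does not reduce the uncompleted statement to $-\otimes^{\mathbb{L}}\mathbb{F}_p$; a copy of $\mathbb{Q}_p/\mathbb{Z}_p$ in $\pi_1$ is invisible modulo $p$ in degrees $\le 1$. By contrast, $L_{B[V]_{\le n}/B}[F^{-1}]$ is killed by $p^n$, hence coincides with $\lt_{B[V]_{\le n}/B}$, and one then passes to the colimit over $n$.
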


\begin{proof}
By taking resolutions, we may assume that $B$ is a free $\delta$-ring over $\mathbb{Z}_{(p)}$.
In this case, because $L_B$ is concentrated in degree zero, it suffices to show that the 1-truncation of $L_{B[V]_{\leq n}/B}[F^{-1}]$ vanishes. 

We prove the result more generally for the filtered pieces $B[V]_{\leq n} = \bigoplus_{i=0}^n F_*^i B$ of $B[V]$, which are $\delta$-subrings.
Passing to the colimit gives the result for $B[V]$. 

The $n$th power of Frobenius on $B[V]_{\leq n}$ factors through $B$. 
In particular, the map $B \to B[V]_{\leq n}$ induces an isomorphism on perfections. 
This in particular implies that the transitivity triangle for cotangent complexes gives a cofiber sequence 
$L_{B}[F^{-1}] \to L_{B[V]_{\leq n}}[F^{-1}]  \to L_{B[V]_{\leq n}/B}[F^{-1}].$
Moreover, it implies that $(pF)^n$ annihilates $L_{B[V]_{ \leq n}/B}$, since $pF$ is the map induced by functoriality of the Frobenius. It follows that $L_{B[V]_{ \leq n}/B}[F^{-1}]$ is annihilated by $p^n$ and therefore agrees with its $p$-completion $\lt_{B[V]_{ \leq n}/B}$. 
We will show that the 1-truncation of $\lt_{B[V]_{ \leq n}/B}$ vanishes, which will imply the result.

Since $B$ is assumed to be a free 
$\delta$-ring, 
$B[V]_{\leq n}$ is flat over $B$, and 
$\lt_B$ is concentrated in degree zero. 
By \Cref{tautequivdiagonalcriterion}, it suffices to show that 
$B[V]_{\leq n} \otimes_B B[V]_{\leq n} \to B[V]_{\leq n}$ is a taut equivalence.
For this, it suffices by \Cref{topdnilquotienttautequiv} to show that the kernel $J$ of
\[
B[V]_{\leq n}\otimes_B B[V]_{\leq n}\to B[V]_{\leq n}
\]
consists of topologically $\delta$-nilpotent elements.

We apply \Cref{topdnilnonunitalcriterion} to the nonunital $\delta$-ring $J$.
The kernel $J$ is generated by the classes
\[
\lambda_{j,x}=V^j(x)\otimes 1-1\otimes V^j(x),
\qquad x\in B,\ 1\leq j\leq n.
\]
It follows that $J$ has divided powers, since $J$ is a $\delta$-ideal and
$F(J)\subset pJ$, cf.~\cite[\S 2.5]{bhattscholze}. Moreover, $\lambda_{j,x}$ is annihilated by $F^j$, so to
conclude it suffices to show that its divided powers are topologically ($p$-adically) nilpotent. In fact,
\[
\lambda_{j,x}^p
=
V^j(x)^p\otimes 1-1\otimes V^j(x)^p
+\sum_{i=1}^{p-1}\binom{p}{i}(-1)^{p-i}V^j(x)^i\otimes V^j(x)^{p-i}.
\]
This is divisible by $p^2$, whence $\gamma_p(\lambda_{j,x})\in pJ$.
Thus the divided powers of the generators tend $p$-adically to zero, and the
result follows from the criterion.
The general case follows by resolving $B$ by free $\delta$-rings.

\end{proof}

\begin{proposition}[Injectivity of $V$]
\label{Visinjective}
Let $A \in \ddcart$.
Then $V: A \to A$ is injective.
\end{proposition}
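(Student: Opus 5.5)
The plan is to take $a \in A$ with $V(a) = 0$ and show $a = 0$ directly from the three axioms of \Cref{dcartrings}, together with the standard identity $\delta(px) = \phi(x) - p^{p-1}x^p$ already used in the proof of \Cref{torsionmeansnilpotent}. First, axiom (1) gives $pa = F(V(a)) = F(0) = 0$, so $a$ is $p$-torsion. Second, since $\delta(0) = 0$, axiom (3) applied to $a$ gives
\[
0 = \delta(V(a)) = a - p^{p-2}V(a^p), \qquad\text{so}\qquad a = p^{p-2}V(a^p).
\]
The remaining work is to show that the right-hand side vanishes. I would split into the cases $p > 2$ and $p = 2$.

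\emph{Case $p > 2$.} Here $p - 2 \geq 1$, so I write $p^{p-2}V(a^p) = p^{p-3}\cdot pV(a^p)$. By additivity of $V$, $pV(a^p) = V(pa^p) = V\big((pa)a^{p-1}\big) = V(0) = 0$. Hence $a = 0$. This case needs only axioms (1) and (3).

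\emph{Case $p = 2$.} Here the relation reads $a = V(a^2)$, and there is no spare factor of $p$, so I would use the projection formula (axiom (2)) and control $F(a)$.
\begin{enumerate}
\item From $2a = 0$ and the identity $\delta(2a) = \phi(a) - 2a^2$, I get $F(a) = \phi(a) = 2a^2 = (2a)a = 0$.
\item Multiplying $a = V(a^2)$ by $a$ and applying axiom (2) gives $a^2 = a\,V(a^2) = V(F(a)\,a^2) = V(0) = 0$.
\item Substituting back, $a = V(a^2) = V(0) = 0$.
\end{enumerate}

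The only mildly delicate point is the $p = 2$ case, where one must first show that $F(a)$ vanishes before the projection formula can kill $a^2$. The identity for $\delta(px)$ handles this, and no completeness or torsion-freeness hypotheses are needed. So I do not expect a genuine obstacle: the argument is a short manipulation of the axioms, valid for an arbitrary \dcart ring.
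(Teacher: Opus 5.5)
Your proof is correct and matches the paper's: you get $pa=0$ from $FV=p$ and $a=p^{p-2}V(a^p)$ from the $\delta V$ identity, and you handle the case $p>2$ the same way. At $p=2$ the paper instead squares $a=V(a^2)$ and uses $V(x)^2=V(2x^2)$ to get $a^2=V(2a^4)=0$; your route via $F(a)=0$ (\Cref{torsionmeansnilpotent}(2)) and the projection formula is an equally short variant, and is the same device the paper uses later for $\hV$ in \Cref{hVisinjective}.
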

\begin{proof}
Suppose $V(a) = 0$.
Then we find that $F(V(a)) = pa = 0$.
Moreover, we find
\[0 = \delta(V(a)) = a - p^{p-2} V(a^p)  .\]
For $p >2$, this already yields $a = 0$: indeed $pa=0$ implies
$pa^p=0$, so
\[
pV(a^p)=V(pa^p)=0,
\]
and therefore $p^{p-2}V(a^p)=0$.
For $p = 2$, we obtain
$a = V(a^2)$. Since $2a=0$, also $2a^4=0$. Squaring the equality and using
$V(x)^2=V(2x^2)$ at $p=2$, we get
\[
a^2=V(a^2)^2=V(2a^4)=0.
\]
Thus $a=V(a^2)=0$.
\end{proof}

\begin{lemma}
\label{deltaonVcongruence}
Let $A$ be a \dcart ring, and let $i \geq 1$.
Suppose $a_1, a_2 \in A$ are congruent modulo $V^i(A)$.
Then $\delta(a_1), \delta(a_2)$ are congruent modulo $V^{i-1}(A)$.
Moreover, the induced map
\[ \delta: V^i(A)/V^{i+1}(A) \to V^{i-1}(A)/V^{i}(A)  \]
is the inverse to the isomorphism
$V: V^{i-1}(A)/V^{i}(A) \xrightarrow{\sim}
V^i(A)/V^{i+1}(A)$.
\end{lemma}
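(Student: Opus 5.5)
The plan is to reduce everything to the behavior of $\delta$ on a single element $V^i(b)$ together with the $\delta$-ring addition law. Write $a_2 = a_1 + V^i(b)$ with $b \in A$. The addition formula for $\delta$ gives
\[
\delta(a_1 + V^i(b)) = \delta(a_1) + \delta(V^i(b)) - \sum_{k=1}^{p-1} \tfrac{1}{p}\binom{p}{k} a_1^k V^i(b)^{p-k}.
\]
The last sum is an integer combination of products $a_1^k V^i(b)^{p-k}$ with $p-k \geq 1$. Since $V^i(A)$ is an ideal by the projection formula, $a V^i(b) = V^i(F^i(a) b)$, every one of these terms lies in $V^i(A) \subset V^{i-1}(A)$. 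This reduces the first claim to showing that $\delta(V^i(b)) \in V^{i-1}(A)$. The second claim likewise reduces to showing $\delta(V^i(b)) \equiv V^{i-1}(b) \pmod{V^i(A)}$, since the correction terms already lie in $V^i(A)$.

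Next I would apply identity \eqref{deltaVidentity} to $c = V^{i-1}(b)$:
\[
\delta(V^i(b)) = \delta(V(c)) = V^{i-1}(b) - p^{p-2} V\bigl(V^{i-1}(b)^p\bigr).
\]
It then suffices to check that $V(V^{i-1}(b)^p) \in V^i(A)$. For $i = 1$ this is immediate, since it is $V(b^p)$. For $i \geq 2$ one can use the remark $V^{i-1}(x)V^{i-1}(y) = p^{i-1}V^{i-1}(xy)$, which gives $V^{i-1}(b)^p = p^{(i-1)(p-1)}V^{i-1}(b^p) \in V^{i-1}(A)$. Hence $V(V^{i-1}(b)^p) \in V^i(A)$. (For $p=2$ the factor $p^{p-2}$ is $1$, and the argument is the same.)

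Combining the two steps gives $\delta(a_2) \equiv \delta(a_1) + V^{i-1}(b) \pmod{V^i(A)}$. This proves the congruence modulo $V^{i-1}(A)$. Specializing to $a_1 = 0$ and $a_2 = V^i(b)$, and using $\delta(0) = 0$, shows that the map $V^i(A)/V^{i+1}(A) \to V^{i-1}(A)/V^i(A)$ induced by $\delta$ is well defined by the first part applied with $i+1$. It also shows that this map sends the class of $V(V^{i-1}(b))$ to the class of $V^{i-1}(b)$, so it is a left inverse of the map induced by $V$.

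The map $V: V^{i-1}(A)/V^i(A) \to V^i(A)/V^{i+1}(A)$ is surjective by definition. It is injective because $V$ is injective (\Cref{Visinjective}): if $V(x) = V^{i+1}(y)$, then $x = V^i(y)$. So it is an isomorphism, and the left inverse is its inverse. It also follows that $\delta$ induces an additive map on these quotients, which is consistent with the first paragraph.

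The only delicate point is checking that the cross terms in the addition formula and the $p^{p-2}V(\cdot^p)$ term lie in $V^i(A)$ rather than just $V^{i-1}(A)$. The projection formula handles both, so I expect no real obstacle.
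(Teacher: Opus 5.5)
Your proposal is correct and follows essentially the same route as the paper. Both arguments use the addition formula for $\delta$ to reduce to $\delta(b)$ for $b \in V^i(A)$, and then apply the identity $\delta(V(c)) = c - p^{p-2}V(c^p)$, which gives $\delta(V(c)) \equiv c \pmod{V^i(A)}$ for $c \in V^{i-1}(A)$. Your added check that $V$ is an isomorphism on the graded pieces, via \Cref{Visinjective}, just spells out what the statement takes for granted.
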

\begin{proof}
In fact,
suppose $a_2 = a_1 + b$.
Then $\delta(a_2) = \delta(a_1) + \delta(b)  -  \sum_{k=1}^{p-1} \frac{1}{p}
\binom{p}{k} a_1^{k} b^{p-k}$,
so it suffices to show that
$\delta(b) \in V^{i-1}(A)$.

This is clear when $i = 1$.
For $i > 1$, write $b = V(b')$ with $b' \in V^{i-1}(A)$. Then
$\delta(b) = b' - p^{p-2}V( b'^p) $, which is congruent to $b'$ modulo $V^i(A)$.
\end{proof}

\begin{theorem}
\label{automaticV}
Let $A, B \in \ddcart$. Consider a map of $\delta$-rings $f: A \to B$ such that
$f$ carries the ideal $V(A) \subset A$ into the ideal $V(B) \subset B$.
If $p > 2$, $f$ is a map in $\ddcart$ (i.e., $f$ respects $V$).
If $p = 2$ and $f$ additionally sends $V(1)$ to $V(1)$ or if $B$ is $V$-adically separated (i.e., $\bigcap_{i \geq 0} V^i(B) = 0$), then $f$ is a map in
$\ddcart$.
\end{theorem}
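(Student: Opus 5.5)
The plan is to measure the failure of $f$ to commute with $V$ and show that this defect vanishes. Since $f(V(A)) \subset V(B)$, for each $a \in A$ we can write $f(V(a)) = V(c)$ with $c \in B$, and $c$ is unique by \Cref{Visinjective}. Set
\[
d(a) \stackrel{\mathrm{def}}{=} c - f(a),
\]
so that $f(V(a)) = V(f(a)) + V(d(a))$. We need to show $d(a)=0$.

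First I record the formal properties of $d$.
\begin{enumerate}
\item Applying $F$ to $f(V(a)) = V(c)$ and using that $f$ commutes with $F$ gives $p f(a) = pc$. Hence $p\,d(a) = 0$.
\item The map $a \mapsto d(a)$ is additive, by injectivity of $V$.
\item By the projection formula on both sides, $e(a) \stackrel{\mathrm{def}}{=} f(V(a)) - V(f(a))$ satisfies $e(F(x)y) = f(x)\,e(y)$. Consequently $d(F(x)y) = f(x)\,d(y)$, again by injectivity of $V$.
\end{enumerate}

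Next I apply $\delta$ to $f(V(a)) = V(c)$ and use \eqref{deltaVidentity} on both sides:
\[
f(a) - p^{p-2} f(V(a^p)) \;=\; c - p^{p-2} V(c^p).
\]
Write $f(V(a^p)) = V(c')$ with $c' = f(a^p) + d(a^p)$. Since $p\,d(a)=0$, the binomial expansion gives $c^p = f(a)^p + d(a)^p$; the mixed terms carry a factor of $p$ and so die. Substituting, we get
\[
d(a) \;=\; p^{p-2}\, V\bigl(d(a)^p - d(a^p)\bigr).
\]
The element $d(a)^p - d(a^p)$ is $p$-torsion. For $p>2$ the coefficient $p^{p-2}$ has at least one factor of $p$, and $pV(x) = V(px) = 0$ for $p$-torsion $x$, so $d(a)=0$. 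This settles $p>2$.

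For $p=2$ the identity reads $d(a) = V\bigl(d(a)^2 - d(a^2)\bigr)$, and the argument splits by hypothesis.
\begin{itemize}
\item If $B$ is $V$-adically separated, I induct on $i$ to show $d(x) \in V^i(B)$ for all $x$. If every $d(x)$ lies in $V^i(B)$, then both $d(a)^2$ and $d(a^2)$ do, so the identity puts $d(a)$ in $V^{i+1}(B)$. Separatedness then forces $d = 0$.
\item If instead $f(V(1)) = V(1)$, then $d(1) = 0$, so $e(1)=0$. Property (3) gives $d(F(a)) = f(a)\,d(1) = 0$.
\begin{itemize}
\item Writing $a^2 = F(a) - 2\delta(a)$, additivity and $2d = 0$ give $d(a^2) = 0$.
\item Moreover $d(a)^2 = V(\cdots)^2 = V(2(\cdots)^2) = 0$, using $V(x)^2 = V(2x^2)$ at $p=2$.
\item Hence $d(a) = V(0) = 0$.
\end{itemize}
\end{itemize}

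I expect the main obstacle to be the bookkeeping in the $\delta$-computation: checking that $c^p$ simplifies as claimed, and handling the $2$-primary case, where the $p^{p-2}$ factor disappears and the extra hypotheses are genuinely needed.
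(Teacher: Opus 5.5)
Your proposal is correct and is essentially the paper's proof: your defect $d$ is the paper's $h = g - f$. You derive the same identity $d(a) = p^{p-2}V\bigl(d(a)^p - d(a^p)\bigr)$ by applying $F$ and $\delta$, and you handle the three cases ($p>2$, $V$-adically separated $B$, and $f(V(1))=V(1)$) exactly as the paper does.

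One small slip: from $e(F(x)y)=f(x)\,e(y)$, the projection formula and injectivity of $V$ give $d(F(x)y)=F(f(x))\,d(y)$, not $f(x)\,d(y)$. Since you only use this with $y=1$ and $d(1)=0$, the conclusion $d(F(a))=0$ is unaffected.
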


This result admits a simple, abstract proof using \Cref{BtoBVtautequivalence}. 
We give an elementary, computational proof here.  The more abstract proof will be given in the case $p = 2$ below (\Cref{automatichV}), where an elementary proof is more difficult.  The analogous proof also works for $p > 2$.

\begin{proof}
By assumption and \Cref{Visinjective}, there is a map (necessarily additive) $g:
A \to B$ such that
for each $a \in A$,
\begin{equation} \label{fVVg}
f( V(a)) = V( g(a)).
\end{equation}
Applying $F$ to both sides and using that $F$ commutes with $f$, we find $pf =
pg$.
We apply $\delta$ to both sides, noting that $f$ commutes with $\delta$; thus,
we find
\[ f( a - p^{p-2} V(a^p)) = g(a) - p^{p-2} V(g(a)^p).  \]
Moreover, by \eqref{fVVg}, we can write the left-hand side as
$f(a) - p^{p-2} V(g(a^p))$. So we get the equation in $g$,
\begin{equation}  g(a) = f(a) + p^{p-2}  V(g(a)^p - g(a^p)).  \end{equation}
Writing $g = f+ h$ and using $ph = 0$ and $f(a)^p = f(a^p)$, we find
the identity
\begin{equation} \label{hfunctionaleq} h(a) = p^{p-2}V  (  h(a)^p - h(a^p) ).
\end{equation}
When $p > 2$,
this already gives $h = 0$ since we have $ph = 0$. 

If $p = 2$ and $B$ is $V$-adically separated, then  \eqref{hfunctionaleq} also implies that $h =0$. 
Suppose now $p = 2$\footnote{This is also a consequence of \Cref{dcartisdcartwoplusepsilon} and \Cref{automatichV} below.} and we have $f(V(1)) =
V(1)$.
In \eqref{fVVg}, we replace $a$ by $Fa$.
Then
\[
V(g(F(a)))=f(V(F(a)))=f(V(1)a)=V(1)f(a)
=V(F(f(a)))=V(f(F(a))),
\]
which gives
 $g \circ F = f \circ F$.
This means that $h$ vanishes on the image of $F$.
Since $2h = 0$ and $h$ is additive, this means that $h$ also vanishes on $p$th powers, whence we get
$h(a) =  V(h(a)^2)$.
Substituting this identity into itself, we get
	$h(a) = V( V(h(a)^2)^2) =0 $, using again $2h =0$.
	\end{proof}

\begin{example}[Preserving the Verschiebung ideal is not enough at $p=2$]
\label{VidealnotVmap}
Consider the $\delta$-Cartier ring $Q( \mathbb{Z}/4) = W( \mathbb{Z}/4)/\hw(\mathbb{Z}/4)$. 
The $\delta$-Cartier structure on $W( \mathbb{Z}/4)$ descends to $Q( \mathbb{Z}/4)$. 
Consider the map 
$F: Q( \mathbb{Z}/4) \to Q( \mathbb{Z}/4)$ given by the $\delta$-ring Frobenius. 
Then $F$ is a $\delta$-ring map that preserves the ideal $V(Q( \mathbb{Z}/4)) = \hV( Q( \mathbb{Z}/4))$ since $F, \hV$ commute (\Cref{FhatV}).
However, $F$ does not commute with $V$ since 
$F(V(1)) = 2 \neq V(F(1)) = V(1) \in Q( \mathbb{Z}/4)$ by \Cref{impossibilityinWZ4}, so $F$ is not a map of   $\delta$-Cartier rings. 
\end{example}

We now record the following result, which is a special case of the results of \cite{Magidson} on derived $\delta$-Cartier rings; for convenience, we include an elementary proof for classical rings.

\begin{definition}[$V$-completeness]
Let \(A\) be a \dcart ring.  We say
that \(A\) is \emph{\(V\)-complete} if
\[
A \xrightarrow{\sim}\varprojlim_n A/V^n(A).
\]
For any \dcart\ ring \(B\), its \emph{\(V\)-completion} is
\(\varprojlim_n B/V^n(B)\). 
\end{definition}

\begin{theorem}[{The $V$-complete Cartier theorem, cf.~\cite[Theorem~1.2.10]{Magidson}}]
\label{Vcompletecartierwitt}
The functor $A\mapsto A/V$ from \dcart rings to rings commutes with colimits.
Its right adjoint carries a ring $R$ to the \dcart ring $W(R)$. Moreover,
$R\mapsto W(R)$ induces an equivalence between commutative rings and
$V$-complete \dcart rings.
\end{theorem}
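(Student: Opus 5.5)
We prove the three assertions in order: first that $W(R)$ represents the right adjoint, then that $A \mapsto A/V$ preserves colimits, and finally the equivalence with $V$-complete objects.

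\textbf{Step 1: the adjunction.} Fix a \dcart ring $A$ and a ring $R$. Composition with the projection $W(R)\to R$ gives a map $\Hom_{\ddcart}(A,W(R))\to\Hom(A,R)$. Since $V(A)$ maps into $VW(R)$, which is the kernel of $W(R)\to R$, this map factors through $\Hom(A/V,R)$. By Joyal's theorem (\Cref{joyal:witt}), a ring map $A\to R$ corresponds to a unique $\delta$-map $A\to W(R)$. The remaining claim is that this $\delta$-map is automatically a \dcart map exactly when the ring map kills $V(A)$.

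\emph{Necessity.} If $f:A\to W(R)$ is a \dcart map, its composite with $W(R)\to R$ kills $V(A)$, as already noted.

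\emph{Sufficiency.} Suppose $\bar f:A\to R$ kills $V(A)$, and let $f:A\to W(R)$ be the Joyal lift. Then $f(V(A))\subset VW(R)$: the zeroth Witt component of $f(V(a))$ is $\bar f(V(a))=0$. Now apply \Cref{automaticV}. For $p>2$ it gives directly that $f$ respects $V$. For $p=2$ it also applies, because $W(R)$ is $V$-adically separated. So $f$ is a \dcart map in all cases.

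Both assignments are compatible with the Joyal bijection, so this establishes $\Hom_{\ddcart}(A,W(R))\simeq\Hom(A/V,R)$.

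\textbf{Step 2: colimits.} Since $A\mapsto A/V$ is a left adjoint, it preserves all colimits formally. One could also check this by hand: sifted colimits are computed on underlying rings, and coproducts and coequalizers can be treated via free objects $\mathbb{Z}\{x_i\}[V]$, whose quotient by $V$ is $\mathbb{Z}[x_i]$. Neither check is needed once the adjunction is in hand.

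\textbf{Step 3: the equivalence.} First, $W(R)$ is $V$-complete and $W(R)/V\simeq R$, so the counit is an isomorphism. It remains to show that for a $V$-complete $A$ the unit $u:A\to W(A/V)$ is an isomorphism. By the \dcart property, $u$ respects $V$, so it induces maps
\[
V^iA/V^{i+1}A\longrightarrow V^iW(A/V)/V^{i+1}W(A/V).
\]
By \Cref{Visinjective}, $V^i$ identifies $A/V$ with $V^iA/V^{i+1}A$. The same holds on the Witt side. The map induced by $u$ is compatible with these identifications, because $u$ commutes with $V$ and is the identity modulo $V$. Hence each graded piece of $u$ is an isomorphism. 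Induction on $n$ then gives isomorphisms $A/V^n\simeq W(A/V)/V^n$. Passing to the limit, both sides are $V$-complete, so $u$ is an isomorphism.

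\textbf{Expected obstacle.} The delicate point is the $p=2$ case in Step 1: preserving the $V$-ideal is not enough in general (\Cref{VidealnotVmap}). Here it is rescued by the $V$-adic separatedness of the target $W(R)$. The Joyal lift into $W(R)$ has to be handled with care so that the hypotheses of \Cref{automaticV} are genuinely met.
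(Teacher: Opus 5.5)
Your proof is correct and follows essentially the same route as the paper. Both use Joyal's theorem together with \Cref{automaticV}, invoking $V$-adic separatedness of $W(R)$ at $p=2$, to identify \dcart maps into $W(R)$; both then use injectivity of $V$ to show that, for $V$-complete $A$, the map $A\to W(A/V)$ is an isomorphism modulo every $V^n$. The only difference is presentational: you spell out the general adjunction and deduce colimit preservation formally, while the paper proves full faithfulness of $W$ directly and leaves these as ``the remaining claims follow.''
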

\begin{proof}
Let $A$ be a $V$-complete \dcart ring. By Joyal's theorem
(\Cref{joyal:witt}), there is a unique map of $\delta$-rings
\[
A\longrightarrow W(A/V)
\]
whose composite with $W(A/V)\to A/V$ is the tautological map. In particular it
carries $V(A)$ into $VW(A/V)$, so it is a map of \dcart rings by
\Cref{automaticV} (using $V$-separatedness at $p = 2$). Since $V$ is injective on both sides and the map is an
isomorphism modulo $V$, it is an isomorphism after passing to all quotients
modulo $V^n$; $V$-completeness then gives $A\simeq W(A/V)$.

For rings $R$ and $R'$, a \dcart map $W(R)\to W(R')$ is the same as a $\delta$-map
$W(R)\to W(R')$ carrying $VW(R)$ into $VW(R')$, by \Cref{automaticV}. By Joyal's theorem, this is
equivalent to a ring map $R\to R'$. The remaining claims follow.
\end{proof}

\subsection{\dcartwo rings}\label{sec:dcartwo}

In this subsection, we develop a 2-primary variant of the theory of \dcart rings, which we call \dcartwo rings; a basic example is the sheared Witt vectors. The definition of \dcartwo rings was communicated to us by Drinfeld. We prove analogs of all the results for \dcart rings. Although the proofs are more involved, the theory is better behaved than the theory of \dcart rings at $p=2$, and in particular, the analog of \Cref{automaticV} holds without any additional assumptions.

\subsubsection{Definition and basic properties}
\begin{definition}[\dcartwo rings, Drinfeld]
Suppose $p=2$. A
\emph{\dcartwo ring} consists of a
$\delta$-ring $A$ (with $\delta$-ring Frobenius denoted $F: A \to A$)
equipped with an additive map $\hV: A \to A$ such that the following
properties hold.
 
\begin{enumerate}
\item Let $\two = F(\hV(1)) \in A$.
Then
$\two^2 = 4$ and $\delta(\two) = -1$.\footnote{This implies
$F(\two) = 2$. Strictly speaking, $\delta(\two) = -1$ is a consequence of the identity for $\delta(\hV(a))$ for $a = 1$, but we include it as a separate axiom for clarity.}
\item $F(\hV(a)) = \two a$ for all $a \in A$.
\item $a \hV(b) = \hV(F(a) . b)$ for all $a, b \in A$.
\item For all $a \in A$, \begin{equation}
    \delta(\hV(a)) = -a + \hV(\two \delta(a)) \label{deltaVtildeidentity}
\end{equation}
\end{enumerate}
We write $\ddcartwo$ for the category of \dcartwo rings.
\end{definition}

We begin by recording a few basic consequences of the identities. 
\begin{lemma} 
    \label{hVof2tilde}
Let $A$ be a \dcartwo ring.  
Then $\hV(\two) = 2 + \two$. 
More generally, for any $a \in A$, one has 
\begin{equation}
    \label{hVtwoF}
    \hV( \two F(a)) = (2 + \two)a.  \end{equation}

Conversely, if $A$ is a 2-torsionfree $\delta$-ring with an additive map $\hV: A \to A$ such that the identities in the definition of a \dcartwo ring hold except for the last one, and if $\hV(\two) = 2 + \two$, then the last identity also holds.
\end{lemma}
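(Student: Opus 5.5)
The identity $\hV(\two)=2+\two$ should follow directly from the axioms applied to $a=1$, combined with the projection formula. The general formula \eqref{hVtwoF} then follows by one more use of the projection formula. For the converse, I would multiply the desired identity by $2$ and use torsionfreeness.

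\emph{Step 1: compute $\delta(\hV(1))$.} Take $a=1$ in \eqref{deltaVtildeidentity}. Since $\delta(1)=0$ and $\hV$ is additive, so that $\hV(0)=0$, this gives $\delta(\hV(1))=-1$.

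\emph{Step 2: express $\two$ through $\hV(1)$.} By the definition of the Frobenius,
\[
\two=F(\hV(1))=\hV(1)^2+2\delta(\hV(1))=\hV(1)^2-2 .
\]

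\emph{Step 3: rewrite $\hV(1)^2$.} Apply the projection formula (axiom (3)) with $a=\hV(1)$ and $b=1$:
\[
\hV(1)^2=\hV\bigl(F(\hV(1))\cdot 1\bigr)=\hV(\two).
\]
Combining with Step 2 gives $\hV(\two)=\two+2$.

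\emph{Step 4: the general formula.} For arbitrary $a$, the projection formula gives
\[
\hV(\two F(a))=a\,\hV(\two)=(2+\two)a .
\]
This proves \eqref{hVtwoF}.

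\emph{Converse.} Assume $A$ is $2$-torsionfree, axioms (1)--(3) hold, and $\hV(\two)=2+\two$. Step 4 used only the projection formula and this identity, so \eqref{hVtwoF} still holds.

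Since $A$ is $2$-torsionfree, it suffices to check \eqref{deltaVtildeidentity} after multiplying by $2$.

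For the left side, use $2\delta(x)=F(x)-x^2$, then axiom (2) for the first term and the projection formula for the square:
\[
2\delta(\hV(a))=F(\hV(a))-\hV(a)^2=\two a-\hV\bigl(F(\hV(a))\,a\bigr)=\two a-\hV(\two a^2).
\]

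For the right side, substitute $2\delta(a)=F(a)-a^2$ and then apply \eqref{hVtwoF}:
\[
-2a+\hV\bigl(\two(F(a)-a^2)\bigr)=-2a+(2+\two)a-\hV(\two a^2)=\two a-\hV(\two a^2).
\]

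The two sides agree, so dividing by $2$ yields \eqref{deltaVtildeidentity}.

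I expect no real obstacle here. The only points needing care are the order of factors in the projection formula and remembering that $\delta(1)=0$.
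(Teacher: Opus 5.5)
Your proof is correct and is essentially the paper's argument. For the first part you use $\delta(\hV(1))=-1$ together with the projection formula, and you derive $\delta(\hV(1))=-1$ explicitly from \eqref{deltaVtildeidentity} at $a=1$, where the paper uses it tacitly. For the converse, both you and the paper multiply by $2$, expand via $2\delta(x)=F(x)-x^2$, and invoke \eqref{hVtwoF}; the only cosmetic difference is that you compute both sides and compare them, while the paper substitutes the expansion of $\hV(\two a^2)$ directly into the left side.
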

\begin{proof}
To see this, we write
(where we use the \dcartwo identities)
\begin{align*} 
    2 + \two &= 2 + F( \hV(1)) \\
    & = 2 + \hV(1)^2 + 2 \delta( \hV(1)) \\
    & = 2 + \hV( F( \hV(1))) + 
2(-1) \\
& = \hV( \two). 
\end{align*}
The more general identity \eqref{hVtwoF} follows from the projection formula. 

Conversely, if $A$ is a 2-torsionfree $\delta$-ring with an additive map $\hV: A \to A$ such that the identities in the definition of a \dcartwo ring hold except for the last one, and if $\hV(\two) = 2 + \two$, then we can check the last identity as follows:
\begin{align*}
2 \delta(\hV(a)) & = 
F( \hV(a))
- \hV(a)^2  \\ 
& = \two a - \hV( a F \hV( a ))\\
& = \two a - \hV( \two a^2).
\end{align*}
Moreover, we have
\begin{align*}
\hV( \two a^2) 
& = \hV( \two F(a) - 2 \two \delta(a)) \\ 
& = (2 + \two)a - 2 \hV( \two \delta(a)), 
\end{align*}
where we have used \eqref{hVtwoF} in the last step. Substituting the second identity into the first gives the desired identity for $\delta(\hV(a))$.

\end{proof}

\begin{lemma}
\label{hVbasicidentities}
Let $A$ be a \dcartwo ring. Then:
\begin{enumerate}
\item The image $\hV(A)$ is an ideal of $A$.
\item One has
\[
(2+\two)A\subset \hV(A),\qquad (2-\two)\hV(A)=0. 
\]
\end{enumerate}
\end{lemma}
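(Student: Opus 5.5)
The plan is to derive both items directly from the axioms of a \dcartwo ring, together with \Cref{hVof2tilde}.

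For (1), we need $\hV(A)$ to be closed under multiplication by arbitrary elements of $A$. By the projection formula (axiom (3)), for $a,b\in A$ we have $a\hV(b)=\hV(F(a)b)\in\hV(A)$. Since $\hV$ is additive, $\hV(A)$ is also an additive subgroup, so it is an ideal. This step is immediate.

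For the first half of (2), apply \eqref{hVtwoF} from \Cref{hVof2tilde}: $(2+\two)a=\hV(\two F(a))\in\hV(A)$ for every $a\in A$. Hence $(2+\two)A\subset\hV(A)$.

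For the second half, we must show $(2-\two)\hV(b)=0$ for all $b$. By the projection formula,
\[
(2-\two)\hV(b)=\hV\bigl(F(2-\two)\,b\bigr).
\]
Now $F(2)=2$, and $F(\two)=\two^2+2\delta(\two)=4-2=2$ by axiom (1). Therefore $F(2-\two)=0$, and the right-hand side is $\hV(0)=0$. This computation is also routine.

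The only point requiring care is the value $F(\two)=2$, which uses both $\two^2=4$ and $\delta(\two)=-1$. With that in hand, no step presents a genuine obstacle. No torsion-freeness assumption is needed, since every step uses only the axioms and the identity \eqref{hVtwoF}, which \Cref{hVof2tilde} establishes for all \dcartwo rings.
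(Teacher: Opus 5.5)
Your proof is correct and matches the paper's argument: (1) comes from the projection formula, the inclusion $(2+\two)A\subset\hV(A)$ comes from \eqref{hVtwoF}, and $(2-\two)\hV(A)=0$ comes from the projection formula together with $F(\two)=2$. The paper phrases the last step as $\two\hV(y)=\hV(F(\two)y)=\hV(2y)=2\hV(y)$, which is the same computation as your $F(2-\two)=0$.
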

\begin{proof}
The first assertion follows from the projection formula
$a\hV(b)=\hV(F(a)b)$. The inclusion $(2+\two)A\subset \hV(A)$ follows from
\Cref{hVof2tilde}. If $x=\hV(y)$, then
\[
\two x=\two\hV(y)=\hV(F(\two)y)=\hV(2y)=2\hV(y)=2x,
\]
so $(2-\two)x=0$.
\end{proof}

\begin{lemma}
\label{deltaonhVcongruence}
Let $A$ be a \dcartwo ring.
Let $i\geq 1$.
Suppose $a_1,a_2\in A$ are congruent modulo $\hV^i(A)$.
Then $\delta(a_1),\delta(a_2)$ are congruent modulo $\hV^{i-1}(A)$.
\end{lemma}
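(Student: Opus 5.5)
Mirroring the proof of \Cref{deltaonVcongruence}, we write $a_2 = a_1 + b$ with $b \in \hV^i(A)$ and use the sum formula for $\delta$:
\[
\delta(a_2) = \delta(a_1) + \delta(b) - \sum_{k=1}^{p-1}\tfrac{1}{p}\tbinom{p}{k} a_1^k b^{p-k}.
\]
For $p=2$ the correction term is $a_1 b$, which lies in $\hV^i(A)\subset \hV^{i-1}(A)$. Here we use that each $\hV^j(A)$ is an ideal: the projection formula gives $a\hV^j(c) = \hV^j(F^j(a)c)$, iterating \Cref{hVbasicidentities}(1). So it suffices to show that $\delta(b)\in \hV^{i-1}(A)$ for every $b\in \hV^i(A)$.

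The case $i=1$ is trivial, since $\hV^0(A)=A$. For $i\geq 2$, write $b=\hV(b')$ with $b'\in \hV^{i-1}(A)$. The identity \eqref{deltaVtildeidentity} gives
\[
\delta(b) = -b' + \hV(\two\,\delta(b')).
\]
The term $-b'$ already lies in $\hV^{i-1}(A)$. It remains to see that $\hV(\two\,\delta(b'))\in \hV^{i-1}(A)$, i.e.\ that $\two\,\delta(b')\in \hV^{i-2}(A)$.

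We handle this by induction on $i$. The inductive hypothesis, applied to $b'\in \hV^{i-1}(A)$ (with $i-1\ge 1$), says $\delta(b')\in \hV^{i-2}(A)$. Since $\hV^{i-2}(A)$ is an ideal, $\two\,\delta(b')\in \hV^{i-2}(A)$, and applying $\hV$ gives $\hV(\two\,\delta(b'))\in \hV^{i-1}(A)$. Formally, we prove the claim ``$b\in\hV^i(A)\Rightarrow \delta(b)\in\hV^{i-1}(A)$'' by induction on $i\ge1$, and then deduce the statement for $a_1,a_2$ from the sum formula as above.

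I don't expect a genuine obstacle. The one point to check is that the extra term $\hV(\two\delta(b'))$, absent in the \dcart case (where $p^{p-2}V(b'^p)$ was visibly in $V^i$), is absorbed by this induction. It costs one power of $\hV$ from $\delta$ but gains one back from the outer $\hV$. As a small bonus, $\delta(b)\equiv -b' \pmod{\hV^{i-1}(A)}$ is not sharp modulo $\hV^i$ here, which is why this lemma, unlike \Cref{deltaonVcongruence}, only asserts the congruence and not the inverse-isomorphism statement.
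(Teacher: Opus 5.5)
Your proof is correct and is exactly the paper's argument: reduce via $\delta(a_1+b)=\delta(a_1)+\delta(b)-a_1b$ to showing $\delta(b)\in\hV^{i-1}(A)$ for $b\in\hV^i(A)$, then induct on $i$ using $\delta(\hV(b'))=-b'+\hV(\two\,\delta(b'))$. Your closing aside is off, though: for $i\geq 2$ and $b'\in\hV^{i-1}(A)$, \Cref{hVversusVdcartidentity} gives $\delta(\hV(b'))=b'-\hV(b'^2)\equiv b'\pmod{\hV^i(A)}$, so the graded inverse-isomorphism statement does hold here (with a sign change when $i=1$), and the computation does not force its absence from the lemma.
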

\begin{proof}
In fact, suppose $a_2=a_1+b$ with $b\in \hV^i(A)$.
Then $\delta(a_2)=\delta(a_1)+\delta(b)-a_1b$, so it suffices to show that
$\delta(b)\in \hV^{i-1}(A)$.
This is clear when $i=1$.
For $i>1$, if $b=\hV(b')$, then
\[
\delta(b)=-b'+\hV(\two\delta(b')).
\]
Inductively, $\delta(b')\in \hV^{i-2}(A)$, whence the result for $b$.
\end{proof}

\begin{remark} 
Let $A$ be a \dcartwo ring such that $\hV(1) = 2$. 
Then $\hV$ defines on $A$ the structure of a Dieudonn\'e $\delta$-ring as in \Cref{def:dieudonne-delta-rings}, and so 
one obtains an embedding of the category of Dieudonn\'e $\delta$-rings into the category of 
\dcartwo rings whose essential image is exactly those 
\dcartwo rings where $\hV(1) = 2$. 
\end{remark}

\subsubsection{Comparison with \dcart rings; $\hV$-completeness}
Although the identities for \dcartwo rings are much more complicated than those of \dcart rings, the basic construction of \dcartwo rings is actually via the following forgetful functor from \dcart rings.
We will later show (\Cref{comparisonofdcartanddcartwoonVcompleteor2invertible}) that this forgetful functor is an equivalence on certain subcategories, enabling us to reduce many results about \dcartwo rings to (easier) results about \dcart rings.
\begin{construction}
    [\dcartwo rings from \dcart rings at $p=2$]
    \label{dcartwofromdcart}
Suppose $p=2$ and let $A$ be a \dcart ring. 
We construct on $A$ the structure of a \dcartwo ring.

Put\footnote{Note that in $W(\mathbb{Z})$, $\epsilon$ maps to $[-1]$. The identities for $\epsilon, \two$ can be checked in the initial $\delta$-Cartier ring, which injects into $W(\mathbb{Z})$.}
\[
\epsilon=V(1)-1,\qquad \two=2\epsilon,
\]
and define $\hV: A \to A$ by $\hV(a) = V(\epsilon a)$, so that $F\hV(a) = \two a$ for all $a \in A$.
The projection formula for $V$ gives the projection formula for $\hV$. It remains to check the identities for $\two$ and for $\delta(\hV(a))$.

To this end, we use the following identities: 
\begin{enumerate}
\item $2\epsilon=\two$ (by definition). 
\item $\delta(\epsilon)=0$.
Indeed, the identity $\delta(V(1))=1-V(1)$ gives
\[
\delta(\epsilon)=\delta(V(1)-1)=\delta(V(1))+\delta(-1)+V(1)=(1-V(1))-1+V(1)=0.
\]
\item 
$\epsilon^2 = F(\epsilon) = 1$. Indeed, $\delta(\epsilon)=0$ gives
$F(\epsilon)=\epsilon^2$, while $F(\epsilon)=F(V(1)-1)=F(V(1))-1=1$.
\item $\hV(\epsilon)=1+\epsilon$.
Indeed, $\hV( \epsilon) = V(\epsilon^2) = V(1) = 1 + \epsilon$.
\end{enumerate} 

The above identities imply that $\two^2 = 4$ and 
$\delta(\two) = -1$, which verifies the identities for $\two$.

Finally, for $a \in A$, 
\begin{equation} \label{aux:deltaVhV}
\delta(\hV(a))=\delta(V(\epsilon a))
=\epsilon a-V((\epsilon a)^2)=\epsilon a-V(a^2) = \epsilon a - \hV( \epsilon a^2).
\end{equation}
On the other hand,
\begin{equation} \label{aux:atimesoneplusepsilon}
a(1+\epsilon)=aV(1)=V(F(a))=V(a^2)+V(2\delta(a)) = \hV( \epsilon a^2 ) + \hV( \two \delta(a)).
\end{equation}
Combining 
\eqref{aux:deltaVhV} and \eqref{aux:atimesoneplusepsilon} gives
the desired identity for $\delta(\hV(a))$. 
\end{construction}

\begin{example}[$\chW$ as a \dcartwo ring]
\label{chWdcartwo}
For any $2$-completely nilperfect ring $R$ with bounded $2$-power torsion,
$\chW(R)$ with $\delta$ and $\hV$ has the structure of a
\dcartwo ring, and the map
$\chW(R) \to W(R)$ is a morphism of
\dcartwo rings.

Indeed, the usual Witt vectors form a \dcart ring, and \Cref{dcartwofromdcart}
gives it the structure of a \dcartwo ring.
In this case
$V(1)=1+[-1]$, so $\epsilon=V(1)-1=[-1]$ and we get
$\hV(x)=V([-1]x)$. Thus $W(R)$ is a
\dcartwo ring, with
\(
\two=F(\hV(1))=2[-1].
\)
The quotient map $W(R)\to Q(R)$ is compatible with $\delta$, $F$, and $\hV$ by
the construction of these operations on $Q$. Moreover, \Cref{FhatV} shows that
the Frobenius $F:Q(R)\to Q(R)$, which is a $\delta$-map, commutes with $\hV$, so it is a map of
\dcartwo rings. The remainder of the construction is just as in \Cref{chWdcart}. Note that $F: Q(R) \to Q(R)$ usually does not commute with the usual Verschiebung, and as a result the \dcartwo ring structure on $\chW(R)$ generally does not arise from a \dcart ring structure on $\chW(R)$ via \Cref{dcartwofromdcart}.
\end{example}

\begin{proposition}
    \label{dcartisdcartwoplusepsilon}
The functor 
    from \Cref{dcartwofromdcart} 
    establishes an equivalence of categories between: 
    \begin{enumerate}
        \item the category of \dcart rings, and
        \item the category of \dcartwo rings $A$ equipped with a choice of $\epsilon\in A$ such that $2\epsilon=\two$, $\delta(\epsilon)=0$, and $\hV(\epsilon)=1+\epsilon$.
    \end{enumerate}
\end{proposition}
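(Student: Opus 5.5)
The plan is to write down an explicit inverse functor and then check that both composites are the identity. The forward functor sends a \dcart ring $A$ to the pair $(A,\epsilon)$, where $A$ carries the \dcartwo structure of \Cref{dcartwofromdcart} and $\epsilon=V(1)-1$. The identities verified in that construction show that $\epsilon$ satisfies the three required conditions:
\begin{itemize}
\item $2\epsilon=\two$,
\item $\delta(\epsilon)=0$,
\item $\hV(\epsilon)=1+\epsilon$.
\end{itemize}
Morphisms of \dcart rings preserve $V$ and $V(1)$, so they preserve $\hV$ and $\epsilon$. Hence this is a functor to category (2).

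\textbf{The inverse functor.} Given a \dcartwo ring $A$ with such an $\epsilon$, we will first show $\epsilon^2=1$. From $\delta(\epsilon)=0$ we get $F(\epsilon)=\epsilon^2$. On the other hand, applying $F$ to $\hV(\epsilon)=1+\epsilon$ gives
\[
\two\epsilon=1+F(\epsilon),
\]
and $\two\epsilon=2\epsilon^2$. So $F(\epsilon)=\epsilon^2$ and $2\epsilon^2=1+\epsilon^2$, which together yield $\epsilon^2=1$. We then define $V(a)=\hV(\epsilon a)$; since $\epsilon^2=1$ this inverts the formula $\hV(a)=V(\epsilon a)$. The \dcart axioms are checked in turn:
\begin{itemize}
\item $F(V(a))=\two\epsilon a=2\epsilon^2a=2a$;
\item the projection formula for $V$ follows from that for $\hV$;
\item $V(1)=\hV(\epsilon)=1+\epsilon$, so the original $\epsilon$ is recovered as $V(1)-1$.
\end{itemize}
On the other side, starting from a \dcart ring, the $\hV$ built by the forward functor satisfies $\hV(\epsilon a)=V(\epsilon^2a)=V(a)$. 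So both composites are the identity on objects, and a map preserves $(\hV,\epsilon)$ if and only if it preserves $V$. This gives full faithfulness.

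\textbf{The main obstacle: the $\delta V$ identity.} The remaining axiom is $\delta(V(a))=a-V(a^2)$. We compute
\[
\delta(V(a))=\delta(\hV(\epsilon a))=-\epsilon a+\hV\bigl(\two\,\delta(\epsilon a)\bigr),
\]
using the \dcartwo axiom for $\delta\hV$. Since $\delta(\epsilon)=0$ and $F(\epsilon)=1$, the product rule for $\delta$ gives $\delta(\epsilon a)=F(\epsilon)\delta(a)+a^2\delta(\epsilon)=\delta(a)$. So we need
\[
-\epsilon a+\hV(\two\delta(a))=a-\hV(\epsilon a^2).
\]
To get this, I would use the displayed identity \eqref{aux:atimesoneplusepsilon}, re-derived directly from the \dcartwo axioms: write $a^2=F(a)-2\delta(a)$ and apply \eqref{hVtwoF} in the form $\hV(\two F(a))=(2+\two)a$. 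This gives
\[
\hV(\epsilon a^2)=\hV(\epsilon F(a))-\hV(\two\delta(a)),
\]
using $2\epsilon=\two$. By the projection formula, $\hV(\epsilon F(a))=a\hV(\epsilon)=a(1+\epsilon)$. Substituting, $a-\hV(\epsilon a^2)=-\epsilon a+\hV(\two\delta(a))$, which is exactly the required identity.

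This bookkeeping is the only real content, so I expect the $\delta V$ check to be the main obstacle. It mirrors \Cref{dcartwofromdcart} run in reverse, and all the relations it uses are the listed axioms together with the derived fact $\epsilon^2=1$.
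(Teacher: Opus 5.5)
Your proof is correct and follows essentially the same route as the paper: you derive $\epsilon^2=1$ by applying $F$ to $\hV(\epsilon)=1+\epsilon$, set $V(a)=\hV(\epsilon a)$, and obtain the $\delta V$ identity by combining $\delta(\epsilon a)=\delta(a)$ with the projection formula applied to $\hV(\epsilon)=1+\epsilon$. (The appeal to \eqref{hVtwoF} in your last step is superfluous: the displayed formula for $\hV(\epsilon a^2)$ follows from $a^2=F(a)-2\delta(a)$, additivity of $\hV$, and $2\epsilon=\two$ alone.)
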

\begin{proof}
    We construct the inverse of the functor of \Cref{dcartwofromdcart}.
 Let $A$ be a \dcartwo ring,
and suppose that $\epsilon\in A$ satisfies
the three specified identities. 
Applying $F$ to the last identity gives
\[
2\epsilon^2=\two\epsilon=F(\hV(\epsilon))
=F(1+\epsilon)=1+F(\epsilon)=1+\epsilon^2,
\]
where the last equality uses $\delta(\epsilon)=0$. Thus $\epsilon^2=1$ and $\two \epsilon = 2$.
Now define
\[
V(a)=\hV(\epsilon a).
\]
Then
\[
F(V(a))=F(\hV(\epsilon a))=\two\epsilon a=2a,
\]
and the projection formula for $\hV$ gives the projection formula for $V$. 
It remains to check the identity for $\delta V$. Since
$\delta(\epsilon)=0$ and $\epsilon^2=1$, one has
$\delta(\epsilon a)=\delta(a)$ by the identity for $\delta(xy)$. Hence
\[
\delta(V(a))
=\delta(\hV(\epsilon a))
=-\epsilon a+\hV(\two\delta(a)).
\]
On the other hand, applying the projection formula to
$\hV(\epsilon)=1+\epsilon$ gives
\[
a(1+\epsilon)=a\hV(\epsilon)=\hV(F(a)\epsilon)
=\hV(\epsilon a^2+\two\delta(a))
=V(a^2)+\hV(\two\delta(a)).
\]
Substituting this in the preceding formula gives
\[
\delta(V(a))=a-V(a^2),
\]
so $V$ defines a \dcart structure on $A$.

It is straightforward to check that both functors are inverse to each other. 
\end{proof}

\begin{definition}[$\hV$-completeness]
    We say that a \dcartwo ring $A$ is \emph{$\hV$-complete} if the natural map $A \to \varprojlim_{i} A/\hV^i(A)$ is an isomorphism, i.e., $A$ is complete with respect to the linear topology induced by the system of ideals $\hV^i(A) \subset A$.\footnote{We will show in \Cref{hVisinjective} below that $\hV: A \to A$ is injective.}
    Given a \dcartwo ring $B$, we write $\widehat{B}$ for the $\hV$-completion of $B$, i.e., $\widehat{B} = \varprojlim_i B/\hV^i(B)$ is the completion of $B$ with respect to the above topology.  
    Using \Cref{deltaonhVcongruence}, one checks that $\widehat{B}$ inherits the structure of a \dcartwo ring such that the natural map $B \to \widehat{B}$ is a morphism of \dcartwo rings.
\end{definition}

\begin{proposition}
\label{comparisonofdcartanddcartwoonVcompleteor2invertible}
    The functor 
    from \dcart rings to \dcartwo rings given by \Cref{dcartwofromdcart} restricts to an equivalence between: 
\begin{itemize}
    \item The full subcategories of $V$-complete \dcart rings and $\hV$-complete \dcartwo rings.
    \item     The full subcategories of \dcart rings and \dcartwo rings where $2$ is invertible. 
\end{itemize}
\end{proposition}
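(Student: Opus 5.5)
The plan is to use \Cref{dcartisdcartwoplusepsilon}: it suffices to show that in each of the two subcategories, a \dcartwo ring $A$ admits a \emph{unique} element $\epsilon$ with $2\epsilon=\two$, $\delta(\epsilon)=0$ and $\hV(\epsilon)=1+\epsilon$. Uniqueness of $\epsilon$ makes the forgetful functor from pairs $(A,\epsilon)$ to \dcartwo rings fully faithful on these subcategories, since any \dcartwo map must carry the unique $\epsilon$ to the unique $\epsilon$. Existence makes it essentially surjective. We must also check that the functor sends $V$-complete rings to $\hV$-complete ones and conversely. For this, note that $V(a)=\hV(\epsilon a)$ with $\epsilon$ a unit ($\epsilon^2=1$), so $V^i(A)=\hV^i(A)$ as ideals, by induction using the projection formula: $\hV(\epsilon\,\hV^{i-1}(A))=\hV(\hV^{i-1}(F(\epsilon)A))$ and $F(\epsilon)=\epsilon^2=1$. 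In particular the two completeness notions agree.

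\emph{The case where $2$ is invertible.} Here $\epsilon$ is forced: $\epsilon=\two/2$. It remains to show that this element satisfies the identities. From $\two^2=4$ we get $\epsilon^2=1$. From $\delta(\two)=-1$ and $F(\two)=2$ we get $F(\epsilon)=1$, hence $2\delta(\epsilon)=F(\epsilon)-\epsilon^2=0$, so $\delta(\epsilon)=0$. Finally \Cref{hVof2tilde} gives $\hV(\two)=2+\two$, and dividing by $2$ yields $\hV(\epsilon)=1+\epsilon$.

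\emph{The $\hV$-complete case.} Here $2$ need not be invertible, so I would construct $\epsilon$ by successive approximation modulo $\hV^n(A)$. The starting point is \Cref{hVbasicidentities}: we have $(2+\two)\in\hV(A)$, so $\two\equiv -2$ modulo $\hV(A)$. This suggests $\epsilon\equiv -1$, and more precisely $\epsilon=-1+\hV(u)$ for some $u$ to be found. The conditions then become equations in $u$:
\begin{itemize}
\item $\hV(\epsilon)=1+\epsilon$ reads $-\hV(1)+\hV(\hV(u))=\hV(u)$. By injectivity of $\hV$ (\Cref{hVisinjective}) this reduces to $u=\hV(u)-1$, i.e.\ $u=-(1+\hV(1)+\hV^2(1)+\cdots)$ in the $\hV$-completion.
\item We must then verify $2\epsilon=\two$ and $\delta(\epsilon)=0$ for this candidate.
\end{itemize}
For $\delta(\epsilon)=0$, I would compute $\delta(-1+\hV(u))$ using \eqref{deltaVtildeidentity} together with the congruences of \Cref{deltaonhVcongruence}, checking the identity modulo each $\hV^n(A)$ and passing to the limit. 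For $2\epsilon=\two$, the approach is to apply $F$ to $\hV(\epsilon)=1+\epsilon$, which gives $\two\epsilon=1+F(\epsilon)$. Knowing $F(\epsilon)=\epsilon^2+2\delta(\epsilon)$, the remaining task is to show $\epsilon^2=1$. This is the subtle point: the identity $\epsilon^2=1$ was deduced in \Cref{dcartisdcartwoplusepsilon} from $2\epsilon=\two$, so the argument must avoid circularity. I would try to show instead that $x=2\epsilon-\two$ satisfies $x\equiv \hV(\text{something}\cdot x)$, so that $x\in\bigcap\hV^n(A)=0$ by completeness.

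For uniqueness in the complete case, take two solutions $\epsilon,\epsilon'$ and set $d=\epsilon-\epsilon'$. Then $\hV(d)=d$ and $2d=0$, so $d\in\bigcap_n\hV^n(A)=0$.

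The main obstacle is the existence step in the $\hV$-complete case: checking $\delta(\epsilon)=0$ and $2\epsilon=\two$ for the explicit series. As a fallback, I would use the universal case. Here the free \dcartwo ring $\hV$-completes to something embedding into $W(\mathbb{Z}_{(2)}\{x\})$-type rings, which are $2$-torsionfree. There the identities can be verified after inverting $2$, by the first case, and then transported back using torsionfreeness and functoriality.
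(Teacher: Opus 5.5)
Your strategy is the paper's: reduce via \Cref{dcartisdcartwoplusepsilon} to the unique existence of $\epsilon$. The case where $2$ is invertible is argued the same way, and so is uniqueness in the complete case. Your candidate is also the paper's, since your $u$ satisfies $u=\hV(u)-1=\epsilon$, so $\epsilon=-\sum_{i\geq 0}\hV^i(1)$. However, the step you call the main obstacle is left undone, so as written the existence half of the $\hV$-complete case is incomplete. It closes in one line each, with no need for $\epsilon^2=1$ or for applying $F$:
\begin{itemize}
\item Since $\delta(1+\epsilon)=\delta(\epsilon)-\epsilon$ at $p=2$, applying $\delta$ to $1+\epsilon=\hV(\epsilon)$ and using \eqref{deltaVtildeidentity} gives $\delta(\epsilon)=\hV(\two\delta(\epsilon))$. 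Inductively $\delta(\epsilon)\in\bigcap_n\hV^n(A)=0$.
\item By additivity of $\hV$ and $\hV(\two)=2+\two$ (\Cref{hVof2tilde}), $\hV(2\epsilon-\two)=2(1+\epsilon)-(2+\two)=2\epsilon-\two$. Hence $2\epsilon-\two\in\bigcap_n\hV^n(A)=0$, so the ``something'' in your plan is just $1$.
\end{itemize}

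Two cautions. First, you cite \Cref{hVisinjective}, but in the paper that result is proved using the present proposition (through the $\hV$-completion), so citing it here is circular. It is also unnecessary: existence only needs the easy direction, namely that the series solves $\hV(\epsilon)=1+\epsilon$, and uniqueness only needs $\hV$-separatedness. Second, your fallback identifies $\hV$-completions of free \dcartwo rings with Witt vectors. In the paper that identification comes from \Cref{mapsbetweendcartwoandVcomplete}, which is itself deduced from this proposition, so the fallback should be dropped. Your check that $V^i(A)=\hV^i(A)$ (via $F(\epsilon)=1$), so that the two completeness conditions match, is correct and fills in a detail the paper leaves as straightforward.
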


\begin{proof}
    In both cases, we check that for such a $\dcartwo$ ring, there is a  unique choice of $\epsilon$ satisfying the identities of \Cref{dcartisdcartwoplusepsilon}; this defines the inverse functor.

    Let $A$ be a $\hV$-complete \dcartwo ring. 
    Define $\epsilon \in A$ by the formula
    \[ \epsilon = - \sum_{i = 0}^\infty \hV^i(1).  \]
    Then $\epsilon$ is well-defined since $A$ is $\hV$-complete, and it satisfies the equation 
    $\epsilon + 1 = \hV( \epsilon)$; 
    moreover, $\hV$-completeness implies that $\epsilon$ is the unique element of $A$ satisfying this equation. 
    From this, we find by applying $\delta$ 
    $$ \delta(\epsilon) - \epsilon = -\epsilon + \hV(\two \delta(\epsilon)), \quad \text{so} \quad \delta(\epsilon) = \hV ( \two \delta(\epsilon)). $$
    Applying this inductively and using $\hV$-separatedness of $A$, we find $\delta(\epsilon) \in \bigcap_{i \geq 0} \hV^i(A) = 0$.\footnote{This argument shows that a $\hV$-separated \dcartwo ring can admit at most one compatible structure as a \dcart ring.} 

    Using the identity $\hV(\two) = 2 + \two$ from \Cref{hVof2tilde}, we get
    \begin{align*} 2 \epsilon - \two =    
        \sum_{i = 1}^\infty \hV^i(-2) - 2 - \two   
        = \hV( 2 \epsilon - \two). \end{align*}
By $\hV$-separatedness, this implies $2 \epsilon = \two$.
By \Cref{dcartisdcartwoplusepsilon}, the map $V: A \to A$ defined by $V(a) = \hV( \epsilon a)$ defines the structure of a $\delta$-Cartier ring on $A$. 
It is straightforward to check that the two functors are inverse to each other on the full subcategories of $V$-complete \dcart rings and $\hV$-complete \dcartwo rings.

Next we treat the comparison between 
\dcart rings and \dcartwo rings where $2$ is invertible.
For a \dcartwo ring $A$ with $1/2 \in A$, put
$\epsilon=\two/2$; clearly this is forced by the identities of \Cref{dcartisdcartwoplusepsilon}, so we need only check that the remaining identities are satisfied. 
Since $\two^2 = 4$ and $F(\two) = 2$, we obtain 
 $\epsilon^2=1$ and $F(\epsilon)=1$, so
$\delta(\epsilon)=0$. Also, dividing $\hV(\two)=2+\two$ by 2 
yields $\hV(\epsilon)=1+\epsilon$. 
We now conclude thanks to \Cref{dcartisdcartwoplusepsilon} that the functor of \Cref{dcartwofromdcart} is an equivalence between the full subcategories of \dcart rings and \dcartwo rings where $2$ is invertible.
\end{proof}

We now obtain the analog of \Cref{Vcompletecartierwitt} for \dcartwo rings. 
\begin{theorem} 
The functor $A\mapsto A/\hV$ from \dcartwo rings to rings commutes with colimits.
Its right adjoint carries a ring $R$ to the \dcartwo ring $W(R)$. Moreover,
$R\mapsto W(R)$ induces an equivalence between commutative rings and
$\hV$-complete \dcartwo rings.
    \label{mapsbetweendcartwoandVcomplete}
Explicitly,   let $A$ be a \dcartwo ring. 
    Let $B$ be a $\hV$-complete \dcartwo ring.  
    Then $\Hom_{\ddcartwo}(A, B) \simeq \Hom(A/\hV, B/\hV)$. Moreover, this also agrees with the collection of $\delta$-maps $A \to B$ that carry $\hV(A) $ into $\hV(B)$. 
\end{theorem}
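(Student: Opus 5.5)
The plan is to reduce to the $p$-odd statement \Cref{Vcompletecartierwitt} via \Cref{comparisonofdcartanddcartwoonVcompleteor2invertible}, and to handle the one genuinely new input, maps out of a non-complete source, by passing through $\hV$-completion.

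\emph{Step 1: completion.} First I show that $\hV\colon A\to A$ is injective for any \dcartwo ring $A$, by imitating \Cref{Visinjective}. If $\hV(a)=0$, then $\two a=0$ and $0=\delta(\hV(a))=-a+\hV(\two\delta(a))$, so $a\in\hV(A)$. Iterating this, and using $(2-\two)\hV(A)=0$ from \Cref{hVbasicidentities}, should force $a=0$. With injectivity in hand, $\hV^i(A)/\hV^{i+1}(A)\simeq A/\hV$ for each $i$. I then claim that maps of \dcartwo rings $A\to B$ with $B$ $\hV$-complete are the same as maps $\widehat A\to B$. This holds because any \dcartwo map carries $\hV^i(A)$ into $\hV^i(B)$, so it extends continuously, and the extension respects $\delta$ by \Cref{deltaonhVcongruence}. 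Moreover $A/\hV\simeq\widehat A/\hV$. So it suffices to treat $\hV$-complete $A$.

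\emph{Step 2: reduction to \dcart rings.} For $\hV$-complete $A$ and $B$, \Cref{comparisonofdcartanddcartwoonVcompleteor2invertible} identifies them with $V$-complete \dcart rings, via the unique $\epsilon=-\sum_i\hV^i(1)$. Any \dcartwo map $f$ respects this $\epsilon$, since $f(\epsilon)$ solves $x+1=\hV(x)$ in $B$ and that solution is unique. Hence $\Hom_{\ddcartwo}(A,B)=\Hom_{\ddcart}(A,B)$. Applying \Cref{Vcompletecartierwitt}, with $V$-separatedness of $B$ covering the $p=2$ case of \Cref{automaticV}, gives $\Hom(A/V,B/V)$. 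Finally $A/V=A/\hV$, because $\epsilon$ is a unit ($\epsilon^2=1$). Taking $B=W(R)$ shows that $W(-)$ is right adjoint to $A\mapsto A/\hV$, that $W$ induces an equivalence onto $\hV$-complete objects, and that $A\mapsto A/\hV$ preserves colimits as a left adjoint.

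\emph{Step 3: $\delta$-maps preserving $\hV$.} Let $f\colon A\to B$ be a $\delta$-map with $f(\hV A)\subset\hV B$, and let $\bar f$ be the induced ring map $A/\hV\to B/\hV$. By Step 2, $\bar f$ corresponds to a \dcartwo map $g\colon A\to B$. To show $f=g$, compose both with the $\delta$-map $B\to W(B/\hV)$ given by Joyal's theorem; this map is an isomorphism because $B\simeq W(B/\hV)$. By Joyal's theorem, two $\delta$-maps $A\to W(B/\hV)$ agreeing after projection to $B/\hV$ are equal. Both composites project to $A\to A/\hV\xrightarrow{\bar f}B/\hV$, so $f=g$, and in particular $f$ respects $\hV$.

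\emph{Main obstacle.} The delicate points are the injectivity of $\hV$ at $p=2$ and the claim that $\widehat A$ is again a \dcartwo ring with $\widehat A/\hV=A/\hV$. If the direct injectivity argument stalls, I would first prove injectivity on $\hV$-separated rings, where the iteration $a\in\bigcap_i\hV^i(A)$ already gives $a=0$. That case suffices for the Hom statements, since only the target $B$ needs to be separated and the source can be replaced by its image in $\widehat A$.
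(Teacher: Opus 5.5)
Your overall route is the paper's. The paper's proof cites \Cref{comparisonofdcartanddcartwoonVcompleteor2invertible} and \Cref{Vcompletecartierwitt}, and proves the last claim with the same Joyal argument as your Step~3. Your Step~1, which passes from an arbitrary source $A$ to $\widehat A$, usefully makes explicit a reduction the paper leaves implicit. Steps~2 and~3 are fine.

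The one unsupported point is the injectivity of $\hV$ in Step~1. From $\hV(a)=0$ you only get $a=\hV(s)$ with $s=\two\delta(a)$. Nothing in ``iterating'' this places $s$ in $\hV(A)$, so you never reach $a\in\bigcap_i\hV^i(A)$. Your fallback for $\hV$-separated rings assumes exactly the same unproved iteration.

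You do not need injectivity at all. Step~1 only uses two facts: that $\widehat A$ is $\hV$-complete, and that $\widehat A/\hV\simeq A/\hV$. Both follow from the equality $\ker(\widehat A\to A/\hV^iA)=\hV^i(\widehat A)$, which holds for any \dcartwo ring. To see this, take a compatible system $y=(y_j)$ with $y_j\in\hV^iA/\hV^jA$. Choose $a_k\in A$ inductively, starting from $a_0=0$, such that $\hV^i(a_k)\equiv y_{k+i}$ modulo $\hV^{k+i}A$ and $a_{k+1}-a_k\in\hV^k(A)$. For the inductive step, let $\hV^i(b)$ lift $y_{k+i+1}$ and write $\hV^i(b)-\hV^i(a_k)=\hV^{k+i}(c)$. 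Then set $a_{k+1}=a_k+\hV^k(c)$. The resulting $z\in\widehat A$ satisfies $\hV^i(z)=y$. The same equality gives $\widehat A=\mathrm{im}(A)+\hV^i(\widehat A)$, which you need for uniqueness of extensions. Injectivity of $\hV$ on the target $B$ is immediate from the comparison and \Cref{Visinjective}, since there $\hV=V(\epsilon\,\cdot)$ with $\epsilon^2=1$.

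If you want injectivity anyway, it can be proved directly.
\begin{enumerate}
\item Since $a\in\hV(A)$, we have $2a=\two a=0$.
\item Hence $F(a)=0$ by \Cref{torsionmeansnilpotent}.
\item By the projection formula, $a\,\hV(A)=0$, so $a^2=0$.
\item From $F(a)=a^2+2\delta(a)$ we get $2\delta(a)=0$.
\item \Cref{hVtwodeltahVidentity} gives $\two\delta(\hV(s))=2s-\hV(\two s^2)$. This identity can be checked directly from the axioms via $\hV(\two F(s))=(2+\two)s$. Since $\two\delta(\hV(s))=\two\delta(a)=s$, it yields $s=\hV(\two s^2)\in\hV(A)$.
\item Therefore $2s=\two s=4\delta(a)=0$.
\item Hence $F(s)=0$, so $s^2=0$, and $s=\hV(\two s^2)=0$, giving $a=\hV(s)=0$.
\end{enumerate}
The paper proves injectivity later, in \Cref{hVisinjective}, by a different route: it first uses the comparison on $\widehat A$ to get $a\in\bigcap_i\hV^i(A)$.
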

\begin{proof}
    This follows from 
    \Cref{comparisonofdcartanddcartwoonVcompleteor2invertible} and 
    \Cref{Vcompletecartierwitt}. 
    For the last claim, 
    we use Joyal's theorem to conclude that $\delta$-maps $A \to B$ are identified with ring maps $A \to B/V = B/\hV$, and the condition that $\hV(A)$ is carried into $\hV(B)$ yields the subset of ring maps $A/\hV \to B/V$. 
\end{proof}

\subsubsection{2-torsionfree \dcartwo rings} 
We now establish a 2-primary analog of 
\Cref{torsionfreedcartring}.

\begin{proposition}[2-torsionfree \dcartwo rings]
    \label{ptorsionfree2Cart}
The functor $A \mapsto (A, \hV(A))$ establishes an equivalence between the category of 2-torsionfree \dcartwo rings and the category of pairs $(A,  I)$ where $A$ is a 2-torsionfree $\delta$-ring and $I \subset A$ is an ideal satisfying: 
\begin{enumerate}
    \item There exists $\two \in A$ such that $\two^2 = 4$, $\delta(\two) = -1$, and $\two + 2 \in I$.
    \item The $\delta$-ring Frobenius $F: A \to A$ induces an isomorphism of abelian groups $F|_I: I \xrightarrow{\sim} \two A$.
\end{enumerate}
\label{dcartwofromideal}
\end{proposition}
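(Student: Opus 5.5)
We model the argument on \Cref{torsionfreedcartring}. Given a 2-torsionfree \dcartwo ring $A$, we first check that $(A,\hV(A))$ is such a pair. By \Cref{hVbasicidentities}, $\hV(A)$ is an ideal containing $2+\two$. The element $\two=F\hV(1)$ satisfies $\two^2=4$ and $\delta(\two)=-1$ by axiom (1). For condition (2), $F\circ\hV=\two\cdot(-)$ shows that $F$ maps $\hV(A)$ onto $\two A$. Injectivity of $F|_{\hV(A)}$ follows because $F\hV(a)=\two a$, and multiplication by $\two$ is injective since $\two^2=4$ and $A$ is 2-torsionfree. Along the way we note that $\two$ is a nonzerodivisor. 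This also shows that $\hV$ is injective and equals $(F|_I)^{-1}\circ(\two\cdot)$. Hence $\hV$ is determined by the pair $(A,I)$ (once $\two$ is fixed), so the functor is faithful. For fullness, a $\delta$-map $f$ carrying $\hV(A)$ into $\hV(B)$ satisfies
\[ F(f(\hV a))=f(\two)f(a). \]
Therefore $f\hV=\hV f$ as soon as $f(\two_A)=\two_B$. We will argue that $\two$ is intrinsic to the pair, namely uniquely determined by condition (1) together with $I$; see below.

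For the inverse functor, start from a pair $(A,I)$ with $\two$ as in (1), and define $\hV:=(F|_I)^{-1}\circ(\two\cdot):A\to I$. This map is additive, and it satisfies $F\hV(a)=\two a$ by construction. We need $F\hV(1)=\two$, which holds tautologically. The projection formula is checked after applying the injective map $F|_I$ to both sides: both sides lie in $I$, and
\[ F(a\hV(b))=F(a)\two b=F(\hV(F(a)b)). \]
The $\delta$-identity we deduce from the converse part of \Cref{hVof2tilde}, which applies to 2-torsionfree rings. So it suffices to prove $\hV(\two)=2+\two$. Both sides lie in $I$, the right side by hypothesis (1). Applying $F$ gives $\two^2=4$ on the left, and $2+F(\two)=2+\two^2-2\cdot(-1)\cdot\ldots$ on the right. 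More carefully, $F(\two)=\two^2+2\delta(\two)=4-2=2$, so $F(2+\two)=4$. Injectivity of $F$ on $I$ then gives the equality.

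The main obstacle is the uniqueness of $\two$, which is what we need for full faithfulness and for the inverse construction to be well defined. Suppose $\two,\two'$ both satisfy (1) for the same $I$, and set $d=\two-\two'$. Then $d\in I$, and from $\two^2=\two'^2$ we get $d(\two+\two')=0$. Since $F(\two)=F(\two')=2$, we also have $F(d)=0$. But $d\in I$ and $F|_I$ is injective, so $d=0$.

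It remains to check that the two functors are mutually inverse on objects and morphisms. On objects, $\hV(A)=I$ because $F(I)=\two A$ and $F|_I$ is bijective. On morphisms, we use the fullness argument above together with the uniqueness of $\two$. Both checks are routine.
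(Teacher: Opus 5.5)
Your proof is correct and takes the same route as the paper. Both arguments get uniqueness of $\two$ from injectivity of $F|_I$; the paper phrases this as $2+\two$ being the unique element of $I$ with $F$-image $4$. Both then define $\hV=(F|_I)^{-1}\circ(\two\cdot)$, check $\hV(\two)=2+\two$ by applying $F$, and invoke the converse half of \Cref{hVof2tilde} for the $\delta$-identity.

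You also spell out full faithfulness on morphisms, which the paper leaves implicit: a map of pairs sends $\two_A$ to $\two_B$ by uniqueness, and hence commutes with $\hV$. Delete the garbled first attempt at computing $F(2+\two)$ and keep only your corrected computation $F(\two)=\two^2+2\delta(\two)=2$.
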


\begin{proof}
First, let us check that the functor is well-defined, i.e., that if $A$ is a 2-torsionfree \dcartwo ring, then the pair $(A, \hV(A))$ satisfies the conditions. 
This follows easily from the definition of a \dcartwo ring and the identities of \Cref{hVof2tilde}. 

Conversely, 
fix a pair $(A, I)$ satisfying the above conditions. 
We observe that $\two \in A$ is unique subject to the above conditions: in fact, $2 + \two$ is the unique element of $I$ that maps via $F$ to $4$. 
We can define $\hV: A \to A$ by $\hV(a) = F|_I^{-1}(\two a)$.
One checks that $\hV(\two) = 2 + \two$ (since both sides belong to $I$ and map via $F$ to $4$), 
and the \dcartwo identities follow from the properties of $F$ and $I$, cf.~\Cref{hVof2tilde}. This defines the inverse functor from the category of pairs $(A, I)$ satisfying the above conditions to the category of 2-torsionfree \dcartwo rings.
\end{proof}

\subsubsection{Structure of free objects}

\begin{construction}[Free \dcartwo rings]
    \label{cons:freedCarttwo}
    Let $R$ be a $\delta$-ring. By the adjoint functor theorem, we can form the free \dcartwo ring on $R$, which we denote $R[\hV]$.
    
\end{construction}
However, $R[\hV]$ has a more complicated description than the free \dcart ring $R[V]$ of \Cref{cons:freedCartring}. 
The description presented below (\Cref{prop:RhVdecomposition} and \Cref{cokernelRhVtoRV}) was originally suggested by ChatGPT.
\begin{proposition}
    \label{prop:RhVdecomposition}
    Suppose $R$ is a $\delta$-ring such that $F: R \to R$ is split injective  as a map of abelian groups (in particular, $R$ is 2-torsionfree, cf.~\Cref{torsionmeansnilpotent}), and choose an abelian group decomposition 
    $R \simeq F(R) \oplus C$ for $C \subset R$.\footnote{Note that this is always possible if $R$ is a free $\delta$-ring.}

  Then $R[\hV]$ admits a direct sum decomposition of the form
    \begin{equation} \label{RhVdecomposition}
        R[\hV] = R \otimes_{\mathbb{Z}} \mathbb{Z}[\two] \oplus \bigoplus_{i > 0} \hV^i( R \oplus \two. C).
    \end{equation}
    That is, every $x \in R[\hV]$ has a unique expression of the form
    \begin{equation} \label{rhvexpr} x = a_0 + \two b_0 + \sum_{i > 0} \hV^i( a_i + \two  b_i), \end{equation}
    where $a_i, b_i \in R$ (with all but finitely many equal to zero) and $b_i \in C$ for $i > 0$. 
\end{proposition}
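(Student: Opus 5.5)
The proof has two halves, parallel to \Cref{prop:normalformDieudonne}. First we show that the right-hand side of \eqref{RhVdecomposition} spans $R[\hV]$. Then we prove uniqueness by mapping $R[\hV]$ to a 2-torsionfree \dcartwo ring in which the normal forms \eqref{rhvexpr} can be separated.

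\emph{Spanning.} Let $M \subset R[\hV]$ be the subgroup of all expressions \eqref{rhvexpr}. Since $R[\hV]$ is generated as a \dcartwo ring by $R$, it suffices to show that $M$ contains $R$ and is closed under multiplication, $\hV$, and $\delta$. Closure under $\delta$ then gives closure under $F$.

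\begin{itemize}
\item \emph{$\hV$ and level-zero $C$-components.} For $\hV$, the only issue is the level-zero term $\two b_0$ with $b_0 \in R$. Write $b_0 = F(r) + c$ with $c \in C$ and use \eqref{hVtwoF}, which gives $\hV(\two F(r)) = (2+\two) r$. This lies in $M$, so $\hV(a_0 + \two b_0) = \hV(a_0) + (2+\two) r + \hV(\two c) \in M$.
\item \emph{$\hV$ on higher levels.} The same reduction applies to $\hV(\two b)$ with $b \in C$, and to $\hV^{i+1}$ of anything already in $M$, by induction on $i$.
\item \emph{Multiplication.} Products $\hV^i(x)\hV^j(y)$ are reduced with the projection formula, $F\hV = \two$, and $\two^2 = 4$, which show that $\two\hV(y) = 2\hV(y)$ (\Cref{hVbasicidentities}). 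Products with $\two$ and with $R$ are handled similarly.
\item \emph{$\delta$.} Use \eqref{deltaVtildeidentity}, the formula for $\delta$ of a sum (the cross term $a\hV(b)$ is handled by the projection formula), and $\delta(\two) = -1$.
\end{itemize}

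Each of these is a routine induction on the $\hV$-level.

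\emph{Injectivity.} Since $R$ is 2-torsionfree, we aim to construct a 2-torsionfree \dcartwo ring $B$ under $R$ and compute the image of $M$ there. My first attempt would be to take the perfection $R_{\mathrm{perf}} = \varinjlim_F R$ and adjoin $\two$. Concretely, let $B = R_{\mathrm{perf}}[\two]/(\two^2 - 4)$, or better its $\mathbb{Z}[1/2]$-free quotient. By \Cref{ptorsionfree2Cart} we make $B$ a \dcartwo ring with $\delta(\two) = -1$ and
\[
\hV(a) = F^{-1}(\two a),
\]
using the ideal $I = F^{-1}(\two B)$, which contains $2 + \two$.

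This $B$ is torsionfree but not a domain, and that is actually useful. The idempotent-like splitting $\two = \pm 2$ gives, after inverting $2$, a decomposition $B[1/2] = R_{\mathrm{perf}}[1/2] \times R_{\mathrm{perf}}[1/2]$. On the $\two = 2$ factor, $\hV$ becomes $2F^{-1}$. On the $\two = -2$ factor, it becomes $-2F^{-1}$.

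In this model an element \eqref{rhvexpr} maps to the pair
\[
\Bigl(\sum_i (\pm 2)^i F^{-i}(a_i \pm 2 b_i)\Bigr)_{\pm}.
\]
From the two components we recover $\sum_i 2^i F^{-i}(a_i)$ and $\sum_i 2^{i+1} F^{-i}(b_i)$, together with their alternating-sign variants, and hence the even-level and odd-level contributions separately. Recovering each individual $a_i, b_i$ is where the hypothesis $b_i \in C$ and the splitting $R = F(R) \oplus C$ must enter. Working inside $F^{-N}(R)$ for large $N$, the group $F^{-N}(R)$ decomposes as $R \oplus \bigoplus_{k} F^{-k}(C)$, and level-$i$ terms land in distinct summands.

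\emph{Main obstacle.} This injectivity step is the crux, and I expect it to be the hardest part. The $a_i$ for $i > 0$ are unrestricted elements of $R$, so $F^{-i}(a_i)$ can collide with lower levels. One must use the $\two$-twisting, that is, the two sign components, to separate the $a$-part from the $b$-part, and the filtration by powers of $2$ to peel off levels one at a time. If this model turns out to be too lossy, I would instead map to the $\hV$-completion $W(R[\hV]/\hV)$ via \Cref{mapsbetweendcartwoandVcomplete} and compare ghost components.
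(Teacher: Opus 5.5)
Your spanning half is fine and matches the paper. The paper also checks that such expressions are closed under the operations, then pushes $b_i$ into $C$ via $\hV^i(\two F(b))=\hV^{i-1}((2+\two)b)$. The gap is in the uniqueness half. Your ring $B$ carries no \dcartwo structure with the given $\two$, and no model of this type can separate the normal forms.

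Indeed, $\two^2=4$ and $\delta(\two)=-1$ give $F(\two)=\two^2+2\delta(\two)=2$, so $0\neq\two-2\in\ker F$. Moreover $F(a+\two b)=F(a)+2F(b)$ shows $F(B)=R_{\mathrm{perf}}$, which does not contain $\two=F(\hV(1))$. So there is no ideal $I$ with $F|_I\colon I\xrightarrow{\sim}\two B$, and \Cref{ptorsionfree2Cart} does not apply. After inverting $2$, $F$ kills the idempotent $(2-\two)/4$ of the factor where $\two=-2$, so there is no factor on which $\hV$ becomes $-2F^{-1}$.

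More fundamentally, in any \dcartwo ring with injective Frobenius, $F(\two)=2=F(2)$ forces $\two=2$. Hence $\hV(1)=2$ and $\hV(F(r))=r\hV(1)=2r$. But $\hV(F(r))$ (normal form $a_1=F(r)$) and $2r$ (normal form $a_0=2r$) are distinct in $R[\hV]$. This is exactly the collision you flag as the main obstacle, and sign components cannot repair it, because the information lives in the non-injectivity of $F$. For the same reason, the perfection trick of \Cref{prop:normalformDieudonne} does not transfer.

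The missing idea is to map to a target where nothing collapses. Take the free \dcart ring $R[V]$, made into a \dcartwo ring by \Cref{dcartwofromdcart}; its underlying group is $\bigoplus_{i\geq 0}V^i(R)$ by construction. With $\epsilon=V(1)-1$ one has, for $i\geq 1$ and all $b$,
\[
\hV^i(x)=-V^i(x)+V^{i+1}(F(x)),\qquad \hV^i(\two b)=V^i(2b),\qquad \two b=-2b+V(2F(b)).
\]
So the image of \eqref{rhvexpr} has these coefficients:
\begin{itemize}
\item $V^0$-coefficient $a_0-2b_0$;
\item $V^1$-coefficient $F(2b_0)-a_1+2b_1$;
\item $V^k$-coefficient $F(a_{k-1})-a_k+2b_k$ for $k\geq 2$.
\end{itemize}
Suppose all vanish and $n\geq 1$ is maximal with $(a_n,b_n)\neq(0,0)$. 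The $V^{n+1}$-coefficient gives $F(a_n)=0$, so $a_n=0$. Then the $V^n$-coefficient reads $F(\ast)+2b_n=0$ with $2b_n\in C$. The splitting $R=F(R)\oplus C$ and $2$-torsionfreeness give $b_n=0$, a contradiction. Hence $a_i=b_i=0$ for $i\geq 1$, and then $F(2b_0)=0$ and $a_0-2b_0=0$ give $b_0=a_0=0$. This top-down comparison of leading coefficients is the paper's argument, and it is where the hypothesis $b_i\in C$ enters.

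Your fallback of mapping to the $\hV$-completion $W(R)$ would also work. The map $R[\hV]\to W(R)$ factors through $R[V]$, which injects into $W(R)$ by \Cref{joyalVcoords}. But you would still have to carry out exactly this computation in $V$-adic coordinates rather than leave it as a suggestion.
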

\begin{proof}
    In fact, it is easy to see that the set of expressions \eqref{rhvexpr} 
    where all $a_i, b_i \in R$ for $i \geq 0$ (with no assumption yet that $b_i \in C$ for $i > 0$) is stable under $F, \hV, \delta$, and multiplication. Since $R[\hV]$ is generated by $R$ under the \dcartwo operations, it follows that
    any $x \in R[\hV]$ admits a representation as in \eqref{rhvexpr} with all $a_i, b_i \in R$. 
    We need to prove that, given $x$, it is possible to choose $a_i, b_i \in R$ such that $b_i \in C$ for 
    $i  > 0$. 
    This follows from repeatedly simplifying via $\hV^i( \two F(b)) = \hV^{i-1}( (2+ \two)b)$ (cf.~\Cref{hVof2tilde}). 
    
    We now need to show uniqueness, i.e., that 
    if 
\begin{equation} \label{redundantsum} a_0 + \two b_0 + \sum_{i > 0} \hV^i( a_i + \two  b_i) = 0 \end{equation} with the $a_i, b_i \in R$ and $b_i \in C$ for $i > 0$, 
then $a_i = b_i = 0$ for all $i$. 

Note that (thanks to \Cref{dcartwofromdcart}) we have a natural map of \dcartwo-rings $R[\hV] \to R[V]$.
Furthermore, $R[\hV][1/2]= R[V][1/2]$, since the theories of \dcart rings and \dcartwo rings agree after
inverting 2 (\Cref{comparisonofdcartanddcartwoonVcompleteor2invertible}). 

For an element of $R[V]$, and hence in particular for an element of $R[\hV]$, we can consider its ``leading coefficient,''
i.e., the leading coefficient of $\sum_{i\geq 0} V^i(r_i)$ is $V^{j}(r_j)$ for $j$ maximal such that $r_j \neq 0$. 
We check: 
\begin{itemize}
\item 
    For $i > 0$, 
the leading coefficient of $\hV^i(a_i)$ is $V^{i+1}(F(a_i))$. 
\item For $i > 0$, the leading coefficient of $\hV^i( \two b_i) = V^i (2b_i)$ is simply $V^i(2b_i)$. 
\item The leading coefficient of 
$\two b_0 = (-2 + V(2))b_0$ is $V(2 F(b_0))$. 
\end{itemize}

All these can be deduced from the identity
\begin{gather} \hV^i(x) = V^{i-1}(\hV(x)) = -V^{i}(x) + V^{i+1}(F(x)), \quad i \geq 1. \label{somemorehVidentities}
\end{gather}
Comparing leading coefficients then easily implies that the vanishing of \eqref{redundantsum} implies that $a_i = b_i = 0$ for all $i$. 

\end{proof}

\begin{corollary}
    \label{cokernelRhVtoRV}
For any $\delta$-ring $R$ satisfying the conditions of \Cref{prop:RhVdecomposition}, the map $R[\hV] \to R[V]$
is injective.
Moreover, given a sum 
$x = \sum_{i \geq 0 } V^i (c_i)
\in R[V]$, the following are equivalent: 
\begin{enumerate}
\item $x \in R[\hV] \subset R[V]$. 
\item 
$\sum_{i > 0} c_i^{1/2^i} = 0 \in (R/2)_{\mathrm{perf}} $. 
\end{enumerate}
\end{corollary}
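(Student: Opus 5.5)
The plan is to work entirely inside $R[V]$, using the map $R[\hV]\to R[V]$ of \Cref{dcartwofromdcart}, and to handle the two claims separately.

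\textbf{Injectivity.} This comes directly out of the proof of \Cref{prop:RhVdecomposition}. By the existence part, every element of $R[\hV]$ has an expression \eqref{rhvexpr} with $b_i\in C$ for $i>0$. In the uniqueness part, the vanishing \eqref{redundantsum} was only used after mapping to $R[V]$, where the leading coefficients were compared. So if an element of $R[\hV]$ maps to zero in $R[V]$, the same leading-coefficient comparison forces all its $a_i,b_i$ to vanish, and hence the element is $0$. Injectivity of $R[\hV]\to R[V]$ follows.

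\textbf{Setting up the criterion.} Consider the additive map
\[\lambda\colon R[V]\to (R/2)_{\mathrm{perf}},\qquad \sum_{i\ge0}V^i(c_i)\mapsto \sum_{i>0}c_i^{1/2^i}.\]
It is additive because $c\mapsto c^{1/2^i}$ is additive on a perfect $\mathbb{F}_2$-algebra, and it is well defined by the uniqueness of Verschiebung expansions in $R[V]$. By \Cref{prop:RhVdecomposition}, $R[\hV]$ is spanned additively by $R$, by $\two R$, by $\hV^i(R)$ and by $\hV^i(\two R)$ for $i>0$. I will also use \eqref{somemorehVidentities}, $\two=-2+V(2)$, and the projection formula.

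\textbf{(1)$\Rightarrow$(2).} I check that $\lambda$ kills each of these spanning pieces.
\begin{itemize}
\item Elements of $R$ sit in degree $0$, so $\lambda$ vanishes on them.
\item $\two b=-2b+V(2F(b))$, so $\lambda(\two b)=(2F(b))^{1/2}=0$.
\item $\hV^i(a)=-V^i(a)+V^{i+1}(F(a))$, so $\lambda(\hV^i(a))=-a^{1/2^i}+F(a)^{1/2^{i+1}}$. This is $0$ because $F(a)\equiv a^2 \pmod 2$.
\item $\hV^i(\two b)=V^i(2b)$, so $\lambda(\hV^i(\two b))=(2b)^{1/2^i}=0$.
\end{itemize}

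\textbf{(2)$\Rightarrow$(1).} I work modulo the subgroup $R[\hV]$ and record three congruences, valid for all $a\in R$ and $i\ge1$.
\begin{itemize}
\item $V^i(2a)\in R[\hV]$, being $\hV^i(\two a)$.
\item $V^i(a)\equiv V^{i+1}(F(a))$, from the formula for $\hV^i(a)$.
\item $R\subset R[\hV]$.
\end{itemize}
Now let $x=\sum V^i(c_i)$ with $\lambda(x)=0$, and pick $N$ at least the top degree of $x$. Iterating the second congruence, and dropping $c_0$ by the third, gives
\[x\equiv V^N(d),\qquad d=\sum_{0<i\le N}F^{N-i}(c_i).\]
Since $F$ is a lift of Frobenius, the image of $d$ in $(R/2)_{\mathrm{perf}}$ is $\lambda(x)^{2^N}=0$. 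Hence $d$ is nilpotent mod $2$, so $F^k(d)\equiv d^{2^k}\equiv 0 \pmod 2$ for $k\gg0$. Pushing up once more gives $V^N(d)\equiv V^{N+k}(F^k(d))\in V^{N+k}(2R)\subset R[\hV]$, so $x\in R[\hV]$.

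\textbf{Main obstacle.} The only delicate point is bookkeeping. First, the identity \eqref{somemorehVidentities} and the expansion $\two=-2+V(2)$ must hold in $R[V]$ exactly as used. Second, in the injectivity step, the leading-coefficient comparison in \Cref{prop:RhVdecomposition} must genuinely take place in $R[V]$, so that no information is lost by passing to the image. Both should be routine verifications from the definitions.
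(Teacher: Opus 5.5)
Your proof is correct. For the equivalence of (1) and (2) you follow the paper's argument: your $\lambda$ is the paper's $\nu$. You also write out the reduction $x\equiv V^N(d)$ modulo $R[\hV]$ and the nilpotence step, which the paper only sketches.

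For injectivity you take a different route. The paper uses the decomposition of \Cref{prop:RhVdecomposition} only to conclude that $R[\hV]$ is $2$-torsionfree. It then notes that $R[\hV][1/2]\to R[V][1/2]$ is an isomorphism, because the two theories agree once $2$ is inverted (\Cref{comparisonofdcartanddcartwoonVcompleteor2invertible}). Hence the kernel is $2$-power torsion inside a $2$-torsionfree group, so it vanishes.

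You instead observe that the uniqueness half of \Cref{prop:RhVdecomposition} is proved by comparing coefficients after mapping to $R[V]$. Together with the existence half, this already says that an element of $R[\hV]$ with zero image in $R[V]$ is zero. This is valid: the comparison does take place in $R[V]$, using only that $F$ is injective, that $R$ is $2$-torsionfree, and that $F(R)\cap C=0$. Your route avoids the localization argument. The cost is that it relies on the internal form of that proof rather than on its statement alone.
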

\begin{proof} 
By \Cref{prop:RhVdecomposition}, $R[\hV]$ is 2-torsionfree. We also know that the map $R[\hV] \to R[V]$ becomes an isomorphism after inverting 2, whence it must be injective. 
For any $i > 0$ and $r \in R$, one has $V^i( 2r) \in R[\hV] \subset R[V]$. 
In fact, this follows from $V^i(2r) = \hV^{i-1} \hV(\two r)$ for any $i \geq 1$.

For any $y = \sum_{i \geq 0} V^i(d_i) \in R[V]$, set $\nu(y) = \sum_{i > 0} d_i^{1/2^i} \in (R/2)_{\mathrm{perf}}$. 
Clearly $\nu: R[V] \to (R/2)_{\mathrm{perf}}$ is a well-defined, surjective map of abelian groups. Furthermore, $\nu$ vanishes on $R[\hV]$ by the preceding paragraph, in light of \eqref{somemorehVidentities} and the identity $\two = 2(V(1)-1)$ in $R[V]$. 
Conversely, 
given $x \in R[V]$ such that $\nu(x) = 0$, 
the combination of the identity \eqref{somemorehVidentities} and 
$\hV^i( \two b) = V^i (2b)$ for $i > 0$ implies that $x$ can be rewritten as an element of $R[\hV]$.
\end{proof} 

\begin{remark} 
In the proof of \Cref{cokernelRhVtoRV}, the map 
$\nu: R[V] \to (R/2)_{\mathrm{perf}}$ is a taut derivation, where we make $(R/2)_{\mathrm{perf}}$ into a Frobenius module over $R[V]$ 
via the natural map $R[V] \to R/2$ which annihilates $V( R[V])$. 
That is, $R[\hV]$ is the kernel of a taut derivation on $R[V]$ which annihilates $R \subset R[V]$. 
One checks in fact that $\Omega^1_{R[V]/R}[F^{-1}] \simeq (R/2)_{\mathrm{perf}}$, via the map $\nu$. 

By contrast, 
when $p > 2$, a taut derivation on $R[V]$ into a derived $p$-complete module that annihilates $R$ automatically vanishes. This is one reason that the prime 2 is special in the theory. 

\end{remark}

\begin{remark}[{The \dcartwo ring $R[\hV]$ in general}]
Let $R$ be any $\delta$-ring. 
Then $R[\hV]$ can be described as the 
kernel of the taut derivation $R[V] \to (R/2)_{\mathrm{perf}}$ of \Cref{cokernelRhVtoRV}.
To see this, we need to show that this functor is left Kan extended from free $\delta$-rings. 
This follows because the functor $R \mapsto (R/2)_{\mathrm{perf}}$ is left Kan extended from free $\delta$-rings as a functor to the derived category; indeed, $(R/2)_{\mathrm{perf}}$ is also the perfection of $R\quot 2$. 
\end{remark} 

\begin{example}
    \label{freehVofperfectdelta}
By \Cref{prop:RhVdecomposition}, the initial \dcartwo ring 
$\mathbb{Z}[\hV]$ has as $\mathbb{Z}$-basis
$1, \two, \hV(1), \hV^2(1), \dots$. 

More generally, let $R$ be a perfect $\delta$-ring. 
Then any element of $R[\hV]$ can be uniquely expressed as a finite sum of the form 
$$  \two b_0 + \sum_{i \geq 0} \hV^i(a_i) $$
for $b_0 \in R$ and $a_i \in R, i \geq 0$ with all but finitely many $a_i$ equal to zero.
\end{example}

\begin{proposition}
\label{chWZ2description}
The natural map $\mathbb{Z}[\hV] \to \chW( \mathbb{Z}_2)$ is an isomorphism after 2-completion.
\end{proposition}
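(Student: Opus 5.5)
The plan is to follow the proof of \Cref{chWZpdescription}, using the explicit $\mathbb{Z}$-basis $1, \two, \hV(1), \hV^2(1), \dots$ of $\mathbb{Z}[\hV]$ from \Cref{freehVofperfectdelta}.

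\emph{Step 1: the map and its target.} Since $\chW(\mathbb{Z}_2)=\varprojlim_n \chW(\mathbb{Z}/2^n)$ is a \dcartwo ring (\Cref{chWdcartwo}), we get a map $\mathbb{Z}[\hV]\to\chW(\mathbb{Z}_2)$. By \Cref{chWofArtinian} and passage to the limit, $\chW(\mathbb{Z}_2)\hookrightarrow W(\mathbb{Z}_2)$ with image $\mathbb{Z}_2\oplus \hw(\mathbb{Z}_2)$; this is classically $2$-complete. So the $2$-completion $\mathbb{Z}[\hV]^\wedge_2$ consists of sums $c\,\two+\sum_{i\ge0} a_i\hV^i(1)$ with $c,a_i\in\mathbb{Z}_2$ and $a_i\to0$, and we must show the induced map to $W(\mathbb{Z}_2)$ is injective with image $\mathbb{Z}_2\oplus\hw(\mathbb{Z}_2)$.

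\emph{Step 2: injectivity.} In $W(\mathbb{Z}_2)$ we have $\two=2[-1]=-2+V(2)$, and by \eqref{somemorehVidentities}, $\hV^i(1)=-V^i(1)+V^{i+1}(1)$ for $i\ge1$, while $\hV^0(1)=1$. Using the unique Verschiebung expansions $\sum V^i(r_i)$, $r_i\in\mathbb{Z}_2$, of \Cref{joyalVcoords}, the element $c\,\two+\sum a_i\hV^i(1)$ has $V^0$-coefficient $a_0-a_1-2c$, $V^1$-coefficient $2c+a_1-a_2$, and $V^j$-coefficient $a_{j-1}-a_{j+1}$ for $j\ge2$; the last is telescoping up to the shift. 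If all coefficients vanish and $a_i\to0$, then $a_{j-1}=a_{j+1}$ for $j\ge2$ forces $a_i=0$ for $i\ge1$. The $V^1$-coefficient then gives $c=0$, and the $V^0$-coefficient gives $a_0=0$. I will check this triangular bookkeeping carefully. The convergent infinite sums cause no problem because everything is continuous for the $V$-adic and $2$-adic topologies.

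\emph{Step 3: surjectivity, the main step.} $\mathbb{Z}_2\subset W(\mathbb{F}_2)$ is hit by multiples of $1$, so it remains to hit every $\epsilon\in\hw(\mathbb{Z}_2)$. Write $\epsilon=\sum V^i(r_i)$. By \Cref{ghostcrit}, $\gh_n(\epsilon)\in 2^{n+c_n}\mathbb{Z}_2$ with $c_n\to\infty$, so $r_n\to0$ $2$-adically, exactly as in \Cref{chWZpdescription}. Set $s=\sum_i r_i$, which converges; $s$ is the limit of the ghost components divided by powers of $2$. We try to solve the triangular system from Step 2, namely $a_{j+1}=a_{j-1}-r_j$ for $j\ge2$, together with the equations for $r_0,r_1$, with $a_i\to0$. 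This is possible with $a_i\to 0$ if and only if the two alternating tail sums $\sum_{j \text{ even}} r_j$ and $\sum_{j \text{ odd}} r_j$ are absorbed by the free parameters $a_0,a_1,c$.

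This is the delicate point, and I expect it to be the main obstacle, since at $p=2$ the element $V(1)$ is not in the image. I will solve for $a_1,a_2$ from the two sums, $c$ from $r_1$, and $a_0$ from $r_0$, and then verify consistency. There is exactly one constraint, because $V(1)\notin\chW(\mathbb{Z}_2)$, and I would expect it to be forced by membership in $\hw$ via the condition mod $4$ in \Cref{ghostcrit}, that is, by \Cref{impossibilityinWZ4}/\Cref{uinWZ2}. If the parity bookkeeping fails, the fallback is to work modulo $\hw$: use \Cref{uinWZ2} to replace generators by elements of $\hw$, and conclude by a dévissage along $\hV$ in $\chW(\mathbb{Z}_2)$ together with derived Nakayama, using $\chW(\mathbb{Z}_2)/\hV\simeq\mathbb{Z}_2$ and $\mathbb{Z}[\hV]/\hV\simeq\mathbb{Z}[\two]/(2+\two)\simeq\mathbb{Z}$.
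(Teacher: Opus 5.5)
Your Step 3 has a genuine gap, at the point you flag yourself.

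First, a correction to Step 2. From $\two=-2+V(2)$ and $\hV^i(1)=-V^i(1)+V^{i+1}(1)$, the Verschiebung coefficients of $c\,\two+\sum_{i\ge0}a_i\hV^i(1)$ are $r_0=a_0-2c$, $r_1=2c-a_1$, and $r_j=a_{j-1}-a_j$ for $j\ge2$. These are not the ones you wrote, though injectivity still follows. Solving this telescoping system with $a_i\to0$ forces $a_m=\sum_{k>m}r_k$ for $m\ge1$, $2c=\sum_{k\ge1}r_k$ and $a_0=\sum_{k\ge0}r_k$. So the image of $\mathbb{Z}[\hV]^\wedge_2$ in $W(\mathbb{Z}_2)$ is exactly the kernel of
\[
\nu\colon \mathbb{Z}[V]^\wedge_2=\Bigl\{\sum_i V^i(r_i): r_i\to 0\Bigr\}\longrightarrow \mathbb{F}_2,\qquad \sum_i V^i(r_i)\longmapsto \sum_{i\ge1} r_i \bmod 2,
\]
which is the map of \Cref{cokernelRhVtoRV}. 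Surjectivity onto $\chW(\mathbb{Z}_2)$ is therefore equivalent to $\nu$ vanishing on $\hw(\mathbb{Z}_2)$, and this is exactly what you do not prove; you only say you expect it to follow from the mod $4$ condition in \Cref{ghostcrit}. Your fallback cannot supply it either, because it only uses information modulo powers of $\hV$. The inclusion $\mathbb{Z}[\hV]^\wedge_2\subset\mathbb{Z}[V]^\wedge_2$ also induces isomorphisms modulo $\hV^n$ for every $n$ (both quotients by $\hV$ are $\mathbb{Z}_2$ and $\hV$ is injective), yet it has cokernel $\mathbb{Z}/2$. So no such d\'evissage can detect the one constraint at stake.

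The missing idea is the index argument, which is how the paper concludes. You already name the right ingredient, $V(1)\notin\chW(\mathbb{Z}_2)$, but use it only heuristically. The image computation above, together with your observation via \Cref{ghostcrit} that $r_n\to0$ on $\chW(\mathbb{Z}_2)$, gives $\ker\nu\subseteq\chW(\mathbb{Z}_2)\subseteq\mathbb{Z}[V]^\wedge_2$ with $\ker\nu$ of index $2$. Hence $\chW(\mathbb{Z}_2)$ is one of the two ends. It is not the larger one because $V(1)\notin\chW(\mathbb{Z}_2)$. Indeed, if $V(1)=a+e$ with $a\in\mathbb{Z}_2$ and $e\in\hw(\mathbb{Z}_2)$, then reducing to $W(\mathbb{F}_2)$ forces $a=2$. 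So $V(1)-2\in\hw(\mathbb{Z}_2)$, contradicting \Cref{impossibilityinWZ4} after mapping to $W(\mathbb{Z}/4)$.

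Alternatively, you can check $\nu|_{\hw(\mathbb{Z}_2)}=0$ by hand. For $x=2y$ one has $[x]=\sum_n V^n(s_n)$ with $s_0=x$ and $s_n=(x^{2^n}-x^{2^{n-1}})/2^n$. Then $\sum_{n\ge1}s_n\equiv y+y^2\equiv 0 \pmod 2$, hence $\nu(V^i[x])=0$ for all $i\ge0$. Now $\nu$ kills $\sum_i V^i[x_i]$ by continuity for the sup-norm on coefficients. This step genuinely needs $x_i\to0$: $V(1)-2$ has all Witt components even but not tending to $0$, and $\nu(V(1)-2)=1$.
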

\begin{proof}
In fact, we know from \Cref{chWZpdescription}
that we have an  inclusion $\chW( \mathbb{Z}_2) $ into the 2-completion of $\mathbb{Z}[ V]$. 
This inclusion is proper, because $V(1) \notin \chW( \mathbb{Z}_2)$ (cf.~\Cref{chWZpdescription} and \Cref{impossibilityinWZ4}).
Therefore, $\chW( \mathbb{Z}_2)$ contains the 2-completion of $\mathbb{Z}[\hV]$, since  $\mathbb{Z}[\hV] \to \mathbb{Z}[V]$ is injective. Since the  cokernel of this map is $\mathbb{Z}/2$ by \Cref{cokernelRhVtoRV},  the result follows. 
\end{proof}

\begin{remark}
    As a result, we observe the following general principle. To verify an identity in an arbitrary \dcartwo ring, it suffices to verify it in the case of a \dcartwo ring that arises from a \dcart ring via \Cref{dcartwofromdcart} (and even one that is 2-torsionfree). This is because all identities can be checked in the free \dcartwo rings; now we use that free \dcartwo rings admit injective maps into free \dcart rings by \Cref{cokernelRhVtoRV}. In fact, this reasoning shows that it suffices to verify any such identities in $W(R)$, for $R$ a 2-torsionfree ring. 
\label{generalprincipleforcheckingidentities}
\end{remark}

\begin{lemma}\label{hVtwodeltahVidentity}
Let \(A\) be a \dcartwo ring.  For every \(s\in A\), one has
\[
\two\delta(\hV(s))=2s-\hV(\two s^2).
\]
\end{lemma}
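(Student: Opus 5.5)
The identity follows directly from the \dcartwo axioms and \Cref{hVof2tilde}, with no reduction to special cases. The idea is to multiply the defining identity \eqref{deltaVtildeidentity} by $\two$, move $\two$ inside $\hV$ using the projection formula, and then eliminate the resulting $\hV(\two\delta(s))$ term using \eqref{hVtwoF}.

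First, multiplying \eqref{deltaVtildeidentity} by $\two$ gives
\[
\two\delta(\hV(s)) = -\two s + \two\,\hV(\two\delta(s)).
\]
To simplify the last term, use the projection formula together with $F(\two)=2$, which follows from $\two^2=4$ and $\delta(\two)=-1$. For any $y\in A$ this gives
\[
\two\hV(y)=\hV(F(\two)y)=\hV(2y)=2\hV(y),
\]
as already observed in the proof of \Cref{hVbasicidentities}. Hence
\[
\two\delta(\hV(s)) = -\two s + 2\hV(\two\delta(s)).
\]

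Second, apply \eqref{hVtwoF} with $a=s$ and expand $F(s)=s^2+2\delta(s)$. By additivity of $\hV$ this yields
\[
(2+\two)s=\hV(\two F(s))=\hV(\two s^2)+2\hV(\two\delta(s)).
\]
Therefore
\[
2\hV(\two\delta(s))=(2+\two)s-\hV(\two s^2).
\]

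Substituting this into the first display gives
\[
\two\delta(\hV(s))=-\two s+(2+\two)s-\hV(\two s^2)=2s-\hV(\two s^2),
\]
which is the claim. The argument is purely formal; the only point needing care is the step $\two\hV(y)=2\hV(y)$, which relies on $F(\two)=2$. Alternatively, by \Cref{generalprincipleforcheckingidentities} one could check the identity in $W(R)$ for $2$-torsionfree $R$, but the direct computation above makes that unnecessary.
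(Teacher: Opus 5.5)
Your computation is correct and is the direct verification from the axioms that the paper mentions but leaves to the reader. You multiply the $\delta\hV$ axiom by $\two$, use $\two\hV(y)=2\hV(y)$ (which follows from $F(\two)=2$ and the projection formula), and eliminate $2\hV(\two\delta(s))$ via \eqref{hVtwoF}; this mirrors the calculation in the converse part of \Cref{hVof2tilde}, so the paper's alternative reduction via \Cref{generalprincipleforcheckingidentities} is indeed unnecessary.
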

\begin{proof}
    This can be checked by a direct computation using the identities of a \dcartwo ring, but it also follows from the general principle of \Cref{generalprincipleforcheckingidentities} and an easy calculation in a \dcart ring. 
\end{proof}

Next, we record a result stating that the
last \dcartwo identity takes the same form as the usual \dcart identity when $x \in \hV(A)$.
This could also have been proved via direct calculation.

\begin{lemma} 
    \label{hVversusVdcartidentity}
Let $A$ be a \dcartwo ring, and let $x \in \hV(A)$. Then 
\[ \delta(\hV x) = x - \hV( x^2). \]
\end{lemma}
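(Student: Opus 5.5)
Write $x=\hV(y)$ with $y\in A$. By the defining identity \eqref{deltaVtildeidentity} applied to $x$,
\[
\delta(\hV x) = -x + \hV(\two\,\delta(x)),
\]
so the claim is equivalent to
\[
\hV(\two\,\delta(x)) = 2x - \hV(x^2).
\]
The plan is to compute both sides in terms of $y$ and compare.

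For the left-hand side, \Cref{hVtwodeltahVidentity} with $s=y$ gives $\two\,\delta(x)=\two\,\delta(\hV y)=2y-\hV(\two y^2)$. Hence
\[
\hV(\two\,\delta(x)) = 2\hV(y)-\hV^2(\two y^2) = 2x - \hV^2(\two y^2).
\]
It therefore suffices to show $\hV(x^2)=\hV^2(\two y^2)$. This would follow from the identity
\[
x^2=\hV(y)^2=\hV(\two y^2).
\]

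To prove this last identity, I would use the projection formula:
\[
\hV(y)\hV(y)=\hV\bigl(F(\hV(y))\,y\bigr)=\hV(\two y\cdot y)=\hV(\two y^2),
\]
where $F(\hV(y))=\two y$ is axiom (2). Substituting back gives $\hV(x^2)=\hV^2(\two y^2)$, and so
\[
\hV(\two\,\delta(x))=2x-\hV(x^2),
\]
which combined with the displayed identity for $\delta(\hV x)$ yields $\delta(\hV x)=x-\hV(x^2)$.

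I do not expect a real obstacle. The only point needing care is the reliance on \Cref{hVtwodeltahVidentity}. If one prefers to avoid it, the identity can be checked instead by the general principle of \Cref{generalprincipleforcheckingidentities}: reduce to a $2$-torsionfree \dcart ring. There $\hV(a)=V(\epsilon a)$ and $\epsilon^2=1$, so for $x=V(z)$ one gets $\delta(\hV x)=\delta(V(\epsilon x))=\epsilon x - V(x^2)$. One then uses $\epsilon x=\epsilon V(z)=V(F(\epsilon)z)=V(z)=x$, since $F(\epsilon)=1$, together with $V(x^2)=\hV(\epsilon x^2)=\hV(x^2)$, since $\epsilon x=x$. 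This gives the claim directly.
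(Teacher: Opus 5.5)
Your proof is correct, and your main route differs from the paper's. The paper proves the lemma in one line using \Cref{generalprincipleforcheckingidentities}. Free \dcartwo rings embed into free \dcart rings, so it suffices to treat a \dcartwo ring coming from a \dcart ring. There $\hV = V$ on $\hV(A)$, and the claim is just \eqref{deltaVidentity}; this is exactly your fallback argument.

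Your main route instead derives the identity from the axioms: \eqref{deltaVtildeidentity} applied to $x$, \Cref{hVtwodeltahVidentity} applied to $y$, and the projection formula $\hV(y)^2=\hV(\two y^2)$. This avoids the structural input behind the general principle (\Cref{prop:RhVdecomposition} and \Cref{cokernelRhVtoRV}), but only if \Cref{hVtwodeltahVidentity} is itself checked by hand. The paper's own justification of that lemma again appeals to the general principle.

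That hand check is short. Multiply \eqref{deltaVtildeidentity} by $\two$ and use $\two\hV(z)=\hV(F(\two)z)=2\hV(z)$. The claim then reduces to $2\hV(\two\delta(s))+\hV(\two s^2)=(2+\two)s$, and the left side is $\hV(\two F(s))$, which equals $(2+\two)s$ by \eqref{hVtwoF}.

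In exchange, the paper's route is shorter and explains conceptually why the identity holds: on the ideal $\hV(A)$, the modified Verschiebung agrees with an honest one.
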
 
\begin{proof} 
In fact, 
if $A$ arises from a \dcart ring, then $\hV = V$ on $\hV(A)$. The result follows from \Cref{generalprincipleforcheckingidentities}. 
\end{proof} 

\begin{proposition}\label{BtoBhVtautequivalence}
Suppose $p =2$, and let $B$ be any $\delta$-ring.
The map
$L_B[F^{-1}] \to L_{B[\hV]}[F^{-1}]$
is an isomorphism on 1-truncations.
In particular, the natural map $B \to B[\hV]$ is a taut equivalence.
\end{proposition}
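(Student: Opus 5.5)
We follow the template of \Cref{BtoBVtautequivalence}, replacing $B[V]$ by $B[\hV]$ and the normal form of \Cref{cons:freedCartring} by that of \Cref{prop:RhVdecomposition}.

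\emph{Reduction to free $B$.} Both $L_B[F^{-1}]$ and $B[\hV]$ commute with sifted colimits, so by simplicial resolution it suffices to treat $B$ a free $\delta$-ring over $\mathbb{Z}_{(2)}$. Then $F$ on $B$ is split injective, so \Cref{prop:RhVdecomposition} applies. Moreover $L_B$ is concentrated in degree zero, so by the transitivity triangle it suffices to show that the $1$-truncation of $L_{B[\hV]/B}[F^{-1}]^\wedge_2$ vanishes. For this we filter $B[\hV]$ by the sub-$\delta$-rings
\[
B[\hV]_{\leq n} = B\otimes\mathbb{Z}[\two]\oplus\bigoplus_{0<i\leq n}\hV^i(B\oplus \two C).
\]
These are $\delta$-subrings because, by \Cref{hVversusVdcartidentity} and the identity $\delta(\hV a)=-a+\hV(\two\delta a)$, the operator $\delta$ lowers the $\hV$-degree, and the projection formula controls products. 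We then pass to the colimit in $n$.

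\emph{Step 1: $L_{B[\hV]_{\leq n}/B}[F^{-1}]$ is $2$-power torsion.} We need to check that $B\to B[\hV]_{\leq n}$ is an isomorphism on perfections. The first ingredient is that $F^{n+1}$ carries $B[\hV]_{\leq n}$ into $B[\two]$, since $F\hV^i(x)=\two\hV^{i-1}(x)$. The second is that $B\to B[\two]$ induces an isomorphism on colimit perfections: $\two^2=4$ and $F(\two)=2$, so $F$ sends $\two$ into $B$. Since $pF$ is the map induced on cotangent complexes by the Frobenius, $(2F)^{n+1}$ annihilates $L_{B[\hV]_{\leq n}/B}$. Hence after inverting $F$ this complex is killed by $2^{n+1}$ and agrees with its taut version.

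\emph{Step 2: diagonal criterion.} Each $B[\hV]_{\leq n}$ is flat (indeed free) over $B$ by the normal form. By \Cref{tautequivdiagonalcriterion} together with \Cref{topdnilquotienttautequiv}, it suffices to show that the kernel $J$ of the multiplication map $B[\hV]_{\leq n}\otimes_B B[\hV]_{\leq n}\to B[\hV]_{\leq n}$ consists of topologically $\delta$-nilpotent elements. Here $J$ is $2$-torsionfree and generated as an ideal by the elements
\[
\lambda_x=x\otimes1-1\otimes x,
\]
where $x$ runs through $\two$ and $\hV^j(y)$ with $y\in B\oplus\two C$. Note that $\lambda_{\two}$ has $F(\lambda_{\two})=0$.

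\emph{Main obstacle: the divided-power estimate at $p=2$.} We apply \Cref{topdnilnonunitalcriterion}, and possibly \Cref{SEStopdnil} along the filtration by $\hV$-degree. Each $\lambda_x$ is killed by a power of $F$, since $F^{j+1}$ lands in $B[\two]$ and $F(\two)=2$ is diagonal. The difficulty is that at $p=2$ we need $\gamma_2(\lambda_x)=\lambda_x^2/2$ to be divisible by $2$, i.e.\ $\lambda_x^2\in 4J$, and the argument used for $p>2$ (the $\binom{p}{i}$ terms) only yields $\lambda^2\in 2J$.

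First we treat $\lambda_{\two}$. Since $\two^2=4$, we have $\lambda_{\two}^2=8-2\,\two\otimes\two=-2\,\two\,\lambda_{\two}$. The divided powers of $\lambda_{\two}$ therefore involve $\two^{k-1}/k!$-type factors, up to sign. These tend to zero $2$-adically only when combined with the extra factor of $2$ coming from $\two$. We expect this to be the crux, mirroring how \Cref{uinWZ2} succeeds where \Cref{impossibilityinWZ4} fails.

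Next we treat $x=\hV^j(y)$. We plan to use \Cref{hVtwodeltahVidentity} and the relation $(2-\two)\hV(A)=0$ to compute $\hV(y)^2=\hV(\two y\,F\hV(\cdot))$-type identities. The aim is to show that $\lambda_x^2\in 2\two J+4J$. If the direct estimate fails, we fall back on \Cref{SEStopdnil}: first quotient by the ideal generated by $\lambda_{\two}$, and then handle the $\hV$-graded pieces inductively, where $F$ nilpotence and the $p>2$-style computation hold modulo the previous stage.
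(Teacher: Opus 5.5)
Your reduction to free $B$, the filtration, Step~1, and the reduction (via \Cref{tautequivdiagonalcriterion} and \Cref{topdnilquotienttautequiv}) to topological $\delta$-nilpotence of the diagonal kernel $J$ all agree with the paper. The gap is in the final step, and you have put the difficulty in the wrong place.

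The element $\lambda_{\two}$ is harmless. From $\lambda_{\two}^2=2(\two\otimes 1)\lambda_{\two}$ you get $\gamma_n(\lambda_{\two})=(2\two\otimes 1)^{n-1}\lambda_{\two}/n!$, which tends to $0$ because $(2\two)^2=16$.

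The bad generators are $\lambda_{\hV(y)}$ with $y\in B$. Since $F(\lambda_{\hV(y)})=\lambda_{\two}\cdot(y\otimes 1)$, one has
\[
\lambda_{\hV(y)}^2=F(\lambda_{\hV(y)})-2\delta(\lambda_{\hV(y)})\equiv \lambda_{\two}\cdot(y\otimes 1)\pmod 2 .
\]
This is nonzero mod $2$ already for $y=1$: by the normal form, $B[\two]\otimes_B B[\two]$ is a free summand, and $\lambda_{\two}$ has coefficients $\pm1$ there. Several things follow:
\begin{itemize}
\item $J$ has no divided powers at all, since $\lambda_{\hV(y)}^2\notin 2J$.
\item Your claim that the $p>2$ argument ``only yields $\lambda^2\in 2J$'' is false here.
\item Your target $\lambda_x^2\in 2\two J+4J$ cannot hold.
\item \Cref{topdnilnonunitalcriterion} cannot be applied to $J$.
\end{itemize}
Your fallback does not repair this. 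After killing $\lambda_{\two}$ you get
\[
\gamma_2(\lambda_{\hV(y)})=-\delta(\lambda_{\hV(y)})=-\lambda_{\hV(\two\delta(y))}-(1\otimes\hV(y))\,\lambda_{\hV(y)} .
\]
But $1\otimes\hV(y)$ has no divided powers, because $\hV(y)^2=\hV(\two y^2)\equiv \two y \pmod 2$. So the $p>2$-style estimate does not close, and you give no other argument.

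The missing input is that $2-\hV(1)$ is topologically $\delta$-nilpotent. Note that $\lambda_{\hV(1)}=1\otimes(2-\hV(1))-(2-\hV(1))\otimes 1$, so this is exactly what controls the bad generators. This, not $\lambda_{\two}$, is where \Cref{uinWZ2} enters.

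The paper's argument runs as follows.
\begin{enumerate}
\item It shows that every element of the \dcartwo-ideal of $B[\hV]$ generated by $2-\hV(1)$ is topologically $\delta$-nilpotent. It does this by passing to the $\hV$-completion $W(B)$, whose quotient $W(B)/B[\hV]$ has torsion killed by $2$, and invoking \Cref{uinWZ2}.
\item The quotient by this ideal is the free Dieudonn\'e $\delta$-ring $B'$, which is free over $B$ by \Cref{prop:normalformDieudonne}.
\item \Cref{SEStopdnil} then reduces the claim to the diagonal kernel of $B'\otimes_B B'$.
\item There $\hV=V$ and $V(x)^2=2V(x^2)$, so $1\otimes V^i(x)$ has divided powers, and
\[
\gamma_2(\lambda_{i,x})=2^{i-1}\bigl(V^i(x^2)\otimes 1-1\otimes V^i(x^2)\bigr)-(1\otimes V^i(x))\,\lambda_{i,x}.
\]
The first term is divisible by $2$ inside the ideal; for $i=1$ this uses $V(x^2)=2x-2V(\delta(x))$ in a Dieudonn\'e $\delta$-ring.
\end{enumerate}
This estimate is what makes \Cref{topdnilnonunitalcriterion} applicable.
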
 
\begin{proof} 
    This reduces to the case 
    where  $B$ is a free $\delta$-ring over $\mathbb{Z}_{(2)}$. We filter $B[\hV]$ by the subrings $B[\hV]_{\leq n}$ generated by $\hV^i(B)$ for $i \leq n$.
    As in the proof of \Cref{BtoBVtautequivalence}, it suffices to show that
    $\lt_{B[\hV]_{\leq n}/B}$ vanishes in degrees $\leq 1$ for each $n$, and for this it suffices to show that the kernel of 
    $B[\hV] \otimes_B B[\hV] \to B[\hV]$ consists of topologically $\delta$-nilpotent elements. 
    However, this requires one additional step beyond the proof of \Cref{BtoBVtautequivalence}. 

We consider the \dcartwo-ideal in $B[\hV]$ generated by 
$2 - \hV(1)$. 
Let us observe that every element in this \dcartwo ideal is topologically $\delta$-nilpotent. 
In fact, to check topological $\delta$-nilpotence in $B[\hV]$, we may work in the $\hV$-completion of $B[\hV]$, which is $W(B)$ by \Cref{mapsbetweendcartwoandVcomplete}, since the explicit description in \Cref{prop:RhVdecomposition,cokernelRhVtoRV} shows that the torsion in $W(B)/B[\hV]$ is killed by $2$. 

We know that in $W(\mathbb{Z}_2)$, the element $2 - \hV(1)$ is topologically $\delta$-nilpotent by \Cref{uinWZ2}. Moreover, the topologically $\delta$-nilpotent elements of $W(B)$ are $\hw(B) \subset W(B)$, which is stable under the \dcartwo operations.\footnote{Alternatively, one can argue directly that in any \dcartwo ring, the topologically $\delta$-nilpotent elements form a \dcartwo ideal.}
It follows easily that the \dcartwo ideal $J \subset B[\hV]$ generated by $2 - \hV(1)$  has the property that every element is topologically $\delta$-nilpotent. 

The quotient of $B[\hV]$ by the ideal $J$ is the free Dieudonn\'e $\delta$-ring $B'$ on the $\delta$-ring $B$. 
By the description of $B'$ in \Cref{prop:normalformDieudonne}, we see that $B'$ is a free $B$-module.

Now we have an evident surjection  of nonunital $\delta$-rings
$$ \ker( B[\hV] \otimes_B B[\hV] \to B[\hV]) \to \ker( B' \otimes_B B' \to B'), $$
and we have seen that the kernel 
has the property that every element is topologically $\delta$-nilpotent.
By \Cref{SEStopdnil}, 
it suffices to show that every element of the $2$-torsionfree nonunital $\delta$-ring $\ker(B' \otimes_B B' \to B')$ is topologically $\delta$-nilpotent.

By \Cref{prop:normalformDieudonne}, it suffices to show that any element of the form
$\lambda_{i, x} = V^i x \otimes 1 - 1 \otimes V^i x \in B' \otimes_B B'$ is topologically $\delta$-nilpotent, where $x \in B$ and $i > 0$.
These elements are all annihilated by a power of Frobenius, and become divisible by $2$ after applying one Frobenius. 
By \Cref{topdnilnonunitalcriterion}, it thus suffices to show that their divided powers are 2-adically nilpotent. 

In fact, the divided square of 
$\lambda_{i, x}$ is 
\begin{align} 
   \gamma_2( \lambda_{i, x}) & =  
    2^{i-1}(V^i(x^2) \otimes 1 + 1 \otimes V^i(x^2)) -  V^i(x) \otimes V^i(x) \\
& = 2^{i-1}( V^i(x^2) \otimes 1 - 1 \otimes V^i(x^2))
- (1 \otimes V^i(x) )\cdot (V^i(x) \otimes 1 -  1 \otimes V^i(x))
\end{align}
This is the sum of an element (the first term) which is divisible by $2$ 
in the ideal (when $i = 1$, we use that $V(x^2)$ is divisible by 2 since we are in a Dieudonn\'e $\delta$-ring) and the product of an element in the ideal by an element (namely, $1 \otimes V^i(x)$) which admits divided powers in the ambient ring $B' \otimes_B B'$. 
Therefore, the divided powers of $\gamma_2( \lambda_{i, x})$ are 2-adically nilpotent, whence the same holds for $\lambda_{i, x}$. 
\end{proof} 
\subsubsection{Main results}
\begin{proposition}
\label{hVisinjective}
Let $A$ be a \dcartwo ring. Then $\hV:A\to A$ is injective.
\end{proposition}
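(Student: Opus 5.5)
The plan is to imitate the proof of \Cref{Visinjective}, replacing the \dcart identity for $\delta V$ by the \dcartwo identity \eqref{deltaVtildeidentity}. I will first use that identity to show that any element killed by $\hV$ lies in $\hV(A)$. For such elements, \Cref{hVversusVdcartidentity} then puts us back in the familiar \dcart-type situation, where squaring finishes the argument.

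Suppose $a \in A$ satisfies $\hV(a) = 0$.

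\emph{Step 1.} Apply $\delta$ to $\hV(a)=0$ using \eqref{deltaVtildeidentity}:
\[
0=\delta(\hV(a)) = -a + \hV(\two\,\delta(a)),
\]
so $a = \hV(\two\,\delta(a)) \in \hV(A)$.

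\emph{Step 2.} By \Cref{hVbasicidentities}, $(2-\two)\hV(A)=0$. Since $a \in \hV(A)$, this gives $\two a = 2a$. Applying $F$ to $\hV(a)=0$ gives $\two a = F(\hV(a)) = 0$. Hence $2a = \two a = 0$.

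\emph{Step 3.} Since $a\in \hV(A)$, \Cref{hVversusVdcartidentity} applies to $x=a$ and gives
\[
0=\delta(\hV(a)) = a - \hV(a^2),
\]
so $a = \hV(a^2)$.

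\emph{Step 4.} Square this identity and use the projection formula together with $F\hV = \two$:
\[
a^2 = \hV(a^2)\,\hV(a^2) = \hV\bigl(F(\hV(a^2))\,a^2\bigr) = \hV(\two\, a^4).
\]
Since $\two a = 0$, we have $\two a^4 = (\two a)\,a^3 = 0$, so $a^2 = \hV(0)=0$. Then $a = \hV(a^2) = 0$, which proves injectivity.

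The main point needing care is Step 3. It uses \Cref{hVversusVdcartidentity}, which the paper proved through the general principle of \Cref{generalprincipleforcheckingidentities}, and whose hypothesis $x\in\hV(A)$ is exactly what Step 1 supplies. If one prefers to avoid that lemma, the fallback is to compute directly. Apply \Cref{hVtwodeltahVidentity} with $s=a$: it gives $0 = 2a - \hV(\two a^2)$, and combined with $2a=0$ this yields $\hV(\two a^2)=0$. One would then expand $a = \hV(\two\,\delta(a))$ and iterate similar manipulations. I expect the route through \Cref{hVversusVdcartidentity} to be the cleanest, since it makes the argument literally parallel to the $p=2$ case of \Cref{Visinjective}.
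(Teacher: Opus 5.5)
Your argument is correct, and it takes a different route from the paper.

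The paper first passes to the $\hV$-completion $\widehat A$. By \Cref{comparisonofdcartanddcartwoonVcompleteor2invertible}, $\widehat A$ comes from a $V$-complete \dcart ring, so $\hV$ is injective on it by \Cref{Visinjective}, which forces $a\in\bigcap_i\hV^i(A)$. On that ideal $\two$ acts as $2$, and by \Cref{deltaonhVcongruence} the element $\delta(a)$ also lies there. The paper then argues as follows: $F(a)=0$ follows from $2a=0$ via \Cref{torsionmeansnilpotent}, so $a^2=0$ by the projection formula and $2\delta(a)=F(a)-a^2=0$. Hence $a=\hV(\two\delta(a))=\hV(2\delta(a))=0$.

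You never leave $A$. The $\delta\hV$ axiom alone gives $a\in\hV(A)$, and \Cref{hVversusVdcartidentity} then reduces you to exactly the $p=2$ squaring argument of \Cref{Visinjective}. This is not circular. \Cref{hVversusVdcartidentity} rests only on \Cref{generalprincipleforcheckingidentities}, i.e., on the embedding of free \dcartwo rings into free \dcart rings (\Cref{prop:RhVdecomposition,cokernelRhVtoRV}), and none of that uses injectivity of $\hV$.

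The trade-off is this. Your proof is shorter and literally parallel to the \dcart case, but its real input is the normal form for free \dcartwo rings. The paper's proof uses only the elementary comparison for $\hV$-complete rings together with \Cref{Visinjective}.

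In fact the completion step can also be avoided. Apply the $\delta\hV$ axiom to your Step~1 identity $a=\hV(\two\delta(a))$: this gives $(1+\two)\delta(a)\in\hV(A)$. Since $(2+\two)A\subset\hV(A)$, it follows that $\delta(a)\in\hV(A)$, which is all the paper's argument needs.

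Finally, the conclusion $2a=0$ in your Step~2 is never used, since Step~4 only needs $\two a=F(\hV(a))=0$. The fallback you sketch is likewise unnecessary.
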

\begin{proof}
Consider the $\hV$-completion $\hat{A}$ of $A$.
By \Cref{comparisonofdcartanddcartwoonVcompleteor2invertible}, the \dcartwo ring $\hat{A}$ is induced (via the functor of \Cref{dcartwofromdcart}) by a $V$-complete \dcart ring.  In particular, $\hV$ is injective on $\hat{A}$ since $V$ is injective on any \dcart ring (\Cref{Visinjective}). 

Suppose $a \in A$ is such that $\hV(a)=0$. It follows from the previous paragraph that $a \in \bigcap _{i \geq 0} \hV^i(A)$.
Applying $F$, we find $\two a=0$. 
Since $a \in \bigcap _{i \geq 0} \hV^i(A)$, we get $2a = \two a = 0$ (using \Cref{hVbasicidentities}). 

Applying $\delta$, we obtain the identity
\begin{equation}
a=\hV(\two\delta(a)) = \hV( 2 \delta(a)),
\label{V2delta2}    
\end{equation}
since $\delta(a) \in \bigcap _{i \geq 0} \hV^i(A)$ by \Cref{deltaonhVcongruence}.

Finally, $2a = 0$ gives $F(a) = 0$ thanks to \Cref{torsionmeansnilpotent}. This implies that 
$a . \hV(A) = 0$ by the projection formula, whence $a^2 = 0$ since $a \in \hV(A)$.
Together, we get $2 \delta(a) = 0$ and, from \eqref{V2delta2}, $a = 0$, as desired.
\end{proof}

\begin{lemma}[Infinitely $\hV$-divisible torsion]
\label{deltaisooninfiniteVdivtorsion}
Let $C$ be a \dcartwo ring and consider
\[
I_C=\bigcap_{i\geq 0}\hV^i(C[2^\infty]).
\]
Then:
\begin{enumerate}
\item $I_C \subset C$ is an ideal stable under $F, \hV, \delta$. 
\item $I_C^2=0$.
\item $\delta:I_C\xrightarrow{\sim}I_C$ is an isomorphism of abelian groups.
Moreover, $\delta$ and $\hV$ induce inverses to each other on $I_C $.
\end{enumerate}
\end{lemma}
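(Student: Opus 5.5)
The plan is to handle the three assertions in order, using the projection formula for $\hV$ together with two earlier facts: injectivity of $\hV$ (\Cref{hVisinjective}) and \Cref{hVversusVdcartidentity}. First, two observations about $I_C$. Since $\hV$ is additive, $\hV^i(C[2^\infty]) \subset C[2^\infty]$, so every element of $I_C$ is $2$-power torsion. By injectivity of $\hV$, if $x = \hV(y) \in I_C$, then $y \in I_C$. Indeed, for each $i$ write $x = \hV^{i+1}(z_i)$ with $z_i$ torsion; cancelling one $\hV$ gives $y = \hV^i(z_i)$. Hence $\hV \colon I_C \to I_C$ is a bijection.

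For (1), additivity is clear, and stability under $\hV$ is immediate from the definition. For the ideal property, I would use the projection formula $c\,\hV^i(y) = \hV^i(F^i(c)\,y)$ and note that $F^i(c)\,y$ is again torsion. Stability under $F$ follows from $F\hV^{i}(y) = \two\,\hV^{i-1}(y)$ and the ideal property. Stability under $\delta$ is the only step needing an induction. I would mimic \Cref{deltaonhVcongruence}: show by induction on $i$ that $\delta(\hV^i(C[2^\infty])) \subset \hV^{i-1}(C[2^\infty])$. The inductive step uses
\[
\delta(\hV(y)) = -y + \hV(\two\,\delta(y)),
\]
together with \Cref{deltapreservestorsion}, which says $\delta$ preserves $2$-power torsion. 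Intersecting over $i$ then gives $\delta(I_C) \subset I_C$.

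For (2), take $x, x' \in I_C$ and choose $m$ with $2^m x' = 0$. By \Cref{torsionmeansnilpotent}, this gives $F^m(x') = 0$. Write $x = \hV^m(y)$. The projection formula then gives
\[
x x' = \hV^m(F^m(x')\,y) = 0.
\]

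For (3), let $y \in I_C$. Since $y \in \hV(C)$, \Cref{hVversusVdcartidentity} gives $\delta(\hV y) = y - \hV(y^2)$, and $y^2 = 0$ by (2). So $\delta \circ \hV = \mathrm{id}$ on $I_C$. Because $\hV$ is a bijection on $I_C$, $\delta$ is its two-sided inverse, and in particular $\delta$ is an additive automorphism of $I_C$.

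The main obstacle I expect is the $\delta$-stability in (1). The bookkeeping of the error terms $\hV(\two\,\delta(y))$ across the induction is where care is needed. If that induction becomes awkward, a fallback is to reduce via \Cref{generalprincipleforcheckingidentities} to the analogous congruence in a \dcart ring, where \Cref{deltaonVcongruence} is already available.
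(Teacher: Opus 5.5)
Your proof is correct and is essentially the paper's argument: stability via the projection formula and a \Cref{deltaonhVcongruence}-type induction, bijectivity of $\hV$ on $I_C$ from \Cref{hVisinjective}, $I_C^2=0$ via the projection formula, and $\delta\circ\hV=\mathrm{id}$ on $I_C$ from \Cref{hVversusVdcartidentity}. The one variation is in (2), where you obtain $F^m(x')=0$ directly from \Cref{torsionmeansnilpotent}, whereas the paper uses that $\two$ acts as $2$ on $I_C$ so that $F^N\hV^N=2^N$ there. Your route is slightly cleaner. Also, your step (3) already gives $\delta(I_C)\subset I_C$, since $\delta(\hV y)=y$ for $y\in I_C$, so the induction you flagged as the main obstacle is not actually needed.
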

\begin{proof}
    It is easy to see that $I_C \subset C$ is an ideal. 
Moreover, $C[2^\infty]$ is stable under $F$, $\hV$, and $\delta$ (cf.~\Cref{deltapreservestorsion}). 
This implies that  $F, \hV, \delta$ preserve $I_C$ by \Cref{deltaonhVcongruence}. Moreover, 
the map $\hV: I_C \to I_C$ is an isomorphism, thanks to \Cref{hVisinjective}. 

We show that $I_C^2=0$.
In fact, let $z_1,z_2\in I_C$ and suppose $2^Nz_1=2^Nz_2=0$.
Write $z_1=\hV^{N}(w_1)$ and $z_2=\hV^{N}(w_2)$ where $w_1, w_2 \in I_C$ also satisfy $2^N w_1 = 2^N w_2 = 0$ (by \Cref{hVisinjective}).  
Then, since multiplication by 2 and $\two$ coincide on $I_C$, we find 
\[
\begin{aligned}
z_1z_2
&=\hV^{N}(w_1)\hV^{N}(w_2)  \\
&=\hV^{N}\bigl(w_1F^{N}\hV^{N}(w_2)\bigr)
=2^N\hV^{N}( w_1w_2)=0,
\end{aligned}
\]
proving $I_C^2=0$.

Moreover, $\delta$ is additive on $I_C$, since $I_C^2=0$. 
For $a \in I_C$,
we have $a\in \hV(C)$ and $a^2=0$, so \Cref{hVversusVdcartidentity}
gives $\delta(\hV(a))=a$.
\end{proof}

\begin{theorem}
\label{automatichV}
Let $A,B$ be \dcartwo rings. Let $f:A\to B$ be a map of $\delta$-rings such that
$f(\hV(A))\subset \hV(B)$, where $\hV(A)$ and $\hV(B)$ denote the image ideals
of $\hV$. Then $f$ is a map of \dcartwo rings.
Equivalently, the functor
\[
\ddcartwo\longrightarrow
\{(A,I): A\text{ is a }\delta\text{-ring and }I\subset A\text{ is an ideal}\},
\qquad
A\longmapsto (A,\hV(A)),
\]
is fully faithful, where morphisms on the right are $\delta$-ring maps
preserving the distinguished ideals.
\label{dcartwo-full-faithfulness-pairs}
\end{theorem}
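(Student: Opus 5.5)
We follow the abstract route announced after \Cref{automaticV}: reduce to the universal case using free objects and taut rigidity. First, since $A$ is a colimit of free \dcartwo rings and $f$ is determined on generators, it suffices to treat $A = R[\hV]$ with $R$ a free $\delta$-ring. Concretely, given $a \in A$ we must show $f(\hV(a)) = \hV(f(a))$. By assumption we may write $f(\hV(a)) = \hV(g(a))$ for a unique $g(a) \in B$ (injectivity of $\hV$, \Cref{hVisinjective}), and $g: A \to B$ is additive. We will show $h := g - f$ vanishes.

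\emph{Step 1: $h$ lands in infinitely $\hV$-divisible torsion.} Applying $F$ gives $\two f(a) = \two g(a)$, so $\two h = 0$. Passing to the $\hV$-completion $\widehat{B}$, which is $W(\widehat{B}/\hV)$ by \Cref{mapsbetweendcartwoandVcomplete}, the composite $A \to \widehat{B}$ is a $\delta$-map carrying $\hV(A)$ into $\hV(\widehat{B})$, hence a \dcartwo map by the last clause of \Cref{mapsbetweendcartwoandVcomplete}. Thus $h(a)$ maps to $0$ in $\widehat{B}$, i.e.\ $h(a) \in \bigcap_i \hV^i(B)$. Since $\two h(a)=0$ and $\two = 2$ on $\hV(B)$ (\Cref{hVbasicidentities}), $2h(a)=0$; using injectivity of $\hV$ and that $\hV^{-1}$ of torsion is torsion, we get $h(a) \in I_B$ from \Cref{deltaisooninfiniteVdivtorsion}. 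So $I_B$ is a square-zero ideal on which $\delta$ is bijective, inverse to $\hV$.

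\emph{Step 2: taut rigidity of the free object.} Consider the $\delta$-subring $B_0 = f(A) + I_B$ and its quotient $B_0 \to B_0/I_B$; after derived $2$-completing the ideal (or working with the $2$-power torsion $I_B$, which has bounded behaviour via $\two h=0$), this is a taut square-zero extension. The two maps $f$ and $f' := f + (\text{correction})$, where $f'$ is the \dcartwo map obtained by transporting... — more cleanly: the composite $A \to B/I_B$ is a \dcartwo map (since $h$ dies there), and by \Cref{BtoBhVtautequivalence} maps $R[\hV] \to C$ lifting through a taut square-zero extension $C \to B/I_B$ are determined by their restriction to $R$. We will construct a genuine \dcartwo lift $\tilde f: A \to B$ agreeing with $f$ on $R$ (use freeness: define $\tilde f$ as the \dcartwo map extending $f|_R$) and note both $f$ and $\tilde f$ are $\delta$-lifts of the same map $A \to B/I_B$ (one checks $\tilde f \equiv f$ mod $I_B$ since the difference of their $\hV$-values is $\hV h$-type, lying in $I_B$) agreeing on $R$. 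Taut \'etaleness of $R \to R[\hV]$ then forces $f = \tilde f$, giving $h = 0$.

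The main obstacle is Step 2's technical bookkeeping: $I_B$ need not be derived $2$-complete, so we must either replace $I_B$ by the sub-ideal generated by $\delta$/$\hV$-iterates of the finitely many $h$-values needed (which is $2$-torsion, hence derived complete) or argue directly as in the proof of \Cref{automaticV}: using $\delta$ on $f(\hV a) = \hV(g a)$ and \Cref{hVversusVdcartidentity}-type identities, derive the functional equation $h(a) = \hV(\two(\text{stuff}))$ with $\two h = 0$, then iterate using $I_B^2=0$ and $\delta = \hV^{-1}$ on $I_B$. If the abstract route stalls, we will fall back on that elementary computation, where the key identity to extract is $h(a) = \hV(x)$ with $x$ expressed via $h$ on $\two\delta(a)$, and then show the resulting element is both infinitely $\hV$-divisible and killed by $\delta$-nilpotence, forcing zero.
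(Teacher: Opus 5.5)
Your architecture matches the paper's. After reducing to a free source $R[\hV]$, comparing $f$ with the \dcartwo extension $\tilde f$ of $f|_R$ is the same as the paper's comparison of the two maps $A[\hV]\to B$ (the counit of $A$ followed by $f$, versus $f[\hV]$ followed by the counit of $B$). In both, the conclusion comes from uniqueness of $\delta$-lifts through $B\to B/I_B$ via \Cref{BtoBhVtautequivalence}.

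You differ in how you show that $f$ is a \dcartwo map modulo $I_B$. You compute $h$ directly; the paper first treats the case $I_B=0$ via $B\hookrightarrow\widehat B\times B[1/2]$ and then applies it to $B/I_B$. Your direct route is fine in principle, but Step~1 has a genuine gap. Applying $F$ to $f(\hV(a))=\hV(g(a))$ gives $f(\two_A)\,f(a)=\two_B\,g(a)$, not $\two f(a)=\two g(a)$. The element $\two_A=F(\hV(1))$ is defined through $\hV$, so $f(\two_A)=\two_B$ is a consequence of the theorem, not an available input. Hence $\two h=0$, and with it the torsion of $h(a)$, is unjustified. This matters: infinite $\hV$-divisibility alone does not put $h(a)$ in a square-zero ideal on which $\delta$ is bijective. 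Without torsion, $\bigcap_i\hV^i(B)$ can be all of $B$, as in \Cref{envelopeofcartentirering}. Your bounded-torsion workaround in Step~2 also rests on $\two h=0$.

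The repair is short, and there are two ways to do it.
\begin{itemize}
\item Borrow the paper's device. The composite $A\to B[1/2]$ factors through the $2$-torsionfree \dcartwo ring $A[1/2]$ and preserves the $\hV$-ideals. By \Cref{ptorsionfree2Cart} it is a \dcartwo map, so $\hV(h(a))$ dies in $B[1/2]$. By injectivity of $\hV$, $h(a)$ is then $2$-power torsion.
\item Argue directly. Put $z=f(\two_A)-\two_B$. By \Cref{hVof2tilde}, both $2+f(\two_A)=f(\hV(\two_A))$ and $2+\two_B=\hV(\two_B)$ lie in $\hV(B)$, so $z=\hV(y)$ for some $y$. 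Since $F(z)=f(2)-2=0$, we get $\two_B y=0$, hence $4y=0$ and $4z=0$. Then $\two_B h(a)=z\,f(a)$ is killed by $4$. Your $\widehat B$ argument gives $h(a)\in\hV(B)$, so $2h(a)=\two_B h(a)$, and therefore $8h(a)=0$.
\end{itemize}

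Finally, the derived-completeness issue you single out as the main obstacle does not arise. \Cref{BtoBhVtautequivalence} concerns the uncompleted $L[F^{-1}]$. The difference $f-\tilde f$ is a $\delta$-derivation over $R$ into the perfect Frobenius module $(I_B,\delta)$, so it factors through $\Omega^1_{R[\hV]/R}[F^{-1}]=0$ whether or not $I_B$ is derived $2$-complete. This is exactly how the paper concludes, so you can drop the detour through $B_0=f(A)+I_B$ and the elementary fallback.
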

\begin{proof}
The result asserts that if $f: A \to B$ is a map of $\delta$-rings such that $f(\hV(A)) \subset \hV(B)$, then $f$ commutes with $\hV$.

Suppose first that 
$I_B \stackrel{\mathrm{def}}{=} \bigcap_{n \geq 1} \hV^n(B[2^\infty])$  vanishes. 
Then $B$ injects into 
the product of the $\hV$-completion of $B$ and $B[1/2]$. 
Thus, it suffices to prove the result after replacing $B$ by either of these two rings. 
In this case, the claim follows  from \Cref{ptorsionfree2Cart} and \Cref{mapsbetweendcartwoandVcomplete}. 

We now prove the result in general.
The theorem statement amounts to the commutativity of the diagram
\[
\begin{tikzcd}
A[\hV] \ar[r] \ar[d] & B[\hV] \ar[d] \\
A \ar[r] & B
\end{tikzcd}
\]
where \(A[\hV]\) and \(B[\hV]\) denote the free \dcartwo rings on the underlying
\(\delta\)-rings of \(A\) and \(B\), the maps \(A[\hV]\to A\) and \(B[\hV]\to B\)
come from the \dcartwo structures on \(A,B\), and the map \(A[\hV]\to B[\hV]\)
is obtained by applying \((-) [\hV]\) to \(f\). We know this diagram commutes
after passing to \(B/I_B\), and after precomposing with the section
\(A\to A[\hV]\). However, \(B\to B/I_B\) is a square-zero extension of
\(\delta\)-rings and \(\delta:I_B\simeq I_B\) by
\Cref{deltaisooninfiniteVdivtorsion}. 
Since \(L_{ A[\hV]/A}[1/F]\) vanishes in degrees $\leq 1$ by
\Cref{BtoBhVtautequivalence}, the result follows.
\end{proof}

\subsection{Universal property of $\chW$}

In this subsection and the sequel we use the following notational convention.  If \(p>2\), we consider
the category \(\ddcart\) of \dcart rings and write \(\hV\) for the usual
operator \(V\).  If \(p=2\), we only consider the category \(\ddcartwo\) of
\dcartwo rings and write $\hV$ as usual. 

We will use the following analog of \Cref{deltaisooninfiniteVdivtorsion}. 
\begin{proposition}[Infinitely \(\hV\)-divisible ideals]
\label{infiniteVdivisibleideals}
Let \(A\) be a derived \(p\)-complete \dcart ring if \(p>2\) (resp.  a  derived
\(2\)-complete \dcartwo ring if \(p=2\)). Put
\[
I=\hV^\infty A\stackrel{\mathrm{def}}{=}\bigcap_{i\geq 0}\hV^i(A).
\]
Then \(I^2=0\), and \(\delta:I\xrightarrow{\sim}I\).  
On $I$, $\delta$ and $\hV$ are inverses to each other.
\end{proposition}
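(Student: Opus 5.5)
The plan is to reduce to the torsion case already handled in \Cref{deltaisooninfiniteVdivtorsion} (and its evident $p>2$ analogue), by showing that every element of $I=\hV^\infty A$ is $p$-power torsion up to a derived-completeness argument. First, since $\hV$ is injective (\Cref{Visinjective}, \Cref{hVisinjective}), $\hV$ restricts to a bijection $I\to I$. Also $I$ is an ideal, and by \Cref{deltaonVcongruence} (resp.\ \Cref{deltaonhVcongruence}) it is stable under $\delta$. The key observation, as indicated in the introduction, is that for $x=\hV^i(a)$ and $y=\hV^i(b)$ the projection formula gives
\[
xy=\hV^i\bigl(a\,F^i\hV^i(b)\bigr)=\hV^i\bigl(a\,\two^{\,i}b\bigr) \quad(\text{resp. } p^i\hV^i(ab)),
\]
so $I^2\subset \bigcap_i p^i A$ (using $\two^2=4$ at $p=2$, so $\two^{2i}A\subset 4^iA$).

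The main step is to upgrade this to $I^2=0$. I would argue that $I\cdot I\subset \bigcap_i p^i\,\hV^{i}(A)$, and more precisely that for $x,y\in I$ the product $xy$ is infinitely $p$-divisible \emph{inside $I$} compatibly: writing $y=\hV^i(b_i)$ with $b_i\in I$ determined uniquely, the elements $z_i=\hV^i(a_i b_i)\cdot(\text{unit})$ satisfy $xy=p^i z_i$ with $z_i=p z_{i+1}$ (up to the factor $\two/2$, which on $\hV(A)$ equals $1$ by \Cref{hVbasicidentities}). So $xy$ is the image of an element of $\varprojlim_{\cdot p} A = \mathrm{Hom}(\mathbb{Z}[1/p],A)$. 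Since $A$ is derived $p$-complete, $\mathrm{Hom}(\mathbb{Z}[1/p],A)=0$, hence $xy=0$. The compatibility $z_i=pz_{i+1}$ follows from uniqueness in the $\hV$-expansion (injectivity of $\hV$) together with $F\hV=p$ on $\hV(A)$; verifying this cleanly, especially the $p=2$ bookkeeping with $\two$ versus $2$, is where I expect the main obstacle, and I would first check it in $W(R)$ and invoke \Cref{generalprincipleforcheckingidentities}.

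Once $I^2=0$, $\delta$ is additive on $I$ (the cross terms $\sum\frac1p\binom pk x^ky^{p-k}$ vanish). For $a\in I$, write $a\in\hV(A)$; then \Cref{hVversusVdcartidentity} (resp.\ the \dcart identity \eqref{deltaVidentity}) gives $\delta(\hV(a))=a-\hV(a^2)=a$ (resp.\ $a-p^{p-2}V(a^p)=a$), since $a^2=0$. Thus $\delta\circ\hV=\mathrm{id}$ on $I$, and as $\hV:I\to I$ is bijective, $\delta$ is its inverse and in particular an isomorphism $I\xrightarrow{\sim}I$.
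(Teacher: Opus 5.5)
Your argument is correct and is essentially the paper's proof. The system $z_i=\hV^i(a_ib_i)$ has $z_0=xy$ and $z_i=pz_{i+1}$, by $\hV(a)\hV(b)=p\hV(ab)$ for $a,b\in I$; at $p=2$ this follows directly from the projection formula and $(2-\two)\hV(A)=0$, so \Cref{generalprincipleforcheckingidentities} is not needed. This system defines an element of $\mathrm{Hom}(\mathbb{Z}[1/p],A)=0$, so $xy=0$, and then \eqref{deltaVidentity}, resp.\ \Cref{hVversusVdcartidentity}, gives $\delta\circ\hV=\mathrm{id}$ on $I$.

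Two small points do not affect the proof. Your opening framing, a reduction to the torsion case of \Cref{deltaisooninfiniteVdivtorsion}, is not what you actually do, and it could not work in general because $I$ can contain non-torsion elements. Also, at $p=2$ one has $F^i\hV^i(b)=2^{i-1}\two\, b$, not $\two^{\,i} b$.
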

\begin{proof}
We recall that if \(M\) is derived \(p\)-complete and
\(m_0,m_1,\dots\in M\) are such that \(m_n=pm_{n+1}\) for all \(n\geq 0\),
then \(m_0=0\).  Indeed, such a system defines a map
\(\mathbb{Z}[1/p]\to M\), and
\(\operatorname{Hom}(\mathbb{Z}[1/p],M)=0\) for derived \(p\)-complete \(M\).

The group \(I\) is derived \(p\)-complete and  \(\hV:I\to I\) is an
automorphism, cf.~\Cref{Visinjective} if \(p>2\) and
\Cref{hVisinjective} if \(p=2\). 

Let \(x,y\in I\). Set
\[
x_n=\hV^{-n}(x),\qquad y_n=\hV^{-n}(y),\qquad m_n=\hV^n(x_ny_n).
\]
Then \(m_0=xy\). The product formula
\(\hV(a)\hV(b)=p\hV(ab)\) for $a, b \in I$ (valid at $p = 2$ because multiplication by 2 and $\two$ agree on $I$) gives \(m_n=pm_{n+1}\).   Thus \(xy=0\), proving \(I^2=0\).

It remains to identify \(\delta\) on \(I\).  If \(p>2\) and \(z=\hV(w)\) with
\(w\in I\), then
\[
\delta(z)=\delta(\hV(w))=w-p^{p-2}\hV(w^p)=w,
\]
since \(I^2=0\).  Thus \(\delta\) is inverse to \(\hV\) on \(I\).
At $ p= 2$, the same calculation as 
in \Cref{deltaisooninfiniteVdivtorsion} shows that $\delta$ and $\hV$ are inverses to each other on $I$.
\end{proof}

\begin{lemma} 
    Let $A$ be a  \dcart ring (resp. a  \dcartwo ring if $p = 2$). 
Let $M$ be a derived $p$-complete $A$-module equipped with an isomorphism \begin{equation} \label{phiiso}\phi: M \simeq F_* M \end{equation} of $A$-modules. 
Then the 
$A$-module structure
on $M$ extends uniquely to an $\widehat{A}_{\hV}$-module structure
such that \eqref{phiiso} is also $\widehat{A}_{\hV}$-linear. 
\label{VcompletionFrobsemilinear}
\end{lemma}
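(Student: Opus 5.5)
The plan is to show that the action of $A$ on $M$ is continuous for the $\hV$-adic topology on $A$, after which we extend by completion. The key observation is that $\hV(A)$ acts on $M$ through $p$. For $a\in A$ and $m\in M$, write $m=\phi^{-1}$-pulled back, i.e.\ use that $\phi\colon M\to F_*M$ is an $A$-linear isomorphism. Then for $b\in A$ we have
\[
\phi(\hV(b)\cdot m)=F(\hV(b))\cdot\phi(m)=p\,b\,\phi(m)
\]
for $p>2$, and $\two\, b\,\phi(m)$ when $p=2$. Applying $\phi^{-1}$ gives $\hV(b)m=\phi^{-1}(pb\,\phi(m))\in pM$ (resp.\ $\in \two M$).

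Iterating this gives $\hV^n(A)M\subset p^{n}M$ up to a fixed shift. Indeed $\hV^n(b)m=\phi^{-1}(p\hV^{n-1}(b)\phi(m))$, and by induction this lies in $\phi^{-1}(p\cdot p^{n-1}M)=p^nM$. At $p=2$ we use $\two^2=4$, so that $\two^2M\subset 4M$, and hence $\hV^{2n}(A)M\subset 2^nM$. Either way, $\hV^n(A)$ acts on $M/p^k$ by zero for $n\gg k$. Thus $M/p^k$ is a module over $A/\hV^n(A)$, which gives compatible $\widehat{A}_{\hV}$-module structures on each $M/p^k$.

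To pass to $M$ itself, we work with derived reductions. The action on $M\otimes^{\mathbb{L}}\mathbb{Z}/p^k$ also factors through $A/\hV^n$, since that complex is the cone of $p^k$ on $M$ and the nullity of $\hV^n(A)$ on $M/p^{k'}$ for larger $k'$ controls both homology groups. Since $M\simeq R\varprojlim_k M\otimes^{\mathbb{L}}\mathbb{Z}/p^k$ by derived completeness, we obtain an $\widehat{A}_{\hV}=\varprojlim_n A/\hV^n$-action.

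A more concrete route avoids the derived reductions. Given a compatible system $(a_n)\in\varprojlim A/\hV^n$, choose lifts $\tilde a_n$. The differences $\tilde a_{n+1}-\tilde a_n$ lie in $\hV^n(A)$ and so act with image in $p^{\lfloor n/2\rfloor}M$. We then define the action of $(a_n)$ as the sum of a series that converges in the derived-complete sense. This requires care because $M$ need not be $p$-adically separated. I expect this to be the main obstacle, and I would handle it by factoring $\hV^n(b)m=p^{n}\cdot c_n(b,m)$ explicitly, with $c_n=\phi^{-1}(\hV^{n-1}(b)\phi(m))$ for $p>2$. The series $\sum p^n c_n$ then defines an element of $M$, using the summation map $\prod_n M\to M$, $(c_n)\mapsto\sum p^nc_n$, which exists for derived $p$-complete $M$.

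Linearity of $\phi$ for the extended action comes next. The relation $\phi(am)=F(a)\phi(m)$ holds for $a\in A$, and $F$ is continuous for the $\hV$-topology by \Cref{deltaonVcongruence} / \Cref{deltaonhVcongruence}. So the relation extends to $\widehat{A}_{\hV}$ by the same limiting construction.

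Uniqueness uses the same estimate. Any two extensions agree on $A$ and hence differ by an action of the kernel of $\widehat{A}_{\hV}\to A/\hV^n$. That kernel is topologically generated by $\hV^n(\widehat A)$, which by the same computation, now using $\phi$-linearity of either extension, maps $M$ into $p^{n}M$ (or $2^{\lfloor n/2\rfloor}M$). Hence the difference lies in $\bigcap_n p^nM$. Because the $\widehat A$-action is continuous (it is built from the summation maps), the difference is given by compatible $p$-divisible systems as in the proof of \Cref{infiniteVdivisibleideals}. It therefore vanishes, since $\operatorname{Hom}(\mathbb{Z}[1/p],M)=0$.
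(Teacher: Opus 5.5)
Your route differs from the paper's. It can be completed, but as written the existence half is not yet a proof.

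The paper argues structurally. A perfect Frobenius module over $A$ is the same as one over $A_{\mathrm{perf}}=\varinjlim_F A$, where the class of $b$ in the $n$-th term acts by $\phi^{-n}\circ b\circ\phi^n$. Since $A_{\mathrm{perf}}$ is $p$-torsionfree and $M$ is derived $p$-complete, $M$ is then a module over $(A_{\mathrm{perf}})^\wedge_p\simeq W((A/(\hV,p))_{\mathrm{perf}})$ (\Cref{completionofperfection}). That ring is also the $p$-completed perfection of $\widehat A_{\hV}$, because $\widehat A_{\hV}/(\hV,p)=A/(\hV,p)$.

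Your key identity is $\hV^n(b)m=p^n\phi^{-n}(b\,\phi^n(m))$, or $2^{n-1}\phi^{-n}(\two b\,\phi^n(m))$ at $p=2$. It says exactly that the image of $\hV^n(b)$ in $A_{\mathrm{perf}}$ is $p^nF^{-n}(b)$ (resp.\ $2^{n-1}F^{-n}(\two b)$), so $A\to A_{\mathrm{perf}}$ extends to $\widehat A_{\hV}\to(A_{\mathrm{perf}})^\wedge_p$. The paper then lets the standard fact that derived $p$-complete modules over a $p$-torsionfree ring are modules over its $p$-completion absorb all the series manipulations you do by hand. This makes uniqueness automatic and records that the action factors through $W((A/(\hV,p))_{\mathrm{perf}})$, which is what is used later. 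Your route instead is self-contained and gives the explicit formula $\alpha\cdot m=\sum_n p^n\phi^{-n}(b_n\phi^n(m))$ for $\alpha=\sum_n\hV^n(b_n)$.

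This comparison also shows why your first, derived route fails as stated. Knowing that $\hV^n(A)$ kills the homology of $M\otimes^{\mathbb L}\mathbb{Z}/p^k$ does not make that complex an object of $D(A/\hV^n)$, let alone compatibly in $k$. What naturally lives over a quotient ring is $M\otimes^{\mathbb L}_{A_{\mathrm{perf}}}A_{\mathrm{perf}}/p^k\in D(A_{\mathrm{perf}}/p^k)$.

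For the concrete route, you need to fix the following.
\begin{enumerate}
\item Your $c_n=\phi^{-1}(\hV^{n-1}(b)\phi(m))$ only satisfies $\hV^n(b)m=p\,c_n$. Take $c_n=\phi^{-n}(b\,\phi^n(m))$ instead.
\item Existence of the sums is the easy part. You must still show that the result is independent of the lifts $\tilde a_n$, and that it is an action, i.e.\ $(\alpha\beta)m=\alpha(\beta m)$. Since $M$ need not be $p$-adically separated, none of this can be checked modulo $p^N$. The uniform tool is the one you invoke at the end: exhibit each defect $x_0$ as the start of a sequence with $x_n=px_{n+1}$, then use $\Hom(\mathbb{Z}[1/p],M)=0$.
\item The same applies to your uniqueness step. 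Landing in $\bigcap_n p^nM$ is not enough, and continuity of an arbitrary extension is not available. However, $\phi$-linearity alone gives the ladder. Let $D$ be the difference of two extensions, write $\alpha=\sum_k\hV^k(b_k)$, and set $\alpha_n=\sum_{k\ge n}\hV^{k-n}(b_k)=b_n+\hV(\alpha_{n+1})$. The relation $D(\hV(\beta),y)=p\,\phi^{-1}(D(\beta,\phi(y)))$ then shows that $x_n=\phi^{-n}(D(\alpha_n,\phi^n(m)))$ satisfies $x_n=px_{n+1}$ with $x_0=D(\alpha,m)$. At $p=2$, group the steps in pairs using $F^2\hV^2=2\two$.
\end{enumerate}
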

\begin{proof}
    Let $B$ be a $\delta$-ring. 
    Then we observe that the category of perfect Frobenius $B$-modules   is equivalent to the analogous category with $B_{\mathrm{perf}} = \varinjlim_F B$ in place of $B$.
    The result now follows because $M$ is derived $p$-complete and 
   $A$ and $\widehat{A}_{\hV}$ have the same $p$-completed perfection by \Cref{completionofperfection}. 
\end{proof}

\begin{lemma} 
    \label{completionofperfection}
Let $A$ be a \dcart ring (resp. a \dcartwo ring if $p = 2$). 
\begin{enumerate}
    \item  
The $p$-completion of the colimit perfection $A_{\mathrm{perf}}$ of $A$ is isomorphic to $W( (A/(\hV, p))_{\mathrm{perf}})$. 

    \item
If $A$ is additionally derived $p$-complete, then 
$A^{\mathrm{perf}}$ is  isomorphic to $W( (A/(\hV, p))^{\mathrm{perf}})$.
\end{enumerate}
\end{lemma}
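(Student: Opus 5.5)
The plan is to reduce everything to the fact that modulo $p$, the operator $\hV$ becomes nilpotent after composing with a power of Frobenius. Then perfection will automatically kill $\hV(A)$.

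\textbf{Step 1: reduce to $\bar A = A/p$.} Since $F\hV = p$ (resp.\ $F\hV = \two$ with $\two^2 = 4$ at $p=2$), we get $F^2(\hV(x)) \in pA$ for every $x$. Indeed, at $p=2$ we have $F(\two) = 2$, so $F^2\hV(x) = F(\two x) = 2F(x)$.

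Consequently the Frobenius of $\bar A = A/p$, namely $x\mapsto x^p$, which agrees with $F$ mod $p$, sends the ideal $\hV(A)\bar A$ to zero after two iterations. Precisely, for $y=\hV(x)$ we have $y^{p^2}\equiv F^2(y) \equiv 0 \pmod p$. So every element of the image of $\hV(A)$ in $\bar A$ is nilpotent, annihilated by a power of Frobenius. Since $\hV(A)$ is an ideal (\Cref{hVbasicidentities}, or the projection formula), the map $\bar A\to A/(\hV,p)$ has kernel killed by $\phi^2$. Hence it induces isomorphisms on both colimit and inverse-limit perfections:
\[(A/p)_{\mathrm{perf}}\simeq (A/(\hV,p))_{\mathrm{perf}},\qquad (A/p)^{\mathrm{perf}}\simeq (A/(\hV,p))^{\mathrm{perf}}.\]

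\textbf{Step 2: part (1).} $A_{\mathrm{perf}}=\varinjlim_F A$ is a perfect $\delta$-ring, so it is $p$-torsionfree (\Cref{frobinjtorsionfree}). Its $p$-completion is a $p$-complete perfect $\delta$-ring, hence by \Cref{perfectdeltarings} it equals $W(S)$ with $S = A_{\mathrm{perf}}/p$. Reduction mod $p$ commutes with filtered colimits, so $S=\varinjlim_\phi A/p=(A/p)_{\mathrm{perf}}$. Step 1 then identifies $S$ with $(A/(\hV,p))_{\mathrm{perf}}$.

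\textbf{Step 3: part (2).} Now assume $A$ is derived $p$-complete. Then $A^{\mathrm{perf}}=\varprojlim_F A$ is derived $p$-complete, as a limit of such, and it is a perfect $\delta$-ring, hence $p$-torsionfree. A $p$-torsionfree derived $p$-complete group is classically $p$-complete, so again $A^{\mathrm{perf}}\simeq W(A^{\mathrm{perf}}/p)$.

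It remains to identify $A^{\mathrm{perf}}/p$ with $(A/p)^{\mathrm{perf}}$. Here I will use the tilting universal property of \Cref{perfectdeltarings}. Both sides corepresent the same functor on $p$-complete perfect $\delta$-rings: maps $W(P)\to A^{\mathrm{perf}}$ equal maps $W(P)\to A$ (by perfectness of $W(P)$), which equal maps $P\to A^\flat = \varprojlim_\phi A/p = (A/p)^{\mathrm{perf}}$.

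Alternatively, I can compute directly. The inverse system $\varprojlim_F$ of the short exact sequences $0\to A\xrightarrow{p}A\to A/p\to0$ gives
\[0\to A^{\mathrm{perf}}\xrightarrow{p} A^{\mathrm{perf}}\to (A/p)^{\mathrm{perf}}\to \varprojlim\nolimits^1_F A,\]
so the issue is the $\varprojlim^1$ term. I would avoid it through the universal-property argument: take the natural map $W(A^\flat)\to A$ from \Cref{perfectdeltarings}, lift it to $A^{\mathrm{perf}}$, and check that it is inverse to $W(A^{\mathrm{perf}}/p)\to W(A^\flat)$. Finally, Step 1 replaces $A/p$ by $A/(\hV,p)$.

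\textbf{Main obstacle.} The main obstacle is this comparison $A^{\mathrm{perf}}/p\simeq (A/p)^{\mathrm{perf}}$ in the derived (not classical) $p$-complete setting. Everything else is formal. The $p=2$ bookkeeping with $\two$ is only needed in Step 1, and it is handled by $F(\two)=2$.
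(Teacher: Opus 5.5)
Your proof is correct and follows the paper's route: the paper likewise uses the general identifications $(B_{\mathrm{perf}})^\wedge_p \simeq W((B/p)_{\mathrm{perf}})$ and, for derived $p$-complete $B$, $B^{\mathrm{perf}} \simeq W((B/p)^{\mathrm{perf}})$, and then notes that the image of $\hV(A)$ in $A/p$ is nilpotent, so perfection does not see it. You supply the details the paper leaves implicit, namely the Yoneda argument via \Cref{perfectdeltarings} for the limit perfection and the uniform bound $y^{p^2}=0$, which is exactly what both perfections need. (One small point: the sequence $0\to A\xrightarrow{p}A\to A/p\to 0$ in your discarded aside is not exact when $A$ has $p$-torsion, but this is harmless since $F$ kills $A[p]$ and you do not rely on it.)
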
 
\begin{proof} 
Given any $\delta$-ring $B$, the $p$-completion of the colimit perfection of $B$ is isomorphic 
to $W( (B/p)_{\mathrm{perf}})$,
and, if $B$ is derived $p$-complete, the limit perfection of $B$ (which is already $p$-complete) is isomorphic to 
$W( (B/p)^{\mathrm{perf}})$. 
The result then follows since the image of the ideal $\hV(A)$ in $A/p$ is nilpotent. 
\end{proof}

\begin{remark} 
We will use in the sequel without comment the following basic fact: if $M$ is an abelian group and $f: M \to M$ is an \emph{injective} endomorphism,
then the map from $M$ to the completion $\widehat{M}_f = \varprojlim M/f^n(M)$ has the property that $f$ acts isomorphically on both the kernel and cokernel; this follows easily from the fact that $\widehat{M}_f$ is also the derived $f$-completion of $M$.
\end{remark}

\begin{proposition}[The taut derivation of the completion quotient]
\label{Vcompletiontautfactorization}
Let \(A\) be a derived \(p\)-complete \dcart ring if \(p>2\), and a derived
\(2\)-complete \dcartwo ring if \(p=2\).   Let \(C_A=\widehat A_{\hV}/\mathrm{im}(A \to \widehat A_{\hV})\). 
We consider $C_A $ as a Frobenius module via the map  \(\hV^{-1}: C_A \to C_A\).\footnote{Note that $\hV$ is an isomorphism on $C_A$ thanks to injectivity of $\hV : A \to A$.} 

Then the
Frobenius $A$-module structure on $C_A$ uniquely extends to a Frobenius $\widehat A_{\hV}$-module structure, and with respect to this structure,
the quotient
map
\[
d_A:\widehat A_{\hV}\longrightarrow C_A
\]
is a taut derivation of the $\delta$-ring $\widehat A_{\hV}$.
\end{proposition}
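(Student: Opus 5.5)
Write $\iota: A \to \widehat A_{\hV}$ for the completion map and $\widehat A = \widehat A_{\hV}$. The plan has four steps: identify $C_A$ and its Frobenius structure, extend the module structure to $\widehat A$, check that $d_A$ is an ordinary derivation, and then verify the $\delta$-derivation identity \eqref{deltaderivationformula}.

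\emph{Step 1: the Frobenius $A$-module $C_A$.} By \Cref{Visinjective} (resp.\ \Cref{hVisinjective}), $\hV$ is injective on $A$. The remark preceding the statement then shows that $\hV$ acts isomorphically on $\ker\iota$ and on $C_A$. Using the projection formula $a\hV(c)=\hV(F(a)c)$, the map $\hV^{-1}$ is $F$-semilinear: $\hV^{-1}(ac)=F(a)\hV^{-1}(c)$. This is the Frobenius $A$-module structure on $C_A$, and it is perfect. Since $\widehat A$ is the derived $\hV$-completion of $A$, the module $C_A$ is a cokernel of derived $p$-complete groups, hence derived $p$-complete. So \Cref{VcompletionFrobsemilinear} applies and gives a unique extension of the module structure to $\widehat A$ with $\hV^{-1}$ still semilinear; this settles the uniqueness part. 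It remains to see that this extended structure is the obvious one: the action of $\widehat A$ on $C_A$ induced from $\widehat A$ acting on itself is $\widehat A$-linear for $\hV^{-1}$, again by the projection formula in $\widehat A$ (which inherits the \dcart, resp.\ \dcartwo, structure). By uniqueness the two structures agree. With this identification, $d_A$ is tautologically an ordinary derivation, since it is a quotient map of $\widehat A$-modules killing $\iota(A)$, where $\iota(A)$ is a subring.

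\emph{Step 2: the $\delta$-identity.} We must show, for $x\in\widehat A$,
\[
d_A(\delta x)=\hV^{-1}(d_A x)-x^{p-1}d_A(x).
\]
Write $x=\iota(a)+\hV^n(y)$ with $a\in A$, $y\in\widehat A$; this is possible because $\widehat A/\hV^n\widehat A\cong A/\hV^nA$. Then $d_Ax=\hV^n(d_Ay)$. Expand $\delta(\iota a+\hV^n y)$ using the sum formula. The term $\delta(\iota a)$ lies in $\iota(A)$ and so dies under $d_A$. The cross terms are of the form $\iota(a)^k\hV^n(y)^{p-k}$; modulo $\iota(A)$ and higher $\hV$-powers, they reduce to $-x^{p-1}d_A(x)$. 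The term $\delta(\hV^n y)$ is computed from \eqref{deltaVidentity} (resp.\ \eqref{deltaVtildeidentity}) together with \Cref{hVversusVdcartidentity}; it equals $\hV^{n-1}(y)$ plus terms lying in $\hV^{n}$ of something involving $y^p$ or $\two\delta$.

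These error terms lie in arbitrarily deep $\hV$-filtration only \emph{before} applying $\hV^{-1}$, so one cannot conclude directly. Instead I plan to reduce to the universal case. Both sides of the identity are functions of $x$ that are additive up to correction terms computed by the same $\delta$-sum formula. So it suffices to check the identity on elements $\hV^n(y)$ and on $\iota(A)$, plus a compatibility showing that the defect map $x\mapsto d_A(\delta x)-\hV^{-1}d_Ax+x^{p-1}d_Ax$ is additive. This additivity follows from the sum formula for $\delta$, since $d_A$ is a derivation. On $\iota(A)$ the defect vanishes trivially. For $x=\hV(z)$, we have $d_A(\delta\hV z)=d_A(z)-p^{p-2}\hV(d_A(z^p))$ (for $p>2$). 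Using $d_A(z^p)=pz^{p-1}d_Az$ and the projection formula, we get $p^{p-1}\hV(z^{p-1}d_Az)=p^{p-2}\hV(z)^{p-1}\cdot\hV(d_Az)$, which matches $x^{p-1}d_Ax$ because $\hV(a)\hV(b)=p\hV(ab)$. The $p=2$ case is handled the same way using \eqref{deltaVtildeidentity} and \Cref{hVtwodeltahVidentity}, or via \Cref{generalprincipleforcheckingidentities}.

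\emph{Main obstacle.} The main obstacle is the density step. Elements of the form $\iota(a)+\hV(z)$ exhaust $\widehat A$, since $\widehat A=\iota(A)+\hV\widehat A$, so the defect vanishes everywhere once it is additive and vanishes on each of the two types. The subtle point is making sure the additivity correction really cancels against the Leibniz rule. If this fails, the fallback is to check the identity in the universal case: free \dcart rings, where everything is $p$-torsionfree and it suffices to check $d(\phi x)=pF(dx)$.
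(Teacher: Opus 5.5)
There is a genuine gap in Step~1. The image $\iota(A)$ is a subring of $\widehat A=\widehat A_{\hV}$, but it is not an ideal. So multiplication in $\widehat A$ does not descend to $C_A=\widehat A/\iota(A)$: an induced action would give $d_A(r)=r\cdot d_A(1)=0$ for all $r$, i.e.\ $C_A=0$. There is therefore no ``obvious'' action to compare with the one from \Cref{VcompletionFrobsemilinear}. That action is an abstract extension of the $A$-action through the $p$-completed perfection, and its compatibility with $d_A$ is exactly what must be proved.

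Even granting such an action, your conclusion would not follow. An $\widehat A$-linear map satisfying the Leibniz rule is zero (take $y=1$), so linearity of a quotient map never produces a derivation. Hence the Leibniz rule $d_A(xy)=x\cdot d_A(y)+y\cdot d_A(x)$, which is the main content of the statement, is unproved. Your Step~2 uses it twice: for additivity of the defect, and for $d_A(z^p)=pz^{p-1}d_A(z)$.

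The paper supplies Leibniz in three moves. It reduces to $p$-completed free objects, where $C_A$ has bounded $p$-power torsion and is hence $p$-adically separated and complete. It computes the action as $r\cdot d_A(z)\equiv d_A(rz_{\geq n})\pmod{p^NC_A}$, for a representative $z_{\geq n}\in\hV^n\widehat A$ of $d_A(z)$ with $n\geq N$. It then checks Leibniz modulo $p^N$ using $\hV^i\widehat A\cdot\hV^j\widehat A\subset p^{\min(i,j)}\widehat A$.

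Your defect method can actually fill this gap without the free reduction. Set $L(x,y)=d_A(xy)-x\cdot d_A(y)-y\cdot d_A(x)$; this is biadditive. It vanishes when $x$ or $y$ lies in $\iota(A)$, since $d_A$ is $A$-linear and kills $\iota(A)$. The $\widehat A$-semilinearity of $\hV^{-1}$, i.e.\ $r\cdot\hV(m)=\hV(F(r)\cdot m)$, gives $L(\hV u,\hV v)=p\,\hV L(u,v)$; at $p=2$, use that $\two$ acts as $2$ on $C_A$. Choose $u_0=x$, $v_0=y$, and $u_n-\hV u_{n+1},\,v_n-\hV v_{n+1}\in\iota(A)$. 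Then $m_n=\hV^nL(u_n,v_n)$ satisfies $m_n=pm_{n+1}$ and $m_0=L(x,y)$. So $L=0$ by derived $p$-completeness of $C_A$, as in the proof of \Cref{infiniteVdivisibleideals}.

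Granting Leibniz, your $\delta$-argument for $p>2$ is correct and avoids the paper's use of $p$-torsionfreeness of $C_A$ in the free case. The displayed identity should read $p^{p-1}\hV(z^{p-1}d_Az)=\hV(z)^{p-1}\hV(d_Az)$.

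At $p=2$ the argument is not ``the same'', though. With $D(x)=d_A(\delta x)-\hV^{-1}d_A(x)+x\,d_A(x)$, \eqref{deltaVtildeidentity} yields $D(\hV z)=2\hV(D(z))$ rather than $0$. You can either iterate with the same completeness trick, or write $x\in\iota(A)+\hV^2\widehat A$ and apply \Cref{hVversusVdcartidentity} to $\hV(\hV w)$, which gives $D(\hV^2w)=0$ directly.
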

\begin{proof}
Our strategy is as follows: the construction of the module structure and the verification of the relevant identities 
reduces (by left Kan extension) to the case where $A$ is the derived $p$-completion of a  free \dcart ring (resp.~the $2$-completion of a free \dcartwo ring), 
since we may write
any derived $p$-complete \dcart ring (resp.~\dcartwo ring) as a reflexive coequalizer
of a pair of $p$-completions of free
\dcart rings (resp.~\dcartwo rings).
Moreover, $\hV$-completion commutes with reflexive coequalizers, since
$\hV$ is injective on any \dcart ring (resp.~\dcartwo ring) and $\hV$-completion thus agrees with derived
$\hV$-completion. 
So we will assume throughout that $A$ is the $p$-completion of a free \dcart ring (resp.~\dcartwo ring) on a (possibly infinite) set of generators.
Write $\widehat A=\widehat A_{\hV}$,  $C=C_A$, and $d = d_A: \widehat A \to C$ the quotient map. 
Every $z\in\widehat A$ has a decomposition
$z=z_{<n}+z_{\geq n}$ with $z_{<n}\in A$ and
$z_{\geq n}\in \hV^n\widehat A$. We also use
\begin{equation}
\hV^i\widehat A\cdot \hV^j\widehat A\subset p^{\min(i,j)}\widehat A, \quad i, j \geq 1. \label{productsofVpowers}
\end{equation}
This follows from the projection formula. Note that when $p = 2$, $\widehat{A}$ comes from a \dcart ring by \Cref{comparisonofdcartanddcartwoonVcompleteor2invertible}, so the same conclusion holds.

The key observation is that 
\(C\) has bounded $p$-power torsion, and hence is classically $p$-adically separated and complete since it is derived $p$-complete.
To see this, suppose first $p > 2$. 
Let \(B\) be a free \(\delta\)-ring and let
\( 
A
\) 
be the $p$-completion of the free \dcart ring on $B$. 
Then
\[
A=\left\{\sum_{i\geq 0}\hV^i(b_i)\mid b_i\to 0\text{ $p$-adically}\right\}
\subset
\widehat A_{\hV}=\prod_{i\geq 0}\hV^i(B^\wedge_p).
\]
Thus \(C_A=\widehat A_{\hV}/A\) is even \(p\)-torsionfree. 
When $p = 2$, 
the free \dcartwo ring injects into the free \dcart ring with cokernel 2-torsion by \Cref{cokernelRhVtoRV}, so the conclusion holds by comparing $\widehat A_{\hV}/A$ with the analog for the corresponding \dcart ring.

Next, we construct the $\hat{A}$-module structure on $C$. 
In fact, this structure is a consequence of \Cref{VcompletionFrobsemilinear} applied to the $A$-module $C$ equipped with the isomorphism $\hV^{-1}: C \simeq F_* C$ of $A$-modules, but we will also spell it out in detail. 
For
\(r,z\in\widehat A\), define \(r\cdot d(z)\in C\) by the congruences
\begin{equation}
\label{tailregularizedaction}
r\cdot d(z)\stackrel{\mathrm{def}}{\equiv} d(r_{< n}z_{\geq n}) \equiv d( r z_{\geq n})\pmod {p^NC},\qquad n\geq N \geq 2,
\end{equation}
where \(z=z_{<n}+z_{\geq n}, r = r_{<n} + r_{\geq n}\) with \(z_{<n}, r_{<n}\in A\) and
\(z_{\geq n}, r_{\geq n} \in \hV^n\widehat A\); here the second congruence follows from \eqref{productsofVpowers}, and shows independence of the choice of decomposition of $r$.
Note that for a different decomposition 
$z = z'_{<n} + z'_{\geq n}$, the difference $z_{<n}-z'_{<n}=z'_{\geq n}-z_{\geq n}$ belongs to $A$,
which implies that
$d( r_{<n} ( z_{\geq n} - z'_{\geq n})) = 0$. 

This implies that the right-hand side of \eqref{tailregularizedaction} is well-defined modulo $p^N C$.
Since $C$ is $p$-adically separated and complete, this defines a well-defined element $r\cdot d(z)\in C$.
Note also that if 
$r \in \hV^n \widehat A$ for some $n\geq N$, then $r\cdot d(z)\equiv 0\pmod {p^NC}$.

We next check that
\(d:\widehat A\to C\) is a derivation.
Choose $x, y \in \widehat A$; we need to show that
$d(xy) = x \cdot d(y) + y \cdot d(x)$ in $C$. 
It suffices to prove this modulo $p^N$ for every $N \geq 2$.
We write
\[
x=x_{<n}+x_{\geq n},\qquad y=y_{<n}+y_{\geq n}\qquad (n\geq N),
\]
with $x_{<n},y_{<n}\in A$ and $x_{\geq n},y_{\geq n}\in \hV^n\widehat A$. Then
the product \(x_{\geq n}y_{\geq n}\) belongs to \(p^N\widehat A\), and hence
\begin{align*}
d(xy)
& = d( (x_{<n}+x_{\geq n})(y_{<n}+y_{\geq n})) \\
&\equiv d(x_{<n}y_{\geq n}+y_{<n}x_{\geq n}) \\
&\equiv x_{<n}\cdot d(y)+y_{<n}\cdot d(x)\\
&\equiv x\cdot d(y)+y\cdot d(x)
\pmod {p^NC}.
\end{align*}

Next, we check that 
$d: \widehat A \to C$ is a $\delta$-derivation.
Let $a \in \widehat A$. 
We need to show that
\begin{equation} \label{keydeltaderivationinproof} F( d(a)) = d( \delta(a)) + a^{p-1} d(a) \in C. \end{equation}

Suppose first $p > 2$, which implies (by freeness of $A$ as above) that     $C$ is $p$-torsionfree. Thus,   it suffices to check that 
if $a \in \widehat{A}$, then 
\begin{equation} 
    d( F(a)) = pF(d(a))  \in C. \end{equation}
In fact, if $a = a_{<1} + a_{\geq 1}$ with $a_{<1} \in A$ and $a_{\geq 1} \in \hV \widehat{A}$, then 
$F(a) = F(a_{<1}) + F(a_{\geq 1})$ with $F(a_{<1}) \in A$ and $F(a_{\geq 1}) \in \widehat{A}$, so
$$ 
d(F(a)) =  d(F(a_{\geq 1})) =  pd( \hV^{-1}(a_{\geq 1})) = p F d(a). $$

Finally, let us finish the proof in case $p = 2$.
To this end, fix $n \geq 3$ and write $a = a_{<n } + a_{\geq n}$ with $a_{<n} \in A$ and $a_{\geq n} \in \hV^n \widehat{A}$, and set $x = \hV^{-1}(a_{\geq n}) \in \hV^{n-1}(\widehat{A})$. Then
$F(d(a)) = d( x)$ by definition.
Moreover, \begin{equation} \label{deltaaexpansion}\delta(a) = \delta(a_{<n}) + \delta(a_{\geq n}) - a_{< n}a_{\geq n} . \end{equation}
Next, 
by \Cref{hVversusVdcartidentity} and \eqref{productsofVpowers}, we have
\begin{align*} 
    \delta(a_{\geq n}) &  =
x - \hV( x^2) \\
& \equiv x \pmod{ 2^{n-1} \widehat A} . \end{align*}
Applying $d$ to \eqref{deltaaexpansion},  we get 
from the above that
\begin{align*}
d( \delta(a))  & \equiv
d(x) - d( a_{<n}a_{\geq n}) \pmod{2^{n-1} C} \\
& \equiv 
F( d(a)) - a d(a) \pmod{ 2^{n-1} C}.
\end{align*}
Since $C$ is $2$-adically separated, letting $n \to \infty$ gives the desired identity.
\end{proof}

\begin{definition}[Taut square-zero extensions of \dcart and \dcartwo rings]
    \label{tautsquarezeroextensiondcartring}
A \emph{taut square-zero extension} of derived $p$-complete \dcart (resp.~\dcartwo) rings $B \to A$
is a surjective map 
of  $p$-complete \dcart (resp.~\dcartwo) rings with kernel $I$ such that
$I^2 =0$, $I$ is derived $p$-complete, and $V: I \to I$ is an isomorphism of abelian groups
(resp. $\hV: I \to I$ is an isomorphism of abelian groups).
We note that the condition implies that 
$$ B/\hV B \xrightarrow{\sim} A/\hV A .$$
\end{definition}

\begin{remark} 
If $B \to A$ is a taut square-zero extension of derived $p$-complete \dcart (resp.~\dcartwo) rings, then the 
map of underlying $\delta$-rings is a taut square-zero extension, as a consequence of \eqref{deltaVidentity} and \Cref{hVversusVdcartidentity}. 
\end{remark}

\begin{proposition} 
\label{liftcartierstructuretautsquarezero}
Let $A$ be a \dcart ring (resp.~a \dcartwo ring if $p = 2$).
Let $0 \to I \to B \to A \to 0$ be a taut square-zero extension of underlying $\delta$-rings. 
Then there exists a unique \dcart (resp.~\dcartwo) structure on $B$ such that 
the map $B \to A$ is a map of \dcart (resp.~\dcartwo) rings; it becomes a 
taut square-zero extension of \dcart (resp.~\dcartwo) rings.
\end{proposition}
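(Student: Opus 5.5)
The plan is to produce the Cartier structure on $B$ abstractly, as an action of the free-object monad, using the taut equivalence $B \to B[V]$ of \Cref{BtoBVtautequivalence} (resp.\ $B \to B[\hV]$ of \Cref{BtoBhVtautequivalence} when $p=2$). Throughout I write $\hV$ uniformly, as in the convention of this subsection. The point is that giving a \dcart (resp.\ \dcartwo) structure on a $\delta$-ring $B$ is the same as giving an algebra structure $\mu_B : B[\hV] \to B$ over the monad $(-)[\hV]$ on $\delta$-rings. Concretely, $\mu_B$ must be a $\delta$-map restricting to the identity on $B$ and satisfying the associativity constraint $\mu_B \circ \mu_B[\hV] = \mu_B \circ m$ as maps $B[\hV][\hV] \to B$, where $m$ is the monad multiplication.

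\emph{Step 1 (construction).} The given structure on $A$ is a $\delta$-map $\mu_A : A[\hV] \to A$. Let $\pi : B \to A$ be the given surjection. I consider the square with $B \to B$ (identity) on top, $B[\hV] \xrightarrow{\pi[\hV]} A[\hV] \xrightarrow{\mu_A} A$ on the bottom, the inclusion $B \to B[\hV]$ on the left, and the taut square-zero extension $\pi : B \to A$ on the right. This square commutes because $\mu_A$ restricts to the identity on $A$. By \Cref{BtoBVtautequivalence} (resp.\ \Cref{BtoBhVtautequivalence}), $B \to B[\hV]$ is a taut equivalence, hence taut \'etale. So there is a unique dashed $\delta$-map $\mu_B : B[\hV] \to B$ that restricts to $\mathrm{id}_B$ and lifts $\mu_A \circ \pi[\hV]$. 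I then set $\hV_B(b) = \mu_B(\hV b)$.

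\emph{Step 2 (axioms).} I will compare the two maps $\mu_B \circ \mu_B[\hV]$ and $\mu_B \circ m$ from $B[\hV][\hV]$ to $B$. Both restrict to $\mathrm{id}_B$ along $B \to B[\hV][\hV]$, and both lift the corresponding composite to $A$, because $\mu_A$ is associative. The map $B \to B[\hV][\hV]$ is the composite of two maps of the form $C \to C[\hV]$, and each is a taut equivalence by the cited propositions applied to $C = B$ and $C = B[\hV]$. The lifting property is stable under composition, so the unique-lift property gives equality of the two maps. This associativity is exactly what packages the identities $F\hV = p$ (resp.\ $\two$), the projection formula, and the $\delta\hV$ identity, since these hold in the free object $B[\hV]$ and are transported along the $\delta$-map $\mu_B$. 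By construction $\pi$ is then a map of \dcart (resp.\ \dcartwo) rings.

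\emph{Step 3 (tautness).} On $I$ we have $I^2 = 0$. The identity \eqref{deltaVidentity} (resp.\ \Cref{hVversusVdcartidentity}, using that $I \subset \hV(B)$, which I would check from $\delta(I) = I$) gives $\delta(\hV_B(i)) = i$ for $i \in I$. Here I use that $\hV_B(I) \subset I$, which holds since $\pi$ commutes with $\hV$ and $\hV_A(0) = 0$. Since $\delta$ is bijective on $I$, it follows that $\hV_B|_I = \delta|_I^{-1}$ is an isomorphism, so the extension is taut.

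\emph{Step 4 (uniqueness).} Any structure on $B$ compatible with $\pi$ has a structure map $B[\hV] \to B$ that is a lift in the same square, so it equals $\mu_B$ by uniqueness of lifts.

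I expect the main obstacle to be the bookkeeping in Step 2, namely making sure the iterated free construction $B[\hV][\hV]$ is still a taut equivalence over $B$. A related concern is that for $p = 2$ the free \dcartwo object is only available via the adjoint functor theorem. If this is problematic, I would fall back on a direct verification. That route defines $\hV_B(b)$ as the unique lift $y$ of $\hV_A(\pi b)$ for which the $\delta\hV$ identity holds modulo the correction term. Uniqueness there comes from invertibility of $i \mapsto \delta(i) - y^{p-1}i$ on the derived $p$-complete group $I$, via a geometric series, since $y^{p-1}$ acts on $I$ as $\delta^{-1} \circ (pa)^{p-1} \circ \delta$, which is divisible by $p$.
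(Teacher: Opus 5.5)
Your proposal is the paper's proof: monadicity over $\delta$-rings, the unique lift of $B[\hV]\to A[\hV]\to A$ through the taut equivalence $B\to B[\hV]$, and associativity from uniqueness of lifts along $B\to B[\hV][\hV]$. Your remark that this last map is a composite of two taut equivalences, together with your Steps 3--4, spells out what the paper leaves implicit.

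The one soft spot is Step 3 at $p=2$. You do not explain how $I\subset\hV_B(B)$ would follow from $\delta(I)=I$, but you do not need that inclusion. For $x\in B$ and $i\in I$ one has $\delta(xi)=\phi(x)\delta(i)$, and $\phi(\two)=\two^2+2\delta(\two)=2$, so $\two$ acts as $2$ on $I$. The \dcartwo identity therefore reads $\delta\circ\hV_B=-(1-2\,\hV_B\circ\delta)$ on $I$. This is an automorphism of the derived $2$-complete group $I$ by derived Nakayama, and since $\delta|_I$ is bijective, so is $\hV_B|_I$, which gives tautness.
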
 
\begin{proof} We treat the case $p = 2$; the case $p > 2$ is analogous (and easier).
The category of \dcartwo rings is monadic over the category of $\delta$-rings. 
Thus, to promote $B$ into a \dcartwo ring, it suffices to produce a map of $\delta$-rings 
\begin{equation} B[\hV] \to B \end{equation}
satisfying the monad identities; in particular, it should restrict on $B \subset B[\hV]$ to the identity, and the two natural maps $(B[\hV])[\hV] \to B[\hV] \to B$ should coincide.
To construct this map, we consider the lifting problem
\[ \xymatrix{
B \ar[d]  \ar[r] &  B \ar[d]  \\
B[\hV] \ar@{-->}[ru] \ar[r] &  
A
},  \]
where the horizontal map is 
$B[\hV] \to A[\hV] \to A$ given by the \dcartwo structure on $A$.
By taut \'etaleness of $B \to B[\hV]$, this admits a unique lift $B[\hV] \to B$ making the diagram commute.
Similarly, the two natural maps $(B[\hV])[\hV] \to B[\hV] \to B$ coincide, since they coincide after either precomposing with $B  \subset (B[\hV])[\hV]$ or postcomposing with the map $B \to A$.
\end{proof}

\begin{proposition} 
    \label{tautrigidityimpliesuniversalproperty}
Let $S$ be a derived $p$-complete \dcart  ring (resp. a derived $2$-complete  \dcartwo ring if $p = 2$). Suppose the underlying $\delta$-ring of $S$ is taut rigid in the sense of \Cref{delta-taut-rigidity}. 
Then
for every derived
\(p\)-complete \dcart (resp. \dcartwo) ring \(A\), the natural map
\[
\Hom_{\ddcart}(S,A)\longrightarrow \Hom(S/\hV,A/\hV)  \ \text{resp.}\  \Hom_{\ddcartwo}(S,A)\longrightarrow \Hom(S/\hV,A/\hV)
\]
is a bijection.  
\end{proposition}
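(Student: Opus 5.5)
The strategy follows the outline in the introduction. We factor the map from $A$ to its $\hV$-completion $\widehat A = \widehat A_{\hV}$ into two steps, each controlled by taut rigidity of $S$. By \Cref{Vcompletecartierwitt} (resp.\ \Cref{mapsbetweendcartwoandVcomplete} at $p=2$), maps $S \to \widehat A$ of \dcart (resp.\ \dcartwo) rings correspond bijectively to ring maps $S/\hV \to \widehat A/\hV$. Since $\hV$ is injective on $A$ (\Cref{Visinjective}, \Cref{hVisinjective}), we have $A/\hV \xrightarrow{\sim} \widehat A/\hV$. It therefore suffices to show that composition with $A \to \widehat A$ induces a bijection
\[
\Hom(S, A) \to \Hom(S, \widehat A)
\]
on \dcart (resp.\ \dcartwo) maps.

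Step 1 uses the quotient $\overline{A} = A/\hV^\infty A$. By \Cref{infiniteVdivisibleideals}, $I = \hV^\infty A$ squares to zero, $\delta$ is an isomorphism on $I$, and $I$ is derived $p$-complete, being an inverse limit of derived $p$-complete groups. So $A \to \overline A$ is a taut square-zero extension of $\delta$-rings. Taut rigidity of $S$, i.e.\ vanishing of $\lt_S$ in degrees $\le 1$, gives a unique $\delta$-lift along it. The lift respects $\hV$ by \Cref{automaticV} (resp.\ \Cref{automatichV}) once we check that it carries $\hV(S)$ into $\hV(A)$. This holds because $\hV(A) \supset I$ and the composite to $\overline A$ already carries $\hV(S)$ into $\hV(\overline A)$. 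At $p=2$ with \dcart rings we do not need this, since we only work with \dcartwo. Hence $\Hom(S,A) \simeq \Hom(S,\overline A)$.

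Step 2 passes from $\overline A$ to $\widehat A$. The map $\overline A \to \widehat A$ is injective, and its cokernel is $C_A$. By \Cref{Vcompletiontautfactorization}, $C_A$ carries a Frobenius $\widehat A$-module structure making $d_A\colon \widehat A \to C_A$ a taut derivation with kernel $\overline A$. A $\delta$-map $g\colon S \to \widehat A$ factors through $\overline A$ if and only if $d_A \circ g = 0$. But $d_A \circ g$ is a taut $\delta$-derivation of $S$ into the perfect, derived $p$-complete Frobenius module $g^*C_A$. Such derivations correspond to $\delta$-sections of $S \oplus g^*C_A \to S$ other than the zero section, and taut rigidity forces these to agree with the zero section by uniqueness. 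Hence $d_A \circ g = 0$, and $g$ factors uniquely through $\overline A$, which injects into $\widehat A$. Compatibility with $\hV$ is again automatic, because $\overline A \subset \widehat A$ is a \dcart subring.

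The main obstacle is Step 2. First, taut rigidity as defined concerns square-zero extensions, so one must identify derivations into $C_A$ with sections of the trivial extension $S\oplus g^*C_A$. Second, one must confirm that $g^*C_A$ is genuinely perfect and derived $p$-complete as an $S$-Frobenius module. This uses the fact, established in the proof of \Cref{Vcompletiontautfactorization}, that $C_A$ is $p$-complete with bounded torsion and $F = \hV^{-1}$ invertible. A further minor point is that in Step 1 we need $I$ to be derived $p$-complete as an $\overline A$-module; this follows because it is a closed intersection in a derived complete module.
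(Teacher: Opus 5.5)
Your proposal is correct and follows the paper's proof essentially step for step. You first quotient by $\hV^\infty A$, a taut square-zero extension whose lift is made $\hV$-compatible by \Cref{automaticV}/\Cref{automatichV}; then you use the taut derivation $\widehat A_{\hV}\to C_A$ of \Cref{Vcompletiontautfactorization}, whose kernel is $A/\hV^\infty A$; and you finish with the $\hV$-complete Cartier theorem. One small correction to your closing remark: bounded $p$-power torsion of $C_A$ is only established in the free case inside that proof, and you do not need it, since derived $p$-completeness of $C_A$ (part of tautness) and bijectivity of $\hV^{-1}$ are all your argument uses.
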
 
\begin{proof} 
We treat the case $p = 2$; the case $p > 2$ is analogous (and easier). The idea is to factor 
the map from $A$ to its completion as a composite of a taut square-zero extension and the kernel of a taut derivation. 

Let \(I=\hV^\infty A\) and \(A'=A/I\) be the universal separated quotient of $A$.  By
\Cref{infiniteVdivisibleideals}, the map \(A\to A'\) is a taut square-zero
extension of \dcartwo rings
in the sense of \Cref{tautsquarezeroextensiondcartring},
and \(A'/\hV\simeq A/\hV\).  The  taut rigidity assumption on \(S\) gives
\[
\Hom_{\ddcartwo}(S,A)\simeq \Hom_{\ddcartwo}(S,A');
\]
in fact, taut rigidity gives an isomorphism on maps of $\delta$-rings, but we invoke \Cref{automatichV} to conclude that any map of $\delta$-rings $S \to A$ such that the map $S \to A'$ is a map of \dcartwo rings is automatically a map of \dcartwo rings.

Moreover, \(\widehat A_{\hV}=\widehat {A'}_{\hV}\), and \(A'\) is the kernel of the
taut derivation
\[
d:\widehat A_{\hV}\longrightarrow \widehat A_{\hV}/A'
\]
from \Cref{Vcompletiontautfactorization}. This derivation vanishes on $S$ by the rigidity hypothesis, whence the image of $S$ is contained in $A'$. Thus
\[
\Hom_{\ddcartwo}(S,A)\simeq \Hom_{\ddcartwo}(S,\widehat A_{\hV}).
\]
Finally, \(\widehat A_{\hV}\) is \(\hV\)-complete, so
\Cref{mapsbetweendcartwoandVcomplete}
identifies the last set with
\(
\Hom(S/\hV,\widehat A_{\hV}/\hV)=\Hom(S/\hV,A/\hV). \) 
\end{proof} 

\begin{theorem}[Universal property of \(\chW\) as a \dcart or \dcartwo ring]
\label{chWuniversalcartier}
Let \(R\) be a \(p\)-complete ring with bounded $p$-power torsion, such that \((R/p)_{\mathrm{red}}\) is perfect.
Consider $\chW(R)$ as a \dcart ring (resp.~\dcartwo ring if $p = 2$). 
\begin{enumerate}
\item If \(p>2\),   for every derived \(p\)-complete
\dcart ring \(A\), the natural map \(R\to\chW(R)/\hV\) induces an isomorphism
\begin{equation}  \label{chWuniversalproperty1}
\Hom_{\ddcart}(\chW(R),A)\xrightarrow{\sim}\Hom(R,A/\hV).
\end{equation}
\item If \(p=2\), for every derived \(2\)-complete \dcartwo ring \(A\),
the natural map \(R\to\chW(R)/\hV\) induces an isomorphism
\begin{equation} \label{chWuniversalproperty2}
\Hom_{\ddcartwo}(\chW(R),A)\xrightarrow{\sim}\Hom(R,A/\hV).
\end{equation}
\end{enumerate}
\end{theorem}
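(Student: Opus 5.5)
The plan is to deduce the theorem directly from \Cref{tautrigidityimpliesuniversalproperty}, applied with $S=\chW(R)$. Three inputs are needed for this.

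\emph{First input.} $\chW(R)$ is a derived $p$-complete \dcart ring if $p>2$, and a derived $2$-complete \dcartwo ring if $p=2$. For $p>2$ this is \Cref{chWdcart}; for $p=2$ it is \Cref{chWdcartwo}. Both are passed to the limit $\chW(R)=\varprojlim_n\chW(R/p^n)$. Derived $p$-completeness follows because each $\chW(R/p^n)$ is derived $p$-complete and limits preserve this property. Each $R/p^n$ is nilperfect because $(R/p)_{\mathrm{red}}$ is perfect.

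\emph{Second input.} The underlying $\delta$-ring of $\chW(R)$ is taut rigid. This is exactly \Cref{tautrigidityofchWasdcart}, whose hypotheses coincide with ours.

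\emph{Third input.} The natural map $R\to\chW(R)/\hV$ is an isomorphism. For each $n$, \Cref{prop:chWmodV} gives the short exact sequence
\[
0\to\chW(R/p^n)\xrightarrow{\hV}\chW(R/p^n)\to R/p^n\to 0 .
\]
I pass to the inverse limit over $n$. The transition maps of the system $\{\chW(R/p^n)\}$ are surjective by \Cref{surjectioninducessurjection}, so this system is Mittag-Leffler and $\varprojlim^1$ vanishes on the left term. Hence the limit sequence $0\to\chW(R)\xrightarrow{\hV}\chW(R)\to\varprojlim_n R/p^n\to 0$ remains exact. Since $R$ is $p$-complete, $\varprojlim_n R/p^n=R$.

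With these three inputs, \Cref{tautrigidityimpliesuniversalproperty} gives the bijection
\[
\Hom(\chW(R),A)\xrightarrow{\sim}\Hom(\chW(R)/\hV,A/\hV)=\Hom(R,A/\hV).
\]
Here $\Hom(\chW(R),A)$ means $\Hom_{\ddcart}$ or $\Hom_{\ddcartwo}$ according to $p$. This bijection is induced by the natural map, which is exactly the isomorphisms \eqref{chWuniversalproperty1} and \eqref{chWuniversalproperty2}.

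The main point needing care is the limit argument in the third input, together with the check that the \dcart or \dcartwo structure is compatible with the transition maps, so that it passes to the limit. Compatibility holds because the structures are functorial in $R$. The remaining steps are direct citations of earlier results.
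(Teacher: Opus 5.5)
Your proposal is correct and is the paper's proof: get $\chW(R)/\hV\simeq R$ from \Cref{prop:chWmodV} passed to the limit, take taut rigidity from \Cref{tautrigidityofchWasdcart}, and apply \Cref{tautrigidityimpliesuniversalproperty}. Your Mittag-Leffler argument, using the surjective transition maps from \Cref{surjectioninducessurjection}, correctly fills in what the paper calls ``passage to the inverse limit.''
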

\begin{proof}
In both cases, \Cref{prop:chWmodV} and passage to the inverse limit give
\(\chW(R)/\hV\simeq R\).  Moreover, 
by \Cref{tautrigidityofchWasdcart}, the underlying \(\delta\)-ring of \(\chW(R)\) is taut rigid.
Applying \Cref{tautrigidityimpliesuniversalproperty} to the source \(\chW(R)\) and the
target \(A\), we get
\[
\Hom(\chW(R),A)\simeq \Hom(\chW(R)/\hV,A/\hV)=\Hom(R,A/\hV),
\]
where the first Hom is taken in \(\ddcart\) if \(p>2\), and in \(\ddcartwo\) if
\(p=2\).
\end{proof}
\subsection{\dcart and \dcartwo envelopes; construction of $\chW$ in general}
In this subsection, 
we extend the definition and universal property of $\chW$ to all $p$-completely nilperfect rings, by enforcing the universal property of \Cref{chWuniversalcartier}. 
To prove that an object satisfying the desired universal property exists,
we first introduce the notion of \dcart and \dcartwo envelopes.

Consider the forgetful functor from \dcart rings (resp.~\dcartwo rings) to pairs of a $\delta$-ring and an ideal. 
We know that this functor is fully faithful (\Cref{automaticV,automatichV}). Moreover, this functor commutes with limits, since \(V\) (resp.~\(\hV\)) is always injective (\Cref{Visinjective,hVisinjective}).

\begin{construction}[\dcart and \dcartwo envelopes]
    Let $(A_0, I_0)$ be a pair consisting of a $\delta$-ring $A_0$ and an ideal $I_0 \subset A_0$. By the adjoint functor theorem, we can form the universal \dcart (resp.~\dcartwo) ring $A$ with a $\delta$-map $A_0 \to A$ such that the image of $I_0$ in $A$ is contained in $\hV(A)$.
    We will call this the \emph{\dcart envelope} (resp.~\emph{\dcartwo envelope}) of the pair $(A_0, I_0)$.
\end{construction}

\begin{example}
    The \dcart (resp.~\dcartwo) envelope of the pair $(A_0, 0)$ is the free \dcart (resp.~\dcartwo) ring on the $\delta$-ring $A_0$ (cf.~\Cref{cons:freedCartring} and \Cref{cons:freedCarttwo}).
\end{example}

\begin{example}
    \label{envelopeofcartentirering}
    Let $A_0$ be any $\delta$-ring, and consider the pair $(A_0, A_0)$.

    The \dcart (resp.~\dcartwo) envelope of $(A_0, A_0)$ is $(A_0)_{\mathrm{perf}}[1/p]$ with Verschiebung given by $p F^{-1}$. 
This is equivalent to the following observation: if $A$ is a \dcart (resp.~\dcartwo) ring under $A_0$ such that $1 \in \hV(A)$, then $p \in A^{\times}$ and $A$ is perfect as a $\delta$-ring.
In fact, $1 \in \hV(A)$ implies that $A = \hV(A)$, whence $\hV: A \to A$ is an isomorphism. 
Choosing $x$ such that $\hV(x) = 1$ and applying $F$ (resp. $F^2$ when $p = 2$), we find that $p$ is invertible in $A$.   
Then, the identity $FV = p$ (resp.~$F\hV = \two$, with $\two$ a unit) implies that $F$ is also an isomorphism, so $A$ is perfect as a $\delta$-ring.
\end{example}

\begin{example}
    \label{envelopeofcartidealpn}
   Consider the initial \dcart ring $A_0 = \ZZ[V]$ for $ p> 2$.  
   We let $I_0$ be the ideal $( V(A_0), p^n)$ for some $n \geq 1$. 
   By construction, the \dcart envelope of the pair $(A_0, I_0)$ is the universal \dcart  ring $A$ such that $A/V$ is annihilated by $p^n$.
   Unwinding \Cref{chWuniversalcartier}, the derived $p$-completion of this envelope is $\chW( \ZZ/p^n)$.

 We can give an explicit description of the envelope by hand (before $p$-completion). 
    Let $A$ be a \dcart ring  such that 
   $p^n = V(a)$ for some $a \in A$. We claim that $a = p^{n-1}$. 
   To see this, we apply $F$ to obtain that $p( a -p^{n-1}) =0 $. 
   Applying $\delta$, we find 
   \[ p^{n-1} - p^{np-1} = \delta(p^n) = a - p^{p-2} V(a^p), \]
    which easily gives the claim. 
    It follows that the \dcart  envelope of the pair $(A_0, I_0)$ is given by the quotient of $A_0$ by the ideal generated by $V^i( p^n - V(p^{n-1}))$ for $i \geq 0$ (one checks that this is stable under $\delta$).
\end{example}

\begin{example}
    \label{dcartenvelopewhencontainsV}
    One important class of cases to consider is the \dcart (resp.~\dcartwo) envelope
    of a pair $(A_0, I_0)$ where $A_0$ is already a \dcart (resp.~\dcartwo) ring and $I_0 \subset A_0$ contains $\hV(A_0)$. 
    In this case (thanks to \Cref{Visinjective} and \Cref{hVisinjective}), the envelope is the universal \dcart (resp.~\dcartwo) ring $A$ under $A_0$ such that the map $A_0/\hV(A_0) \to A/\hV$ annihilates the image of $I_0$. 

    Any construction of a    \dcart (resp.~\dcartwo) envelope can be reduced to this case: 
    the \dcart (resp.~\dcartwo) envelope of a pair $(A_0, I_0)$ is the same as the \dcart (resp.~\dcartwo) envelope of the pair $(A_0', I_0')$ where $A_0' $ is the free \dcart (resp.~\dcartwo) ring on $A_0$, and $I_0'$ is generated by $\hV(A_0')$ and the image of $I_0$ in $A_0'$.
\end{example}

\begin{example} 
Let $A$ be a \dcart ring (resp.~\dcartwo ring if $p = 2$). The \dcart (resp.~\dcartwo) envelope of the pair $(A, \hV(A) + (p))$ is called the \emph{Dieudonn\'eization} of $A$; it is the universal \dcart (resp.~\dcartwo) ring $A'$ under $A$ such that $p \in A$ is in the image of $\hV$.
Using \Cref{automaticV} (resp.~\Cref{automatichV}), one checks that
$\mathbb{Z} \to A'$ is a map of \dcart (resp.~\dcartwo) rings, so
$V(1) = p$ (resp. $\hV(1) = p$) in $A'$ and $A'$ is a Dieudonn\'e $\delta$-ring.
\end{example}

We now apply this notion to construct $\chW(R)$ for any $p$-completely nilperfect ring $R$.
First, we show that any $p$-completely nilperfect ring can be realized as a quotient of a $p$-torsionfree $p$-completely nilperfect ring, based on the following straightforward observation whose proof we omit.

\begin{lemma} 
\label{critforRredflat}
Let $R$ be any ring. Then the following are equivalent: 
\begin{enumerate}
\item  
$(R/p)_{\mathrm{red}}$ is perfect. 
\item
For any map $\mathbb{Z}[x] \to R$, there exists $m \geq 1$ such that the map
extends along
\[
\mathbb{Z}[x] \to
B_m \stackrel{\mathrm{def}}{=}
\mathbb{Z}[x, y, z]/\bigl((x-y^p)^m-pz\bigr).
\]
\end{enumerate}
\end{lemma}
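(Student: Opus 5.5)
We prove the two implications separately; the whole argument is elementwise. Write $\bar r$ for the image of $r \in R$ in $R/p$.

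\emph{(1) implies (2).} Let $r \in R$ be the image of $x$. The ring $\overline{R} = (R/p)_{\mathrm{red}}$ is perfect, so $\bar r = s^p$ in $\overline{R}$ for some $s$. Lift $s$ to an element $y \in R$. Then $r - y^p$ maps to a nilpotent element of $R/p$. Hence there is some $m \geq 1$ with $(r-y^p)^m \in pR$, and we may write $(r-y^p)^m = pz$ with $z \in R$. Sending $y \mapsto y$ and $z \mapsto z$ defines the required extension $B_m \to R$.

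\emph{(2) implies (1).} Set $S = (R/p)_{\mathrm{red}}$. Since $S$ is reduced of characteristic $p$, the Frobenius of $S$ is injective, so we only need surjectivity. Let $\bar r \in S$, and lift it to $r \in R$. Applying (2) to the map $\mathbb{Z}[x] \to R$ with $x \mapsto r$ gives $y, z \in R$ and $m \geq 1$ with $(r-y^p)^m = pz$. Reducing modulo $p$ shows that $r - y^p$ is nilpotent in $R/p$, hence zero in $S$. So $\bar r = \bar y^{\,p}$ in $S$, and the Frobenius of $S$ is surjective.

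No real obstacle arises. The only points to keep in mind are that injectivity of Frobenius is automatic for reduced $\mathbb{F}_p$-algebras, and that nilpotence in $R/p$ is exactly the condition that some power lies in $pR$.
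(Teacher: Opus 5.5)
Your proof is correct. The paper omits the proof of this lemma as a straightforward observation, and your elementwise argument supplies it: lift a $p$-th root from $(R/p)_{\mathrm{red}}$, translate nilpotence of $r-y^p$ in $R/p$ into $(r-y^p)^m\in pR$, and use that Frobenius is automatically injective on a reduced $\mathbb{F}_p$-algebra.
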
 

\begin{remark} 
\label{flatBm}
 Note that the maps 
$\mathbb{Z}[x] \to \mathbb{Z}[x, z] \to  B_m$ of \Cref{critforRredflat} are faithfully flat. 
\end{remark}

\begin{lemma}\label{pcompleteNilperfectQuotient}
Let $R$ be a ring such that $(R/p)_{\mathrm{red}}$ is perfect. 
Then there is a $p$-torsionfree ring $\widetilde{R}$ such that
$(\widetilde{R}/p)_{\mathrm{red}}$ is perfect, and a surjection $\widetilde{R}
\twoheadrightarrow R$. 
If $R$ is derived $p$-complete, we can arrange the same for $\widetilde{R}$. 
\end{lemma}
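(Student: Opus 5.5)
We build $\widetilde{R}$ as a (possibly infinite) tensor product of copies of the rings $B_m$ from \Cref{critforRredflat}, one for each element of $R$, and then adjust. Concretely, for each $r \in R$ we use \Cref{critforRredflat} (applied to $\mathbb{Z}[x] \to R$, $x \mapsto r$) to choose $m_r \geq 1$ and an extension $B_{m_r} \to R$ sending $x \mapsto r$. Set
\[
P = \bigotimes_{r \in R} B_{m_r},
\]
the filtered colimit of the finite tensor products over $\mathbb{Z}$. This gives a ring map $P \to R$ which is surjective, since every $r$ is the image of the variable $x$ in the $r$-th factor. Each $B_m$ is flat over $\mathbb{Z}$ (\Cref{flatBm}), so $P$ is $p$-torsionfree.

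The issue is that $(P/p)_{\mathrm{red}}$ need not be perfect: only the elements $x_r$ have $p$-th roots up to nilpotents, while $y_r$, $z_r$ and general polynomials in them do not. I would fix this by iterating. Given a $p$-torsionfree ring $P_n$ with a surjection $P_n \to R$, form $P_{n+1} = P_n \otimes \bigotimes_{s \in P_n} B_{m_s}$, where for each $s \in P_n$ the factor $B_{m_s}$ is attached along $x \mapsto s$. Here $m_s$ and the extension to $R$ are chosen via \Cref{critforRredflat} applied to the image of $s$ in $R$, and the map $P_n \otimes B_{m_s} \to R$ uses that extension. Each step is a base change of a flat map $\mathbb{Z}[x] \to B_m$ (\Cref{flatBm}), so $P_{n+1}$ is flat over $P_n$ and therefore $p$-torsionfree. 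The surjection to $R$ persists. Set $\widetilde{R} = \varinjlim_n P_n$; this is $p$-torsionfree and surjects onto $R$.

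\textbf{Perfectness.} Every element $s \in \widetilde{R}$ comes from some $P_n$. In $P_{n+1}$ we have $(s - y^p)^{m} = pz$, so $s$ is congruent modulo $p$ to a $p$-th power up to a nilpotent. Hence Frobenius on $(\widetilde{R}/p)_{\mathrm{red}}$ is surjective, and it is injective on any reduced $\mathbb{F}_p$-algebra. Thus $(\widetilde{R}/p)_{\mathrm{red}}$ is perfect.

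\textbf{Derived $p$-complete case.} If $R$ is derived $p$-complete, replace $\widetilde{R}$ by its $p$-adic completion. Since $\widetilde{R}$ is $p$-torsionfree, the classical and derived completions agree, and the completion is $p$-torsionfree. The quotient mod $p$ is unchanged, so $(\widetilde{R}^\wedge_p/p)_{\mathrm{red}}$ is still perfect. The map to $R$ factors through the completion because $R$ is derived $p$-complete.

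\textbf{Main obstacle.} I expect the hardest point to be surjectivity of $\widetilde{R}^\wedge_p \to R$ in the complete case, which is immediate without completion. I would argue as follows. The map $\widetilde{R} \to R$ is surjective, so the induced map $\widetilde{R}^\wedge_p \to R^\wedge_p = R$ on derived completions is surjective on $\pi_0$: the cokernel is the derived completion of $0$, and the relevant $\lim^1$/$H^1$ term vanishes because derived completion is right exact on $\pi_0$.
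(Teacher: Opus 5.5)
Your proposal is correct and follows essentially the same route as the paper: a small object argument that iterates flat pushouts along $\mathbb{Z}[x]\to B_m$, passes to the colimit, and then $p$-completes in the derived $p$-complete case. The only cosmetic difference is that the paper starts from a polynomial ring rather than $\bigotimes_r B_{m_r}$, and your argument that $\widetilde{R}^\wedge_p \to R$ stays surjective (the $\varprojlim^1$ of $K/p^nK$ for the kernel $K$ vanishes) fills in a point the paper leaves implicit.
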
 
\begin{proof} 
The proof is via a straightforward small object argument. Take $\widetilde{R}_0$ to be a polynomial ring surjecting onto $R$. 
By pushing out $\widetilde{R}_0$ along the maps $\mathbb{Z}[x] \to B_m$ (for various $m$),
we can find a factorization
$\widetilde{R}_0 \to \widetilde{R}_1 \to R$ such that: 
\begin{enumerate}
\item $\widetilde{R}_0 \to \widetilde{R}_1$ is faithfully flat (use
\Cref{flatBm}).  
\item Any map $\mathbb{Z}[x] \to \widetilde{R}_0$ fits into a commutative
diagram
\[ \xymatrix{
\mathbb{Z}[x] \ar[d] \ar[r] &  \widetilde{R}_0 \ar[d]  \\
B_m \ar[r] &  \widetilde{R}_1
}\]
for some $m$. 
\end{enumerate}
We can iterate this process to produce a sequence of faithfully flat maps
\[ \widetilde{R}_0 \to \widetilde{R}_1 \to \widetilde{R}_2 \to \dots \to R  \]
such that each map $\widetilde{R}_i \to \widetilde{R}_{i+1}$ has the above lifting
property, 
i.e., any map $\mathbb{Z}[x] \to \widetilde{R}_i$ has the property that the
composite $\mathbb{Z}[x] \to \widetilde{R}_i \to \widetilde{R}_{i+1}$ extends
over $B_m$ for some $m$. 

It follows that 
$\widetilde{R} \stackrel{\mathrm{def}}{=} \varinjlim \widetilde{R}_i$ is
$p$-torsionfree and  has the
property that $(\widetilde{R}/p)_{\mathrm{red}}$ is perfect. 
Finally, if $R$ is derived $p$-complete, then we can arrange the same for
$\widetilde{R}$ by replacing it with its $p$-completion. 
\end{proof}

\begin{definition}[\(\chW\) for $p$-completely nilperfect rings]
    Let $R$ be a
  $p$-completely nilperfect ring. We define $\chW(R)$ as the derived $p$-complete \dcart ring (resp.~\dcartwo ring if $p = 2$)
 satisfying the universal property of \Cref{chWuniversalcartier} (i.e., \eqref{chWuniversalproperty1} or \eqref{chWuniversalproperty2}), which we may construct as follows (the universal property shows that it is well-defined up to unique isomorphism). 
By \Cref{pcompleteNilperfectQuotient}, choose a surjection
\(\widetilde{R}\twoheadrightarrow R\), where \(\widetilde{R}\) is
\(p\)-torsionfree and \(p\)-completely nilperfect. 
We then form the derived $p$-completion of the \dcart (resp.~\dcartwo) envelope of 
the pair \((\chW(\widetilde{R}),\ker(\chW(\widetilde{R})\to R))\). This satisfies the desired universal property by \Cref{chWuniversalcartier} applied to $\widetilde{R}$. 
\end{definition}

    \begin{corollary}
The functor \(R\mapsto \chW(R)\), from $p$-completely nilperfect rings to
\dcart rings if \(p>2\) and to \dcartwo rings if \(p=2\), is fully faithful.
Moreover, for any \(p\)-completely nilperfect ring \(R\), the $\hV$-completion of $\chW(R)$ is canonically isomorphic to \(W(R)\).
\end{corollary}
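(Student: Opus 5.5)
Both claims follow from the universal property by Yoneda-type arguments; the only real content is a compatibility check for the second one.

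\emph{Full faithfulness.} Given $p$-completely nilperfect rings $R, R'$, consider the map
\[
\Hom(R,R') \to \Hom_{\ddcart}(\chW(R),\chW(R')) \quad (\text{resp. } \ddcartwo \text{ at } p=2).
\]
By the defining universal property \eqref{chWuniversalproperty1} (resp.\ \eqref{chWuniversalproperty2}), applied with $A = \chW(R')$, the target is identified with $\Hom(R, \chW(R')/\hV)$. So full faithfulness reduces to proving $\chW(R')/\hV \simeq R'$, with the identification compatible with the unit $R' \to \chW(R')/\hV$.

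To prove this, I would take a derived $p$-complete \dcart (resp.\ \dcartwo) ring $A$ and consider a map $R' \to A/\hV$. By \Cref{Vcompletecartierwitt} (resp.\ \Cref{mapsbetweendcartwoandVcomplete}), this corresponds to a map into $W(A/\hV)$. Then:
\begin{enumerate}
\item $W(R')$ is derived $p$-complete, since $R'$ is.
\item Universality gives a map $\chW(R') \to W(R')$ inducing the identity $R' \to R'$ on the composite $R' \to \chW(R')/\hV \to W(R')/\hV = R'$.
\item Hence the unit $R' \to \chW(R')/\hV$ is split injective.
\end{enumerate}

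For surjectivity of the unit I would use the construction. We have $\chW(R')$ as the completed envelope of $(\chW(\widetilde R), K)$, where $K = \ker(\chW(\widetilde R) \to R')$. By \Cref{dcartenvelopewhencontainsV}, this envelope $E$ is the universal \dcart ring under $\chW(\widetilde R)$ killing the image of $K$ modulo $\hV$. The composite $\chW(\widetilde R)/\hV = \widetilde R \to E/\hV$ is surjective, because $E$ is generated under the operations $F$, $\delta$, $\hV$ by the image of $\chW(\widetilde R)$, and $\delta$ preserves the property of lying in image $+\, \hV E$ by \Cref{deltaonVcongruence} and \Cref{deltaonhVcongruence}. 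Since this map kills $K$, it factors through $R'$. The resulting $R' \to E/\hV$ is then surjective and split injective, hence an isomorphism. Derived $p$-completion preserves this, because $R'$ is derived $p$-complete and $\hV$ is injective, so $(\cdot)/\hV$ commutes with completion. This is the step I expect to need the most care; I would first try to argue it directly from the universal property, namely that maps out of $E/\hV$ agree with maps out of $R'$.

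\emph{$\hV$-completion.} The completion $\widehat{\chW(R)}_{\hV}$ is $\hV$-complete with quotient $\chW(R)/\hV = R$. So by \Cref{Vcompletecartierwitt} (resp.\ \Cref{mapsbetweendcartwoandVcomplete}), it is $W(R)$, and the comparison map is the map from the universal property.
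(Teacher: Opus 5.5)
The reduction of full faithfulness to the claim that the unit $R'\to\chW(R')/\hV$ is an isomorphism is fine. So is the split injectivity of the unit, and so is your deduction of the $\hV$-completion from $\chW(R)/\hV\simeq R$. The gap is the surjectivity step.

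The map $\chW(\widetilde R)\to E$ is a map of \dcart (resp.\ \dcartwo) rings, so its image is already stable under $F$, $\delta$ and $\hV$. Saying that $E$ is generated under these operations by the image therefore amounts to saying that $\chW(\widetilde R)\to E$ is surjective. You have not justified this, and it fails for envelopes in general before completion: the envelope of $(\mathbb{Z},\mathbb{Z})$ is $\mathbb{Z}[1/p]$, by \Cref{envelopeofcartentirering}. The genuinely new generators are the elements $\hV^{-1}(k)$ for $k\in K$.

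Nor does $\delta$ preserve $\mathrm{im}+\hV E$ for any formal reason. \Cref{deltaonVcongruence} and \Cref{deltaonhVcongruence} with $i=1$ only give a congruence modulo $\hV^0E=E$, which says nothing. In fact $\delta(a+\hV y)\equiv\delta(a)\pm y \pmod{\hV E}$, so $\delta$-stability of $\mathrm{im}+\hV E$ is equivalent to $\mathrm{im}+\hV E=E$, which is exactly what you are trying to prove.

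Your fallback idea is the right one, and it makes the envelope unnecessary.
\begin{itemize}
\item By \Cref{Vcompletecartierwitt} (resp.\ \Cref{mapsbetweendcartwoandVcomplete}), $(-)/\hV$ is left adjoint to $W$, and $W(S)$ is derived $p$-complete whenever $S$ is.
\item Hence for every derived $p$-complete ring $S$ you get $\Hom(\chW(R')/\hV,S)\simeq\Hom(\chW(R'),W(S))\simeq\Hom(R',S)$, and the composite is restriction along the unit.
\item $\chW(R')/\hV$ is itself derived $p$-complete, because $\hV$ is injective (\Cref{Visinjective}, \Cref{hVisinjective}). So Yoneda shows the unit is an isomorphism.
\end{itemize}
The paper runs the same idea in the other order. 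It first identifies $\widehat{\chW(R)}_{\hV}\simeq W(R)$ by Yoneda among derived $p$-complete, $\hV$-complete rings; this uses only the universal property, not $\chW(R)/\hV\simeq R$. It then reads off $\chW(R)/\hV\simeq\widehat{\chW(R)}_{\hV}/\hV\simeq W(R)/V=R$, and after that full faithfulness is your computation.
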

\begin{proof}
The universal property of \(\chW(R)\) shows that maps into any derived $p$-complete and $\hV$-complete \dcart (resp.~\dcartwo) ring $A$ are determined by the induced map on $R \to A/\hV$. From this and \Cref{Vcompletecartierwitt,mapsbetweendcartwoandVcomplete} it follows easily that the $\hV$-completion of $\chW(R)$ is canonically isomorphic to \(W(R)\).

For $p$-completely nilperfect rings \(R\) and \(R'\), by definition we have
\[
\Hom(\chW(R),\chW(R'))\simeq \Hom(R,\chW(R')/\hV)\simeq \Hom(R,R'),
\]
where the first Hom is taken in \(\ddcart\) if \(p>2\), and in \(\ddcartwo\)
if \(p=2\).  
\end{proof}

We will not seriously consider examples of $p$-completely nilperfect rings that have unbounded $p$-power torsion in this paper. 
However, even if one starts with nilperfect rings, $\chW(R)$ has typically unbounded $p$-power torsion, so the above is necessary if one wants to iterate the $\chW$ construction.

\begin{corollary}
\label{universaltautsquarezero:general}
    Let $R$ be a $p$-completely nilperfect ring. 
    Then any taut square-zero extension of $\chW(R)$ in $\delta$-rings admits a unique splitting. 
    As a result, if the natural map $\chW(R) \to W(R)$ is surjective, then $\chW(R)$ is the 
    universal taut square-zero extension of $W(R)$. 
\end{corollary}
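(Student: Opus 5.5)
The plan has two parts: first prove that $\chW(R)$ is taut rigid for an arbitrary $p$-completely nilperfect $R$, then deduce the universal property in the same way as \Cref{chWasuniversaltaut}.

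\textbf{Taut rigidity.} By \Cref{pcompleteNilperfectQuotient}, choose a surjection $\widetilde{R} \twoheadrightarrow R$ with $\widetilde{R}$ $p$-torsionfree and $p$-completely nilperfect. By \Cref{tautrigidityofchWasdcart}, $\chW(\widetilde{R})$ is taut rigid. By construction, $\chW(R)$ is the derived $p$-completion of the \dcart (resp.~\dcartwo) envelope $E$ of the pair $(\chW(\widetilde{R}), J)$, where $J = \ker(\chW(\widetilde{R})\to R)$. Since $J \supset \hV(\chW(\widetilde{R}))$, \Cref{dcartenvelopewhencontainsV} identifies $E$ with the universal \dcart (resp.~\dcartwo) ring under $\chW(\widetilde{R})$ in which the image of $J$ becomes divisible by $\hV$.

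Rather than analysing $E$ directly, I would argue with universal properties. Let $B \twoheadrightarrow \chW(R)$ be a taut square-zero extension of $\delta$-rings, with kernel $I$. By \Cref{liftcartierstructuretautsquarezero}, $B$ acquires a unique compatible \dcart (resp.~\dcartwo) structure, and $B\to\chW(R)$ becomes a taut square-zero extension of such rings. Since $I$ is derived $p$-complete, so is $B$. By the Remark following \Cref{tautsquarezeroextensiondcartring}, $B/\hV \simeq \chW(R)/\hV \simeq R$, where the last isomorphism comes from the universal property applied to $\chW(R)$ itself. The defining universal property \eqref{chWuniversalproperty1} (resp.~\eqref{chWuniversalproperty2}), applied with target $A = B$, then produces a unique \dcart map $s: \chW(R) \to B$ inducing the identity $R \to B/\hV$. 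The composite $\chW(R) \to B \to \chW(R)$ induces the identity on $R$, so by the universal property it is the identity. Hence $s$ is a section.

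For uniqueness among $\delta$-ring sections, let $t$ be any $\delta$-section. It carries $\hV(\chW(R))$ into the preimage of $\hV(\chW(R))$, which equals $\hV(B)$ because $I = \hV(I) \subset \hV(B)$. By \Cref{automaticV}, resp.~\Cref{automatichV}, $t$ is therefore a \dcart (resp.~\dcartwo) map. At $p=2$ this needs no extra hypothesis; for $p>2$ it is automatic. Since $t$ induces the identity mod $\hV$, we get $t = s$. This is the step I expect to be the main obstacle: one has to check carefully that the image ideals match, so that \Cref{automaticV}/\Cref{automatichV} applies, and that $B$ really is derived $p$-complete so that the universal property is available.

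\textbf{Universality.} Suppose $\chW(R)\to W(R)$ is surjective. Its kernel is $\hV^\infty \chW(R)$, because $W(R)$ is the $\hV$-completion of $\chW(R)$ and the map is surjective. By \Cref{infiniteVdivisibleideals}, this kernel is a taut square-zero ideal. Now let $C\to W(R)$ be any taut square-zero extension. Pulling it back along $\chW(R)\to W(R)$ gives a taut square-zero extension of $\chW(R)$, which has a unique section by the first part; this yields a map $\chW(R)\to C$ over $W(R)$. Any other such map also gives a section of the pullback, so it coincides with this one by the uniqueness just proved.
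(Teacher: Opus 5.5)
Your proposal is correct and follows the paper's proof: promote the extension to one of \dcart (resp.~\dcartwo) rings via \Cref{liftcartierstructuretautsquarezero} and invoke the universal property of $\chW(R)$. Your appeal to \Cref{automaticV}/\Cref{automatichV} (for uniqueness among all $\delta$-sections) and to \Cref{infiniteVdivisibleideals} (for tautness of $\chW(R)\to W(R)$) fills in steps the paper leaves implicit. Two minor points: the opening detour through $\widetilde{R}$ and the envelope is never used; and you do not need $\chW(R)/\hV\simeq R$, which comes from the construction via \Cref{envelopeisquotientexplicit} rather than formally from the universal property. The isomorphism $B/\hV\simeq\chW(R)/\hV$ together with naturality of the universal property already gives the bijection $\Hom(\chW(R),B)\to\Hom(\chW(R),\chW(R))$.
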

\begin{proof}
Any taut square-zero extension of $\chW(R)$ in $\delta$-rings automatically 
promotes to one in \dcart (resp.~\dcartwo) rings, thanks to \Cref{liftcartierstructuretautsquarezero}. 
The result then follows by the universal property of $\chW(R)$. 
\end{proof}

\section{$\chW$ for semiperfectoid rings}

In this final section, we discuss some further examples of \dcart and \dcartwo rings based on the notion of \dcart and \dcartwo envelopes; this leads to explicit descriptions of $\chW$ in several perfectoid and semiperfectoid cases.

\subsection{Computing envelopes as quotients}

We start with the following construction of $\chW(R)$ for a derived $p$-complete ring $R$ such that $R/p$ is semiperfect.
\begin{proposition}
\label{chWasperfectdeltaenvelope}
Let $R$ be a
derived $p$-complete ring written as a quotient $P/J$ where $P$ is a perfect $\delta$-ring and $J \subset P$ is an ideal. (For example, we could take $(P, J) = (W(R^{\flat}), \ker \theta)$ for any derived $p$-complete ring $R$ such that $R/p$ is semiperfect.)
Then $\chW(R)$ is the derived $p$-completion of the \dcart envelope (resp.~\dcartwo envelope if $p = 2$) of the pair $(P, J)$. 
\end{proposition}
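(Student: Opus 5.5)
Let $E$ denote the derived $p$-completion of the \dcart (resp.~\dcartwo) envelope of $(P,J)$. The plan is to check that $E$ has the universal property defining $\chW(R)$, namely \eqref{chWuniversalproperty1} or \eqref{chWuniversalproperty2}: for every derived $p$-complete \dcart (resp.~\dcartwo) ring $A$, we want $\Hom(E,A) \simeq \Hom(R,A/\hV)$. By the universal property of the envelope and of derived $p$-completion, $\Hom(E,A)$ is the set of $\delta$-maps $f\colon P \to A$ with $f(J) \subset \hV(A)$. Here we use that $P$ carries its unique \dcart structure $V = pF^{-1}$ (\Cref{perfectdeltaringdcart}), so $P$ is itself a \dcart (resp.~\dcartwo) ring.

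\textbf{Step 1.} We identify $\delta$-maps $P \to A$ with ring maps $P \to A/\hV$. I would first try to obtain this from the taut rigidity route of \Cref{tautrigidityimpliesuniversalproperty}. The underlying $\delta$-ring of $P$ is perfect, hence taut rigid, and $P$ is a derived $p$-complete \dcart ring (after $p$-completing; this does not change the maps into $A$). So \Cref{tautrigidityimpliesuniversalproperty} gives
\[
\Hom_{\ddcart}(P,A) \simeq \Hom(P/\hV, A/\hV) = \Hom(P/p, A/\hV),
\]
and similarly at $p=2$, using that $\hV P = pP$ for perfect $P$.

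We then need that every $\delta$-map $P \to A$ automatically respects $\hV$. It carries $\hV(P) = pP$ into $pA$, and $pA = F\hV(A) \subset \hV(A)$ by the projection formula at $p > 2$. So \Cref{automaticV} (resp.~\Cref{automatichV}) applies. At $p = 2$ I need $2 \in \hV(A)$; this follows from $2+\two \in \hV(A)$ and $(2-\two)$-type identities, or alternatively from the map of initial objects $\ZZ[\hV] \to A$ sending $\hV(1)$ to $\hV(1)$. I expect this to be a minor check.

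\textbf{Step 2.} Under Step 1, the condition $f(J) \subset \hV(A)$ says exactly that the ring map $P \to A/\hV$ kills $J$. Hence
\[
\Hom(E,A) \simeq \Hom(P/J, A/\hV) = \Hom(R, A/\hV),
\]
as desired. The correspondence in Step 1 is induced by reduction $A \to A/\hV$, so this identification is the one induced by $R \to E/\hV$.

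\textbf{Main obstacle.} The delicate part is Step 1. Note that $P/\hV P = P/p$, which is perfect, so a ring map $P \to A/\hV$ genuinely factors through $P/p$. We must know that $A/\hV$-valued maps from $P$ lift uniquely to $\delta$-maps. This is where the taut rigidity of perfect $\delta$-rings is used, together with \Cref{Vcompletiontautfactorization} inside \Cref{tautrigidityimpliesuniversalproperty}. If instead one wants to avoid that proposition, one could argue via \Cref{perfectdeltarings}: maps $P \to A$ correspond to maps $P/p \to A^\flat$, and then compare $A^\flat$ with $(A/\hV)^{\flat}$ using that $\hV A$ is nilpotent in $A/p$ (\Cref{completionofperfection}). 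That comparison for non-$\hV$-separated $A$ is exactly what the taut factorization handles, so I would rely on \Cref{tautrigidityimpliesuniversalproperty}.
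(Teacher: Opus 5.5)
There is a genuine gap in Step 1. Your overall plan matches the paper's: show that $\delta$-maps $P\to A$ correspond to ring maps $P\to A/\hV$, then read off the condition on $J$. The justification you give for that correspondence, however, fails.

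\textbf{Where Step 1 breaks.} The claim $pA=F\hV(A)\subset \hV(A)$ is false. The equality $F\hV(A)=pA$ holds, but the projection formula only shows that $\hV(A)$ is an ideal. Moreover, $p\in\hV(A)$ holds exactly when $p=0$ in $A/\hV$. For $A=W(\ZZ_p)$ or $A=\chW(\ZZ_p)$ one has $A/\hV=\ZZ_p$, so $p\notin\hV(A)$.

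In fact, with its unique structure $\hV=pF^{-1}$, the ring $P$ has $\hV(1)=p$. A \dcart (resp.\ \dcartwo) map out of $P$ must send $\hV(1)$ to $\hV(1)$, so $\Hom_{\ddcart}(P,A)$ is empty unless $\hV(1)=p$ in $A$. Concrete failures:
\begin{enumerate}
\item The unit $\ZZ_p\to\chW(\ZZ_p)$ is a $\delta$-map that does not respect $\hV$.
\item For a mixed-characteristic perfectoid $R$, the natural $\delta$-map $\ainf(R)\to\chW(R)$ (the main example in the statement) is not a \dcart map.
\end{enumerate}
Consequently, \Cref{tautrigidityimpliesuniversalproperty} applied to $P$ computes $\Hom(P/p,A/\hV)$, not $\Hom(P,A/\hV)$. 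Your remark that a ring map $P\to A/\hV$ ``genuinely factors through $P/p$'' is false for the same reason. As written, the argument only covers the case $p\in J$, where $f(J)\subset\hV(A)$ forces $pA\subset\hV(A)$; that is, only $R$ an $\mathbb{F}_p$-algebra.

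\textbf{The repair.} Use the alternative you set aside; it needs neither the taut factorization nor any $\hV$-separatedness. The image of $\hV(A)$ in $A/p$ is nilpotent: $\hV(a)\hV(b)=p\hV(ab)$ for $p>2$, and $\hV(A)^3\subset 4A$ for $p=2$ (cf.\ \Cref{completionofperfection}). Hence
\[
A^\flat=(A/p)^{\mathrm{perf}}\simeq (A/(p,\hV))^{\mathrm{perf}}=(A/\hV)^\flat .
\]
Note that $A/\hV$ is derived $p$-complete because $\hV$ is injective. Since $P$ is perfect, \Cref{perfectdeltarings} then shows that any ring map $P\to R\to A/\hV$ lifts uniquely to a $\delta$-map $P\to A$. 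This is exactly the paper's proof, and your Step 2 then goes through unchanged.

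If you prefer to keep the taut-rigidity route, apply \Cref{tautrigidityimpliesuniversalproperty} not to $P$ but to the $p$-completed free \dcart ring $P[V]$ (resp.\ free \dcartwo ring $P[\hV]$). This ring is taut rigid by \Cref{BtoBVtautequivalence} (resp.\ \Cref{BtoBhVtautequivalence}). Its \dcart maps to $A$ are precisely the $\delta$-maps $P\to A$, and its quotient by $\hV$ is $P$, which gives the correct identification.
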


\begin{proof}
Suppose we are given a 
derived $p$-complete pair $(A_0, I_0)$ consisting of a $\delta$-ring $A_0$ and an ideal $I_0 \subset A_0$  equipped with a map $R \to A_0/I_0$.
Suppose moreover that some power of $I_0$ is contained in $(p) \subset A_0$. 
In this case,  since $P$ is perfect, the map $$P \to  R \to  A_0/I_0$$ lifts uniquely to a map $P \to A_0$ of $\delta$-rings, and the image of $J$ is contained in $I_0$. Thus, we get a map of pairs $( P, J) \to (A_0, I_0)$.

It follows that the category of derived $p$-complete
\dcart rings (resp.~ \dcartwo rings if $p = 2$) $A$ with a map $R \to A/\hV$ is equivalent to the category of derived $p$-complete \dcart rings (resp.~ \dcartwo rings if $p = 2$) $A$ with a map of pairs $(P, J) \to (A, \hV(A))$. 
The result now follows from the universal property of $\chW(R)$ as a \dcart (resp.~\dcartwo) ring proved in \Cref{chWuniversalcartier}. 
\end{proof}

We now give a construction of the derived $p$-completion of the \dcart (resp.~\dcartwo) envelope of a pair $(A_0, I_0)$. 
If $A_0$ is already a derived $p$-complete \dcart (resp.~\dcartwo) ring and $I_0$ contains $V(A_0)$ (resp.~$\hV(A_0)$), then we show that the derived $p$-completed envelope is a quotient of $A_0$ (\Cref{envelopeisquotientexplicit}). Our main tool is a natural retraction of Verschiebung (\Cref{naturalretractionofV}). In special cases, we can give more explicit descriptions.

    \begin{lemma} 
        Let $C \subset A $ be an inclusion of \dcart (resp.~\dcartwo if $p = 2$) rings such that 
        $C/\hV C \to A/\hV A$ is surjective. 
        Then $C + pA \subset A$ is a \dcart (resp.~\dcartwo) subring. 
        Moreover, when $p = 2$, $C + pA$ contains $\two A$. 
        \label{CpluspAisacartsubring}
    \end{lemma}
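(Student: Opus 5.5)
The plan is to check directly that $C + pA$ is closed under each piece of structure: it is an additive subgroup containing $1$, and we verify closure under multiplication, $\delta$ (hence $F$) and $\hV$. We then use the surjectivity hypothesis to handle $\hV$, where the difficulty lies.

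\emph{Multiplication.} We have $(c+pa)(c'+pa') = cc' + p(ca' + c'a + paa') \in C + pA$, since $C$ is a subring. The same computation shows $C+pA$ is a subring.

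\emph{Closure under $\delta$.} Recall
\[
\delta(x+y)=\delta(x)+\delta(y)-\sum_{0<k<p}\tfrac1p\tbinom pk x^ky^{p-k},
\]
and $\delta(pa)=\phi(a)-p^{p-1}a^p$. Take $x=c\in C$ and $y=pa$. Then $\delta(c)\in C$ and $\delta(pa)\in \phi(a)+pA$. Each cross term $\frac1p\binom pk c^k(pa)^{p-k}$ lies in $pA$, because $p-k\geq1$ and $\frac1p\binom pk\in\mathbb Z$. It therefore suffices to show $\phi(A)\subset C+pA$. Write $a = c_0 + \hV(a_1)$ with $c_0\in C$; this is possible since $C\to A/\hV A$ is surjective. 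Then:
\begin{itemize}
\item $\phi(a) = \phi(c_0) + F\hV(a_1) + (\text{cross terms})$, and $\phi$ is a ring map, so the cross terms vanish: $\phi(c_0+\hV a_1)=\phi(c_0)+\phi(\hV a_1)$.
\item $\phi(c_0)\in C$.
\item $F\hV(a_1)=pa_1$ for $p>2$, and $F\hV(a_1)=\two a_1$ for $p=2$.
\end{itemize}
So for $p>2$ this already gives $\phi(A)\subset C+pA$. For $p=2$ we need $\two A\subset C+2A$, which is the extra assertion of the lemma.

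\emph{Proving $\two A\subset C+2A$ at $p=2$.} By \Cref{hVof2tilde}, $\two = \hV(\two) - 2$. Since $\two=F\hV(1)\in C$, we get $\two a\equiv \hV(\two)a = \hV(\two F(a))\pmod{2A}$. To handle $\two F(a)$, write $a=c_0+\hV(a_1)$ again, so that $F(a)=F(c_0)+\two a_1$. Using $\two^2=4$,
\[
\hV(\two F(a)) = \hV(\two F(c_0)) + 4\hV(a_1) = (2+\two)c_0 + 4\hV(a_1)\in C+2A,
\]
where the first term is computed by \Cref{hVof2tilde}. Hence $\two A\subset C+2A$, and this also completes closure under $\delta$ when $p=2$.

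\emph{Closure under $\hV$.} We have $\hV(c+pa)=\hV(c)+p\hV(a)$, and $\hV(c)\in C$, so closure holds.

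The only genuinely delicate step is the $p=2$ containment $\two A\subset C+2A$, which is needed for $\phi(A)\subset C+pA$. It is handled by the identity $\hV(\two F(a))=(2+\two)a$ together with one further use of the surjectivity $C\to A/\hV A$. Finally, $C+pA$ inherits the \dcart (resp.~\dcartwo) identities from $A$, because they hold in $A$.
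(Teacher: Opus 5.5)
Your proof is correct and follows essentially the same route as the paper. Closure under multiplication and $\hV$ is immediate, and you reduce $\delta$-stability, via the sum formula and $\delta(pa)=\phi(a)-p^{p-1}a^p$, to the surjectivity $A=C+\hV A$. The only difference is at $p=2$. The paper decomposes $a=c_0+\hV^2(a_0)$, so that $F\hV^2(a_0)=\two\hV(a_0)=2\hV(a_0)$ lands in $2A$ directly (using $(2-\two)\hV(A)=0$), and it proves $\two A\subset C+2A$ separately as $\two(c_1+\hV(a_2))=\two c_1+2\hV(a_2)$. You instead use a single $\hV$ and feed the ``moreover'' claim, obtained through $\hV(\two F(a))=(2+\two)a$, into the $\delta$-stability; both arguments are valid.
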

    \begin{proof} 
 Clearly $C + p A$ is a subring stable under $\hV$, so it suffices to show stability 
 under $\delta$. 
 Given $x \in C + p A$ of the form $x = c + p a$ with $c \in C$ and $a \in A$, 
 we write $a = \hV^2(a_0) + c_0$ with $a_0 \in A$ and $c_0 \in C$ (using surjectivity of $C/\hV C \to A/\hV A$).
It follows that 
\begin{align*} 
\delta(x) \equiv  \delta(c + pc_0 )  + \delta( p  \hV^2(a_0) )\mod p A,
\end{align*}
The second term 
$\delta( p \hV^2(a_0))$ 
belongs to $pA$
(where we use \Cref{hVversusVdcartidentity} in case $p = 2$). 
For the last claim, if $a \in A$, we write 
$a = c_1 + \hV(a_2 )$ for $c_1 \in C$ and $a_2 \in A$, and then
$\two a = \two c_1 + 2 \hV(a_2) \in C + pA$. 
    \end{proof}

\begin{proposition}
    For any derived $p$-complete \dcart (resp.~\dcartwo if $p = 2$) ring $A$, 
    the map 
    $\hV: A \to A$ has a natural set-valued retraction $r: A \to A$, i.e., such that $r \circ \hV = \mathrm{id}_A$.\footnote{This result was suggested by ChatGPT.}
    \label{naturalretractionofV}
\end{proposition}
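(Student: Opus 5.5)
Given $y = \hV(x)$, I will recover $x$ from $y$ by a formula that uses only the $\delta$-structure and derived $p$-completeness; in particular it must not depend on knowing that $y$ lies in the image of $\hV$. The identity \eqref{deltaVidentity}, $\delta(V(x)) = x - p^{p-2}V(x^p)$, says that $\delta$ is an approximate inverse of $V$, with error $p^{p-2}V(x^p)$. For $p>3$ this error is divisible by $p$. For $p=3$ the error is $pV(x^3)$, also divisible by $p$. For $p=2$ we use \eqref{deltaVtildeidentity}, $\delta(\hV(x)) = -x + \hV(\two\delta(x))$, whose error term is not divisible by $2$. The plan is to iterate the correction and sum a $p$-adically convergent series. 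Define
\[ r(y) = \delta(y) + p^{p-2}\hV(\delta(y)^p) + \cdots, \]
that is, the unique fixed point of the map $x \mapsto \delta(y) + p^{p-2}\hV(x^p)$. If $y=\hV(x)$, then $x$ is a fixed point, because $x = \delta(\hV x) + p^{p-2}\hV(x^p)$.

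For $p>2$ the steps are as follows. First, show that $\Phi_y(x) = \delta(y) + p^{p-2}\hV(x^p)$ is a contraction for the $p$-adic filtration. If $x \equiv x' \pmod{p^n A}$, then $x^p \equiv x'^p \pmod{p^{n+1}}$ for $n\geq1$, and the factor $p^{p-2}\geq p$ improves the congruence. Second, define $r(y)$ as the limit of the iterates $\Phi_y^k(\delta(y))$. The difficulty is that $A$ is only derived $p$-complete, not classically complete, so limits need not exist or be unique. Third, check that $r\circ\hV=\mathrm{id}$: if $y=\hV(x)$, then $x$ is a fixed point of $\Phi_y$, and it agrees with the limit provided the limit is unique.

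The existence and uniqueness of fixed points in a derived $p$-complete but non-separated ring is the main obstacle. It is also why the retraction is only set-valued and natural. I would first try to make the construction work on the $p$-adically complete quotient, and then handle the remainder using $\hV$ itself rather than $p$-adic limits, because $\hV^\infty A$ is taut square-zero with $\delta=\hV^{-1}$ on it (\Cref{infiniteVdivisibleideals}).

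For that reason I would try an alternative formulation: define $r$ algebraically by a universal formula valid in the free case. Take the derived $p$-completed free \dcart (resp. \dcartwo) ring $F$ on one generator $y$. For each candidate we then only need one element $\rho\in F$ together with the property that $\rho(\hV x) = x$ in the free object on $x$. The catch is that a set-valued natural operation $A\to A$ on derived $p$-complete algebras corresponds exactly to an element of the derived $p$-completion of the free object on one generator. So I will construct $\rho$ in the $p$-completion of $\mathbb{Z}\{y\}[V]$, using the normal form of \Cref{cons:freedCartring} for $p>2$ and \Cref{prop:RhVdecomposition} for $p=2$. In that $p$-torsionfree setting the contraction series converges classically, and naturality then follows from Yoneda.

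For $p=2$, the error term $\hV(\two\delta(x))$ is not divisible by $2$. Instead I will use that it lies one step deeper in the $\hV$-filtration, and combine this with \eqref{productsofVpowers} and \Cref{hVversusVdcartidentity}. For $x\in\hV(A)$ the simpler formula $\delta(\hV x) = x - \hV(x^2)$ holds, and there the error is $2$-adically small after one more step. I therefore expect to build $\rho$ by a two-stage correction in the free object. The main work is verifying the required vanishing there via the leading-coefficient argument of \Cref{prop:RhVdecomposition}.
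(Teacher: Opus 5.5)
Your argument for $p>2$ is correct, and it takes a different route from the paper. Both proofs use Yoneda to reduce to the $p$-completed free object on one generator. There the content is that the endomorphism $f\colon y\mapsto \hV(y)$ is surjective.

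The paper proves this surjectivity abstractly, by Nakayama. It first gets surjectivity modulo $\hV$ through the $\hV$-completion $W(B/\hV)$. It then shows that $f(A)+pA$ is a \dcart subring (\Cref{CpluspAisacartsubring}) containing the generator.

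You instead produce the preimage explicitly, as the unique fixed point of $\Phi_y(x)=\delta(y)+p^{p-2}V(x^p)$ in the classically $p$-complete, $p$-torsionfree ring $\mathbb{Z}\{y\}[V]^\wedge_p$. Here $\Phi_y(x)-\Phi_y(x')\in pA$ always, and for $n\geq 1$ a congruence modulo $p^nA$ improves to one modulo $p^{n+p-1}A$. Uniqueness of the fixed point in the free object on $x$ then forces $f(\rho)=x$. Your route gives an explicit formula and skips the mod-$\hV$ step, but it uses $p^{p-2}\in pA$ essentially.

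That is exactly what fails at $p=2$, and there your proposal is only an expectation (``I expect to build $\rho$ by a two-stage correction''). So the statement is not proved for $p=2$.

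The map $\Psi_y(x)=-\delta(y)+\hV(\two\delta(x))$ coming from \eqref{deltaVtildeidentity} is not a contraction for the $2$-adic filtration. If $x-x'=2^nz$ with $n\geq 1$, then $\delta(x)-\delta(x')\equiv 2^{n-1}F(z)$ modulo $2^n$. Since $\hV(\two F(z))=(2+\two)z$ by \eqref{hVtwoF}, the estimate degrades from $2^n$ to $2^{n-1}$. Neither the leading-coefficient bookkeeping of \Cref{prop:RhVdecomposition} nor the identities you cite address this as stated.

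The missing idea is to work with the filtration $2^m\hV(A)$. The relevant identities are:
\begin{itemize}
\item $\delta(a+b)=\delta(a)+\delta(b)-ab$;
\item $\delta(2^mh)=2^{m-1}F(h)-2^{2m-1}h^2$;
\item $F\hV=\two$ and $\two^2=4$;
\item $\two\hV(a)=2\hV(a)$ (\Cref{hVbasicidentities}).
\end{itemize}
With these one checks two things:
\begin{itemize}
\item If $x-x'\in 2^m\hV(A)$ with $m\geq 1$, then $\Psi_y(x)-\Psi_y(x')\in 2^{m+1}\hV(A)$.
\item For arbitrary $x,x'$, one has $\Psi_y^2(x)-\Psi_y^2(x')\in 2\hV(A)$. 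Indeed, the first difference is $d=\hV(\two w)$, for which \eqref{deltaVtildeidentity} gives $\two\delta(d)\equiv -4w$ modulo $2\hV(A)$, while $\two d=2d$.
\end{itemize}

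Hence the iterates converge, and the fixed point is unique, in the $2$-completed free \dcartwo ring. That ring is $2$-torsionfree and classically complete by \Cref{prop:RhVdecomposition}. Your Yoneda argument then goes through unchanged. Alternatively, the paper's Nakayama argument handles $p=2$ uniformly with $p>2$.
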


Since $\hV$ is injective, the main content of the result is that the retraction can be chosen naturally in $A$; we do not know if there is any canonical choice. 

\begin{proof}
By the Yoneda lemma, the statement is equivalent to the following assertion: 
	    Let $A$ be the $p$-completion of the free \dcart (resp.~\dcartwo) ring $\mathbb{Z}\{ x \} [\hV]$ on a single generator $x$.
		    Consider the endomorphism $f: A \to A$ of \dcart (resp.~\dcartwo) rings defined by $f(x) = \hV(x)$. 
		    Then $f$  admits a section $s: A \to A$ in the category of \dcart (resp.~\dcartwo) rings.
             Equivalently, by freeness, it suffices for $f$ to be surjective.

    We first show that $f$ is surjective after $\hV$-completion.
    In fact, again by the Yoneda lemma, this is equivalent to the statement that 
    for any $\hV$-complete \dcart (resp.~\dcartwo) ring $B$, the map $\hV: B \to B$ has a set-valued retraction. 
    But we have a natural equivalence $B \simeq W(B/\hV)$, and a natural retraction is given by $x \mapsto \hV^{-1}( x - [x_0])$ where $x_0$ is the image of $x$ in $B/\hV$.
 
    Let
    $A' = f(A) + p A \subset A$. By Nakayama's lemma, to prove surjectivity of $f$, it suffices to prove that $A' = A$. 
     By \Cref{CpluspAisacartsubring},  
    $A'  \subset A$ is a \dcart (resp.~\dcartwo) subring; when $p = 2$, it contains $\two A$.

    Finally, we claim that $A' = A$. 
    Note that $\hV(x) \in A'$ by definition. 
    It follows that 
    $\delta( \hV(x)) \in A'$. Using the expansion of $\delta(\hV(x))$ (and $pA \subset A'$ if $p > 2$ and  $\two A \subset A'$ if $p = 2$), we find 
    that $x \in A'$, as desired. Since $A$ is generated as a $p$-complete \dcart (resp.~\dcartwo) ring by $x$, 
    we conclude that $A' = A$.
\end{proof}

\begin{corollary}
    \label{inclusioninducesinclusiononquotientsmodV}
    Let $A \to B$ be an injective map of derived $p$-complete \dcart (resp.~\dcartwo if $p = 2$) rings. 
    Then the induced map $A/\hV(A) \to B/\hV(B)$ is also injective.
\end{corollary}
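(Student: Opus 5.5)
The plan is to use the natural retraction $r$ of \Cref{naturalretractionofV}. Since $r$ is natural in $A$, it commutes with any map of derived $p$-complete \dcart (resp.~\dcartwo) rings, and in particular with the given injection $\iota: A \to B$. Concretely, $\iota(r(y)) = r(\iota(y))$ for all $y \in A$. This holds because $r$ is built via the Yoneda lemma from a fixed section $s$ of the endomorphism $x \mapsto \hV(x)$ of the $p$-completed free object. Hence $r$ is given by evaluating a universal element, and is therefore compatible with all morphisms.

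Let $a \in A$ be such that $\iota(a) \in \hV(B)$, say $\iota(a) = \hV(b)$. We must show $a \in \hV(A)$. Set $a' = r(a) \in A$; the candidate is $a = \hV(a')$. Apply $\iota$ and naturality to get
\[ \iota(\hV(a')) = \hV(\iota(r(a))) = \hV(r(\iota(a))) = \hV(r(\hV(b))) = \hV(b) = \iota(a), \]
using $r \circ \hV = \mathrm{id}_B$. Since $\iota$ is injective, $\hV(a') = a$, so $a \in \hV(A)$. Thus the kernel of $A/\hV(A) \to B/\hV(B)$ vanishes.

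The only point needing care is the naturality claim itself, namely that the retraction of \Cref{naturalretractionofV} is natural with respect to all morphisms of derived $p$-complete \dcart (resp.~\dcartwo) rings, as the statement asserts. I would justify it from the Yoneda description. An element $y \in A$ corresponds to a map $\mathbb{Z}\{x\}[\hV]^\wedge_p \to A$ sending $x \mapsto y$, and $r(y)$ is the image of $s(x)$ under this map. Composing with $\iota$ gives the corresponding map for $\iota(y)$, which yields $\iota(r(y)) = r(\iota(y))$. I expect no further obstacle.
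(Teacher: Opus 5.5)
Your proof is correct and is essentially the paper's argument. The paper observes that $a \in \hV(A)$ if and only if $a = \hV(r(a))$ for the natural retraction $r$ of \Cref{naturalretractionofV}, and then concludes by naturality of $r$ and injectivity of $A \to B$, which is exactly the computation you wrote out.
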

\begin{proof}
An element $a \in A$ belongs to $\hV(A)$ if and only if $a = \hV( r(a))$, where $r$ is the retraction from \Cref{naturalretractionofV}, and similarly for an element $b \in B$; the result follows.
\end{proof}

\begin{proposition}[\dcart and \dcartwo envelopes as quotients]
    \label{envelopeisquotientexplicit}
    Let $A$ be a derived $p$-complete \dcart ring (resp.~\dcartwo ring at $p = 2$). 
   Let $I \subset A$ be an ideal containing $\hV(A)$. 

   Let
   \[
   \mathfrak{a} =
   \ker\bigl(A \to W(A/\hV) \to W(A/I)\bigr).
   \]
   Then the derived $p$-complete \dcart (resp.~\dcartwo) envelope of the pair $(A, I)$ is the derived $p$-completion of the quotient of $A$ by the \dcart (resp. \dcartwo) ideal generated by 
$y - \hV( \delta(y))$ for $y \in \mathfrak{a}$.
\end{proposition}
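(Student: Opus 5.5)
The plan is to compare $A$ with the envelope $E$ directly. By \Cref{dcartenvelopewhencontainsV}, $E$ (derived $p$-completed) is the universal derived $p$-complete \dcart (resp.~\dcartwo) ring under $A$ such that $A/\hV(A)\to E/\hV(E)$ kills the image of $I$. Write $K\subset A$ for the \dcart (resp.~\dcartwo) ideal generated by the elements $y-\hV(\delta(y))$ with $y\in\mathfrak a$, and set $B'=(A/K)^\wedge_p$. I will show three things: $K$ dies in $E$; $B'$ itself satisfies the universal property; and therefore $B'\simeq E$.

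\emph{Step 1: $K$ maps to zero in $E$.} Let $y\in\mathfrak a$ and consider its image in $E$. It maps to zero in $W(A/I)$. The $\hV$-completion of $E$ is $W(E/\hV)$ by \Cref{Vcompletecartierwitt} and \Cref{mapsbetweendcartwoandVcomplete}, and $A/I\to E/\hV$ factors the map $A/\hV\to E/\hV$. Hence $y$ maps to zero in $W(E/\hV)$, i.e.\ $y\in\hV^\infty E$. By \Cref{infiniteVdivisibleideals}, $\delta$ and $\hV$ are mutually inverse on $\hV^\infty E$, so $y=\hV(\delta(y))$ in $E$. Thus $A\to E$ factors through $B'$, since $E$ is derived $p$-complete and the kernel is a \dcart (resp.~\dcartwo) ideal.

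\emph{Step 2: $B'$ satisfies the envelope condition, i.e.\ $I$ maps into $\hV(B')$.} First I check that $K$ is stable under the operations and that $\mathfrak a$ is preserved. For $y\in\mathfrak a$, $\delta(y)$ also dies in $W(A/I)$, since that map is a $\delta$-map. Hence $\hV\delta(y)\in\mathfrak a$, and so $y-\hV\delta(y)\in\mathfrak a$. It then follows that $\mathfrak a$ maps into $\hV(B')$: in $B'$ each $y\in\mathfrak a$ equals $\hV(\delta(y))$. The remaining point is to show that an arbitrary $y\in I$ lies in $\hV(B')+\operatorname{im}(\mathfrak a)$. Its image in $W(A/I)$ has zeroth component $0$, so it equals $V(u)$. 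I would lift $u$ through the surjection $W(A/\hV)\to W(A/I)$. I would then use that $A\to W(A/\hV)$ is an isomorphism modulo every $\hV^n$ to approximate the lift successively by elements $a_n\in A$, so that $y-\hV(a_0+\hV(a_1)+\cdots)$ is congruent to an element of $\mathfrak a$.

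The hard part of Step 2 is convergence of this approximation. Here I would use the natural retraction $r$ of \Cref{naturalretractionofV} to make the choices canonical, and derived $p$-completeness together with the products $\hV^i\cdot\hV^j\subset p^{\min(i,j)}$ (cf.~\eqref{productsofVpowers}) to control the error terms. If this fails, the fallback is to prove directly that $E/\hV E$ is a quotient of $A/I$ and to use \Cref{inclusioninducesinclusiononquotientsmodV} instead.

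\emph{Step 3: conclude.} Once Step 2 holds, the universal property gives a map $E\to B'$ under $A$. Both composites $E\to B'\to E$ and $B'\to E\to B'$ are maps under $A$. Since $A\to B'$ is surjective up to $p$-completion, and $E$ is generated by the image of $A$ (which follows from the uniqueness in the universal property), both composites are the identity. Hence $E\simeq B'$.

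I expect the main obstacle to be Step 2, namely showing that every element of $I$, and not just of $\mathfrak a$, becomes $\hV$-divisible in the quotient. Concretely, this is the convergence of the successive-approximation argument in the derived $p$-complete but possibly non-separated setting.
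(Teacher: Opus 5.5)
Your Steps 1 and 3 are sound and match the paper's argument: $\mathfrak a$ lands in $\hV^\infty$ of any admissible target, and there $\hV\delta=\mathrm{id}$ by \Cref{infiniteVdivisibleideals}. The gap is in Step 2, at the point you flag yourself. You need $I\subset \mathfrak a+\hV(A)$, and the successive-approximation argument cannot give this.

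The ring $A$ is not $\hV$-adically complete, so the series $a_0+\hV(a_1)+\hV^2(a_2)+\cdots$ has no meaning in $A$. Derived $p$-completeness does not rescue it, because $\hV^n(A)$ is not $p$-adically small: for $p>2$, $V^n(1)\in\mathbb{Z}[V]^\wedge_p$ is never divisible by $p$. Likewise \eqref{productsofVpowers} only controls products of two elements of $\hV$-ideals. A finite stage of your approximation only shows that $y$ lies in $\hV(A)$ plus the preimage of $V^nW(A/I)$. That is strictly weaker than $y\in\hV(A)+\mathfrak a$. The real question is whether an element of the subring $A/\mathfrak a\subset W(A/I)$ that is $V$-divisible in $W(A/I)$ is already $\hV$-divisible in $A/\mathfrak a$. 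This is an exact statement about a non-complete subring, and no approximation reaches it. Your fallback, showing that $E/\hV E$ is a quotient of $A/I$, targets the wrong statement: it says nothing about whether $I$ lands in $\hV(B')$.

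The missing step is to apply \Cref{inclusioninducesinclusiononquotientsmodV} to the injection $A/\mathfrak a\hookrightarrow W(A/I)$ of \dcart (resp.~\dcartwo) rings. This shows that $A/(\mathfrak a+\hV A)\to W(A/I)/\hV= A/I$ is injective. It is clearly surjective, so $I=\mathfrak a+\hV(A)$.

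Equivalently, use the retraction $r$ of \Cref{naturalretractionofV} directly, with no approximation. For $y\in I$, the image of $y$ in $W(A/I)$ is $\hV(u)$ for some $u$. Naturality of $r$ along $A\to W(A/I)$ gives $r(y)\mapsto u$, hence $y-\hV(r(y))\in\mathfrak a$.

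Once $I=\mathfrak a+\hV(A)$ is known, your observation that $y=\hV(\delta(y))$ in $B'$ for $y\in\mathfrak a$ shows that $I$ maps into $\hV(B')$. Your Step 3 then completes the proof, and this is exactly the paper's argument.
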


Before derived $p$-completion, the map from $A$ to the \dcart (resp.~\dcartwo) envelope of $(A, I)$ as in the statement need not be surjective,  cf.~\Cref{envelopeofcartentirering}.

\begin{proof}
    We first show that $\mathfrak{a}$ is not too small. Consider the factorization of $A \to W(A/I)$ through its image $A/\mathfrak{a}$.  
    The map $A/\hV \to A/( \mathfrak{a}, \hV)$ is surjective. Moreover,
$A/(\mathfrak{a}, \hV) \to A/I \simeq W(A/I)/\hV$ is injective by
\Cref{inclusioninducesinclusiononquotientsmodV}. Hence
$A/(\mathfrak{a}, \hV) \simeq A/I$.

Let $A'$ be the derived $p$-completion of the quotient of $A$ by the \dcart (resp.~\dcartwo) ideal generated by  $y - \hV( \delta(y))$ for $y \in \mathfrak{a}$ as in the statement. 
It follows from the previous paragraph that $A'/\hV \simeq A/I$. 

To complete the proof, it suffices to show that any map $A \to B$ of derived $p$-complete \dcart (resp.~\dcartwo) rings such that $I$ maps to zero in $B/\hV$ factors through $A'$. 
First, $\mathfrak{a}$ maps to $\bigcap_{n \geq 1} \hV^n(B)$, as one sees by passing to $\hV$-completion, after which one obtains the map $W(A/\hV) \to W(A/I) \to W(B/\hV)$. 
Therefore, the classes
$y - \hV( \delta(y))$ for $y \in \mathfrak{a}$  map to zero in $B$ thanks to \Cref{infiniteVdivisibleideals} and the previous observation as well. The result follows. 
\end{proof}

In certain cases we can obtain simpler and more explicit descriptions. The strategy is to identify certain elements that necessarily map to zero in the \dcart (resp.~\dcartwo) envelope. In some cases, one may show that the resulting quotient is already a \dcart (resp.~\dcartwo) ring.

Recall that an element $y$ of a $\delta$-ring is said to be \emph{rank one} if $\delta(y) = 0$. 
\begin{proposition}
\label{norankoneinVideal}
Let $A$ be a derived $p$-complete \dcart ring (resp.~\dcartwo ring at $p = 2$). Then there are no nonzero rank one elements in the ideal $\hV(A) \subset A$.
\end{proposition}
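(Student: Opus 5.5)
The plan is to show that any rank one element $y\in\hV(A)$ lies in the infinitely $\hV$-divisible ideal $I=\hV^\infty A=\bigcap_{i\geq 0}\hV^i(A)$. Then \Cref{infiniteVdivisibleideals} applies: $I^2=0$, and $\delta$ and $\hV$ are mutually inverse on $I$. Vanishing then follows formally: if $y=\hV(x)$ with both $x,y\in I$, then $0=\delta(y)=\hV^{-1}(y)=x$, so $y=0$. So the whole proof reduces to a divisibility statement about the element $x$ with $y=\hV(x)$.

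\emph{Case $p>2$.} Write $y=V(x)$. The identity \eqref{deltaVidentity} together with $\delta(y)=0$ gives
\[
x=p^{p-2}V(x^p).
\]
Raise this to the $p$th power and use the product formula $V(a)^p=p^{p-1}V(a^p)$, which follows from the projection formula. This gives $x^p=p^{p(p-2)+p-1}V(x^{p^2})$. Substituting back yields $x=p^{c_2}V^2(x^{p^2})$. Repeating this inductively gives
\[
x=p^{c_n}V^n(x^{p^n})\quad\text{for all } n,
\]
so $x\in I$. Then $x^p\in I^2=0$, and the displayed identity gives $x=0$ directly, hence $y=0$. Derived $p$-completeness enters only through \Cref{infiniteVdivisibleideals}, which is where $I^2=0$ comes from.

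\emph{Case $p=2$.} Write $y=\hV(x)$. Identity \eqref{deltaVtildeidentity} gives $x=\hV(\two\,\delta(x))$, so $x\in\hV(A)$. I would then bootstrap $x\in\hV^n(A)$ by induction on $n$:
\begin{itemize}
\item If $x\in\hV^n(A)$ with $n\geq1$, then \Cref{deltaonhVcongruence} only gives $\delta(x)\in\hV^{n-1}(A)$. This yields $x=\hV(\two\delta(x))\in\hV^n(A)$ again, with no gain.
\item So one has to squeeze out one extra level. Write $x=\hV(s)$ and use \Cref{hVtwodeltahVidentity}, namely $\two\delta(\hV(s))=2s-\hV(\two s^2)$. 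This gives
\[
x=\hV(2s)-\hV^2(\two s^2).
\]
Here $\hV(2s)=\hV(\two s)+\hV((2-\two)s)$. By \Cref{hVbasicidentities}, $(2-\two)$ kills $\hV(A)$, which should let one absorb this term once $s$ itself lies in $\hV(A)$.
\item Alternatively, one can exploit $2x=\two x$ on $\hV(A)$ together with \Cref{hVversusVdcartidentity}. Since $x\in\hV(A)$, that lemma gives $\delta(\hV x)=x-\hV(x^2)$, so one can mimic the $p>2$ computation, with $x^2$ in place of $x^p$, and push $x$ into $\hV^n(A)$ for every $n$.
\end{itemize}
Once $x\in I$, we have $\delta(x)=\hV^{-1}(x)$ on $I$, and $\two$ acts as $2$ on $I$. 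The relation $x=\hV(\two\delta(x))$ then reads $x=2x$, so $x=0$.

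The main obstacle is the $p=2$ bootstrapping step $x\in\hV^n(A)\Rightarrow x\in\hV^{n+1}(A)$. Plain congruence bookkeeping loses exactly one level of $\hV$, so the extra level must come from the precise identities \Cref{hVversusVdcartidentity} and \Cref{hVtwodeltahVidentity}. If this gets unwieldy, a fallback is the general principle of \Cref{generalprincipleforcheckingidentities}. One would pass to a universal example (the $2$-completed free \dcartwo ring on $x$ with the relation $\delta(\hV x)=0$ imposed) and check the needed identity $x=2^{c_n}\hV^n(\cdots)$ there.
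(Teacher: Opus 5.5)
Your proof is correct, and it takes a different route from the paper's.

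First, the step you flag as the main obstacle at $p=2$ is already settled by your third bullet. The first two bullets and the fallback via \Cref{generalprincipleforcheckingidentities} can therefore be dropped. Since $x=\hV(\two\delta(x))\in\hV(A)$, \Cref{hVversusVdcartidentity} gives $0=\delta(\hV x)=x-\hV(x^2)$, that is, $x=\hV(x^2)$. Each $\hV^n(A)$ is an ideal by the projection formula. So $x\in\hV^n(A)$ implies $x^2\in\hV^n(A)$, hence $x\in\hV^{n+1}(A)$. Thus $x\in I$, and then $x=\hV(x^2)=0$ because $I^2=0$; your ending via $x=2x$ also works. The same one-line induction, starting from $x=p^{p-2}V(x^p)$, replaces the explicit exponent bookkeeping in your $p>2$ case.

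The paper argues through the $\hV$-completion instead. The image of $\hV(x)$ in $\widehat A_{\hV}\simeq W(A/\hV)$ is a rank one element lying in $VW(A/\hV)$. It is therefore zero, because rank one Witt vectors are Teichm\"uller lifts by Joyal's theorem. Hence $\hV(x)\in\bigcap_n\hV^n(A)$, and injectivity of $\delta$ on that ideal (\Cref{infiniteVdivisibleideals}) finishes the argument.

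The two approaches buy different things:
\begin{itemize}
\item The paper's route is uniform in $p$ and needs no special $\delta\hV$ computation. It does rely on identifying $\hV$-complete objects with Witt vectors (at $p=2$ through \Cref{comparisonofdcartanddcartwoonVcompleteor2invertible}) and on classifying the rank one elements of $W(R)$.
\item Your route stays inside $A$. It uses only the $\delta\hV$ identities and the vanishing $I^2=0$ from \Cref{infiniteVdivisibleideals}, which is where derived $p$-completeness enters in both proofs. It also gives a slightly stronger intermediate fact: $x$ itself, not just $\hV(x)$, is infinitely $\hV$-divisible.
\end{itemize}
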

\begin{proof}
    For any ring $R$, the rank one elements in the $\delta$-ring $W(R)$ are exactly the elements of the form $[r]$ for some $r \in R$; this follows easily from Joyal's theorem. In particular, there are no nonzero rank one elements in the image of $V$ (resp. $\hV$). 

    Now let $A$ be an arbitrary derived $p$-complete \dcart ring (resp.~\dcartwo ring at $p = 2$), and let $x \in A$ be such that $\hV(x)$ is rank one.
   It follows from the previous paragraph that $\hV(x)$ maps to zero in the $\hV$-completion of $A$, and hence belongs to $\bigcap_{n \geq 1} \hV^n(A)$.
    However, $\delta$ is an isomorphism on this ideal by \Cref{infiniteVdivisibleideals}, implying $\hV(x) =0 $ and then $x =0$. 
\end{proof}

\begin{corollary}
    \label{modoutbyrankoneelements}
    Let $A$ be a derived $p$-complete \dcart ring (resp.~\dcartwo ring at $p = 2$) such that Frobenius $F: A \to A$ is surjective. 
    Let $(x_i)_{i \in I}$ be a collection of rank one elements of $A$. Then the derived $p$-completion of the \dcart (resp.~\dcartwo) envelope of the pair $(A, (x_i)_{i \in I})$ is the derived $p$-completed quotient of $A$ by the ideal generated by $\hV^j(x_i)$ for $j \geq 0$ and $i \in I$.
\end{corollary}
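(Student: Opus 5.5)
The plan is to show that the ideal $K \subset A$ generated by $\hV^j(x_i)$ for $j \geq 0$, $i \in I$, is already a \dcart (resp.~\dcartwo) ideal. Then the derived $p$-completion $A'$ of $A/K$ is a derived $p$-complete \dcart (resp.~\dcartwo) ring under $A$ in which each $x_i$ lies in $\hV(A')$, because $x_i$ is congruent to $\hV^1(x_i)$ up to $K$. Conversely, I will show that any map $A \to B$ to a derived $p$-complete \dcart (resp.~\dcartwo) ring with $x_i \in \hV(B)$ kills every $\hV^j(x_i)$. Together these identify $A'$ with the derived $p$-completed envelope.

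\textbf{Ideal structure of $K$.} The ideal $K$ is clearly stable under $\hV$. For $F$, note that $F(\hV^j(x_i)) = p\hV^{j-1}(x_i)$ (resp.~$\two\hV^{j-1}(x_i)$) for $j \geq 1$, and $F(x_i) = x_i^p$ since $x_i$ has rank one. Both lie in $K$, and $F(aK) \subset F(a)F(K)$, so $F(K) \subset K$. For $\delta$, it suffices to check the generators, since $\delta(ak)$ and $\delta(k+k')$ lie in $K$ whenever $\delta(k), \delta(k') \in K$ by the usual identities, together with $\delta(a)\phi$-type terms that are multiples of elements of $K$. On generators, $\delta(x_i) = 0$, and for $j \geq 1$ we have $\delta(\hV^j x_i) = \hV^{j-1}x_i - p^{p-2}\hV(\cdots) \in K$ by \eqref{deltaVidentity}. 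At $p=2$ one uses instead \eqref{deltaVtildeidentity}, $\delta(\hV y) = -y + \hV(\two\delta(y))$, inductively in $j$. The case $j=1$ gives $-x_i$. Hence $K$ is a \dcart (resp.~\dcartwo) ideal, and $K + p^nA$ is one as well, so the derived $p$-completed quotient inherits the structure.

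\textbf{Universal property.} Let $f: A \to B$ with $f(x_i) = \hV(b_i)$. Since $F$ is surjective on $A$, I first choose $y_i \in A$ with $F(y_i) = x_i$. Then $f(x_i) = F(f(y_i))$, and the projection formula gives $\hV^j$-compatibility: $\hV^j(f(x_i)) = \hV^j(F^j(\ldots))$ is a multiple of $f$ applied to $x_i$. The key step is that $f(x_i) = 0$ itself. Indeed, $f(x_i)$ is a rank one element of $B$ lying in $\hV(B)$, so \Cref{norankoneinVideal} forces $f(x_i) = 0$. Then $f(\hV^j x_i) = \hV^j(f(x_i)) = 0$, so $f$ factors through $A/K$, and hence through its derived $p$-completion by completeness of $B$. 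Uniqueness is clear because $A \to A'$ is surjective after completion. As a sanity check on the convention, the envelope condition is "image in $\hV$", and we have shown this forces vanishing, so the quotient really kills $x_i$. This is consistent with $x_i \in K$.

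\textbf{Main obstacle.} The main obstacle is the $\delta$-stability computation at $p=2$, including verifying that $\delta(ak)$ lands in $K$ for a general $a$. This uses $\delta(ak) = a^p\delta(k) + k^p\delta(a) + p\delta(a)\delta(k)$, which lies in $K$ once $\delta(k) \in K$. A second point to check is that derived $p$-completion of $A/K$ is computed correctly. Surjectivity of $F$ is only a convenience here: the essential input is \Cref{norankoneinVideal}.
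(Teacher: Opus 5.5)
Your strategy is the paper's. You show that the ideal $K$ generated by the $\hV^j(x_i)$ is already a \dcart (resp.~\dcartwo) ideal. You then use \Cref{norankoneinVideal} to see that any map $f\colon A\to B$ with $f(x_i)\in\hV(B)$ kills the rank one element $f(x_i)$, and hence every $\hV^j(x_i)$. That half is fine.

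\textbf{The gap.} You assert that $K$ is ``clearly stable under $\hV$''. A general element of $K$ is $\sum a\,\hV^j(x_i)$ with $a\in A$ arbitrary, and nothing you wrote puts $\hV(a\,\hV^j(x_i))$ back into $K$. This is exactly where surjectivity of $F$ is needed. Writing $a=F(a')$, the projection formula gives $\hV(a\,\hV^j(x_i))=\hV(F(a')\,\hV^j(x_i))=a'\,\hV^{j+1}(x_i)\in K$. Your $\delta$-computation on generators, as written, also invokes $\hV(K)\subset K$. The reason is that \eqref{deltaVidentity} and \eqref{deltaVtildeidentity} produce terms $\hV(k)$ with $k\in K$ not a generator, e.g.\ $\hV\bigl(\two\,\delta(\hV^{j-1}x_i)\bigr)$. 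You do use surjectivity of $F$ in the universal-property step, where it is not needed, but not in the stability step, where it is.

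\textbf{The hypothesis is essential.} Your closing remark that surjectivity of $F$ is ``only a convenience'' is wrong: without it the statement fails. Take $p>2$, and let $B_0$ be the $p$-completion of the $\delta$-ring $\mathbb{Z}[x,y,\delta(y),\delta^2(y),\dots]$ with $\delta(x)=0$. Let $A$ be the $p$-completed free \dcart ring on $B_0$.
\begin{itemize}
\item From $V^i(b)V^j(c)=p^iV^j(F^{j-i}(b)c)$ for $i\le j$, the $V^1$-component of any element of $K$ lies in $F(B_0)x+x^pB_0+pB_0$.
\item This group does not contain $xy$ even modulo $p$.
\item Hence $V(xy)\notin K+pA$. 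Since derived $p$-completion does not change the reduction modulo $p$, $V(xy)$ survives in the completed quotient of $A$ by $K$.
\item But $V(xy)$ dies in the envelope, because $x$ does.
\end{itemize}

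A smaller slip: $K+p^nA$ is not a $\delta$-ideal, since $\delta(p^n)=p^{n-1}-p^{np-1}$. The $\delta$-structure on the completed quotient should instead come from completing the \dcart ring $A/K$.
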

\begin{proof} 
    By \Cref{norankoneinVideal}, 
   the elements $x_i, \hV(x_i), \hV^2(x_i), \ldots$  map to zero in the derived $p$-complete \dcart envelope of $(A, (x_i)_{i \in I})$. 
    Our assumptions imply that this ideal is stable under $\delta$ and $\hV$, and thus the quotient is a \dcart (resp.~\dcartwo) ring. The result follows. 
\end{proof} 

\subsection{Semiperfect \dcart and \dcartwo rings}

Recall that a $\delta$-ring is said to be \emph{semiperfect} if the Frobenius map is surjective. 

\begin{proposition}[Semiperfect \dcart rings] \label{semiperfectdeltaCartierringclassification}
The category of semiperfect \dcart rings  is equivalent, via the functor sending $A \in \ddcart $ to $(A, V(1))$,   to the category of pairs $(A, \xi)$ where $A$ is a semiperfect $\delta$-ring and $\xi \in A$ is an element satisfying the following conditions:
\begin{enumerate}
    \item $F( \xi) = p$. 
    \item $\delta(\xi) = 1 - p^{p-2} \xi$. 
    \item $\xi\cdot \ker(F) = 0 \subset A$
\end{enumerate}
In the $p$-torsionfree case, the second condition can be omitted. 
\end{proposition}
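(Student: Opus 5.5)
The plan is to show the functor is well defined, construct an inverse by defining $V$ from $\xi$, and check that the two constructions are mutually inverse.

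\emph{Well-definedness.} Given a semiperfect \dcart ring $A$, set $\xi = V(1)$. Then $F(\xi) = FV(1) = p$. The identity \eqref{deltaVidentity} with $a=1$ gives $\delta(\xi) = 1 - p^{p-2}V(1) = 1-p^{p-2}\xi$. For (3), take $k\in\ker F$. The projection formula gives $\xi k = V(1)k = V(F(k)) = 0$. Since a \dcart map must send $V(1)$ to $V(1)$, the functor $A\mapsto (A,V(1))$ is a functor to pairs.

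\emph{Inverse construction.} Given $(A,\xi)$, define $V(a) = \xi b$ for any $b$ with $F(b)=a$; such $b$ exists by semiperfectness. This is forced, because the projection formula gives $V(F(b)) = bV(1) = b\xi$. It is well defined by condition (3): two choices of $b$ differ by an element of $\ker F$, which $\xi$ kills. Additivity is clear.

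\emph{Verification of the axioms.} For $a=F(b)$ we have $F(V(a)) = F(\xi)F(b) = pa$. The projection formula holds because if $y = F(b)$, then $F(x)y = F(xb)$, so $V(F(x)y) = \xi x b = xV(y)$. The main step is the $\delta V$ identity: for $a=F(b)$ we must show
\[\delta(\xi b) = F(b) - p^{p-2}\xi b^p,\]
using $V(a^p) = \xi b^p$ since $F(b^p)=a^p$. Expand with the product rule:
\[\delta(\xi b) = \xi^p\delta(b) + b^p\delta(\xi) + p\delta(\xi)\delta(b).\]
Substitute $\delta(\xi) = 1-p^{p-2}\xi$ and note $\xi^p + p\delta(\xi) = F(\xi) = p$. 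This gives
\[\delta(\xi b) = p\delta(b) + b^p - p^{p-2}\xi b^p = F(b) - p^{p-2}\xi b^p,\]
as required. So condition (2) is exactly what is needed, and no torsion-freeness enters.

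In the $p$-torsionfree case, condition (2) follows from (1). Indeed $p\delta(\xi) = F(\xi) - \xi^p = p - \xi^p$, so it suffices to show $\xi^p = p^{p-1}\xi$ and then divide by $p$. Apply $F$ to both sides; they agree since both give $p^p$. But $F$ need not be injective, so this argument does not close. Instead use $\xi\cdot\ker F = 0$: the element $k = \xi^p - p^{p-1}\xi$ lies in $\ker F$, hence $\xi k = 0$. I expect this step to be the main obstacle. Rather than fight it directly, I would fall back on \Cref{dcart:redundant}: in the torsion-free case the $\delta V$ identity is automatic. Moreover, the computation above shows that condition (2) is equivalent to the $\delta V$ identity at $b=1$. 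Hence (2) is automatic once $V$ is defined and satisfies $FV=p$ and the projection formula.

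\emph{Mutual inverse and fully faithfulness.} Round trips are identities: $V(1)$ recovers $\xi$ (take $b=1$), and $V$ is determined by $\xi$ via the forced formula above. On morphisms, a $\delta$-map $f$ with $f(\xi)=\xi'$ satisfies $f(V(F(b))) = f(\xi b) = \xi' f(b) = V(F(f(b)))$. So $f$ commutes with $V$ on all of $A=F(A)$, which gives fully faithfulness.
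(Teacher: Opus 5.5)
Your proof is correct and follows the paper's route: define $V(a)=\xi F^{-1}(a)$ (well defined by (3)), verify the $\delta V$ identity with the product formula for $\delta$, and handle (2) in the torsionfree case via \Cref{dcart:redundant}, exactly as the paper does. Your grouping $\xi^p+p\delta(\xi)=F(\xi)=p$ is a slightly slicker version of the paper's use of $\xi^p=p^{p-1}\xi$, and the direct attempt you abandoned does close if you apply (3) to $\xi-p\in\ker F$ instead: this gives $\xi^2=p\xi$, hence $\xi^p=p^{p-1}\xi$ and $p\delta(\xi)=p-p^{p-1}\xi$, and (2) follows by dividing by $p$.
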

\begin{proof}
We construct the functor in the inverse direction, and leave it to the reader to check that both functors are inverse equivalences. 
Given a pair $(A, \xi)$ as above, we construct a \dcart ring structure on $A$ by defining $V: A \to A$ to send $x \in A$ to $\xi F^{-1}(x)$; this is well-defined as an additive map $A \to A$ by the third condition, and satisfies $FV = p$. 

The only condition to verify is the equation for $\delta ( V(a))$ for $a \in A$. 
This follows from the following calculation, via the expression for $\delta(xy)$ and $\xi^p = p^{p-1} \xi$:
\begin{align*}
    \delta(V(a)) & = \delta( \xi F^{-1}(a)) \\
    & = \delta(\xi) a + \xi^p \delta(F^{-1}(a)) \\
    & = (1 - p^{p-2} \xi) a + p^{p-1} \xi \delta(F^{-1}(a)) \\
   & = (1 - p^{p-2} \xi) a + p^{p-2} \xi ( a  - F^{-1}(a)^p) = a - p^{p-2} \xi F^{-1}(a)^p. 
\end{align*}
In the $p$-torsionfree case, the condition for $\delta V$ is redundant. 
\end{proof}

\begin{proposition}[Semiperfect \dcartwo rings] \label{semiperfectdeltaCartierringclassification2}
For $p = 2$, the category of semiperfect \dcartwo rings is equivalent, via the functor sending $A \in \ddcartwo$ to $(A, \hV(1))$,   to the category of pairs $(A, \xi)$ where $A$ is a semiperfect $\delta$-ring and $\xi \in A$ is an element satisfying the following conditions:
\begin{enumerate}
    \item $F( F( \xi)) = 2$. 
    \item $\delta(\xi) = -1$.
    \item $\xi\cdot \ker(F) = 0 \subset A$.

\end{enumerate}
In the 2-torsionfree case, the second condition can be replaced with $\xi^2 = 2 + F( \xi)$. 
\end{proposition}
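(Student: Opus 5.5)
The plan is to mirror the proof of \Cref{semiperfectdeltaCartierringclassification}. In one direction, given a semiperfect \dcartwo ring $A$, we set $\xi=\hV(1)$ and check the three conditions. First, $F(\xi)=\two$ and $F(\two)=2$ (a consequence of $\two^2=4$ and $\delta(\two)=-1$), so $F(F(\xi))=2$. Second, $\delta(\xi)=\delta(\hV(1))=-1+\hV(\two\delta(1))=-1$ by \eqref{deltaVtildeidentity}, since $\delta(1)=0$. Third, if $F(y)=0$, then the projection formula gives $y\xi=y\hV(1)=\hV(F(y))=0$. Moreover, because $F$ is surjective, the projection formula gives $\hV(x)=\hV(F(x')\cdot 1)=x'\hV(1)=\xi F^{-1}(x)$. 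So $\hV$ is determined by $\xi$, which gives faithfulness, and fullness follows because a $\delta$-map preserving $\xi$ commutes with $\xi F^{-1}(-)$.

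For the inverse functor, given a pair $(A,\xi)$ we define $\hV(x)=\xi F^{-1}(x)$; this is well defined by condition (3). We put $\two=F(\xi)$, and then verify the axioms in turn.
\begin{enumerate}
\item Projection formula: $a\hV(b)=a\xi F^{-1}(b)=\xi F^{-1}(F(a)b)$.
\item Since $F(\hV(a))=F(\xi)a=\two a$, we have $F\hV=\two$, and $F(\hV(1))=\two$ is consistent.
\item $\delta(\two)=-1$: apply $F$ to $\delta(\xi)=-1$ to get $\delta(F\xi)=F(\delta\xi)=-1$.
\item $\two^2=4$: from $\delta(\two)=-1$ we get $2=F(\two)=\two^2+2\delta(\two)=\two^2-2$.
\item The key identity $\delta(\hV(a))=-a+\hV(\two\delta(a))$: compute $\delta(\xi b)$ with $b=F^{-1}(a)$, so that $\delta(\xi b)=\delta(\xi)F(b)+\xi^2\delta(b)=-a+\xi^2\delta(b)$ (for $p=2$, $\delta(xy)=x^2\delta(y)+y^2\delta(x)+2\delta(x)\delta(y)$, and with $\delta(\xi)=-1$ the terms collapse to $\delta(\xi)F(b)+\xi^2\delta(b)$, as in the $p>2$ computation). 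It then remains to show $\xi^2\delta(b)=\hV(\two\delta(a))=\xi F^{-1}(\two\,\delta(F b))=\xi\cdot\xi F^{-1}(F\delta(b))$, using $\delta F=F\delta$ and $\two=F(\xi)$. By condition (3), $\xi F^{-1}(F\delta(b))=\xi\delta(b)$, so the right-hand side is $\xi^2\delta(b)$, as required.
\end{enumerate}
Finally, $\xi^2=F(\xi)-2\delta(\xi)=\two+2$, which is the consistency identity of \Cref{hVof2tilde}. The semiperfectness and the fact that the two functors are mutually inverse are then formal.

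The main obstacle is getting the $p=2$ formula for $\delta$ of a product right, so that the $2\delta(\xi)\delta(b)$ cross term is correctly absorbed. If that bookkeeping becomes messy, I would instead invoke \Cref{generalprincipleforcheckingidentities} in a $2$-torsionfree model.

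For the last sentence in the $2$-torsionfree case, note that $\xi^2=2+F(\xi)$ is equivalent to $2\delta(\xi)=F\xi-\xi^2=-2$, and hence to $\delta(\xi)=-1$ by $2$-torsionfreeness. Equivalently, \Cref{hVof2tilde} shows that the $\delta\hV$ identity is redundant once $\hV(\two)=2+\two$, and $\hV(\two)=\xi F^{-1}(F\xi)=\xi^2$ (again using condition (3)).
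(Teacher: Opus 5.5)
Your proposal is correct and follows the paper's proof. You define $\hV(a)=\xi F^{-1}(a)$, well defined by condition (3), and obtain the $\delta\hV$ identity by combining $\delta(\xi b)=\xi^2\delta(b)+\delta(\xi)F(b)$ with $\hV(\two\delta(a))=\xi^2\delta(F^{-1}(a))$; the collapse in the first formula holds for any $\xi$, not only when $\delta(\xi)=-1$. Your additional checks, which the paper leaves to the reader, are also sound: the axioms for $\two$, full faithfulness via $\hV(x)=\xi F^{-1}(x)$, and the $2$-torsionfree reformulation.
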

\begin{proof} 
    Again, we construct the functor in the inverse direction, and leave it to the reader to check that both functors are inverse equivalences.
Given a pair \((A,\xi)\) as above, we construct a \dcartwo ring structure on \(A\) by defining \(\hV:A\to A\) to send \(a \in A\) to \(\xi F^{-1}(a)\); this is well-defined as an additive map \(A\to A\) by the third condition.  It satisfies \(F\hV=\two\), where \(\two=F(\xi)\), and the projection formula \(b\hV(a)=\hV(F(b)a)\).

The only remaining condition to verify is the equation for \(\delta(\hV(a))\) for \(a \in A\).  Since \(F\) commutes with \(\delta\), we have \(\delta(a)=F(\delta(F^{-1}(a)))\).  Hence
\[
\hV(\two\delta(a))=\hV(F(\xi)F(\delta(F^{-1}(a))))=\xi^2\delta(F^{-1}(a)).
\]
The desired identity now follows from the following calculation:
\[
\begin{aligned}
\delta(\hV(a))
&=\delta(\xi F^{-1}(a)) \\
&=\xi^2\delta(F^{-1}(a))+F^{-1}(a)^2\delta(\xi)+2\delta(\xi)\delta(F^{-1}(a)) \\
&=\xi^2\delta(F^{-1}(a))-F^{-1}(a)^2-2\delta(F^{-1}(a)) \\
&=-a+\xi^2\delta(F^{-1}(a)) \\
&=-a+\hV(\two\delta(a)).
\end{aligned}
\]
\end{proof}

\begin{example}
    \label{cyclotomicdCartring}
Let us consider the semiperfect $\delta$-ring 
$\mathbb{Z}[q^{\pm 1/p^\infty}]/(q-1)$ with the $\delta$-structure determined by $F( q^{1/p^i}) = q^{1/p^{i-1}}$ for $i \geq 1$. 
We take $\xi = [p]_{q^{1/p}} \stackrel{\mathrm{def}}{=} \frac{q-1}{q^{1/p}-1}$, which satisfies $F(\xi) = [p]_q \equiv p \mod (q-1)$. Moreover, $\xi$ annihilates $\ker F = (q^{1/p} - 1)$. 
Since this quotient is $p$-torsionfree, the second condition in
\Cref{semiperfectdeltaCartierringclassification} is automatic. Hence the
$\delta$-ring $\mathbb{Z}[q^{\pm 1/p^\infty}]/(q-1)$ 
acquires the structure of a \dcart ring, with  $V(x) = {[p]_{q^{1/p}}} ( F^{-1}(x))$. 
\end{example}

 \subsection{$\chW$ for perfectoid rings}

In the remainder of this section, we give an ``explicit'' formula for $\chW$ of a semiperfectoid ring. The starting point is the case of a perfectoid ring, which is most explicit when the ring contains a compatible system of $p$-power roots of unity.

\begin{construction}[Recollections on perfectoids]
Let $R$ be a perfectoid ring \cite[\S 3.2]{bms1}, so that $R \simeq W(R^{\flat})/\ker \theta$ for $\theta: W(R^{\flat}) \to R$ the natural map, and $\ker \theta$ is principal, generated by a distinguished element $d \in W(R^{\flat})$.
One can always choose $d$ of the form $d= [\pi] + p u $ for $\pi \in R^{\flat}$ and $u \in W(R^{\flat})^\times$. 

We write $\ainf(R) = W(R^{\flat})$.
Throughout, we let $\varpi \in R$ denote an element such that $\varpi^p$ is a unit multiple of $p$ and such that $\varpi$ admits a compatible system of $p$-power roots $\varpi^{1/p^n}, n \geq 0$.
Such an element always exists, e.g., if $d = [\pi] + p u$ is a distinguished element generating $\ker \theta$, then we can take $\varpi = \theta([\pi^{1/p}])$.
\end{construction}

\begin{lemma} 
    \label{perfectoidrootliftinglemma}
Let $R$ be a perfectoid ring. Let $z \in \varpi^i R$ for some $i \in \mathbb{Z}_{\geq 0}$. Then there exists $z' \in  \varpi^{i/p}R$ such that 
$z'^p \equiv z \mod \varpi^{i+p + 1} R$.
\end{lemma}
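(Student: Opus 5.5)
We work in $R/\varpi^{i+p+1}R$ and approximate $z$ by a $p$-th power using the perfectoid structure, namely the surjectivity of Frobenius $R/p \to R/p$ (more precisely, $R/\varpi \xrightarrow{\phi} R/\varpi^p$ is an isomorphism). The basic tool is the standard fact for perfectoid rings: every element of $R$ is a $p$-th power modulo $pR=\varpi^pR$. More generally, for any $y\in R$ there is $y'\in R$ with $y'^p\equiv y \pmod{\varpi^p R}$, and moreover $x\mapsto x^p$ induces an isomorphism $R/\varpi R\simeq R/\varpi^pR$.

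The plan is to first handle the divisibility. Write $z=\varpi^i y$ with $y\in R$. Since $\varpi$ has a compatible system of $p$-power roots, $\varpi^{i}=(\varpi^{i/p})^p$, so it suffices to find $y'$ with $y'^p\equiv y \pmod{\varpi^{p+1}R}$ and then set $z'=\varpi^{i/p}y'$. Indeed $z'^p=\varpi^i y'^p\equiv \varpi^i y \pmod{\varpi^{i+p+1}}$, and $z'\in\varpi^{i/p}R$ as required. This reduces everything to the case $i=0$, but with the slightly improved modulus $\varpi^{p+1}$ rather than $\varpi^p$.

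For the case $i=0$, first choose $y_0$ with $y_0^p\equiv y\pmod{\varpi^p}$ and write $y=y_0^p+\varpi^p w$. Then correct: look for $y'=y_0+\varpi t$. We have $(y_0+\varpi t)^p=y_0^p+\varpi^p t^p+\sum_{0<k<p}\binom pk y_0^{p-k}\varpi^k t^k$. The middle terms are divisible by $p\varpi=\varpi^{p+1}\cdot(\text{unit})$, hence vanish modulo $\varpi^{p+1}$. So we need $t^p\equiv w\pmod{\varpi}$, which is possible because every element of $R$ is a $p$-th power modulo $\varpi^p$, hence certainly modulo $\varpi$. Then $y'^p\equiv y_0^p+\varpi^p w= y \pmod{\varpi^{p+1}}$. (If $R$ has $\varpi$-torsion, this causes no issue, since we only use congruences and explicit factorizations $z=\varpi^i y$.)

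The main point to double-check is the input fact that Frobenius is surjective on $R/p$ for a perfectoid $R$ as in \cite[\S 3.2]{bms1}, and that $p=\varpi^p\cdot$unit, so that $\binom pk\varpi^k\in\varpi^{p+1}R$ for $0<k<p$. Both are standard, so I expect no real obstacle; the only care needed is the bookkeeping of exponents, in particular making sure the correction step gains exactly one extra power of $\varpi$ and not just $\varpi^{1/p}$.
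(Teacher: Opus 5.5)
Your proposal is correct and follows the paper's route: both write $z=\varpi^i z_0$ and set $z'=\varpi^{i/p}z_0'$ with $z_0'^p\equiv z_0 \bmod p\varpi R=\varpi^{p+1}R$. The only difference is that the paper cites \cite[Lem.~3.9]{bms1} for the existence of $z_0'$, whereas you prove it directly: you lift from a $p$-th root modulo $\varpi^p$ via the correction $y_0+\varpi t$, and this bookkeeping is right.
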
 
\begin{proof} 
    Write $z = \varpi^i z_0$ for some $z_0 \in R$. There exists $z_0' \in R$ such that $z_0'^p \equiv z_0 \mod p \varpi R$ because $R$ is perfectoid, cf.~\cite[Lem.~3.9]{bms1}. 
    Then $z' = \varpi^{i/p} z_0'$ satisfies the desired congruence.
\end{proof}

In general, given a perfectoid ring $R$, the Frobenius $F: W(R) \to W(R)$ is not surjective, cf.~\cite[Ex.~5.4]{dk}. Nonetheless, we have the following result for $\chW(R)$ and $\hw(R)$. 
\begin{proposition}
\label{ainftochWsurjectiveperfectoid}
Let $R$ be a perfectoid ring. 
The natural map $\ainf(R) \to \chW(R)$ is surjective.
Moreover: 
\begin{enumerate}
    \item  $F: \hw(R) \to \hw(R)$ is surjective.  
    \item $F: \chW(R) \to \chW(R)$ is surjective. 
\end{enumerate}
\end{proposition}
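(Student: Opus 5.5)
Since $\ainf(R)=W(R^\flat)$ is a perfect $\delta$-ring, (2) follows from the surjectivity of $\ainf(R)\to\chW(R)$: the map is a $\delta$-map, so it intertwines the Frobenii, and $F$ is bijective on the source. The plan is therefore to prove (1) first and then deduce surjectivity of $\ainf(R)\to\chW(R)$.

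\emph{Surjectivity of $\ainf(R)\to \chW(R)$ given (1).} Work first with $\chW(R/p^n)$ for each $n$ and pass to the limit at the end; this is legitimate because $R$ has bounded $p$-torsion and all groups involved are derived $p$-complete. The exact sequence \eqref{thirdexactseq}, after passage to the limit, presents $\chW(R)$ as an extension of $W((R/p)_{\mathrm{red}})$ by $\hw(R)$ (completed). The composite $\ainf(R)\to W((R/p)_{\mathrm{red}})$ is surjective, since $R^\flat\to (R/p)_{\mathrm{red}}$ is a surjection of perfect rings. It therefore suffices to show that the image of $\ainf(R)$ contains $\hw(R)$. Since the image is closed under $F$, $\hV$, sums and $p$-adic limits, it is enough to hit the generators $\hV^j([z])$ with $z$ topologically nilpotent, or just $[z]$ for $z\in \varpi R$. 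For such $z$, use the fact that $R^\flat\to R/p$ is surjective to pick $t\in R^\flat$ with $\theta([t])\equiv z \bmod p$. Then $[\theta[t]]-[z]$ lies in $\hw(pR)$, and the error terms can be handled by iterating in $p$-adic level. This is where (1) enters: $\hw(pR)$ is $F$-surjective, so errors can be pushed to higher $p$-adic order (using $p\hw \supset \hw(p^3R)$ from \Cref{pdividespthree}), and the resulting corrections converge.

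\emph{Proof of (1).} Take $x=\sum_i V^i[x_i]$ with $x_i\to 0$ topologically nilpotent. By additivity, continuity and $FV=p$, it suffices to solve $F(y)\equiv x$ modulo an element of a smaller ideal, and then to iterate. For a single term $V^i[z]$ with $z\in\varpi^k R$, apply \Cref{perfectoidrootliftinglemma} to obtain $z'$ with $z'^p\equiv z \bmod \varpi^{k+p+1}$. Then $F[z']=[z'^p]$, and $[z'^p]-[z]$ lies in $\hw(\varpi^{k+1}R)$ up to a term in $p\hw$; the latter comes from the Witt-vector expression of $[a]-[b]$ when $a\equiv b$ mod a high power of $\varpi$, whose higher components are divisible by the difference. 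The $V^i$ terms with $i\geq1$ are handled by writing $V^i[z]=V^{i-1}(V[z])$ and using $V[z]=F$-preimage-type identities. More simply, note $F(V^{i}[w])=pV^{i-1}[w]$ and $p=V(\epsilon)$ in $Q$, i.e. $V[z]$ is, up to $\hw$ corrections of higher valuation, equal to $F(V^2[\ldots])$-type terms. Alternatively, take $p$-th roots of $z$ and use $V^i[z'^{p}]=V^i(F[z'])=F(V^i[z'])$ modulo $p$-multiples, thanks to $FV=VF$ up to $p$-torsion in Witt vectors of $R/p$. The filtration by $\varpi$-adic valuation of the components then makes the successive approximation converge in $\hw(R)$, which is complete for this topology.

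\emph{Main obstacle.} The hard step is to control the error term $[z'^p]-[z]$, together with the non-commutation of $F$ and $V$ in mixed characteristic, so that each approximation step strictly increases the valuation uniformly in the Witt index $i$. This uniformity is what makes the sum converge in $\hw(R)$. I would first try to prove a clean lemma of the form $F(\hw(\varpi^{k/p}R))+\hw(\varpi^{k+1}R)\supseteq \hw(\varpi^kR)$, using \Cref{lemmatorsioninWittvectors}-style polynomial identities for $FV-VF$ on Teichmüller elements. Iterating this lemma and passing to the limit then yields (1).
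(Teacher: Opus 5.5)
Your deduction of (2) from surjectivity is fine, but the deduction of surjectivity of $\alpha\colon\ainf(R)\to\chW(R)$ from (1) has a genuine gap. The image of $\alpha$ is a subring stable under $F$, but it is not formally stable under $\hV$. By the projection formula, $\hV(\alpha(a))=\alpha(F^{-1}(a))\cdot\hV(1)$. So $\hV$-stability of the image is equivalent to $\hV(1)$ lying in it. Since $\hV(1)-p\in\hw(R)$ (\Cref{pminusV1}, \Cref{uinWZ2}), this is an instance of what you are trying to prove.

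Without $\hV$-stability, reducing to Teichm\"uller elements $[z]$ is not justified. Your error term $[\theta([t])]-[z]\in\hw(pR)$ is again a general sum $\sum_i V^i[e_i]$, so the iteration is circular. Nor does (1) help here: writing an error as $F(e')$ with $e'\in\hw(R)$ puts it in the image only if $e'$ already is.

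The missing idea is that no approximation is needed at this stage. $\chW(R)$ is a derived $p$-complete \dcart (resp.~\dcartwo) ring with $\chW(R)/(\hV,p)\simeq R/p$. So \Cref{completionofperfection} identifies $\varprojlim_F\chW(R)$ with $W((R/p)^{\mathrm{perf}})=\ainf(R)$, and $\alpha$ is the projection from this inverse limit. Surjectivity of $\alpha$ therefore follows from surjectivity of $F$ on $\chW(R)$, by choosing successive $F$-preimages. Surjectivity of $F$ on $\chW(R)$ in turn follows from (1) and the exact sequence $0\to\hw(R)\to\chW(R)\to\qperf(R/p)\to0$ (the limit of \eqref{secondexactseq} over $R/p^n$), since $F$ is bijective on the perfect $\delta$-ring $\qperf(R/p)$. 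So the correct order is (1) $\Rightarrow$ (2) $\Rightarrow$ surjectivity, the reverse of yours.

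For (1), your outline is close in spirit to the paper's: root lifting via \Cref{perfectoidrootliftinglemma}, then finishing with $p\mid[p^2]$ as in \Cref{pdividespthree}, so that $\hw(\varpi^{2p+1}R)\subset p\,\hw(R)=FV\hw(R)$. However, the key approximation lemma is left unproved, and your treatment of the $V^i$-terms is wrong as written. In $W(R)$ one has $V^iF[z']-FV^i[z']=V^{i-1}\bigl((V(1)-p)[z']\bigr)$, which is not a multiple of $p$.

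The paper avoids this $F$/$V$ interaction by working in Joyal coordinates, in which $F(w)$ has coordinates $w_n^p+pw_{n+1}$. First it reduces to $x\in\hw(pR)$: $F$ on $\hw(R/p)$ just raises Witt components to $p$th powers, and $R/p$ is semiperfect. Then it solves $w_n^p+pw_{n+1}\equiv x_n \pmod{\varpi^{2p+1}}$ with $w_n\in\varpi R$ by descending induction on $n$, starting from $w_n=0$ for $n\gg0$. This makes the uniformity in the Witt index automatic.
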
 
\begin{proof} 
We start by proving that $F: \hw(R) \to \hw(R)$ is surjective, via a successive approximation argument in three stages.
    Choose an element $\varpi \in R$ such that $\varpi^p$ is a unit multiple of $p$ and which admits a compatible system of $p$-power roots.
    
Let $x \in \hw(R)$ be given. 
We want to find $y \in \hw(R)$ such that $F(y) = x$.

First, since $R/p$ is semiperfect,  we can find $v \in \hw(R)$ such that $F(v) \equiv x \mod \hw( p R)$. 
Replacing $x$ by $x - F(v)$, we can assume without loss of generality that $x \in \hw(pR)=\hw(\varpi^pR)$.

For the second step, 
suppose that $x \in \hw( \varpi^p R)$  and let $x_0, x_1, \dots \in R$ be the sequence of Joyal coordinates of $x$. 
Since $x \in \hw( \varpi^p R)$, we have $x_0, x_1, \dots \in \varpi^p R$ and $x_n \to 0$ in the $\varpi$-adic topology as $n \to \infty$. 
Via descending induction on $n$ and starting with $w_n = 0$ for $n \gg 0$, we can find a sequence $w_n \in  \varpi R$ such that
$w_n^p + p w_{n+1} \equiv  x_n \mod \varpi^{2p +1} R$ for all $n \geq 0$, cf.~\Cref{perfectoidrootliftinglemma}. 
Replacing $x$ by $x-F(w)$, where $w \in \hw(R)$ has Joyal coordinates $(w_n)_{n \geq 0}$, we can assume that $x \in \hw( \varpi^{2p + 1} R)$.

For the third and  last step, 
suppose that $x \in \hw( \varpi^{2p + 1} R)$. 
We observe that $[ \varpi^{2p+1}] \in p \hw(R)$ 
by the fact that $p \mid [p^2]$ in $W( \mathbb{Z})$ 
\cite[Lem.~4.7.3]{drinfeld}; cf.~\Cref{pdividespthree}. 
Applying this to the $V$-adic expansion of $x$, 
 it follows that $x =  px'$ for some $x' \in  \hw(R)$. 
It follows that 
$x  = F( V(x'))$ and we conclude. This completes the proof of the surjectivity of $F$ on $\hw(R)$.

The surjectivity of $F: \chW(R) \to \chW(R)$ follows from the short exact sequence
$$ 0 \to \hw(R) \to \chW(R) \to \qperf(R/p) \to 0$$
obtained by applying the short exact sequence \eqref{secondexactseq} of
Section~3 to the $p$-nilpotent rings $R/p^n$ and passing to the inverse limit;
the transition maps are surjective, so no \(\varprojlim^1\) term appears
(using \Cref{nilinvarianceofqperf} to identify
$\qperf(R/p^n)=\qperf(R/p)$), together with the surjectivity of $F$ on
$\hw(R)$ from the preceding argument. 
Finally, the surjectivity of $\ainf(R) \to \chW(R)$ follows from the surjectivity of $F$ on $\chW(R)$ and the fact that $\ainf(R) $ is the inverse limit perfection 
of $\chW(R)$ by \Cref{completionofperfection}. 
\end{proof} 

In the rest of the subsection, we identify precisely the kernel of the surjection $\ainf(R) \to \chW(R)$ for a perfectoid ring $R$.

\begin{construction}[Setup of almost ring theory]
We have natural maps
$\ainf(R) = W(R^{\flat}) \to W(R) \to W( (R/p)_{\mathrm{red}})$. 
The composite map is surjective and exhibits the target as a $p$-completely idempotent algebra object of $D( \ainf(R) )$ as a consequence of \cite[Th.~3.5.1]{BhattLuriemodp} (cf.~also \cite[Rem.~3.5.5]{BhattLuriemodp}), whence we obtain a setup of $p$-complete almost ring theory with respect to the kernel $W( \mathfrak{m}^{\flat}) \subset \ainf(R)$ of the map $\ainf(R) \to W( (R/p)_{\mathrm{red}})$. In particular, given a derived $p$-complete $\ainf(R)$-module $M$, we 
write $M_!$ for the $p$-completion of $ W(\mathfrak{m}^{\flat}) \otimes_{\ainf(R)} M$, the associated cosaturated module.

Explicitly, there exists an element $\pi \in R^{\flat}$ such that $\pi^{\sharp} \in R$ is a unit multiple of $p$; then 
$W( \mathfrak{m}^{\flat})$ is $p$-completely generated by $[\pi^{1/p^n}]$ for $n \geq 0$. 
In this case, 
$M_!$ is realized as the derived $p$-completion of the  colimit 
\begin{equation} \label{lowershriekdesc} M_! = \left(  M \xrightarrow{ [\pi^{1-1/p}] } M \xrightarrow{ [\pi^{1/p-1/p^2}] } M \xrightarrow{ [\pi^{1/p^2-1/p^3}] } \dots \right)^{\wedge}_p . \end{equation}
This reduces to the case where $M = W( R^{\flat})$, and then one can check after reducing modulo $p$, in which case this follows from a general  statement about perfect $\mathbb{F}_p$-algebras, cf.~\cite[Th.~3.1]{Aberbach_Melvin} and \cite[Lem.~3.5.4]{BhattLuriemodp}.
\end{construction}

\begin{remark}
\label{shriekpreservesptorsionfreeinclusions}
Note that description \eqref{lowershriekdesc} also shows that $W( \mathfrak{m}^{\flat})$ is $p$-completely
flat. 
As a consequence, if $M \subset M'$ is an inclusion of $p$-complete, $p$-torsionfree $W( R^{\flat})$-modules such that the cokernel is $p$-torsionfree, then the induced map $M_! \to M'_!$ is also injective with $p$-torsionfree cokernel.
\end{remark} 

\begin{construction}[The map $\Theta$]
We will use the natural commutative diagram
\begin{equation} 
\xymatrix{
W(R^{\flat}) \ar[r]^{\Theta} \ar[rd]^{\theta} & W(R) \ar[d]^{\gh_0} \\
& R 
},
\end{equation} 
where $\Theta: \ainf(R) \to W(R)$ is the unique map of $\delta$-rings compatible with the projection to $R$. 
We write $J \subset \ainf(R)$ for $J = \ker \Theta$.
\end{construction}

\begin{proposition}[{Cf.~\cite[Lem.~3.23]{bms1}}]
\label{thetaalmostsurjectiveperfectoid}
For any perfectoid $R$, the map $\Theta: \ainf(R) \to W(R)$ is almost
surjective.
\end{proposition}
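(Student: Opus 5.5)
We plan to show that the cokernel of $\Theta:\ainf(R)\to W(R)$ is almost zero, meaning it is killed by $W(\mathfrak{m}^\flat)$. Equivalently, for each $n$ we show that $[\pi^{1/p^n}]\cdot W(R)$ lies in the image of $\Theta$, up to $p$-adic closure. Since both sides are $p$-complete and $\Theta$ is a $\delta$-map, derived Nakayama should reduce this to a statement modulo $p$, or modulo a fixed power of $\varpi$. The model is the argument of \cite[Lem.~3.23]{bms1}: one uses successive approximation along the Verschiebung filtration of $W(R)$, together with the fact that $\Theta$ is compatible with $F$ and with Teichmüller lifts, $\Theta([a]) = [a^\sharp]$.

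First, we analyse the image of $\Theta$ on Verschiebung graded pieces. Every $x\in W(R)$ can be written as $\sum_i V^i[x_i]$. For each $x_i$, we use that $R/p$ is semiperfect (\Cref{perfectoidrootliftinglemma}) to choose $a_i\in R^\flat$ with $(a_i^{1/p^i})^\sharp$-type approximations. The key identity we will use is
\[ V^i[b^{p^i}] = [b]\cdot V^i(1), \]
combined with $V^i(1) = p^i$ plus a correction. More precisely, $V^i(1)\equiv p^{i}\cdot(\text{unit})$ modulo elements that are better approximable; compare \Cref{pminusV1}, and use \Cref{uinWZ2} when $p=2$. This gives
\[ [\pi^{1/p^n}]\,V^i[x_i] \approx \Theta\bigl([\pi^{1/p^n}]\,[a_i]\,p^i/u\bigr) \]
up to an error divisible by a higher power of $\varpi$ in Witt components. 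The point is that $p^i$ divided by $V^i(1)$ need not make sense integrally. This is exactly where we must multiply by $[\pi^{1/p^n}]$: it absorbs the defect $p \sim \varpi^p$ coming from the fact that $\theta([\pi]) = \pi^\sharp$ is only a unit multiple of $p$.

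Second, we iterate. Each step replaces $[\pi^{1/p^n}]x$ by an error whose Witt components are divisible by an extra factor of $\varpi^{c}$ for some fixed $c>0$. Summing the $p$-adically convergent series of preimages then produces an element of $\ainf(R)$ mapping to $[\pi^{1/p^n}]x$. The convergence uses that $\ainf(R)$ is $p$-complete and that $\Theta$ is continuous for the $(p,[\pi])$-adic topologies. Because of the $\delta$-compatibility, it suffices to treat a single graded piece $V^i[z]$ uniformly in $i$.

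The main obstacle is the uniform error estimate in the second step. We need the error term to gain a power of $\varpi$ independent of $i$, despite the growth of $V^i(1)-p^i$ in $W(R)$. We would first try to handle it with the ghost-component criterion (\Cref{ghostcrit}), applied after lifting to a $p$-torsionfree perfectoid. Failing that, we would use the divisibility $p\mid[p^2]$ (\Cref{pdividespthree}) to trade Witt-component divisibility for $p$-divisibility, in the same way as in the third stage of \Cref{ainftochWsurjectiveperfectoid}.
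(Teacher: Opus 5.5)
There is a genuine gap. The step you call the main obstacle is the whole content of the statement, and the mechanism you sketch for it cannot work as described.

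Your first step needs no almost factor. Since $R^\flat\to R/p$ is surjective, $\ainf(R)\to W(R/p)$ is surjective. The map $W(R)\to W(R/p)$ has kernel $W(pR)$, the Witt vectors with all components in $pR$. Hence $W(R)=\Theta(\ainf(R))+W(pR)$, and correspondingly $V^i(1)\equiv p^i$ modulo $W(pR)$, with no unit $u$.

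Now suppose your second step gave a gain of a fixed power of $\varpi$ in all Witt components, i.e.\ $W(\varpi^kR)\subset\Theta(\ainf(R))+W(\varpi^{k+c}R)$. Combined with $W(p^2R)\subset pW(R)$ (from $p\mid[p^2]$, cf.~\Cref{pdividespthree}), this gives $W(R)=\Theta(\ainf(R))+pW(R)$. Then $\Theta$ is surjective by $p$-completeness, so $F$ is surjective on $W(R)$ because $F\circ\Theta=\Theta\circ\phi$ with $\phi$ bijective. This fails in general (\cite[Ex.~5.4]{dk}).

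So the factor $[\pi^{1/p^n}]$ must do essential work, and in your sketch it does not: in your displayed approximation it is merely multiplied onto both sides of a step-one congruence. Its actual effect is far from uniform. $\Theta([\pi^{1/p^n}])=[(\pi^{1/p^n})^\sharp]$ multiplies the $i$-th Witt component by $(\pi^{p^{i-n}})^\sharp$. For $i>n$ this is divisible by $p^{p^{i-n}}$, but for $i<n$ it is only a small fractional power of $\varpi$. Nothing in the proposal handles the finitely many components of index $\le n$, and treating a single piece $V^i[z]$ ``uniformly in $i$'' is exactly what breaks. Neither fallback fills the gap:
\begin{itemize}
\item \Cref{ghostcrit} detects membership in $\hw(R)$, which helps only once you know $\hw(R)\subset\Theta(\ainf(R))$, i.e.\ \Cref{ainftochWsurjectiveperfectoid}.
\item $p\mid[p^2]$ needs Witt components divisible by $p^2$, which is exactly what is missing in low degrees.
\end{itemize}

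The missing input for a direct proof is surjectivity of $\ainf(R)\to W_{n+1}(R)$ at finite length. This is standard (cf.~\cite{bms1}), and it also follows from \Cref{ainftochWsurjectiveperfectoid}, since $\chW(R)\to W(R)$ is an isomorphism modulo $\hV^{n+1}$. Write $x=\Theta(a)+V^{n+1}(z)$. Then $\Theta([\pi^{1/p^n}])\,x=\Theta([\pi^{1/p^n}]a)+V^{n+1}([(\pi^\sharp)^p]\,z)$, and this lies in $\Theta(\ainf(R))+pW(R)$ by $p\mid[p^2]$. The usual almost-mathematics iteration, with exponents $1/p^n+1/p^{n+1}+\cdots$, then gives $[\pi^{\epsilon}]W(R)\subset\Theta(\ainf(R))$ for every $\epsilon>0$.

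The paper avoids estimates altogether. The successive approximation is carried out in \Cref{ainftochWsurjectiveperfectoid}, for $F$ on $\hw(R)$. By that result, $\mathrm{coker}\,\Theta$ is the cokernel $C$ of the $\hV$-completion map $\chW(R)\to W(R)$, on which $\hV$ acts invertibly. Through $\hV^{-1}$, $C$ becomes a perfect Frobenius module over $\chW(R)$. Being derived $p$-complete, it is then a module over the $p$-completed perfection $W((R/p)_{\mathrm{red}})$ of $\chW(R)$ (\Cref{completionofperfection}), and hence is killed by $W(\mathfrak{m}^\flat)$.
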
 
\begin{proof} 
This follows because the cokernel of $\Theta$ is also the cokernel $C$ of the $\hV$-completion map $\chW(R) \to W(R)$ by \Cref{ainftochWsurjectiveperfectoid}. 
Since 
$\hV$ acts invertibly on $C$, it follows that 
$C$ (via $\hV^{-1}$) acquires the structure of a perfect Frobenius module over $\chW(R)$ 
and hence the structure of a module over the colimit perfection of $\chW(R)$, 
which is $W((R/p)_{\mathrm{red}})$ by \Cref{completionofperfection}. 
\end{proof}

\begin{remark} 
Let $V$ be a perfectoid valuation ring receiving a map from $\zpcycl$. 
In this case, the kernel $J$ of $\Theta: \ainf(V) \to W(V)$ is generated by $[\epsilon]-1$, where $\epsilon
\in ( \zpcycl)^{\flat}$ is the sequence $(1, \zeta_p, \zeta_{p^2}, \dots)$ of compatible $p$-power roots of unity
(cf.~\cite[Ex.~3.16 and Lem.~3.23]{bms1}). 
\end{remark} 
\begin{lemma}
\label{Jbanginjectiveperfectoid}
As above, let $J=\ker(\Theta: \ainf(R) \to W(R))$. Then the natural map $J_! \to \ainf(R)$ is injective, and the  ring $\ainf(R)/J_!$ is 
$p$-torsionfree. 
\end{lemma}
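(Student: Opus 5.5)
We want to show that $J_! \to \ainf(R)$ is injective and that $\ainf(R)/J_!$ is $p$-torsionfree. The plan is to apply \Cref{shriekpreservesptorsionfreeinclusions} to the inclusion $J \subset \ainf(R)$. That remark says $J_! \to \ainf(R)_!$ is injective with $p$-torsionfree cokernel, provided $J$ and $\ainf(R)$ are $p$-complete and $p$-torsionfree and $\ainf(R)/J$ is $p$-torsionfree.

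We then compose with the natural map $\ainf(R)_! = W(\mathfrak{m}^\flat) \to \ainf(R)$. This map is injective, because $W(\mathfrak{m}^\flat)$ is an ideal, namely the kernel of $\ainf(R)\to W((R/p)_{\mathrm{red}})$. Its cokernel $W((R/p)_{\mathrm{red}})$ is $p$-torsionfree, since $(R/p)_{\mathrm{red}}$ is perfect. So $J_! \to W(\mathfrak{m}^\flat) \to \ainf(R)$ is a composite of two injections, and $\ainf(R)/J_!$ is an extension of $W((R/p)_{\mathrm{red}})$ by $W(\mathfrak{m}^\flat)/J_!$. Both of these are $p$-torsionfree, hence so is the extension.

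It remains to check the hypotheses of the remark.
\begin{itemize}
\item $\ainf(R)$ is $p$-complete and $p$-torsionfree, being the Witt vectors of a perfect ring.
\item $J = \ker\Theta$ is closed in $\ainf(R)$: $\Theta$ is continuous for the $p$-adic topologies, since $W(R)$ is $p$-adically complete ($R$ is $p$-complete with bounded torsion). So $J$ is $p$-complete, and it is $p$-torsionfree as a submodule of $\ainf(R)$.
\item The main point is that $\ainf(R)/J$ is $p$-torsionfree. Since $\ainf(R)/J$ embeds into $W(R)$ via $\Theta$, it suffices to show that $W(R)$ is $p$-torsionfree.
\end{itemize}

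For the last point, we first reduce to the case that $R$ is $p$-torsionfree. A perfectoid ring is a fiber product $R \simeq R' \times_{\bar R} S$, where $R'$ is $p$-torsionfree perfectoid and $S$ is a perfect $\mathbb{F}_p$-algebra; this is the standard decomposition, e.g.\ from \cite{bhattscholze}. Since $W$ preserves fiber products and $W(S)$, $W(R')$ are $p$-torsionfree, $W(R)$ is $p$-torsionfree as well. In the $p$-torsionfree case, $W(R)$ embeds into $\prod R$ via the ghost map, so it is $p$-torsionfree.

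Alternatively, one can avoid the decomposition and argue with $\delta$-rings. By \Cref{torsionmeansnilpotent}, if $px=0$ in $\ainf(R)/J$ then $\phi(x)=0$ there. Since $\phi$ is bijective on $\ainf(R)$ and $J$ is a $\delta$-ideal with $\phi(J)\subseteq J$, it remains to see that $\phi^{-1}(J)\subseteq J$; we would verify this using $\phi$-equivariance of $\Theta$ together with injectivity of $F$ on $W(R)$ modulo torsion.

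I expect this $p$-torsionfreeness of $\ainf(R)/J$ to be the only real obstacle. I would try the fiber product decomposition first.
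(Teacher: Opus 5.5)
Your argument is correct, and its skeleton is the paper's:
\begin{enumerate}
\item prove that $W(R)$ is $p$-torsionfree, so that $\ainf(R)/J\subset W(R)$ is too;
\item apply the remark on $(-)_!$ to $J\subset\ainf(R)$;
\item compose with $\ainf(R)_!=W(\mathfrak m^\flat)\hookrightarrow\ainf(R)$, whose cokernel $W((R/p)_{\mathrm{red}})$ is $p$-torsionfree.
\end{enumerate}
You are more careful than the paper about $J$ being $p$-complete. To pass from ``closed'' to ``complete'' you also need $J\cap p^n\ainf(R)=p^nJ$, which follows from the $p$-torsionfreeness of $\ainf(R)/J$. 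Alternatively, note that the kernel of a map of derived $p$-complete modules is derived $p$-complete.

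The only real difference is how you show $W(R)[p]=0$. The paper works inside $W(R)$. If $pw=0$, then $V(w)\in\ker F\simeq\gasharp(R)$ has vanishing zeroth component, so it is a sequence $(x_i)$ with $x_0=0$ and $px_{i+1}=x_i^p$. These $x_i$ are $p$-power torsion, hence killed by $p$, hence nilpotent, hence zero because perfectoid rings are reduced.

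You instead reduce to a $p$-torsionfree ring (ghost map) and a perfect $\mathbb{F}_p$-algebra ($p=VF$ with $F$ injective). You need less than the full fiber product decomposition. Since $W$ preserves injections, it suffices that $R\to R/R[p^\infty]\times R/\sqrt{pR}$ is injective. An element of $R[p^\infty]\cap\sqrt{pR}$ is nilpotent, hence zero by reducedness. Phrased this way, your argument uses only that $R$ is reduced, so it shows $W(R)$ is $p$-torsionfree for every reduced ring. It also replaces the identification $W^{F=0}\simeq\gasharp$ with elementary ghost-component reasoning. The paper's argument stays within the $\gasharp$ formalism used elsewhere in the text.

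Discard your fallback $\delta$-ring sketch: it aims at a false statement. $\phi$ need not be injective on $\ainf(R)/J$, i.e.\ $\phi^{-1}(J)\not\subseteq J$ in general. Take $R=\zpcycl$. The $\delta$-map $\Theta$ sends the rank one element $q^{1/p}$ to $[\zeta_p]$. So $q-1\in J$ but $q^{1/p}-1\notin J$, even though $\phi(q^{1/p}-1)=q-1$.

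Likewise, $F$ is not injective on $W(R)$ modulo torsion. The image of $V(1)-p$ in the $p$-torsionfree ring $W(\zpcycl)$ is nonzero, since its zeroth ghost component is $-p$, yet it is killed by $F$. Torsionfreeness of $\ainf(R)/J$ holds without $\phi$ being injective, so keep the decomposition route.
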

\begin{proof}
First, $W(R)$ is $p$-torsionfree. Indeed, if $pw=0$ for some $w\in W(R)$, then 
$V(w)\in \ker(W(R)^{F=0} \to \mathbb{G}_a(R))=\ker(\mathbb{G}_a^\sharp(R)\to R)$. 
An element of the latter kernel is a sequence $(x_i\in R)_{i\geq 0}$ such that
$px_{i+1}=x_i^p$ for all $i\geq 0$ and $x_0=0$. It follows that every $x_i$ is killed
by some power of $p$. Since every $p$-power torsion element of a perfectoid ring is
killed by $p$ \cite[Lem.~2.34]{bhattscholze}, every $x_i$ is nilpotent; since perfectoid rings
are reduced \cite[\S 2.1.3]{cs}, every $x_i$ is zero. Thus $W(R)$, and hence
$\ainf(R)/J\subset W(R)$, is $p$-torsionfree.

It follows from \Cref{shriekpreservesptorsionfreeinclusions} that
$J_! \to \ainf(R)_!$ is injective with $p$-torsionfree cokernel, and since $\ainf(R)_! \to \ainf(R)$ is injective  with $p$-torsionfree
cokernel $W( (R/p)_{\mathrm{red}})$, the result follows.
\end{proof}

Note that $J_!  \subset \ainf(R)$ is  a $\delta$-ideal 
as the $p$-completion of $W( \mathfrak{m}^{\flat}) J$.

\begin{theorem}
\label{thm:chWofperfectoid}
Let $R$ be a perfectoid ring. 
Then the \dcart (resp.~\dcartwo) ring $\chW(R)$ is 
the ring-theoretic cosaturation of the $\ainf(R)$-algebra $W(R)$. 
That is,  there is a natural identification of $\ainf(R)$-algebras 
$\chW(R) \simeq \ainf(R)/ J_!$, compatible with the natural projection $\chW(R) \to W(R)$.
\end{theorem}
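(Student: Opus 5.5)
We already know from \Cref{ainftochWsurjectiveperfectoid} that $\ainf(R)\to\chW(R)$ is surjective. Let $K$ be its kernel. Since $\chW(R)\to W(R)$ lies over $\ainf(R)$, we get $K\subset J$. The plan is to show $K=J_!$, viewing both as ideals of $\ainf(R)$ via \Cref{Jbanginjectiveperfectoid}. I will prove the two inclusions separately.

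\emph{Step 1: $J_!\subset K$.} Take $j\in J$ and $n\geq 0$. We need the image of $[\pi^{1/p^n}]j$ in $\chW(R)$ to vanish. Its image in $W(R)$ vanishes, so it lies in $\ker(\chW(R)\to W(R))=\hV^\infty\chW(R)$, the ideal of \Cref{infiniteVdivisibleideals}. That ideal squares to zero and is torsion-free for $\hV$, with $\delta$ inverse to $\hV$ on it. The element $[\pi^{1/p^n}]$ maps to the kernel of $\chW(R)\to W((R/p)_{\mathrm{red}})$, which is $\hw(R)$ (completed). Writing $\bar j\in \hV^\infty\chW(R)$ for the image of $j$, I want $[\pi^{1/p^n}]\bar j=0$. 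Choose $y$ with $\bar j=\hV^m(y)$ and $y\in\hV^\infty\chW(R)$. The projection formula gives $[\pi^{1/p^n}]\bar j=\hV^m([\pi^{p^m/p^n}]y)$. Now $[\pi^{p^{m-n}}]$ is divisible by a large power of $p$ in $\chW(R)$, since $\pi^\sharp$ is a unit times $p$, so the element is $p^N$-divisible for all $N$ compatibly. Because $\hV$ is injective and the ideal is derived $p$-complete, the argument "$m_n=pm_{n+1}\Rightarrow m_0=0$" from the proof of \Cref{infiniteVdivisibleideals} should kill it. Then $J_!\subset K$ by $p$-completeness of $\chW(R)$.

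\emph{Step 2: $K\subset J_!$.} Set $S=\ainf(R)/J_!$. By \Cref{Jbanginjectiveperfectoid}, $S$ is $p$-torsionfree, and it is a $\delta$-ring surjecting onto $\ainf(R)/J\subset W(R)$. The key claim is that $S$ is \deltanilperfect, or more directly that $S\to \ainf(R)/J$ has kernel $J/J_!$ that is "taut", and that $S$ carries a \dcart (resp.\ \dcartwo) structure with $S/\hV\simeq R$. Then the universal property \Cref{chWuniversalcartier} gives a map $\chW(R)\to S$ over $\ainf(R)$, forcing $K\subset J_!$.

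To get the Cartier structure, I would use \Cref{torsionfreedcartring} (resp.\ \Cref{ptorsionfree2Cart}). Let $I_S$ be the preimage of $VW(R)$, i.e.\ the kernel of $S\to R$. We need $F:I_S\xrightarrow{\sim}pS$ (resp.\ $\two S$). Surjectivity comes from $\ainf(R)$ being perfect: $p\cdot a=F(pF^{-1}a)$, and $pF^{-1}a\in\ker\theta$ holds up to adjustment using $J\subset\ker\theta$. Injectivity follows because $F$ is an automorphism of $\ainf(R)$ preserving $J_!$. So the real content is that $F(I_S)=pS$. Equivalently, with $\xi$ a generator of $\ker\theta$, I need $\ker\theta+J_!=F^{-1}(p\ainf(R)+J_!)$, which should follow from $\varphi(\xi)\equiv p\cdot(\text{unit})$ modulo $J$ in $W(R)$.

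\emph{Main obstacle.} I expect the hardest part to be making Step 2 precise at $p=2$, and ensuring that $S/\hV\simeq R$ exactly rather than only almost. A fallback is to avoid the structure on $S$ and argue directly that $K$ is a $\delta$-ideal with $K\subset J$ and $J/K$ taut. Since $\chW(R)$ is taut rigid by \Cref{tautrigidityofchWasdcart}, the taut extension $S\to\ainf(R)/J$ (with $\hV$-inverse $\delta$ on the almost-zero kernel $J/J_!$) would split compatibly, forcing $K=J_!$ after comparing with Step 1.
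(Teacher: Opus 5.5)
\textbf{Verdict.} Your overall architecture matches the paper's: $\ainf(R)\to\chW(R)$ is surjective, then $J_!\subset K$ because the kernel of $\chW(R)\to W(R)$ is almost zero, then $K\subset J_!$ via a map $\chW(R)\to\ainf(R)/J_!$. But Step 2 has a genuine gap, and the direct route you propose there rests on false claims.

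\textbf{The direct route in Step 2 fails.} Take $R=\zpcycl$. Then $J=(q-1)$, and $J_!$ is $p$-completely generated by $(q-1)(q^{1/p^n}-1)$ for $n\geq 0$. Consider $x=(q^{1/p}-1)^2$.
\begin{itemize}
\item $F(x)=(q-1)^2\in J_!$.
\item $x\notin J_!$. Write $t=q-1$, so $\ainf(R)/p=\mathbb{F}_p[t^{1/p^\infty}]^\wedge_t$. The image of $J_!$ there is generated by the $t^a$ with $a>1$, whereas $x\equiv t^{2/p}$ and $2/p\leq 1$.
\item $\theta(x)=(\zeta_p-1)^2\neq 0$.
\end{itemize}
Consequences:
\begin{itemize}
\item $F$ is not injective on $S=\ainf(R)/J_!$. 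That $F$ preserves $J_!$ only makes $F$ well defined on $S$; injectivity would need $F^{-1}(J_!)\subset J_!$, which fails.
\item Your reformulation $\ker\theta+J_!=F^{-1}(p\ainf(R)+J_!)$ is false: $x$ lies in the right side but not the left.
\item A congruence $\varphi(\xi)\equiv p\cdot(\text{unit})$ modulo $J$ is not enough, since $J/J_!\not\subset pS$ (already $q-1\notin pS$). At $p=2$ you would also have to work with $\two$ and a carefully normalized generator of $\ker\theta$.
\end{itemize}

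\textbf{The missing ingredient.} What you need is that $S\to\ainf(R)/J$ is a taut square-zero extension (\Cref{ainfJbangtoainfJtautsquarezero}). Your fallback silently presupposes this when it speaks of a ``$\hV$-inverse'' to $\delta$ on $J/J_!$ before any $\hV$ on $S$ exists. The paper proves it as follows.
\begin{itemize}
\item $J\subset W(\mathfrak m^\flat)$ gives $J^2\subset J_!$.
\item Choose a generator $d$ of $\ker\theta$ mapping to $p$ in $W((R/p)_{\mathrm{red}})$. By the projection formula, using $\Theta(d)\in VW(R)$, the operator $x\mapsto dF^{-1}(x)$ preserves $J$, hence $J_!$.
\item $J/J_!$ is a $p$-torsionfree $W((R/p)_{\mathrm{red}})$-module on which $d$ and $F(d)$ both act as $p$. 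So this operator is inverse to $\delta=F/p$ there.
\end{itemize}
The paper then lifts the \dcart (resp.~\dcartwo) structure of $\ainf(R)/J$ (the image of $\chW(R)$ in $W(R)$) to $S$ by \Cref{liftcartierstructuretautsquarezero}, and concludes by \Cref{chWuniversalcartier}.

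\textbf{Your fallback, once the lemma is in hand.} It then works, and is a slightly leaner variant of the paper's argument.
\begin{itemize}
\item Pull $S\to\ainf(R)/J$ back along $\chW(R)\to\ainf(R)/J$.
\item Split the resulting taut extension using taut rigidity of $\chW(R)$ (\Cref{tautrigidityofchWasdcart}).
\item Use uniqueness of lifts from the perfect $\delta$-ring $\ainf(R)$ to see that the resulting $\delta$-map $\chW(R)\to S$ is compatible with the two quotient maps from $\ainf(R)$.
\end{itemize}
This gives $K\subset J_!$ without ever constructing a \dcart structure on $S$.

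\textbf{Step 1 is incomplete as written.} Knowing that $[\pi^{1/p^n}]\bar j$ is divisible by $p^N$ for every $N$ does not force it to vanish in a derived $p$-complete group. You would need a compatible system of divisions, or a priori separatedness, and you provide neither. The clean fix is the paper's: $\hV^\infty\chW(R)$ is a derived $p$-complete perfect Frobenius module (with Frobenius $\hV^{-1}$). By \Cref{VcompletionFrobsemilinear} and \Cref{completionofperfection} it is therefore a module over $W((R/p)_{\mathrm{red}})$, on which $W(\mathfrak m^\flat)$ acts by zero.
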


The main strategy of this proof is to show that $\ainf(R)/J_!$ is a \dcart ring (resp. a \dcartwo ring if $p = 2$). 
We treat this by showing that $\ainf(R)/J$ is a \dcart (resp. a \dcartwo) ring and that $\ainf(R)/J_! \to \ainf(R)/J$ is a taut square-zero extension.

\begin{proposition}
\label{ainfmodJcartierperfectoid}
Notation as above, for any perfectoid ring $R$, the pair
$(\ainf(R)/J, \mathrm{im}(\ker(\theta)\to \ainf(R)/J))$ defines a \dcart ring
(resp.~a \dcartwo ring if $p = 2$). Equivalently, the image of
$\ainf(R) \to W(R)$ is a \dcart (resp. \dcartwo) subring of $W(R)$.
\end{proposition}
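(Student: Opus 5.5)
We show that the image $A' \stackrel{\mathrm{def}}{=} \Theta(\ainf(R)) \subset W(R)$ is stable under the Verschiebung ($V$ if $p>2$, $\hV$ if $p=2$). Once this holds, $A'$ is a sub-$\delta$-ring of the \dcart (resp.~\dcartwo) ring $W(R)$ closed under $\hV$. It is therefore itself a \dcart (resp.~\dcartwo) ring, since all defining identities are inherited from $W(R)$. Its $\hV$-ideal is then $\hV(A')$, and we must identify it with the image of $\ker\theta$.

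The first step is to compute $\hV$ on generators. Since $\ainf(R)$ is perfect, every element of $\ainf(R)$ has the form $F(a)$. By the projection formula, $\hV(\Theta(F(a))) = \hV(F(\Theta(a))) = \Theta(a)\,\hV(1)$. So stability of $A'$ under $\hV$ reduces to a single statement: $\hV(1) \in A'$. Concretely, we need some $\xi \in \ainf(R)$ with $\Theta(\xi) = \hV(1)$; then $\hV(\Theta(x)) = \Theta(\xi\, F^{-1}(x))$ for all $x$, and $\hV(A') = \Theta(\xi\,\ainf(R))$.

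The second step is to produce $\xi$. Write $\ker\theta = (d)$ with $d$ distinguished, and take $\xi$ to be a suitable unit multiple of $d$, or of $\phi^{-1}(d)$. The map $\Theta\colon \ainf(R) \to W(R)$ is surjective modulo $V$, since $\theta$ is surjective. In $W(R)$ one has $\hV(1) \equiv p$ modulo $\hw$-type corrections, so the image of $\xi$ should be $\hV(1)$ exactly once $\xi$ is chosen correctly. To pin down $\xi$, use that $F(\hV(1)) = p$ (resp.~$\two$). We look for $\xi$ with $F(\xi) = \phi(\xi)$ equal to a lift of $p$ (resp.~$2[-1]$) in $\ainf(R)$ and with $\theta(\xi) = 0$. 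For $p>2$, $\Theta$ is a $\delta$-map, so $\Theta(\xi)$ and $V(1)$ would both satisfy $F(-) = p$ and lie in $VW(R)$. Their difference $\eta$ is then killed by $F$ and lies in $\ker(F)\cap VW(R)$, which corresponds to $\mathbb{G}_a^\sharp(R)$-type elements with vanishing zeroth coordinate. As in the proof of \Cref{Jbanginjectiveperfectoid}, reducedness of $R$ and the fact that $p$-power torsion in $R$ is killed by $p$ force $\eta = 0$. So $\Theta(\xi) = V(1)$.

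The remaining task is to choose $\xi \in \ker\theta$ with $\phi(\xi) \in p\cdot\ainf(R)^\times$ equal to $p$ on the nose modulo $J$. Since $\phi(d)$ is distinguished and $p$-divisible-type arguments apply only modulo $J$, we will instead solve $\phi(\xi) \equiv p \pmod J$. I expect this to be the main obstacle: showing that $p \in \phi(\ker\theta) + J$. I would try to deduce it from \Cref{thetaalmostsurjectiveperfectoid} together with \Cref{ainftochWsurjectiveperfectoid}. The element $V(1)$ lies in $\chW(R)$, hence is the image of some $\xi' \in \ainf(R)$; its image in $W(R)$ is $V(1)$ and $\theta(\xi') = 0$. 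That gives $\hV(1) \in A'$ directly, which is the cleanest route. For $p=2$, apply the same argument to $\hV(1) = V([-1]) \in \chW(R)$ (see \Cref{chWdcartwo}).

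Finally, identify the ideal. We have $\hV(A') = \Theta(\xi'\ainf(R))$. We also have $A'/\hV(A') \cong R$, because $A' \to W(R) \to R$ is surjective and $\hV(A') = A' \cap VW(R)$. The latter equality follows from $V$-injectivity and the formula $\hV(\Theta(x)) = \Theta(\xi' F^{-1}x)$, together with surjectivity of $\Theta$ mod $V$ and a successive approximation in $V$. It follows that $\hV(A')$ is the image of $\ker\theta$, giving the stated pair description.
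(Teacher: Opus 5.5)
Your main argument is correct and uses the same key input as the paper: the surjectivity of $\ainf(R)\to\chW(R)$ (\Cref{ainftochWsurjectiveperfectoid}). The paper applies it to all of $\chW(R)$. The image of $\Theta$ equals the image of the \dcart (resp.~\dcartwo) map $\chW(R)\to W(R)$, so it is automatically a \dcart (resp.~\dcartwo) subring. You instead lift only $\hV(1)$ to some $\xi'\in\ainf(R)$, and combine perfectness of $\ainf(R)$ with the projection formula to get $\hV(\Theta(x))=\Theta(\xi'F^{-1}(x))$. This is equally valid. It also makes explicit the formula for $\hV$ that the paper uses afterwards (\Cref{ainfJbangtoainfJtautsquarezero}, \Cref{modoutbyrankoneelementschW}). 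The detour about solving $\phi(\xi)\equiv p \pmod J$, and the uniqueness argument via $\ker F\cap VW(R)=0$, can simply be deleted.

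The step that does not work as written is the last one: identifying $\hV(A')$ with the image of $\ker\theta$ via the equality $\hV(A')=A'\cap VW(R)$. Successive approximation in $V$ only gives $x\in\hV(A')+\bigl(A'\cap V^nW(R)\bigr)$ for every $n$. To pass to the limit, the corrections would have to converge, i.e.\ you would need $V$-adic completeness of $A'$. That fails in general. The $V$-adic completion of $A'$ is $W(R)$, whereas $\Theta$ is in general only almost surjective (\Cref{thetaalmostsurjectiveperfectoid}). The equality $A'\cap VW(R)=\hV(A')$ is true, but it is not formal. In the paper it is \Cref{inclusioninducesinclusiononquotientsmodV}, whose proof needs the natural retraction of \Cref{naturalretractionofV}.

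Within your setup there is a much shorter fix:
\begin{enumerate}
\item First, $\xi'\in\ker\theta$, because $\theta(\xi')=\gh_0(\hV(1))=0$.
\item Next, $\theta(\delta(\xi'))=\gh_0(\delta(\hV(1)))$ equals $1$ for $p>2$, since $\delta(V(1))=1-p^{p-2}V(1)$. It equals $-1$ for $p=2$, since $\delta(\hV(1))=-1$.
\item Write $\xi'=du$ with $(d)=\ker\theta$, and reduce $\delta(\xi')=u^p\delta(d)+d^p\delta(u)+p\delta(d)\delta(u)$ modulo $(p,d)$. This shows $u$ is a unit.
\item Hence $\xi'\ainf(R)=\ker\theta$, and so $\hV(A')=\Theta(\xi'\ainf(R))=\Theta(\ker\theta)$.
\end{enumerate}
Alternatively, use that $\ainf(R)\twoheadrightarrow\chW(R)$ and $\chW(R)/\hV\chW(R)\simeq R$ compatibly with $\theta$. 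Then $\ker\theta$ is the full preimage of $\hV\chW(R)$, whose image in $W(R)$ is $\hV(A')$.
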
 
\begin{proof} 
By \Cref{ainftochWsurjectiveperfectoid}, the image of $\ainf(R)\to W(R)$ is the
image of $\chW(R)\to W(R)$.
\end{proof} 

\begin{lemma}
\label{ainfJbangtoainfJtautsquarezero}
The map of $\delta$-rings
$\ainf(R)/ J_! \to \ainf(R)/J \subset W(R)$ is a taut square-zero extension.
\end{lemma}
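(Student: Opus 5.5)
The kernel of $\ainf(R)/J_! \to \ainf(R)/J$ is $J/J_!$. We have to show three things: $J/J_!$ squares to zero, it is derived $p$-complete, and $\delta$ (equivalently the Frobenius module structure) acts bijectively on it. Surjectivity is clear, and $J_!\subset J$ is a $\delta$-ideal by the remark preceding the theorem, so the map is one of $\delta$-rings.

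\emph{Derived $p$-completeness.} Both $J$ and $J_!$ are derived $p$-complete. Indeed, $J$ is the kernel of a map of $p$-complete modules $\ainf(R)\to W(R)$, and $J_!$ is a $p$-completion which injects by \Cref{Jbanginjectiveperfectoid}. Hence the cokernel $J/J_!$ is derived $p$-complete.

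\emph{Square-zero.} I would show $J^2\subset J_!$. First I would observe that $J$ is annihilated by a power of $F$ modulo almost-zero things. More usefully, elements of $J$ map to zero in $W(R)$, hence also in $W((R/p)_{\mathrm{red}})$, so $J\subset W(\mathfrak{m}^\flat)$. Then $J\cdot J\subset W(\mathfrak{m}^\flat)J$, whose $p$-completion lies in $J_!$. This gives $(J/J_!)^2=0$.

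\emph{Tautness.} This is the main step, and I expect it to be the main obstacle. The cokernel $M=J/J_!$ is the ``almost zero'' part of $J$, i.e.\ the cofiber of $J_!\to J$, so it is a module over $W((R/p)_{\mathrm{red}})$. Via \eqref{lowershriekdesc}, $M$ is killed by each $[\pi^{1/p^n}]$ up to $p$-completion. The Frobenius module structure on $M$ (given by $\delta$, or $F/p$) is semilinear over $\ainf(R)/J$, and this ring acts on $M$ through $W((R/p)_{\mathrm{red}})$, which is perfect. So bijectivity of $\delta$ on $M$ reduces to a statement about $J$ itself.

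To get that statement, I would use that $\ainf(R)\to\chW(R)$ is surjective (\Cref{ainftochWsurjectiveperfectoid}) and identify $\ainf(R)/J$ with the image of $\chW(R)$ in $W(R)$. The kernel of $\chW(R)\to W(R)$ is the infinitely $\hV$-divisible ideal, on which $\delta$ is bijective (\Cref{infiniteVdivisibleideals}, \Cref{chWtoWtautsquarezero}). I would compare $J/J_!$ with this kernel, or argue directly: $F$ on $\ainf(R)$ is an isomorphism and preserves $J$ and $J_!$. Also $p\delta=F$ on the square-zero ideal $M$, so the key point is to show that $F(J)+J_! = pJ+J_!$ appropriately, i.e.\ that $F\colon M\to M$ is $p$ times an automorphism.

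For this I would use the $\delta$-structure: for $j\in J$, $\delta(j)\in J$ and $F(j)=j^p+p\delta(j)$ with $j^p\in J_!$. This gives $F\equiv p\delta$ on $M$, and shows that $\delta$ is injective on $M$ whenever $M$ is $p$-torsionfree. For surjectivity of $\delta$ on $M$, I would use $F^{-1}$ on $\ainf(R)$: given $j$, the element $F^{-1}(pj)$ lies in $J$ modulo $J_!$, because $\Theta(F^{-1}(pj))$ is $p$-torsion information in $W(R)$, which is $p$-torsionfree (as shown in \Cref{Jbanginjectiveperfectoid}), together with almost-zeroness. The hardest points to nail down are this surjectivity and the $p$-torsionfreeness of $M$. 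For the latter I would again use \Cref{shriekpreservesptorsionfreeinclusions} and \Cref{Jbanginjectiveperfectoid}.
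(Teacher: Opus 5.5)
Your arguments for square-zeroness ($J\subset W(\mathfrak{m}^\flat)$, hence $J^2\subset J_!$), for derived $p$-completeness, and for $p$-torsionfreeness of $M=J/J_!$ via \Cref{Jbanginjectiveperfectoid} are fine and agree with the paper. The gap is in the tautness step, which is the real content of the lemma.

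\textbf{Injectivity.} Your claim does not follow. From $F\equiv p\delta$ on $M$ and $p$-torsionfreeness you only get $\ker(\delta|_M)=\ker(F|_M)$. Injectivity of $F$ on $M$ means $F(j)\in J_!\Rightarrow j\in J_!$ for $j\in J$, and this is not formal. Although $F$ is bijective on $\ainf(R)$, in general $F^{-1}(J_!)\neq J_!$. For instance, take $R=\zpcycl$, where $J=(q-1)$: the element $(q^{1/p}-1)^2$ lies in $F^{-1}(J_!)$ but not in $J$.

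\textbf{Surjectivity.} Your claim is false as stated. For $R=\zpcycl$ and $j=q-1$, the element $F^{-1}(pj)=p(q^{1/p}-1)$ has $\Theta$-image $p([\zeta_p]-1)$. Its zeroth ghost component $p(\zeta_p-1)$ is nonzero, so $F^{-1}(pj)\notin J$. The $p$-torsionfreeness of $W(R)$ cannot rescue this, because $\Theta(F^{-1}(j))$ is killed by $F$, not by $p$.

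\textbf{Comparison with $\chW(R)$.} This route is not available either. At this stage you would at best have a $\delta$-equivariant surjection from $J/J_!$ onto $\ker(\chW(R)\to W(R))$. That does not transfer bijectivity of $\delta$ back to $J/J_!$. Moreover, the isomorphism between them is exactly what \Cref{thm:chWofperfectoid} deduces from this lemma, so using it would be circular.

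\textbf{The missing idea} is an explicit inverse to $\delta$ on $M$.
\begin{itemize}
\item Choose $d\in\ker\theta$ mapping to $p$ in $W((R/p)_{\mathrm{red}})$, and put $\hV'(x)=dF^{-1}(x)$.
\item Since $\theta(d)=0$, you have $\Theta(d)=V(y)$ for some $y\in W(R)$. The projection formula then gives $\Theta(dF^{-1}(x))=V(y\,\Theta(x))$.
\item Hence $\hV'(J)\subset J$, and therefore $\hV'(J_!)\subset J_!$.
\item $M$ is a $W((R/p)_{\mathrm{red}})$-module on which both $d$ and $F(d)$ act by $p$. So $F\hV'=\hV'F=p$ on $M$.
\item Combined with $F=p\delta$ on $M$ and $p$-torsionfreeness, this shows $\hV'$ is a two-sided inverse to $\delta$ on $M$.
\end{itemize}
Note that $F^{-1}(pj)\equiv dF^{-1}(j)$ modulo $F^{-1}(J_!)$, because $F$ of the difference is $(p-F(d))j\in W(\mathfrak{m}^\flat)J$. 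So your surjectivity heuristic becomes correct exactly once $d$ is introduced.
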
 
\begin{proof} 
    Since $J$ is contained in the kernel $W( \mathfrak{m}^{\flat})$ of $\ainf(R) \to W( (R/p)_{\mathrm{red}})$, we have $J^2 \subset J_!$, whence the map is a square-zero extension. 

By \Cref{Jbanginjectiveperfectoid}, \(J/J_!\) is \(p\)-torsionfree. It suffices to show that \(F\) is \(p\) times an isomorphism on \(J/J_!\). 

To this end, define 
an operator 
$\hV': \ainf(R)/J_! \to \ainf(R)/J_!$ as follows: choose a generator $d \in \ker(\theta) \subset \ainf(R)$ such that $d$ maps to $p \in W( (R/p)_{\mathrm{red}})$ and set $\hV'(x) = d F^{-1}(x)$; 
since $d F^{-1}(J) \subset J \subset \ainf(R)$, we get
$d F^{-1}(J_!) \subset J_!$, so $\hV'$ is well-defined.
Both $d$ and $F(d)$ act as $p$ on $J/J_!$, since their images in $W((R/p)_{\mathrm{red}})$ are $p$. Since $J/J_!$ is a $W((R/p)_{\mathrm{red}})$-module, we obtain $F \circ \hV' = \hV' \circ F = p$ on $J/J_!$, so $\hV'$ is an inverse to $\delta = F/p$ on $J/J_!$.
\end{proof}

\begin{proof}[Proof of \Cref{thm:chWofperfectoid}]
We have a natural, surjective (by \Cref{ainftochWsurjectiveperfectoid}) map
$\ainf(R) \to \chW(R)$. 
Let us first show that it annihilates $J_!$. 
This follows because the kernel and cokernel of 
$\chW(R) \to W(R)$ are almost zero:
almost surjectivity holds by \Cref{thetaalmostsurjectiveperfectoid}, while
$\hV$ is invertible on the kernel of $\chW(R) \to W(R)$, whence the $W(R)$-module
structure factors over a $W( (R/p)_{\mathrm{red}})$-module structure, so it is
almost zero.
Since $J = \ker( \ainf(R) \to W(R))$, 
it follows that $J_! \subset \ker( \ainf(R) \to \chW(R))$.

We next construct a \dcart (resp.~\dcartwo) structure on $\ainf(R)/J_!$ such that the map
$\ainf(R)/J_!\to R$ identifies $R$ with
$(\ainf(R)/J_!)/\hV(\ainf(R)/J_!)$.
In fact, this follows from \Cref{liftcartierstructuretautsquarezero}, the fact
that $\ainf(R)/J_! \to \ainf(R)/J$ is a taut square-zero extension of
$\delta$-rings by \Cref{ainfJbangtoainfJtautsquarezero}, and the \dcart
(resp.~\dcartwo) structure on $\ainf(R)/J$ from
\Cref{ainfmodJcartierperfectoid}.

Using the universal property of $\chW(R)$ from \Cref{chWuniversalcartier}, it follows that we obtain a section of $\ainf(R)/J_! \to \chW(R)$, 
necessarily an isomorphism since all maps are compatible with the maps from $\ainf(R)$. 
\end{proof}

\begin{proposition}[$\chW$ of semiperfectoid rings]
Let $R = P/I$ be a ring, where 
\begin{itemize} 
\item $P$ is a $p$-complete, perfect $\delta$-ring (e.g., we could take $P = W(R^{\flat})$). 
\item $I \subset P$ is a $p$-complete ideal containing an element $d \in I$ such that $(P, (d))$ defines a perfect prism (thus, $R$ is semiperfectoid). 
\end{itemize}
Let $\theta:P\twoheadrightarrow R=P/I$ denote the quotient map.

Then $\chW(R)$ is the derived $p$-completion of the quotient of $P$ obtained by the following two-step procedure: 

\begin{enumerate}
\item Let $\mathfrak{a} \subset P$ be the kernel of the map $P \to W( R)$, and let $\mu \subset P$ be the kernel of the map $P \to W( (R/p)_{\mathrm{red}})$. Let $\mathfrak a'=(\mathfrak a\mu)^{\wedge}_p$ be the $p$-completed ideal generated by $\mathfrak a\mu$.
 Then $(P/\mathfrak{a}', \ker(P/\mathfrak{a}' \to R))$ defines a semiperfect \dcart (resp.~\dcartwo at $p = 2$) ring. 

\item  Take the further quotient of $P/\mathfrak{a}'$ by the \dcart
(resp.~\dcartwo) ideal generated by the images of
\(y-\hV(\delta(y))\) for all \(y\in\mathfrak a\).
\end{enumerate}
\end{proposition}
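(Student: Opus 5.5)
The plan is to mirror the perfectoid argument (\Cref{thm:chWofperfectoid}) for the first step, and then finish with \Cref{envelopeisquotientexplicit}. By \Cref{chWasperfectdeltaenvelope}, $\chW(R)$ is the derived $p$-completion of the \dcart (resp.~\dcartwo) envelope of $(P,I)$. So it suffices to show two things. First, $P/\mathfrak a'$, with distinguished ideal $\ker(P/\mathfrak a'\to R)$, is a derived $p$-complete \dcart (resp.~\dcartwo) ring $A$ with $A/\hV\simeq R$. Second, the envelope of $(A,\ker(A\to R))$ is the quotient in step (2), which is exactly the content of \Cref{envelopeisquotientexplicit} once we check that the ideal $\mathfrak a$ appearing there (the kernel of $A\to W(A/\hV)\to W(R)$) is the image of the $\mathfrak a$ of the statement. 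Since $A/\hV=R$, the map $A\to W(A/\hV)=W(R)$ is the map induced from $P\to W(R)$, so this identification is immediate. One also checks that any map of pairs $(P,I)\to(B,\hV B)$ into a derived $p$-complete \dcart ring kills $\mathfrak a'$. This follows as in the first paragraph of the proof of \Cref{thm:chWofperfectoid}: $\mathfrak a$ maps into $\hV^\infty B$ (pass to $\hV$-completion $W(B/\hV)$), and $\hV^\infty B$ is a module over $W((B/p)_{\mathrm{red}})$ on which $\mu$ acts by zero, so $\mathfrak a\mu$ maps to $0$. Hence the envelopes of $(P,I)$ and $(A,\ker(A\to R))$ agree.

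For the first step I would argue as in \Cref{ainfmodJcartierperfectoid} and \Cref{ainfJbangtoainfJtautsquarezero}. First, $P/\mathfrak a$ is the image of $P\to W(R)$. I would show it is a \dcart (resp.~\dcartwo) subring of $W(R)$, i.e.\ stable under $\hV$. The natural choice is $\hV(x)=dF^{-1}(x)$ up to a unit correction. Concretely, I would use \Cref{semiperfectdeltaCartierringclassification} (resp.~\Cref{semiperfectdeltaCartierringclassification2}) and take $\xi$ to be the image of $V(1)$ (resp.~$\hV(1)$) of $W(R)$. This requires showing $V(1)\in\mathrm{im}(P\to W(R))$; I would get it by writing $p=d\cdot u'$ modulo $I$ with $d$ distinguished, using that $R$ receives a map from the perfectoid $P/d$, and invoking \Cref{thm:chWofperfectoid} (or \Cref{ainfmodJcartierperfectoid}) for $P/d$ to produce $V(1)$ in the image of $P$. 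Stability under $\hV$ then follows from the projection formula, since $P$ is perfect and $\hV(F(a)\cdot 1)=a\hV(1)$. Second, $P/\mathfrak a'\to P/\mathfrak a$ is a taut square-zero extension. It is square-zero because $\mathfrak a\subset\mu$, so $\mathfrak a^2\subset\mathfrak a'$. For tautness, $\mathfrak a/\mathfrak a'$ is a $P/\mu=W((R/p)_{\mathrm{red}})$-module, so $F$ acts on it perfectly as a semilinear map, $F=p\,\delta$, and $\delta=F/p$ is an isomorphism provided $\mathfrak a/\mathfrak a'$ is $p$-torsionfree. The \dcart structure then lifts by \Cref{liftcartierstructuretautsquarezero}, and it is semiperfect since $P$ is perfect.

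I expect the main obstacle to be the $p$-torsionfreeness of $\mathfrak a/\mathfrak a'$ (equivalently, $\delta$ being injective and surjective rather than only $F$ being bijective). In the perfectoid case this came from \Cref{Jbanginjectiveperfectoid} via flatness of $W(\mathfrak m^\flat)$. Here $W(R)$ need not be $p$-torsionfree, so I would first try to prove that $P/\mathfrak a\subset W(R)$ is $p$-torsionfree using the $F$-surjectivity available on the image. Failing that, I would instead directly construct the inverse of $\delta$ as $x\mapsto dF^{-1}(x)$ on $\mathfrak a/\mathfrak a'$, as in \Cref{ainfJbangtoainfJtautsquarezero}, and check that $d$ and $F(d)$ act as $p$ there because their images in $W((R/p)_{\mathrm{red}})$ are $p$ times units.
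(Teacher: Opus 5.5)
\textbf{There is a genuine gap.} It sits at exactly the step you flagged, and it cannot be closed. In general $P/\mathfrak a'\to P/\mathfrak a$ is not taut. Moreover, $P/\mathfrak a'$ carries no derived $p$-complete \dcart (resp.~\dcartwo) structure with $(P/\mathfrak a')/\hV\simeq R$.

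A first warning sign: suppose your $A=P/\mathfrak a'$ existed. Then the image of $\mathfrak a$ in $A$ would be $\ker(A\to W(A/\hV))=\hV^\infty A$, on which $\hV\delta=\mathrm{id}$ by \Cref{infiniteVdivisibleideals}. So step (2) would be vacuous, and your argument would prove $\chW(R)=P/\mathfrak a'$.

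This is false. Take $P=\mathbb{Z}_p[q^{1/p^\infty},x^{1/p^\infty}]^{\wedge}_{(p,q-1)}$, $d=[p]_{q^{1/p}}$ and $I=(d,x)$, so that $R=\bigl(\zpcycl[x^{1/p^\infty}]\bigr)^{\wedge}_p/(x)$.
\begin{itemize}
\item The rank one element $x$ lies in $\mathfrak a$, since $\Theta(x)=[\theta(x)]=0$. Also $x^p=x\cdot x^{p-1}\in\mathfrak a\mu$.
\item However, $x\notin\mathfrak a'$. Consider the map $\sigma\colon P\to T=\mathbb{F}_p[y^{\beta}:\beta\in\mathbb{Z}[1/p]_{\geq 0}]/(y^{\beta}:\beta>1)$ given by $q^{1/p^n}\mapsto 1$ and $x^{\beta}\mapsto y^{\beta}$; it is defined on the completion because it kills $p$ and $q-1$.
\item $\sigma$ sends $\mathfrak a\subset I$ into $yT$, because $\sigma(d)=p=0$. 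It sends $\mu$ into the ideal of positive-degree elements. Hence $\sigma$ kills $\mathfrak a\mu$, and therefore also $\mathfrak a'\subset \mathfrak a\mu+pP$. But $\sigma(x)=y\neq 0$.
\end{itemize}
So $\bar x\in\mathfrak a/\mathfrak a'$ is nonzero with $F(\bar x)=\delta(\bar x)=0$. Meanwhile $x$ does die in $\chW(R)$, by \Cref{norankoneinVideal}, since it is a rank one element of $I$.

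This breaks your tautness argument in three places:
\begin{itemize}
\item Being a $P/\mu$-module says nothing about bijectivity of the semilinear $F$, so your perfectness claim does not follow.
\item Your fallback $dF^{-1}$ only gives $F\circ dF^{-1}=dF^{-1}\circ F=p$. That cannot invert $\delta$ in the presence of such $p$-torsion.
\item By \Cref{norankoneinVideal}, the nonzero rank one element $\bar x$ cannot lie in the Verschiebung ideal of any derived $p$-complete \dcart (resp.~\dcartwo) structure on $P/\mathfrak a'$. So $\ker(P/\mathfrak a'\to R)$ is not such an ideal, and the parenthetical in step (1) has to be read with a smaller ideal.
\end{itemize}

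\textbf{The paper's route avoids $P/\mathfrak a$ and tautness entirely.}
\begin{itemize}
\item It lifts $V(1)$ (resp.~$\hV(1)$) of $\chW(P/(d))=P/(J_0)_!$ to an element $\xi\in dP\subset I$.
\item It notes $(J_0)_!\subset\mathfrak a'$, because $J_0\subset\mathfrak a$ and $\mu_0\subset\mu$. Hence the identities demanded of $\xi$ in \Cref{semiperfectdeltaCartierringclassification} (resp.~\Cref{semiperfectdeltaCartierringclassification2}) descend to $P/\mathfrak a'$.
\item It checks $\xi F^{-1}(\mathfrak a')\subset\mathfrak a'$, using $\Theta(\xi)=V(1)$ (resp.~$\hV(1)$) and $F^{-1}(\mu)=\mu$.
\end{itemize}
This gives a semiperfect \dcart (resp.~\dcartwo) structure on $P/\mathfrak a'$. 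Its Verschiebung ideal is generated by $\xi$, i.e.\ it is the image of $(d)$. In the example above this is strictly smaller than $\ker(P/\mathfrak a'\to R)$, and step (2) is precisely what pushes the rest of $I$ into the Verschiebung ideal.

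The envelope comparison then proceeds as you describe. A map of pairs $(P,I)\to(B,\hV B)$:
\begin{itemize}
\item kills $\mathfrak a'$ (your argument, which is the paper's);
\item is a \dcart map out of $P/\mathfrak a'$ by \Cref{automaticV} (resp.~\Cref{automatichV}), since $\xi\in I$;
\item kills $y-\hV\delta(y)$ for $y\in\mathfrak a$, by \Cref{infiniteVdivisibleideals}.
\end{itemize}
For the converse, that the two-step quotient receives the pair $(P,I)$, use your correct observation that $P/\mathfrak a\subset W(R)$ is a \dcart subring with Verschiebung ideal $\xi\cdot(P/\mathfrak a)$. Combined with \Cref{inclusioninducesinclusiononquotientsmodV}, this gives $I\subset \xi P+\mathfrak a$. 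Elements of $\mathfrak a$ become $\hV$-divisible after step (2), so $I$ maps into the Verschiebung ideal.
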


\begin{proof}
Let $\mathfrak{a}'=(\mathfrak a\mu)^{\wedge}_p \subset P$ be the $p$-completed ideal generated by $\mathfrak a\mu$. 
We  first show that $P/\mathfrak{a}'$ naturally admits the structure of a \dcart (resp.~\dcartwo) ring with Verschiebung ideal generated by the image of $\ker \theta$.
Since $\mathfrak{a}$ and $\mu$ are both $\delta$-ideals, $\mathfrak{a}'$ is also a $\delta$-ideal. 

Choose $\xi \in P$ that maps to $V(1) \in \chW(P/(d))$ (resp.~$\hV(1) \in \chW(P/(d))$ if $p = 2$), which we can do since $\chW(P/(d))$ is a quotient of $P$. 
Since $\xi$ maps to $V(1)$ (resp.~$\hV(1)$ if $p = 2$) in $W(R)$ as well, it follows that 
$\xi \cdot F^{-1}(\mathfrak{a}) \subset \mathfrak{a}$. Moreover, $F^{-1}(\mu) = \mu$. 
This gives 
$$ \xi . F^{-1}( \mathfrak a') \subset \mathfrak a'  \subset P.$$
Let \(J_0=\ker(P\to W(P/(d)))\), and let
\(\mu_0=\ker(P\to W(((P/(d))/p)_{\mathrm{red}}))\). By
\Cref{thm:chWofperfectoid}, the kernel of \(P\to \chW(P/(d))\) is
\((J_0)_!\), i.e., the $p$-completed ideal generated by \(J_0\mu_0\). Since \(J_0\subset \mathfrak a\) and \(\mu_0\subset \mu\), this
kernel is contained in \(\mathfrak a'=(\mathfrak a\mu)^{\wedge}_p\). Hence
\(P/\mathfrak a'\) is a quotient of \(\chW(P/(d))\), so the identities required
of \(\xi\) descend to \(P/\mathfrak a'\). Together with
\(\xi F^{-1}(\mathfrak a')\subset \mathfrak a'\), the classifications
\Cref{semiperfectdeltaCartierringclassification,semiperfectdeltaCartierringclassification2}
give the desired \dcart (resp.~\dcartwo) structure.

By \Cref{chWasperfectdeltaenvelope}, $\chW(R)$ is the derived $p$-completion of the \dcart (resp.~\dcartwo) envelope of the pair $(P, \ker (P \to R))$. 
In order to prove the result, it thus suffices to show that any map of $\delta$-pairs $(P, \ker (P \to R)) \to (A, \hV(A))$ with $A$ a \dcart (resp.~\dcartwo) ring factors through $P/\mathfrak{a}'$ and annihilates the elements in (2).

Now given a derived $p$-complete \dcart ring $A$ (resp.~\dcartwo ring if $p = 2$) and a map of $\delta$-pairs $(P, \ker (P \to R)) \to (A, \hV(A))$, 
we know that
$\mathfrak{a}$ maps to $\bigcap_{n \geq 1} \hV^n(A)$; this follows because the map from $P$ to the $\hV$-completion of $A$ factors through $W(R)$.
But $\bigcap_{n \geq 1} \hV^n(A)$ has the structure of a perfect Frobenius module over $A$, 
whence it is naturally a module over the $p$-completed perfection of $A$; by \Cref{completionofperfection} and the factorization $P \to R \to A/\hV$, its $P$-action factors through $ W( (R/p)_{\mathrm{red}})$, hence it is annihilated by $\mu$. 
We conclude that $\mathfrak{a}'$ maps to zero in $A$. 
Thus, we obtain a map of 
\dcart (resp.~\dcartwo) rings $P/\mathfrak{a}' \to A$. 

Note that the elements
in $\mathfrak{a}$ map into $\bigcap_{n \geq 1} \hV^n(A)$, so the elements of the form $x - \xi F^{-1}(\delta(x))$ for $x \in \mathfrak{a}$ map to $x - \hV \delta(x) = 0$ in $A$ by \Cref{infiniteVdivisibleideals}. 
The result now follows. 
\end{proof}

\begin{proposition} 
    \label{modoutbyrankoneelementschW}
Let $R_0$ be a perfectoid ring, 
$d \in \ainf(R_0)$ a generator of the kernel of $\theta: \ainf(R_0) \to R_0$, 
and let $(x_\alpha)_{\alpha \in S}$ be a collection of elements of 
$R_0^{\flat}$. 
Let $R$ be the $p$-completed quotient of  $R_0$ by the ideal generated by the
elements \(x_\alpha^{\sharp}\) for \(\alpha \in S\).
Then 
the $\delta$-ring $\chW(R)$ is obtained from $\ainf(R_0)/J_!$ by taking the quotient by the $p$-complete ideal generated by the elements
\[
[x_\alpha],\ d[x_\alpha^{1/p}],\ dF^{-1}(d)[x_\alpha^{1/p^2}],\ \dots
\]
for all $\alpha \in S$.
\end{proposition}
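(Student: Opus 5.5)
We will realize $\chW(R)$ as an envelope. By \Cref{thm:chWofperfectoid}, $A \stackrel{\mathrm{def}}{=} \ainf(R_0)/J_! \simeq \chW(R_0)$ is a derived $p$-complete \dcart (resp.~\dcartwo) ring with $A/\hV \simeq R_0$. By \Cref{ainftochWsurjectiveperfectoid}, its Frobenius is surjective. The elements $[x_\alpha]$ are rank one in $\ainf(R_0)$, hence in $A$.

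\emph{Step 1: identify $\chW(R)$ as an envelope.} From the universal property \Cref{chWuniversalcartier} (in the general form, since $R$ may have unbounded torsion we use the definition via that universal property), maps $\chW(R)\to B$ into derived $p$-complete \dcart (resp.~\dcartwo) rings $B$ are ring maps $R\to B/\hV$. These are the same as ring maps $R_0 \to B/\hV$ killing the $x_\alpha^\sharp$, i.e.\ \dcart maps $A\to B$ carrying each $[x_\alpha]$ into $\hV(B)$, since $[x_\alpha]\mapsto x_\alpha^\sharp$ under $A\to A/\hV=R_0$. So $\chW(R)$ is the derived $p$-completed envelope of the pair $(A,\hV(A)+([x_\alpha]))$. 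By \Cref{dcartenvelopewhencontainsV} this is the same as the envelope of $(A,([x_\alpha]))$ in the sense of \Cref{modoutbyrankoneelements}.

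\emph{Step 2: apply \Cref{modoutbyrankoneelements}.} Since $F$ is surjective on $A$ and the $[x_\alpha]$ are rank one, the envelope is the derived $p$-completed quotient of $A$ by the ideal generated by $\hV^j([x_\alpha])$ for $j\geq 0$. It remains to compute $\hV^j([x_\alpha])$ in $A$.

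\emph{Step 3: compute $\hV$ on $A$.} The plan is to show that $\hV(y)=\xi F^{-1}(y)$, where $\xi$ is the image of $d$, up to a unit.

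We first check that $\xi = \hV(1)$ is a generator of the image of $\ker\theta$ that can be taken as $d$ times a unit. By \Cref{semiperfectdeltaCartierringclassification} (resp.~\Cref{semiperfectdeltaCartierringclassification2}), $\hV(y) = \hV(1)\,F^{-1}(y)$ for any preimage $F^{-1}(y)$. The element $\hV(1)$ lies in the image of $\ker\theta=(d)$, so $\hV(1)=d u$ for some $u$. Applying $F$ gives $F(d)F(u)=p$ (resp.~$\two$) in $A$. Since $F(d)$ is $p$ times a unit in $\ainf$, $u$ is a unit modulo the relevant torsion; for instance one can use that $\hV(1)$ and $d$ generate the same ideal $\hV(A)$ modulo $p$, and Nakayama.

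Given this, $\hV^j([x_\alpha]) = \hV(1)F^{-1}(\hV(1))\cdots F^{-(j-1)}(\hV(1))\,[x_\alpha^{1/p^j}]$. Up to units this is $d\,F^{-1}(d)\cdots F^{-(j-1)}(d)[x_\alpha^{1/p^j}]$, which matches the listed generators.

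The main obstacle is this unit comparison between $\hV(1)$ and $d$, particularly at $p=2$, and making sure that the choice of preimages $F^{-1}$ does not matter. The latter follows from condition (3) of the semiperfect classification, $\xi\cdot\ker F=0$. For the unit comparison, we would argue in $A/\hV$-adic terms via \Cref{norankoneinVideal} if the direct argument stalls.
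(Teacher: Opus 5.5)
Your route is the paper's: realize $\chW(R)$ as the completed envelope of $(\chW(R_0),\hV+([x_\alpha]))$, apply \Cref{modoutbyrankoneelements}, and compute $\hV^j([x_\alpha])$ using $\hV(y)=\hV(1)F^{-1}(y)$. The paper phrases the envelope step via \Cref{chWasperfectdeltaenvelope} with $P=\ainf(R_0)$, which is equivalent. Steps 1 and 2 are fine.

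The gap is the unit comparison in Step 3. You correctly flag it as the crux but do not establish it, and the justification you offer is wrong.
\begin{itemize}
\item $F(d)$ is not $p$ times a unit in $\ainf(R_0)$. For $R_0=\zpcycl$ one has $d=[p]_{q^{1/p}}$ and $F(d)=[p]_q\equiv (q-1)^{p-1}\pmod p$, which is not divisible by $p$.
\item ``$\hV(1)$ and $\bar d$ generate the same ideal'' plus Nakayama does not yield a unit relation. By the projection formula $\hV(1)$ annihilates $\ker F$, so it is in general a zero-divisor. Two generators of a principal ideal generated by a zero-divisor need not differ by a unit.
\end{itemize}

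The fix is short. Let $\bar d$ be the image of $d$ in $A$. Since $\bar d\in\ker(A\to R_0)=\hV(A)$, write $\bar d=\hV(a)$. From $\hV(1)=\bar d u$ and the projection formula,
\[
\hV(1)=u\hV(a)=\hV(F(u)a).
\]
Injectivity of $\hV$ (\Cref{Visinjective}, \Cref{hVisinjective}) then gives $F(u)a=1$. So $F(u)$ is a unit, hence $u^p\equiv F(u)$ is a unit mod $p$. Since $A$ is derived $p$-complete, $p$ lies in its Jacobson radical, so $u$ is a unit.

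Alternatively, as the paper implicitly does, take any lift $\tilde\xi\in\ker\theta$ of $\hV(1)$. Then $\delta(\tilde\xi)$ maps to $\delta(\hV(1))=1-p^{p-2}\hV(1)$ (resp.\ $-1$), which is a unit. So $\tilde\xi$ is distinguished and is itself a generator of $\ker\theta$, and any two generators differ by a unit of $\ainf(R_0)$.

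You can in fact avoid units altogether.
\begin{itemize}
\item Both $\hV(1)$ and $\bar d$ lie in $\hV(A)$, so both kill $\ker F$.
\item Given $\xi$ with $\xi\cdot\ker F=0$, induction shows that $\xi_0\xi_1\cdots\xi_{j-1}y_j$, with $F^k(\xi_k)=\xi$ and $F^j(y_j)=y$, does not depend on the chosen preimages. This also settles your worry about choices of iterated $F^{-k}$.
\item Since $\hV(A)=\hV(1)A=\bar d A$, write $\hV(1)=\bar d u$ and $\bar d=\hV(1)w$. Choosing preimages multiplicatively then shows that each $\hV^j([x_\alpha])$ is a multiple of the image of $dF^{-1}(d)\cdots F^{-(j-1)}(d)[x_\alpha^{1/p^j}]$, and conversely.
\end{itemize}
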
 
\begin{proof} 
This follows from the description of $\chW(R_0) = \ainf(R_0)/J_!$ from \Cref{thm:chWofperfectoid}, and then the description of $\chW(R)$ as a \dcart (resp.~\dcartwo) envelope from \Cref{chWasperfectdeltaenvelope}, and finally \Cref{modoutbyrankoneelements}. 
We note that $\hV: \chW(R_0) \to \chW(R_0)$ is given by $\hV(x) = dF^{-1}(x)$, where $d$ is a generator of $\ker \theta: \ainf(R_0) \to R_0$ which maps to $V(1)$ (resp.~$\hV(1)$ if $p = 2$) in $\chW(R_0)$; however, for forming the quotient, any choice of $d$ works.
\end{proof}

\subsection{Cyclotomic examples}
\label{subsec:cyclotomicexamples}

In this subsection, we work out explicitly $\chW$ of the perfectoid ring $\zpcycl$ and some related semiperfectoid examples.
We keep the notation and conventions from the previous subsection.
\begin{construction}[The perfectoid ring $\zpcycl$]
    Let $R = \zpcycl$. 
    Then $R$ is perfectoid, $R^{\flat} = \mathbb{F}_p[q^{1/p^\infty}]^{\wedge}_{(q-1)}$,
    $\ainf(R) = W(R^{\flat}) = \mathbb{Z}_p[q^{1/p^\infty}]^{\wedge}_{(p,q-1)}$ with $\theta: \ainf(R) \to R$ sending $q^{1/p^n}$ to $\zeta_{p^n}$,
    and $\ker \theta$ is generated by $[p]_{q^{1/p}}$. 
By \cite[Ex.~3.16 and Lem.~3.23]{bms1}, for the map
\[
\Theta:\mathbb{Z}_p[q^{1/p^\infty}]^{\wedge}_{(p,q-1)}\to W(\zpcycl),
\]
the ideal $\ker\Theta$ is generated by $q-1$.

    The ideal $W( \mathfrak{m}^{\flat})$ is $p$-completely  generated by $q^{1/p^n}-1$ for $n \geq 0$.
    Indeed, these classes map to zero in $W( (R/p)_{\mathrm{red}}) = \mathbb{Z}_p$, and one sees that they $p$-completely generate $W( \mathfrak{m}^{\flat})$ by reduction mod $p$, because they generate the kernel of the map $R^{\flat} \to (R/p)_{\mathrm{red}}$.
    \label{zpcyclexample}
\end{construction}

It follows that 
$\chW(\zpcycl) = \ainf(\zpcycl)/J_!$ is the $p$-completion of the semiperfect $\delta$-ring
\begin{equation}
\label{cosaturateddeltaring}
\mathbb{Z}[q^{\pm 1/p^\infty}]
/
\left( (q-1)(q^{1/p^n} - 1)\right)_{n \geq 1}.
\end{equation} 
The associated Verschiebung ideal is generated by $[p]_{q^{1/p}}$.

\begin{remark}[Unwinding the \dcart/\dcartwo structure on $\chW(\zpcycl)$] 
By \Cref{semiperfectdeltaCartierringclassification,semiperfectdeltaCartierringclassification2}, to specify the structure of a \dcart (resp.~a \dcartwo) ring on 
\eqref{cosaturateddeltaring}, it suffices to specify an element $\xi$ such that $\xi\cdot \ker F = 0$ and $F(\xi) = p$ (resp.~$F(F(\xi)) = 2$ and $\delta(\xi) = -1$ when $p = 2$); then, 
$\hV$ is determined by the formula $\hV(x) = \xi F^{-1}(x)$.
Moreover, $\xi$ is uniquely determined by these conditions and $\xi \in \ker \theta$, since the \dcart (resp.~\dcartwo) structure is uniquely determined 
by the Verschiebung ideal. 

A short calculation shows that the element
\[
\xi = q^{-(p-1)/(2p)}[p]_{q^{1/p}}
 = q^{-(p-1)/(2p)} + q^{-(p-3)/(2p)} + \dots + q^{(p-3)/(2p)} + q^{(p-1)/(2p)} \in \ker \theta
\]
satisfies the axioms.

\begin{itemize}
    \item  When $p > 2$, $F( \xi) - p$ is divisible by $(q-1)^2$ and thus vanishes.
 \item When $p = 2$, $F(F( \xi)) - 2$ and 
$\xi^2 - F( \xi) - 2$ are divisible by $(q-1)^2$ and thus vanish.
\item One checks that $\ker(F)$ is $p$-completely generated by $(q^{1/p} -1)( q^{1/p^n} - 1)$ for $n \geq 0$, from which one easily gets $\xi  \cdot \ker(F) = 0$.
\end{itemize}

\end{remark} 

\begin{example}
Consider the 
semiperfectoid ring
$\left(\zpcycl[x^{1/p^\infty}]\right)^{\wedge}_p/(x)$. 
    By \Cref{modoutbyrankoneelementschW}, 
  $\chW\left(\left(\zpcycl[x^{1/p^\infty}]\right)^{\wedge}_p/(x)\right)$ is the $p$-completion 
of the quotient of 
$ \mathbb{Z}[q^{\pm 1/p^\infty}, x^{1/p^\infty}]$ by the ideal generated by 
$$\{   (q-1)(q^{1/p^n}-1), n \geq 1\}, \quad \{ x, [p]_{q^{1/p}} x^{1/p}, [p]_{q^{1/p}}  [p]_{q^{1/p^2}} x^{1/p^2}, \dots \}.$$
This is a sort of ``$q$-analog''
of \Cref{semiperfectexample}. 
\end{example}

\raggedbottom
\bibliographystyle{bkmalpha}
\bibliography{bibfile}

\end{document}